\documentclass[phd,english]{ThesisPUC_uk}
\usepackage{chngcntr}
\usepackage[T1]{fontenc}
\usepackage{ae,aecompl}
\usepackage{amsmath,amsthm,amssymb,amsfonts}
\usepackage[mathscr]{eucal}
\usepackage{graphicx}
\usepackage[all]{xy}
\usepackage{ifthen}


\newboolean{numbered}
\newcommand{\bfsection}[2][true] {%
	\setboolean{numbered}{#1}%
	\renewcommand{\thesection}{\textbf{\S\hspace{0.8pt}\arabic{chapter}}}%
	\ifthenelse{\boolean{numbered}}%
	{\chapter{\textbf{#2}}}%
	{\chapter*{\textbf{#2}}}%
	\renewcommand{\thesection}{\arabic{chapter}}%
}

\newcommand{\vphi}{\ensuremath{\varphi}}
\newcommand{\al}{\ensuremath{\alpha}}
\newcommand{\be}{\ensuremath{\beta}}
\newcommand{\ga}{\ensuremath{\gamma}}
\newcommand{\Ga}{\ensuremath{\Gamma}}
\newcommand{\de}{\ensuremath{\delta}}
\newcommand{\De}{\ensuremath{\Delta}}
\newcommand{\eps}{\ensuremath{\varepsilon}}
\newcommand{\ze}{\ensuremath{\zeta}}
\newcommand{\ka}{\ensuremath{\kappa}}
\newcommand{\la}{\ensuremath{\lambda}}
\newcommand{\La}{\ensuremath{\Lambda}}
\newcommand{\sig}{\ensuremath{\sigma}}
\newcommand{\Sig}{\ensuremath{\Sigma}}
\newcommand{\om}{\ensuremath{\omega}}
\newcommand{\Om}{\ensuremath{\Omega}}


\newcommand{\N}{\ensuremath{\mathbf{N}}}

\newcommand{\R}{\ensuremath{\mathbf{R}}}
\newcommand{\Z}{\ensuremath{\mathbf{Z}}}
\newcommand{\Hh}{\ensuremath{\mathbf{H}}}
\newcommand{\Ss}{\ensuremath{\mathbf{S}}}
\newcommand{\D}{\ensuremath{\mathbf{D}}}
\newcommand{\RP}{\ensuremath{\mathbf{RP}}}

\newcommand{\SO}{\ensuremath{\mathbf{SO}}}


\newcommand{\tdef}{\textit}
\newcommand{\tbf}{\textbf}
\newcommand{\tup}{\textup}
\newcommand{\tit}{\textit}

\newcommand*{\mbf}[1]{\boldsymbol{#1}}
\newcommand{\fr}{\mathfrak}

\newcommand*{\sr}[1]{\ensuremath{\mathscr{#1}}}

\newcommand{\?}{\ensuremath{\mspace{2mu}}}
\newcommand{\sz}{\ensuremath{\mspace{2mu}}}

\newcommand{\ol}{\overline}

\newcommand{\te}{\tilde}
\newcommand{\ce}{\check}

\newcommand*{\abs}[1]{\left\vert#1\right\vert}
\newcommand*{\norm}[1]{\left\Vert#1\right\Vert}
\newcommand*{\fl}[1]{\left\lfloor#1\right\rfloor}

\newcommand{\gen}[1]{\left\langle#1\right\rangle}
\newcommand{\ggen}[1]{\pmb\langle#1\pmb\rangle}
\newcommand{\se}[1]{\left\{#1\right\}}

\newcommand{\set}[2]{\big\{#1 \ \text{\large $:$}\ #2 \big\}}

\newcommand{\ring}{\mathring}
\DeclareMathOperator{\Int}{Int}
\DeclareMathOperator{\arccot}{arccot}
\DeclareMathOperator{\imag}{Im}
\renewcommand{\Im}{\imag}
\DeclareMathOperator{\real}{Re}
\renewcommand{\Re}{\real}

\DeclareMathOperator{\id}{id}
\DeclareMathOperator{\pr}{pr}


\newcommand{\bd}{\partial}
\newcommand{\nin}{\notin}
\newcommand{\iso}{\simeq}
\newcommand{\home}{\approx}

\newcommand{\rar}{\ensuremath{\rightarrow}}
\newcommand{\dar}{\ensuremath{\leftrightarrow}}
\newcommand{\incr}{\text{\small$\mspace{1mu}\nearrow\mspace{1mu}$}}
\newcommand{\decr}{\text{\small$\mspace{1mu}\searrow\mspace{2mu}$}}

\newcommand{\subs}{\subset}
\newcommand{\nsubs}{\not\subset}
\newcommand{\sups}{\supset}

\newcommand{\ssm}{\smallsetminus}

\newcommand{\bcup}{\bigcup}
\newcommand{\bcap}{\bigcap}
\newcommand{\du}{\sqcup}
\newcommand{\Du}{\bigsqcup}

\newcommand{\inc}{\hookrightarrow}

\newcommand{\ds}{\oplus}

\newcommand{\tri}{\triangle}

\newcommand{\san}{\sphericalangle}

\newcommand*{\zmod}[1]{\hspace{-.05em}\leavevmode\kern0.1em\raise.20ex%
	\hbox{\bfseries \small Z}\kern-.1em/\kern-.15em\lower.25ex%
	\hbox{\small $#1$\bfseries\small Z}\hspace{.1ex}}
\newcommand*{\ifrac}[2]{\hspace{-.10em}\leavevmode\kern0.1em\raise.50ex%
	\hbox{\small $#1$}\kern-.18em/\kern-.12em\lower.40ex%
	\hbox{\small $#2$}\hspace{.15ex}}





\newcounter{cthma}
\newtheoremstyle{cmain}
	{}
	{}
	{\itshape}
	{}
	{\bfseries}
	{.}
	{ }
	{(\thmnumber{#2})\:\addtocounter{cthma}{1}\thmname{#1} \Alph{cthma}\thmnote{ (#3)}}
\newtheoremstyle{cplain}
	{}
	{}
	{\itshape}
	{}
	{\bfseries}
	{.}
	{ }
	{(\thmnumber{#2})\:\thmname{#1}\thmnote{ (#3)}}
\newtheoremstyle{uplain}
	{}
	{}
	{\itshape}
	{}
	{\bfseries}
	{.}
	{ }
	{\thmname{#1}\thmnote{(#3)}}
\newtheoremstyle{cdefinition}
	{}
	{}
	{}
	{}
	{\bfseries}
	{.}
	{ }
	{(\thmnumber{#2})\:\thmname{#1}\thmnote{ (#3)}}
\newtheoremstyle{cremark}
	{}
	{}
	{}
	{}
	{}
	{.}
	{ }
	{{\bfseries(\thmnumber{#2})}\:{\itshape \thmname{#1}}\thmnote{ (#3)}}
\newtheoremstyle{uremark}
	{}
	{}
	{}
	{}
	{}
	{.}
	{ }
	{{\itshape \thmname{#1}}\thmnote{ (#3)}}

\newenvironment{cnproof}[1]{\begin{proof}[Proof of #1.]}{\end{proof}}

\newcounter{cenumeratecounter}

\counterwithout{figure}{chapter}
\counterwithout{equation}{chapter}
\makeatletter
\@addtoreset{equation}{chapter} 
\makeatother


 

\theoremstyle{cplain}
\newtheorem{thma}{Theorem}[chapter]
	\newtheorem{propa}[thma]{Proposition}
	\newtheorem{cora}[thma]{Corollary}
	\newtheorem{lemmaa}[thma]{Lemma}
{
\theoremstyle{cremark}
\newtheorem{rema}[thma]{Remark}
\newtheorem{rems}[thma]{Remarks}
}
{
\theoremstyle{uremark}
\newtheorem*{exm}{Example}
\newtheorem*{exms}{Examples}
\newtheorem*{urem}{Remark}
}
{
\theoremstyle{cdefinition}
\newtheorem{defna}[thma]{Definition}
\newtheorem{defns}[thma]{Definitions}
}

\renewcommand{\L}{\sr{L}}
\newcommand{\E}{\mathbf{E}}
\newcommand{\gc}[1]{\textit{\u{#1}}}
\newcommand{\co}[1]{\hat{#1}}
\newcommand{\no}{\mathbf{n}}
\newcommand{\ta}{\mathbf{t}}
\DeclareMathOperator{\tot}{tot}
\newcommand{\gr}{\preccurlyeq}
\newcommand{\bi}{\mbf{i}}
\newcommand{\bj}{\mbf{j}}
\newcommand{\bk}{\mbf{k}}


\author{Pedro Paiva Z\"uhlke d'Oliveira}
\authorR{Z\"uhlke, Pedro}
\orientador{Nicolau Cor\c c\~ao Saldanha}
\orientadorR{Saldanha, Nicolau C.}
\title{Homotopies of Curves on the 2-Sphere with Geodesic Curvature in a Prescribed Interval}
\titlebr{Homotopias de Curvas na Esfera com Curvatura Geod\'esica num Intervalo Dado}
\day{10} \mes{September} \ano{2012}

\city{Rio de Janeiro}
\CDD{510}
\department{Matem\'atica}
\departamento{Matem\'atica}
\program{Matem\'atica}
\programbr{Matem\'atica}
\school{Centro T\'{e}cnico Cient\'{i}fico}
\university{Pontif\'{i}cia Universidade Cat\'{o}lica do Rio de Janeiro}
\uni{PUC-Rio}


\jury{
  \jurymember{Carlos Gustavo Tamm de Ara\'ujo Moreira}{Instituto Nacional de Matem\'atica Pura e Aplicada (IMPA)}
  \jurymember{Carlos Tomei}{Departamento de Matem\'atica -- PUC-Rio}
  \jurymember{Jairo da Silva Bochi}{Departamento de Matem\'atica -- PUC-Rio}
  \jurymember{Paul Alexander Schweitzer}{Departamento de Matem\'atica -- PUC-Rio}
  \jurymember{Ricardo S\'a Earp}{Departamento de Matem\'atica -- PUC-Rio}
  \jurymember{Umberto Leone Hryniewicz}{Instituto de Matem\'atica -- UFRJ}
  \schoolhead{Jos\'e Eug\^enio Leal}
}


\curriculo{%
}


\agradecimentos{%
I thank prof.~Nicolau C. Saldanha for his support, patience and immense generosity. All of the results in this work bear his influence in some form. I feel privileged to be his student and friend. I also thank profs.~Alexei N. Krasilnikov  and Paul A. Schweitzer, S.J., for the kindness and generosity with which they have always treated me. Without the help of all three, I would hardly have obtained a PhD degree. 

During the last few years I was partially supported by scholarships (from CNPq and CAPES); I would like to thank everyone who worked to make them available to me.
}

\keywordsbr{
	\key{Curva}
	\key{Curvatura}
	\key{Geometria}
	\key{Homotopia}
	\key{Topologia}
}

\keywords{
	\key{Curve}
	\key{Curvature}
	\key{Geometry}
	\key{Homotopy}
	\key{Topology}
}



\abstract{%
For $-\infty\leq \ka_1<\ka_2\leq +\infty$, let $\sr L_{\ka_1}^{\ka_2}$ denote the set of all closed curves of class $C^r$ on the sphere $\Ss^2$ whose geodesic curvatures lie in the interval $(\ka_1,\ka_2)$, furnished with the $C^r$ topology (for some $r\geq 2)$. In 1970, J.~Little proved that the space $\sr L_0^{+\infty}$ of closed curves having positive geodesic curvature has three connected components. Let $\rho_i=\arccot \ka_i$ ($i=1,2$). In this thesis, we show that $\sr L_{\ka_1}^{\ka_2}$ has $n$ connected components $\sr L_1,\dots,\sr L_n$, where
\begin{equation*}
	n=\left\lfloor{\frac{\pi}{\rho_1-\rho_2}}\right\rfloor+1
\end{equation*} 
and $\sr L_j$ contains circles traversed $j$ times ($1\leq j\leq n$). The component $\sr L_{n-1}$ also contains circles traversed $(n-1)+2k$ times, and $\sr L_n$ contains circles traversed $n+2k$ times, for any $k\in \N$. In addition, each of $\sr L_1,\dots,\sr L_{n-2}$ is homotopy equivalent to $\SO_3$ ($n\geq 3$). A direct characterization of the components in terms of the properties of a curve and a proof that $\sr L_{\ka_1}^{\ka_2}$ is homeomorphic to $\sr L_{\bar\ka_1}^{\bar\ka_2}$ whenever $\rho_1-\rho_2=\bar\rho_1-\bar\rho_2$ ($\bar\rho_i=\arccot \bar\ka_i$) are also presented. 

}

\titulo{Homotopies of Curves on the 2-Sphere with Geodesic Curvature in a Prescribed Interval}
\abstractbr{
Para $-\infty\leq \ka_1<\ka_2\leq +\infty$, seja $\sr L_{\ka_1}^{\ka_2}$ o conjunto de todas as curvas fechadas de classe $C^r$ na esfera $\Ss^2$ cujas curvaturas geod\'esicas est\~ao restritas ao intervalo $(\ka_1,\ka_2)$, munido da topologia $C^r$ (para algum $r\geq 2$). Em 1970, J.~Little provou que o espa\c co $\sr L_{0}^{+\infty}$ de curvas fechadas com curvatura geod\'esica positiva possui tr\^es componentes conexas. Sejam $\rho_i=\arccot \ka_i$ ($i=1,2$). Nesta tese, mostramos que $\sr L_{\ka_1}^{\ka_2}$ possui $n$ componentes conexas $\sr L_1,\dots,\sr L_n$, onde
\begin{equation*}
	n=\left\lfloor{\frac{\pi}{\rho_1-\rho_2}}\right\rfloor+1
\end{equation*} 
e  $\sr L_j$ cont\'em c\'irculos percorridos $j$ vezes ($1\leq j\leq n$). A componente $\sr L_{n-1}$ tamb\'em cont\'em c\'irculos percorridos $(n-1)+2k$ vezes, e $\sr L_n$ cont\'em c\'irculos percorridos $n+2k$ vezes, para qualquer $k\in \N$. Al\'em disto, $\sr L_1,\dots,\sr L_{n-2}$ s\~ao todos homotopicamente equivalentes a $\SO_3$ ($n\geq 3$). Tamb\'em s\~ao exibidas uma caracteriza\c c\~ao das componentes em termos das propriedades de uma curva e uma prova de que  $\sr L_{\ka_1}^{\ka_2}$ \'e homeomorfo a $\sr L_{\bar\ka_1}^{\bar\ka_2}$ se $\rho_1-\rho_2=\bar\rho_1-\bar\rho_2$ ($\bar\rho_i=\arccot \bar\ka_i$).  
}


\tablesmode{none} 


\begin{document}

\chapter{Introduction}\label{S:introduction}

\subsection*{History of the problem} Consider the set $\sr W$ of all $C^r$ regular closed curves in the plane $\R^2$ (i.e., $C^r$ immersions $\Ss^1\to \R^2$), furnished with the $C^r$ topology ($r\geq 1$). The Whitney-Graustein theorem (\cite{WhiGra}, thm.~1) states that two such curves are homotopic through regular closed curves if and only if they have the same rotation number (where the latter is the number of full turns of the tangent vector to the curve).\footnote{Numbers enclosed in brackets refer to works listed in the bibliography at the end.} Thus, the space $\sr W$ has an infinite number of connected components $\sr W_n$, one for each rotation number $n\in \Z$. A typical element of $\sr W_n$ ($n\neq 0$) is a circle traversed $\abs{n}$ times, with the direction depending on the sign of $n$; $\sr W_0$ contains a figure eight curve.

For curves on the unit sphere $\Ss^2\subs \R^3$, there is no natural notion of rotation number. Indeed, the corresponding space $\sr I$ of $C^r$ immersions $\Ss^1\to \Ss^2$ (i.e., regular closed curves on $\Ss^2$) has only two connected components $\sr I_+$ and $\sr I_-$; this is an immediate consequence of a much more general result of S.~Smale (\cite{Sma56}, thm.~A). The component $\sr I_-$ contains all circles traversed an odd number of times, and the component $\sr I_+$ contains all circles traversed an even number of times. Actually, the Hirsch-Smale theorem implies that $\sr I_{\pm}\iso \SO_3\times \Om\Ss^3$,  where $\Om\Ss^3$ denotes the set of all continuous closed curves on $\Ss^3$, with the compact-open topology; the properties of the latter space are well understood (see \cite{BottTu}, \S 16).\footnote{The notation $X\iso Y$ (resp.~$X\home Y$) means that $X$ is homotopy equivalent (resp.~homeomorphic) to $Y$.}

 In 1970, J.~A.~Little formulated and solved the following problem: Let $\sr L$ denote the set of all $C^2$ closed curves on $\Ss^2$ which have nonvanishing geodesic curvature, with the $C^2$ topology; what are the connected components of $\sr L$?  Although his motivation to investigate $\sr L$ appears to have been  purely geometric, this space arises naturally in the study of a certain class of linear ordinary differential equations (see \cite{Sal3} for a discussion of this class and further references).

Little was able to show (see \cite{Little}, thm.~1) that $\sr L$ has six connected components, $\sr L_{\pm 1}$, $\sr L_{\pm 2}$ and $\sr L_{\pm 3}$, where the sign indicates the sign of the geodesic curvature of a curve in the corresponding component. A homeomorphism between $\sr L_i$ and $\sr L_{-i}$ is obtained by reversing the orientation of the curves in $\sr L_i$.
 \begin{figure}[ht]
	\begin{center}
		\includegraphics[scale=.33]{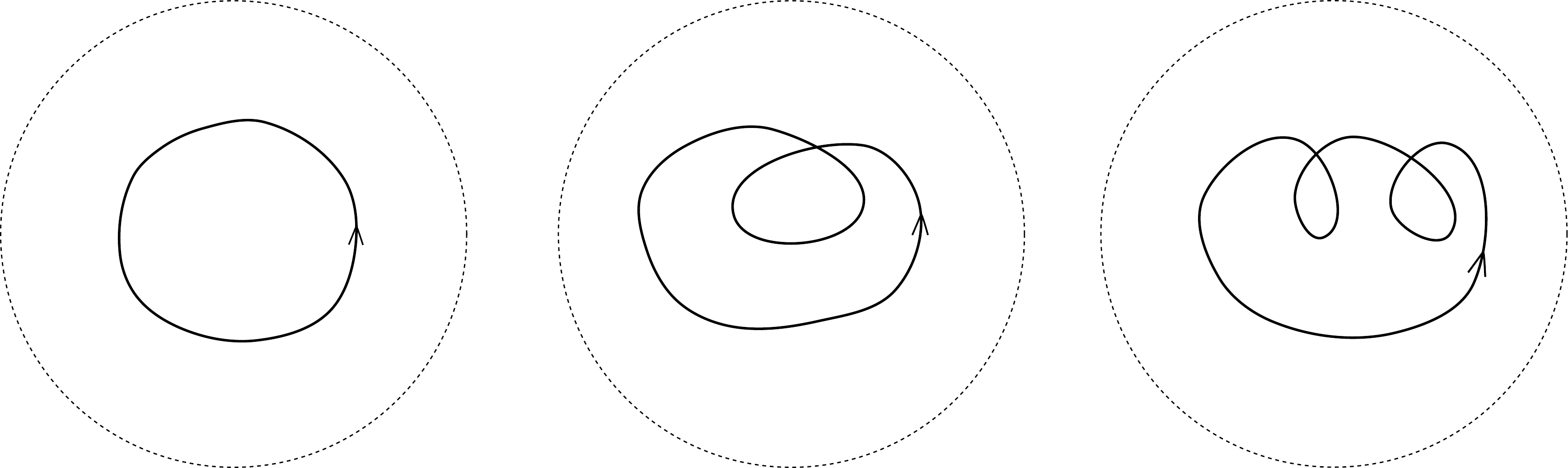}
		\caption{The curves depicted above provide representatives of the components $\sr L_{1}$, $\sr L_{2}$ and $\sr L_{3}$, respectively. All three are contained in the upper hemisphere of $\Ss^2$; the dashed line represents the equator seen from above.}
		\label{F:Little}
	\end{center}
\end{figure}

 
The topology of the space $\sr L$ has been investigated by quite a few other people since Little. We mention here only B.~Khesin, B.~Shapiro and M.~Shapiro, who studied $\sr L$ and similar spaces in the 1990's (cf.~\cite{KheSha}, \cite{KheSha2}, \cite{ShaSha} and \cite{ShaTop}). They showed that $\sr L_{\pm1}$  are homotopy equivalent to $\SO_3$, and also determined the number of connected components of the spaces analogous to $\sr L$ in $\R^n$, $\Ss^n$ and $\RP^n$, for arbitrary $n$.

The first pieces of information about the homotopy and cohomology groups $\pi_k(\sr L)$ and $H^k(\sr L)$ for $k\geq 1$ were, however, only obtained a decade later by N.~C.~Saldanha in \cite{Sal1} and \cite{Sal2}. Finally, in the recent work \cite{Sal3},  Saldanha gave a complete description of the homotopy type of $\sr L$ and other closely related spaces of curves on $\Ss^2$. He proved in particular that
\begin{alignat*}{9}
	\sr L_{\pm 2}&\iso \SO_3\times \big(\Om\Ss^3\vee \Ss^2\vee \Ss^6\vee \Ss^{10}\vee\dots\big) \text{\quad and \quad}
	\\\sr L_{\pm 3} &\iso \SO_3\times \big(\Om\Ss^3\vee \Ss^4\vee \Ss^8\vee \Ss^{12}\vee\dots\big).
\end{alignat*}

The reason for the appearance of an $\SO_3$ factor in all of these results is that (unlike Saldanha, cf.~\cite{Sal3}) we have not chosen a basepoint for the unit tangent bundle $UT\Ss^2\home \SO_3$; a careful discussion of this is given in \S 1. 



\subsection*{Overview of this work} The main purpose of this thesis is to generalize Little's theorem to other spaces of closed curves on $\Ss^2$. Let $-\infty\leq\ka_1<\ka_2\leq+\infty$ be given and let $\sr L_{\ka_1}^{\ka_2}$ be the set of all $C^r$ closed curves on $\Ss^2$ whose geodesic curvatures are restricted to lie in the interval $(\ka_1,\ka_2)$, furnished with the $C^r$ topology (for some $r\geq 2$); in this notation, the spaces $\sr L$ and $\sr I$ discussed above become $\sr L_{-\infty}^0 \du \sr L_0^{+\infty}$ and $\sr L_{-\infty}^{+\infty}$, respectively. We present a direct characterization of the connected components of $\sr L_{\ka_1}^{\ka_2}$ in terms of the pair $\ka_1<\ka_2$ and of the properties of curves in $\sr L_{\ka_1}^{\ka_2}$. It is shown in particular that the number of components is always finite, and a simple formula for it in terms of $\ka_1$ and $\ka_2$ is deduced.  

More precisely, let $\rho_i=\arccot(\ka_i)$, $i=1,2$, where we adopt the convention that $\arccot$ takes values in $[0,\pi]$, with $\arccot(+\infty)=0$ and $\arccot(-\infty)=\pi$. Also,  let $\lfloor x\rfloor $ denote the greatest integer smaller than or equal to $x$. Then $\sr L_{\ka_1}^{\ka_2}$ has $n$ connected components $\sr L_1,\dots,\sr L_n$, where
\begin{equation*}
	n=\left\lfloor{\frac{\pi}{\rho_1-\rho_2}}\right\rfloor+1
\end{equation*} 
and  $\sr L_j$ contains circles traversed $j$ times ($1\leq j\leq n$). The component $\sr L_{n-1}$ also contains circles traversed $(n-1)+2k$ times, and $\sr L_n$ contains circles traversed $n+2k$ times, for $k\in \N$. In addition, it will be seen that each of $\sr L_1,\dots,\sr L_{n-2}$ is homotopy equivalent to $\SO_3$ ($n\geq 3$).

This result could be considered a first step towards the determination of the homotopy type of $\sr L_{\ka_1}^{\ka_2}$ in terms of $\ka_1$ and $\ka_2$. In this context, it is natural to ask whether the inclusion $\sr L_{\ka_1}^{\ka_2}\inc \sr L_{-\infty}^{+\infty}=\sr I$ is a homotopy equivalence; as we have already mentioned, the topology of the latter space is well understood. It will be shown that the answer is negative when $\rho_1-\rho_2\leq \frac{2\pi}{3}$. We expect this to be false except when $\ka_1=-\infty$ and $\ka_2=+\infty$.  Actually, we conjecture that $\sr L_{\ka_1}^{\ka_2}$ and $\sr L_{\bar\ka_1}^{\bar\ka_2}$ have different homotopy types if and only if $\rho_1-\rho_2\neq \bar\rho_1-\bar\rho_2$,  but here it will only be proved that $\sr L_{\ka_1}^{\ka_2}$ is homeomorphic to $\sr L_{\bar\ka_1}^{\bar\ka_2}$ if $\rho_1-\rho_2=\bar\rho_1-\bar\rho_2$ ($\rho_i=\arccot \ka_i$ and $\bar\rho_i=\arccot \bar\ka_i$).

\subsection*{Brief outline of the sections} It turns out that it is more convenient, but not essential, to work with curves which need not be $C^2$. The curves that we consider possess continuously varying unit tangent vectors at all points, but their geodesic curvatures are defined only almost everywhere. This class of curves is described in \S 1, where we also relate the resulting spaces of curves to the more familiar spaces of $C^r$ curves. In this section we take the first steps toward the main theorem by proving that the topology of $\sr L_{\ka_1}^{\ka_2}$ depends only on $\rho_1-\rho_2$. A corollary of this result is that any space $\sr L_{\ka_1}^{\ka_2}$ is homeomorphic to a space of type $\sr L_{\ka_0}^{+\infty}$; the latter class is usually more convenient to work with. Some variations of our definition are also investigated. In particular, in this section we consider spaces of non-closed curves.

In \S 2, we study curves which have image contained in a hemisphere. Almost all of this section is dedicated to proof that it is possible to assign to each such curve a distinguished hemisphere $h_\ga$ containing its image, in such a way that $h_\ga$ depends continuously on $\ga$.

The main tools in the thesis are introduced in \S3. Given a curve $\ga$, we assign to $\ga$ certain maps $B_\ga$ and $C_\ga$, called the regular and caustic bands spanned by $\ga$, respectively. These are ``fat'' versions of the curve, and each of them carries in geometric form important information on the curve. We separate our curves into two main classes, called condensed and diffuse, depending on the properties of its caustic band. This distinction is essential throughout the work.

In \S 4, the grafting construction is explained. If the curve is diffuse, then we can use grafting to deform it into a circle traversed a certain number of times, which is the canonical curve in our spaces.  We reach the same conclusion for condensed curves, using very different methods, in \S 5, where a notion of rotation numbers for curves of this type is also introduced. Although there exist curves which are neither condensed nor diffuse, any such curve is homotopic to a curve of one of these two types. The main results used to establish this are presented in $\S 6$. 

 In \S 7, we decide when it is possible to deform a circle traversed $k$ times into a circle traversed $k+2$ times in $\sr L_{\ka_0}^{+\infty}$. It is seen that this is possible if and only if $k\geq n-1=\fl{\frac{\pi}{\rho_0}}$ (where $\rho_0=\arccot \ka_0$), and an explicit homotopy when this is the case is presented. It is also shown that the set of condensed curves in $\sr L_{\ka_0}^{+\infty}$ with fixed rotation number $k< n-1$ is a connected component of this space.
 
The proofs of the main theorems are given in \S 8, after most of the work has been done. A direct characterization of the components of $\sr L_{\ka_0}^{+\infty}$ $(\ka_0\in \R)$ in terms of the properties of a curve is presented at the end of this section.

The last section is dedicated to the proof that the inclusion $\sr L_{\ka_1}^{\ka_2}\inc \sr L_{-\infty}^{+\infty}=\sr I$ is not a (weak) homotopy equivalence if $\rho_1-\rho_2\leq \frac{2\pi}{3}$

Finally, we present in an appendix some basic results on convexity in $\Ss^n$ that are used throughout the thesis. Although none of these results is new, complete proofs are given.


\vfill\eject
\chapter{Spaces of Curves of Bounded Geodesic Curvature}\label{S:belowandabove}

\subsection*{Basic definitions and notation}Let $M$ denote either the euclidean space $\R^{n+1}$ or the unit sphere $\Ss^n\subs \R^{n+1}$, for some $n\geq 1$. By a \tdef{curve} $\ga$ in $M$ we mean a continuous map $\ga\colon [a,b]\to M$. A curve will be called \tdef{regular} when it has a continuous and nonvanishing derivative; in other words, a regular curve is a $C^1$ immersion of $[a,b]$ into $M$. For simplicity, the interval where $\ga$ is defined will usually be $[0,1]$.

Let $\ga\colon [0,1]\to \Ss^2$ be a regular curve and let $\abs{\ }$ denote the usual Euclidean norm. The \tdef{arc-length parameter} $s$ of $\ga$ is defined by
\begin{equation*}
s(t)=\int_0^t \abs{\dot{\ga}(t)}\,dt,
\end{equation*}
and $L=\int_0^1 \abs{\dot \ga(t)}\,dt$ is called the \tdef{length} of $\ga$. Since $\dot{s}(t)>0$ for all $t$, $s$ is an invertible function, and we may parametrize $\ga$ by $s\in [0,L]$. Derivatives with respect to $t$ and $s$ will be systematically denoted by a $\dot{\vphantom{\ga}}$ and a $\vphantom{\ga}'$, respectively; this convention extends, of course, to higher-order derivatives as well. 


Up to homotopy, we can always assume that a family of curves is parametrized proportionally to arc-length.
\begin{lemmaa}\label{L:reparametrize}
	Let $A$ be a topological space and let $a\mapsto \ga_a$ be a continuous map from $A$ to the set of all $C^r$ regular curves $\ga\colon [0,1]\to M$ \tup($r\geq 1$\tup) with the $C^r$ topology. Then there exists a homotopy $\ga_a^u\colon [0,1] \to M$, $u\in [0,1]$, such that for any $a\in A$:
	\begin{enumerate}
		\item [(i)] $\ga_a^0=\ga_a$ and $\ga_a^1$ is parametrized so that $\vert\dot{\ga}_a^1(t)\vert$ is independent of $t$.
		\item [(ii)]  $\ga_a^u$ is an orientation-preserving reparametrization of $\ga_a$, for all $u\in [0,1]$. 
	\end{enumerate} 
\end{lemmaa}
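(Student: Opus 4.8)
The plan is to build the desired reparametrization explicitly from the arc-length functions and then interpolate linearly between the given parametrization and the proportional-to-arc-length one. For each $a \in A$, let $L_a = \int_0^1 |\dot\ga_a(t)|\,dt$ denote the length of $\ga_a$ and let $s_a\colon [0,1]\to [0,L_a]$ be its arc-length function, $s_a(t) = \int_0^t |\dot\ga_a(\tau)|\,d\tau$. Since $\ga_a$ is regular, $s_a$ is a $C^r$ diffeomorphism onto $[0,L_a]$ (it is strictly increasing because $\dot s_a > 0$), so it has a $C^r$ inverse. Normalize to the unit interval by setting $\sigma_a = \frac{1}{L_a} s_a\colon [0,1]\to[0,1]$, a $C^r$ orientation-preserving diffeomorphism of $[0,1]$ fixing the endpoints, and let $\sigma_a^{-1}$ be its inverse. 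Then $\ga_a \circ \sigma_a^{-1}$ is parametrized proportionally to arc-length: indeed $|\frac{d}{dt}(\ga_a\circ\sigma_a^{-1})(t)| = L_a$ for all $t$ by the chain rule and the definition of $\sigma_a$.

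Next I would define the homotopy. For $u\in[0,1]$ set
\begin{equation*}
	\varphi_a^u(t) = (1-u)\,t + u\,\sigma_a^{-1}(t),
\end{equation*}
which for each fixed $u$ is a convex combination of two orientation-preserving diffeomorphisms of $[0,1]$ fixing $0$ and $1$; hence $\varphi_a^u$ is itself such a diffeomorphism (its derivative $(1-u) + u\,(\sigma_a^{-1})'(t)$ is a convex combination of positive numbers, so strictly positive, and it fixes the endpoints). Define $\ga_a^u = \ga_a \circ \varphi_a^u$. Then $\ga_a^0 = \ga_a \circ \id = \ga_a$, and $\ga_a^1 = \ga_a\circ\sigma_a^{-1}$ has $|\dot\ga_a^1(t)| \equiv L_a$ independent of $t$, giving (i); and each $\ga_a^u$ is an orientation-preserving reparametrization of $\ga_a$, giving (ii).

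It remains to check continuity of $(a,u)\mapsto\ga_a^u$ into the $C^r$ curves with the $C^r$ topology. The only subtle point — and the main technical obstacle — is that passing to the inverse function $\sigma_a^{-1}$ and then composing must be shown to be continuous in the $C^r$ sense, uniformly as $a$ varies. Here I would argue as follows: $a\mapsto \dot\ga_a$ is continuous in $C^{r-1}$, so $a\mapsto s_a$ is continuous in $C^r$ and $a\mapsto L_a$ is continuous and bounded below by a positive constant on a neighborhood of any given $a_0$ (by compactness one reduces to a neighborhood, or one simply notes regularity is an open condition); hence $a\mapsto \sigma_a$ is continuous in $C^r$ with derivative bounded away from $0$. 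The inversion map on the set of $C^r$ diffeomorphisms of $[0,1]$ with derivative bounded below by a fixed $\epsilon>0$ is continuous in the $C^r$ topology — this follows by differentiating the identity $\sigma_a(\sigma_a^{-1}(t)) = t$ to express the derivatives of $\sigma_a^{-1}$ up to order $r$ as continuous functions (rational expressions with nonvanishing denominators) of the derivatives of $\sigma_a$ evaluated at $\sigma_a^{-1}$, combined with the continuity of composition $C^r\times C^r\to C^r$ on suitable subsets. Finally $(a,u)\mapsto\varphi_a^u$ is manifestly continuous in $C^r$, and composition $\ga_a\circ\varphi_a^u$ is continuous $C^r\times C^r\to C^r$; chaining these yields the claim. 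I expect the write-up of this last paragraph to be the bulk of the work, though all of it is routine; everything else is immediate from the explicit formulas.
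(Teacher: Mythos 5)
Your construction is exactly the paper's: the paper sets $\ga_a^u(t)=\ga_a\big((1-u)t+u\tau_a(L_at)\big)$ with $\tau_a=s_a^{-1}$, and your $\sigma_a^{-1}(t)$ equals $\tau_a(L_at)$, so the formulas coincide. Your additional discussion of $C^r$-continuity in $(a,u)$ is correct and in fact more careful than the paper, which simply asserts that this is the desired homotopy.
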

\begin{proof}
	Let $s_a(t)=\int_0^t\abs{\dot\ga_a(\tau)}\,d\tau$ be the arc-length parameter of $\ga_a$, $L_a$ its length and $\tau_a\colon [0,L_a]\to [0,1]$ the inverse function of $s_a$. Define $\ga_a^u\colon [0,1]\to M$ by:
	\begin{equation*}
\qquad		\ga_a^u(t)=\ga_a\big((1-u)t+u\tau_a(L_at)\big) \qquad (u,t\in [0,1],~a\in A).
	\end{equation*}
Then $\ga_a^u$ is the desired homotopy.
\end{proof}

The unit tangent vector to $\ga$ at $\ga(t)$ will always be denoted by $\ta(t)$. 
Set $M=\Ss^2$ for the rest of this section, and define the \tit{unit normal vector} $\no$ to $\ga$ by 
\begin{equation*}
	\no(t)=\ga(t)\times \ta(t),
\end{equation*} where $\times$ denotes the vector product in $\R^3$. Equivalently, $\no(t)$ is the unique vector which makes $\big(\ga(t),\ta(t),\no(t)\big)$ a positively oriented orthonormal basis of $\R^3$. 

Assume now that $\ga$ has a second derivative. By definition, the \tdef{geodesic curvature} $\ka(s)$ at $\ga(s)$ is given by 
\begin{equation}\label{E:geodesiccurvature}
\ka(s)=\gen{\ta'(s),\no(s)}.
\end{equation}
Note that the geodesic curvature is not altered by an orientation-preserving reparametrization of the curve, but its sign is changed if we use an orientation-reversing reparametrization. 
Since the sectional curvatures of the sphere are all equal to 1, the normal curvature of $\ga$ is 1 at each point. In particular, its \tdef{Euclidean curvature} $K$,  
\[
K(s)=\sqrt{1+\ka(s)^2},
\] never vanishes. 

Closely related to the geodesic curvature of a curve $\ga\colon [0,1]\to \Ss^2$ is the \tdef{radius of curvature} $\rho(t)$ of $\ga$ at $\ga(t)$, which we define as the unique number in $(0,\pi)$ satisfying
\begin{equation*}
\cot \rho(t)=\ka(t).
\end{equation*}
Note that the sign of $\ka(t)$ is equal to the sign of  $\frac{\pi}{2}-\rho(t)$. 

\begin{exm}
	A parallel circle of colatitude $\al$, for $0<\al<\pi$, has geodesic curvature $\pm \cot \al$ (the sign depends on the orientation), and radius of curvature $\al$ or $\pi-\al$ at each point. (Recall that the colatitude of a point measures its distance from the north pole along $\Ss^2$.) The radius of curvature $\rho(t)$  of an arbitrary curve $\ga$ gives the size of the radius of the osculating circle to $\ga$ at $\ga(t)$, measured along $\Ss^2$ and taking the orientation of $\ga$ into account.
\end{exm} 
\begin{figure}[ht]
	\begin{center}
		\includegraphics[scale=.33]{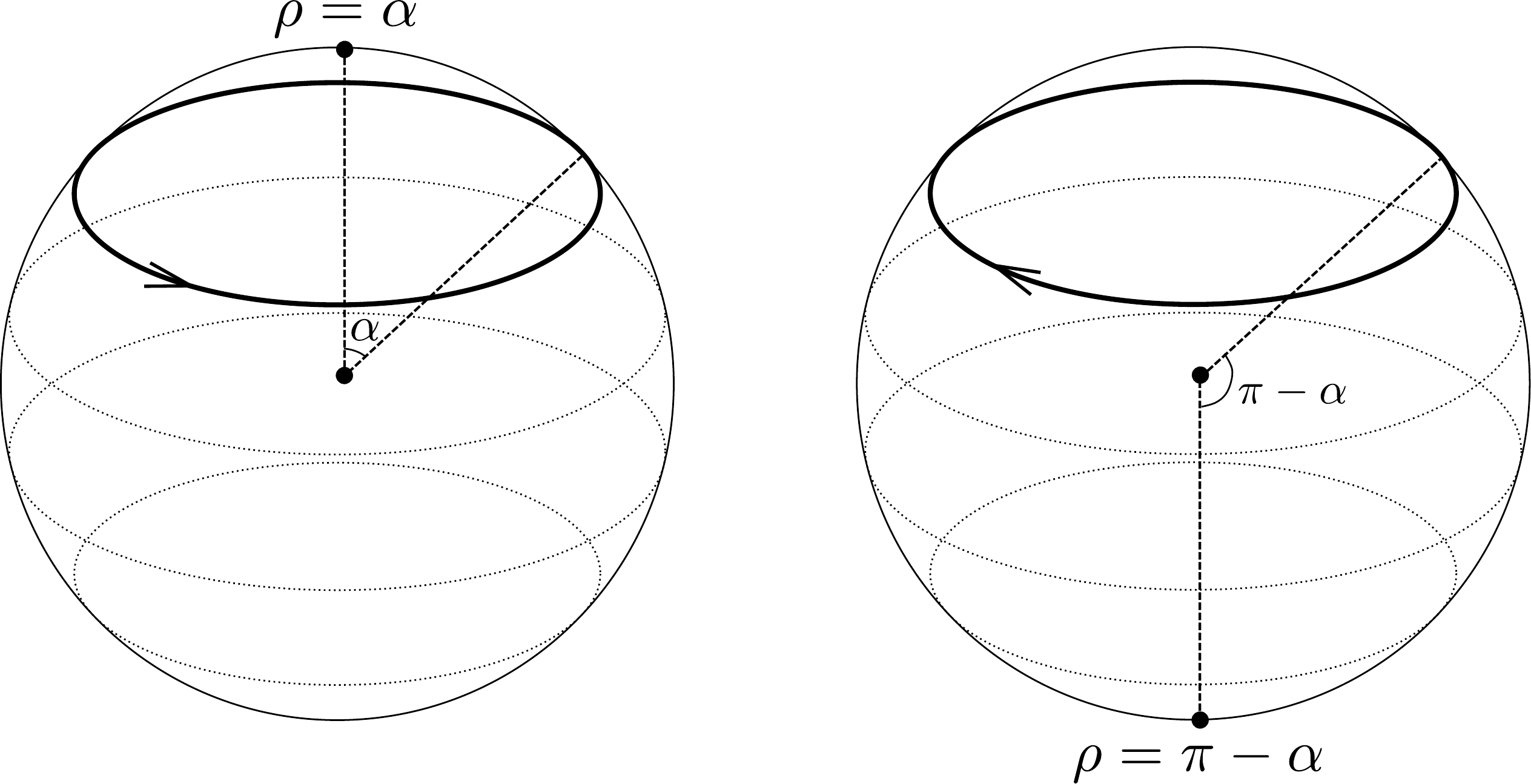}
		\caption{A parallel circle of colatitude $\al$ has radius of curvature $\al$ or $\pi-\al$, depending on its orientation. In the first figure the center of the circle on $\Ss^2$ is taken to be the north pole, and in the second, the south pole.}
		\label{F:parallel}
	\end{center}
\end{figure}

If we consider $\ga$ as a curve in $\R^3$, then its ``usual'' radius of curvature $R$ is defined by $R(t)=\frac{1}{K(t)}=\sin \rho(t)$. We will rarely mention $R$ or $K$ again, preferring instead to work with $\rho$ and $\ka$, which are their natural intrinsic analogues in the sphere.

\subsection*{Spaces of curves}

Given $p\in \Ss^2$ and $v\in T_p\Ss^2$ of norm 1, there exists a unique $Q\in \SO_3$ having $p\in \R^3$ as first column and $v\in \R^3$ as second column. We obtain thus a diffeomorphism between $\SO_3$ and the unit tangent bundle $UT\Ss^2$ of $\Ss^2$. 

\begin{defna}\label{D:frame}For a regular curve $\ga\colon [0,1]\to \Ss^2$, its \tdef{frame} $\Phi_\ga\colon [0,1]\to \SO_3$ is the map given by
\begin{equation*}\label{E:frame}
\Phi_\ga(t)= \begin{pmatrix}
| & |& | \\
\ga(t) & \ta(t) & \no(t) \\
| & | & |
\end{pmatrix}.\footnote{In the works of Saldanha this is denoted by $\fr F_\ga$ and called the \tdef{Frenet frame} of $\ga$. We will not use this terminology to avoid any confusion with the usual Frenet frame of $\ga$ when it is considered as a curve in $\R^3$.} 
\end{equation*}
In other words, $\Phi_\ga$ is the curve in $UT\Ss^2$ associated with $\ga$, under the identification of $UT\Ss^2$ with $\SO_3$. We emphasize that it is not necessary that $\ga$ have a second derivative for $\Phi_\ga$ to be defined.
\end{defna}

Now let $-\infty\leq \ka_1<\ka_2\leq +\infty$ and $Q\in \SO_3$. We would like to study the space $\sr L_{\ka_1}^{\ka_2}(Q)$ of all regular curves $\ga\colon [0,1]\to \Ss^2$ satisfying:
\begin{enumerate}\label{D:conditions}
\item [(i)] $\Phi_\ga(0)=I$ and $\Phi_\ga(1)=Q$;
\item [(ii)] $\ka_1<\ka(t)<\ka_2$ for each $t\in [0,1]$.
\end{enumerate}
Here $I$ is the $3\!\times\!3$ identity matrix and $\ka$ is the geodesic curvature of $\ga$. Condition (i) says that $\ga$ starts at $e_1$ in the direction $e_2$ and ends at $Qe_1$ in the direction $Qe_2$. 

This definition is incomplete because we have not described the topology of $\sr L_{\ka_1}^{\ka_2}(Q)$, nor explained what is meant by the geodesic curvature of a regular curve (which need not have a second derivative, according to our definition). The most natural choice would be to require that the curves in this space be of class $C^2$, and to give it the $C^2$ topology. The foremost reason why we will not follow this course is that we would like to be able to perform some constructions which yield curves that are not $C^2$. For instance, we may wish to construct a curve $\ga$ of positive geodesic curvature by concatenating two arcs of circles $\sig_1$ and $\sig_2$ of different radii, as in fig.~{\ref{F:circles}} below. Even though the resulting curve is regular, it is not possible to assign any meaningful value to the curvature of $\ga$ at $p$. However, we may approximate $\ga$ as well as we like by a smooth curve which does have everywhere positive geodesic curvature. We shall adopt a more complicated definition precisely in order to avoid using convolutions or other tools all the time to smoothen such a curve.

\begin{figure}[ht]
	\begin{center}
		\includegraphics[scale=.36]{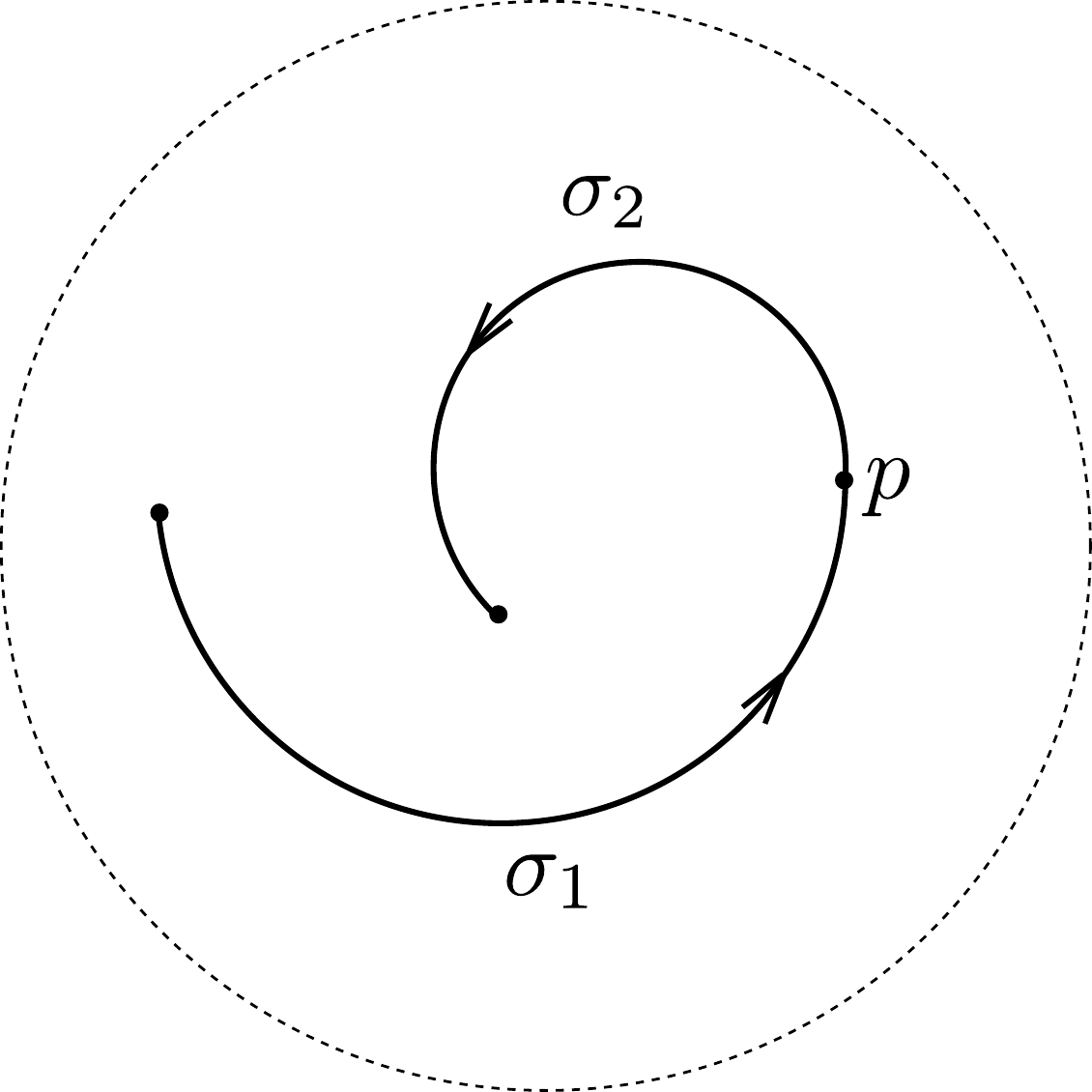}
		\caption{A curve on $\Ss^2$ obtained by concatenation of arcs of circles of different radii. The dashed line represents the equator.}
		\label{F:circles}
	\end{center}
\end{figure}

\begin{defna}
A function $f\colon [a,b]\to \R$ is said to be of class $H^1$ if it is an indefinite integral of some $g\in L^2[a,b]$. We extend this definition to maps $F\colon [a,b]\to \R^n$ by saying that $F$ is of class $H^1$ if and only if each of its component functions is of class $H^1$. 
\end{defna}
Since $L^2[a,b]\subs L^1[a,b]$, an $H^1$ function is absolutely continuous (and differentiable almost everywhere). 

We shall now present an explicit description of a topology on $\sr L_{\ka_1}^{\ka_2}(Q)$ which turns it into a Hilbert manifold. The definition is unfortunately not very natural. However, we shall prove the following two results relating this space to more familiar concepts: First, for any $r\in \N$, $r\geq 2$, the subset of $\sr L_{\ka_1}^{\ka_2}(Q)$ consisting of $C^r$ curves will be shown to be dense in $\sr L_{\ka_1}^{\ka_2}(Q)$. Second, we will see that the space of $C^r$ regular curves satisfying conditions (i) and (ii) above, with the $C^r$ topology, is (weakly) homotopy equivalent to $\sr L_{\ka_1}^{\ka_2}(Q)$.\footnote{The definitions given here are straightforward adaptations of the ones in \cite{SalSha}, where they are used to study spaces of locally convex curves in $\Ss^n$ (which correspond to the spaces $\sr L_{0}^{+\infty}(Q)$ when $n=2$).}

Consider first a smooth regular curve $\ga\colon [0,1]\to \Ss^2$. From the definition of $\Phi_\ga$ we deduce that
\begin{equation}\label{E:frenet1}
	\dot{\Phi}_\ga(t)=\Phi_\ga(t)\La(t), \text{\ \ where\ \ }\La(t)=\begin{pmatrix}
		 0  & -\abs{\dot{\ga}(t)} & 0  \\
		 \abs{\dot{\ga}(t)}  & 0 & -\abs{\dot{\ga}(t)}\ka(t) \\
		    0 & \abs{\dot{\ga}(t)}\?\ka(t) & 0
	\end{pmatrix}\in \fr{so}_3
\end{equation}
is called the \tdef{logarithmic derivative} of $\Phi_\ga$ and $\ka$ is the geodesic curvature of $\ga$. 

Conversely, given $Q_0\in \SO_3$ and a smooth map $\La\colon [0,1]\to \fr{so}_3$ of the form
\begin{equation}\label{E:frenet2}
	\La(t)=\begin{pmatrix}
		0   & -v(t) & 0  \\
		 v(t)  &   0 & -w(t) \\
		 0  & w(t)  & 0
	\end{pmatrix},
\end{equation} let $\Phi\colon [0,1]\to \SO_3$ be the unique solution to the initial value problem 
\begin{equation}\label{E:ivp}
	\dot{\Phi}(t)=\Phi(t)\La(t),\quad \Phi(0)=Q_0.
\end{equation}
Define $\ga\colon [0,1]\to \Ss^2$ to be the smooth curve given by $\ga(t)=\Phi(t)(e_1)$. Then $\ga$ is regular if and only if $v(t)\neq 0$ for all $t\in [0,1]$, and it satisfies $\Phi_\ga=\Phi$ if and only if $v(t)>0$ for all $t$. (If $v(t)<0$ for all $t$ then $\ga$ is regular, but $\Phi_\ga$ is obtained from $\Phi$ by changing the sign of the entries in the second and third columns.) 

Equation \eqref{E:ivp} still has a unique solution if we only require that $v,w\in L^2[0,1]$ (cf.~\cite{CodLev}, p.~67). With this in mind, let $\E=L^2[0,1]\times L^2[0,1]$ and let $h\colon (0,+\infty)\to \R$ be the smooth diffeomorphism
\begin{equation}\label{E:thereason}
	h(t)=t-t^{-1}.
\end{equation} 
For each pair $\ka_1<\ka_2\in \R$, let $h_{\ka_1,\,\ka_2} \colon (\ka_1,\ka_2)\to \R$ be the smooth diffeomorphism
\begin{equation*}
	h_{\ka_1,\,\ka_2}(t)=(\ka_1-t)^{-1}+(\ka_2-t)^{-1}
\end{equation*}
and, similarly, set
\begin{alignat*}{10}
&h_{-\infty,+\infty}\colon \R\to \R\qquad  & &h_{-\infty,+\infty}(t)=t \\
 &h_{-\infty,\ka_2}\colon (-\infty,\ka_2)\to \R\qquad &  &h_{-\infty,\ka_2}(t)=t+(\ka_2-t)^{-1} \\
 &h_{\ka_1,+\infty}\colon (\ka_1,+\infty)\to \R \qquad & &h_{\ka_1,+\infty}(t)=t+(\ka_1-t)^{-1}.
\end{alignat*}

\begin{defna}
Let $\ka_1,\ka_2$ satisfy $-\infty\leq \ka_1<\ka_2\leq +\infty$. A curve $\ga\colon [0,1]\to \Ss^2$ will be called $(\ka_1,\ka_2)$-\tdef{admissible} if there exist $Q_0\in \SO_3$ and a pair $(\hat v,\hat w)\in \E$ such that $\ga(t)=\Phi(t)\,e_1$ for all $t\in [0,1]$, where $\Phi$ is the unique solution to equation \eqref{E:ivp}, with $v,w$ given by 
\begin{equation}\label{E:Sobolev}
	v(t)=h^{-1}(\hat{v}(t)),\text{\quad }w(t)=v(t)\?h^{-1}_{\ka_1,\,\ka_2}(\hat{w}(t)).
\end{equation}
When it is not important to keep track of the bounds $\ka_1,\ka_2$, we shall say more simply that $\ga$ is \tdef{admissible}.
\end{defna}

 In vague but more suggestive language, an admissible curve $\ga$ is essentially an $H^1$ frame $\Phi\colon [0,1]\to \SO_3$ such that $\ga=\Phi e_1\colon [0,1]\to \Ss^2$ has geodesic curvature in the interval $(\ka_1,\ka_2)$. The unit tangent (resp.~normal) vector $\ta(t)=\Phi(t)e_2$ (resp.~$\no(t)=\Phi(t)e_3$) of $\ga$ is thus defined everywhere on $[0,1]$, and it is absolutely continuous as a function of $t$. The curve $\ga$ itself is, like $\Phi$, of class $H^1$. However, the coordinates of its velocity vector $\dot\ga(t)=v(t)\?\Phi(t)e_2$ lie in $L^2[0,1]$, so the latter is only defined almost everywhere. The geodesic curvature of $\ga$, which is also defined a.e., is given by 
\[
\ka(t)=\frac{1}{v(t)}\gen{\dot\ta(t),\no(t)}=h^{-1}_{\ka_1,\,\ka_2}(\hat{w}(t))\in (\ka_1,\ka_2)
\]
(cf.~\eqref{E:frenet1}, \eqref{E:frenet2} and \eqref{E:Sobolev}). 

\begin{urem}
	The reason for the choice of the specific diffeomorphism $h\colon (0,+\infty)\to \R$ in \eqref{E:thereason} (instead of, say, $h(t)=\log t$) is that we need $h^{-1}(t)$ to diverge linearly to $\pm \infty$ as $t\to 0,+\infty$ in order to guarantee that $v=h^{-1}\circ \hat{v}\in L^2[0,1]$ whenever $\hat{v}\in L^2[0,1]$. The reason for the choice of the other diffeomorphisms is analogous.
\end{urem}

\begin{defna}\label{D:Little0}
Let $-\infty\leq \ka_1<\ka_2\leq +\infty$, $Q_0\in \SO_3$. Define $\sr L_{\ka_1}^{\ka_2}(Q_0,\cdot)$ to be the set of all $(\ka_1,\ka_2)$-admissible curves $\ga$ such that
\[
\Phi_\ga(0)=Q_0,
\]
where $\Phi_\ga$ is the frame of $\ga$. This set is identified with $\E$ via the correspondence $\ga\leftrightarrow (\hat v,\hat w)$, and this defines a (trivial) Hilbert manifold structure on $\sr L_{\ka_1}^{\ka_2}(Q_0,\cdot)$.
\end{defna}

In particular, this space is contractible by definition. We are now ready to define the spaces $\sr L_{\ka_1}^{\ka_2}(Q)$, which constitute the main object of study of this work.

\begin{defna}\label{D:Little}
Let $-\infty\leq \ka_1<\ka_2\leq +\infty$, $Q\in \SO_3$. We define $\sr L_{\ka_1}^{\ka_2}(Q)$ to be the subspace of $\sr L_{\ka_1}^{\ka_2}(I,\cdot)$ consisting of all curves $\ga$ in the latter space satisfying 
\begin{equation*}
	\Phi_\ga(0)=I\text{\quad and \quad}\Phi_\ga(1)=Q.\tag{i}
\end{equation*}
Here $\Phi_\ga$ is the frame of $\ga$ and $I$ is the $3\!\times\!3$ identity matrix.\footnote{The letter `L' in $\sr L_{\ka_1}^{\ka_2}(Q)$ is a reference to John A.~Little, who determined the connected components of $\sr L_{0}^{+\infty}(I)$ in \cite{Little}.}
\end{defna}

Because $\SO_3$ has dimension 3, the condition $\Phi_\ga(1)=Q$ implies that $\sr L_{\ka_1}^{\ka_2}(Q)$ is a closed submanifold of codimension 3 in $\E\equiv \sr L_{\ka_1}^{\ka_2}(I,\cdot)$. (Here we are using the fact that the map which sends the pair $(\hat v,\hat w)\in \tbf{E}$ to $\Phi(1)$ is a submersion; a proof of this when $\ka_1=0$ and $\ka_2=+\infty$ can be found in $\S$\?3 of \cite{Sal3}, and the proof of the general case is analogous.) The space $\sr L_{\ka_1}^{\ka_2}(Q)$ consists of closed curves only when $Q=I$. Also, when $\ka_1=-\infty$ and $\ka_2=+\infty$ simultaneously, no restrictions are placed on the geodesic curvature. The resulting space (for arbitrary $Q\in \SO_3$) is known to be homotopy equivalent to $\Om\Ss^3\du \Om\Ss^3$; see the discussion after (\ref{L:bit}). 

Note that we have natural inclusions $\sr L_{\ka_1}^{\ka_2}(Q)\inc \sr L_{\bar\ka_1}^{\bar\ka_2}(Q)$ whenever $\bar\ka_1\leq \ka_1<\ka_2\leq \bar\ka_2$. More explicitly, this map is given by:
\begin{equation*}
	\ga\equiv(\hat v,\hat w)\mapsto \big(\hat v, h_{\bar \ka_1,\bar\ka_2}\circ h_{\ka_1,\ka_2}^{-1}(\hat w)\big);
\end{equation*}
it is easy to check that the actual curve associated with the pair of functions in $\sr L_{\bar\ka_1}^{\bar\ka_2}(Q)$ on the right side (via \eqref{E:frenet2}, \eqref{E:ivp} and \eqref{E:Sobolev}) is the original curve $\ga$, so that the use of the term ``inclusion'' is justified. In fact, this map is an embedding, so that $\sr L_{\ka_1}^{\ka_2}(Q)$ can be considered a subspace of $\sr L_{\bar\ka_1}^{\bar\ka_2}(Q)$ when $\bar\ka_1\leq \ka_1<\ka_2\leq \bar\ka_2$.


The next lemma contains all results on Hilbert manifolds that we shall use. 
\begin{lemmaa} Let $\sr M$ be a Hilbert manifold. Then:\label{L:Hilbert}
	\begin{enumerate}
		\item [(a)] $\sr M$ is locally path-connected. In particular, its connected components and path components coincide. 
		\item [(b)] If $\sr M$ is weakly contractible then it is contractible.\footnote{Recall that a map $f\colon X\to Y$ between topological spaces $X$ and $Y$ is said to be a weak homotopy equivalence if $f_*\colon \pi_n(X,x_0)\to \pi_n(Y,f(x_0))$ is an isomorphism for any $n\geq 0$ and $x_0\in X$. The space $X$ is said to be weakly contractible if it is weakly homotopy equivalent to a point, that is, if all of its homotopy groups are trivial.}
		\item [(c)] Assume that $0$ is a regular value of $F\colon \sr M\to \R^n$. Then $\sr P=F^{-1}(0)$ is a closed submanifold which has codimension $n$ and trivial normal bundle in $\sr M$. 
		\item [(d)] Let $\E$ and $\mathbf{F}$ be separable Banach spaces. Suppose $i\colon \mathbf{F}\to \E$ is a bounded, injective linear map with dense image and $M\subs \E$ is a smooth closed submanifold of finite codimension.  Then $N=i^{-1}(M)$ is a smooth closed submanifold of\, $\mathbf{F}$ and $i\colon (\mathbf{F},N)\to (\E,M)$ is a homotopy equivalence of pairs.
	\end{enumerate}
\end{lemmaa}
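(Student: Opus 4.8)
My plan is to dispose of (a)--(c) by appealing to standard facts about infinite-dimensional manifolds, and to concentrate the real work on (d). For (a): a Hilbert manifold is locally homeomorphic to an open subset of a Hilbert space, which is convex and hence locally path-connected; local path-connectedness is inherited by $\sr M$, and in a locally path-connected space the connected components coincide with the path components. For (b): a Hilbert manifold is an ANR (being metrizable and locally an open subset of a Hilbert space), so by Milnor's theorem it has the homotopy type of a CW complex; a weakly contractible space having the homotopy type of a CW complex is contractible by Whitehead's theorem, so $\sr M$ is contractible. For (c): this is the regular value theorem in the Hilbert setting, a consequence of the implicit function theorem --- near each point of $\sr P=F^{-1}(0)$ one finds a chart carrying $F$ to a linear projection onto $\R^n$, which exhibits $\sr P$ as a closed submanifold of codimension $n$, and the restriction of $dF$ to any topological complement of $T\sr P$ is an isomorphism onto $\R^n$, which provides a global trivialization of the normal bundle.

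For (d) I would first show that $N=i^{-1}(M)$ is a smooth closed submanifold of $\mathbf{F}$ of the same codimension $n$ as $M$, working locally. Write $M$ near one of its points as $G^{-1}(0)$ for a submersion $G\colon U\to\R^n$ on an open set $U\subseteq\E$; then $N\cap i^{-1}(U)=(G\circ i)^{-1}(0)$, and at a point $x$ of this set the differential $d(G\circ i)_x=dG_{i(x)}\circ i$ is onto $\R^n$, because $dG_{i(x)}$ is surjective and $i$ has dense image, so that $dG_{i(x)}(i(\mathbf{F}))$ is a linear subspace of $\R^n$ which is dense, hence all of $\R^n$. Its kernel, being of finite codimension in $\mathbf{F}$, is complemented; so $G\circ i$ is a submersion, $N$ is a closed submanifold of $\mathbf{F}$ of codimension $n$, and its normal bundle is the pullback under $i$ of the normal bundle of $M$.

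For the homotopy-equivalence-of-pairs statement I would proceed as follows. Since $\mathbf{F}$ and $\E$ are contractible, $i\colon\mathbf{F}\to\E$ is a homotopy equivalence; moreover $N\inc\mathbf{F}$ and $M\inc\E$ are closed submanifolds, hence cofibrations (they admit tubular neighbourhoods and so form NDR pairs). By the standard gluing argument for maps of cofibrations it then suffices to prove that $i|_N\colon N\to M$ is a homotopy equivalence. Since $N$ and $M$ are themselves Hilbert (or Banach) manifolds, they have the homotopy type of CW complexes, so by Whitehead's theorem it is enough to show that $i|_N$ is a weak homotopy equivalence. This reduces in turn to an approximation statement: given a finite CW complex $P$ with a subcomplex $P_0$, every continuous map $P\to M$ taking $P_0$ into $i(N)=i(\mathbf{F})\cap M$ can be deformed, rel $P_0$ and within $M$, to a map taking all of $P$ into $i(N)$ --- applied to $(P,P_0)=(\Ss^k,*)$ this gives surjectivity, and to $(P,P_0)=(D^{k+1},\Ss^k)$ injectivity, of $\pi_k(i|_N)$.

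The hard part will be precisely this approximation step: making rigorous, from the density of $i(\mathbf{F})$ in $\E$ together with the finite-codimension submanifold structure of $M$ --- for instance via a locally finite atlas of graph charts for $M$ over a complemented subspace of $\E$, with a subordinate partition of unity --- that maps of compact polyhedra into $M$, and homotopies between them, can be pushed into $i(N)$. The analogous statement for $H^1$ versus $C^0$ frames is established in \cite{SalSha}, and that argument transfers to the present situation with only cosmetic changes.
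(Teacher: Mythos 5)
Parts (a)--(c) are correct and essentially coincide with the paper's treatment: (a) is the standard local-convexity argument, and for (c) the paper uses exactly your idea, namely the implicit function theorem followed by the observation that $TF_p$ restricted to a complement of $T\sr P_p$ is an isomorphism onto $\R^n$, giving a global trivialization. For (b) the paper simply cites thm.~15 of Palais \cite{PalHom}; your route via the ANR property, Milnor's theorem on CW homotopy type, and Whitehead's theorem is a standard and equally valid alternative proof of the same fact.

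For (d) the situation is different: the paper does not prove this statement at all but quotes it as thm.~2 of Burghelea--Saldanha--Tomei \cite{bst}. Your local argument that $N=i^{-1}(M)$ is a closed submanifold of codimension $n$ is correct (the image of the dense subspace $i(\mathbf F)$ under the continuous surjection $dG_{i(x)}$ is a dense, hence full, subspace of $\R^n$, and the kernel is complemented by finite codimension), and the reduction of the homotopy equivalence of pairs to the weak homotopy equivalence of $i|_N\colon N\to M$ via cofibrations and Whitehead is legitimate. But the step you yourself identify as ``the hard part'' --- deforming maps of compact pairs $(P,P_0)\to (M,\?i(\mathbf F)\cap M)$ rel $P_0$ into $i(\mathbf F)\cap M$ --- is precisely the substance of the cited theorem, and asserting that an analogous argument ``transfers with only cosmetic changes'' does not discharge it. As a self-contained proof your (d) therefore has a genuine gap at exactly that approximation step; the intended proof here is simply the citation to \cite{bst}.
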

\begin{proof} Part (a) is obvious and part (b) is a special case of thm.~15 in \cite{PalHom}. The first assertion of part (c) is a consequence of the implicit function theorem (for Banach spaces). The triviality of the normal bundle can be proved as follows: Let $p\in \sr P$ and $N\sr P_p$ be the fiber over $p$ of the normal bundle $N\sr P$. Then 
\begin{equation*}
	T\sr M_p=T \sr P_p \ds N \sr P_p,
\end{equation*}
and $T\sr P_p$ lies in the kernel of the derivative $TF_p$ by hypothesis, as $F$ vanishes identically on $\sr P$. Since $TF_p$ is surjective and $\dim N \sr P_p=n$, $TF_p$ must be an isomorphism when restricted to $N\sr P_p$. This is valid for any $p\in \sr P$, so we can obtain a trivialization $\tau$ of $N\sr P$ by setting:
\begin{equation*}
\qquad	\tau(p,v)=\big((TF_p)|_{N\sr P_p}\big)^{-1}(v)\text{\quad}(p\in \sr P,~v\in \R^n).
\end{equation*}

Finally, part (d) is thm.~2 in \cite{bst}.
\end{proof}

\begin{lemmaa}\label{L:dense}
	Let $r\in \se{2,3,\dots,\infty}$. Then the subset of all $\ga\colon [0,1]\to \Ss^2$ of class $C^r$ is dense in $\sr L_{\ka_1}^{\ka_2}(Q)$.
\end{lemmaa}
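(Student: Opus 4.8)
The plan is to approximate an arbitrary admissible curve $\ga\equiv(\hat v,\hat w)\in\sr L_{\ka_1}^{\ka_2}(Q)$ by $C^r$ curves \emph{within the same space}, i.e.\ respecting the boundary condition (i) and the curvature bounds. The key observation is that $\sr L_{\ka_1}^{\ka_2}(Q)$ is, by Definition~\ref{D:Little}, the preimage $F^{-1}(0)$ of a submersion $F\colon\E\to\R^3$ (the map $(\hat v,\hat w)\mapsto\Phi(1)$ composed with a chart of $\SO_3$ near $Q$, as recalled in the paragraph after Definition~\ref{D:Little}), and that the subset $\sr D\subs\E$ of pairs $(\hat v,\hat w)$ with both components of class $C^{r-1}$ (or smooth) is dense in $\E=L^2[0,1]\x L^2[0,1]$. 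So first I would show that a $C^{r-1}$ pair $(\hat v,\hat w)$ produces, via \eqref{E:Sobolev} and the ODE \eqref{E:ivp}, a curve $\ga=\Phi e_1$ of class $C^r$: indeed $v=h^{-1}\circ\hat v$ and $w=v\cdot h_{\ka_1,\ka_2}^{-1}\circ\hat w$ are then $C^{r-1}$, hence $\La$ is $C^{r-1}$, hence the solution $\Phi$ of the linear ODE $\dot\Phi=\Phi\La$ is $C^r$, and $\ga=\Phi e_1$ is $C^r$; moreover the curvature of such a $\ga$ is exactly $h_{\ka_1,\ka_2}^{-1}(\hat w)\in(\ka_1,\ka_2)$, so $\ga$ lies in $\sr L_{\ka_1}^{\ka_2}(I,\cdot)$, and $\ga$ is automatically regular since $v>0$.

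Next I would handle the boundary condition. Given $\ga\equiv(\hat v,\hat w)\in\sr L_{\ka_1}^{\ka_2}(Q)$, pick a sequence $(\hat v_k,\hat w_k)\in\sr D$ converging to $(\hat v,\hat w)$ in $\E$; by continuity of the endpoint map (solutions of \eqref{E:ivp} depend continuously on $(v,w)\in L^2$, cf.\ \cite{CodLev}), $F(\hat v_k,\hat w_k)\to F(\hat v,\hat w)=0$. Since $F$ is a submersion, it admits a local right inverse, or more concretely one can use the standard device of appending to $(\hat v_k,\hat w_k)$ a small correction supported on a fixed subinterval, say $[\tfrac12,1]$, chosen so as to adjust $\Phi_k(1)$ back to $Q$; because $F$ is a submersion and the correction can be taken $C^\infty$ with arbitrarily small $L^2$-norm (a finite-dimensional implicit-function-theorem argument, using three smooth "control" directions in $\hat w$), the corrected pair $(\tilde v_k,\tilde w_k)$ still lies in $\sr D$, still satisfies the curvature bounds, and now satisfies $F(\tilde v_k,\tilde w_k)=0$, i.e.\ $\tilde\ga_k\in\sr L_{\ka_1}^{\ka_2}(Q)$. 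Finally $(\tilde v_k,\tilde w_k)\to(\hat v,\hat w)$ in $\E$, which is precisely convergence in the Hilbert manifold topology of $\sr L_{\ka_1}^{\ka_2}(Q)$ (Definition~\ref{D:Little0}), so the $C^r$ curves are dense.

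The main obstacle is the boundary-correction step: one must produce the perturbation that restores $\Phi(1)=Q$ while (a) keeping it smooth, (b) keeping the curvature strictly inside $(\ka_1,\ka_2)$, and (c) controlling its size so density is not destroyed. The clean way is to invoke that $F$ is a submersion (already asserted in the excerpt, with the proof for $\ka_1=0,\ka_2=+\infty$ in \S3 of \cite{Sal3} and the general case analogous): submersivity gives, near any point, a smooth local section of $F$, and since $\sr D$ is dense and $F$ restricted to $\sr D$ is still "submersive in smooth directions" (the three control directions can be chosen smooth), one gets smooth preimages of $0$ arbitrarily close to $(\hat v,\hat w)$. I would phrase the argument so that it reduces to this already-available submersion fact plus density of $\sr D$ in $\E$, avoiding any delicate hands-on ODE estimates; the regularity claim $C^{r-1}$ data $\Rightarrow$ $C^r$ curve is then just bootstrapping the linear ODE and is routine.
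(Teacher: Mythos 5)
Your proposal is correct, and its core is the same as the paper's: approximate the data $(\hat v,\hat w)$ by smooth functions, which is possible because smooth functions are dense in $L^2[0,1]$, and observe that $C^{r-1}$ data produce a $C^r$ curve by bootstrapping the linear ODE. In fact the paper's entire proof is the one-sentence remark that smooth functions are dense in $L^2[0,1]$; it does not address the point you (rightly) identify as the main obstacle, namely that a smooth approximant of $(\hat v,\hat w)$ generally fails the endpoint condition $\Phi(1)=Q$ and so lands only in $\sr L_{\ka_1}^{\ka_2}(I,\cdot)$, not in the codimension-3 submanifold $\sr L_{\ka_1}^{\ka_2}(Q)$. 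Your correction step --- using the already-asserted submersivity of $(\hat v,\hat w)\mapsto\Phi(1)$ together with three smooth control directions and the finite-dimensional implicit function theorem to restore $\Phi(1)=Q$ by an arbitrarily small smooth perturbation --- is exactly the standard way to close this gap, and it is sound: the curvature constraint is automatic for any element of $\E$ by the construction via $h_{\ka_1,\ka_2}^{-1}$, and fixing the control directions at the limit point gives the needed uniformity, so the corrected pairs converge in $\E$, which is the manifold topology. (Alternatively, one could deduce density from the cited theorem of Burghelea--Saldanha--Tomei, which is the route the paper takes for the stronger statement in the next lemma.) So your write-up is a complete version of an argument the paper leaves almost entirely implicit.
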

\begin{proof}
	This follows from the fact that the set of smooth functions $f\colon [0,1]\to \R$ is dense in $L^2[0,1]$.
\end{proof}

\begin{defna}\label{D:Cittle}
Let $-\infty\leq \ka_1<\ka_2\leq +\infty$, $Q\in \SO_3$ and $r\in \N$, $r\geq 2$. Define $\sr C_{\ka_1}^{\ka_2}(Q)$ to be the set, furnished with the $C^r$ topology, of all $C^r$ regular curves $\ga\colon [0,1]\to \Ss^2$ such that:
\begin{enumerate}
	\item [(i)] $\Phi_\ga(0)=I$ and $\Phi_\ga(1)=Q$;
	\item [(ii)]  $\ka_1<\ka(t)<\ka_2$ for each $t\in [0,1]$.
\end{enumerate}
\end{defna}
The value of $r$ is not important, as all of these spaces are homotopy equivalent. Because of this, after the next lemma, when we speak of $\sr C_{\ka_1}^{\ka_2}(Q)$, we will implicitly assume that $r=2$.

\begin{lemmaa}\label{L:C^2}
	Let $r\in \N$ ($r\geq 2$), $Q\in \SO_3$ and $-\infty\leq \ka_1<\ka_2\leq +\infty$. Then the set inclusion $i\colon \sr C_{\ka_1}^{\ka_2}(Q)\inc \sr L_{\ka_1}^{\ka_2}(Q)$ is a  homotopy equivalence.
\end{lemmaa}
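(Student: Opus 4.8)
The plan is to deduce Lemma~\ref{L:C^2} from part (d) of Lemma~\ref{L:Hilbert} by exhibiting $\sr C_{\ka_1}^{\ka_2}(Q)$ as the preimage of $\sr L_{\ka_1}^{\ka_2}(Q)$ under a suitable continuous linear inclusion of Banach spaces. Recall that, via the correspondence $\ga \leftrightarrow (\hat v, \hat w)$, the space $\sr L_{\ka_1}^{\ka_2}(I,\cdot)$ is identified with $\E = L^2[0,1]\times L^2[0,1]$, and $\sr L_{\ka_1}^{\ka_2}(Q)$ is a smooth closed submanifold of finite codimension $3$ in $\E$. The first step is to identify the Banach space $\mathbf{F}$ playing the role of the $C^r$ curves: a $C^r$ regular curve $\ga$ with $\Phi_\ga(0)=I$ is determined by the pair $(v,w)$ of $C^{r-2}$ functions appearing in \eqref{E:frenet2} (with $v>0$), or equivalently, after composing with the diffeomorphisms $h$ and $h_{\ka_1,\ka_2}$ as in \eqref{E:Sobolev}, by a pair $(\hat v, \hat w)$ of $C^{r-2}$ functions on $[0,1]$. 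So I would take $\mathbf{F} = C^{r-2}[0,1]\times C^{r-2}[0,1]$ with its usual Banach space norm, and $i\colon \mathbf{F}\to \E$ the obvious inclusion, which is bounded, injective, and has dense image (polynomials are dense in $L^2$). One must check that under these identifications the $C^r$ topology on $\sr C_{\ka_1}^{\ka_2}(Q)$ really does correspond to the subspace topology from $\mathbf{F}$; this is where the specific shape of \eqref{E:frenet1}--\eqref{E:ivp} is used, since $\ga$, $\ta$, $\no$ all depend continuously on $(v,w)$ in the appropriate topologies via continuous dependence of ODE solutions on parameters, and conversely $(v,w)$ is recovered from $\ga$ by differentiation.

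The second step is to verify the hypotheses of Lemma~\ref{L:Hilbert}(d) in this setting: $\E$ and $\mathbf{F}$ are separable Banach spaces, $i$ is bounded injective with dense image, and $M = \sr L_{\ka_1}^{\ka_2}(Q)\subs \E$ is a smooth closed submanifold of finite codimension (codimension $3$, as noted in the text following Definition~\ref{D:Little}, using that $(\hat v,\hat w)\mapsto \Phi(1)$ is a submersion). Then the lemma yields directly that $N = i^{-1}(M)$ is a smooth closed submanifold of $\mathbf{F}$ and that $i\colon(\mathbf{F},N)\to(\E,M)$ is a homotopy equivalence of pairs; in particular $i|_N\colon N\to M$ is a homotopy equivalence. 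The final step is to identify $N = i^{-1}(\sr L_{\ka_1}^{\ka_2}(Q))$ with $\sr C_{\ka_1}^{\ka_2}(Q)$: a pair $(\hat v,\hat w)\in \mathbf{F}$ lies in $N$ exactly when the associated curve $\ga$ (which is automatically $C^r$ regular with curvature in $(\ka_1,\ka_2)$ and $\Phi_\ga(0)=I$, because the constructions \eqref{E:Sobolev}, \eqref{E:ivp} preserve $C^{r-2}$ regularity and positivity of $v$) additionally satisfies $\Phi_\ga(1)=Q$, which is precisely condition~(i) of Definition~\ref{D:Cittle}. Combining, $i$ restricts to a homotopy equivalence $\sr C_{\ka_1}^{\ka_2}(Q)\to\sr L_{\ka_1}^{\ka_2}(Q)$, which is the set inclusion in the statement.

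I expect the main obstacle to be the bookkeeping in the first step, namely checking that the chart $(\hat v,\hat w)$ really is a homeomorphism between $\sr C_{\ka_1}^{\ka_2}(Q)$ (with the $C^r$ topology) and its image in $\mathbf{F} = C^{r-2}[0,1]^2$ — i.e., that passing back and forth between a curve and the data $(\hat v,\hat w)$ is continuous in both directions in the relevant topologies. The forward direction (curve $\mapsto$ data) costs two derivatives and is straightforward; the reverse direction (data $\mapsto$ curve) requires the smooth dependence of solutions of \eqref{E:ivp} on the $L^2$, resp.\ $C^{r-2}$, coefficients, together with the fact that $h^{-1}$ and $h_{\ka_1,\ka_2}^{-1}$ are diffeomorphisms so that positivity of $v$ and the curvature bounds are automatic. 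One small subtlety worth flagging: the case $r=\infty$ is excluded here (Definition~\ref{D:Cittle} takes $r\in\N$), so $\mathbf{F}$ is always a genuine Banach space and Lemma~\ref{L:Hilbert}(d) applies verbatim; the density of $C^\infty$ curves, handled separately in Lemma~\ref{L:dense}, is not needed for this argument. Everything else is a direct invocation of Lemma~\ref{L:Hilbert}(d).
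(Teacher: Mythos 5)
Your overall strategy is exactly the paper's: realize $\sr L_{\ka_1}^{\ka_2}(Q)$ as a finite-codimension closed submanifold $M$ of $\E=L^2[0,1]\times L^2[0,1]$, choose a denser Banach space $\mathbf{F}$ of pairs $(\hat v,\hat w)$ modelling the $C^r$ curves, and invoke Lemma~(\ref{L:Hilbert}\,(d)) to get that $i^{-1}(M)\inc M$ is a homotopy equivalence. The one concrete error is your choice $\mathbf{F}=C^{r-2}[0,1]\times C^{r-2}[0,1]$. A $C^r$ regular curve has speed $v=\abs{\dot\ga}$ of class $C^{r-1}$ (one derivative of $\ga$) but geodesic curvature only of class $C^{r-2}$ (two derivatives), so the two components of $(\hat v,\hat w)$ do not have the same regularity: the correct choice is $\mathbf{F}=C^{r-1}[0,1]\times C^{r-2}[0,1]$, which is what the paper uses. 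With your symmetric choice the identification step fails in both directions. Going forward, a pair with $\hat v$ merely $C^{r-2}$ produces $\La$ with $C^{r-2}$ entries, hence $\Phi$ of class $C^{r-1}$ and $\dot\ga=v\,\Phi e_2$ only of class $C^{r-2}$, so $\ga$ is only guaranteed to be $C^{r-1}$; thus $N=i^{-1}(M)$ strictly contains the image of $\sr C_{\ka_1}^{\ka_2}(Q)$ and is not homeomorphic to it. Conversely, even on the subset where the curves happen to be $C^r$, the subspace topology from $C^{r-2}\times C^{r-2}$ does not control the $r$-th derivative of $\ga$, so it is coarser than the $C^r$ topology and the "chart" you describe would not be a homeomorphism.

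Everything else in your argument is sound and matches the paper: the hypotheses of Lemma~(\ref{L:Hilbert}\,(d)) are verified just as you say, and the bookkeeping you flag as the main obstacle (continuity of $(\hat v,\hat w)\leftrightarrow\ga$ in both directions, via continuous dependence of solutions of \eqref{E:ivp} on the coefficients and the fact that $h$, $h_{\ka_1,\ka_2}$ are diffeomorphisms) is exactly the content of the paper's verification. Once you replace the first factor by $C^{r-1}[0,1]$, the forward direction gives $\hat v=h\circ\abs{\dot\ga}\in C^{r-1}$ and $\hat w=h_{\ka_1,\ka_2}\circ\ka\in C^{r-2}$, and the reverse direction gives $\Phi\in C^{r-1}$ and $\dot\ga=v\,\Phi e_2\in C^{r-1}$, hence $\ga\in C^r$, so $N$ is indeed $\sr C_{\ka_1}^{\ka_2}(Q)$ with the $C^r$ topology and the lemma follows.
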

\begin{proof}In this proof we will highlight the differentiability class by denoting $\sr C_{\ka_1}^{\ka_2}(Q)$ by $\sr C_{\ka_1}^{\ka_2}(Q)^r$. Let $\E=L^2[0,1]\times L^2[0,1]$, let $\tbf{F}=C^{r-1}[0,1]\times C^{r-2}[0,1]$ (where $C^k[0,1]$ denotes the set of all $C^k$ functions $[0,1]\to \R$, with the $C^k$ norm) and let $i\colon \E\to \tbf{F}$ be set inclusion. Setting $M=\sr L_{\ka_1}^{\ka_2}(Q)$, we conclude from (\ref{L:Hilbert}\?(d)) that $i\colon N=i^{-1}(M)\inc M$ is a homotopy equivalence. We claim that $N\home \sr C_{\ka_1}^{\ka_2}(Q)^r$, where the homeomorphism is obtained by associating a pair $(\hat v,\hat w)\in N$ to the curve $\ga$ obtained by solving \eqref{E:ivp} (with $\La$ defined by \eqref{E:frenet2} and \eqref{E:Sobolev} and $Q_0=I$), and vice-versa.
	
	Suppose first that $\ga\in \sr C_{\ka_1}^{\ka_2}(Q)^r$. Then $\abs{\dot\ga}$ (resp.~$\ka$) is a function $[0,1]\to \R$ of class $C^{r-1}$ (resp.~$C^{r-2}$). Hence, so are $\hat{v}=h\circ \abs{\dot\ga}$ and $\hat w=h_{\ka_1}^{\ka_2}\circ \ka$, since $h$ and $h_{\ka_1}^{\ka_2}$ are smooth. Conversely, if $(\hat{v},\hat{w})\in N$, then $v=h^{-1}(\hat v)$ is of class $C^{r-1}$ and $w=(h_{\ka_1}^{\ka_2})^{-1}\circ \hat w$ of class $C^{r-2}$, and the frame $\Phi$ of the curve $\ga$ corresponding to that pair satisfies
	\begin{equation*}
		\dot \Phi=\Phi\La,\quad \La=\begin{pmatrix}
		0 & -\abs{\dot\ga} & 0 \\
		\abs{\dot\ga} & 0 & -\abs{\dot\ga} \ka \\
		0 & \abs{\dot\ga} \ka & 0
		\end{pmatrix}=\begin{pmatrix}
		0 & -v & 0 \\
		v & 0 & -w \\
		0 & w & 0
		\end{pmatrix}.
	\end{equation*}
	Since the entries of $\La$ are of class (at least) $C^{r-2}$, the entries of $\Phi$ are functions of class $C^{r-1}$. Moreover, $\ga=\Phi e_1$, hence
	\begin{equation*}
		\dot\ga=\dot\Phi e_1=\Phi\La e_1=v\?\Phi e_2,
	\end{equation*}
	and the velocity vector of $\ga$ is seen to be of class $C^{r-1}$. It follows that $\ga$ is a curve of class $C^r$. Finally, it is easy to check that the correspondence $(\hat v,\hat w)\dar \ga$ is continuous in both directions.
\end{proof}

\subsection*{Lifted frames} The (two-sheeted) universal covering space of $\SO_3$ is $\Ss^3$. Let us briefly recall the definition of the covering map $\pi\colon \Ss^3\to \SO_3$.\footnote{See \cite{CoxeterNEG} for more details and further information on quaternions and rotations.} We start by identifying $\R^4$ with the algebra $\Hh$ of quaternions, and $\Ss^3$ with the subgroup of unit quaternions. Given $z\in \Ss^3$, $v\in \R^4$, define a transformation $T_z\colon \R^4\to \R^4$ by $T_z(v)=zvz^{-1}=zv\ol{z}$. One checks easily that $T_z$ preserves the sum, multiplication and conjugation operations. It follows that, for any $v,w\in \R^4$,
\begin{alignat*}{10}
	4\gen{T_z(v),T_z(w)}=&\abs{T_z(v)+T_z(w)}^2-\abs{T_z(v)-T_z(w)}^2 \\
	=&\abs{v+w}^2-\abs{v-w}^2=4\gen{v,w},
\end{alignat*} 
where $\gen{\,,\,}$ denotes the usual inner product in $\R^4$. Thus $T_z$ is an orthogonal linear transformation of $\R^4$. Moreover, $T_z(\mbf{1})=\mbf{1}$ (where $\mbf{1}$ is the unit of $\Hh$), hence the three-dimensional vector subspace $\{0\}\times \R^3\subs \R^4$ consisting of  the purely imaginary quaternions is invariant under $T_z$. The element $\pi(z)\in \SO_3$ is the restriction of $T_z$ to this subspace, where $(a,b,c)\in \R^3$ is identified with the quaternion $a\mbf{i}+b\mbf{j}+c\mbf{k}$.

In what follows we adopt the convention that $\Ss^3$ (resp.~$\SO_3$) is furnished with the Riemannian metric inherited from $\R^4$ (resp.~$\R^9$).

\begin{lemmaa}\label{L:2sqrt2}
	Let $\gen{\?,\?}$ denote the metric in $\Ss^3$ and $\ggen{\?,\?}$ the metric in $\SO_3$. Then $\pi^*\ggen{\?,\?}=8\gen{\?,\?}$, where $\pi^*\ggen{\?,\?}$ denotes the pull-back of $\ggen{\?,\?}$ by $\pi$.
\end{lemmaa}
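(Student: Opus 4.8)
The plan is to reduce the identity to the tangent space at $\mbf{1}\in\Ss^3$ by invariance, and then carry out an explicit quaternionic computation. Both metrics are invariant under left translations in the respective groups: left multiplication by a unit quaternion is an orthogonal transformation of $\R^4=\Hh$, and the map $X\mapsto QX$ on $\R^9=\R^{3\times 3}$ preserves the Frobenius inner product $\gen{X,Y}=\tr(X^TY)$ whenever $Q\in\SO_3$. Since $\pi$ is a group homomorphism, $\pi\circ L_z=L_{\pi(z)}\circ\pi$ for every $z\in\Ss^3$, and composing differentials shows that it suffices to prove $\pi^*\ggen{\?,\?}=8\gen{\?,\?}$ at $\mbf{1}$, i.e.\ that the linear map $d\pi_{\mbf 1}\colon T_{\mbf 1}\Ss^3\to T_I\SO_3=\fr{so}_3$ multiplies norms by $\sqrt 8$. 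Here $T_{\mbf 1}\Ss^3$ is identified with the space of purely imaginary quaternions, with the inner product inherited from $\R^4$.

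Next I would compute $d\pi_{\mbf 1}$ directly. For a purely imaginary quaternion $u$ with $\abs{u}=1$, the curve $z(t)=(\cos t)\?\mbf 1+(\sin t)\?u$ lies in $\Ss^3$, satisfies $z(0)=\mbf 1$ and $\dot z(0)=u$. Because $\pi(z)$ acts on an imaginary quaternion $w$ by $w\mapsto zw\ol z$, differentiating $t\mapsto z(t)\?w\?\ol{z(t)}$ at $t=0$ and using $\ol u=-u$ gives
\[
	\frac{d}{dt}\Big|_{t=0}\big(z(t)\?w\?\ol{z(t)}\big)=uw+w\ol u=uw-wu .
\]
Hence $d\pi_{\mbf 1}(u)$ is the endomorphism $\operatorname{ad}_u\colon w\mapsto uw-wu$ of the space of imaginary quaternions.

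Finally I would evaluate Frobenius norms on the orthonormal basis $(\mbf i,\mbf j,\mbf k)$ of $T_{\mbf 1}\Ss^3$. Using $\mbf i\mbf j=\mbf k=-\mbf j\mbf i$ and $\mbf i\mbf k=-\mbf j=-\mbf k\mbf i$, the matrix of $\operatorname{ad}_{\mbf i}$ in this basis is $\left(\begin{smallmatrix}0&0&0\\0&0&-2\\0&2&0\end{smallmatrix}\right)$, with squared Frobenius norm $4+4=8$; by the cyclic symmetry of the quaternion relations the same holds for $\operatorname{ad}_{\mbf j}$ and $\operatorname{ad}_{\mbf k}$. Moreover the three matrices $\operatorname{ad}_{\mbf i},\operatorname{ad}_{\mbf j},\operatorname{ad}_{\mbf k}$ have pairwise disjoint supports, hence are pairwise orthogonal for the Frobenius inner product, just as $\mbf i,\mbf j,\mbf k$ are in $\R^4$. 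Thus $d\pi_{\mbf 1}$ scales the inner product by the factor $8$, and by the invariance reduction so does $d\pi_z$ at every $z$, which is the assertion. The computation has no genuine obstacle; the only points to watch are the two invariance claims and keeping the factor $2$ in each commutator $uw-wu$ --- precisely what turns the naive $1$ into $8$.
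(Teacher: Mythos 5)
Your proof is correct and follows essentially the same strategy as the paper's: reduce to the tangent space at $\mbf{1}$ using the group structure (the paper does this via $z(t)=w(0)^{-1}w(t)$ at the end of its argument), then perform an explicit quaternionic computation there. The only difference is organizational — you identify $d\pi_{\mbf 1}$ with $u\mapsto \operatorname{ad}_u=[u,\cdot\,]$ and evaluate Frobenius norms on the basis $\mbf i,\mbf j,\mbf k$, whereas the paper expands $\vert\dot Q(0)e_i\vert^2$ for an arbitrary tangent vector and sums using the identity for $\Re(w^2)$; both yield the factor $8$ correctly.
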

\begin{proof}
	It suffices to prove that if 
	\begin{equation*}
		z\colon (-1,1)\to \Ss^3,\qquad t\mapsto a(t)\mbf{1}+b(t)\mbf{i}+c(t)\mbf{j}+d(t)\mbf{k}
	\end{equation*} is a regular curve and $Q=\pi\circ z$ then $\big\vert\dot{Q}(0)\big\vert^2=8\abs{\dot{z}(0)}^2$. Let us assume first that $z(0)=\mbf{1}$, so that $\dot a(0)=0$. From the definition of $Q$, we have 
	\begin{equation*}
		Q(t)e_1=z(t)\bi \bar{z}(t)
	\end{equation*}
and similarly for $\bj$,\,$\bk$, where, as above, we identify $\R^3$ with the imaginary quaternions.  Hence
	\begin{alignat*}{9}
		\big\vert\dot{Q}(0)e_1\big\vert^2&=\abs{z(0)\bi \dot{\bar{z}}(0)+\dot z(0)\bi \bar{z}(0)}^2=2\abs{\dot{z}(0)}^2-\big(\dot{z}(0)\bi\big)^2-\big(\bi\dot{\bar{z}}(0)\big)^2 \\
			&=2\abs{\dot{z}(0)}^2-2\Re\big((\dot{z}(0)\bi )^2\big)
	\end{alignat*}
Therefore
\begin{equation*}
	\big\vert\dot{Q}(0)\big\vert^2=6\abs{\dot z(0)}^2-2\Re\big((\dot{z}(0)\bi )^2\big)-2\Re\big((\dot{z}(0)\bj )^2\big)-2\Re\big((\dot{z}(0)\bk)^2\big) 
\end{equation*}
Since $\Re(w^2)=\al^2-\be^2-\ga^2-\de^2$ if $w=\al+\be\mbf{i}+\ga\mbf{j}+\de\mbf{k}$ and $\dot{a}(0)=0$, we deduce that 
\begin{alignat*}{10}
	-2\Re\big((\dot{z}(0)\bi)^2\big)=2\dot{c}(0)^2+2\dot{d}(0)^2-2\dot{b}(0)^2=2\abs{\dot z(0)}^2-4\dot{b}(0)^2
\end{alignat*}
and analogously for $\bj,\,\bk$. Thus $\big\vert\dot{Q}(0)\big\vert^2=8\abs{\dot{z}(0)}^2$ as claimed, provided $z(0)=\mbf{1}$.

Now consider any regular curve $w\colon (-1,1)\to \Ss^3$, let $P=\pi\circ w$ and set 
\begin{equation*}
	z(t)=w(0)^{-1}w(t),\quad Q(t)=\pi(z(t))=P(0)^{-1}P(t).
\end{equation*}	
Then $z(0)=\mbf{1}$, hence 
\begin{equation*}
	\big\vert\dot P(0)\big\vert^2=\big\vert P(0)\dot{Q}(0)\big\vert^2=\big\vert\dot{Q}(0)\big\vert^2=8\left\vert\dot{z}(0)\right\vert^2=8\abs{w(0)\dot{z}(0)}^2=8\abs{\dot{w}(0)}^2.\qedhere
\end{equation*}
\end{proof}
  
 \begin{defna}\label{D:liftedframe}
 Let $\Phi\colon [0,1]\to \SO_3$ be a frame (of class $H^1$) and let $z\in \Ss^3$ satisfy $\pi(z)=\Phi_\ga(0)$. We define the \tdef{lifted frame} $\te{\Phi}^z\colon [0,1]\to \Ss^3$ to  be the lift of $\Phi$ to $\Ss^3$, starting at $z$. When $\Phi(0)=I$ we adopt the convention that $z=\mbf{1}$, and we denote the lifted frame simply by $\te{\Phi}$. 
 \end{defna}
 
 Here is a simple but important application of this concept.

\begin{lemmaa}\label{L:bit}
	Let $\ga_0,\ga_1\in \sr L_{\ka_1}^{\ka_2}(Q)$, for some $Q\in \SO_3$, and suppose that $\ga_0,\ga_1$ lie in the same connected component of this space. Then $\te{\Phi}_{\ga_0}(1)=\te{\Phi}_{\ga_1}(1)$.
\end{lemmaa}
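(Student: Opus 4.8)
The statement asserts that the endpoint of the lifted frame $\te\Phi_\ga(1) \in \Ss^3$ is constant on connected components of $\sr L_{\ka_1}^{\ka_2}(Q)$. The natural approach is a continuity/discreteness argument: the assignment $\ga \mapsto \te\Phi_\ga(1)$ is continuous, but it takes values in the fiber $\pi^{-1}(Q) \subs \Ss^3$, which is a discrete (two-point) set; a continuous map from a connected space to a discrete space is constant. First I would set up the continuity. Given the Hilbert manifold structure on $\sr L_{\ka_1}^{\ka_2}(I,\cdot) \equiv \E$ from Definition~\ref{D:Little0}, the map sending $(\hat v, \hat w)$ to the solution $\Phi$ of the initial value problem~\eqref{E:ivp} (with $Q_0 = I$) is continuous into $H^1([0,1],\SO_3)$, hence so is evaluation at $t=1$, giving $\Phi_\ga(1)$ continuous in $\ga$. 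Lifting is where one must be slightly careful: since $\pi\colon \Ss^3 \to \SO_3$ is a covering map and every $\ga \in \sr L_{\ka_1}^{\ka_2}(I,\cdot)$ has $\Phi_\ga(0) = I$, the lift $\te\Phi_\ga$ starting at $z = \mbf 1$ is uniquely determined, and by the standard continuity of lifts under the compact-open topology (equivalently, uniform topology on the path, which is controlled by the $H^1$ topology since $H^1 \hookrightarrow C^0$), the map $\ga \mapsto \te\Phi_\ga \in C([0,1],\Ss^3)$ is continuous; composing with evaluation at $1$ gives that $\ga \mapsto \te\Phi_\ga(1)$ is continuous.

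Next I would restrict to $\sr L_{\ka_1}^{\ka_2}(Q)$. Every $\ga$ in this space satisfies $\Phi_\ga(1) = Q$, so $\te\Phi_\ga(1)$ lies in $\pi^{-1}(Q)$, which consists of exactly two antipodal points of $\Ss^3$ — in particular a discrete subspace. Thus $\ga \mapsto \te\Phi_\ga(1)$ is a continuous map from $\sr L_{\ka_1}^{\ka_2}(Q)$ to a two-point discrete space. By Lemma~\ref{L:Hilbert}(a), the connected components of the Hilbert manifold $\sr L_{\ka_1}^{\ka_2}(Q)$ coincide with its path components, so it suffices to observe that a continuous map out of a connected (equivalently path-connected) space into a discrete space is constant on that space. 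Since $\ga_0$ and $\ga_1$ lie in the same connected component by hypothesis, we conclude $\te\Phi_{\ga_0}(1) = \te\Phi_{\ga_1}(1)$.

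The only real point requiring care — the "main obstacle," though it is more a matter of bookkeeping than of genuine difficulty — is the continuity of the lift in the appropriate function-space topology. One cannot just invoke the path-lifting property pointwise; one needs that a small $H^1$ (hence $C^0$) perturbation of $\Phi_\ga$ produces a uniformly small perturbation of the lift $\te\Phi_\ga$. This follows from the local homeomorphism property of $\pi$ together with a compactness (Lebesgue number) argument on $[0,1]$: cover $[0,1]$ by finitely many subintervals on each of which $\Phi_\ga$ lands in an evenly covered neighborhood, and propagate the lift sheet-by-sheet; nearby frames land in the same neighborhoods and are lifted to the same sheets, so their lifted endpoints agree with $\te\Phi_\ga(1)$ up to something small, and being in the discrete set $\pi^{-1}(Q)$ they must in fact coincide. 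Alternatively, and perhaps more cleanly for this thesis, one can argue directly along a path $u \mapsto \ga^u$ in $\sr L_{\ka_1}^{\ka_2}(Q)$ joining $\ga_0$ to $\ga_1$: this yields a homotopy $u \mapsto \Phi_{\ga^u}$ of paths in $\SO_3$ with fixed endpoints $I$ and $Q$, which by the homotopy lifting property lifts to a homotopy of paths in $\Ss^3$ with fixed initial point $\mbf 1$; the terminal point $\te\Phi_{\ga^u}(1)$ then varies continuously in the discrete fiber $\pi^{-1}(Q)$, hence is constant in $u$.
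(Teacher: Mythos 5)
Your proof is correct, and your closing alternative — joining $\ga_0$ to $\ga_1$ by a path (using that path components and connected components of a Hilbert manifold coincide), viewing $u\mapsto \Phi_{\ga^u}$ as a fixed-endpoint homotopy of paths in $\SO_3$, and applying the homotopy lifting property of the covering $\pi\colon\Ss^3\to\SO_3$ — is exactly the paper's argument. Your primary route via continuity of $\ga\mapsto\te\Phi_\ga(1)$ into the discrete fiber $\pi^{-1}(Q)$ is a correct, essentially equivalent repackaging of the same ideas.
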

\begin{proof}
	Since $\sr L_{\ka_1}^{\ka_2}(Q)$ is a Hilbert manifold, its path and connected components coincide. Therefore, to say that $\ga_0,\ga_1$ lie in the same connected component of $\sr L_{\ka_1}^{\ka_2}(Q)$ is the same as to say that there exists a continuous family of curves $\ga_s\in \sr L_{\ka_1}^{\ka_2}(Q)$ joining $\ga_0$ and $\ga_1$, $s\in [0,1]$. The family $\Phi_{\ga_s}$ yields a homotopy between the paths $\Phi_{\ga_0}$ and $\Phi_{\ga_1}$ in $\SO_3$. (Recall that each of the frames $\Phi_{\ga_s}$ is (absolutely) continuous.) By the homotopy lifting property of covering spaces, the paths $\te{\Phi}_{\ga_0}$ and $\te{\Phi}_{\ga_1}$ are also homotopic in $\Ss^3$ (fixing the endpoints).
\end{proof}

\subsection*{The role of the initial and final frames}We will now study how the topology of $\sr L_{\ka_1}^{\ka_2}(Q)$ changes if we consider variations of condition (i) in (\ref{D:Little}); by the end of the section it should be clear that our original definition is sufficiently general. A summary of all the definitions considered here is given in table form on p.~\pageref{Ta:spaces}. 

\label{omegao}For fixed $z\in \Ss^3$, let $\Om_z\Ss^3$ denote the set of all continuous paths $\om\colon [0,1]\to \Ss^3$ such that $\om(0)=\mbf{1}$ and $\om(1)=z$, furnished with the compact-open topology. It can be shown (see \cite{BottTu}, p.~198) that $\Om_z\Ss^3\iso \Om\Ss^3$ for any $z\in \Ss^3$, where $\Om\Ss^3$ is the space of paths in $\Ss^3$ which start and end at $\mbf{1}\in \Ss^3$.\footnote{The notation $X\iso Y$ (resp.~$X\home Y$) means that $X$ is homotopy equivalent (resp.~homeomorphic) to $Y$.} The topology of this space is well understood; we refer the reader to \cite{BottTu}, \S 16, for more information. 

Now let $\ka_1<\ka_ 2$, $z\in \Ss^3$ be arbitrary and $Q=\pi(z)$. Define
\begin{equation}\label{E:Hirsch}
	F\colon \sr L_{\ka_1}^{\ka_2}(Q)\to \Om_{z}\Ss^3\cup \Om_{-z}\Ss^3 \iso \Om\Ss^3\du \Om\Ss^3\text{\ \ by\ \ } F(\ga)=\te{\Phi}_{\ga}.
\end{equation}
In the special case $\ka_1=-\infty$, $\ka_2=+\infty$, it follows from the Hirsch-Smale theorem that this map is a homotopy equivalence. In the general case this is false, however. For instance, $\Om \Ss^3\du \Om \Ss^3$ has two connected components, while Little has proved (\cite{Little}, thm.~1) that $\sr L_{0}^{+\infty}(I)$ has three connected components. We take this opportunity to recall the precise statement of Little's theorem and to introduce a new class of spaces.

\begin{defna}\label{D:arbitrary}
Let $-\infty\leq \ka_1<\ka_2\leq +\infty$. Define $\sr L_{\ka_1}^{\ka_2}$ to be the space of all $(\ka_1,\ka_2)$-admissible curves $\ga\colon [0,1]\to \Ss^2$ such that 
\begin{equation*}
	\Phi_\ga(0)=\Phi_\ga(1).
\end{equation*} 
\end{defna}

Note that the only difference between $\sr L_{\ka_1}^{\ka_2}(I)$ and $\sr L_{\ka_1}^{\ka_2}$ is that curves in the latter space may have arbitrary initial and final frames, as long as they coincide. An argument analogous to the one given for the spaces $\sr L_{\ka_1}^{\ka_2}(Q)$ shows that $\sr L_{\ka_1}^{\ka_2}$ is also a Hilbert manifold. In fact, we have the following relationship between the two classes. 

\begin{propa}\label{P:arbitrary}
	The space  $\sr L_{\ka_1}^{\ka_2}$ is homeomorphic to $\SO_3\times \sr L_{\ka_1}^{\ka_2}(I)$.
\end{propa}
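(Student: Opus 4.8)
The plan is to build the homeomorphism $\sr L_{\ka_1}^{\ka_2} \home \SO_3 \times \sr L_{\ka_1}^{\ka_2}(I)$ by exploiting the left action of $\SO_3$ on $\Ss^2$, which rotates frames on the left. First I would define the candidate map $\Psi\colon \SO_3 \times \sr L_{\ka_1}^{\ka_2}(I) \to \sr L_{\ka_1}^{\ka_2}$ by $\Psi(P,\ga) = P\cdot\ga$, where $(P\cdot\ga)(t) = P\ga(t)$. One checks that $P\cdot\ga$ is again admissible: if $\ga$ corresponds to the pair $(\hat v,\hat w)\in\E$ with initial frame $Q_0 = I$ via the initial value problem \eqref{E:ivp}, then $P\cdot\ga$ corresponds to the \emph{same} pair $(\hat v,\hat w)$ but with initial frame $Q_0 = P$, since $\Phi_{P\cdot\ga}(t) = P\,\Phi_\ga(t)$ solves $\dot\Phi = \Phi\La$ with $\Phi(0) = P$; and left multiplication by a fixed $P\in\SO_3$ does not change the geodesic curvature (it is an isometry of $\Ss^2$ preserving orientation), so the constraint $\ka_1 < \ka < \ka_2$ is preserved. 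Moreover $\Phi_{P\cdot\ga}(0) = P = P\,\Phi_\ga(1) = \Phi_{P\cdot\ga}(1)$, so indeed $P\cdot\ga \in \sr L_{\ka_1}^{\ka_2}$.

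Next I would write down the inverse map $\Psi^{-1}\colon \sr L_{\ka_1}^{\ka_2} \to \SO_3 \times \sr L_{\ka_1}^{\ka_2}(I)$ explicitly by $\Psi^{-1}(\ga) = \big(\Phi_\ga(0),\ \Phi_\ga(0)^{-1}\cdot\ga\big)$. For this to be well-defined one must verify that $\Phi_\ga(0)^{-1}\cdot\ga$ lies in $\sr L_{\ka_1}^{\ka_2}(I)$: its frame at time $0$ is $\Phi_\ga(0)^{-1}\Phi_\ga(0) = I$, and at time $1$ it is $\Phi_\ga(0)^{-1}\Phi_\ga(1) = \Phi_\ga(0)^{-1}\Phi_\ga(0) = I$ since $\Phi_\ga(0) = \Phi_\ga(1)$ for curves in $\sr L_{\ka_1}^{\ka_2}$; admissibility is preserved as above. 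The two maps $\Psi$ and $\Psi^{-1}$ are mutually inverse by a direct computation: $\Psi\big(\Psi^{-1}(\ga)\big) = \Phi_\ga(0)\cdot\big(\Phi_\ga(0)^{-1}\cdot\ga\big) = \ga$, and conversely $\Psi^{-1}\big(\Psi(P,\ga)\big) = \big(\Phi_{P\cdot\ga}(0),\ \Phi_{P\cdot\ga}(0)^{-1}\cdot(P\cdot\ga)\big) = \big(P,\ P^{-1}\cdot(P\cdot\ga)\big) = (P,\ga)$, using $\Phi_{P\cdot\ga}(0) = P$ and $\Phi_\ga(0) = I$ for $\ga\in\sr L_{\ka_1}^{\ka_2}(I)$.

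It remains to check continuity of both maps in the Hilbert manifold topology, i.e.\ in terms of the chart $\ga \leftrightarrow (\hat v,\hat w)\in\E$. This is where a little care is needed. The key observation is that, in these coordinates, the map $\Psi$ sends $(P,(\hat v,\hat w)) \mapsto (\hat v,\hat w)$ together with a change of initial frame from $I$ to $P$; since solutions of the linear ODE \eqref{E:ivp} depend continuously (indeed smoothly) on the initial condition $Q_0$ and on the $L^2$ data $(v,w)$ — this is the standard dependence result cited after \eqref{E:ivp} from \cite{CodLev} — the frame $\Phi_{P\cdot\ga}$, and hence the curve $P\cdot\ga = \Phi_{P\cdot\ga}e_1$, depends continuously on $(P,\hat v,\hat w)$. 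For $\Psi^{-1}$, continuity follows because $\ga \mapsto \Phi_\ga(0)$ is continuous (it is evaluation of a continuously varying frame at a point — note that for $\ga\in\sr L_{\ka_1}^{\ka_2}$ the initial frame $\Phi_\ga(0) = Q_0$ is literally one of the coordinates in Definition \ref{D:Little0}, so this is just a projection), and then $\ga \mapsto \Phi_\ga(0)^{-1}\cdot\ga$ is continuous by the same ODE-dependence argument applied with the varying initial frame $\Phi_\ga(0)^{-1}$. I expect the main obstacle to be bookkeeping rather than genuine difficulty: one must be scrupulous about which space uses the ``$Q_0$ is a free coordinate'' convention (Definition \ref{D:Little0}) versus the ``$Q_0 = I$ fixed'' convention (Definition \ref{D:Little}), and confirm that left translation acts on the $\E$-coordinates by the identity while merely shifting $Q_0$, so that the product structure is manifest. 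Once that is set up cleanly, continuity in both directions is an immediate consequence of continuous dependence of ODE solutions on initial data and coefficients.
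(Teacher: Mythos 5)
Your proposal is correct and follows exactly the paper's own argument: the map $(P,\ga)\mapsto P\ga$ with inverse $\eta\mapsto\big(\Phi_\eta(0),\Phi_\eta(0)^{-1}\eta\big)$. The paper states continuity of both maps without elaboration, so your extra discussion of continuous dependence on initial data and $L^2$ coefficients only fills in details the paper takes for granted.
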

\begin{proof}
	For $Q\in SO_3$ and $\ga\in \sr L_{\ka_1}^{\ka_2}(I)$, let $Q\ga$ be the curve defined by $(Q\ga)(t)=Q(\ga(t))$. Because $Q$ is an isometry, the geodesic curvatures of $Q\ga$ at $(Q\ga)(t)$ and of $\ga$ at $\ga(t)$ coincide. Define $F\colon \SO_3\times \sr L_{\ka_1}^{\ka_2}(I)\to \sr L_{\ka_1}^{\ka_2}$ by $F(Q,\ga)=Q\ga$; clearly, $F$ is continuous. Since it has the continuous inverse $\eta\mapsto (\Phi_\eta(0),\Phi_\eta(0)^{-1}\eta)$, $F$ is a homeomorphism.
\end{proof}

Let us temporarily denote by $\sr L$ the space $\sr L_{-\infty}^0\du \sr L_{0}^{+\infty}$ studied by Little. We have $\sr L_{-\infty}^0 \home \sr L_0^{+\infty}$, since the map which takes a curve in $\sr L$ to the same curve with reversed orientation is a (self-inverse) homeomorphism mapping $\sr L_{-\infty}^0$ onto $\sr L_0^{+\infty}$. What is proved in \cite{Little} is that $\sr L$ has six connected components.\footnote{Little works with $C^2$ curves, but, as we have seen, this is not important.} Using prop.~(\ref{P:arbitrary}) and the fact that $\SO_3$ is connected, we see that Little's theorem is equivalent to the assertion that $\sr L_{0}^{+\infty}(I)$ has three connected components, as was claimed immediately above (\ref{D:arbitrary}).

A natural generalization of the spaces $\sr L_{\ka_1}^{\ka_2}(Q)$ is obtained by modifying condition (i) of (\ref{D:Little}) as follows. 
\begin{defna}\label{D:Little2}
Let $-\infty\leq \ka_1<\ka_2\leq +\infty$ and $Q_0,Q_1\in \SO_3$. Define $\sr L_{\ka_1}^{\ka_2}(Q_0,Q_1)$ to be the space of all $(\ka_1,\ka_2)$-admissible curves $\ga\colon [0,1]\to \Ss^2$  such that 
\begin{equation*}
	\Phi_\ga(0)=Q_0\text{\quad and\quad}\Phi_\ga(1)=Q_1.\tag{i$'$}
\end{equation*}
\end{defna}
Thus, the only difference between condition (i) on p.~\pageref{D:Little} and condition (i$'$) is that the latter allows arbitrary initial frames. 
\begin{propa}\label{T:initialfinal}
	 $\sr L_{\ka_1}^{\ka_2}(Q_0,Q_1)\home \sr L_{\ka_1}^{\ka_2}(PQ_0,PQ_1)$ for any $P,Q_0,Q_1\in \SO_3$. Then. In particular, $\sr L_{\ka_1}^{\ka_2}(Q_0,Q_1)\home \sr L_{\ka_1}^{\ka_2}(Q)$, where $Q=Q_0^{-1}Q_1$.
\end{propa}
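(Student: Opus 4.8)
The plan is to exhibit an explicit homeomorphism $\sr L_{\ka_1}^{\ka_2}(Q_0,Q_1)\to \sr L_{\ka_1}^{\ka_2}(PQ_0,PQ_1)$ by post-composing curves with the rigid rotation $P$, and then to deduce the special case by taking $P=Q_0^{-1}$. This is essentially the same idea as in the proof of (\ref{P:arbitrary}): for $\ga\in \sr L_{\ka_1}^{\ka_2}(Q_0,Q_1)$ set $(P\ga)(t)=P(\ga(t))$. First I would check that $P\ga$ is again $(\ka_1,\ka_2)$-admissible with the correct endpoint frames. Since $P\in \SO_3$ acts linearly and orthogonally on $\R^3$, it preserves $\Ss^2$, sends regular curves to regular curves, and commutes with the vector product up to the determinant, which is $+1$; hence $\ta_{P\ga}=P\ta_\ga$ and $\no_{P\ga}=P\ga\times P\ta_\ga=P(\ga\times\ta_\ga)=P\no_\ga$. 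Therefore $\Phi_{P\ga}(t)=P\,\Phi_\ga(t)$ for every $t$, so $\Phi_{P\ga}(0)=PQ_0$ and $\Phi_{P\ga}(1)=PQ_1$. Moreover the geodesic curvature is intrinsic and $P$ is an isometry of $\Ss^2$, so $\ka_{P\ga}(t)=\ka_\ga(t)\in(\ka_1,\ka_2)$ for a.e.\ $t$; concretely, conjugating the logarithmic derivative by the constant matrix $P$ in \eqref{E:frenet1} leaves the functions $v,w$ (hence the Sobolev data $\hat v,\hat w$) unchanged. Thus $P\ga\in \sr L_{\ka_1}^{\ka_2}(PQ_0,PQ_1)$.

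Next I would verify that $\ga\mapsto P\ga$ is a homeomorphism. Under the identification of each space with a subset of $\E=L^2[0,1]\times L^2[0,1]$ via $\ga\leftrightarrow(\hat v,\hat w)$ (Definitions (\ref{D:Little0})--(\ref{D:Little2})), the map $P\ga$ corresponds to the same pair $(\hat v,\hat w)$ but with initial condition $Q_0$ replaced by $PQ_0$ in the IVP \eqref{E:ivp}; so on the level of Sobolev data it is literally the identity, and in particular continuous in both directions. Alternatively, and more in the spirit of the proof of (\ref{P:arbitrary}), one notes that $\ga\mapsto P\ga$ is continuous for the $C^r$ (hence the Hilbert-manifold) topology because post-composition with a fixed smooth map is continuous, and it has the evident continuous inverse $\eta\mapsto P^{-1}\eta$. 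This establishes the first assertion.

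For the particular case, take $P=Q_0^{-1}$. Then $PQ_0=I$ and $PQ_1=Q_0^{-1}Q_1=:Q$, so the first part gives $\sr L_{\ka_1}^{\ka_2}(Q_0,Q_1)\home \sr L_{\ka_1}^{\ka_2}(I,Q)$, and by Definition (\ref{D:Little}) the latter is exactly $\sr L_{\ka_1}^{\ka_2}(Q)$.

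There is really no hard step here: the content is the routine but necessary bookkeeping that left multiplication by a constant $P\in\SO_3$ on the frame corresponds to post-composing the curve with $P$, and that this operation preserves admissibility and the endpoint-frame conditions while being a homeomorphism. The only point deserving a moment's care is the orientation issue in $\no=\ga\times\ta$: one must use $\det P=+1$ (not merely $P\in\Or_3$) to conclude $\no_{P\ga}=P\no_\ga$ rather than $-P\no_\ga$, which is what keeps $\Phi_{P\ga}=P\Phi_\ga$ (an honest frame, i.e.\ a positively oriented basis) and keeps the sign of the geodesic curvature unchanged so that $\ka_{P\ga}$ still lies in $(\ka_1,\ka_2)$ rather than $(-\ka_2,-\ka_1)$.
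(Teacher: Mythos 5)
Your proposal is correct and follows exactly the paper's argument: the paper's proof is the one-line observation that $\ga\mapsto P\ga$ maps $\sr L_{\ka_1}^{\ka_2}(Q_0,Q_1)$ continuously into $\sr L_{\ka_1}^{\ka_2}(PQ_0,PQ_1)$ with continuous inverse $\ga\mapsto P^{-1}\ga$, as in (\ref{P:arbitrary}), and the special case is obtained by taking $P=Q_0^{-1}$. Your additional checks (that $\Phi_{P\ga}=P\Phi_\ga$ using $\det P=+1$, and that the Sobolev data $(\hat v,\hat w)$ are unchanged) are correct elaborations of what the paper leaves implicit.
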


\begin{proof}
	The proof is similar to that of (\ref{P:arbitrary}). The map $\ga\mapsto P\ga$ takes $\L_{\ka_1}^{\ka_2}(Q_0,Q_1)$ into $\L_{\ka_1}^{\ka_2}(PQ_0,PQ_1)$ and is continuous. The map $\ga\mapsto P^{-1}\ga$, which is likewise continuous, is its inverse.
\end{proof}

Of course, we could also consider the spaces $\sr L_{\ka_1}^{\ka_2}(\cdot,Q)$, consisting of all $(\ka_1,\ka_2)$-admissible curves $\ga$ having final frame $\Phi_{\ga}(1)=Q\in \SO_3$ (but arbitrary initial frame).\label{D:Little00} Like $\sr L_{\ka_1}^{\ka_2}(Q,\cdot)$, this space is contractible. To see this, one can go through the definition to check that it is indeed diffeomorphic to $\E$, or, alternatively, one can observe that the map $\ga\mapsto \bar{\ga}$, $\bar{\ga}(t)=\ga(1-t)$, establishes a homeomorphism 
\[
\sr L_{\ka_1}^{\ka_2}(\cdot,Q)\home \sr L_{\ka_1}^{\ka_2}(QR,\cdot),
\]
where 
\begin{equation*}
	R=\begin{pmatrix}
		1   & 0 &0   \\
		  0 & -1 & 0  \\
		  0 & 0 & -1 
	\end{pmatrix}.
\end{equation*}
Finally, we could study the space \label{D:solto}$\sr L_{\ka_1}^{\ka_2}(\cdot,\cdot)$ of all $(\ka_1,\ka_2)$-admissible curves, with no conditions placed on the frames. The argument given in the proof of (\ref{P:arbitrary}) shows that 
\begin{equation*}
	\sr L_{\ka_1}^{\ka_2}(\cdot,\cdot) \home \SO_3\times  \sr L_{\ka_1}^{\ka_2}(I,\cdot).
\end{equation*} 
Hence,  $\sr L_{\ka_1}^{\ka_2}(\cdot,\cdot)$ is homeomorphic to $\E\times \SO_3$, and has the homotopy type of $\SO_3$.

Thus, the topology of the spaces $\sr L_{\ka_1}^{\ka_2}(Q,\cdot)$, $\sr L_{\ka_1}^{\ka_2}(\cdot,Q)$ and $\sr L_{\ka_1}^{\ka_2}(\cdot,\cdot)$ is uninteresting. We will have nothing else to say about these spaces.

\subsection*{The role of the bounds on the curvature}

Having analyzed the significance of condition (i) on p.~\pageref{D:conditions}, let us examine next condition (ii). Notice that we have allowed the bounds $\ka_1$, $\ka_2$ on the curvature to be infinite. The definition of radius of curvature is extended accordingly by setting $\arccot(+\infty)=0$ and $\arccot(-\infty)=\pi$. We can then rephrase (ii) as:
\begin{enumerate}
	\item[(ii)] $\rho(t)\in (\rho_2,\rho_1)$ for each $t\in [0,1]$.
\end{enumerate} 
Here $\rho$ is the radius of curvature of $\ga$ and $\rho_i=\arccot \ka_i\in [0,\pi]$,  $i=1,2$. The main result of this section relates the topology of $\sr L_{\ka_1}^{\ka_2}(Q)$ to the size $\rho_1-\rho_2$ of the interval $(\rho_2,\rho_1)$. Its proof relies on the following construction. 

Given $-\pi<\theta<\pi$ and an admissible curve $\ga\colon [0,1]\to \Ss^2$, define the \tit{translation} $\ga_\theta\colon [0,1]\to \Ss^2$ of $\ga$ by $\theta$ to be the curve given by
\begin{equation}\label{E:translation}
\ga_\theta(t)=\cos \theta\, \ga(t)+\sin \theta\, \no(t)\qquad (t\in [0,1]).
\end{equation}

\begin{exm}
	Let $0<\al<\frac{\pi}{2}$ and let $C$ be the circle of colatitude $\al$. Depending on the orientation, the translation of $C$ by $\theta$, $0\leq \theta\leq \al$, is either the circle of colatitude $\al+\theta$ or the circle of colatitude $\al-\theta$. In particular, taking $\theta=\al$ and a suitable orientation of $C$, the translation degenerates to a single point (the north pole). 
\end{exm}	 
This example shows that some care must be taken in the choice of $\theta$ for the resulting curve to be admissible.

\begin{lemmaa}\label{L:essentialtranslation}
Let $\ga\colon [0,1]\to \Ss^2$ be an admissible curve and $\rho$ its radius of curvature. Suppose 
\begin{equation}\label{E:regulartranslation}
\rho_2<\rho(t)<\rho_1 \text{\ \,for a.e.~$t\in [0,1]$\ \ and\ \ } \rho_1-\pi\leq \theta\leq\rho_2.
\end{equation}
Then $\ga_\theta$ is an admissible curve and its frame is given by:
\begin{equation}\label{E:rtheta}
\Phi_{\ga_\theta}=\Phi_\ga\?R_\theta\?,\quad\text{where}\quad		R_\theta=\begin{pmatrix}
			\cos \theta   &   0   &   -\sin \theta \\
			 0  &   1 &   0 \\
			 \sin \theta   &   0   &   \cos \theta 
		\end{pmatrix}.
	\end{equation} 
\end{lemmaa}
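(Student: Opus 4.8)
The strategy is a direct computation. The key observation is that the translation operation $\ga\mapsto\ga_\theta$ corresponds, at the level of frames, to right-multiplication by the fixed rotation $R_\theta$, and so one should verify \eqref{E:rtheta} first and then read off admissibility from it. I would begin by recalling from \eqref{E:frenet1} that an admissible curve is governed by its logarithmic derivative $\La = \Phi_\ga^{-1}\dot\Phi_\ga$, which has the skew-symmetric form \eqref{E:frenet2} with $v(t)=\abs{\dot\ga(t)}>0$ a.e.\ and $w(t)=v(t)\ka(t)$, and that the hypothesis $\rho_2<\rho(t)<\rho_1$ is just the statement $\ka_1<\ka(t)<\ka_2$.

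First I would compute $\dot\ga_\theta$. Differentiating \eqref{E:translation} and using $\dot\ga = v\,\ta$ together with $\dot\no = \Phi_\ga\La e_3 = -v\ka\,\ta$ (from \eqref{E:frenet1}), one gets $\dot\ga_\theta = v(\cos\theta - \ka\sin\theta)\,\ta$. Hence $\ga_\theta$ is regular precisely when $\cos\theta - \ka(t)\sin\theta \neq 0$ a.e., and in fact I want this quantity to be strictly positive so that $\Phi_{\ga_\theta}$ (and not a sign-flipped variant) equals $\Phi_\ga R_\theta$; writing $\ka=\cot\rho$ this is $\cos\theta\sin\rho - \cos\rho\sin\theta = \sin(\rho-\theta) > 0$, which holds exactly when $\rho(t)-\theta\in(0,\pi)$, i.e.\ when $\rho_1-\pi\le\theta\le\rho_2$ (using $\rho_2<\rho<\rho_1$). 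This explains the hypothesis \eqref{E:regulartranslation} and shows $\ta_{\ga_\theta} = \ta_\ga$. Next, since $\ga_\theta = \cos\theta\,\ga + \sin\theta\,\no = \Phi_\ga(\cos\theta\,e_1 + \sin\theta\,e_3)$ and $\ta_{\ga_\theta} = \ta_\ga = \Phi_\ga e_2$, the normal is $\no_{\ga_\theta} = \ga_\theta\times\ta_{\ga_\theta} = \Phi_\ga\big((\cos\theta\,e_1+\sin\theta\,e_3)\times e_2\big) = \Phi_\ga(-\sin\theta\,e_1 + \cos\theta\,e_3)$. Reading off the three columns gives exactly $\Phi_{\ga_\theta} = \Phi_\ga R_\theta$, establishing \eqref{E:rtheta}.

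It then remains to check that $\ga_\theta$ is admissible, i.e.\ that its frame is an $H^1$ curve into $\SO_3$ whose associated $(v,w)$ lie (after applying $h$, $h_{\ka_1,\ka_2}$) in $\E = L^2\times L^2$, and that $\ka_1<\ka_{\ga_\theta}<\ka_2$. Since $R_\theta$ is a constant matrix, $\Phi_{\ga_\theta}=\Phi_\ga R_\theta$ is $H^1$ because $\Phi_\ga$ is, and its logarithmic derivative is $\La_{\ga_\theta} = R_\theta^{-1}\La_\ga R_\theta$. A short matrix computation with $\La_\ga$ of the form \eqref{E:frenet2} shows $\La_{\ga_\theta}$ again has that form, with new entries $v_\theta = v\cos\theta - w\sin\theta = v(\cos\theta-\ka\sin\theta)$ — consistent with $\dot\ga_\theta$ above and positive a.e.\ by the sign analysis — and $w_\theta = v\sin\theta + w\cos\theta = v(\sin\theta + \ka\cos\theta)$, so that $\ka_{\ga_\theta} = w_\theta/v_\theta = (\sin\theta+\ka\cos\theta)/(\cos\theta-\ka\sin\theta)$. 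Writing $\ka=\cot\rho$ this simplifies to $\cot(\rho-\theta)$, so the radius of curvature of $\ga_\theta$ is $\rho-\theta$; since $\rho_2<\rho<\rho_1$ and $\rho_1-\pi\le\theta\le\rho_2$ one has $\rho-\theta\in(\rho_2-\theta,\rho_1-\theta)\subseteq(0,\pi)$ and moreover $\rho-\theta\in(\rho_2,\rho_1)$ whenever... — more carefully, the required bound is $\rho_2 - \theta \ge$ the relevant endpoint; in any case $\ka_{\ga_\theta}=\cot(\rho-\theta)$ with $\rho-\theta\in(0,\pi)$ shows the geodesic curvature is finite a.e. The only genuine point to be careful about — and the main obstacle — is integrability: one must confirm $v_\theta\in L^2$ (immediate, as $v,w\in L^2$ and $\cos\theta,\sin\theta$ are constants) and, more delicately, that $\hat v_\theta := h(v_\theta)$ and $\hat w_\theta$ still lie in $L^2$, which requires knowing that $v_\theta$ stays bounded away from $0$ and that $\ka_{\ga_\theta}$ stays strictly inside $(\ka_1,\ka_2)$; both follow from $\rho-\theta$ lying in a compact subinterval of $(\rho_2,\rho_1)$, which is guaranteed by \eqref{E:regulartranslation} together with $\rho$ itself being essentially bounded away from $\rho_1,\rho_2$ for an admissible curve. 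I would conclude by invoking the definition of admissibility to assemble these facts into the statement.
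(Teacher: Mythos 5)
Your computation of the frame is correct and follows the same route as the paper: conjugating the logarithmic derivative by the constant rotation $R_\theta$ produces a matrix of the form \eqref{E:frenet2} with new entries $v_\theta=\cos\theta\,v-\sin\theta\,w$ and $w_\theta=\sin\theta\,v+\cos\theta\,w$, and the identity $v_\theta=\tfrac{v}{\sin\rho}\sin(\rho-\theta)>0$ a.e.\ is exactly where the hypothesis $\rho_1-\pi\leq\theta\leq\rho_2$ enters. Your preliminary derivation of $\dot\ga_\theta$ and of $\no_{\ga_\theta}$ is a harmless reordering of the same ideas, and the formula $\ka_{\ga_\theta}=\cot(\rho-\theta)$ is right (it is the content of (\ref{C:radiusofcurvature})).

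The gap is in your closing integrability argument. You justify $h(v_\theta)\in L^2[0,1]$ and the corresponding condition on the curvature by asserting that $v_\theta$ is bounded away from $0$ and that $\rho-\theta$ lies in a compact subinterval of $(\rho_2,\rho_1)$, ``guaranteed by \dots $\rho$ itself being essentially bounded away from $\rho_1,\rho_2$ for an admissible curve.'' This premise is false: an admissible curve is specified only by $(\hat v,\hat w)\in L^2\times L^2$, and since $\hat w$ may be essentially unbounded, $\ka=h_{\ka_1,\ka_2}^{-1}(\hat w)$ may come arbitrarily close to $\ka_1$ and $\ka_2$, i.e.\ $\rho$ may approach $\rho_1$ and $\rho_2$; likewise $v=h^{-1}(\hat v)$ need not be bounded away from $0$. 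Allowing exactly this behaviour is the whole purpose of composing with the diffeomorphisms $h$ and $h_{\ka_1,\ka_2}$ in \eqref{E:Sobolev}. There is also a smaller slip: for $\theta\neq 0$ the new radius $\rho-\theta$ lies in $(\rho_2-\theta,\rho_1-\theta)$, not in $(\rho_2,\rho_1)$, so admissibility of $\ga_\theta$ must be asserted with respect to the shifted bounds $\bar\rho_i=\rho_i-\theta$ (as in (\ref{T:size})), and what is actually needed is $1/v_\theta\in L^2$ and $h_{\bar\ka_1,\bar\ka_2}\big(\cot(\rho-\theta)\big)\in L^2$, not pointwise boundedness away from the endpoints. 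The correct justification is that $\hat v_\theta$ and $\hat w_\theta$ are obtained from $\hat v,\hat w$ by composing with fixed real functions of at most linear growth at $\pm\infty$, so they stay in $L^2$; the paper itself does not spell this out (its proof stops at the positivity of $v_\theta$), but your stated reason, as written, would fail.
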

\begin{proof}
Let $\Psi=\Phi_\ga\?R_\theta$. Since $\Phi_\ga$ satisfies the differential equation \eqref{E:frenet1}, $\Psi$ satisfies 
\begin{equation*}
	\dot\Psi=\Psi\?(R_\theta^{-1}\La\?R_\theta).
\end{equation*}
A direct calculation shows that 
\begin{equation*}
	R_\theta^{-1}\La\?R_\theta=\begin{pmatrix}
			0  &   -\big( \cos\theta\? v-\sin\theta\?w \big)   &  0 \\
			\cos\theta\? v-\sin\theta\?w  &   0 &   -\big( \cos\theta\?w+\sin\theta\?v \big) \\
			 0  &  \cos\theta\?w+\sin\theta\?v    &   0
		\end{pmatrix},
\end{equation*}
where $v=v(t)=\abs{\dot\ga(t)}$ and $w=w(t)=v(t)\ka(t)$. Also, $\Psi e_1=\ga_\theta$ by construction. To show that $\ga_\theta$ is admissible, it is thus only necessary to show that 
\begin{equation*}
	\cos\theta\? v(t)-\sin\theta\?w(t)=v(t)\big(\cos\theta-\sin\theta\?\cot\rho(t)\big)=\frac{v(t)}{\sin \rho(t)}\sin(\rho(t)-\theta)>0
\end{equation*}
for almost every $t\in [0,1]$, and this is true by our choice of $\theta$ and the fact that $v>0$.
\end{proof}
Thus, for $\theta$ satisfying \eqref{E:regulartranslation}, we obtain from \eqref{E:rtheta} that  the unit tangent vector $\ta_\theta$ and unit normal vector $\no_\theta$ to the translation $\ga_\theta$ of $\ga$ are given by:
\begin{equation}\label{E:normaltranslation}
	\ta_\theta(t)=\ta(t)\text{\quad and \quad}\no_\theta(t)=-\sin \theta\, \ga(t)+\cos \theta\, \no(t)
\end{equation}
for almost every $t\in [0,1]$.
 

\begin{lemmaa}\label{L:inversetranslation}
	Let $\ga\colon [0,1]\to \Ss^2$ be an admissible curve and suppose that \eqref{E:regulartranslation} holds. Then $(\ga_\theta)_{\vphi}=\ga_{\theta+\vphi}$ for any $\vphi\in (-\pi,\pi)$. In particular, $(\ga_{\theta})_{-\theta}=\ga$.
\end{lemmaa}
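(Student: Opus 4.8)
The identity should follow purely formally from the fact that the matrices $R_\theta$ appearing in (\ref{L:essentialtranslation}) are rotations of $\R^3$ about the $e_2$-axis, and hence form a one-parameter subgroup: $R_\theta\?R_\vphi=R_{\theta+\vphi}$ (immediate from the angle-addition formulas). The plan is to express both $\ga_\theta$ and its normal vector through the frame $\Phi_\ga$ and then read off $(\ga_\theta)_\vphi$.

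\textbf{Steps.} First I would note that the hypothesis \eqref{E:regulartranslation} is exactly what is needed to apply (\ref{L:essentialtranslation}): it guarantees that $\ga_\theta$ is again an admissible curve, with frame $\Phi_{\ga_\theta}=\Phi_\ga\?R_\theta$. Since $\ga_\theta$ is admissible and $\vphi\in(-\pi,\pi)$, the curve $(\ga_\theta)_\vphi$ is well-defined by \eqref{E:translation}, no further hypothesis being required. Reading off the first and third columns of $\Phi_{\ga_\theta}=\Phi_\ga\?R_\theta$ gives $\ga_\theta=\Phi_{\ga_\theta}e_1=\Phi_\ga\?R_\theta\?e_1$ and $\no_{\ga_\theta}=\Phi_{\ga_\theta}e_3=\Phi_\ga\?R_\theta\?e_3$ (the latter is precisely \eqref{E:normaltranslation}). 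Then, using $\cos\vphi\,e_1+\sin\vphi\,e_3=R_\vphi\?e_1$ together with the group law,
\[
(\ga_\theta)_\vphi \;=\; \cos\vphi\,\ga_\theta+\sin\vphi\,\no_{\ga_\theta}
\;=\;\Phi_\ga\?R_\theta\bigl(\cos\vphi\,e_1+\sin\vphi\,e_3\bigr)
\;=\;\Phi_\ga\?R_\theta R_\vphi\?e_1
\;=\;\Phi_\ga\?R_{\theta+\vphi}\?e_1 .
\]
Since $\Phi_\ga\?R_{\theta+\vphi}\?e_1=\cos(\theta+\vphi)\,\ga+\sin(\theta+\vphi)\,\no$, which is by definition $\ga_{\theta+\vphi}$, this proves the first assertion; taking $\vphi=-\theta$ gives $(\ga_\theta)_{-\theta}=\ga_0=\ga$. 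All equalities here are genuine pointwise equalities of continuous curves on $[0,1]$, since $\ga_\theta$ and $\no_{\ga_\theta}=\Phi_{\ga_\theta}e_3$ are defined everywhere.

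\textbf{Main obstacle.} There is essentially none: the argument is a two-line computation once (\ref{L:essentialtranslation}) is in hand. The only points requiring a word of care are (i) checking that $(\ga_\theta)_\vphi$ is legitimately defined, which is handled by the admissibility of $\ga_\theta$ coming from (\ref{L:essentialtranslation}), and (ii) observing that ``$\ga_{\theta+\vphi}$'' should be understood as the curve $t\mapsto\cos(\theta+\vphi)\,\ga(t)+\sin(\theta+\vphi)\,\no(t)$ even when $\theta+\vphi$ lies outside $(-\pi,\pi)$ — the case $\vphi=-\theta$ needed for the ``in particular'' clause, where $\theta+\vphi=0$, being entirely unproblematic.
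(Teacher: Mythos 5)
Your proof is correct and follows essentially the same route as the paper: the paper likewise notes that $(\ga_\theta)_\vphi$ is defined because $\ga_\theta$ is admissible by (\ref{L:essentialtranslation}), and then computes $(\ga_\theta)_\vphi=\cos\vphi\,(\cos\theta\,\ga+\sin\theta\,\no)+\sin\vphi\,(-\sin\theta\,\ga+\cos\theta\,\no)=\ga_{\theta+\vphi}$ directly from \eqref{E:translation} and \eqref{E:normaltranslation}. Your phrasing via the one-parameter group $R_\theta R_\vphi=R_{\theta+\vphi}$ is just the matrix form of that same angle-addition computation.
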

\begin{proof}
Note that $(\ga_{\theta})_\vphi$ is defined because $\ga_\theta$ is admissible, as we have just seen. Using \eqref{E:translation} and \eqref{E:normaltranslation} we obtain that 
\begin{equation*}
(\ga_\theta)_{\vphi}=\cos \vphi\?\big(\cos\theta\, \ga+\sin\theta\, \no\big)+\sin \vphi\?\big(-\sin \theta\, \ga+\cos \theta\, \no\big)=\ga_{\theta+\vphi}.\qedhere
\end{equation*}
\end{proof}


Given three distinct points on $\Ss^2$, there is a unique circle passing through them; this circle is also contained in the sphere, for it is the intersection of the unique plane containing the points with $\Ss^2$. Now consider a $C^2$ regular curve $\ga\colon [0,1]\to \Ss^2$. Fix $t\in [0,1]$, and take distinct $t_1,t_2,t_3\in [0,1]$. Because the Euclidean curvature $K(t)\neq 0$, the osculating circle to $\ga$ at $\ga(t)$ exists and is equal to the limit position, as $t_1,t_2,t_3$ approach $t$, of the unique circle through $\ga(t_1)$, $\ga(t_2)$ and $\ga(t_3)$. Therefore, being a limit of circles contained in the sphere, the osculating circle at any point of $\ga$ is also contained in the sphere.

\begin{lemmaa}\label{L:osculating}
Let $\ga\colon [0,1]\to \Ss^2$ be $C^2$ regular and let $\theta$ satisfy \eqref{E:regulartranslation}. Then the osculating circle to the translation $\ga_\theta$ at $\ga_\theta(t)$ is the translation of the osculating circle to $\ga$ at $\ga(t)$ by $\theta$.
\end{lemmaa}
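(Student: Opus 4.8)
The plan is to reduce the statement to a first-order comparison at the point in question. Recall that, for a $C^2$ regular curve, the osculating circle at $\ga(t)$ --- being the limiting position of the circles through three nearby points of $\ga$, as recalled just before the lemma --- coincides with the unique circle on $\Ss^2$ that passes through $\ga(t)$, has $\ta(t)$ as unit tangent vector there, and has geodesic curvature $\ka(t)$ at that point. This last prescription still makes sense for an admissible curve whose geodesic curvature is defined and continuous at $t$, and we take it as the definition of the osculating circle of $\ga_\theta$ (which need only be $C^1$, not $C^2$, even when $\ga$ is $C^2$, since its speed is in general merely continuous). Thus it suffices to show that $\ga_\theta$ and the translation by $\theta$ of the osculating circle of $\ga$ at $\ga(t_0)$ agree in position, unit tangent vector and geodesic curvature at $t_0$.

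First I would record how a translation acts on the curvature. Writing $v=\abs{\dot\ga}$ and $w=v\ka$, the matrix $R_\theta^{-1}\La R_\theta$ computed in the proof of (\ref{L:essentialtranslation}), together with $\Phi_{\ga_\theta}=\Phi_\ga R_\theta$ from \eqref{E:rtheta}, shows that the logarithmic derivative of $\Phi_{\ga_\theta}$ has $(2,1)$-entry $\cos\theta\,v-\sin\theta\,w$ and $(3,2)$-entry $\cos\theta\,w+\sin\theta\,v$; dividing and using $\ka=\cot\rho$ gives $\ka_{\ga_\theta}=\cot(\rho-\theta)$, i.e.\ the radius of curvature of $\ga_\theta$ at $t$ equals $\rho(t)-\theta$, which lies in $(0,\pi)$ by \eqref{E:regulartranslation}. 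In particular a circle --- a curve of constant radius of curvature --- is carried by a translation to a curve of constant radius of curvature, hence again to a circle.

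Now fix $t_0$ and let $C$ be the osculating circle of $\ga$ at $\ga(t_0)$, parametrized as an admissible curve with $C(t_0)=\ga(t_0)$, $\ta_C(t_0)=\ta(t_0)$ and geodesic curvature constantly equal to $\ka(t_0)$. Since $\rho_C\equiv\rho(t_0)\in(\rho_2,\rho_1)$ and $\theta$ satisfies \eqref{E:regulartranslation}, Lemma (\ref{L:essentialtranslation}) applies to $C$ as well, so $C_\theta$ is defined and admissible. From $\no_C(t_0)=C(t_0)\times\ta_C(t_0)=\ga(t_0)\times\ta(t_0)=\no(t_0)$ and \eqref{E:translation} we get $C_\theta(t_0)=\cos\theta\,\ga(t_0)+\sin\theta\,\no(t_0)=\ga_\theta(t_0)$; by \eqref{E:normaltranslation}, $\ta_{C_\theta}(t_0)=\ta_C(t_0)=\ta(t_0)=\ta_{\ga_\theta}(t_0)$; and by the previous paragraph, $\ka_{C_\theta}(t_0)=\cot(\rho_C(t_0)-\theta)=\cot(\rho(t_0)-\theta)=\ka_{\ga_\theta}(t_0)$. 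So $C_\theta$ and $\ga_\theta$ have the same position, unit tangent vector and geodesic curvature at $t_0$, hence the same osculating circle there; and since $C_\theta$ is itself a circle, it coincides with its own osculating circle at $t_0$, so that common circle is $C_\theta$. Therefore the osculating circle of $\ga_\theta$ at $\ga_\theta(t_0)$ is $C_\theta$, the translation by $\theta$ of $C$, which is what was claimed.

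The only genuinely delicate point is the reduction carried out in the first paragraph: one must be sure that the osculating circle is determined by --- and can be prescribed through --- the point, the unit tangent vector and the geodesic curvature, and that this is the right notion to use for the possibly non-$C^2$ curve $\ga_\theta$. Once that is granted, everything else is bookkeeping resting on (\ref{L:essentialtranslation}) and the formulas \eqref{E:translation} and \eqref{E:normaltranslation}. An alternative that avoids adjusting the definition would be to approximate $\ga$ in the $C^2$ topology by smooth curves, apply the smooth case, and pass to the limit using the continuity of $\ga\mapsto\ga_\theta$ and of the osculating-circle map; the direct argument above seems preferable.
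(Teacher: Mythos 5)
Your proof is correct, and it takes a genuinely different (and in one respect cleaner) route than the paper. The paper also translates the osculating circle $\sig$ and identifies $\sig_\theta$ with the osculating circle of $\ga_\theta$, but it does so by computing $\ddot\ga_\theta(0)$ and $\ddot\sig_\theta(0)$ explicitly and invoking the characterization of the osculating circle as the unique circle through the point lying in the plane spanned by the first and second derivatives; note that this computation produces a term $-\ka'(0)\sin\theta\,\ta(0)$, so it implicitly differentiates $\ka$ (harmlessly, since that term is tangential and does not change the spanned plane, but it is a wrinkle for a curve that is only $C^2$). You instead characterize the osculating circle by the triple (point, unit tangent, geodesic curvature), read off the curvature of a translate directly from the conjugated logarithmic derivative $R_\theta^{-1}\La R_\theta$ in the proof of (\ref{L:essentialtranslation}) to get $\ka_{\ga_\theta}=\cot(\rho-\theta)$, and then match the three data at $t_0$. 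This buys you two things: it never requires second derivatives of $\ga_\theta$ (which is in general only $C^1$ even when $\ga$ is $C^2$), and it yields Corollary (\ref{C:radiusofcurvature}) as a byproduct rather than as a consequence of the lemma --- and there is no circularity in that inversion, since your curvature formula comes from (\ref{L:essentialtranslation}) alone. The one point you rightly flag as delicate, the identification of the osculating circle with the unique circle having prescribed point, tangent and signed geodesic curvature, is consistent with the paper's own description of the center of curvature $\chi(t)=\cos\rho(t)\,\ga(t)+\sin\rho(t)\,\no(t)$, so it is a legitimate recalled fact rather than a gap.
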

\begin{proof}
 Let $\ga$ be parametrized by arc-length and let $\sig$ be a parametrization, also by arc-length, of the osculating circle to $\ga$ at $\ga(0)$. By definition, the osculating circle is the unique circle in $\R^3$ which has contact of order 3 with $\ga$ at $\ga(0)$; that is, $\sig$ must satisfy:
\begin{equation*}
   \sig(0)=\ga(0),\qquad \sig'(0)=\ga'(0),\qquad \sig''(0)=\ga''(0).
\end{equation*}  
In particular, the geodesic curvatures of $\ga$ and $\sig$ at the point $\ga(0)=\sig(0)$ coincide. From these relations and \eqref{E:translation} we deduce that $\sig_\theta(0)=\ga_\theta(0)$, $\dot{\sig}_\theta(0)=\dot{\ga}_\theta(0)$. Another calculation shows that 
\begin{alignat*}{10}
\ddot{\ga}_\theta(0) &=\big(\ka(0)\sin\theta-\cos \theta\big)\big(\ga(0)-\ka(0)\,\no(0)\big)-\ka'(0)\sin\theta\,\ta(0), \\
\ddot{\sig}_\theta(0) &=\big(\ka(0)\sin\theta-\cos \theta\big)\big(\sig(0)-\ka(0)\,\no(0)\big).
\end{alignat*}
(Here $\ga_\theta$ (resp.~$\sig_\theta$) is parametrized with respect to the arc-length parameter of $\ga$ (resp.~$\sig$).) This shows that the vector subspaces of $\R^3$ spanned by the two pairs $\se{\dot{\ga}_\theta(0),\ddot{\ga}_\theta(0)}$ and $\se{\dot{\sig}_\theta(0),\ddot{\sig}_\theta(0)}$ coincide. Consequently, the image of $\sig_\theta$ is a circle in the sphere contained in the plane parallel to $\dot{\ga}_\theta(0)$ and $\ddot\ga_\theta(0)$ through $\ga_\theta(0)$. But there is only one such circle, viz., the osculating circle to $\ga_\theta$ at $\ga_\theta(0)$. Since $0$ could have been replaced by any $s_0\in [0,1]$ in this argument, the proof is complete.
\end{proof}

\begin{cora}\label{C:radiusofcurvature}
Let $\ga\colon [0,1]\to \Ss^2$ be an admissible curve and let $\theta$ satisfy \eqref{E:regulartranslation}. Then the radius of curvature $\bar\rho$ of $\ga_\theta$ is given by $\bar\rho=\rho-\theta$.
\end{cora}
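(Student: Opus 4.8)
The plan is to read off the geodesic curvature of $\ga_\theta$ directly from the computation already carried out in the proof of (\ref{L:essentialtranslation}), and then appeal to the definition of the radius of curvature. There it was shown that $\Phi_{\ga_\theta}=\Phi_\ga\,R_\theta$ and that $\Psi=\Phi_\ga R_\theta$ satisfies $\dot\Psi=\Psi\,(R_\theta^{-1}\La R_\theta)$, where the skew-symmetric matrix $R_\theta^{-1}\La R_\theta$ was written out explicitly: its $(2,1)$ entry is $\cos\theta\,v-\sin\theta\,w$ and its $(3,2)$ entry is $\cos\theta\,w+\sin\theta\,v$, with $v=v(t)=\abs{\dot\ga(t)}$ and $w=w(t)=v(t)\ka(t)$. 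Comparing this with the normal form \eqref{E:frenet1}--\eqref{E:frenet2} of a logarithmic derivative, those two entries must equal $\abs{\dot\ga_\theta(t)}$ and $\abs{\dot\ga_\theta(t)}\,\bar\ka(t)$ respectively, where $\bar\ka$ is the geodesic curvature of $\ga_\theta$ (the first entry being positive a.e.\ by the choice of $\theta$, as already verified there). Dividing, and using $w=v\cot\rho$ together with the angle-subtraction formulas, gives
\[
	\bar\ka(t)=\frac{\cos\theta\,w(t)+\sin\theta\,v(t)}{\cos\theta\,v(t)-\sin\theta\,w(t)}
	=\frac{\cos\theta\cos\rho(t)+\sin\theta\sin\rho(t)}{\cos\theta\sin\rho(t)-\sin\theta\cos\rho(t)}
	=\cot\big(\rho(t)-\theta\big)
\]
for almost every $t\in[0,1]$.

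Next I would check that $\rho(t)-\theta\in(0,\pi)$ for a.e.\ $t$, so that the uniqueness clause in the definition of the radius of curvature applies: this is immediate from \eqref{E:regulartranslation}, since $\theta\le\rho_2<\rho(t)$ forces $\rho(t)-\theta>0$, and $\rho(t)<\rho_1\le\theta+\pi$ forces $\rho(t)-\theta<\pi$. As $\bar\rho(t)$ is by definition the unique number in $(0,\pi)$ with $\cot\bar\rho(t)=\bar\ka(t)$, we conclude $\bar\rho=\rho-\theta$ almost everywhere, which is the assertion. I would also remark that for $\ga$ of class $C^2$ the same conclusion can be obtained geometrically from (\ref{L:osculating}): the osculating circle of $\ga_\theta$ at $\ga_\theta(t)$ is the $\theta$-translation of the osculating circle of $\ga$ at $\ga(t)$, and translating a circle of radius of curvature $\rho$ by $\theta$ produces a circle of radius of curvature $\rho-\theta$ (as one sees by moving $\ga$'s osculating circle, via an ambient isometry, to a parallel circle as in the example following Figure~\ref{F:parallel}). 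I would nonetheless prefer the computational argument, since it is shorter and applies verbatim to all admissible curves, whereas the geometric route would need an extra approximation step (via (\ref{L:dense})) to descend from $C^2$ to merely admissible curves.

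There is no serious obstacle here; the only points demanding attention are the trigonometric simplification and—more importantly—the verification that $\rho(t)-\theta$ stays inside $(0,\pi)$, which is precisely what the two-sided bound on $\theta$ in \eqref{E:regulartranslation} is designed to guarantee.
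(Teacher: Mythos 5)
Your proof is correct, but it takes a genuinely different route from the paper's. The paper argues geometrically: for a $C^2$ curve it invokes (\ref{L:osculating}) to reduce to the case where $\ga$ is a circle, verifies the formula $\bar\rho=\rho-\theta$ directly for circles, and then extends to arbitrary admissible curves by the density result (\ref{L:dense}). You instead extract $\bar\ka$ algebraically from the conjugated logarithmic derivative $R_\theta^{-1}\La R_\theta$ already computed in the proof of (\ref{L:essentialtranslation}), identify its $(2,1)$ and $(3,2)$ entries with $\abs{\dot\ga_\theta}$ and $\abs{\dot\ga_\theta}\bar\ka$ via the normal form \eqref{E:frenet1}--\eqref{E:frenet2}, and simplify to $\bar\ka=\cot(\rho-\theta)$; the sign check for the $(2,1)$ entry and the verification that $\rho-\theta\in(0,\pi)$ are exactly the right points to flag, and both are handled correctly. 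Your computation has the advantage of applying verbatim to all admissible curves, whereas the paper's approximation step leaves implicit why an a.e.-defined curvature identity survives passage to the $L^2$ limit; the paper's version, on the other hand, makes the geometric content (translation of osculating circles) visible, which is why (\ref{L:osculating}) is stated at all. Either argument is acceptable; yours is arguably the tighter one for the generality actually claimed in the statement.
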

\begin{proof}
If $\ga$ is $C^2$ regular we can, by (\ref{L:osculating}), actually assume that it is a circle. Then an easy direct verification shows that the formula $\bar\rho=\rho-\theta$ holds regardless of which orientation we choose. The general case where $\ga$ is only admissible can be deduced from this by applying (\ref{L:dense}).
\end{proof}

\begin{thma}\label{T:size}
Let $Q\in \SO_3$, $\ka_1<\ka_2$, $\bar\ka_1<\bar\ka_2$, $\rho_i=\arccot \ka_i$, $\bar\rho_i=\arccot\bar\ka_i$. Suppose that  $\rho_1-\rho_2=\bar\rho_1-\bar\rho_2$. Then $\sr L_{\ka_1}^{\ka_2}(Q)\home \sr L_{\bar\ka_1}^{\bar\ka_2}(R_{-\theta}QR_\theta)$, where $\theta=\rho_2-\bar\rho_2$ and
\begin{equation*}
		R_\theta=\begin{pmatrix}
			\cos \theta   &   0   &   -\sin \theta \\
			 0  &   1 &   0 \\
			 \sin \theta  &   0   &   \cos \theta 
		\end{pmatrix}.
	\end{equation*} 
\end{thma}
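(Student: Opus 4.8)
The plan is to use the translation construction from Lemma~\ref{L:essentialtranslation} to build the homeomorphism explicitly. Set $\theta = \rho_2 - \bar\rho_2$; the hypothesis $\rho_1 - \rho_2 = \bar\rho_1 - \bar\rho_2$ forces also $\theta = \rho_1 - \bar\rho_1$, so the interval $(\bar\rho_2,\bar\rho_1)$ is exactly $(\rho_2 - \theta, \rho_1 - \theta)$. I would first check that $\theta$ lies in the admissible range for translating any curve $\ga \in \sr L_{\ka_1}^{\ka_2}(Q)$, i.e.\ that condition \eqref{E:regulartranslation} holds: since such a $\ga$ has $\rho(t) \in (\rho_2,\rho_1)$ for a.e.\ $t$, and since $\rho_1 - \pi \le \bar\rho_1 - \pi \le 0$ and $\rho_2 \le \pi$ give $\rho_1 - \pi \le \theta \le \rho_2$ (using $0 \le \bar\rho_i \le \pi$), the translate $\ga_\theta$ is well-defined and admissible.

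Next I would verify that $\ga \mapsto \ga_\theta$ actually lands in $\sr L_{\bar\ka_1}^{\bar\ka_2}(R_{-\theta} Q R_\theta)$. By Corollary~\ref{C:radiusofcurvature}, the radius of curvature of $\ga_\theta$ is $\bar\rho = \rho - \theta$, which lies in $(\rho_2 - \theta, \rho_1 - \theta) = (\bar\rho_2, \bar\rho_1)$ a.e., so $\ga_\theta$ satisfies condition (ii) for the bounds $\bar\ka_1 < \bar\ka_2$. For the frame condition, Lemma~\ref{L:essentialtranslation} gives $\Phi_{\ga_\theta} = \Phi_\ga R_\theta$; hence $\Phi_{\ga_\theta}(0) = \Phi_\ga(0) R_\theta = I\cdot R_\theta = R_\theta$ and $\Phi_{\ga_\theta}(1) = Q R_\theta$. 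This is not quite the space $\sr L_{\bar\ka_1}^{\bar\ka_2}(R_{-\theta}QR_\theta)$, whose curves must start at $I$; so I would compose with the isometry $R_{-\theta} = R_\theta^{-1}$ acting on $\Ss^2$, sending $\ga_\theta$ to $R_{-\theta}\,\ga_\theta$. Since $R_{-\theta}$ is an isometry of $\Ss^2$, it preserves geodesic curvature (cf.\ the proof of Proposition~\ref{P:arbitrary}), and its frame transforms by $\Phi_{R_{-\theta}\ga_\theta} = R_{-\theta}\,\Phi_{\ga_\theta} = R_{-\theta}\Phi_\ga R_\theta$; evaluating at $0$ and $1$ gives initial frame $I$ and final frame $R_{-\theta} Q R_\theta$, as required.

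The composite map is therefore $\Psi\colon \ga \mapsto R_{-\theta}\,(\ga_\theta)$, from $\sr L_{\ka_1}^{\ka_2}(Q)$ to $\sr L_{\bar\ka_1}^{\bar\ka_2}(R_{-\theta}QR_\theta)$. Continuity in the Hilbert manifold structure is routine: in the $(\hat v, \hat w)$ coordinates, translation by $\theta$ multiplies $\Phi$ on the right by the constant matrix $R_\theta$ and replaces the pair $(v, w)$ by $(\cos\theta\, v - \sin\theta\, w,\ \cos\theta\, w + \sin\theta\, v)$ (from the conjugated $\La$ in the proof of Lemma~\ref{L:essentialtranslation}); re-expressing through $h$, $h_{\ka_1,\ka_2}$ and the target diffeomorphisms $h_{\bar\ka_1,\bar\ka_2}$ yields a smooth (in particular continuous) map on $\E$, and left multiplication by the constant isometry $R_{-\theta}$ is continuous as well. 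For the inverse, I would invoke Lemma~\ref{L:inversetranslation}: $(\ga_\theta)_{-\theta} = \ga$, so the map $\eta \mapsto (R_\theta\eta)_{-\theta}$ — equivalently translation by $-\theta$ followed by the isometry $R_\theta$ — is a two-sided inverse of $\Psi$, and it is continuous for the same reasons. Hence $\Psi$ is a homeomorphism.

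I expect the main obstacle to be purely bookkeeping rather than conceptual: carefully tracking the two separate actions — the right multiplication of frames by $R_\theta$ coming from the translation, versus the left multiplication by $R_{-\theta}$ coming from the ambient isometry needed to normalize the initial frame back to $I$ — and confirming that they combine to give exactly $R_{-\theta} Q R_\theta$ and not, say, $R_\theta Q R_{-\theta}$ or a conjugate by some other element. A secondary subtlety is making sure the range condition $\rho_1 - \pi \le \theta \le \rho_2$ in \eqref{E:regulartranslation} genuinely holds for the chosen $\theta = \rho_2 - \bar\rho_2$ in all the edge cases where some $\ka_i$ is infinite (so some $\rho_i \in \{0, \pi\}$); this is where the convention $\arccot(\pm\infty) \in \{0,\pi\}$ and the hypothesis $\rho_1 - \rho_2 = \bar\rho_1 - \bar\rho_2 \le \pi$ must be used to rule out a degenerate translation.
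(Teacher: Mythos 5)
Your proposal is correct and follows essentially the same route as the paper: translate by $\theta=\rho_2-\bar\rho_2$ using Lemma (\ref{L:essentialtranslation}) and Corollary (\ref{C:radiusofcurvature}) to land in $\sr L_{\bar\ka_1}^{\bar\ka_2}(R_\theta,QR_\theta)$, invert via Lemma (\ref{L:inversetranslation}), and then normalize the initial frame back to $I$ — the paper does this last step by citing Proposition (\ref{T:initialfinal}) rather than inlining the left-multiplication by $R_{-\theta}$, but that is the identical argument. (One small remark: your intermediate inequality $\rho_1-\pi\leq\bar\rho_1-\pi$ need not hold, but the bound $\rho_1-\pi\leq\theta\leq\rho_2$ you actually need follows directly from $0\leq\bar\rho_2$ and $\bar\rho_1\leq\pi$ together with $\rho_1-\rho_2=\bar\rho_1-\bar\rho_2$.)
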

We recall that the bounds $\ka_i$, $\bar\ka_i$ may take on infinite values, and we adopt the conventions that $\arccot(+\infty)=0$ and $\arccot (-\infty)=\pi$.
\begin{proof}
	Let $\ga\in \sr L_{\ka_1}^{\ka_2}(Q)$ and let $\rho$ be its radius of curvature. We have:
\[
\rho_2<\rho(t)<\rho_1\text{ for a.e.~$t\in [0,1]$.}
\]
Set $\theta=\rho_2-\bar\rho_2$. Then \eqref{E:regulartranslation} is satisfied, so $\ga_\theta$ is and admissible curve. By (\ref{C:radiusofcurvature}), the radius of curvature $\bar{\rho}$ of $\ga_{\theta}$ is given by $\bar{\rho}=\rho-\theta$. Thus, 

\begin{equation*}
\bar\rho_2<\bar{\rho}(t)<\bar\rho_1 \text{ for a.e.~$t\in [0,1]$}.	
\end{equation*}

Together with (\ref{L:essentialtranslation}), this says that $F\colon \ga\mapsto \ga_{\theta}$ maps $\sr L_{\ka_1}^{\ka_2}(Q)$ into $\sr L_{\bar\ka_1}^{\bar\ka_2}(R_\theta,QR_\theta)$. Similarly,  translation by $-\theta$ is a map $G\colon \sr L_{\bar\ka_1}^{\bar\ka_2}(R_\theta,QR_\theta)\to \sr L_{\ka_1}^{\ka_2}(Q)$. By (\ref{L:inversetranslation}), the maps $F$ and $G$ are inverse to each other, hence 
\begin{equation*}
	\sr L_{\ka_1}^{\ka_2}(Q) \home \sr L_{\bar\ka_1}^{\bar\ka_2}(R_\theta,QR_\theta).
\end{equation*}
Finally, because $R_\theta^{-1}=R_{-\theta}$, (\ref{T:initialfinal}) guarantees that
\begin{equation*}
	\sr L_{\bar\ka_1}^{\bar\ka_2}(R_\theta,QR_\theta) \home \sr L_{\bar\ka_1}^{\bar\ka_2}(R_{-\theta}QR_\theta).\qedhere
\end{equation*}
\end{proof}

\begin{rema}\label{R:circletocircle}
	Taking $Q=I$ we obtain from (\ref{T:size}) that $\sr L_{\ka_1}^{\ka_2}(I)\home \sr L_{\bar\ka_1}^{\bar\ka_2}(I)$ ($\ka_i$, $\bar\ka_i$ as in the hypothesis of the theorem). It will also be important to us that under the homeomorphisms of (\ref{T:size}) and the following corollaries, the image of any circle traversed $k$ times is another circle traversed $k$ times.
\end{rema}

\begin{cora}\label{C:symmetricinterval}
Let $Q\in \SO_3$ and $\ka_1<\ka_2$. Then $\sr L_{\ka_1}^{\ka_2}(Q)\home \sr L_{-\ka_0}^{+\ka_0}(P)$ for suitable $\ka_0>0$, $P\in \SO_3$. Moreover, if $Q=I$ then $P=I$ also.
\end{cora}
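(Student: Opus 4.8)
The plan is to invoke Theorem~(\ref{T:size}) with a judicious choice of the target bounds $\bar\ka_1<\bar\ka_2$, namely one that makes the interval of radii $(\bar\rho_2,\bar\rho_1)$ symmetric about $\frac{\pi}{2}$. Since $\cot$ is an odd function about $\frac{\pi}{2}$ (i.e.\ $\cot(\pi-\rho)=-\cot\rho$), the condition that $\bar\ka_1=-\bar\ka_0$ and $\bar\ka_2=+\bar\ka_0$ for some $\bar\ka_0>0$ is equivalent to $\bar\rho_1+\bar\rho_2=\pi$, where $\bar\rho_i=\arccot\bar\ka_i$. Concretely I would set $\delta=\rho_1-\rho_2>0$; note $0<\delta<\pi$ (the extreme $\delta=\pi$ occurs only when $\ka_1=-\infty$, $\ka_2=+\infty$, which is excluded since then one cannot form a symmetric finite interval — but actually in that case any $\ka_0$ works after noting $\sr L_{-\infty}^{+\infty}=\sr L_{-\ka_0}^{+\ka_0}$ is false, so I should simply assume $\ka_1,\ka_2$ are not simultaneously infinite, or rather handle it: if $\delta=\pi$ the statement still makes sense only vacuously, so I'll restrict to $\delta<\pi$, which is the case of interest; if $\delta = \pi$ then $\sr L_{\ka_1}^{\ka_2}$ already has no curvature restriction and the claim is trivially false unless reinterpreted, so I take $\delta<\pi$). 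Then I choose $\bar\rho_2=\frac{\pi-\delta}{2}$ and $\bar\rho_1=\frac{\pi+\delta}{2}$, so that $\bar\rho_1-\bar\rho_2=\delta=\rho_1-\rho_2$ and $\bar\rho_1+\bar\rho_2=\pi$. Set $\bar\ka_i=\cot\bar\rho_i$, so $\bar\ka_2=\cot\frac{\pi-\delta}{2}>0$ and $\bar\ka_1=\cot\frac{\pi+\delta}{2}=-\cot\frac{\pi-\delta}{2}=-\bar\ka_2$; put $\ka_0=\bar\ka_2$.

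With these choices the hypothesis $\rho_1-\rho_2=\bar\rho_1-\bar\rho_2$ of Theorem~(\ref{T:size}) holds, so it gives $\sr L_{\ka_1}^{\ka_2}(Q)\home \sr L_{-\ka_0}^{+\ka_0}(R_{-\theta}QR_\theta)$ with $\theta=\rho_2-\bar\rho_2=\rho_2-\frac{\pi-\delta}{2}=\rho_2-\frac{\pi-(\rho_1-\rho_2)}{2}=\frac{\rho_1+\rho_2-\pi}{2}$. Taking $P=R_{-\theta}QR_\theta\in\SO_3$ proves the first assertion. For the second assertion, when $Q=I$ we get $P=R_{-\theta}IR_\theta=R_{-\theta}R_\theta=I$ since $R_{-\theta}=R_\theta^{-1}$; equivalently one may simply cite Remark~(\ref{R:circletocircle}).

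The only point requiring a little care is checking that $\theta$ satisfies the admissibility constraint \eqref{E:regulartranslation} implicit in Theorem~(\ref{T:size}), i.e.\ $\rho_1-\pi\leq\theta\leq\rho_2$; but this is automatic because Theorem~(\ref{T:size}) already builds this in: $\theta=\rho_2-\bar\rho_2$ with $0<\bar\rho_2<\pi$ forces $\rho_2-\pi<\theta<\rho_2$, and since $\bar\rho_1=\bar\rho_2+\delta<\pi$ we also get $\bar\rho_2>\delta-\pi+\pi-\delta$... more directly, $\theta=\frac{\rho_1+\rho_2-\pi}{2}$, and $\rho_1-\pi\leq\theta$ reduces to $\rho_1\leq\rho_2+\pi$ (true since $\rho_i\in[0,\pi]$) while $\theta\leq\rho_2$ reduces to $\rho_1\leq\rho_2+\pi$ as well — wait, $\theta\le\rho_2\iff\rho_1+\rho_2-\pi\le2\rho_2\iff\rho_1\le\rho_2+\pi$, again true. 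So no obstacle arises; the whole corollary is essentially a bookkeeping application of Theorem~(\ref{T:size}) together with the elementary trigonometric identity $\cot(\pi-\rho)=-\cot\rho$, and the only mild subtlety is the degenerate case $\delta=\pi$, which must be excluded (or noted as giving $\sr I$, which is not of the form $\sr L_{-\ka_0}^{+\ka_0}$ for finite $\ka_0$) — but as $\ka_1<\ka_2$ are given with the understanding that the interval is proper, one may assume at least one of them finite, hence $\delta<\pi$.
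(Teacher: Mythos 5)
Your proof is correct and follows exactly the paper's own argument: both choose $\bar\rho_{1,2}=\frac{\pi}{2}\pm\frac{\rho_1-\rho_2}{2}$, set $\ka_0=\cot\bar\rho_2$, and apply Theorem~(\ref{T:size}) with $\theta=\frac{\rho_1+\rho_2-\pi}{2}$. The digression about the degenerate case $\rho_1-\rho_2=\pi$ (i.e.\ $\ka_1=-\infty$, $\ka_2=+\infty$, where the claim is vacuous or trivial) is harmless and is simply not addressed in the paper.
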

\begin{proof}
	Let $\rho_i=\arccot \ka_i$, $i=1,2$, and set 
	\begin{equation*}
		\bar\rho_1=\frac{\pi}{2}+\frac{\rho_1-\rho_2}{2},\quad \bar\rho_2=\frac{\pi}{2}-\frac{\rho_1-\rho_2}{2}\quad\text{and}\quad \ka_0=\cot(\bar\rho_2).
	\end{equation*}	
	The interval $(\bar\rho_2,\bar\rho_1)$ has the same size as $(\rho_2,\rho_1)$ by construction. Since $\cot (\bar\rho_1)=-\ka_0$, (\ref{T:size}) yields that $\sr L_{\ka_1}^{\ka_2}(Q)\home \sr L_{-\ka_0}^{+\ka_0}(R_{-\theta}QR_\theta)$, where $\theta=\frac{\rho_1+\rho_2-\pi}{2}$.
\end{proof}

\begin{cora}\label{C:belowandabove}
Let $Q\in \SO_3$ and $\ka_1<\ka_2$. Then $\sr L_{\ka_1}^{\ka_2}(Q)\home \sr L_{\ka_0}^{+\infty}(P)$ for suitable $\ka_0\in [-\infty,+\infty)$ and $P\in \SO_3$. Moreover, if $Q=I$ then $P=I$ also.
\end{cora}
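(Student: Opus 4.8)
The plan is to reduce this to Theorem \ref{T:size} by choosing an appropriate target interval with the same length $\rho_1-\rho_2$ but whose right endpoint corresponds to curvature $+\infty$. Set $\rho_1-\rho_2=\ell$, where $\ell\in(0,\pi]$ since $0\le\rho_2<\rho_1\le\pi$. I want new radii $\bar\rho_2,\bar\rho_1$ with $\bar\rho_1-\bar\rho_2=\ell$ and $\bar\rho_1=0$ corresponding to $\bar\ka_2=+\infty$; but $\bar\rho_1=0$ forces $\bar\rho_2=-\ell<0$, which is not allowed. The correct move is instead to take $\bar\rho_2=0$, i.e.\ $\bar\ka_2=+\infty$, so that $\bar\rho_1=\ell$ and $\ka_0:=\cot\bar\rho_1=\cot\ell\in[-\infty,+\infty)$ (with the convention $\cot\pi=-\infty$, matching $\arccot(-\infty)=\pi$ when $\ell=\pi$). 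Wait — $\sr L_{\ka_0}^{+\infty}$ means $\ka_1=\ka_0$, $\ka_2=+\infty$, so $\rho_1=\arccot\ka_0=\ell$ and $\rho_2=\arccot(+\infty)=0$, giving interval $(\rho_2,\rho_1)=(0,\ell)$ of length $\ell$, exactly as needed.

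Concretely, I would: first, given $\ka_1<\ka_2$, compute $\rho_i=\arccot\ka_i$ and $\ell=\rho_1-\rho_2\in(0,\pi]$; second, define $\bar\rho_2=0$, $\bar\rho_1=\ell$, and $\ka_0=\cot\ell$ (interpreted as $-\infty$ if $\ell=\pi$), so that $\bar\ka_1=\ka_0$ and $\bar\ka_2=+\infty$ satisfy $\bar\rho_1-\bar\rho_2=\ell=\rho_1-\rho_2$; third, invoke Theorem \ref{T:size} with $\theta=\rho_2-\bar\rho_2=\rho_2$, which yields $\sr L_{\ka_1}^{\ka_2}(Q)\home\sr L_{\ka_0}^{+\infty}(R_{-\theta}QR_\theta)$, so $P=R_{-\rho_2}QR_{\rho_2}$; fourth, observe that when $Q=I$ we get $P=R_{-\rho_2}R_{\rho_2}=I$. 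One should double-check that the hypothesis $\ka_1<\ka_2$ translating to $\rho_2<\rho_1$ and the convention $\arccot(\pm\infty)\in\{0,\pi\}$ make $\ell\le\pi$, and that $\ell=\pi$ (the case $\ka_1=-\infty$, $\ka_2=+\infty$) indeed corresponds to $\ka_0=-\infty$, consistent with $\sr L_{-\infty}^{+\infty}$.

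There is essentially no obstacle here: the corollary is a direct specialization of Theorem \ref{T:size}, and the only thing to verify carefully is the bookkeeping of the $\arccot$/$\cot$ conventions at the infinite endpoints and the fact that $\theta=\rho_2$ satisfies the constraint $\rho_1-\pi\le\theta\le\rho_2$ from \eqref{E:regulartranslation} (the upper bound is an equality, and the lower bound $\rho_1-\pi\le\rho_2$ is equivalent to $\ell\le\pi$, which always holds). The statement about circles traversed $k$ times mapping to circles traversed $k$ times is inherited from Remark \ref{R:circletocircle}. So the proof is short: set up the interval, apply \ref{T:size}, and record $P$.
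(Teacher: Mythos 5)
Your proposal is correct and matches the paper's own proof: both reduce to Theorem (\ref{T:size}) by replacing $(\rho_2,\rho_1)$ with the interval $(0,\rho_1-\rho_2)$, taking $\ka_0=\cot(\rho_1-\rho_2)$ and $\theta=\rho_2$, so that $P=R_{-\rho_2}QR_{\rho_2}$ and $P=I$ when $Q=I$. Your extra checks on the $\arccot$ conventions and the constraint $\rho_1-\pi\le\theta\le\rho_2$ are sound, just more explicit than the paper's one-line argument.
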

\begin{proof}
Let $\rho_i=\arccot \ka_i$, $i=1,2$. Then the interval $(\rho_2,\rho_1)$ has the same size as the interval $(0,\rho_1-\rho_2)$. Hence, by (\ref{T:size}), $\sr L_{\ka_1}^{\ka_2}(Q)\home \sr L^{+\infty}_{\ka_0}(R_{-\theta}QR_\theta)$, where 
\begin{equation*}
	\ka_0=\cot (\rho_1-\rho_2)=\frac{1+\ka_1\ka_2}{\ka_2-\ka_1}\text{\quad and\quad }\theta=\rho_2.\qedhere
\end{equation*}
\end{proof}

Corollaries (\ref{C:symmetricinterval}) and (\ref{C:belowandabove}) both express the fact that, for fixed $Q\in \SO_3$, the topology of the spaces $\sr L_{\ka_1}^{\ka_2}(Q)$ depends essentially on one parameter, not two. The spaces of type $\sr L_{-\ka_0}^{+\ka_0}(Q)$ and $\sr L_{\ka_0}^{+\infty}(Q)$ have been singled out merely because they are more convenient to work with. For spaces of closed curves we have the following result relating the two classes, which is another simple consequence of (\ref{C:symmetricinterval}).

\begin{cora}\label{C:twoviewpoints} Let $\ka_0\in [-\infty,+\infty)$, $\ka_1\in (0,+\infty]$ and $\rho_i=\arccot(\ka_i)$, $i=0,1$. If $\rho_0=\pi-2\rho_1$ then $\sr L_{-\ka_1}^{+\ka_1}(I)\home \sr L_{\ka_0}^{+\infty}(I)$.\qed
\end{cora}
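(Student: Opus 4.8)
The plan is to reduce this to (\ref{T:size}) --- equivalently, to the observation recorded in (\ref{R:circletocircle}) --- by verifying that the two spaces have radius-of-curvature intervals of the same length. First I would rewrite condition (ii) of (\ref{D:Little}) for each space in terms of the radius of curvature $\rho=\arccot\ka$, keeping in mind that $\arccot$ is an order-\emph{reversing} bijection $[-\infty,+\infty]\to[0,\pi]$ with $\arccot(+\infty)=0$, $\arccot(-\infty)=\pi$, and in particular $\arccot(-\ka)=\pi-\arccot(\ka)$.

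For $\sr L_{-\ka_1}^{+\ka_1}(I)$ the condition $-\ka_1<\ka(t)<\ka_1$ is equivalent to $\arccot(\ka_1)<\rho(t)<\arccot(-\ka_1)$, i.e.\ $\rho(t)\in(\rho_1,\pi-\rho_1)$, an interval of length $(\pi-\rho_1)-\rho_1=\pi-2\rho_1$. For $\sr L_{\ka_0}^{+\infty}(I)$ the condition $\ka_0<\ka(t)<+\infty$ is equivalent to $0<\rho(t)<\arccot(\ka_0)$, i.e.\ $\rho(t)\in(0,\rho_0)$, an interval of length $\rho_0$. By the hypothesis $\rho_0=\pi-2\rho_1$ these two lengths agree.

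I would then invoke (\ref{T:size}) with the pair $(\ka_1,\ka_2)$ there taken to be $(-\ka_1,\ka_1)$, the pair $(\bar\ka_1,\bar\ka_2)$ taken to be $(\ka_0,+\infty)$, and $Q=I$. Since $R_{-\theta}\,I\,R_\theta=R_{-\theta}R_\theta=I$, the theorem yields precisely $\sr L_{-\ka_1}^{+\ka_1}(I)\home\sr L_{\ka_0}^{+\infty}(I)$; alternatively one may apply (\ref{C:symmetricinterval}) to $\sr L_{\ka_0}^{+\infty}(I)$ and check, using $\tan(\tfrac{\rho_0}{2})=\cot(\rho_1)=\ka_1$, that the resulting symmetric bound is exactly $\ka_1$.

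There is no genuine obstacle here: once (\ref{T:size}) is available, the statement is bookkeeping with the $\arccot$ convention. The only point that deserves a moment's attention is that, because $\arccot$ reverses order, in passing from the curvature bounds to the radius-of-curvature bounds the two endpoints are interchanged, so one must use $\arccot(-\ka_1)=\pi-\rho_1$ (not $-\rho_1$) for the upper endpoint.
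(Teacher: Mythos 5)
Your proof is correct and is exactly the intended argument: the paper states this corollary with the proof omitted as "another simple consequence of (\ref{C:symmetricinterval})", and your computation — matching the lengths $\pi-2\rho_1$ and $\rho_0$ of the radius-of-curvature intervals and then invoking (\ref{T:size}) with $Q=I$ (so $R_{-\theta}IR_\theta=I$), or equivalently checking $\tan(\rho_0/2)=\cot\rho_1=\ka_1$ via (\ref{C:symmetricinterval}) — is precisely that deduction, with the order-reversal of $\arccot$ handled correctly.
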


For convenience, we list in table \ref{Ta:spaces} all the spaces considered thus far, together with some of the results that we have proved about their topology. As we have already remarked, the spaces $\sr L_{\ka_1}^{\ka_2}(\cdot,Q)$, $\sr L_{\ka_1}^{\ka_2}(Q,\cdot)$ and $\sr L_{\ka_1}^{\ka_2}(\cdot,\cdot)$ will not be mentioned again. 

\begin{table}[ht]
\begin{center}
\begin{tabular}{ c  c  c c}\hline
Space & Definition & Condition on Frames & Topology \rule[-8pt]{0pt}{22pt}  \\ \hline
$\L_{\ka_1}^{\ka_2}(Q)$ & p.\,\pageref{D:Little}, (\ref{D:Little}) & $\Phi(0)=I$, $\Phi(1)=Q$ & depends on $\rho_1-\rho_2$, $Q$ \rule{0pt}{14pt} \\
$\L_{\ka_1}^{\ka_2}$ & p.\,\pageref{D:arbitrary}, (\ref{D:arbitrary}) & $\Phi(0)=\Phi(1)$ arbitrary & $\home \SO_3\times \sr L_{\ka_1}^{\ka_2}(I)$ \rule{0pt}{14pt} \\
$\L_{\ka_1}^{\ka_2}(Q_0,Q_1)$ & p.\,\pageref{D:Little2}, (\ref{D:Little2}) & $\Phi(0)=Q_0$, $\Phi(1)=Q_1$ & $\home \sr L_{\ka_1}^{\ka_2}(Q_0^{-1}Q_1)$ \rule{0pt}{14pt} \\
$\L_{\ka_1}^{\ka_2}(Q,\cdot)$ & p.\,\pageref{D:Little0}, (\ref{D:Little0}) & $\Phi(0)=Q$, $\Phi(1)$ arbitrary & contractible \rule{0pt}{14pt} \\ 
$\L_{\ka_1}^{\ka_2}(\cdot,Q)$ & p.\,\pageref{D:Little00} & $\Phi(0)$ arbitrary, $\Phi(1)=Q$ & contractible \rule{0pt}{14pt} \\
$\L_{\ka_1}^{\ka_2}(\cdot,\cdot)$ & p.\,\pageref{D:solto} & none & $\iso \SO_3$ \rule{0pt}{14pt} \\
\end{tabular}\vspace{10pt}
\caption{Spaces of spherical curves of bounded geodesic curvature. Here $Q\in \SO_3$, $-\infty\leq \ka_1<\ka_2\leq +\infty$ and  $\rho_i=\arccot(\ka_i)$. The notation $X\home Y$ (resp.~$X\iso Y$) means that $X$ is homeomorphic (resp.~homotopy equivalent) to $Y$.}\label{Ta:spaces}
\end{center}\vspace{-12pt}
\end{table}



\vfill\eject
\chapter{Curves Contained in a Hemisphere} There exists a two-way correspondence between the unit sphere $\Ss^n$ in $\R^{n+1}$ and the set consisting of its open hemispheres; namely, with $h\in \Ss^n$ we can associate
\begin{equation*}
H=\set{p\in \Ss^n}{\gen{h,p}>0}.
\end{equation*}
Thus the set of open hemispheres of $\Ss^n$ carries a natural topology. For convenience, we will often identify $H$ with $h$. In the sequel all hemispheres shall be open, save explicit mention to the contrary, and we will assume throughout that $n\geq 2$.

Let $\ga\colon [0,1]\to \Ss^n$ be a (continuous) curve contained in the hemisphere $H$. As a consequence of the compactness of $[0,1]$, if $\te h\in \Ss^n$ is sufficiently close to $h$, then $\ga$ is also contained in the hemisphere $\te H$ corresponding to $\te h$. It is desirable to be able to select, in a natural way, a distinguished hemisphere among those which contain $\ga$. 

\begin{lemmaa}\label{L:setofhemispheres}
Let $\ga\colon [0,1]\to\Ss^n$ be contained in a hemisphere. Then the set $\sr H\subs \Ss^n$  of hemispheres that contain $\ga$ is open, geodesically convex and itself contained in a hemisphere.\footnote{See the appendix for the definition and basic properties of geodesically convex sets.}
\end{lemmaa}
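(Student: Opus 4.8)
The plan is to work with the identification of hemispheres with their centers $h\in\Ss^n$, so that $\sr H=\set{h\in\Ss^n}{\gen{h,\ga(t)}>0\text{ for all }t\in[0,1]}$. The first observation is that for each fixed $t$, the set $U_t=\set{h\in\Ss^n}{\gen{h,\ga(t)}>0}$ is itself an open hemisphere, and $\sr H=\bcap_{t\in[0,1]}U_t$. Openness of $\sr H$ then follows from the compactness of $[0,1]$: if $h_0\in\sr H$, then $t\mapsto\gen{h_0,\ga(t)}$ is a continuous positive function on a compact set, hence bounded below by some $\eps>0$, and any $h$ within a suitable distance of $h_0$ still satisfies $\gen{h,\ga(t)}>\eps/2>0$ for all $t$ by continuity of the inner product and uniform continuity arguments; so $h$ lies in $\sr H$. (Alternatively, each $U_t$ is open and the argument shows the intersection is locally describable by finitely many of them near any point, but the direct estimate is cleaner.)

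For geodesic convexity, I would appeal to the appendix: an open hemisphere is geodesically convex (it is one of the basic examples), and a nonempty intersection of geodesically convex sets is geodesically convex. Since $\sr H=\bcap_t U_t$ is an intersection of open hemispheres and is nonempty by hypothesis, it is geodesically convex. If one prefers a bare-hands argument: given $h_0,h_1\in\sr H$, the minimizing geodesic between them (which is unique and well-defined since both lie in the open hemisphere $U_{t}$ for every $t$, and such a hemisphere contains no pair of antipodal points) stays inside each $U_t$ because $U_t$ is geodesically convex; hence the geodesic stays in $\sr H$.

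The last assertion — that $\sr H$ is itself contained in a hemisphere — is the point requiring a genuine idea rather than formal nonsense, and I expect it to be the main obstacle. The key is that $\ga$ is contained in \emph{some} hemisphere, say with center $h_*$; I claim $\sr H$ is then contained in the hemisphere centered at $h_*$, i.e.\ $\gen{h_*,h}>0$ for every $h\in\sr H$. This should follow by a symmetry/duality observation: the condition ``$h$ centers a hemisphere containing $\ga$'' and the condition ``the hemisphere centered at $\ga(t)$ contains $h$'' are the same inequality $\gen{h,\ga(t)}>0$, read in two ways. So $\sr H$ is contained in every hemisphere centered at a point $\ga(t)$; in particular, since $h_*$ lies in the intersection of all these (as $\gen{h_*,\ga(t)}>0$ for all $t$, i.e.\ $h_*\in\sr H$), one argues that $\sr H\subseteq U_{h_*}$ where $U_{h_*}$ is the hemisphere centered at $h_*$ — more precisely, pick any $h\in\sr H$; to show $\gen{h_*,h}>0$, use that $h_*$ can be approximated within $\sr H$, or use that the convex set $\sr H$ together with the convex hull of $\ga([0,1])$ are ``dual'' convex bodies whose mutual positivity is symmetric. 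Concretely: $\sr H$ is a convex subset of $\Ss^n$ and $h_*\in\sr H$; since every $h\in\sr H$ satisfies $\gen{h,\ga(t)}>0$ for all $t$, and $h_*$ itself lies in the convex hull region ``polar'' to $\sr H$... The cleanest route is: let $K$ be the closed convex hull (in $\R^{n+1}$) of $\ga([0,1])$; then $\sr H$ is exactly the set of unit vectors $h$ with $\gen{h,p}>0$ for all $p\in K$, an open convex cone intersected with $\Ss^n$. Because $\ga$ lies in a hemisphere, $0\notin K$, so $K$ has a supporting functional; and the ``double polar'' / separation argument shows the polar cone $\sr H$ is again salient (pointed), which is precisely the statement that $\sr H$ is contained in an open hemisphere. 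I would cite the appendix for the fact that the polar of a salient convex cone with nonempty interior is again salient with nonempty interior, which packages all three conclusions (open, convex, contained in a hemisphere) at once.
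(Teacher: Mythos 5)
Your treatment of openness and geodesic convexity matches the paper's: openness by compactness of $[0,1]$, and convexity by observing that $\sr H=\bigcap_t U_{\ga(t)}$ is an intersection of open hemispheres (the paper does the bare-hands computation you sketch as an alternative, showing $\gen{p,k}>0$ for $k$ on the geodesic from $h$ to $\te h$ and $p\in H\cap\te H$). For the last assertion, the correct and complete argument is in fact already in your paragraph, namely the duality remark: since $\gen{h,\ga(t)}>0$ for every $h\in\sr H$ and every $t$, the set $\sr H$ is contained in the hemisphere centered at $\ga(0)$. That single sentence is the paper's entire proof of this part, and you should stop there.

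However, the claim you lead with is false: it is \emph{not} true that $\sr H$ is contained in the hemisphere centered at an arbitrary $h_*$ for which $U_{h_*}\supseteq\ga([0,1])$. Take $\ga$ constant equal to the north pole $N$, so $\sr H=\set{h}{\gen{h,N}>0}$, and let $h_*=(\cos\al)\,e_1+(\sin\al)\,e_{n+1}$ with $\al$ small; then $h=-(\cos\al)\,e_1+(\sin\al)\,e_{n+1}$ lies in $\sr H$ but $\gen{h_*,h}=-\cos 2\al<0$. The point is that the center must be taken on the curve (or in its convex hull), not merely in $\sr H$; your subsequent attempt to deduce $\sr H\subseteq U_{h_*}$ from $h_*\in\sr H$ does not follow. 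The closing polar-cone packaging also leans on a statement (``the polar of a salient convex cone with nonempty interior is again salient'') that is not in the appendix and would need its own proof; moreover the cone on $\hat\ga$ can have empty interior (constant curve), so the duality would have to be stated asymmetrically. None of this is needed: delete the $h_*$ detour and keep the one-line observation that $\sr H\subseteq U_{\ga(0)}$.
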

\begin{proof}
The hemisphere determined by $\ga(0)$ contains $\sr H$ since $\gen{h,\ga(0)}>0$ for each $h\in \sr H$. Suppose that the hemispheres $H,\te{H}$ corresponding respectively to $h,\te h\in \Ss^n$ belong to $\sr{H}$. We lose no generality in assuming that 
\begin{equation*}
h=e_1,\quad \te h=e^{i\theta_0}=\cos \theta_0\, e_1+\sin \theta_0\, e_2,\quad \text{where $0<\theta_0<\pi$}\,.\footnote{The use of complex numbers here is made only to simplify the notation.}
\end{equation*} Any $k$ in the shortest geodesic through $h,\te h$ has the form 
\begin{equation*}\label{angle1}
k=e^{i\theta},\quad\text{where $0\leq \theta\leq \theta_0$}, 
\end{equation*}
while any $p\in \Ss^n$ satisfying both $\gen{p,h}>0$ and $\langle p,\te h\rangle>0$ is of the form 
\begin{equation*}\label{angle2}
p=e^{i\phi}+\nu,\quad\text{where $\theta_0-\pi/2<\phi<\pi/2$ and $\nu$ is normal to $e_1$ and $e_2$.}
\end{equation*}
The bounds on $\theta$ and $\phi$ give $\abs{\theta-\phi}<\pi/2$, hence $\gen{p,k}=\cos(\theta-\phi)>0$. Thus $p\in K$ (the hemisphere determined by $k$) whenever $p\in H,\te{H}$, that is, $\sr H$ is geodesically convex. Finally, we have already remarked above that $\sr H$ is open.
\end{proof}

From (\ref{L:setofhemispheres}) we deduce that the barycenter (in $\R^{n+1}$) of the set $\sr H$ of hemispheres containing $\ga$ is not the origin. Its image under gnomic (i.e., central) projection on the sphere, to be denoted by $h_\ga$, will be our choice of distinguished hemisphere containing $\ga$. 

\begin{lemmaa}\label{T:barycenter}
Let $r\geq 0$, let $\sr A$ denote the space of arcs $\ga\colon [0,1]\to \Ss^n$, with the $C^r$ topology, and let $\sr S\subs A$ be the subspace consisting of all $\ga$ whose image is contained in some open hemisphere \tup(depending on $\ga$\tup). Then the map $\sr S\to \Ss^n$, $\ga\mapsto h_\ga$, defined in the preceding paragraph, is continuous.
\end{lemmaa}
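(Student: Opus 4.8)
The plan is to show continuity of $\ga \mapsto h_\ga$ by tracking the construction through its two stages: first the passage from $\ga$ to the set $\sr H(\ga) \subs \Ss^n$ of hemispheres containing $\ga$, and then the passage from $\sr H(\ga)$ to its gnomic-projected barycenter $h_\ga$. Fix $\ga_0 \in \sr S$ and a hemisphere $H_0 \ni \ga_0([0,1])$, corresponding to $h_0 \in \Ss^n$. By compactness of $[0,1]$ there is $\eps>0$ with $\gen{h_0,\ga_0(t)} \geq \eps$ for all $t$, and by the $C^r$ (hence $C^0$) convergence every $\ga$ near $\ga_0$ satisfies $\gen{h_0,\ga(t)} \geq \eps/2$, so all these curves lie in a common hemisphere and $h_\ga$ is defined on a neighborhood of $\ga_0$. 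The substance is to show the assignment is continuous there.

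First I would reduce to controlling the sets $\sr H(\ga)$ in the Hausdorff metric on compact subsets of $\Ss^n$ (restricting attention to $\ol{\sr H(\ga)} \cap (\text{a fixed closed hemisphere})$, which is compact by \ref{L:setofhemispheres}). The key estimate is a two-sided inclusion: for $\ga$ close to $\ga_0$ in $C^0$, every hemisphere well inside $\sr H(\ga_0)$ still contains $\ga$, and conversely every hemisphere containing $\ga$ is close to $\sr H(\ga_0)$. Concretely, $h \in \sr H(\ga)$ means $\gen{h,\ga(t)}>0$ for all $t$; since $\gen{h,\ga(t)} = \gen{h,\ga_0(t)} + \gen{h,\ga(t)-\ga_0(t)}$ and $\norm{\ga(t)-\ga_0(t)}_{C^0} =: \de$ is small, we get $\gen{h,\ga_0(t)} > -\de$ for all $t$, i.e. $h$ lies in the $\de$-fattening of $\ol{\sr H(\ga_0)}$; and the reverse inclusion is the same computation with the roles swapped. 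Hence $\sr H(\ga) \to \sr H(\ga_0)$ in Hausdorff distance as $\ga \to \ga_0$ (and one checks the measures/closures behave well because $\sr H(\ga_0)$, being open convex with nonempty interior and contained in a hemisphere, equals the interior of its closure and its boundary has measure zero, so the barycenter of $\sr H(\ga)$ over $\R^{n+1}$ — computed say as the average with respect to Lebesgue measure on the closed set — depends continuously on the set).

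Next I would show the barycenter map $K \mapsto \operatorname{bar}(K) \in \R^{n+1}\ssm\se 0$, defined on the space of closed geodesically convex subsets of a fixed closed hemisphere that have nonempty interior, is continuous in the Hausdorff metric. This is a standard fact: $\operatorname{bar}(K) = \frac{1}{\operatorname{vol}(K)}\int_K x\,dx$, and both $\operatorname{vol}(K)$ and $\int_K x\,dx$ are continuous in $K$ for Hausdorff convergence of convex bodies (convergence of convex bodies implies convergence of their indicator functions in $L^1$ away from a null set, since the symmetric difference $K \triangle K'$ has volume tending to $0$). By \ref{L:setofhemispheres} the set $\sr H(\ga)$ lies in a hemisphere, so $\operatorname{bar}(\sr H(\ga))$ lies strictly inside that hemisphere's half-space and in particular is nonzero and not on the boundary great sphere; hence gnomic projection $x \mapsto x/\norm x$ is defined and continuous at $\operatorname{bar}(\sr H(\ga))$. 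Composing, $\ga \mapsto \sr H(\ga) \mapsto \operatorname{bar}(\sr H(\ga)) \mapsto h_\ga$ is continuous at $\ga_0$, and since $\ga_0 \in \sr S$ was arbitrary, $\ga \mapsto h_\ga$ is continuous on all of $\sr S$.

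The main obstacle I expect is the bookkeeping around measure-theoretic continuity of the barycenter: one must be careful that the relevant convex sets are genuine convex bodies (nonempty interior, which holds because $\sr H(\ga)$ is open and nonempty) sitting inside a fixed compact region, so that classical convex-geometry continuity applies uniformly; and one must confirm that geodesic convexity of $\sr H(\ga)$ inside a hemisphere is equivalent, under the gnomic chart of that hemisphere, to ordinary convexity in $\R^n$, which lets us invoke the Euclidean theory. Everything else — the $C^0$ estimates, the reduction from $C^r$ to $C^0$, the continuity of gnomic projection — is routine.
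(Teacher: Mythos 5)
Your proposal is correct in strategy and reaches the same conclusion, but it takes a genuinely different route from the paper. The paper never passes through Hausdorff convergence of convex sets or cites convex-body theory: it sandwiches the set $\sr K$ of hemispheres containing a nearby curve $\eta$ between $\sr H_0=\sr H\ssm B_\eps$ and $\sr H_1=\sr H\cup B_\eps$, where $B_\eps$ is an $\eps$-neighborhood of $\bd\sr H$, and then estimates the barycenter coordinates directly; the whole weight of the argument rests on showing $V(B_\eps)\to 0$, which is done by proving from scratch that $\bd \sr H$ is a bi-Lipschitz image of $\Ss^{n-1}$ (Lemma \ref{L:boundaryofconvex}) and hence has $(n-1)$-dimensional Hausdorff measure, so its neighborhoods have small volume (Lemma \ref{Hausdorffproportional}). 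Your route buys brevity by outsourcing exactly this point to the standard fact that Hausdorff convergence of convex bodies with interior implies $L^1$-convergence of indicators; the paper's route is longer but self-contained and avoids any identification of $\sr H$ with a Euclidean convex body.

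Two steps in your write-up need shoring up. First, the ``i.e.'' in ``$\gen{h,\ga_0(t)}>-\de$ for all $t$, i.e.\ $h$ lies in the $\de$-fattening of $\ol{\sr H(\ga_0)}$'' goes the wrong way: relaxing the defining inequalities by $\de$ a priori gives a larger set than the metric $\de$-neighborhood, and the containment you actually need (relaxed set inside a small metric neighborhood) is the nontrivial direction. It does hold, but only via the interior-point argument you have the ingredients for: using $h_0$ with $\gen{h_0,\ga_0(t)}\geq\eps$, the point obtained by moving $h$ a fraction $s>\de/(\de+\eps)$ of the way toward $h_0$ lands in $\sr H(\ga_0)$, so the relaxed set lies within distance $O(\de/\eps)$ of $\sr H(\ga_0)$; this must be said, since it is where quantitative Hausdorff convergence actually comes from. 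Second, the appeal to Euclidean convex-body theory through a gnomic chart is delicate: Lemma \ref{L:setofhemispheres} only places $\sr H$ inside an open hemisphere, and $\ol{\sr H}$ may touch that hemisphere's equator (e.g.\ when $\ga$ is nearly constant), so its gnomic image need not be a compact convex body in $\R^n$. One should either prove the symmetric-difference estimate intrinsically on $\Ss^n$ (again via the interior ball around $h_0$), or note that this estimate is precisely the content the paper supplies with its two auxiliary lemmas. With those two repairs the argument is complete.
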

Before proving this, we record two results which we will use.

\begin{lemmaa}\label{L:boundaryofconvex}
Let $C\subs \Ss^n$ be geodesically convex with non-empty interior. Then there exists a homeomorphism $F\colon\Ss^{n-1}\to \bd C$ which is bi-Lipschitz.\footnote{This means that there exist $k_1,k_2>0$ such that
\[
k_1\abs{\sig-\tau}\leq \abs{F(\sig)-F(\tau)}\leq k_2\abs{\sig-\tau}\text{ for any $\sig,\tau \in \Ss^{n-1}$.}
\]}
\end{lemmaa}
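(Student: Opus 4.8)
\textbf{The plan} is to realise $F$ as the inverse of radial projection onto the sphere of unit directions at an interior point of $C$. Fix $p_0\in\Int C$ and $\rho\in(0,\tfrac\pi2)$ with $\ol B(p_0,\rho)\subs C$. Since $C$ is geodesically convex it contains no antipodal pair, and from this I would first deduce $-p_0\nin\ol C$: if $x_k\in C$ with $x_k\to-p_0$, then $-x_k\in\ol B(p_0,\rho)\subs C$ for large $k$, so $\se{x_k,-x_k}\subs C$ is an antipodal pair, a contradiction. Next I would pass to the closure: for a convex set with non-empty interior one has $\Int\ol C=\Int C$, hence $\bd C=\bd\ol C$, and $\ol C$ still contains the short geodesic joining any two of its points. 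So assume from now on that $C=\ol C$; then $C$ is compact, $\rho'':=d(-p_0,C)>0$, and every $q\in C$ satisfies $d(p_0,q)=\pi-d(-p_0,q)\le\pi-\rho''<\pi$.

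Identify $T_{p_0}\Ss^n$ with $\R^n$, write $\ga_v(t)=\cos t\,p_0+\sin t\,v$ for the unit-speed geodesic from $p_0$ in direction $v\in\Ss^{n-1}\subs T_{p_0}\Ss^n$, and set $b(v)=\sup\se{t\in[0,\pi]:\ga_v(t)\in C}$. From $\ol B(p_0,\rho)\subs C$ and $d(-p_0,C)=\rho''$ one reads off $\rho\le b(v)\le\pi-\rho''$; from closedness and convexity of $C$ one gets $\se{t:\ga_v(t)\in C}=[0,b(v)]$, and since the half-open segment $[p_0,\ga_v(b(v)))$ lies in $\Int C$ (a standard convexity fact, $p_0$ being interior), the endpoint $\ga_v(b(v))$ lies in $\bd C$. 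I then define $F\colon\Ss^{n-1}\to\bd C$ by $F(v)=\ga_v(b(v))=\cos(b(v))\,p_0+\sin(b(v))\,v$. To see $F$ is a bijection: any $q\in\bd C$ satisfies $q\ne\pm p_0$, hence lies on a unique $\ga_v$; then $b(v)\ge d(p_0,q)$ because $[p_0,q]\subs C$, while $b(v)>d(p_0,q)$ would force $q$ into the open segment $(p_0,\ga_v(b(v)))\subs\Int C$, which is absurd, so $F(v)=q$ and $F$ is onto; injectivity is clear since $F(v)$ lies at distance $b(v)\in(0,\pi)$ from $p_0$, which determines $v$. The set-theoretic inverse is the explicit map $q\mapsto\bigl(q-\gen{q,p_0}p_0\bigr)\big/\abs{q-\gen{q,p_0}p_0}$, which is $C^\infty$ on a neighbourhood of $\bd C$ (there $\abs{q-\gen{q,p_0}p_0}=\sin d(p_0,q)\ge\sin\min\se{\rho,\rho''}>0$); being a continuous bijection from the compact space $\bd C$, it — and hence $F$ — is a homeomorphism.

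\textbf{It remains} to check that $F$ and $F^{-1}$ are bi-Lipschitz. For $F^{-1}$ this follows from the displayed formula together with a compactness argument: that formula extends to a map which is smooth with bounded derivative on the open annular region $\se{x:\tfrac\rho2<d(p_0,x)<\pi-\tfrac{\rho''}2}\sups\bd C$. For $F$ itself the point is that the radial function $b$ is Lipschitz on $\Ss^{n-1}$, and \textbf{this is the only genuinely non-formal step}: given $v_1,v_2$ at small angle $\al$ with $b(v_1)\le b(v_2)=:D$, the set $C$ contains the spherical ``ice-cream cone'' $\mathrm{conv}\bigl(\ol B(p_0,\rho)\cup\se{\ga_{v_2}(D)}\bigr)$, inside which the ray $\ga_{v_1}$ survives to parameter $\ge D-c\al$ for a constant $c=c(\rho)$ (with $D\le\pi$); hence $b(v_2)-b(v_1)\le c\al$. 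This cone estimate can be carried out by spherical trigonometry, or by transporting a neighbourhood of $p_0$ to a Euclidean chart and invoking the classical Lipschitz bound for the radial function of a convex body. Once $b$ is Lipschitz, so is $F(v)=\cos(b(v))p_0+\sin(b(v))v$, and together with the bound on $F^{-1}$ this finishes the proof; the quantitative cone estimate is the anticipated main obstacle, everything else being formal convexity.
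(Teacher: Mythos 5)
Your construction is the same as the paper's at its core: both parametrize $\bd C$ by radial (geodesic) projection from an interior point $p_0$ and use a small ball $\ol B(p_0,\rho)\subs C$ together with the fact that $\bd C$ stays a definite distance from $\pm p_0$ to control the Lipschitz constants. The executions differ in how the two bounds are obtained. For the lower bound your route (the explicit inverse $q\mapsto (q-\gen{q,p_0}p_0)/\abs{q-\gen{q,p_0}p_0}$ is smooth on a compact annulus avoiding $\pm p_0$) is clean and matches the paper's chord estimate. For the upper bound the paper argues directly on $\bd C$: if the angle $\sphericalangle\, p_0 F(\sig)F(\tau)$ were smaller than a fixed $\al_0$ determined by $\rho$, the geodesic through $F(\sig)$ and $F(\tau)$ would enter $B(p_0,\rho)$ and convexity would force $F(\tau)\in \Int C$; the spherical law of sines then gives the bound immediately. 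Your route instead proves Lipschitz continuity of the radial function $b$ via the ice-cream-cone estimate, which is the dual formulation of the same geometric fact, but you leave that estimate unproven. Two caveats there: (i) this is precisely the quantitative heart of the lemma, so as written the proposal has a hole exactly where the work is; the paper's angle/law-of-sines argument is the cleanest way to fill it, and you could equally well run it inside the $2$-sphere spanned by $p_0,v_1,v_2$. (ii) Your suggested shortcut of ``transporting a neighbourhood of $p_0$ to a Euclidean chart and invoking the classical Lipschitz bound for the radial function of a convex body'' does not work as stated: points of $\bd C$ may lie at distance up to $\pi-\rho''>\tfrac{\pi}{2}$ from $p_0$, so $C$ need not sit inside the hemisphere around $p_0$ where the gnomonic chart linearizes geodesics, and a chart of a mere neighbourhood of $p_0$ sees none of $\bd C$. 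So: same strategy, correct skeleton, but the upper Lipschitz bound must actually be carried out (by the spherical-trigonometry argument), and the Euclidean-chart alternative should be dropped.
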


\begin{proof}
We may assume without loss of generality that $C$ contains $N=e_{n+1}$ in its interior. Let 
\begin{equation*}
\set{(p^1,\dots,p^{n+1})\in \Ss^n}{p^{n+1}=0}
\end{equation*}
be the equator of $\Ss^n$, which we identify with $\Ss^{n-1}$. Because $N\in \Int(C)$, there exists $\de$, $0<\de<1$, such that the open disk
\begin{equation}\label{nneighborhood}
U=\set{(p^1,\dots,p^{n+1})\in \Ss^n}{\,(1-\de) <p^{n+1}\leq 1}
\end{equation}
is contained in $C$. In particular, $\bd C\cap U=\emptyset$. Since $C$ cannot contain antipodal points, $\bd C$ is also disjoint from $-U$ (the image of $U$ under the antipodal map). Because $N\in C$ and $-N\nin C$, any semicircle containing them, say, the one that also contains $\sig\in \Ss^{n-1}$, intersects $\bd C$ at some point $F(\sig)$. 

\begin{figure}[ht]
\begin{center}
\includegraphics[scale=.35]{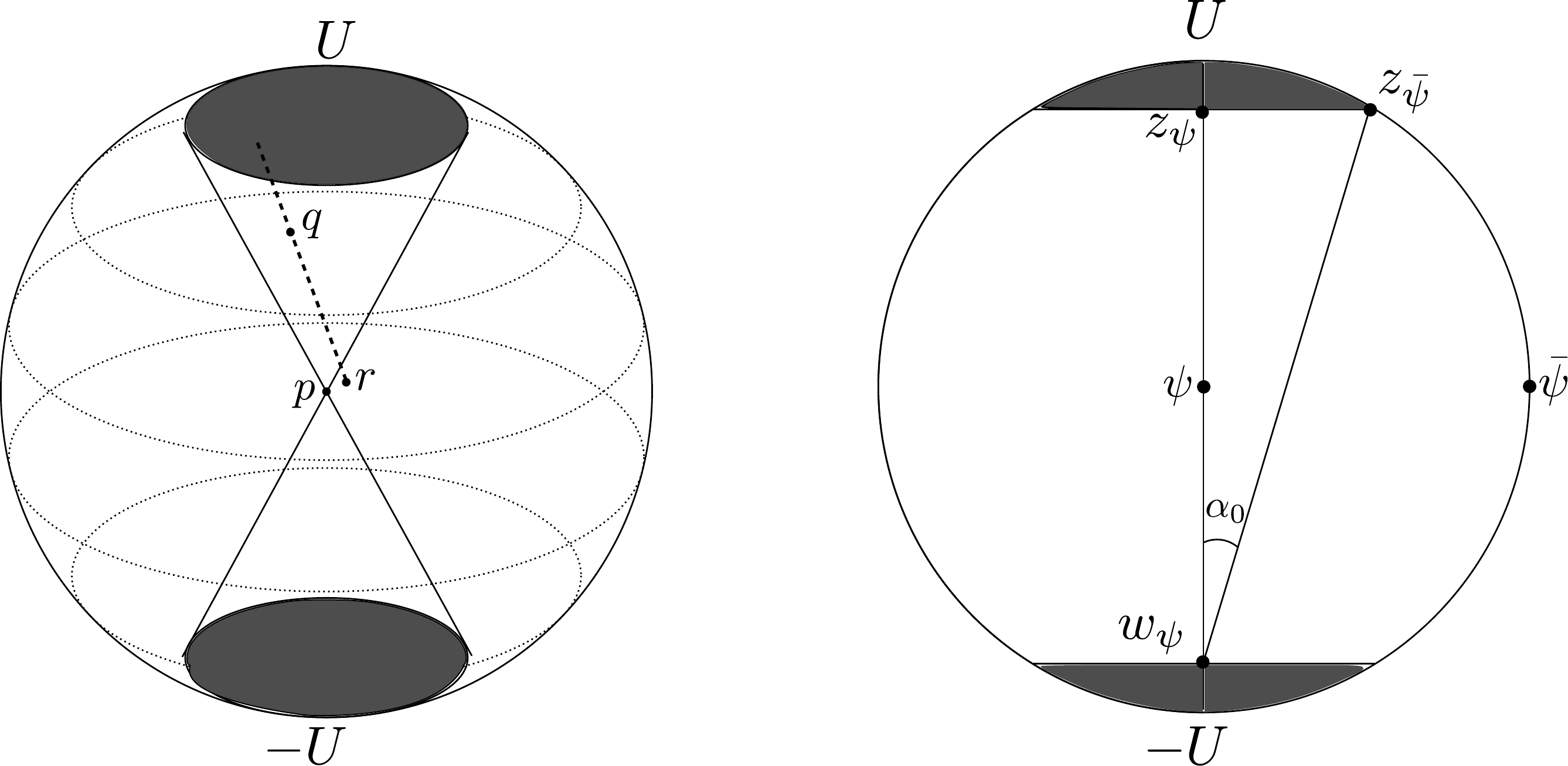}
\caption{An illustration of part of the proof of (\ref{L:boundaryofconvex}) when $n=2$.}
\label{F:bilipschitz}
\end{center}
\end{figure}

Let $p\in \bd C$, $u\in U$. We assert that the semicircle through $p,u$ and $-u$ cannot contain another point $q\in \bd C$ (see fig.~\ref{F:bilipschitz}). If we take $u=N$ then this shows that the definition of $F\colon \Ss^{n-1}\to \bd C$ is unambiguous. Assume for a contradiction that the assertion is false, and suppose further that $q$ lies between $p$ and $u$ (if it lies between $-u$ and $p$ instead, the argument is analogous). Consider the union of all geodesic segments joining points of $U$ to $p$. This set contains $q$ in its interior by hypothesis. The same is true of the union of all geodesic segments joining points of $U$ to $r$, whenever $r$ is sufficiently close to $p$. Since $p\in \bd C$, we can choose $r\in C$ to conclude from the convexity of $C$ that $q\in \Int(C)$, a contradiction.

Let $\sig\neq \tau\in \Ss^{n-1}$. Then 
\begin{alignat*}{9}
	&  \frac{\abs{F(\sig)-F(\tau)}}{\abs{\sig-\tau}} & &\geq \frac{\big\vert\big(\sqrt{\de(2-\de)}\sig\?,\?1-\de\big)-\big(\sqrt{\de(2-\de)}\tau\?,\?1-\de\big)\big\vert}{\abs{\sig-\tau}} \\
	& & & \geq \sqrt{\de(2-\de)}>0.
\end{alignat*}

Let $d$ denote the distance function on $\Ss^n$. To establish a reverse Lipschitz condition for $F$, it suffices to prove that
\begin{equation*}
	\frac{d\big(F(\sig),F(\tau)\big)}{d(\sig,\tau)}=\frac{F(\sig)F(\tau)}{\sphericalangle F(\sig) N F(\tau)}
\end{equation*}
admits an upper bound independent of the pair $\sig\neq \tau$.\footnote{$AB$ denotes the geodesic segment joining $A$ to $B$ and $\sphericalangle ABC$ the angle at $B$ in the spherical triangle $ABC$.} Since $F(\sig)F(\tau)$ is bounded by $\pi$ and $\lim_{x\to 0} \frac{\sin x}{x}=1$, it actually suffices to establish a bound on 
\begin{equation}\label{E:reverseL}
	\frac{\sin \big(F(\sig)F(\tau)\big)}{\sin \big(\sphericalangle F(\sig) N F(\tau))}=\frac{\sin \big(NF(\tau)\big)}{\sin \big(\sphericalangle NF(\sig)F(\tau)\big)},
\end{equation}
where the equality follows from the law of sines (for spherical triangles) applied to $\triangle F(\sig)NF(\tau)$. For arbitrary $\psi\in \Ss^{n-1}$, define $z_\psi\in \bd U$ and $w_\psi\in \bd(-U)$ to be the points where the great circle through $N$ and $\psi$ meets $\bd U$ (resp.~$\bd(-U)$); more explicitly, 
\begin{equation*}
z_\psi=\big(\sqrt{\de(2-\de)}\,\psi,1-\de\big),\qquad w_\psi=\big(\sqrt{\de(2-\de)}\,\psi,-1+\de\big)\,.
\end{equation*}
Let $\psi$, $\bar\psi \in \Ss^{n-1}$ satisfy $d(\psi,\bar{\psi})=\frac{\pi}{2}$ and let $\al_0$ be the angle at $w_\psi$ in $\triangle w_\psi z_\psi z_{\bar\psi}$. Clearly, this angle is independent of $\psi, \bar\psi$. We claim that $\sphericalangle NF(\sig)F(\tau)>\al_0$.  Otherwise, the geodesic through $F(\sig)$ and $F(\tau)$ meets $U$, and so does the geodesic through $p$ and $F(\tau)$ for $p$ close to $F(\sig)$, for $U$ is open. Since $F(\sig)\in \bd C$, we can choose $p\in C$ with this property, which, using the convexity of $C$, contradicts the fact that $F(\tau)\nin \Int(C)$. Hence, we can complete \eqref{E:reverseL} to
\begin{equation*}
	\frac{\sin \big(F(\sig)F(\tau)\big)}{\sin \big(\sphericalangle F(\sig) N F(\tau))}=\frac{\sin \big(NF(\tau)\big)}{\sin \big(\sphericalangle NF(\sig)F(\tau)\big)}< \frac{\pi}{\sin \al_0},
\end{equation*}
finishing the proof that $F$ is bi-Lipschitz.
\end{proof}

\begin{lemmaa}\label{Hausdorffproportional}
Let $A\subs \Ss^n$ be a closed set of Hausdorff dimension less than $n$. If $B_\eps$ consists of all points at  distance less than $\eps$ from $A$, then $\lim_{\eps\to 0} V(B_\eps)=0$, where $V$ denotes the volume in $\Ss^n$. 
\end{lemmaa}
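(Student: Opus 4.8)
Let $A\subs \Ss^n$ be a closed set of Hausdorff dimension less than $n$. If $B_\eps$ consists of all points at distance less than $\eps$ from $A$, then $\lim_{\eps\to 0} V(B_\eps)=0$.

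The plan is to reduce the statement to a covering argument using the definition of Hausdorff dimension, together with the fact that the volume of a small metric ball in $\Ss^n$ is comparable to $\eps^n$. First I would fix some $d$ with $\dim_H(A) < d < n$; since $A$ has Hausdorff dimension less than $n$, such a $d$ exists, and the $d$-dimensional Hausdorff measure $\ca H^d(A)$ is zero (indeed $\ca H^d$ vanishes on any set of Hausdorff dimension strictly below $d$). Because $A$ is closed and $\Ss^n$ is compact, $A$ is compact, so I may also record that there is a uniform constant $c_n>0$ with $V\big(B(p,r)\big)\le c_n r^n$ for every $p\in \Ss^n$ and every $r\le \pi$ (this is just the elementary estimate on the volume of a spherical cap, and $c_n$ can be taken to be the Euclidean value $\om_n$ since geodesic balls on $\Ss^n$ are no larger than Euclidean balls of the same radius).

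The heart of the argument: since $\ca H^d(A)=0$, for every $\eta>0$ there is a cover of $A$ by countably many sets $U_i$ with $\sum_i (\diam U_i)^d < \eta$, and we may take $\diam U_i$ as small as we like. Each $U_i$ is contained in a ball $B(p_i, \diam U_i)$ with $p_i\in U_i$. Now every point of $B_\eps$ lies within distance $\eps$ of some point of $A$, hence within distance $\eps$ of some $U_i$, hence within distance $\eps + \diam U_i$ of $p_i$; so
\begin{equation*}
	B_\eps \subs \bigcup_i B\big(p_i,\, \eps + \diam U_i\big).
\end{equation*}
Choosing the cover so that $\diam U_i \le \eps$ for all $i$ (possible, since the $U_i$ may be taken arbitrarily small), each of these enlarged balls has radius at most $2\eps$, while also $\eps + \diam U_i \le 2\,\diam U_i$ would be the wrong direction — so instead I estimate the radius by $2\eps$ when convenient and relate back to $\diam U_i$ via the bound $\diam U_i \le \eps$ only to control the number of balls. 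The clean way: bound $V(B_\eps) \le \sum_i c_n (\eps + \diam U_i)^n \le \sum_i c_n (2\eps)^{n-d}(\eps+\diam U_i)^d \le c_n (4\eps)^{n-d}\sum_i (\diam U_i)^d \cdot 2^d \le C_n\, \eps^{n-d}\, \eta$, using $\eps+\diam U_i \le 2\eps \le 2\,\max(\eps,\diam U_i)$ and $(\eps+\diam U_i)^d \le 2^d \max(\eps^d, (\diam U_i)^d) \le 2^d(\eps^d + (\diam U_i)^d)$ — one must be slightly careful here, so let me instead simply use $\eps + \diam U_i \le 2\eps$ throughout, giving $V(B_\eps)\le \sum_i c_n (2\eps)^n$, which requires also controlling the number of $i$; that forces a further split. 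Cleanest of all: write $(\eps+\diam U_i)^n = (\eps+\diam U_i)^{n-d}(\eps+\diam U_i)^d \le (2\eps)^{n-d}\cdot 2^d (\diam U_i)^d$ is false since $\diam U_i$ may be smaller than $\eps$; but $(\eps + \diam U_i)^d \le (2\eps)^d$ and separately, since we only need countably many balls of radius $\le 2\eps$ covering a precompact-neighborhood, a volume bound still follows — the honest route is the first one, bounding $(\eps+\diam U_i)^{n-d} \le (2\eps)^{n-d}$ and $(\eps+\diam U_i)^{d}$ by noting $\eps \le$ the cover can be refined so $\diam U_i$ is comparable to a fixed scale; I will present it via: fix $\eta$, pick the cover with $\sum(\diam U_i)^d<\eta$ AND each $\diam U_i < \eps$, then
\begin{equation*}
	V(B_\eps) \le \sum_i c_n(\eps+\diam U_i)^n \le \sum_i c_n (2\eps)^{n-d}\,(2\diam U_i)^{d} \le c_n\, 2^{n}\, \eps^{n-d}\sum_i (\diam U_i)^d < c_n 2^n \pi^{n-d}\,\eta,
\end{equation*}
where in the middle inequality I used $\eps + \diam U_i \le 2\eps$ for the $(n-d)$-power and $\eps+\diam U_i \le 2\diam U_i$ is wrong — so for the $d$-power I instead just use $(\eps+\diam U_i)^d \le (2\eps)^d \le (2\pi)^d$ is a constant, which does not involve $\sum(\diam U_i)^d$. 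I see the subtlety, and in the final write-up I will resolve it by the standard trick of covering $A$ at scale exactly $\eps$: by $\ca H^d(A)=0$ there is, for each $\eps$, a cover of $A$ by balls $B(p_i,\eps)$, $p_i\in A$, with $N(\eps)\cdot \eps^d \to 0$ as $\eps\to 0$ where $N(\eps)$ is the number of balls (this is precisely the content of $\ca H^d(A) = 0$ applied at mesh $\eps$, up to constants). Then $B_\eps \subs \bigcup_i B(p_i, 2\eps)$, so
\begin{equation*}
	V(B_\eps) \le N(\eps)\cdot c_n (2\eps)^n = 2^n c_n\, \big(N(\eps)\,\eps^d\big)\,\eps^{n-d} \longrightarrow 0,
\end{equation*}
since $N(\eps)\eps^d \to 0$ and $\eps^{n-d}$ is bounded.

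The main obstacle, and the only genuinely non-routine point, is establishing that $N(\eps)\eps^d \to 0$, i.e. extracting from $\ca H^d(A) = 0$ a cover by balls of a single radius $\eps$ with few elements. The clean resolution: $\ca H^d(A) = 0$ gives, for every $\eta > 0$, a cover $\{U_i\}$ of $A$ with $\sum_i (\diam U_i)^d < \eta$; refine by intersecting with small balls so that additionally $\diam U_i \le \eps$; then the enlarged balls $B(p_i, \eps + \diam U_i) \subs B(p_i, 2\eps)$ cover $B_\eps$, and $V(B_\eps) \le \sum_i c_n(\eps + \diam U_i)^n$. Now use $\eps + \diam U_i \le 2\eps$ only through the inequality $(\eps + \diam U_i)^n \le (\eps+\diam U_i)^{n-d}\,(\eps+\diam U_i)^d \le (2\eps)^{n-d}\cdot (2\max\{\eps,\diam U_i\})^d \le (2\eps)^{n-d}\,2^d\big(\eps^d + (\diam U_i)^d\big)$. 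This still has the stray $\eps^d$ term summed over $i$, which is only controlled if $\sum_i 1$ is; to kill it, choose the cover with the extra property that the $p_i$ are $\eps$-separated (pass to a maximal $\eps$-separated subset of $A$ and use balls of radius $\eps$ — then disjointness of $B(p_i,\eps/2)$ inside $B_\eps$ bounds their number by $V(B_\eps)/(c_n'\eps^n)$, closing a circular-looking estimate that in fact pins down $N(\eps)$). I will present the argument in this last, self-contained form: maximal $\eps$-separated net, volume-packing to bound cardinality, and $\ca H^d(A)=0$ to force $N(\eps)\,\eps^d \to 0$; the rest is the one-line estimate $V(B_\eps)\le 2^n c_n N(\eps)\eps^d\cdot \eps^{n-d}\to 0$.
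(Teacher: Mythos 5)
There is a genuine gap at the step you yourself flag as ``the only genuinely non-routine point'': the claim that $\ca H^d(A)=0$ yields, for each $\eps$, a cover of $A$ by $N(\eps)$ balls of radius $\eps$ with $N(\eps)\,\eps^d\to 0$. This is false. A single-scale covering (or packing) count is a box-counting quantity, and the upper box dimension of a set can strictly exceed its Hausdorff dimension: the set $\{0\}\cup\{1/k : k\in\N\}\subs[0,1]$ is countable, hence has Hausdorff dimension $0$, yet a maximal $\eps$-separated subset of it has cardinality of order $\eps^{-1/2}$, so $N(\eps)\,\eps^{d}\to\infty$ for every $d<1/2$. The ``maximal $\eps$-separated net plus volume packing'' variant does not repair this: the packing bound only gives $N(\eps)\,\eps^{n}\lesssim V(B_\eps)$, so proving $N(\eps)\,\eps^{n}\to 0$ is essentially equivalent to the conclusion, and you are back to needing the false implication. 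All of your earlier attempts founder on the same rock: once you inflate the cover by $\eps$, the stray term $\sum_i \eps^{\,(\cdot)}$ requires a bound on the \emph{number} of covering sets, which Hausdorff measure does not provide.

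The fix is to avoid inflating the cover at all, and this is what the paper does. Take $d=n$: since $A$ has Hausdorff dimension less than $n$, its $n$-dimensional Hausdorff measure vanishes, so for any $\eta>0$ there is a countable cover of $A$ by sets $A_k$ with $\sum_k\de(A_k)^n<\eta$ (where $\de$ denotes diameter); enclose each $A_k$ in an open ball $U_k$ of diameter $3\de(A_k)$. Now use compactness of $A$: the open set $\bigcup_k U_k$ contains $A$, hence contains $B_{\eps_0}$ for some $\eps_0>0$, and therefore $V(B_\eps)\le\sum_k V(U_k)\le 3^nC\eta$ for all $\eps\le\eps_0$, where $C$ is the constant relating $n$-dimensional Hausdorff measure to volume. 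No control on the number or on the individual radii of the $U_k$ is needed. (Even shorter: $B_\eps$ decreases to $A$ as $\eps\downarrow 0$, so by continuity from above of the finite measure $V$ one gets $V(B_\eps)\to V(A)=0$, the last equality because the $n$-dimensional Hausdorff measure of $A$ is zero.) Your opening move of fixing an intermediate $d$ with $\dim_H A<d<n$ is what forces you into single-scale covers; it is unnecessary, and dropping it dissolves the difficulty.
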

\begin{proof}
Let $\de(S)$ denote the diameter of a set $S\subs \Ss^n$ and $\Ga_\al(S)$ its Hausdorff measure of dimension $\al>0$. Since $\Ga_n(A)=0$, given any $\eta>0$ we can cover $A$ by a countable collection of sets $A_k\subs \Ss^n$ such that $\sum_k\de(A_k)^n<\eta$. Each $A_k$ can be enclosed in an open ball $U_k$ of diameter $3\de(A_k)$, and since $A$ is compact, $\bcup_k U_k$ contains some $B_\eps$. Therefore, the conclusion follows from the estimate
\[	
V(B_\eps)\leq \sum_kV(U_k)\leq C\sum_k\de(U_k)^n<3^nC\eta,
\]
where $C$ is the constant, depending only on $n$, which relates the Hausdorff measure in dimension $n$ to the usual measure (volume). 
\end{proof}

\begin{cnproof}{~(\ref{T:barycenter})}
It suffices to prove the result when $\sr A$ has the $C^0$ topology, since it is coarser than the $C^r$ topology for any $r\geq 1$. 

Let $\ga\in \sr S$ and $\sr H$ (regarded as a subset of $\Ss^n$) be the set of all open hemispheres containing $\ga([0,1])$. Let $\eps>0$ and define
\begin{equation}\label{epsilon}
B_\eps=\bcup_{q\in \bd \sr H}B(q;\eps),\quad \sr H_0=\sr H\ssm B_\eps\quad\text{and}\quad \sr H_1=\sr H\cup B_\eps.
\end{equation}
Then $\ol{\sr H}_0\subs \sr H\subs \ol{\sr H}\subs \sr H_1$. As a consequence of the compactness of $[0,1]$, $\ol{\sr H}_0$ and $\Ss^n\ssm \sr H_1$, there exists $\de>0$ for which
\begin{equation*}\label{hemispheretube}
\gen{\ga(t),u}\geq \de \text{ if $u\in \sr H_0$ \ \ and\ \ }\gen{\ga(t),v}\leq -\de \text{ for $v\nin \sr H_1$\quad for all $t\in [0,1]$}.
\end{equation*}
Consequently, there exists a neighborhood $\sr U\subs \sr A$ of $\ga$ such that if $\eta \in \sr U$ then 
\begin{equation*}
\gen{\eta(t),u}\geq \de/2 \text{ if $u\in \sr H_0$ \ and\ }\gen{\eta(t),v}\leq -\de/2 \text{ for $v\nin \sr H_1$\quad for all $t\in [0,1]$}.
\end{equation*}
Thus, if $\sr K$ is the set of hemispheres containing $\eta$, we have $\ol{\sr H}_0\subs \sr K\subs \ol{\sr K}\subs \sr H_1$. 

Without loss of generality, we may assume that the barycenter $h_\ga$ of $\sr H$ is $e_{n+1}$. Let $h^j_\eta$ denote the $j$-th coordinate of the barycenter $h_{\eta}$ of $\sr K$. By definition $h^j_\eta\, \int_{\sr K}dx=\int_{\sr K}x_j\,dx$, and the latter term satisfies
\begin{alignat*}{5}
&\int_{\sr K}x_jdx\ & &= \ \int_{\sr H}x_j\,dx& &\quad+\quad\int_{\sr K\ssm \sr H}x_j\,dx \ -\ \int_{\sr H\ssm \sr K}x_j\,dx\\
& & &\leq\ \int_{\sr H} x_j\,dx& &\quad+\quad \int_{\sr H_1\ssm \sr H_0}1\,dx \ -\  \int_{\sr H_1\ssm \sr H_0}(-1)\,dx\\
& & &=\ \int_{\sr H}x_j\,dx& &\quad +\quad  2\int_{\sr H_1\ssm \sr H_0}dx  
\end{alignat*}
Since the $j$-th coordinate $h^j_\ga$ of $h_\ga$ is non-negative for each $j$, it follows that 
\[
h^j_\eta\leq \left(\frac{\int_{\sr H}dx}{\int_{\sr H_0}dx}\right)h^j_\ga+2\left(\frac{\int_{\sr H_1-\sr H_0}dx}{\int_{\sr H_0}dx}\right)\,;
\]
similarly, 
 \[
h^j_\eta\geq \left(\frac{\int_{\sr H}dx}{\int_{\sr H_1}dx}\right)h^j_\ga-2\left(\frac{\int_{\sr H_1-\sr H_0}dx}{\int_{\sr H_0}dx}\right).
\]

The set $\bd \sr H$ has Hausdorff dimension $n-1$, for it is the image of $\Ss^{n-1}$ under a Lipschitz map (by (\ref{L:setofhemispheres}) and (\ref{L:boundaryofconvex})). We also have:
\begin{equation*}
\int_{\sr H_1}dx\leq \int_{\sr H}dx+V(B_\eps),\ \ \int_{\sr H_0}dx\geq \int_{\sr H}dx-V(B_\eps)\text{\ \ and\ \ }\int_{\sr H_1\ssm \sr H_0}dx\leq V(B_{\eps}).
\end{equation*}
Therefore, according to (\ref{Hausdorffproportional}), we can make  $h_\eta$ arbitrarily close to $h_{\ga}$ for all $\eta \in \sr U$ by an adequate choice of $\eps$ in \eqref{epsilon}. In other words, $\ga\mapsto h_\ga$ is continuous.
\end{cnproof}

The following result (for $C^1$ curves) is quite old; see \cite{Fen1}, \S 1.
\begin{lemmaa}\label{L:intersectsgreat}
	Let $\ga\colon [0,1]\to \Ss^2$ be an admissible closed curve, and let $\ta(t)$ denote its unit tangent vector at $\ga(t)$. Then the curve $\ta\colon [0,1]\to \Ss^2$ intersects any great circle. 
\end{lemmaa}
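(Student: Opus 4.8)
The plan is to argue by contradiction: suppose the tangent indicatrix $\ta\colon[0,1]\to\Ss^2$ misses some great circle, say the one orthogonal to a unit vector $h\in\Ss^2$. Then $\gen{\ta(t),h}$ has constant sign, say $\gen{\ta(t),h}>0$ for all $t$, so the tangent vector always points into the open hemisphere $H$ determined by $h$. I would then consider the scalar function $f(t)=\gen{\ga(t),h}$, whose derivative is $\dot f(t)=\gen{\dot\ga(t),h}=v(t)\gen{\ta(t),h}$, where $v(t)=\abs{\dot\ga(t)}>0$ almost everywhere. Since $\gen{\ta(t),h}>0$ everywhere and $v(t)>0$ a.e., we get $\dot f(t)>0$ for a.e.\ $t$, and because $f$ is absolutely continuous (as $\ga$ is of class $H^1$), $f$ is strictly increasing. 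But $\ga$ is a closed curve, so $\ga(0)=\ga(1)$ and hence $f(0)=f(1)$, contradicting strict monotonicity.

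The only subtlety is the reduction to the case $\gen{\ta(t),h}>0$ everywhere: a priori "does not intersect the great circle" means $\gen{\ta(t),h}\neq0$ for all $t$, and since $\ta$ is continuous and $[0,1]$ is connected, $\gen{\ta(\cdot),h}$ has constant sign; replacing $h$ by $-h$ if necessary we may assume it is positive. This is where continuity of $\ta$ (guaranteed by admissibility — the frame $\Phi_\ga$, and in particular its second column $\ta=\Phi_\ga e_2$, is absolutely continuous, hence continuous) is used.

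I do not expect any real obstacle here; the argument is essentially the classical one of Fenchel referenced in the statement, adapted to the present regularity class. The one point requiring a line of care is simply making sure the hypotheses of the fundamental theorem of calculus apply: $f=\gen{\ga,h}$ is absolutely continuous because $\ga$ is of class $H^1$, so $f(1)-f(0)=\int_0^1\dot f(t)\,dt=\int_0^1 v(t)\gen{\ta(t),h}\,dt>0$, which already gives the contradiction with $\ga(0)=\ga(1)$ directly, without even needing to phrase it via monotonicity. I would present it in exactly that compressed form.

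\begin{proof}
Suppose, for the sake of contradiction, that the image of $\ta$ is disjoint from the great circle orthogonal to some $h\in \Ss^2$, i.e., $\gen{\ta(t),h}\neq 0$ for all $t\in [0,1]$. Since $\ga$ is admissible, $\ta=\Phi_\ga e_2$ is continuous, so the continuous function $t\mapsto \gen{\ta(t),h}$ has constant sign on the connected interval $[0,1]$; replacing $h$ by $-h$ if necessary, we may assume $\gen{\ta(t),h}>0$ for all $t\in [0,1]$. Let $f\colon [0,1]\to \R$ be given by $f(t)=\gen{\ga(t),h}$. As $\ga$ is of class $H^1$, $f$ is absolutely continuous, and for almost every $t$ we have $\dot f(t)=\gen{\dot\ga(t),h}=v(t)\gen{\ta(t),h}$, where $v(t)=\abs{\dot\ga(t)}>0$ a.e. Hence $\dot f(t)>0$ for a.e.~$t\in [0,1]$, and therefore
\begin{equation*}
	f(1)-f(0)=\int_0^1 \dot f(t)\,dt=\int_0^1 v(t)\gen{\ta(t),h}\,dt>0.
\end{equation*}
But $\ga$ is closed, so $\ga(0)=\ga(1)$ and thus $f(0)=f(1)$, a contradiction. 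Therefore $\ta$ intersects every great circle.
\end{proof}
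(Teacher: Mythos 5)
Your argument is correct and is essentially the same as the paper's: both rest on the identity $\int_0^1\gen{\dot\ga(t),h}\,dt=\gen{\ga(1)-\ga(0),h}=0$ for a closed curve, with $\gen{\dot\ga,h}$ a positive multiple of $\gen{\ta,h}$ almost everywhere. The paper phrases it directly (parametrizing by arc-length so that $\ga'=\ta$, then invoking that a continuous function with zero integral must vanish), whereas you run the contrapositive and carry the factor $v(t)>0$; the difference is purely cosmetic.
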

\begin{proof}
Let $L$ be the length of $\ga$ and $h\in \Ss^2$ any fixed vector. Since $\ga$ is a closed curve, 
	\begin{equation*}
\int_0^L\gen{\ta(s),h}\,ds=\int_0^L\gen{\ga'(s),h}\,ds=\gen{\ga(L)-\ga(0),h}=0.
	\end{equation*}
	In particular, the function $\gen{\ta(s),h}$ must vanish for some $s_0\in [0,L]$. This means that $\ta$ intersects the great circle $C=\set{p\in \Ss^2}{\gen{p,h}=0}$ at $\ta(s_0)$. 
\end{proof}

\vfill\eject
\chapter{The Connected Components of $\sr L_{\ka_1}^{\ka_2}$}\label{S:main}

The following theorem is the main result of this work. It presents a description of the components of $\sr L_{\ka_1}^{\ka_2}$ in terms of $\ka_1$ and $\ka_2$. 
\begin{thma}\label{T:components}
	Let $-\infty\leq \ka_1<\ka_2\leq +\infty$, $\rho_i=\arccot \ka_i$ \tup{(}$i=1,2$\tup{)} and $\fl{x}$ denote the greatest integer smaller than or equal to $x$. Then $\sr L_{\ka_1}^{\ka_2}$ has exactly $n$ connected components $\sr L_1,\dots,\sr L_n$, where
	\begin{equation}\label{E:components}
\qquad		n=\fl{\frac{\pi}{\rho_1-\rho_2}}+1
	\end{equation} 
and  $\sr L_j$ contains circles traversed $j$ times $(1\leq j\leq n)$. The component $\sr L_{n-1}$ also contains circles traversed $(n-1)+2k$ times, and $\sr L_n$ contains circles traversed $n+2k$ times, for $k\in \N$. Moreover, each of $\sr L_1,\dots,\sr L_{n-2}$ is homotopy equivalent to $\SO_3$ $(n\geq 3)$.
\end{thma}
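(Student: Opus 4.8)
## Plan of proof

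The strategy is to reduce everything to the model spaces $\sr L_{\ka_0}^{+\infty}(I)$ (using Corollary~\ref{C:belowandabove} together with Proposition~\ref{P:arbitrary}, which contributes the $\SO_3$ factor), and then to analyze those spaces directly. Writing $\rho_0 = \rho_1-\rho_2 = \arccot\ka_0$ so that $n = \fl{\pi/\rho_0}+1$, the first task is to produce the $n$ candidate components. The circle traversed $j$ times has constant radius of curvature $\rho_0/2 \cdot (\text{something})$ — more precisely, one checks that a circle of colatitude $\al$ traversed $j$ times is admissible in $\sr L_{\ka_0}^{+\infty}$ exactly when $j\al < \pi$ can be arranged with $\al < \rho_0$, and this is possible precisely for $1\le j\le n$. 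So the circles traversed $1,\dots,n$ times furnish $n$ curves in distinct (a priori) classes, and I would record the claim — to be proved later in the thesis — that a circle traversed $j$ times cannot be deformed to a circle traversed $j'$ times for $j\ne j'$ when both are $\le n-2$, while the "overflow" identifications $\sr L_{n-1}\ni$ (circle $(n-1)+2k$ times) and $\sr L_n \ni$ (circle $n+2k$ times) hold.

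The heart of the matter is showing there are \emph{exactly} $n$ components, i.e. every admissible curve is homotopic to one of these $n$ circles. This is where the machinery announced in the outline (\S3--\S7) does the work, and my plan is to invoke it as a black box in the order the thesis builds it: (1) every curve is homotopic to one that is either \emph{condensed} or \emph{diffuse} (\S6); (2) a diffuse curve can be \emph{grafted} down to a multiply-traversed circle (\S4); (3) a condensed curve carries a well-defined rotation number $k$, and is homotopic to the circle traversed $k$ times, with the condensed curves of rotation number $k<n-1$ forming an entire component (\S5, \S7); (4) the decision procedure of \S7 shows a circle traversed $k$ times deforms to one traversed $k+2$ times iff $k\ge n-1$, which collapses the list $\{1,2,\dots\}$ of possible rotation numbers onto exactly $\{1,\dots,n\}$ with the stated $2k$-periodic identifications at the top two levels. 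Combining (1)--(4) gives that $\pi_0(\sr L_{\ka_0}^{+\infty}(I))$ has exactly $n$ elements, represented by the circles traversed $1,\dots,n$ times.

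For the homotopy-equivalence statement, I would argue that for $1\le j\le n-2$ the component $\sr L_j$ consists \emph{entirely} of condensed curves of rotation number $j$ (this is precisely the last assertion of \S7 quoted in the outline). One then shows this space of condensed curves of fixed rotation number $j$ deformation-retracts onto the subspace of "canonical" curves, which is a copy of the fibre $UT\Ss^2 \home \SO_3$ coming from the free choice of initial frame in $\sr L_{\ka_0}^{+\infty}$ (equivalently: the map recording the frame $\Phi_\ga(0)$, restricted to this component, is a fibration with contractible fibres after the retraction, by an argument parallel to Lemma~\ref{L:C^2} and the Hilbert-manifold facts in Lemma~\ref{L:Hilbert}). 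Since $\sr L_j \subset \sr L_{\ka_1}^{\ka_2} \home \SO_3\times \sr L_{\ka_1}^{\ka_2}(I)$ and the $j$-th component of $\sr L_{\ka_1}^{\ka_2}(I)$ is weakly contractible for $j\le n-2$, Lemma~\ref{L:Hilbert}(b) upgrades this to a genuine homotopy equivalence $\sr L_j \iso \SO_3$.

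The main obstacle is step (2)--(4) of the second paragraph: the dichotomy condensed/diffuse and the grafting construction are genuinely the technical core of the thesis, and the sharp cutoff $k\ge n-1$ for when a $k$-fold circle unwinds by two turns is exactly the computation that forces the numerology $n = \fl{\pi/\rho_0}+1$. Everything else — the reduction to $\sr L_{\ka_0}^{+\infty}$, the $\SO_3$ bookkeeping, and the final homotopy equivalence for the lower components — is comparatively formal once those inputs are in hand. Since this theorem is stated at the head of \S\ref{S:main} and explicitly deferred ("after most of the work has been done", \S8), the honest "proof" here is the assembly of \S\S4--7; I would present it as such, citing each ingredient as it is needed.
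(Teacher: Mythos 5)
Your assembly matches the paper's own proof in \S 8 essentially verbatim: the first part of the theorem is exactly the combination of Theorems (\ref{T:allisround}) and (\ref{T:disconnects}) with the homeomorphism of (\ref{P:arbitrary}), and the homotopy equivalence $\sr L_j\iso \SO_3$ for $j\leq n-2$ is exactly Proposition (\ref{P:condensed}) (weak contractibility of the component of condensed curves of rotation number $j$, upgraded to contractibility via (\ref{L:Hilbert}\,(b))), transported through (\ref{C:belowandabove}) and (\ref{R:circletocircle}) before reinstating the $\SO_3$ factor. One parenthetical claim is wrong, though immaterial to your argument: a circle of colatitude $\al$ traversed $j$ times is admissible in $\sr L_{\ka_0}^{+\infty}$ for \emph{every} $j\geq 1$ as soon as $\al<\rho_0$ --- there is no admissibility constraint of the form $j\al<\pi$, and the numerology $n=\fl{\pi/\rho_0}+1$ arises not from which circles exist but from the collapse of rotation numbers $\geq n-1$ that you correctly attribute to \S 7.
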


\begin{figure}[ht]
	\begin{center}
		\includegraphics[scale=.60]{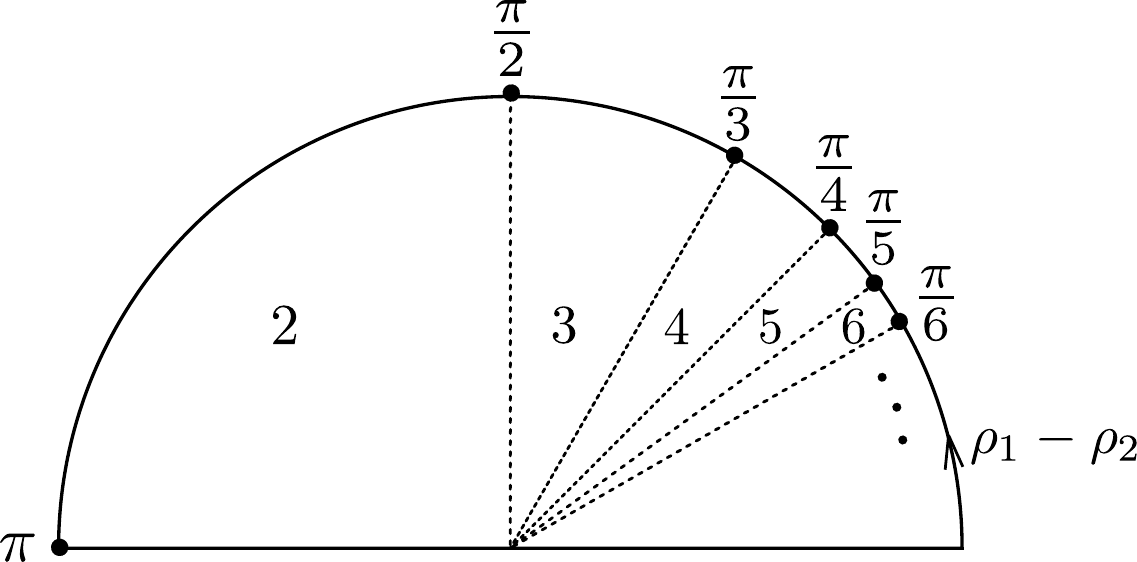}
		\caption{The number of connected components of $\sr L_{\ka_1}^{\ka_2}$, as $\rho_1-\rho_2$ varies in $(0,\pi]$ (where $\rho_i=\arccot \ka_i$). When $\rho_1-\rho_2=\frac{\pi}{n}$, $\sr L_{\ka_1}^{\ka_2}$ has $n+1$ components.}
		\label{F:components}
	\end{center}
\end{figure}


If we replace $\sr L_{\ka_1}^{\ka_2}$ by $\sr L_{\ka_1}^{\ka_2}(I)$ in the statement then the conclusion is the same, except that $\sr L_1(I),\dots,\sr L_{n-2}(I)$ are now contractible, and, of course, the circles are required to have initial and final frames equal to $I$. This is what will actually be proved; the theorem follows from this and the homeomorphism $\sr L_{\ka_1}^{\ka_2}\home \SO_3\times \sr L_{\ka_1}^{\ka_2}(I)$, which was established in (\ref{P:arbitrary}). We could also have replaced $\sr L_{\ka_1}^{\ka_2}$ by the space of all $C^r$ closed curves ($r\geq 2$) whose geodesic curvatures lie in the interval $(\ka_1,\ka_2)$, with the $C^r$ topology, since this space is homotopy equivalent to the former, by (\ref{L:C^2}).

\begin{exms}Let us first discuss some concrete cases of the theorem.

(a) We have already mentioned (on p.~\pageref{E:Hirsch}) that $\sr L_{-\infty}^{+\infty}=\sr I\iso \SO_3\times (\Om\Ss^3\du \Om\Ss^3)$ has two connected components $\sr I_+$ and $\sr I_-$, which are characterized by: $\ga\in \sr I_{+}$ if and only if $\te\Phi_\ga(1)=\te\Phi_\ga(0)$ and $\ga\in \sr I_{-}$ if and only if $\te\Phi_\ga(1)=-\te\Phi_\ga(0)$. This is consistent with (\ref{T:components}).

(b) Suppose $\ka_0<0$. Setting $\rho_2=0$ and $\rho_1=\arccot \ka_0$ in (\ref{T:components}), we find that $\sr L_{\ka_0}^{+\infty}$ also has two connected components. Since $\sr L_{\ka_0}^{+\infty}$ can be considered a subspace of $\sr L_{-\infty}^{+\infty}$, these components have the same characterization in terms of $\te{\Phi}(1)$: two curves $\ga,\eta\in \sr L_{\ka_0}^{+\infty}$ are homotopic if and only if $\te{\Phi}_{\ga}(1)=\pm \te{\Phi}_\ga(0)$ and $\te{\Phi}_{\eta}(1)=\pm \te{\Phi}_\eta(0)$, with the same choice of sign for both curves.

(c) In contrast, $\sr L_{\ka_0}^{+\infty}$ has at least three connected components when $\ka_0\geq 0$. It has exactly three components in case 
\begin{equation*}
	0\leq \ka_0<\frac{1}{\sqrt{3}}.
\end{equation*}
The case $\ka_0=0$ is Little's theorem (\cite{Little}, thm.~1). If
\begin{equation*}
	\frac{1}{\sqrt{3}}\leq \ka_0< 1
\end{equation*}
it has four connected components and so forth.
\end{exms}

To sum up, as we impose starker restrictions on the geodesic curvatures, a homotopy which existed ``before'' may now be impossible to carry out. For instance, in any space $\sr L_{\ka_0}^{+\infty}$ with $\ka_0<0$, it is possible to deform a circle traversed once into a circle traversed three times. However, in $\sr L_0^{+\infty}$ this is not possible anymore, which gives rise to a new component.

The first part of theorem (\ref{T:components}) is an immediate consequence of the following results.  

\begin{thma}\label{T:allisround}
		Let $-\infty\leq \ka_1<\ka_2\leq +\infty$. Every curve in $\sr L_{\ka_1}^{\ka_2}(I)$ (resp.~$\sr L_{\ka_1}^{\ka_2}$) lies in the same component as a circle traversed $k$ times, for some $k\in \N$ (depending on the curve).
\end{thma}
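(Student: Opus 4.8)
The plan is to assemble the constructions of the preceding sections rather than to argue from scratch. First I would reduce the problem to a single model space: by Proposition (\ref{P:arbitrary}) it is enough to treat $\sr L_{\ka_1}^{\ka_2}(I)$, and by Corollary (\ref{C:belowandabove}) together with Remark (\ref{R:circletocircle}) there is a homeomorphism $\sr L_{\ka_1}^{\ka_2}(I)\home\sr L_{\ka_0}^{+\infty}(I)$ carrying every circle traversed $k$ times to a circle traversed $k$ times, so it suffices to show that an arbitrary $\ga\in\sr L_{\ka_0}^{+\infty}(I)$ lies in the same component as some multiply-traversed circle. Using Lemma (\ref{L:dense}) we may further assume, after an arbitrarily small deformation, that $\ga$ is $C^2$, so that the geodesic curvature, the osculating circles, and the regular and caustic bands $B_\ga$ and $C_\ga$ of \S 3 are all classically defined.

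The argument then proceeds in three stages. Stage one uses the results of \S 6 to deform $\ga$, within $\sr L_{\ka_0}^{+\infty}(I)$, into a curve that is either \emph{condensed} or \emph{diffuse} in the sense of \S 3; this is needed because a general $\ga$ need be of neither type. Stage two handles the diffuse case: the grafting construction of \S 4 yields an explicit homotopy, keeping the geodesic curvature in $(\ka_0,+\infty)$ and the endpoint frames equal to $I$, from a diffuse curve to a circle traversed $k$ times for an appropriate $k\in\N$. Stage three handles the condensed case, where one instead invokes the rotation-number theory of \S 5: a condensed curve carries a well-defined rotation number $k$, and the methods of \S 5 deform it to the circle traversed $k$ times. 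Since the set of curves lying in the component of some multiply-traversed circle is automatically a union of connected components, establishing these three stages for a dense set of $\ga$ proves the theorem for $\sr L_{\ka_0}^{+\infty}(I)$, and hence, after transporting back along the homeomorphisms above, for $\sr L_{\ka_1}^{\ka_2}(I)$ and $\sr L_{\ka_1}^{\ka_2}$.

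The main obstacle is Stage one --- the reduction to the condensed-or-diffuse case --- which is exactly the technical core promised in \S 6: one must show that a caustic band which is partly ``thin'' (condensed-like) and partly ``thick'' (diffuse-like) can be homotoped to be uniformly of one character, all the while keeping the curvature strictly inside $(\ka_0,+\infty)$. By comparison, Stages two and three are, respectively, a lengthy but essentially constructive deformation and a deformation controlled by a discrete invariant, so I expect them to be routine once the relevant lemmas of \S\S 4--5 are in hand. A point to watch throughout is that every homotopy must stay in $\sr L_{\ka_0}^{+\infty}(I)$, i.e.\ must preserve both the open curvature constraint and the boundary conditions $\Phi_\ga(0)=\Phi_\ga(1)=I$; this is what forces the constructions to be done with some care rather than by a crude ``straighten everything out'' move.
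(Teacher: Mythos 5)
Your outline matches the paper's proof in its broad strokes: the same reduction to $\sr L_{\ka_0}^{+\infty}(I)$ via (\ref{P:arbitrary}) and (\ref{C:belowandabove}), the same trichotomy (diffuse curves handled by the grafting homotopy of (\ref{P:diffuseloose}); condensed curves by (\ref{P:positivecondensed}), (\ref{P:negativecondensed}) and Little's theorem; and a residual case), and the correct identification of the residual case as the crux. Stages two and three are indeed just citations of \S\S 4--5.

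The gap is in Stage one. Section 6 does not contain a proposition asserting that every curve is homotopic to a condensed or a diffuse one; it only supplies the bound (\ref{P:totbound}) on the total curvature of a non-diffuse curve in terms of its rotation number. The actual reduction is carried out inside the proof of the theorem, and it is not a direct homotopy making the caustic band ``uniformly of one character.'' It is a proof by contradiction built from the grafting machinery of \S 4: assume $\ga$ is homotopic to neither type; then every curve reachable from $\ga$ by grafting is non-condensed, so (\ref{P:totting}) always produces a further chain of grafts with $\tot(\ga_s)=\tot(\ga)+s$; by a Zorn's lemma argument (resting on (\ref{L:eqrel}) and the limit construction of (\ref{L:chainofgrafts})) one takes a maximal chain $(\ga_s)_{s\in J}$; since no $\ga_s$ is diffuse, $\nu(\ga_s)$ is defined and locally constant, so (\ref{P:totbound}) caps the total curvature and forces $J$ to be bounded; but a bounded maximal chain can always be extended --- by (\ref{L:chainofgrafts}) if $J$ is half-open, by applying (\ref{P:totting}) to the endpoint if $J$ is closed --- which is the desired contradiction. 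Without this mechanism (or a substitute for it), your Stage one remains an assertion rather than an argument: no single result of \S 6 delivers the condensed-or-diffuse reduction you are invoking.
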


\begin{thma}\label{T:disconnects}
	Let $-\infty\leq \ka_1<\ka_2\leq +\infty$ and let $\sig_k\in \sr L_{\ka_1}^{\ka_2}(I)$ (resp.~$\sr L_{\ka_1}^{\ka_2}$) denote any circle traversed $k\geq 1$ times. Then $\sig_k$,~$\sig_{k+2}$ lie in the same component of $\sr L_{\ka_1}^{\ka_2}(I)$ (resp.~$\sr L_{\ka_1}^{\ka_2}$) if and only if 
	\begin{equation*}
\qquad		k\geq \left\lfloor{\frac{\pi}{\rho_1-\rho_2}}\right\rfloor\quad (\rho_i=\arccot\ka_i,~i=1,2).
	\end{equation*}
\end{thma}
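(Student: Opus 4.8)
The plan is to reduce to the normalized space $\sr L_{\ka_0}^{+\infty}(I)$ via Corollary (\ref{C:belowandabove}), since by Remark (\ref{R:circletocircle}) the homeomorphism carries circles traversed $k$ times to circles traversed $k$ times, and $\fl{\pi/(\rho_1-\rho_2)}=\fl{\pi/\rho_0}$ where $\rho_0=\arccot\ka_0=\rho_1-\rho_2$; then the $\SO_3$-factor decomposition (\ref{P:arbitrary}) handles the $\sr L_{\ka_1}^{\ka_2}$ versus $\sr L_{\ka_1}^{\ka_2}(I)$ distinction at once. So everything comes down to: in $\sr L_{\ka_0}^{+\infty}(I)$, a circle $\sig_k$ traversed $k$ times is homotopic to $\sig_{k+2}$ if and only if $k\geq n-1=\fl{\pi/\rho_0}$. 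This is exactly the content promised for \S 7 of the thesis, so the proof will just cite that section; below I sketch what \S 7 must contain.

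For the "if" direction ($k\geq n-1$), I would construct an explicit homotopy from $\sig_k$ to $\sig_{k+2}$ inside $\sr L_{\ka_0}^{+\infty}(I)$. The idea is that when $\rho_0$ is small enough that $k$ copies of a circle of radius of curvature close to $\rho_0$ "fit" — more precisely when $k\rho_0\geq\pi$, i.e. $k\geq\pi/\rho_0$, hence $k\geq n-1$ — one has enough room to insert two extra small loops: take a circle traversed $k$ times, and near one point deform it by pushing out a pair of little almost-round detours (each a near-circle of radius $<\rho_0$, allowed since the curvature bound is only from below), increasing the total tangent winding by $2$, arriving at $\sig_{k+2}$. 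The delicate point is staying admissible throughout, i.e. keeping the geodesic curvature $>\ka_0$ at every instant and keeping the frame endpoints at $I$; one writes the family down in closed form (arcs of circles of varying radii concatenated, then smoothed using (\ref{L:dense})) and checks the curvature inequality by the kind of computation in the proof of (\ref{L:essentialtranslation}).

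For the "only if" direction ($k<n-1$ implies $\sig_k\not\sim\sig_{k+2}$), the natural approach is an invariant. The lifted-frame endpoint $\te\Phi_\ga(1)\in\Ss^3$ from (\ref{L:bit}) separates $\sr I$ into two pieces but cannot by itself distinguish $\sig_k$ from $\sig_{k+2}$ (both give the same lift since $k$ and $k+2$ have the same parity); so a finer invariant is needed, presumably the notion of rotation number for condensed curves announced for \S 5, together with the fact (also announced for \S 7) that the set of condensed curves of a fixed rotation number $k<n-1$ is a whole connected component of $\sr L_{\ka_0}^{+\infty}(I)$. Granting that, $\sig_k$ is condensed with rotation number $k$, $\sig_{k+2}$ is condensed with rotation number $k+2$, and if both are $<n-1$ they lie in distinct components. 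The main obstacle is therefore entirely in this direction: one must have already developed the condensed/diffuse dichotomy of \S 3, the rotation-number invariant of \S 5, and the component-identification of \S 7 — in the present section these are simply invoked. The only genuine work local to Theorem (\ref{T:disconnects}) is assembling these inputs and performing the normalization reductions above.
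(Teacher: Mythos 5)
Your normalization via (\ref{P:arbitrary}), (\ref{C:belowandabove}) and (\ref{R:circletocircle}), and your ``only if'' direction, match the paper: the paper cites exactly the rigidity statement you invoke, namely (\ref{P:borderline}) (a homotopy starting at a condensed curve with rotation number $\nu\leq n-2$ keeps the curve condensed with constant $\nu$), so a homotopy $\sig_k\sim\sig_{k+2}$ with $k\leq n-2$ would force $\nu(\sig_{k+2})=k$, a contradiction.

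The gap is in the ``if'' direction. The homotopy you propose --- keeping $\sig_k$ fixed away from one point and ``pushing out a pair of little almost-round detours'' there --- cannot exist in $\sr L_{\ka_0}^{+\infty}(I)$, and the reason is precisely the mechanism behind the ``only if'' half. A deformation supported near one point of $\sig_k$ keeps every intermediate curve inside a fixed open hemisphere, where the Whitney rotation number (via stereographic projection) is defined and locally constant; hence the tangent winding cannot change from $k$ to $k+2$ along such a family. Equivalently, by (\ref{P:borderline}) any homotopy from $\sig_k$ to $\sig_{k+2}$ must pass through an equatorial curve --- one whose caustic band meets the boundary of every hemisphere containing it --- and a small local perturbation of a condensed circle never is one. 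So the condition $k\geq\fl{\pi/\rho_0}$ cannot enter through ``having enough room for two extra loops near a point''; if a local insertion worked it would work for every $k$. What the paper actually does in \S 7 is global: the \emph{bending of the $k$-equator} deforms the equator traversed $k$ times through curves built from $2k+2$ arcs of circles, swinging alternate arcs up and down through the poles until the curve becomes the equator traversed $k+2$ times. The extremal geodesic curvature along the entire family is $\pm\tan\big(\tfrac{\pi}{2k+2}\big)$, and after normalizing to a symmetric interval via (\ref{C:twoviewpoints}) the admissibility condition $\tan(\rho_0/2)>\tan\big(\tfrac{\pi}{2k+2}\big)$ is exactly $k\geq\fl{\pi/\rho_0}$; this is (\ref{P:borderline0}). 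The loop-adding machinery of \S 4 does let one insert arcs without violating the curvature bound, but only for diffuse curves (grafting at two antipodal points of the caustic band so the final frame is preserved), and in the relevant range $\sig_k$ is condensed, not diffuse. You need to replace your local construction by this global one (or an equivalent).
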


The following very simple result will be used implicitly in the sequel; it implies in particular that it does not matter which circle $\sig_k$ we choose in (\ref{T:allisround}) and (\ref{T:disconnects}).

\begin{lemmaa}\label{L:homocircles}
	Let $\sig,\te\sig\in \sr L_{\ka_1}^{\ka_2}(I)$ (resp.~$\sr L_{\ka_1}^{\ka_2}$) be parametrized circles traversed the same number of times. Then $\sig$ and $\te\sig$ lie in the same connected component of $\sr L_{\ka_1}^{\ka_2}(I)$ (resp.~$\sr L_{\ka_1}^{\ka_2}$).
\end{lemmaa}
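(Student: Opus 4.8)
The plan is to reduce to the fact that the group $\SO_3$ acts on these spaces by isometries that preserve geodesic curvature, together with the connectedness of $\SO_3$. First I would fix a ``model'' parametrized circle and show any other parametrized circle traversed the same number of times is connected to it. So let $\sig,\te\sig\in \sr L_{\ka_1}^{\ka_2}(I)$ (or in $\sr L_{\ka_1}^{\ka_2}$) be circles traversed $k$ times. Each of them traces out, at constant speed up to reparametrization, a geometric circle on $\Ss^2$ of some colatitude; using (\ref{L:reparametrize}) I may first homotope both within their respective spaces so that they are parametrized proportionally to arc length, i.e.\ run at constant speed. A circle of constant geodesic curvature $\ka$ has $\ka\in(\ka_1,\ka_2)$, i.e.\ radius of curvature $\rho\in(\rho_2,\rho_1)$; since $(\ka_1,\ka_2)$ is an interval and circles of every radius of curvature in this interval are admissible, I can move the colatitude of $\sig$ continuously to that of $\te\sig$ through admissible circles, giving a path in the space of curves. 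This handles the ``same circle but different size'' issue and reduces the problem to the case where $\sig$ and $\te\sig$ are congruent circles traversed $k$ times at constant speed, differing only by their position on $\Ss^2$ and by a rotation in the parametrization.

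Next I would use the $\SO_3$-action. Two congruent constant-speed circles traversed $k$ times differ by an ambient rotation $P\in\SO_3$ (carrying one circle onto the other, matching orientation and speed) composed with a rotation of the parameter $t\mapsto t+c \pmod 1$; the latter is itself realized by precomposing with a rotation about the axis of the circle, so in fact $\te\sig = P\sig$ for some $P\in\SO_3$ after absorbing the reparametrization. For the space $\sr L_{\ka_1}^{\ka_2}$ (no frame constraint) I am done immediately: since $\SO_3$ is connected, choose a path $P_u$ from $I$ to $P$, and $u\mapsto P_u\sig$ is a path of admissible curves (rotations are isometries, so geodesic curvature is preserved, by the argument already used in the proof of (\ref{P:arbitrary})), joining $\sig$ to $\te\sig$; note $\Phi_{P_u\sig}(0)=\Phi_{P_u\sig}(1)$ automatically since $\Phi_{P_u\sig}=P_u\Phi_\sig$ and $\Phi_\sig(0)=\Phi_\sig(1)$.

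For $\sr L_{\ka_1}^{\ka_2}(I)$ there is the extra constraint $\Phi_\ga(0)=\Phi_\ga(1)=I$, so I cannot simply rotate. Instead, using (\ref{P:arbitrary}) I know $\sr L_{\ka_1}^{\ka_2}\home \SO_3\times\sr L_{\ka_1}^{\ka_2}(I)$, and the component of a curve $\ga$ in $\sr L_{\ka_1}^{\ka_2}$ determines (via the second factor) the component of $\Phi_\ga(0)^{-1}\ga$ in $\sr L_{\ka_1}^{\ka_2}(I)$. So from the previous paragraph $\sig$ and $\te\sig$ lie in the same component of $\sr L_{\ka_1}^{\ka_2}$; applying the homeomorphism of (\ref{P:arbitrary}), $\Phi_\sig(0)^{-1}\sig=\sig$ and $\Phi_{\te\sig}(0)^{-1}\te\sig=\te\sig$ (since both already have initial frame $I$) lie in the same component of $\sr L_{\ka_1}^{\ka_2}(I)$. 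Alternatively, and perhaps cleaner, I would note that a constant-speed circle traversed $k$ times based at $I$ can be rotated about its own axis back to itself, and that conjugating the whole homotopy $P_u\sig$ by a suitable path of rotations ending so as to restore the frame condition works because the loop $u\mapsto\Phi_{P_u\sig}(0)=P_u$ can be cancelled against $u\mapsto\Phi_{P_u\sig}(1)=P_u$ — this is exactly the content of the decomposition in (\ref{P:arbitrary}).

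The one step requiring genuine care is the reduction of an arbitrary constant-speed circle traversed $k$ times to the canonical one by an element of $\SO_3$: one must check that matching position, orientation, and traversal speed of two congruent $k$-fold circles really is achieved by a single rotation (possibly after a harmless reparametrization rotation), and that $k$-fold traversal is preserved throughout — but this is elementary spherical geometry and poses no real obstacle. Everything else is a formal consequence of (\ref{L:reparametrize}), (\ref{P:arbitrary}), and the connectedness of $\SO_3$.
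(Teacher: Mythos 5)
Your proof is correct and uses essentially the same ingredients as the paper's: reparametrization to constant speed via (\ref{L:reparametrize}), interpolation of the radius of curvature through admissible circles, and the connectedness of $\SO_3$ combined with (\ref{P:arbitrary}). The paper merely runs the reduction in the opposite direction — it passes to $\sr L_{\ka_1}^{\ka_2}(I)$ first, where a constant-speed circle traversed $k$ times is determined by its radius of curvature alone, and writes the interpolating family explicitly — but this is a cosmetic difference.
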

\begin{proof}
	By (\ref{P:arbitrary}), it suffices to prove the result for $\sr L_{\ka_1}^{\ka_2}(I)$, since any circle in $\sr L_{\ka_1}^{\ka_2}$ is obtained from a circle in the former space by a rotation and $\SO_3$ is connected. By (\ref{L:reparametrize}), we can assume that both $\sig$ and $\te\sig$ are parametrized by a multiple of arc-length. Let $k$ be the common number of times that the circles are traversed, let $\rho,\te\rho\in (\rho_2,\rho_1)$ be their respective radii of curvature (where $\rho_i=\arccot(\ka_i)$) and define $\rho(s)=(1-s)\rho+s\te\rho$ for $s\in [0,1]$. Then 
	\begin{alignat*}{9}
		(s,t)\mapsto &\cos\rho(s)(\cos\rho(s),0,\sin\rho(s)) \\
		&+ \sin\rho(s)\big(\sin\rho(s)\cos(2k\pi t)\?,\?\sin(2k\pi t)\?,\?{-}\cos\rho(s)\cos(2k\pi t)\big),
	\end{alignat*}
	where $s,t\in [0,1]$, yields the desired homotopy between $\sig$ and $\te\sig$ in $\sr L_{\ka_1}^{\ka_2}(I)$.
\end{proof}

Next we introduce the main concepts and tools used in the proofs of the theorems listed above. From now on we shall work almost exclusively with spaces of type $\sr L_{\ka_0}^{+\infty}$ and $\sr L_{\ka_0}^{+\infty}(I)$; we are allowed to do so by (\ref{C:belowandabove}).

%

\subsection*{The bands spanned by a curve}
Let $\ga\colon [0,1]\to \Ss^2$ be a $C^2$ regular curve. For $t\in [0,1]$, let $\chi(t)$ (or $\chi_\ga(t)$) be the center, on $\Ss^2$, of the osculating circle to $\ga$ at $\ga(t)$.\footnote{There are two possibilities for the center on $\Ss^2$ of a circle. To distinguish them we use the orientation of the circle, as in fig.~\ref{F:parallel}. The radius of curvature $\rho(t)$ is the distance from $\ga(t)$ to the center $\chi(t)$, measured along $\Ss^2$.} The point $\chi(t)$ will be called the \tdef{center of curvature} of $\ga$ at $\ga(t)$, and the correspondence $t\mapsto \chi(t)$ defines a new curve $\chi\colon [0,1]\to \Ss^2$, the \tdef{caustic} of $\ga$. In symbols,
\begin{equation}\label{E:caustic}
	\chi(t)=\cos \rho(t)\?\ga(t)+\sin\rho (t)\?\no(t).
\end{equation}
Here, as always, $\rho=\arccot \ka$ is the radius of curvature and $\no$ the unit normal to $\ga$. Note that the caustic of a circle degenerates to a single point, its center. This is explained by the following result.

\begin{lemmaa}\label{L:caustic}
	Let $r\geq 2$, $\ga\colon [0,1]\to \Ss^2$ be a $C^r$ regular curve and $\chi$ its caustic. Then $\chi$ is a curve of class $C^{r-2}$. When $\chi$ is differentiable, $\dot\chi(t)=0$ if and only if $\dot\ka(t)=0$, where $\ka$ is the geodesic curvature of $\ga$.
\end{lemmaa}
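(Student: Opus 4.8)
The plan is to compute $\dot\chi$ directly from the defining formula \eqref{E:caustic} and read off both assertions at once.

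First I would differentiate $\chi(t)=\cos\rho(t)\,\ga(t)+\sin\rho(t)\,\no(t)$ using the product and chain rules. This requires knowing $\dot\ga$ and $\dot\no$. From \eqref{E:frenet1} (equivalently from the definition of the frame $\Phi_\ga$), when $\ga$ is parametrized by arc-length we have $\ga'=\ta$, $\no'=-\ka\,\ta$ (the $\ga$-component of $\no'$ vanishes because $\gen{\no,\ga}\equiv 0$ forces $\gen{\no',\ga}=-\gen{\no,\ta}=0$, and the $\no$-component vanishes since $\abs{\no}\equiv1$). So, writing everything with respect to arc-length $s$,
\begin{equation*}
	\chi'(s)=-\rho'\sin\rho\,\ga+\cos\rho\,\ta+\rho'\cos\rho\,\no+\sin\rho\,(-\ka\,\ta)=\big(\cos\rho-\ka\sin\rho\big)\ta+\rho'\big(\cos\rho\,\no-\sin\rho\,\ga\big).
\end{equation*}
Since $\ka=\cot\rho$, the coefficient of $\ta$ is $\cos\rho-\cot\rho\sin\rho=0$, so $\chi'(s)=\rho'(s)\big(-\sin\rho\,\ga+\cos\rho\,\no\big)$. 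The vector $-\sin\rho\,\ga+\cos\rho\,\no$ is a unit vector (it is $\no$ rotated, and indeed it is exactly the translated normal $\no_\theta$ with $\theta=\rho$ appearing in \eqref{E:normaltranslation}), hence $\abs{\chi'(s)}=\abs{\rho'(s)}$. Therefore $\chi'(s)=0$ if and only if $\rho'(s)=0$, and since $\rho=\arccot\ka$ is a diffeomorphism onto its range, $\rho'(s)=0$ iff $\ka'(s)=0$. Converting back to the original parameter $t$ via $\dot\chi(t)=s'(t)\chi'(s)$ with $s'(t)=\abs{\dot\ga(t)}>0$ changes nothing in these equivalences, which gives the second assertion.

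For the regularity statement, I would argue that if $\ga$ is $C^r$ then $\ta$ and $\no$ are $C^{r-1}$, the geodesic curvature $\ka=\gen{\ta',\no}$ (suitably normalized) is $C^{r-2}$, hence $\rho=\arccot\ka$ is $C^{r-2}$ since $\arccot$ is smooth on $\R$; then $\cos\rho$ and $\sin\rho$ are $C^{r-2}$, and the formula \eqref{E:caustic} expresses $\chi$ as a sum of products of a $C^{r-2}$ function with a $C^{r-1}$ vector field, so $\chi$ is $C^{r-2}$.

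There is essentially no hard obstacle here; the only point requiring a little care is the bookkeeping of differentiability classes (making sure $\ka$, not $\ka'$, is what is $C^{r-2}$) and remembering to pass between the arc-length parameter and the given parameter $t$, which is harmless because the reparametrization is a $C^{r-1}$ diffeomorphism with nonvanishing derivative. The cancellation of the $\ta$-coefficient, which is where the hypothesis $\ka=\cot\rho$ enters, is the substantive step and it is a one-line trigonometric identity.
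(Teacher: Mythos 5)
Your proof is correct and follows essentially the same route as the paper: differentiate \eqref{E:caustic} in the arc-length parameter, observe that the $\ta$-coefficient $\cos\rho-\ka\sin\rho$ vanishes because $\ka=\cot\rho$, and conclude that $\chi'(s)$ is $\rho'(s)$ times a unit vector, so $\chi'=0$ iff $\rho'=0$ iff $\ka'=0$; the regularity count ($\rho$ of class $C^{r-2}$, hence $\chi$ of class $C^{r-2}$) is also the paper's argument. The only cosmetic difference is that the paper writes $\rho'=-\ka'/(1+\ka^2)$ explicitly rather than invoking that $\arccot$ is a diffeomorphism, and does not bother passing back to the original parameter $t$.
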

\begin{proof}
	If $\ga$ is $C^r$ then $\rho$ is a $C^{r-2}$ function, hence $\chi$ is also of class $C^{r-2}$. The proof of the second assertion is a straightforward computation: Using the arc-length parameter $s$ of $\ga$ instead of $t$, we find that
	\begin{alignat*}{10}
		\chi'(s)&=\rho'(s)\big(-\sin\rho(s)\?\ga(s)+\cos\rho(s)\?\no(s)\big)+\big(\cos\rho(s)-\ka(s)\sin\rho(s)\big)\?\ta(s)\\
		&=\frac{\ka'(s)}{1+\ka(s)^2}\big(\sin\rho(s)\?\ga(s)-\cos\rho(s)\?\no(s)\big),
	\end{alignat*}
where we have used that 
\[
\cos\rho-\ka\sin\rho=\sin\rho\?(\cot\rho-\ka)=0
\]
together with $0<\rho<\pi$. Therefore, $\chi'(s)=0$ if and only if $\ka'(s)$ vanishes. 
\end{proof}

\begin{defns}\label{D:bands}
Let $\ka_0\in \R$, $\rho_0=\arccot \ka_0$ and $\ga\in \sr L_{\ka_0}^{+\infty}$. Define the \tdef{regular band} $B_\ga$ and the \tdef{caustic band} $C_\ga$ to be the maps
\begin{equation*}
B_\ga \colon [0,1]\times [\rho_0-\pi,0]\to \Ss^2 \quad\text{and\quad}C_\ga\colon [0,1]\times [0,\rho_0]\to \Ss^2 
\end{equation*} 
given by the same formula:
\begin{equation}\label{E:bands}
	(t,\theta)\mapsto \cos \theta\, \ga(t)+\sin \theta\, \no(t).
\end{equation}
The image of $C_\ga$ will be denoted by $C$, and the geodesic circle orthogonal to $\ga$ at $\ga(t)$ will be denoted by $\Ga_t$. As a set, 
\[
\Ga_t=\set{\cos \theta\, \ga(t)+\sin \theta\, \no(t)}{\theta\in [-\pi,\pi)}.
\]
\end{defns}
\begin{figure}[ht]
	\begin{center}
		\includegraphics[scale=.48]{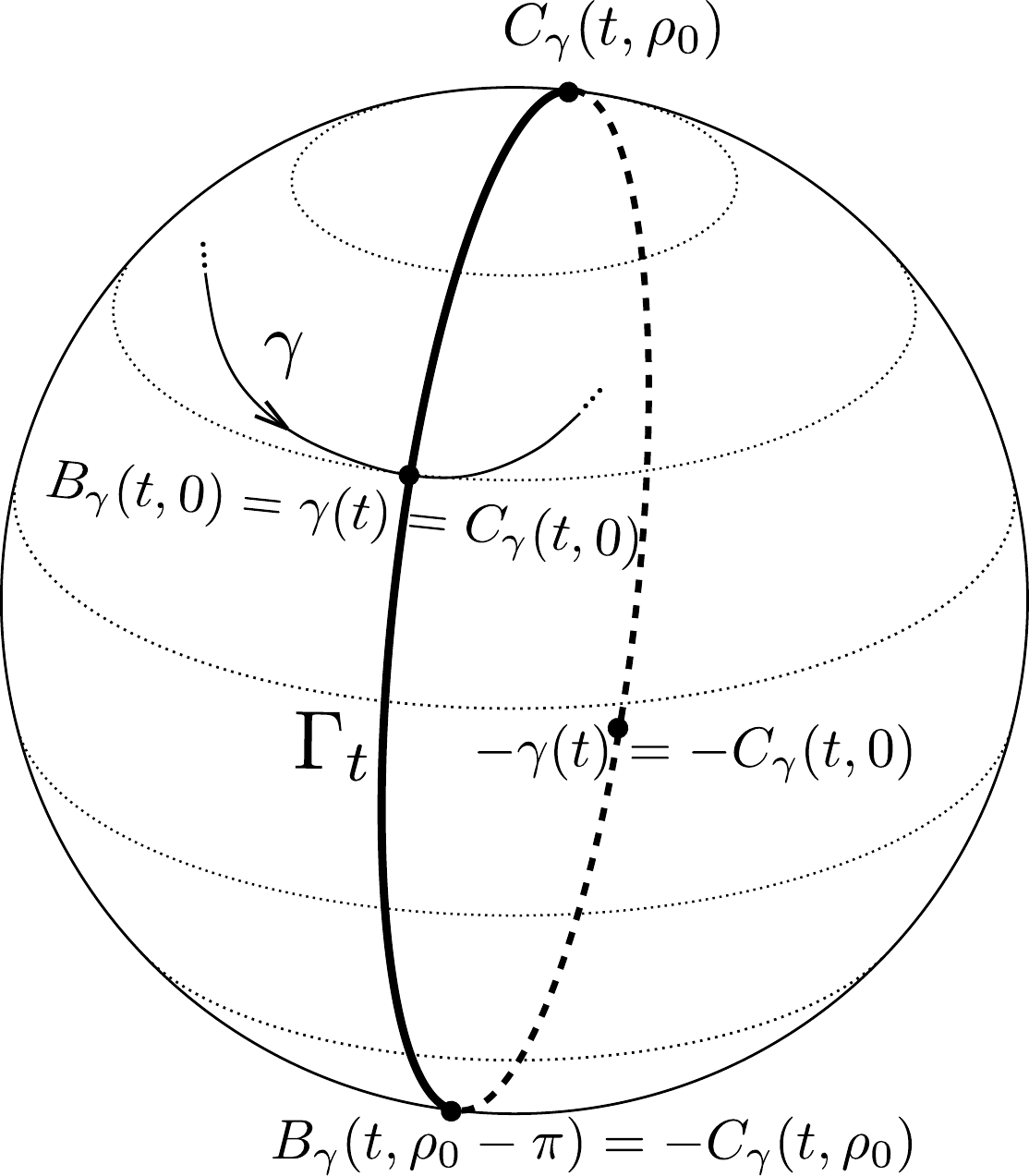}
		\caption{}
		\label{F:Gammat}
	\end{center}
\end{figure}

For fixed $t$, the images of $\pm B_\ga(t,\cdot)$ and $\pm C_\ga(t,\cdot)$ divide the circle $\Ga_t$ in four parts. Note also that $\chi_\ga(t)=C_\ga(t,\rho(t))$. 

\begin{lemmaa}\label{L:band}
	Let $\ga\in \sr L_{\ka_0}^{+\infty}$ and let $B_\ga\colon [0,1]\times [\rho_0-\pi,0]\to \Ss^2$ be the regular band spanned by $\ga$. Then:
	\begin{enumerate}
		\item The derivative of $B_\ga$ is an isomorphism at every point.
		\item $\frac{\bd B_{\ga}}{\bd \theta}(t,\theta)$ has norm 1 and is orthogonal to $\frac{\bd B_{\ga}}{\bd t}(t,\theta)$. Moreover, 
		\begin{equation*}
			\det \Big(B_\ga\,,\,\frac{\bd B_{\ga}}{\bd t}\, ,\, \frac{\bd B_{\ga}}{\bd \theta}  \Big)>0.
		\end{equation*}
		\item $C_\ga$ fails to be an immersion precisely at the points $(t,\rho(t))$ whose images form the caustic $\chi$.
	\end{enumerate} 
\end{lemmaa}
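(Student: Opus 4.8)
The plan is to reduce all three assertions to a single elementary computation of the partial derivatives of the map $f(t,\theta)=\cos\theta\,\ga(t)+\sin\theta\,\no(t)$, which is the common formula \eqref{E:bands} defining both $B_\ga$ and $C_\ga$. It is worth noting at the outset that on the domain $[0,1]\times[\rho_0-\pi,0]$ of $B_\ga$ the interval $[\rho_0-\pi,0]$ is exactly the range allowed in \eqref{E:regulartranslation} (with $\rho_1=\rho_0$ and $\rho_2=0$), so each $\theta$-slice $t\mapsto B_\ga(t,\theta)$ is the translation $\ga_\theta$, which by (\ref{L:essentialtranslation}) is admissible with unit tangent $\ta$ and unit normal $-\sin\theta\,\ga+\cos\theta\,\no$.

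Carrying out the computation (the same one as in the proof of (\ref{L:essentialtranslation}), using $\dot\ga=v\,\ta$ and $\dot\no=-w\,\ta$ from \eqref{E:frenet1}, where $v=\abs{\dot\ga}>0$, $w=v\ka$ and $\ka=\cot\rho$) gives
\begin{equation*}
\frac{\bd f}{\bd\theta}=-\sin\theta\,\ga+\cos\theta\,\no,\qquad \frac{\bd f}{\bd t}=(v\cos\theta-w\sin\theta)\,\ta=\frac{v\,\sin(\rho-\theta)}{\sin\rho}\,\ta.
\end{equation*}
Both vectors are orthogonal to $f(t,\theta)$, hence lie in $T_{f(t,\theta)}\Ss^2$; moreover $\bd f/\bd\theta$ is a unit vector (as $\ga,\no$ are orthonormal) and is orthogonal to $\ta$, hence to $\bd f/\bd t$ — this is the first sentence of part 2. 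For part 1, on the domain of $B_\ga$ we have $\rho(t)\in(0,\rho_0)$ and $\theta\le 0$, so $\rho(t)-\theta\in(0,\pi)$ and the scalar $v\sin(\rho-\theta)/\sin\rho$ is strictly positive; thus $\{\bd B_\ga/\bd t,\bd B_\ga/\bd\theta\}$ is an orthogonal pair of nonzero vectors in the $2$-dimensional space $T_{B_\ga(t,\theta)}\Ss^2$, hence a basis, and $DB_\ga$ is an isomorphism at every point. For the remaining claim of part 2, expand $\det(B_\ga,\bd B_\ga/\bd t,\bd B_\ga/\bd\theta)$ by multilinearity and use $\det(\ga,\ta,\no)=1$; everything collapses to the value $v\sin(\rho-\theta)/\sin\rho>0$.

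Part 3 is the same computation read on the domain $[0,1]\times[0,\rho_0]$ of $C_\ga$. Here $\bd C_\ga/\bd\theta=-\sin\theta\,\ga+\cos\theta\,\no$ is still a nonzero unit vector, while $\bd C_\ga/\bd t=\frac{v\,\sin(\rho(t)-\theta)}{\sin\rho(t)}\,\ta$ vanishes exactly when $\sin(\rho(t)-\theta)=0$; since now $\rho(t)-\theta\in(-\rho_0,\rho_0)\subseteq(-\pi,\pi)$, this happens if and only if $\theta=\rho(t)$. At such a point $DC_\ga$ has rank $1$ (because $\bd C_\ga/\bd\theta\neq 0$), and it has rank $2$ at every other point, so $C_\ga$ fails to be an immersion precisely on $\{(t,\rho(t))\}$; by \eqref{E:caustic} the image of this set is the caustic $\chi$.

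The only step that needs any thought is the sign analysis of $\rho(t)-\theta$: this is exactly where the hypothesis $\ga\in\sr L_{\ka_0}^{+\infty}$ (i.e.\ $\rho(t)<\rho_0$) and the precise choice of the two $\theta$-intervals enter, and it is what simultaneously makes $B_\ga$ an immersion everywhere and forces $C_\ga$ to degenerate exactly along the caustic. (For $\ga$ merely admissible rather than $C^2$, the formula for $\bd f/\bd t$ and all statements involving it are to be read for almost every $t$.)
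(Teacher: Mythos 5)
Your proof is correct and follows essentially the same route as the paper's: compute $\frac{\bd B_\ga}{\bd\theta}=-\sin\theta\,\ga+\cos\theta\,\no$ and $\frac{\bd B_\ga}{\bd t}=\frac{\abs{\dot\ga}}{\sin\rho}\sin(\rho-\theta)\,\ta$, observe that $0<\rho<\rho_0$ together with the choice of $\theta$-interval makes the scalar factor positive on the domain of $B_\ga$ and vanish exactly at $\theta=\rho(t)$ on the domain of $C_\ga$. Your extra details (the explicit determinant expansion and the a.e.\ caveat for merely admissible curves) are fine and consistent with what the paper leaves implicit.
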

\begin{proof} We have:
	\begin{equation}\label{E:partialtheta}
		\frac{\bd B_{\ga}}{\bd \theta}(t,\theta)=-\sin\theta\, \ga(t)+\cos\theta\, \no(t).
	\end{equation}
and
	\begin{alignat}{10}
		\frac{\bd B_{\ga}}{\bd t}(t,\theta)&=\abs{\dot{\ga}(t)}\big(\cos\theta-\ka(t)\?\sin\theta\big)\?\ta(t) \label{E:partialt1}\\
		&=\frac{\abs{\dot{\ga}(t)}}{\sin\rho(t)}\sin(\rho(t)-\theta)\?\ta(t)\label{E:partialt2},
	\end{alignat}
where $\rho(t)=\arccot \ka(t)$ is the radius of curvature of $\ga$ at $\ga(t)$. The inequality $\ka_0<\ka<+\infty$ translates into $0<\rho<\rho_0$, hence the factor multiplying $\ta(t)$ in \eqref{E:partialt2} is positive for $\theta$ satisfying $\rho_0-\pi\leq \theta\leq 0$, and this implies (a) and (b). Part (c) also follows directly from \eqref{E:partialt2}, because $C_\ga$ and $B_\ga$ are defined by the same formula.
\end{proof}
%

Thus, $B_\ga$ is an immersion (and a submersion) at every point of its domain. It is merely a way of collecting the regular translations of $\ga$ (as defined on p.~\pageref{E:translation}) in a single map. 

If we fix $t$ and let $\theta$ vary in $(0,\rho_0)$, the section $C_\ga(t,\theta)$ of $\Ga_t$ describes the set of ``valid'' centers of curvature for $\ga$ at $\ga(t)$, in the sense that the circle centered at $C_\ga(t,\theta)$  passing through $\ga(t)$, with the same orientation, has geodesic curvature greater than $\ka_0$. This interpretation is important because it motivates many of the constructions that we consider ahead.

\subsection*{Condensed and diffuse curves}

\begin{defna}
Let $\ka_0\in \R$ and $\ga\in \sr L_{\ka_0}^{+\infty}$. We shall say that $\ga$ is \tdef{condensed} if the image $C$ of $C_\ga$ is contained in a closed hemisphere, and \tdef{diffuse} if $C$ contains antipodal points (i.e., if $C\cap {-}C\neq \emptyset$). 
\end{defna}
\begin{exms}
	 A circle in $\sr L_{\ka_0}^{+\infty}$ is always condensed for $\ka_0\geq 0$, but when $\ka_0<0$ it may or may not be condensed, depending on its radius. If a curve contains antipodal points then it must be diffuse, since $C_\ga(t,0)=\ga(t)$. By the same reason, a condensed curve is itself contained in a closed hemisphere. 
	 
	 There exist curves which are condensed and diffuse at the same time; an example is a geodesic circle in $\sr L_{\ka_0}^{+\infty}$, with $\ka_0<0$. There also exist curves which are neither condensed nor diffuse. To see this, let $\Ss^1$ be identified with the equator of $\Ss^2$ and let $\ze\in \Ss^1$ be a primitive third root of unity. Choose small neighborhoods $U_i$ of $\ze^i$ ($i=0,1,2$) and $V$ of the north pole in $\Ss^2$. Then the set $G$ consisting of all geodesic segments joining points of $U_1\cup U_2\cup U_3$ to points of $V$ does not contain antipodal points, nor is it contained in a closed hemisphere, by (\ref{L:closedhemisphere}). By taking $\rho_0=\arccot \ka_0$ to be very small, we can construct a curve $\ga\in \sr L_{\ka_0}^{+\infty}$ for which $C=\Im(C_\ga)\subs G$, but $\ze^i\in C$ for each $i$, so that $\ga$ is neither condensed nor diffuse. 
	
		 To sum up, a curve may be condensed, diffuse, neither of the two, or both simultaneously, but this ambiguity is not as important as it seems.
\end{exms} 

\begin{lemmaa}\label{L:opencaustic}
	Let $\ka_0\in \R$ and suppose that $\ga\in \sr L_{\ka_0}^{+\infty}$ is condensed. Then the image of $\chi=\chi_\ga$ is contained in an o\sz p\sz e\sz n hemisphere.
\end{lemmaa}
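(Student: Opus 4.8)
The plan is to work, as permitted by Corollary (\ref{C:belowandabove}), with $\ga\in\sr L_{\ka_0}^{+\infty}$, and to show that if $\ga$ is condensed, with $C=\Im(C_\ga)$ contained in the closed hemisphere $H^{\ge}=\set{p\in\Ss^2}{\gen{h,p}\ge 0}$ for some $h\in\Ss^2$, then in fact the image of $\chi=\chi_\ga$ never meets the boundary great circle $\set{p\in\Ss^2}{\gen{h,p}=0}$. This suffices: since $\ka_0<\ka(t)<+\infty$ forces $\rho(t)=\arccot\ka(t)\in(0,\rho_0)$ for a.e.\ $t$, the caustic point $\chi(t)=C_\ga(t,\rho(t))$ lies in $C\subs H^{\ge}$, so once we know $\gen{h,\chi(t)}\neq 0$ we automatically get $\gen{h,\chi(t)}>0$, i.e.\ $\chi(t)$ lies in the \emph{open} hemisphere determined by $h$.

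The heart of the argument is a one–variable observation. Fix $t_0$ in the full–measure set where $\ka$ (hence $\rho$, hence $\chi$) is defined, and consider $g(\theta)=\gen{h,C_\ga(t_0,\theta)}=\gen{h,\ga(t_0)}\cos\theta+\gen{h,\no(t_0)}\sin\theta$. By definition of $C$ we have $g(\theta)\ge 0$ for every $\theta\in[0,\rho_0]$. Now $g$ is a sinusoid, so if it is not identically zero its zeros are simple and it changes sign at each of them; since $g$ vanishes at the interior point $\theta=\rho(t_0)\in(0,\rho_0)$, it would then take a negative value on $[0,\rho_0]$, a contradiction. Hence $\gen{h,\chi(t_0)}=g(\rho(t_0))=0$ would force $\gen{h,\ga(t_0)}=\gen{h,\no(t_0)}=0$, and since $\big(\ga(t_0),\ta(t_0),\no(t_0)\big)$ is an orthonormal basis of $\R^3$ this forces $h=\pm\ta(t_0)$.

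It remains to rule this out, and here I would use that $\ga$ is a \emph{closed} curve (i.e.\ $\Phi_\ga(0)=\Phi_\ga(1)$, so $\ga$, $\ta$ and $\no$ extend $1$-periodically). Suppose $h=\pm\ta(t_0)$ and put $f(t)=\gen{\ta(t_0),\ga(t)}$, so $f(t_0)=0$. Since $\ga$ is absolutely continuous with $\dot\ga=\abs{\dot\ga}\,\ta$ a.e.\ and $\abs{\dot\ga}>0$ a.e., and $\gen{\ta(t_0),\ta(\tau)}\to 1$ as $\tau\to t_0$ by continuity of $\ta$, one checks from $f(t)-f(t_0)=\int_{t_0}^{t}\abs{\dot\ga(\tau)}\gen{\ta(t_0),\ta(\tau)}\,d\tau$ that $f(t)>0$ for $t$ slightly larger than $t_0$ and $f(t)<0$ for $t$ slightly smaller (reading $t$ mod $1$). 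Thus $f$ takes both signs arbitrarily near $t_0$. But $\ga(t)=C_\ga(t,0)\in C\subs H^{\ge}$ gives $\pm f(t)=\gen{h,\ga(t)}\ge 0$ for every $t$, i.e.\ $f$ is of constant sign — a contradiction. Therefore $\gen{h,\chi(t)}>0$ for every $t$ at which $\chi$ is defined, which is the assertion.

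The only delicate points are bookkeeping rather than conceptual: for admissible curves $\ka$, $\rho$ and $\chi$ are defined merely almost everywhere, so the statement really concerns the essential image of $\chi$; and one must note that the endpoint case $t_0\in\{0,1\}$ is covered by the periodicity of the frame. I expect the sign computation in the last paragraph — showing $f$ is not one-signed near $t_0$ despite the low regularity of $\ga$ — to be the step demanding the most care, although it is entirely routine.
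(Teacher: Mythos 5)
Your proof is correct, and it is more explicit than the paper's, which disposes of the lemma in two rather terse sentences. The paper argues locally at the band point $(t_0,\rho(t_0))$: if $\gen{\chi(t_0),h}=0$, then either the $\theta$-derivative $-\sin\rho(t_0)\,\ga(t_0)+\cos\rho(t_0)\,\no(t_0)$ of $C_\ga(t_0,\cdot)$ is transverse to the equator $\gen{\cdot,h}=0$ (so a small variation of $\theta$ exits the closed hemisphere), or else $h=\pm\ta(t_0)$ and the curve $\ga=C_\ga(\cdot,0)$ itself crosses the equator at $t_0$; either way a small neighborhood of $(t_0,\rho(t_0))$ in the domain of $C_\ga$ maps outside $H$. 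You reach the same dichotomy by a global rather than infinitesimal route: the sinusoid $g(\theta)=\gen{h,C_\ga(t_0,\theta)}$, nonnegative on $[0,\rho_0]$ and vanishing at the interior point $\rho(t_0)$, must vanish identically, forcing $h=\pm\ta(t_0)$, which you then exclude by the sign change of $t\mapsto\gen{\ta(t_0),\ga(t)}$ at $t_0$. Both arguments use exactly the same two directions of the band (the fiber $\{t_0\}\times[0,\rho_0]$ and the curve itself), so the content is the same; your version has the advantage of being honest about the case split that the paper's phrase ``at least one of $\ga(t_0)$ or $\no(t_0)$ is not a multiple of $h\times\chi_\ga(t_0)$'' leaves implicit, and of handling the low regularity ($\ka$ defined only a.e., endpoint $t_0\in\{0,1\}$ via periodicity) explicitly.
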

\begin{proof}
	Let $H=\set{p\in\Ss^2}{\gen{p,h}\geq 0}$ be a closed hemisphere containing the image of $C_\ga$ and suppose that $\gen{\chi(t_0),h}=0$ for some $t_0\in [0,1]$. At least one of $\ga(t_0)$ or $\no(t_0)$ is not a multiple of $h\times \chi_\ga(t_0)$. In either case,
\begin{equation*}
	C_\ga \big( (t_0-\eps,t_0+\eps)\times (\rho(t_0)-\eps,\rho(t_0)+\eps) \big) \nsubs H,
\end{equation*}
for sufficiently small $\eps>0$, a contradiction.
\end{proof}

Let $\ka_0\in \R$ and let $\sr O\subs \sr L_{\ka_0}^{+\infty}$ denote the subset of condensed curves. Define a map $h\colon \sr O\to \Ss^2$ by $\ga\mapsto h_\ga$, where $h_\ga$ is the image under gnomic (central) projection of the barycenter, in $\R^3$, of the set of closed hemispheres which contain $C=\Im(C_\ga)$.
\begin{lemmaa}\label{L:causticbarycenter}
	The map $h\colon \sr O \to \Ss^2$, $\ga\mapsto h_\ga$, defined above is continuous.
\end{lemmaa}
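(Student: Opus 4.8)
The plan is to transcribe the proof of (\ref{T:barycenter}), with the caustic band $C_\ga$ playing the role of the curve there and closed hemispheres in place of open ones. The first point to settle is that $\ga\mapsto C_\ga$ is continuous from $\sr L_{\ka_0}^{+\infty}$, with its Hilbert manifold topology, into the space of maps $[0,1]\times[0,\rho_0]\to\Ss^2$ with the $C^0$ topology. By \eqref{E:bands} this reduces to the continuity of $\ga\mapsto\Phi_\ga$ into $C^0([0,1],\SO_3)$, which follows from the continuous dependence of the solution of the linear problem \eqref{E:ivp} on the integrable coefficient $\La$ --- already used implicitly in the proof of (\ref{L:bit}) --- together with the continuity of $(\hat v,\hat w)\mapsto\La$, the latter being exactly what the linear growth of $h$ and $h_{\ka_0,+\infty}$ was designed to provide. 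It therefore suffices to prove the closed-hemisphere analogue of (\ref{T:barycenter}): if $K$ is a continuous map of a compact space into a closed hemisphere of $\Ss^2$, the barycenter of the set $\sr H$ of closed hemispheres containing $K$'s image depends continuously on $K$ in the $C^0$ topology.

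Fix $\ga_0\in\sr O$ and write $C_0=\Im(C_{\ga_0})$ and $\sr H=\set{h\in\Ss^2}{\gen{h,p}\geq0\ \text{for all}\ p\in C_0}$. Being an intersection of closed hemispheres, $\sr H$ is compact and, by the argument in the proof of (\ref{L:setofhemispheres}), geodesically convex; and it contains no antipodal pair, since if $h,-h\in\sr H$ then $\gen{h,p}=0$ for all $p\in C_0$, and taking $p=C_{\ga_0}(t,0)=\ga_0(t)$ and $p=C_{\ga_0}(t,\theta)$ with $\theta>0$ small forces $\gen{\ga_0(t),h}\equiv\gen{\no_0(t),h}\equiv0$, whence $\ta_0(t)=\ga_0(t)\times\no_0(t)\equiv\pm h$ by continuity and $\gen{\dot\ga_0(t),h}=\pm\abs{\dot\ga_0(t)}\neq0$ contradicts $\gen{\ga_0(t),h}\equiv0$. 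Hence $\sr H$ lies in an open hemisphere; when $\sr H$ has nonempty interior in $\Ss^2$ this makes its barycenter $b_0\in\R^3$ nonzero, so that $h_{\ga_0}=b_0/\abs{b_0}$ is well defined (the lower-dimensional cases are taken up at the end).

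Now run the sandwich argument of (\ref{T:barycenter}) verbatim, assuming $\sr H$ has nonempty interior. For $\eps>0$ set $B_\eps$ equal to the $\eps$-neighborhood of $\bd\sr H$ and $\sr H_0=\sr H\ssm B_\eps$, $\sr H_1=\sr H\cup B_\eps$. By compactness of $C_0$, $\sr H_0$ and $\Ss^2\ssm\sr H_1$ there is $\de>0$ with $\gen{p,u}\geq\de$ for $p\in C_0$, $u\in\sr H_0$, and with some $p\in C_0$ satisfying $\gen{p,v}\leq-\de$ for each $v\notin\sr H_1$; by the continuity of $\ga\mapsto C_\ga$ the same holds with $\de/2$ for all $\eta$ in a neighborhood $\sr U$ of $\ga_0$ in $\sr O$, so $\sr H_0\subs\sr H_\eta\subs\sr H_1$ there. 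The estimates on $\int_{\sr H_\eta}x_j\,dx$ from the proof of (\ref{T:barycenter}) then bound $\abs{b_\eta-b_0}$ in terms of $V(\sr H_1\ssm\sr H_0)\leq V(B_\eps)$, and $V(B_\eps)\to0$ as $\eps\to0$ by (\ref{Hausdorffproportional}), since $\bd\sr H$ is, by (\ref{L:setofhemispheres}) and (\ref{L:boundaryofconvex}), a Lipschitz image of $\Ss^1$ and hence of Hausdorff dimension at most $1$. Letting $\eps\to0$ gives continuity of $\ga\mapsto h_\ga$ at $\ga_0$.

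The step I expect to be the main obstacle is the degenerate case where $\sr H$ has empty interior in $\Ss^2$ --- equivalently, $C_0$ lies in no open hemisphere, which occurs for instance when $\ga_0$ is a geodesic circle, where $C_0$ is a closed hemisphere and $\sr H$ is a single point. Then $V(\sr H_0)=0$ and the quotients in the estimates above collapse, so the barycenter has to be taken with respect to $k$-dimensional Hausdorff measure, $k=\dim\sr H\in\{0,1\}$, and one must control continuity across the locus where $\dim\sr H$ jumps. When $\sr H$ is a point this is easy: $C_\eta$ is $C^0$-close to $C_0$, hence $\sr H_\eta\subs\bcap_{p\in C_0}\set{h}{\gen{h,p}\geq-\eps'}$, which decreases to $\sr H$ in the Hausdorff metric as $\eps'\to0$, and the barycenter of the convex set $\sr H_\eta$ lies in its euclidean convex hull, so $h_\eta\to h_{\ga_0}$. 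The case $\sr H$ a proper geodesic arc is the delicate one; I would first try to rule it out for condensed closed curves (it forces $C_0$ to contain both poles of a great circle in a constrained position), and otherwise handle it by approximating $\ga_0$ within $\sr O$ by curves with two-dimensional $\sr H$ and passing to the limit.
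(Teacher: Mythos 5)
Your route is the paper's: establish $C^0$-continuity of $\ga\mapsto C_\ga$, run the volume/sandwich argument of (\ref{T:barycenter}) on the set $\sr H$ of closed hemispheres containing $C_0=\Im(C_{\ga_0})$ when $\sr H$ has nonempty interior (this is precisely the paper's subspace $\sr S$ of condensed curves whose caustic band lies in an \emph{open} hemisphere, where (\ref{L:setofhemispheres}), (\ref{L:boundaryofconvex}) and (\ref{Hausdorffproportional}) apply verbatim), and then treat $\sr O\ssm\sr S$ separately. In the case where $\sr H$ is a single point your argument --- $\sr H_\eta\subs\bcap_{p\in C_0}\set{h}{\gen{h,p}\geq-\eps'}$, and these compact convex sets shrink to $\se{h_{\ga_0}}$ in the Hausdorff metric --- is correct and in fact tidier than the paper's, which instead produces three points $C_{\ga_0}(t_i,\theta_i)$ on the equator of $h_{\ga_0}$ spanning a simplex containing $0$ and checks by hand that every hemisphere $k$ with $d(k,h_{\ga_0})\geq\de$ fails to contain one of the nearby points $C_\eta(t_i,\theta_i)$.

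The genuine gap is the step you flag but do not carry out: showing that $\sr H$ cannot be a nondegenerate geodesic arc. Without this your two cases are not exhaustive, and neither of your proposed fallbacks is executed. The paper closes it with a short lune argument establishing the dichotomy ``$\Im(C_\ga)$ lies in an open hemisphere, or in exactly one closed hemisphere'': if $C=\Im(C_\ga)$ were contained in two distinct closed hemispheres $H_1,H_2$, it would lie in the closed lune $H_1\cap H_2$; but through every point of $C$ there passes a regular curve contained in $C$ (an arc of $\ga=C_\ga(\cdot,0)$, of $\ce{\ga}=C_\ga(\cdot,\rho_0)$, or one of the geodesic segments $\theta\mapsto C_\ga(t,\theta)$), and a curve with a continuously turning unit tangent vector confined to the lune cannot pass through either corner point of the lune, since the tangent cone there is a convex cone of opening less than $\pi$ and cannot contain both $\ta$ and $-\ta$. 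Hence $C$ misses both corners and, being compact, lies in an open hemisphere after a slight rotation of the axis of $H_1$ --- contradiction. This dichotomy is exactly what your case analysis needs, and it also disposes of your definitional worry about which Hausdorff measure to use for the barycenter: only the dimensions $2$ and $0$ ever occur. With that inserted, your proof is complete.
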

\begin{proof}
	Consider first the subset $\sr S\subs \sr L_{\ka_0}^{+\infty}$ consisting of all curves $\ga$ such that $\Im(C_\ga)$ is contained in an o\sz p\sz e\sz n hemisphere. A minor modification in the proof of (\ref{L:setofhemispheres}) shows that, in this case, the set $\sr H$ of closed hemispheres which contain $\ga$ is geodesically convex, open and contained in an open hemisphere. Thus, we may apply (\ref{L:boundaryofconvex})  and (\ref{Hausdorffproportional}) to $\sr H$ and $\bd \sr H$, respectively. Using these, the proof of (\ref{T:barycenter}) goes through almost unchanged to establish that the restriction of $h$ to $\sr S$ is continuous.
	
	It remains to prove that $h$ is continuous at any curve $\ga \in\sr O\ssm \sr S$. Note first that there exists exactly one closed hemisphere $h_\ga$ containing $\Im(C_\ga)$ in this case. For if $C=\Im(C_\ga)$ is contained in distinct closed hemispheres $H_1$ and $H_2$, then it is contained in the closed lune $H_1\cap H_2$. The boundary of $\Im(C_\ga)$ is contained in the union of the images of $\ga=C_\ga(\cdot,0)$ and $\ce{\ga}=C_\ga(\cdot,\rho_0)$; since these curves have a unit tangent vector at all points, they cannot pass through either of the points in $E_1\cap E_2$ (where $E_i$ is the equator corresponding to $H_i$). It follows that $\Im(C_\ga)$ is contained in an open hemisphere, a contradiction. Furthermore, by (\ref{L:basicconvex}), (\ref{L:closedhemisphere}) and (\ref{L:Steinitz}), we can find
\begin{equation*}
	\quad	z_i=C_\ga(t_i,\theta_i)\in \sr \Im(C_\ga)\cap \set{p\in \Ss^2}{\gen{p,h_\ga}=0}\ \ (\theta_i\in \se{0,\rho_0},~i=1,2,3)	
\end{equation*}

	such that $0$ lies in the simplex spanned by $z_1,z_2,z_3$; any hemisphere other than $\pm h_\ga$ separates these three points. Let $z_0=C_\ga(t_0,\theta
_0)$ be a point in $\Im(C_\ga)$ satisfying $\gen{z_0,h_\ga}>0$. Then we may choose $\de>0$ and a sufficiently small neighborhood $\sr U$ of $\ga$ in $\sr L_{\ka_0}^{+\infty}$ such that $\gen{C_\eta(t_0,\theta_0),k}<0$ for any $\eta\in \sr U$ and $k\in \Ss^2$ satisfying $d(k,h_\ga)\geq \pi-\de$ (where $d$ denotes the distance function on $\Ss^2$). By reducing $\sr U$ if necessary, we can also arrange that if $\de\leq d(k,h_\ga)\leq \pi-\de$, then the hemisphere corresponding to $k$ separates $\se{C_\eta(t_i,\theta_i),~i=1,2,3}$ whenever $\eta\in \sr U$. The conclusion is that if $k\in \Ss^2$ satisfies $\gen{c,k}\geq 0$ for all $c\in \Im(C_\eta)$ and $\eta\in \sr U$, then $d(k,h_\ga)<\de$. It follows that $h$ is continuous at $\ga \in\sr O\ssm \sr S$.
\end{proof}

An argument entirely similar to that given above can be used to modify (\ref{T:barycenter}) as follows.

\begin{lemmaa}\label{L:closedbarycenter}
	Let $\ka_0\in \R$ and $\sr H\subs \sr L_{\ka_0}^{+\infty}$ be the subspace consisting of all $\ga$ whose image is contained in some c\sz l\sz o\sz s\sz e\sz d hemisphere \tup(depending on $\ga$\tup). Then the map $h\colon \sr H\to \Ss^2$, which associates to $\ga$ the barycenter $h_\ga$ on $\Ss^2$ of the set of closed hemispheres that contain $\ga$, is continuous.\qed
\end{lemmaa}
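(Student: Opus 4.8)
The plan is to transcribe the proof of (\ref{L:causticbarycenter}) almost verbatim, with the image $\Im(\ga)$ of the curve itself in the role played there by $\Im(C_\ga)$. Write $\sr H=\sr S\cup(\sr H\ssm\sr S)$, where $\sr S$ consists of those $\ga$ whose image lies in an \emph{open} hemisphere. Since $\ga$ depends continuously on its $\E$-coordinates in the uniform norm, ``$\Im(\ga)$ lies in an open hemisphere'' is an open condition on $\sr H$; hence it suffices to prove that $h$ is continuous on $\sr S$ and that $h$ is continuous at each $\ga\in\sr H\ssm\sr S$, the latter automatically taking care of nearby curves that happen to lie in $\sr S$.

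\emph{Continuity on $\sr S$.} For $\ga\in\sr S$ let $\sr H_\ga$ be the set of open hemispheres containing $\Im(\ga)$. By (\ref{L:setofhemispheres}) it is open, geodesically convex, and contained in an open hemisphere, and one checks (expressing $\gen{\cdot,\ga(t)}$ along a geodesic arc between two admissible centers as $R\cos(s-\phi)$) that the set of \emph{closed} hemispheres containing $\Im(\ga)$ is precisely its closure $\ol{\sr H}_\ga$. Hence $\ol{\sr H}_\ga\ssm\sr H_\ga\subs\bd\sr H_\ga$, which by (\ref{L:boundaryofconvex}) is a bi-Lipschitz image of $\Ss^1$ and so has measure zero in $\Ss^2$; thus $\sr H_\ga$ and $\ol{\sr H}_\ga$ have the same barycenter in $\R^3$ and the same gnomic projection. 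Therefore $h|_{\sr S}$ coincides with the continuous map $\ga\mapsto h_\ga$ of (\ref{T:barycenter}).

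\emph{Continuity at $\ga\in\sr H\ssm\sr S$.} As in (\ref{L:causticbarycenter}), first show $\Im(\ga)$ lies in a \emph{unique} closed hemisphere $h_\ga$: if it lay in two closed hemispheres with non-antipodal centers it would lie in the closed lune they bound, whose corners subtend a wedge of angle $<\pi$; a curve with an everywhere continuous unit tangent cannot pass through such a corner, so $\Im(\ga)$ would avoid the corners and hence lie in the open hemisphere centered at the bisector of the lune, contradicting $\ga\nin\sr S$. (If the two centers were antipodal, $\Im(\ga)$ would have to be a great circle; this degenerate case cannot occur when $\ka_0\geq 0$ and is the only curve at which $h_\ga$ fails to be defined, so I set it aside.) Let $E$ be the equator of $h_\ga$. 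Using (\ref{L:basicconvex}), (\ref{L:closedhemisphere}) and (\ref{L:Steinitz}), choose $z_1,z_2,z_3\in\Im(\ga)\cap E$ whose Euclidean convex hull contains the origin in its interior (equivalently, the $z_i$ do not all lie on one closed half of $E$), together with $z_0=\ga(t_0)$ satisfying $\gen{z_0,h_\ga}>0$. Then $k\mapsto\min_i\gen{z_i,k}$ is $<0$ for all $k\neq\pm h_\ga$, so by continuity of evaluation and compactness there are $\de>0$ and a neighborhood $\sr U$ of $\ga$ in $\sr H$ such that for every $\eta\in\sr U$ and every closed hemisphere whose center $k$ satisfies $d(k,h_\ga)\geq\de$ we have $\gen{\eta(t_0),k}<0$ or $\min_i\gen{\eta(t_i),k}<0$; either way $\Im(\eta)$ is not contained in that hemisphere. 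Hence every closed hemisphere containing $\Im(\eta)$ has center within $\de$ of $h_\ga$, so its barycenter, and thus $h_\eta$, lies within a distance of $h_\ga$ tending to $0$ with $\de$. This is continuity of $h$ at $\ga$.

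The genuinely delicate step — exactly as in (\ref{L:causticbarycenter}) — is the uniqueness of the closed hemisphere at curves not contained in an open one (that a curve with continuous unit tangent cannot be pinched at a lune corner), together with arranging the subsequent estimate to be uniform over a neighborhood of $\ga$ in $\sr H$ regardless of whether the nearby curves are condensed, diffuse, or neither. Everything else is a routine adaptation of the arguments already given for (\ref{T:barycenter}) and (\ref{L:causticbarycenter}).
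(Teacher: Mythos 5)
Your proof is essentially the paper's own: the paper establishes this lemma only by remarking that the argument of (\ref{L:causticbarycenter}) carries over with $\Im(\ga)$ in place of $\Im(C_\ga)$, and that is precisely what you have written out, including the uniqueness of the closed hemisphere via the lune argument and the three-point separation estimate on the equator. The one configuration your three-point step (and the paper's) does not literally cover is when $\Im(\ga)$ meets the equator of $h_\ga$ only in a single antipodal pair — there one instead argues directly from uniqueness of $h_\ga$ plus compactness that nearby curves admit only hemispheres with nearby centers — and your observation that $h_\ga$ is undefined when $\Im(\ga)$ lies in a great circle (possible only for $\ka_0<0$) is a legitimate caveat that the paper leaves implicit.
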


\vfill\eject
\chapter{Grafting}

 \begin{defna}\label{D:bycurvature}
 Let $\ga\colon [a,b]\to \Ss^2$ be an admissible curve. The \tdef{total curvature} $\tot(\ga)$ of $\ga$ is given by
 \begin{equation*}
 	\tot(\ga)=\int_a^bK(t)\abs{\dot\ga(t)}\,dt,
 \end{equation*}
 where 
 \begin{equation}\label{E:totalen}
 	K=\sqrt{1+\ka^2}=\csc\rho
 \end{equation}
 is the Euclidean curvature of $\ga$. We say that $\ga\colon [0,T]\to \Ss^2$, $u\mapsto \ga(u)$, is a \tdef{parametrization of $\ga$ by curvature} if
 \begin{equation*}
 	\big\vert\Phi_\ga'(u)\big\vert=\sqrt{2}\text{\ or, equivalently, \ }\big\vert\tilde{\Phi}_\ga'(u)\big\vert=\frac{1}{2}\text{\  for a.e.~$u\in [0,T]$}.
 \end{equation*}
 \end{defna}
 
 The equivalence of the two equalities comes from (\ref{L:2sqrt2}). The next result justifies our terminology.
 \begin{lemmaa}\label{L:parametrizedbycurvature}
 	Let $\ga\colon [0,T]\to \Ss^2$ be an admissible curve. Then:
 	\begin{enumerate}
 		\item [(a)] $\ga$ is parametrized by curvature if and only if 
 \begin{equation*}
\qquad 	\tot \big( \ga|_{[0,u]} \big)=u\text{ for every $u\in [0,T]$}.
 \end{equation*}
 		\item [(b)] If $\ga$ is parametrized by curvature then its logarithmic derivatives $\La=\Phi_\ga^{-1} \Phi_\ga'$ and $\te\La=\te\Phi_\ga^{-1}\te\Phi'$ are given by:
 		\begin{alignat*}{9}
 			\La(u)&=\begin{pmatrix}
 				0   & -\sin \rho(u) & 0 \\
 				 \sin \rho(u)  &  0 & -\cos \rho(u) \\
 				 0 & \cos\rho(u) & 0   
 			\end{pmatrix},\\
 			\te{\La}(u)&=\frac{1}{2}\big(\cos\rho(u)\mbf{i}+\sin\rho(u)\mbf{k}\big).
 		\end{alignat*}
 	\end{enumerate}	
 \end{lemmaa}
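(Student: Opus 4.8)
The plan is to derive both statements from one identity: the speed $\abs{\dot\Phi_\ga}$ of the frame, measured in the metric that $\SO_3$ inherits from $\R^9$, equals $\sqrt2\,K\abs{\dot\ga}$, where $K=\csc\rho$ is the Euclidean curvature. Indeed, by \eqref{E:frenet1} one has $\dot\Phi_\ga=\Phi_\ga\La$, where the only nonzero entries of $\La(u)$ are $\pm\abs{\dot\ga(u)}$ and $\pm\abs{\dot\ga(u)}\ka(u)$; since $\Phi_\ga(u)\in\SO_3$ and the Frobenius norm is invariant under left multiplication by orthogonal matrices, this gives, for a.e.\ $u$,
\[
\abs{\dot\Phi_\ga(u)}^2=\abs{\La(u)}^2=2\abs{\dot\ga(u)}^2\big(1+\ka(u)^2\big)=2\big(K(u)\abs{\dot\ga(u)}\big)^2.
\]
Hence $\ga$ is parametrized by curvature if and only if $K(u)\abs{\dot\ga(u)}=1$ for a.e.\ $u$.

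For (a), I would note that the entries $v=\abs{\dot\ga}$ and $w=v\ka$ of $\La$ lie in $L^2\subs L^1$, so the integrand $K\abs{\dot\ga}=\sqrt{v^2+w^2}\leq\abs v+\abs w$ is integrable and $u\mapsto\tot(\ga|_{[0,u]})=\int_0^uK\abs{\dot\ga}$ is absolutely continuous with a.e.\ derivative $K\abs{\dot\ga}$. Thus $\tot(\ga|_{[0,u]})=u$ for every $u$ is equivalent to $K\abs{\dot\ga}=1$ a.e., which by the displayed identity is equivalent to $\ga$ being parametrized by curvature.

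For (b), assume $\ga$ is parametrized by curvature. Then $\abs{\dot\ga}=1/K=\sin\rho$ and $\abs{\dot\ga}\ka=\sin\rho\cot\rho=\cos\rho$ a.e.\ (using $K=\csc\rho$ and $\ka=\cot\rho$), and substituting these into $\La$ from \eqref{E:frenet1} yields the asserted formula for $\La(u)$. For $\te\La$ I would use that $\pi$ is a group homomorphism with $\pi\circ\te\Phi_\ga=\Phi_\ga$, so its differential intertwines left translations and therefore $\La=\Phi_\ga^{-1}\dot\Phi_\ga=(d\pi)_{\mbf{1}}\big(\te\Phi_\ga^{-1}\te\Phi_\ga'\big)=(d\pi)_{\mbf{1}}(\te\La)$. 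It then remains to identify $(d\pi)_{\mbf{1}}\colon\Im\Hh\to\fr{so}_3$: from the description of $\pi$ by conjugation, $(d\pi)_{\mbf{1}}(\xi)$ is the map $v\mapsto\xi v-v\xi$ on $\Im\Hh\cong\R^3$, and a direct check on $\bi,\bj,\bk$ shows that $(d\pi)_{\mbf{1}}(b\bi+c\bj+a\bk)$ has nonzero entries $\pm2a$ in the $(1,2)$ slot, $\pm2b$ in the $(2,3)$ slot, and $\pm2c$ in the $(1,3)$ slot. Matching this with the formula for $\La$ forces $a=\tfrac12\sin\rho$, $b=\tfrac12\cos\rho$ and $c=0$, that is $\te\La(u)=\tfrac12\big(\cos\rho(u)\bi+\sin\rho(u)\bk\big)$; as a consistency check $\abs{\te\La}=\tfrac12$, in agreement with $(\ref{L:2sqrt2})$. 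I do not expect a genuine obstacle here; the only points requiring care are the almost-everywhere bookkeeping in part (a) and getting the numerical factor in $(d\pi)_{\mbf{1}}$ right — and the latter is pinned down independently by $(\ref{L:2sqrt2})$.
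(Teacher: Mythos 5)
Your proof is correct and follows essentially the same route as the paper's: both hinge on the identity $\abs{\La(u)}=\sqrt{2}\,K(u)\abs{\dot\ga(u)}$ to characterize the parametrization, and both obtain $\te\La$ from $\La$ via the Lie-algebra isomorphism induced by $\pi\colon\Ss^3\to\SO_3$ (you derive the correspondence of $\tfrac{\bi}{2},\tfrac{\bj}{2},\tfrac{\bk}{2}$ with the standard basis of $\fr{so}_3$ by differentiating the conjugation action, whereas the paper simply quotes it). The extra measure-theoretic bookkeeping in part (a) and the consistency check against (\ref{L:2sqrt2}) are fine.
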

 Here, as always, $\rho$ is the radius of curvature of $\ga$. In the expression for $\te{\La}$ above and in the sequel we are identifying the Lie algebra $\widetilde{\mathfrak{so}}_3=T_{\mbf{1}}\Ss^3$ (the tangent space to $\Ss^3$ at $\mbf{1}$) with the vector space of all imaginary quaternions. Also, it follows from (a) that if $\ga\colon [0,T]\to \Ss^2$ is parametrized by curvature then $T=\tot(\ga)$.
 \begin{proof}Let us denote differentiation with respect to $u$ by $'$. Using \eqref{E:totalen}, we deduce that 
 	\begin{alignat}{9}\label{E:tot0}
 		\La(u)&=\abs{\ga'(u)}\begin{pmatrix}
 		 0 & -1 & 0  \\
 		1 & 0 & -\ka(u)\\
 		0 & \ka(u) & 0
 		\end{pmatrix}\\ 
 		&=K(u)\abs{\ga'(u)}\begin{pmatrix}
 				0   & -\sin \rho(u) & 0 \\
 				 \sin \rho(u)  &  0 & -\cos \rho(u) \\
 				 0 & \cos\rho(u) & 0   
 			\end{pmatrix},
 	\end{alignat}
 	hence $|\Phi'(u)|=\abs{\La(u)}=\sqrt{2}\?K(u)\abs{\ga'(u)}$. Therefore, $\ga$ is parametrized by curvature if and only if
\begin{equation*}
\quad	K(u)\abs{\ga'(u)}=1\text{\, for a.e.~$u\in [0,T]$.}
\end{equation*}
Integrating we deduce that this is equivalent to 
\begin{equation*}
\quad	\tot(\ga|_{[0,u]})=u\text{ for every $u\in [0,T]$,}
\end{equation*} 
which proves (a). The expression for $\te{\La}$ is obtained from \eqref{E:tot0}, using that under the isomorphism $\widetilde{\mathfrak{so}}_3\to \mathfrak{so}_3$ induced by the projection $\Ss^3\to \SO_3$, $\tfrac{\mbf{i}}{2}$, $\tfrac{\mbf{j}}{2}$ and $\tfrac{\mbf{k}}{2}$ correspond respectively to
\begin{equation*}
	\begin{pmatrix}
	 0 & 0 & 0 \\
	 0 & 0 & -1 \\
	 0 & 1 & 0 
	\end{pmatrix},\quad 	\begin{pmatrix}
	 0 & 0 & 1 \\
	 0 & 0 & 0 \\
	 -1 & 0 & 0 
	\end{pmatrix},\text{\quad and \quad}	\begin{pmatrix}
	 0 & -1 & 0 \\
	 1 & 0 & 0 \\
	 0 & 0 & 0 
	\end{pmatrix}.\qedhere
\end{equation*} 
 \end{proof}
 
We now introduce the essential notion of grafting.
 \begin{defna}\label{D:graft} Let $\ga_i\colon [0,T_i]\to\Ss^2$ ($i=0,1$) be admissible curves parametrized by curvature.
 \begin{enumerate}
 	\item [(a)] A \tdef{grafting function} is a function $\phi\colon [0,s_0]\to [0,s_1]$ of the form
 	\begin{equation}\label{E:countable}
 		\phi(t)=t+\sum_{x<t,\,x\in X^+}\de^+(x)+\sum_{x\leq t,\,x\in X^-}\de^-(x),
 	\end{equation}
 	where $X^+\subs [0,s_0)$ and $X^-\subs [0,s_0]$ are countable sets and $\de^{\pm}\colon X^{\pm}\to (0,+\infty)$ are arbitrary  functions. 
 	\item [(b)] We say that $\ga_1$ is \tdef{obtained from $\ga_0$ by grafting}, denoted $\ga_0\gr \ga_1$, if there exists a grafting function $\phi\colon [0,T_0]\to [0,T_1]$ such that $\La_{\ga_0}=\La_{\ga_1}\circ \phi$. 
 	\item [(c)] Let $J$ be an interval (not necessarily closed). A \tdef{chain of grafts} consists of a homotopy $s\mapsto \ga_s$, $s\in J$, and a family of grafting functions $\phi_{s_0,s_1}\colon [0,s_0]\to [0,s_1]$, $s_0<s_1\in J$, such that:
 	\begin{enumerate}
 		\item [(i)] $\La_{\ga_{s_0}}=\La_{\ga_{s_1}}\circ \phi_{s_0,s_1}$ whenever $s_0<s_1$;
 		\item [(ii)] $\phi_{s_0,s_2}=\phi_{s_1,s_2}\circ \phi_{s_0,s_1}$ whenever $s_0<s_1<s_2$.
 	\end{enumerate}
 	Here every curve is admissible and parametrized by curvature. 
 \end{enumerate}
 \end{defna}
 
 \begin{rems}\label{R:grafting}\ 
 
 (a) A function $\phi\colon [0,s_0]\to [0,s_1]$, $s_0\leq s_1$, is a grafting function if and only if it is increasing and there exists a countable set $X\subs [0,s_0]$ such that $\phi(t)=t+c$ whenever $t$ belongs to one of the intervals which form $(0,s_0)\ssm X$, where $c\geq 0$ is a constant depending on the interval. 
 
 (b) Observe that in eq.~\eqref{E:countable}, $x<t$ in the first sum, while $x\leq t$ in the second sum. We do not require $X^+$ and $X^-$ to be disjoint, and they may be finite (or even empty). 
 		
 		(c) If $\phi\colon [0,s_0]\to [0,s_1]$ is a grafting function then it is monotone increasing and has derivative equal to 1 a.e.. Moreover, $\phi(t+h)-\phi(t)\geq h$ for any $t$ and $h\geq 0$; in particular, $s_0\leq s_1$.
 		
 		(d)  As the name suggests, $\ga_0\gr \ga_1$ if $\ga_1$ is obtained by inserting a countable number of pieces of curves (e.g., arcs of circles) at chosen points of $\ga_0$ (see fig.~\ref{F:enxerto}). This can be used, for instance, to increase the total curvature of a curve. The difficulty is that it is usually not clear how we can graft pieces of curves onto a closed curve so that the resulting curve is still closed and the restrictions on the geodesic curvature are not violated.
 
 (e) Two curves $\ga_0,\ga_1\in \sr L_{\ka_1}^{\ka_2}(Q)$ agree if and only if $\La_{\ga_0}=\La_{\ga_1}$ a.e.~on $[0,1]$. Indeed, $\ga_i=\Phi_{\ga_i}e_1$, where $\Phi_{\ga_i}$ is the unique solution to an initial value problem as in eq.~\eqref{E:ivp} of \S 1. Of course, if the curves are parametrized by curvature instead, then the latter condition should be replaced by $T_0=T_1$ and $\La_{\ga_0}=\La_{\ga_1}$ a.e.~on $[0,T_0]=[0,T_1]$. 
 
 \end{rems}

 For a grafting function $\phi\colon [0,s_0]\to [0,s_1]$ and $t\in [0,s_0]$, define:
 \begin{equation*}
 	\om^+(t)=\lim_{h\to 0^+}\phi(t+h)-\phi(t),\qquad \om^-(t)=\lim_{h\to 0^+}\phi(t)-\phi(t-h).
 \end{equation*}	
We also adopt the convention that $\om^+(s_0)=0$, while $\om^-(0)=\phi(0)$. Note that the limits above exist because $\phi$ is increasing. 
 \begin{lemmaa}\label{L:grafting}Let $\phi\colon [0,s_0]\to [0,s_1]$ be a grafting function, and let $X^{\pm}$ and $\de^{\pm}$ be as in definition (\ref{D:graft}\?(a)).
 	\begin{enumerate}
 		\item [(a)] $t\in X^{\pm}$ if and only if $\om^{\pm}(t)>0$. In this case, $\de^{\pm}(t)=\om^{\pm}(t)$.
 		\item [(b)]  $X^{\pm}$ and $\de^{\pm}$ are uniquely determined by $\phi$.
 		\item [(c)] If $\phi_0\colon [0,s_0]\to [0,s_1]$ and $\phi_1\colon [0,s_1]\to [0,s_2]$ are grafting functions then so is $\phi=\phi_1\circ \phi_0$. Moreover,
 		\begin{equation*}
 			X_0^{\pm}\subs X^{\pm}\text{\quad and \quad }\de_0^{\pm}\leq \de^\pm.
 		\end{equation*}
 		(Here $\de_0^\pm$ correspond to $\phi_0$, $\de^{\pm}$ correspond to $\phi$, and so forth.)
 	\end{enumerate}
 \end{lemmaa}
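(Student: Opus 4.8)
The proof consists of elementary facts about monotone functions; the plan is to establish (a) by a direct computation of one‑sided jumps, deduce (b) immediately, and then derive (c) from (a) together with the basic properties of grafting functions recorded in (\ref{R:grafting}). For (a), write $\phi=\iota+g^{+}+g^{-}$, where $\iota(t)=t$, $g^{+}(t)=\sum_{x<t,\,x\in X^{+}}\de^{+}(x)$ and $g^{-}(t)=\sum_{x\le t,\,x\in X^{-}}\de^{-}(x)$. Since $\phi(s_{0})=s_{0}+\sum_{X^{+}}\de^{+}+\sum_{X^{-}}\de^{-}\le s_{1}$, both series converge absolutely, so $g^{\pm}$ are well defined and nondecreasing. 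I would then check that $g^{+}$ is left‑continuous everywhere and that $\lim_{h\to0^{+}}\bigl(g^{+}(t+h)-g^{+}(t)\bigr)$ equals $\de^{+}(t)$ if $t\in X^{+}$ and $0$ otherwise — the sums $\sum_{t-h\le x<t}\de^{+}(x)$ and $\sum_{t<x<t+h}\de^{+}(x)$ are tails of a convergent series and hence vanish as $h\to0^{+}$, while the single term $\de^{+}(t)$ survives in the right‑hand limit exactly when $t\in X^{+}$. Symmetrically, $g^{-}$ is right‑continuous everywhere and $\lim_{h\to0^{+}}\bigl(g^{-}(t)-g^{-}(t-h)\bigr)$ equals $\de^{-}(t)$ if $t\in X^{-}$ and $0$ otherwise. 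Adding the continuous term $\iota$ and unwinding the definitions of $\om^{\pm}$ gives, at every interior point, $\om^{+}(t)=\de^{+}(t)$ or $0$ according as $t\in X^{+}$ or not, and likewise for $\om^{-}$; the conventions $\om^{+}(s_{0})=0$ (legitimate since $s_{0}\notin X^{+}$) and $\om^{-}(0)=\phi(0)=g^{-}(0)$ make the same statement hold at the endpoints. This is exactly (a).

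Part (b) is then immediate: by (a), $X^{\pm}=\{\,t:\om^{\pm}(t)>0\,\}$ and $\de^{\pm}=\om^{\pm}\vert_{X^{\pm}}$, and $\om^{\pm}$ is determined by $\phi$ alone, so the same is true of $X^{\pm}$ and $\de^{\pm}$.

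For (c), recall from (\ref{R:grafting})(c) that a grafting function is strictly increasing; hence $\phi:=\phi_{1}\circ\phi_{0}$ is strictly increasing, and in particular $\phi_{0}$ is injective. Using (\ref{R:grafting})(a), choose countable sets $Y_{0}\subs[0,s_{0}]$ and $Y_{1}\subs[0,s_{1}]$ such that $\phi_{0}$ (resp.~$\phi_{1}$) is a translation by a constant on each connected component of $(0,s_{0})\ssm Y_{0}$ (resp.~of $(0,s_{1})\ssm Y_{1}$). Set $Y=Y_{0}\cup\phi_{0}^{-1}(Y_{1})$; this is countable because $\phi_{0}$ is injective. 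On a component $I$ of $(0,s_{0})\ssm Y$ the map $\phi_{0}$ is a translation (as $I$ lies inside a component of $(0,s_{0})\ssm Y_{0}$), so $\phi_{0}(I)$ is an interval; it avoids $Y_{1}$ by construction and, being an open subinterval of $(0,s_{1})$, lies in a single component of $(0,s_{1})\ssm Y_{1}$, on which $\phi_{1}$ is a translation. Composing two translations, $\phi$ is a translation on $I$; since $\phi$ is increasing and $Y$ is countable, (\ref{R:grafting})(a) shows $\phi$ is a grafting function.

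It remains to prove $X_{0}^{\pm}\subs X^{\pm}$ and $\de_{0}^{\pm}\le\de^{\pm}$, and here I would argue with the one‑sided jumps of part (a). Fix $t_{0}\in X_{0}^{+}$ and put $y_{0}=\phi_{0}(t_{0})$. Since $\phi_{0}$ is nondecreasing, $\phi_{0}(t_{0}+h)\ge y_{0}+\om_{0}^{+}(t_{0})$ for every $h>0$, whence $\phi_{1}\bigl(\phi_{0}(t_{0}+h)\bigr)\ge\phi_{1}\bigl(y_{0}+\om_{0}^{+}(t_{0})\bigr)$; letting $h\to0^{+}$ yields $\om^{+}(t_{0})\ge\phi_{1}\bigl(y_{0}+\om_{0}^{+}(t_{0})\bigr)-\phi_{1}(y_{0})\ge\om_{0}^{+}(t_{0})$, the last step being the estimate $\phi_{1}(a+k)-\phi_{1}(a)\ge k$ from (\ref{R:grafting})(c). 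As $t_{0}\in X_{0}^{+}$ means $\om_{0}^{+}(t_{0})=\de_{0}^{+}(t_{0})>0$, part (a) gives $t_{0}\in X^{+}$ and $\de^{+}(t_{0})=\om^{+}(t_{0})\ge\de_{0}^{+}(t_{0})$. The argument for $X^{-}$ is symmetric, using $\phi_{0}(t_{0}-h)\le y_{0}-\om_{0}^{-}(t_{0})$, and the endpoint case $t_{0}=0$ is handled by the convention $\om^{-}(0)=\phi(0)=\phi_{1}(\phi_{0}(0))\ge\phi_{0}(0)=\om_{0}^{-}(0)$. No genuine difficulty arises anywhere in this lemma; the only thing demanding care is the bookkeeping with the half‑open conventions of \eqref{E:countable} and the observation that $\phi_{0}^{-1}(Y_{1})$ remains countable, which is precisely the point where the strict monotonicity supplied by (\ref{R:grafting})(c) is needed.
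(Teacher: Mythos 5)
Your proof is correct and follows essentially the same route as the paper's: part (a) rests in both cases on the observation that the sum of $\de^{\pm}$ over a shrinking interval tends to zero because the full series converges (you package this via the jump-function decomposition $\phi=\iota+g^{+}+g^{-}$, the paper via finite subsets $F^{\pm}$ carrying all but $\eps$ of the mass), and parts (b) and (c) — the countable exceptional set $Y_{0}\cup\phi_{0}^{-1}(Y_{1})$ and the jump estimate $\om^{+}(t_{0})\ge\om_{0}^{+}(t_{0})$ via $\phi_{1}(a+k)-\phi_{1}(a)\ge k$ — coincide with the paper's argument. No gaps.
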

 
 \begin{proof}The proof will be split into parts.
 	\begin{enumerate}
 		\item [(a)]  Firstly, $\om^+(s_0)=0$ by convention and $s_0\nin X^+$ because $X^+\subs [0,s_0)$. Secondly, $\om^-(0)=\phi(0)$ by convention, and \eqref{E:countable} tells us that $0\in X^-$ if and only if $\phi(0)\neq 0$, in which case $\de^-(0)=\phi(0)$. This proves the assertion for $t=0$ (resp.~$t=s_0$) and $X^-$ (resp.~$X^+$).
 		
 		Since 
 		\begin{equation*}
 			\sum_{x\in X^+}\de^+(x)+\sum_{x\in X^-}\de^-(x)\leq s_1-s_0,
 		\end{equation*}
 		given $\eps>0$ there exist finite subsets $F^{\pm}\subs X^{\pm}$ such that
 		\begin{equation*}
 			\sum_{x\in X^+\ssm F^+}\de^+(x)+\sum_{x\in X^-\ssm F^-}\de^-(x)<\eps.
 		\end{equation*}
 		Suppose $t\nin X^+$, $t<s_0$. Then there exists $\eta$, $0<\eta<\eps$, such that $[t,t+\eta]\cap F^+=\emptyset$ and $[t,t+\eta]\cap F^-$ is either empty or $\se{x}$. In any case,
 		\begin{equation*}
 			\om^+(t)\leq \phi(t+\eta)-\phi(t)<\eta+\eps<2\eps,
 		\end{equation*}
		which proves that $\om^+(t)=0$. 
		
		Conversely, suppose that $t\in X^+$. Then clearly $\om^+(t)\geq \de^+(t)$. Moreover, an argument entirely similar to the one above shows that $\om^+(t)\leq \de^+(t)+2\eps$ for any $\eps>0$, hence $\om^+(t)=\de^+(t)>0$. The results for $X^-$ (and $t>0$) follow by symmetry.

 		\item [(b)] Since $\om^{\pm}$ are determined by $\phi$, the same must be true of $X^{\pm}$ and $\de^{\pm}$, by part (a). The converse is an obvious consequence of the definition of grafting function in \eqref{E:countable}.
	\item [(c)] 
	
	Let $\phi_1$,\,$\phi_0$ be as in the statement and set $X_i=X_i^-\cup X_i^+$, $i=0,1$, and $X=X_0\cup \phi_0^{-1}(X_1)$. Then $X$ is countable since both $X_0$ and $X_1$ are countable and $\phi_0$ is injective. Moreover, if $(a,b)\subs (0,s_0)\ssm X$ then 
	\begin{equation*}
\quad		\phi_1(\phi_0(t))=\phi_1(t+c_0)=t+c_0+c_1 \quad (t\in (a,b))
	\end{equation*}
	for some constants $c_0,c_1\geq 0$. In addition, $\phi_1\circ \phi_0$ is increasing, as $\phi_1$ and $\phi_0$ are both increasing. Thus, $\phi_1\circ \phi_0$ is a grafting function by (\ref{R:grafting}\?(e)).
	
		For the second assertion, let $x\in X_0^+$ and $h>0$ be arbitrary. Then 
 		\begin{equation*}
 			\quad\phi_1(\phi_0(x+h))-\phi_1(\phi_0(x))\geq \phi_0(x+h)-\phi_0(x)\geq \om_0^+(x),
 		\end{equation*}
 		hence $\om^+(x)\geq \om^+_0(x)>0$. Similarly, if $x\in X_0^-$ then $\om^-(x)\geq \om^-_0(x)>0$.  Therefore, it follows from part (a) that $X_0^{\pm}\subs X^{\pm}$ and $\de^{\pm}_0\leq \de^{\pm}$. \qedhere
 	\end{enumerate}
 \end{proof}
 
  \begin{lemmaa}\label{L:eqrel}
 	The grafting relation $\gr$ is a partial order over $\sr L_{\ka_1}^{\ka_2}(Q)$.
 \end{lemmaa}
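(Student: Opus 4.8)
The plan is to verify the three axioms of a partial order for $\gr$, in each case translating the statement about curves into one about grafting functions and then invoking the structural facts recorded in (\ref{L:grafting}) and (\ref{R:grafting}); throughout, every curve is understood to be taken in its parametrization by curvature, so that $\La_{\ga}$ is defined on $[0,\tot(\ga)]$.

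Reflexivity and transitivity require essentially no work. For reflexivity, the identity map of $[0,T]$ is a grafting function — it has the form \eqref{E:countable} with $X^{+}=X^{-}=\emptyset$, equivalently it is increasing and equal to $t+0$ on all of $(0,T)$, cf.\ (\ref{R:grafting}\?(a)) — and $\La_{\ga}=\La_{\ga}\circ\id$, so $\ga\gr\ga$. For transitivity, given $\ga_{0}\gr\ga_{1}$ and $\ga_{1}\gr\ga_{2}$ with grafting functions $\phi_{0},\phi_{1}$ realizing $\La_{\ga_{0}}=\La_{\ga_{1}}\circ\phi_{0}$ and $\La_{\ga_{1}}=\La_{\ga_{2}}\circ\phi_{1}$, part (c) of (\ref{L:grafting}) says that $\phi_{1}\circ\phi_{0}$ is again a grafting function, and then $\La_{\ga_{0}}=\La_{\ga_{2}}\circ(\phi_{1}\circ\phi_{0})$, so $\ga_{0}\gr\ga_{2}$.

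Antisymmetry is the only step carrying any content, and it is a short comparison argument. Assume $\ga_{0}\gr\ga_{1}$ and $\ga_{1}\gr\ga_{0}$, witnessed by grafting functions $\phi\colon[0,T_{0}]\to[0,T_{1}]$ and $\psi\colon[0,T_{1}]\to[0,T_{0}]$. Since any grafting function $[0,s_{0}]\to[0,s_{1}]$ has $s_{0}\le s_{1}$ (part of (\ref{R:grafting}\?(c))), applying this to $\phi$ and to $\psi$ gives $T_{0}\le T_{1}\le T_{0}$, hence $T_{0}=T_{1}=:T$ and $\phi\colon[0,T]\to[0,T]$. The inequality $\phi(t+h)-\phi(t)\ge h$, also from (\ref{R:grafting}\?(c)), used with $t=0$ gives $\phi(h)\ge\phi(0)+h\ge h$, while used with $h=T-t$ gives $\phi(t)\le\phi(T)-(T-t)\le t$; hence $\phi=\id_{[0,T]}$. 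Therefore $\La_{\ga_{0}}=\La_{\ga_{1}}$ a.e.\ on $[0,T]$, and since $T_{0}=T_{1}$, (\ref{R:grafting}\?(e)) yields $\ga_{0}=\ga_{1}$. I expect this comparison to be the main (indeed the only) obstacle, and it is already disposed of by the elementary properties of grafting functions collected above; no new ingredient is needed.
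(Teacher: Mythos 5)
Your proof is correct and follows essentially the same route as the paper's: reflexivity via the identity grafting function, transitivity via (\ref{L:grafting}\?(c)), and antisymmetry by forcing $T_0=T_1$ from the inequality $s_0\leq s_1$ and then $\phi=\id$ from $\phi(t+h)-\phi(t)\geq h$. Your antisymmetry step merely spells out in more detail what the paper states tersely, so there is nothing to add.
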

 \begin{proof}
 Suppose $\ga_0,\ga_1$ are as in (\ref{D:graft}), with $\ga_0\gr \ga_1$ and $\ga_1\gr\ga_0$. Let $\phi_0\colon [0,T_0]\to [0,T_1]$ and $\phi_1\colon [0,T_1]\to [0,T_0]$ be the corresponding grafting functions. By (\ref{R:grafting}\?(d)), the existence of such functions implies that $T_0=T_1$, which, in turn, implies that $\phi_0(t)=t=\phi_1(t)$ for all $t$. Hence $\La_{\ga_0}=\La_{\ga_1}\circ \phi_0=\La_{\ga_1}$, and it follows that $\ga_0=\ga_1$. This proves that $\gr$ is antisymmetric.
 
 Now suppose $\ga_0\gr \ga_1$, $\ga_1\gr \ga_2$ and let $\phi_i\colon [0,T_i]\to [0,T_{i+1}]$ be the corresponding grafting functions, $i=0,1$. By (\ref{L:grafting}\?(c)), $\phi=\phi_1\circ \phi_0$ is also a grafting function. Furthermore,
 \begin{equation*}
\qquad 	\La_{\ga_0}=\La_{\ga_1}\circ \phi_0=(\La_{\ga_2}\circ \phi_1)\circ \phi_0=\La_{\ga_2}\circ \phi
 \end{equation*} by hypothesis, so $\ga_0\gr \ga_2$, proving that $\gr$ is transitive.
 
 Finally, it is clear that $\gr$ is reflexive.
 \end{proof}
 
 \begin{lemmaa}\label{L:chainofgrafts}
 	Let $\Ga=(\ga_s)_{s\in [a,b)}$, $\ga_s\in \sr L_{\ka_1}^{\ka_2}(Q)$, be a chain of grafts.  Then there exists a unique extension of $\Ga$ to a chain of grafts on $[a,b]$. 
 \end{lemmaa}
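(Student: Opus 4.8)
The extending data turn out to be completely forced, so the plan is to build them by a limiting procedure and then check that the result is again a chain of grafts. Recall that $\phi_{s_0,s_1}$ is already given for $s_0<s_1<b$. First I would construct the new grafting functions $\phi_{s,b}\colon[0,s]\to[0,b]$, $s\in[a,b)$. For $s<s'<s''<b$, the cocycle $\phi_{s,s''}=\phi_{s',s''}\circ\phi_{s,s'}$ together with $\phi_{s',s''}(u)\ge u$ (a consequence of (\ref{R:grafting}(c))) gives $\phi_{s,s''}(t)\ge\phi_{s,s'}(t)$ for all $t$, while $\phi_{s,s'}(t)\le s'<b$; hence $\phi_{s,b}(t):=\sup_{s<s'<b}\phi_{s,s'}(t)=\lim_{s'\to b}\phi_{s,s'}(t)$ exists, is increasing, and takes values in $[0,b]$. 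Since $\phi_{s,s''}=\phi_{s,s''}\circ\phi_{s,s'}$ forces (by (\ref{L:grafting}(c))) the jump data of $\phi_{s,s'}$ to grow with $s'$, one checks directly from \eqref{E:countable} that $\phi_{s,b}$ is a grafting function; and letting $s'\to b$ in $\phi_{s_0,s'}=\phi_{s_1,s'}\circ\phi_{s_0,s_1}$ yields the cocycle $\phi_{s_0,b}=\phi_{s_1,b}\circ\phi_{s_0,s_1}$ for $s_0<s_1<b$.

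Next I would build $\ga_b$. Each $\phi_{s,b}$ is injective with derivative $1$ a.e., so $\phi_{s,b}([0,s])$ has measure $s$, and by the cocycle just proved these sets increase with $s$; hence their union $U\subs[0,b]$ has full measure. On $U$ define $\La_{\ga_b}$ by $\La_{\ga_b}\big(\phi_{s,b}(v)\big)=\La_{\ga_s}(v)$; this is unambiguous, since $\phi_{s_0,b}(v_0)=\phi_{s_1,b}(v_1)$ with $s_0<s_1$ forces $v_1=\phi_{s_0,s_1}(v_0)$ (injectivity of $\phi_{s_1,b}$ and the cocycle), whence $\La_{\ga_{s_1}}(v_1)=\La_{\ga_{s_0}}(v_0)$ because $\ga_{s_0}\gr\ga_{s_1}$. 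On the remaining null set $[0,b]\ssm U$ put $\La_{\ga_b}$ equal to a fixed matrix of the form in (\ref{L:parametrizedbycurvature}(b)) with radius of curvature in $(\rho_2,\rho_1)$, and let $\ga_b=\Phi e_1$, where $\dot\Phi=\Phi\La_{\ga_b}$ and $\Phi(0)=I$. By construction $\La_{\ga_s}=\La_{\ga_b}\circ\phi_{s,b}$, i.e.\ $\ga_s\gr\ga_b$; the radius of curvature of $\ga_b$ lies in $(\rho_2,\rho_1)$ a.e.\ and $\abs{\La_{\ga_b}}=\sqrt2$ a.e., so $\ga_b$ is parametrized by curvature with $\tot(\ga_b)=b$, and its admissibility is then a direct check using $\tot(\ga_b)=b<\infty$ and $\rho_{\ga_b}\in(\rho_2,\rho_1)$ a.e.

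There remain two things: $\Phi_{\ga_b}(b)=Q$, and uniqueness. For the frame condition, fix $s<b$; the map $t\mapsto\Phi_{\ga_b}(\phi_{s,b}(t))$ on $[0,s]$ satisfies the same differential equation as $\Phi_{\ga_s}$ off the countable jump set of $\phi_{s,b}$, differing from it only through the frame contributions of the grafted arcs, whose total curvature-length is at most $b-s$. Since $\SO_3$ is compact and $\abs{\La_{\ga_b}}=\sqrt2$, a Gronwall-type estimate bounds the distance from $\Phi_{\ga_b}(\phi_{s,b}(s))$ to $\Phi_{\ga_s}(s)=Q$ by a constant times $b-s$, and the remaining arc of $\ga_b$ over $[\phi_{s,b}(s),b]$ (of curvature-length $\le b-s$) moves the frame by at most another $O(b-s)$; letting $s\to b$ forces $\Phi_{\ga_b}(b)=Q$, so $\ga_b\in\sr L_{\ka_1}^{\ka_2}(Q)$ and the extended family is indeed a chain of grafts. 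For uniqueness, let $\big((\ga_s)_{s\in[a,b]},(\phi'_{s_0,s_1})\big)$ be another extension, so $\phi'_{s_0,s_1}=\phi_{s_0,s_1}$ for $s_0<s_1<b$ and $\phi'_{s_0,b}=\phi'_{s_1,b}\circ\phi_{s_0,s_1}$; by (\ref{L:grafting}(c)), $\phi_{s_0,s_1}\le\phi'_{s_0,b}$ for every $s_1$, hence $\phi_{s_0,b}\le\phi'_{s_0,b}$. Conversely, a positive jump of $\phi'_{s_0,b}$ at a point which is not a jump of $\phi_{s_0,s_1}$ equals a jump of $\phi'_{s_1,b}$, of size at most its total graft length $b-s_1\to0$; comparing jump data (again via (\ref{L:grafting}(c))) and using \eqref{E:countable} gives $\phi'_{s_0,b}=\phi_{s_0,b}$, whence $\La_{\ga'_b}=\La_{\ga_b}$ a.e.\ and, with the common initial frame, $\ga'_b=\ga_b$.

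The delicate step is certifying that the limit object lies in $\sr L_{\ka_1}^{\ka_2}(Q)$ — above all the terminal frame condition $\Phi_{\ga_b}(b)=Q$ — since $\ga_b$ absorbs infinitely many grafted arcs in the limit and one must control the accumulated product of their frame contributions in $\SO_3$; the finiteness $\tot(\ga_b)=b$, equivalently that the total curvature-length of the grafts inserted beyond $\ga_s$ tends to $0$ as $s\to b$, is exactly what makes this passage to the limit work.
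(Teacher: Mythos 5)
Your construction has the same skeleton as the paper's: first the limit grafting functions $\phi_{s,b}$ (the paper assembles them from the monotone jump data $X^{\pm}_{s,s'}$, $\de^{\pm}_{s,s'}$ supplied by (\ref{L:grafting}\,(c)), you take the pointwise supremum of $\phi_{s,s'}$ — the two agree by monotone convergence of the sums in \eqref{E:countable}); then the logarithmic derivative on the full-measure union of the sets $\phi_{s,b}([0,s])$, with the identical well-definedness check; then $\ga_b=\Phi e_1$. Where you genuinely diverge is the terminal frame condition $\Phi_{\ga_b}(b)=Q$: the paper extends each $\La_{\ga_s}$ by zero to $[0,b]$, asserts that these extensions converge to $\La$ in $L^2$, and invokes continuous dependence on parameters, whereas you compare $\Phi_{\ga_b}\circ\phi_{s,b}$ with $\Phi_{\ga_s}$ directly and bound the accumulated discrepancy by the total curvature-length $\leq b-s$ of the grafted arcs. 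Your route is more self-contained — in fact no genuine Gronwall factor is needed, since between jumps the two frames differ by right-multiplication by a common fundamental solution in $\SO_3$, which preserves the Frobenius distance, so the errors merely add — and it sidesteps the $L^2$-convergence assertion, which is the least transparent step of the paper's argument. Your uniqueness argument is also more complete than the paper's one-line remark that \eqref{E:uniquegraft} is forced: you pin down the grafting functions themselves, and the cleanest way to finish your second inequality is the global bound $\phi'_{s_0,b}(t)=\phi'_{s_1,b}(\phi_{s_0,s_1}(t))\leq \phi_{s_0,s_1}(t)+(b-s_1)$, valid because $u\mapsto \phi'_{s_1,b}(u)-u$ is nonnegative, nondecreasing and at most $b-s_1$. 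The one point you leave implicit that the paper makes explicit is the measurability of $\La_{\ga_b}$ (inherited from that of the $\La_{\ga_s}$ via the maps $\phi_{s,b}$), which must be noted before the initial value problem can be solved; add a sentence for it in a full write-up.
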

 \begin{proof}
 	For $s_0<s_1\in [a,b]$, let $\phi_{s_0,s_1}\colon [0,s_0]\to [0,s_1]$ be the grafting function corresponding to $\ga_{s_0}\gr \ga_{s_1}$ and similarly for $X^\pm_{s_0,s_1}$, $\de^{\pm}_{s_0,s_1}$, $\om^{\pm}_{s_0,s_1}$.
 	
	Suppose $s_0<s_1<s_2$. By hypothesis, $\phi_{s_0,s_2}=\phi_{s_1,s_2}\circ \phi_{s_0,s_1}$. Therefore, by (\ref{L:grafting}\?(c)),  
 		\begin{equation}\label{E:Xs0}
\qquad 	X_{s_0,s_1}^{\pm}\subs X^{\pm}_{s_0,s_2}\text{\quad and\quad }\de^{\pm}_{s_0,s_1}\leq \de^{\pm}_{s_0,s_2} \text{\quad ($s_0<s_1<s_2$)}.
 	\end{equation}	
 	Fix $s_0\in [a,b)$ and set
 	\begin{equation*}
 		X^{\pm}_{s_0,b}=\bcup_{s_0<s<b}X^{\pm}_{s_0,s}\text{\quad and\quad}\de^{\pm}_{s_0,b}=\sup_{s_0<s<b}\se{\de^{\pm}_{s_0,s}}.
 	\end{equation*}
 	Since $\big(X^{\pm}_{s_0,s}\big)$ is an increasing family of countable sets, $X^{\pm}_{s_0,b}$ must also be countable. Define $\phi_{s_0,b}\colon [0,s_0]\to [0,b]$ by 
 	\begin{equation*}
 		\phi_{s_0,b}(t)=t+\sum_{x<t,\,x\in X_{s_0,b}^+}\de_{s_0,b}^+(x)+\sum_{x\leq t,\,x\in X_{s_0,b}^-}\de_{s_0,b}^-(x).
 	\end{equation*} 
  Then $\phi_{s_0,b}$ is a grafting function for any $s_0$ by construction, and for $s_0<s_1$ we have 
 \begin{equation*}
 	\phi_{s_0,b}=\lim_{s\to b-}\phi_{s_0,s}=\lim_{s\to b-}\phi_{s_1,s}\circ \phi_{s_0,s_1}=\phi_{s_1,b}\circ \phi_{s_0,s_1}.
 \end{equation*}

Before defining the curve $\ga_b$, we construct its logarithmic derivative $\La$. For each $s<b$, let 
 	\begin{equation*}
 		E_s=\phi_{s,b}\big([0,s]\big),\quad E=\bcup_{s<b}E_s.\quad
 	\end{equation*}
 	Then $\mu(E_s)=s$ for all $s$, hence $[0,b]\ssm E$ has measure zero, which implies that $E$ is measurable and $\mu(E)=b$. (Here $\mu$ denotes Lebesgue measure.) For $u\in E$, $u=\phi_{s,b}(t)$ for some $t\in [0,s]$ and $s\in [a,b)$, set
 	\begin{equation}\label{E:uniquegraft}
\qquad	\La(u)=\La(\phi_{s,b}(t))=\La_{s}(t)\qquad  (u\in E),
 	\end{equation}
	where $\La_s$ denotes the logarithmic derivative of $\ga_s$. Observe that $\La$ is well-defined, for if $\phi_{s_0,b}(t_0)=u=\phi_{s_1,b}(t_1)$, with $s_0<s_1$, then
 	\begin{equation*}
		\phi_{s_1,b}(t_1)=\phi_{s_0,b}(t_0)=\phi_{s_1,b}\circ \phi_{s_0,s_1}(t_0),
 	\end{equation*}
 	hence $t_1=\phi_{s_0,s_1}(t_0)$ (because $\phi_{s_0,s_1}$ is increasing) and thus
 	\begin{equation*}
 		\La_{{s_1}}(t_1)=\La_{{s_1}}(\phi_{s_0,s_1}(t_0))=\La_{{s_0}}(t_0).
 	\end{equation*}
 	Moreover, by (\ref{L:parametrizedbycurvature}),
 	\begin{equation*}
 		\La(u)=\begin{pmatrix}
 				0   & -\sin \rho(u) & 0 \\
 				 \sin \rho(u)  &  0 & -\cos \rho(u) \\
 				 0 & \cos\rho(u) & 0   
 			\end{pmatrix}
 	\end{equation*}
 	where $\rho(u)=\rho_{{s_0}}(t)$ if $u=\phi_{s_0,b}(t)$. The measurability of $\rho$ follows from that of each $\rho_{s}$. Thus, the entries of $\La$ belong to $L^2[0,b]$ and the initial value problem $\dot\Phi=\Phi\?\La$, $\Phi(0)=I$, has a unique solution $\Phi\colon [0,b]\to \SO_3$. Naturally, we define $\ga_b(t)=\Phi (t)e_1$. 
 	
 	Let $X_{s,b}=X^+_{s,b}\cup X^-_{s,b}$ and suppose that $(\al,\be)$ is one of the intervals which form $(0,s)\ssm X_{s,b}$. Then $\phi_{s,b}(\al,\be)\subs E_s\subs [0,b]$ is an interval of measure $\be-\al$; we have $\La(t)=\La_s(t-c)$ for $t\in \phi_{s,b}(\al,\be)$ and a constant $c\geq 0$, so that the restriction of $\ga_b$ to this interval is just $\ga_s|[\al,\be]$ composed with a rotation of $\Ss^2$. In particular, we deduce that the geodesic curvature $\ka$ of $\ga_b$ satisfies $\ka_1<\ka<\ka_2$ a.e.~on $\phi_s(\al,\be)$. Since $\lim_{s\to b}\mu(E_s)=b$, this argument shows that $\ka_1<\ka<\ka_2$ a.e.~on $[0,b]$. We claim also that $\Phi(b)=Q$. To see this, let $\bar{\La}_s\colon [0,b]\to \mathfrak{so}_3$ be the extension of $\La_{s}$ by zero to all of $[0,b]$. If $\bar\Phi_s$ is the solution to the initial value problem $\dot{\bar\Phi}_s=\bar\Phi_s\bar\La_s$, $\bar\Phi_s(0)=I$, we have $\Phi_s(b)=\Phi_{s}(s)=Q$. Since $\bar\La_s$ converges to $\La$ in the $L^2$-norm, it follows from continuous dependence on the parameters of a differential equation that 
 	\begin{equation*}
 		\abs{\Phi(b)-Q}=\lim_{s\to b}\abs{\Phi(b)-\Phi_s(b)}=0.
 	\end{equation*}
 	
 	The curve $\ga_b$ satisfies $\ga_s\gr \ga_b$ for any $s\leq b$ by construction. Conversely, if this condition is satisfied then \eqref{E:uniquegraft} must hold, showing that $\ga_b$ is the unique curve with this property. This completes the proof.
 \end{proof}
 
 \subsection*{Adding loops}This subsection presents adaptations of a few concepts and results contained in \S5 of \cite{Sal3}. Let $\ka_0\in \R$, $\rho_0=\arccot \ka_0$ and $Q\in \SO_3$ be fixed throughout the discussion.
 
 For arbitrary $\rho_1\in (0,\rho_0)$, define $\sig^{\rho_1}$ to be the unique circle in $\sr L_{\ka_0}^{+\infty}(I)$ of radius of curvature $\rho_1$:
 \begin{alignat*}{9}
 	\sig^{\rho_1}(t)=&\cos\rho_1(\cos\rho_1,0,\sin\rho_1)\\
 	&+\sin\rho_1\big(\sin\rho_1\cos(2\pi t),\sin(2\pi t),-\cos\rho_1\cos(2\pi t)\big),
 \end{alignat*} 
 and let $\sig^{\rho_1}_n\in \sr L_{\ka_0}^{+\infty}(I)$ be $\sig^{\rho_1}$ traversed $n$ times; in symbols, $\sig^{\rho_1}_n(t)=\sig^{\rho_1}(nt)$, $t\in [0,1]$. As we have seen in (\ref{L:homocircles}), if $\rho_1,\rho_2<\rho_0$ then $\sig^{\rho_1}$ and $\sig^{\rho_2}$ are homotopic within $\sr L_{\ka_0}^{+\infty}(I)$. 
 
 Now let $\ga\in \sr L_{\ka_0}^{+\infty}(Q)$, $n\in \N$, $\eps>0$ be small and $t_0\in (0,1)$. Let $\ga^{[t_0\#n]}$ be the curve obtained by inserting (a suitable rotation of) $\sig^{\rho_1}_{n}$ at $\ga(t_0)$, as depicted in fig.~\ref{F:adding}. More explicitly,
 \begin{equation*}
 	\ga^{[t_0\#n]}(t)=\begin{cases}
 		\ga(t)   &   \text{ if }0\leq t\leq t_0-2\eps \\
 		 \ga(2t-t_0+2\eps)  &   \text{ if }t_0-2\eps\leq t\leq t_0-\eps \\
 		 \Phi_\ga(t_0)\sig_n^{\rho_1}\Big(\frac{t-t_0+\eps}{2\eps}\Big) &  \text{ if }t_0-\eps\leq t\leq t_0+\eps \\
 		 \ga(2t-t_0-2\eps)  &   \text{ if }t_0+\eps\leq t\leq t_0+2\eps \\
  		\ga(t)   &   \text{ if }t_0+2\eps\leq t\leq 1
  		 \end{cases}
 \end{equation*}
 
 \begin{figure}[ht]
 	\begin{center}
 		\includegraphics[scale=.49]{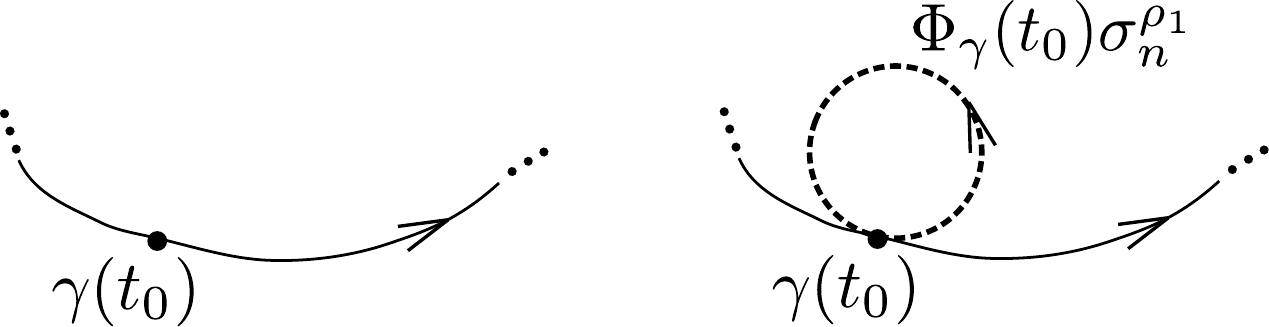}
 		\caption{A curve $\ga\in \sr L_{\ka_0}^{+\infty}(Q)$ and the curve $\ga^{[t_0\#n]}$ obtained from $\ga$ by adding loops at $\ga(t_0)$.}
 		\label{F:adding}
 	\end{center}
 \end{figure}
 
 The precise values of $\eps$ and $\rho_1$ are not important, in the sense that different values of both parameters yield curves that are homotopic. For $t_0\neq t_1\in (0,1)$ and $n_0,n_1\in \N$, the curve $\big(\ga^{[t_0\# n_0]}\big)^{[t_1\#n_1]}$ will be denoted by $\ga^{[t_0\#n_0;t_1\#n_1]}$. 
 
We shall now explain how to spread loops along a curve, as in fig.~\ref{F:adding2}; to do this, a special parametrization is necessary. Given $\ga\in \sr L_{-\infty}^{+\infty}(Q)$, let $\La_\ga=(\Phi_\ga)^{-1}\dot \Phi_\ga\colon [0,1]\to \mathfrak{so}_3$ denote its logarithmic derivative. Since the entries of $\La_\ga$ are $L^2$ functions and $[0,1]$ is bounded, 
 	\begin{equation}\label{E:bycurvature2}
 		M=\int_0^1\abs{\La_\ga(t)}\?dt<+\infty.
 	\end{equation}
 Define a function $\tau\colon [0,1]\to [0,1]$ by 
 	\begin{equation*}
 		\tau(t)=\frac{1}{M}\int_0^t \abs{\La_\ga(u)}\?du.
 	\end{equation*}
 	Then $\tau$ is a monotone increasing function, hence it admits an inverse. If we reparametrize $\ga$ by $\tau\mapsto \ga(t(\tau))$, $\tau\in [0,1]$, then its logarithmic derivative with respect to $\tau$ satisfies
 	\begin{equation*}
 		\abs{\La_\ga(\tau)}=\vert\dot\Phi_\ga(t(\tau))\vert\?\dot t(\tau)=\abs{\La_\ga(t(\tau))}\frac{M}{\abs{\La_\ga(t(\tau))}}=M.\footnote{The parameter $\tau$ is a multiple of the curvature parameter considered in (\ref{D:bycurvature}).}
 	\end{equation*}
 	Therefore, using (\ref{L:reparametrize}), we may assume at the outset that all curves $\ga\in \sr L_{-\infty}^{+\infty}(Q)$ are parametrized so that $\vert\dot \Phi_\ga\vert=\abs{\La_\ga}$ is constant (and finite). With this assumption in force, let $n\in \N$, $\rho_1\in (0,\pi)$ and define a map $F_n\colon \sr L_{-\infty}^{+\infty}(Q)\to \sr L_{-\infty}^{+\infty}(Q)$ by:
 \begin{equation}\label{E:F_n}
 \qquad	F_n(\ga)(t)=\Phi_\ga(t)\sig^{\rho_1}_n(t)\quad (\ga\in \sr L_{-\infty}^{+\infty}(Q),~t\in [0,1]).
  \end{equation} 
  
  \begin{figure}[ht]
 	\begin{center}
 		\includegraphics[scale=.49]{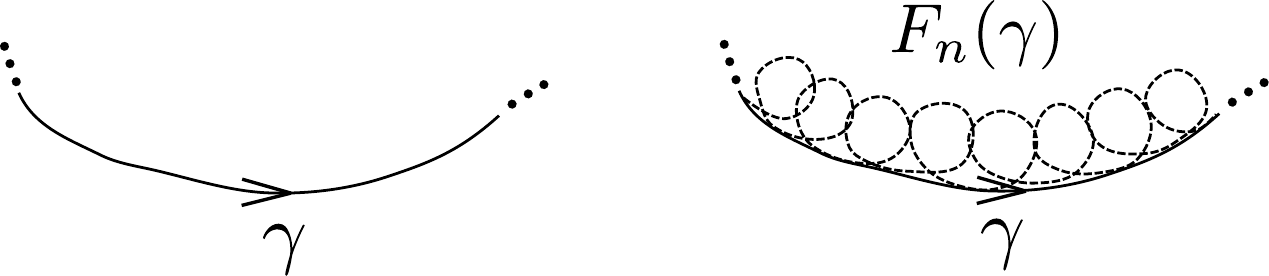}
 		\caption{}
 		\label{F:adding2}
 	\end{center}
 \end{figure}
 
Using that $\dot\Phi_\ga=\Phi_\ga \La_\ga$ (where $\dot{\phantom{a}}$ denotes differentiation with respect to $t$), we find that
 \begin{equation}\label{E:velo}
 	\dot F_n(\ga)=\Phi_\ga\big(\La_\ga\sig_n^{\rho_1}+\dot\sig_n^{\rho_1}\big),
 \end{equation}
and this allows us to conclude that  $\Phi_{F_n(\ga)}(0)=\Phi_\ga(0)$ and $\Phi_{F_n(\ga)}(1)=\Phi_\ga(1)$ for any admissible curve $\ga$, so that $F_n$ does indeed map $\sr L_{-\infty}^{+\infty}(Q)$ to itself.  Moreover, $F_n(\ga)$ is never homotopic to $F_{m}(\ga)$ when $m\not\equiv n \pmod{2}$. This is because the two curves have different final lifted frames: $\te{\Phi}_{F_n(\ga)}(1)=(-1)^{n-m}\te{\Phi}_{F_m(\ga)}(1)$ in $\Ss^3$. 

 \begin{lemmaa}\label{L:Sal1}
 	Let $\ka_0=\cot \rho_0\in \R$, $Q\in \SO_3$, $\rho_1\in (0,\rho_0)$, $K$ be compact and $f\colon K\to \sr L_{-\infty}^{+\infty}(Q)$ be continuous. Then the image of  $F_n\circ f$ is contained in $\sr L_{\ka_0}^{+\infty}(Q)$ for all sufficiently large $n$.
 \end{lemmaa}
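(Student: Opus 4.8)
The idea is to estimate the geodesic curvature of $F_n(\ga)(t)=\Phi_\ga(t)\,\sig^{\rho_1}_n(t)$ uniformly in $\ga\in f(K)$ and show it stays above $\ka_0$ once $n$ is large. First I would exploit the normalization already set up before the lemma: every $\ga\in \sr L_{-\infty}^{+\infty}(Q)$ is parametrized so that $\abs{\La_\ga}$ is a constant, call it $M_\ga=\int_0^1\abs{\La_\ga(u)}\,du$. Since $K$ is compact and $f$ is continuous, the map $\ga\mapsto \La_\ga\in L^2$ is continuous, so $M_{f(x)}$ is bounded: there is $M_0$ with $M_\ga\le M_0$ for all $\ga\in f(K)$. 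This is the crucial boundedness that makes the grafted circles dominate.

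Next I would compute the frame and logarithmic derivative of $F_n(\ga)$ from \eqref{E:velo}. Writing $\Lambda_{F_n(\ga)}=\Phi_{F_n(\ga)}^{-1}\dot\Phi_{F_n(\ga)}$ and using $\dot F_n(\ga)=\Phi_\ga(\La_\ga\sig_n^{\rho_1}+\dot\sig_n^{\rho_1})$, the frame of $F_n(\ga)$ is $\Phi_{F_n(\ga)}(t)=\Phi_\ga(t)\,\Phi_{\sig_n^{\rho_1}}(t)\,R(t)$ for a suitable correction; more simply, one reads off that the logarithmic derivative of $F_n(\ga)$ is, up to conjugation by the frame of $\sig_n^{\rho_1}$, the sum $\Phi_{\sig_n^{\rho_1}}^{-1}\La_\ga\Phi_{\sig_n^{\rho_1}} + \La_{\sig_n^{\rho_1}}$. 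The point is that $\abs{\La_{\sig_n^{\rho_1}}}$ grows linearly in $n$ (the circle $\sig^{\rho_1}$ has fixed total curvature, traversed $n$ times, over the unit interval), while the perturbing term $\Phi_{\sig_n^{\rho_1}}^{-1}\La_\ga\Phi_{\sig_n^{\rho_1}}$ has norm $\abs{\La_\ga}\le M_0$ bounded independently of $n$. From the $\fr{so}_3$ form of these matrices, the "$v$" entry (speed) of $F_n(\ga)$ is close to that of $\sig_n^{\rho_1}$ and bounded away from $0$, while the "$w$" entry differs from $n$ times the curvature data of $\sig^{\rho_1}$ by a bounded amount; hence the geodesic curvature $\ka_{F_n(\ga)}(t)=w/v$ satisfies $\ka_{F_n(\ga)}(t)\ge c\,n - C$ for constants $c>0$, $C\ge 0$ depending only on $\rho_1$ and $M_0$, uniformly over $x\in K$ and $t\in [0,1]$.

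Wait — that last claim needs care, because the geodesic curvature of $F_n(\ga)$ is not literally $n$ times anything: the circle $\sig_n^{\rho_1}$ has geodesic curvature $\cot\rho_1$ at every point, not $n\cot\rho_1$; what grows is its \emph{speed}. So I would instead argue as follows. Parametrize $\sig^{\rho_1}$ by arc length scaled by $n$; then $\abs{\dot\sig_n^{\rho_1}}\approx 2\pi n\sin\rho_1$ while $\abs{\La_\ga}\le M_0$. In the frame $\Phi_\ga\Phi_{\sig_n^{\rho_1}}$, the velocity of $F_n(\ga)$ is $\dot\sig_n^{\rho_1}$ perturbed by a term of bounded norm $M_0$, and likewise the "acceleration" data entering $\ka$ picks up bounded corrections. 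The osculating circle of $F_n(\ga)$ at $t$ is thus a small perturbation (of size $O(M_0/n)$ after normalizing by the large speed) of the osculating circle of the loop $\sig_n^{\rho_1}$, which has radius of curvature $\rho_1<\rho_0$. Hence for $n$ large enough, uniformly in $x\in K$ and $t$, the radius of curvature of $F_n(\ga)$ lies in $(0,\rho_0)$, i.e. its geodesic curvature exceeds $\ka_0=\cot\rho_0$. Compactness of $K$ (to get a single $M_0$ and then a single threshold $n$) and continuous dependence of all these quantities on $\ga$ finish the argument; this is essentially the content of \S5 of \cite{Sal3} adapted to the present setting.

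The main obstacle is making the perturbation estimate rigorous at the level of $H^1$/$L^2$ regularity rather than $C^2$: the geodesic curvature of $F_n(\ga)$ is only defined a.e., so I must phrase everything through the logarithmic derivatives $\La$ in $\fr{so}_3$ and the identity \eqref{E:velo}, showing that the "$v$" and "$w$" entries of $\La_{F_n(\ga)}$ satisfy pointwise-a.e.\ bounds of the form $v\ge c_1 n$ and $\abs{w-\text{(loop term)}}\le C_1$, then dividing. Keeping the constants uniform over the compact family $f(K)$ — which reduces to the boundedness of $\ga\mapsto\abs{\La_\ga}_{L^\infty}$ on $f(K)$, itself a consequence of the constant-speed normalization plus continuity of $f$ and compactness of $K$ — is the other point requiring care.
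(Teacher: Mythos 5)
Your proposal is correct and follows essentially the same route as the paper: reduce to a uniform bound on $\abs{\La_\ga}$ over the compact family $f(K)$, observe that $\La_{F_n(\ga)}$ is $n$ times the logarithmic derivative of the fixed circle $\sig^{\rho_1}$ plus a term of bounded norm, and conclude that $\ka_{F_n(\ga)}\to \cot\rho_1>\ka_0$ uniformly (your mid-proof self-correction — that it is the speed, not the curvature, of $\sig_n^{\rho_1}$ that grows with $n$ — lands you exactly on the paper's comparison of the $(3,2)$-entries of the logarithmic derivatives). Nothing essential is missing.
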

 \begin{proof}
 	In order to simplify the notation, we will prove the lemma when $K$ consists of a single point. The proof still works in the more general case because all that we need is a uniform bound on $\vert\La_{f(a)}\vert$ for $a\in K$. Denoting $\sig_1^{\rho_1}$ simply by $\sig$, we may rewrite \eqref{E:velo} as:
 	\begin{equation}\label{E:velo1}
 	\quad	\dot F_n(\ga)(t)=n\?\Phi_\ga(t)\big(\dot\sig(nt)+O(\tfrac{1}{n})\big)\quad (t\in [0,1]),
 	\end{equation}
 	where $O(\frac{1}{n})$ denotes a term such that $n\abs{O(\tfrac{1}{n})}$ is uniformly bounded over $[0,1]$ as $n$ ranges over all of $\N$. (In this case, $n\abs{O(\tfrac{1}{n})}=\abs{\La_\ga(t)}=M$ for all $t\in [0,1]$, with $M$ as in \eqref{E:bycurvature2}.) Therefore,
 	\begin{equation}\label{E:nor}
 		F_n(\ga)(t)\times \frac{\dot F_n(\ga)(t)}{\vert\dot F_n(\ga)(t)\vert}=\Phi_\ga(t)\left(\sig(nt)\times \frac{\dot\sig(nt)}{\abs{\dot\sig(nt)}}\right)+O(\tfrac{1}{n}).
 	\end{equation}
 	Let $\Phi_{F_n(\ga)}$ (resp.~$\Phi_\sig$) denote the frame of $F_n(\ga)$ (resp.~$\sig$) and $\La_{F_n(\ga)}$ (resp.~$\La_\sig$) its logarithmic derivative. It follows from \eqref{E:F_n}, \eqref{E:velo1} and \eqref{E:nor} that
 	\begin{equation*}
 		\Phi_{F_n(\ga)}(t)=\Phi_\ga(t)\Phi_\sig(nt)+O(\tfrac{1}{n}).
 	\end{equation*}
 	Differentiating both sides of this equality, we obtain that
 	\begin{equation*}
 		\dot\Phi_{F_n(\ga)}(t)=\dot\Phi_\ga(t)\Phi_\sig(nt)+n\?\Phi_\ga(t)\dot\Phi_\sig(nt)+O(1)=n\big(\Phi_\ga(t)\dot\Phi_\sig(nt)+O(\tfrac{1}{n})\big).
 	\end{equation*}
 	Multiplying on the left by the inverse of $\Phi_{F_n(\ga)}$, we finally conclude that
 	\begin{equation}\label{E:loga}
 		\La_{F_n(\ga)}(t)=n\big(\La_\sig(nt)+O(\tfrac{1}{n})\big).
 	\end{equation}
 	Recall that, by the definition of logarithmic derivative (eq.~\eqref{E:frenet1}, \S 1), 
 	\begin{alignat}{9}\label{E:kappan1}
 		&\La_{F_n(\ga)}=\begin{pmatrix}
 		0 & -\vert\dot F_n(\ga)\vert & 0 \\
 		\vert\dot F_n(\ga)\vert & 0 & -\vert\dot F_n(\ga)\vert\ka_{F_n(\ga)} \\
 		0 & \vert\dot F_n(\ga)\vert\ka_{F_n(\ga)} & 0
 		\end{pmatrix}\\
 		\text{and\ \ }&\La_\sig=\begin{pmatrix}
 		0 & -\abs{\dot\sig} & 0 \\
 		\abs{\dot\sig} & 0 & -\abs{\dot\sig}\ka_1 \\
 		0 & \abs{\dot\sig}\ka_1 & 0
 		\end{pmatrix},
 	\end{alignat}
 	where $\ka_{F_n(\ga)}$ (resp.~$\ka_1=\cot\rho_1$) denotes the geodesic curvature of $F_n(\ga)$ (resp.~$\sig$). Comparing the (3,2)-entries of \eqref{E:loga} and \eqref{E:kappan1}, and using \eqref{E:velo1}, we deduce that 
 	\begin{equation*}
 		n\big(\abs{\dot\sig(nt)}+O(\tfrac{1}{n})\big)\ka_{F_n(\ga)}(t)=n\big(\abs{\dot\sig(nt)}\ka_1+O(\tfrac{1}{n})\big).
 	\end{equation*}
 	Therefore $\lim_{n\to +\infty} \ka_{F_n(\ga)}=\ka_1>\ka_0$ uniformly over $[0,1]$, as required.
 \end{proof}

 \begin{lemmaa}\label{L:Sal2}
 	Let $\ga\in \sr L_{\ka_0}^{+\infty}(Q)$, $t_0\in (0,1)$. Then $ \ga^{[t_0\#n]}\iso F_n(\ga)$ within $\sr L_{\ka_0}^{+\infty}(Q)$ for all sufficiently large $n\in \N$.
 \end{lemmaa}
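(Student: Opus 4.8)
The plan is to write down an explicit homotopy inside $\sr L_{\ka_0}^{+\infty}(Q)$ running from $F_n(\ga)$ to $\ga^{[t_0\#n]}$, whose geometric content is ``slide the $n$ loops of $F_n(\ga)$, which are distributed along the whole of $\ga$, together into a single cluster sitting over $\ga(t_0)$''. First I would normalise: by (\ref{L:reparametrize}) we may assume $\abs{\La_\ga}\equiv M$ is constant, as in the discussion preceding (\ref{E:F_n}). Write $\sig=\sig^{\rho_1}$, so $\sig$ is $1$-periodic, $\sig(0)=e_1$, and $\ka_1=\cot\rho_1>\ka_0$. For increasing surjections $\phi,\psi\colon[0,1]\to[0,1]$ consider
\[
 \ga_{\phi,\psi}(t)=\Phi_\ga\big(\phi(t)\big)\,\sig\big(n\psi(t)\big),\qquad t\in[0,1].
\]
Taking $(\phi,\psi)=(\mathrm{id},\mathrm{id})$ gives exactly $F_n(\ga)$. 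On the other hand, if $\psi$ is $\equiv 0$ to the left of $[t_0-\eps,t_0+\eps]$, increases affinely from $0$ to $1$ on that interval, and is $\equiv 1$ to the right of it, while $\phi$ is $\equiv t_0$ there and affine elsewhere, then $n\psi\in n\Z$ off $[t_0-\eps,t_0+\eps]$, hence $\sig(n\psi)\equiv e_1$ and $\ga_{\phi,\psi}=\ga\circ\phi$ off that interval; so this $\ga_{\phi,\psi}$ is precisely $\ga^{[t_0\#n]}$ up to the reparametrising ``bumps'' built into its definition. Thus it suffices to join the two pairs of reparametrisations by a path $s\mapsto(\phi_s,\psi_s)$, $s\in[0,1]$, for which $s\mapsto\ga_{\phi_s,\psi_s}$ is a homotopy inside $\sr L_{\ka_0}^{+\infty}(Q)$.

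I would choose $\psi_s$ so that its ``active set'' $J_s=\{\dot\psi_s\neq0\}$ is an interval containing $t_0$ shrinking from $[0,1]$ at $s=0$ to $[t_0-\eps,t_0+\eps]$ at $s=1$, with $\psi_s$ affine of slope $\abs{J_s}^{-1}\geq1$ on $J_s$ and integer-valued ($0$ or $1$) off $J_s$; and $\phi_s$ affine on each of the three pieces cut out by $J_s$, designed so that $\dot\phi_s$ is bounded above and below by positive constants independent of $s$. Off $J_s$ one has $\sig(n\psi_s)\equiv e_1$, so $\ga_{\phi_s,\psi_s}=\ga\circ\phi_s$ there and its geodesic curvature exceeds $\ka_0$ automatically. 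On $J_s$,
\[
 \dot\ga_{\phi_s,\psi_s}=\Phi_\ga(\phi_s)\Bigl(\dot\phi_s\,\La_\ga(\phi_s)\,\sig(n\psi_s)+n\dot\psi_s\,\dot\sig(n\psi_s)\Bigr),
\]
and since $\dot\phi_s\abs{\La_\ga}$ is uniformly bounded while $n\dot\psi_s\geq n$, the computation of (\ref{L:Sal1}) applies essentially verbatim — the error term is now $O\big(1/(n\dot\psi_s)\big)=O(1/n)$ uniformly in $s$ and $t$ — and shows that the geodesic curvature of $\ga_{\phi_s,\psi_s}$ on $J_s$ tends to $\ka_1>\ka_0$ as $n\to\infty$. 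Hence for all large $n$ the entire homotopy stays in $\sr L_{\ka_0}^{+\infty}(Q)$, and its endpoints are $F_n(\ga)$ and $\ga^{[t_0\#n]}$.

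The genuinely delicate point, and the step I expect to cost the most work, is \emph{keeping every curve admissible} (i.e.\ with a continuously turning unit tangent): written as above, $\ga_{\phi_s,\psi_s}$ has a corner at each endpoint of $J_s$, where the tangent of the ``base-curve'' part fails to match that of the ``loop'' part. This is exactly why the definition of $\ga^{[t_0\#n]}$ inserts the $\eps/2\eps$ reparametrising bumps, and I would build analogous $s$-dependent bumps into $\phi_s$ near $\bd J_s$ so that the tangent turns continuously for every $s$; one must then recheck that these bumps, which only rescale speed by a bounded factor, leave the curvature estimate of the previous paragraph intact, and that at $s=1$ they can be matched to the bumps in the definition of $\ga^{[t_0\#n]}$ (any residual discrepancy being absorbed by an innocuous reparametrisation homotopy as in the proof of (\ref{L:homocircles})). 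A variant that trades some of this bookkeeping for different bookkeeping is to first prove, by sliding and merging loops in the sense of (\ref{R:grafting})(d), that $\ga^{[t_0\#n]}\iso\ga^{[t^{(1)}\#1;\dots;t^{(n)}\#1]}$ for $n$ well-separated insertion points, and then to show $F_n(\ga)\iso\ga^{[t^{(1)}\#1;\dots;t^{(n)}\#1]}$ by applying the curvature estimate loop-by-loop; the merging step, however, carries its own admissibility subtleties, so I would pursue the single explicit homotopy above.
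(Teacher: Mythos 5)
Your construction is precisely the ``push the loops of $F_n(\ga)$ toward $\ga(t_0)$'' homotopy that the paper only sketches (deferring the details to Saldanha's Lemma 5.4 for $\ka_0=0$), and your curvature control via the $O(1/n)$ comparison is the same estimate as in (\ref{L:Sal1}), so the approach is essentially identical and correct. The one point you single out as delicate -- corners at $\bd J_s$ -- is in fact automatic and needs no smoothing bumps: at those endpoints $\psi_s$ takes an integer value, so $\sig(n\psi_s)=e_1$ and both the base term $\La_\ga(\phi_s)\sig(n\psi_s)=v(\phi_s)\,e_2$ and the loop term $\dot\sig(n\psi_s)$ are positive multiples of $e_2$, whence the unit tangent $\Phi_\ga(\phi_s)e_2$ is continuous across $\bd J_s$ even though the speed jumps (which is harmless for admissibility).
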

 \begin{proof}
 	Intuitively, the homotopy is obtained by pushing the loops in $F_n(\ga)$ towards $\ga(t_0)$. If $n$ is large enough, then we can guarantee that the curvature remains greater than $\ka_0$ throughout the deformation; the proof is similar to that of (\ref{L:Sal1}), so we will omit it. See lemma 5.4 in \cite{Sal3} for the details when $\ka_0=0$. 
 \end{proof}
 The next result states that after we add enough loops to a curve, it becomes so flexible that any condition on the curvature may be safely forgotten.
 \begin{lemmaa}\label{L:Sal3}
 	Let $\ga_0,\ga_1\in \sr L_{\ka_0}^{+\infty}(Q)$ be two curves in the same component of $\sr I(Q)=\sr L_{-\infty}^{+\infty}(Q)$. Then $F_{n}(\ga_0)$ and $F_{n}(\ga_1)$ lie in the same component of $\sr L_{\ka_0}^{+\infty}(Q)$ for all sufficiently large $n\in \N$.
 \end{lemmaa}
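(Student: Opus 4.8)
The plan is to deduce the statement directly from (\ref{L:Sal1}), which already carries the analytic weight; the remaining task is merely to rephrase "same component" as "joined by a path," transport that path through $F_n$, and feed it into (\ref{L:Sal1}) as a compact family. So first I would invoke (\ref{L:Hilbert}(a)) for $\sr I(Q)=\sr L_{-\infty}^{+\infty}(Q)$: being a Hilbert manifold it is locally path-connected, so its connected components coincide with its path components, and the hypothesis yields a continuous path $s\mapsto\ga_s$, $s\in[0,1]$, in $\sr I(Q)$ with $\ga_0,\ga_1$ the prescribed curves. By the standing convention of this subsection — the reparametrization introduced just before (\ref{L:Sal1}) (cf.~(\ref{L:reparametrize})), which is continuous in the family, changes each curve only by an orientation-preserving reparametrization, and hence preserves geodesic curvatures and does not move the endpoints — I may assume each $\ga_s$, and in particular $\ga_0$ and $\ga_1$, is parametrized so that $\abs{\La_{\ga_s}}$ is constant in $t$. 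Thus $F_n$ of \eqref{E:F_n} is defined on every $\ga_s$, and $F_n(\ga_i)$ is the curve intended in the statement.

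Next I would apply (\ref{L:Sal1}) with the compact space $K=[0,1]$ and the continuous map $f(s)=\ga_s$: it produces $n_0\in\N$ such that for every $n\geq n_0$ the image of $F_n\circ f$ is contained in $\sr L_{\ka_0}^{+\infty}(Q)$. Fix such an $n$. Since $F_n$ is continuous (from \eqref{E:velo} the logarithmic derivative of $F_n(\ga)$ depends continuously on that of $\ga$, and $\Phi_\ga$ depends continuously on $\ga$), the map $s\mapsto F_n(\ga_s)$ is a path in $\sr L_{\ka_0}^{+\infty}(Q)$ running from $F_n(\ga_0)$ to $F_n(\ga_1)$. As $\sr L_{\ka_0}^{+\infty}(Q)$ is itself a Hilbert manifold, a path joining two of its points certifies that they lie in the same connected component; hence $F_n(\ga_0)$ and $F_n(\ga_1)$ do, for all $n\geq n_0$.

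Because (\ref{L:Sal1}) supplies the one substantive ingredient, I do not expect a genuine obstacle here. The only points requiring care are bookkeeping: that the path joining $\ga_0$ to $\ga_1$ in $\sr I(Q)$ can be taken inside the constant-speed parametrization convention, so that $F_n$ is defined along the whole path and the endpoints are literally $F_n(\ga_0)$ and $F_n(\ga_1)$; and that $F_n$ is continuous for the Hilbert-manifold topologies in play. Both are immediate from the material preceding (\ref{L:Sal1}).
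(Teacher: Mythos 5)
Your proposal is correct and is essentially the paper's own argument: the paper takes $K=[0,1]$, lets $h$ be a path joining $\ga_0$ to $\ga_1$ in $\sr L_{-\infty}^{+\infty}(Q)$, and applies (\ref{L:Sal1}) to conclude that $F_n\circ h$ is a path in $\sr L_{\ka_0}^{+\infty}(Q)$ for large $n$. Your additional remarks on the parametrization convention and the continuity of $F_n$ are correct bookkeeping that the paper leaves implicit.
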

 \begin{proof}
	Let $\ga_0,\,\ga_1$ be two curves in the same component of $\sr L_{-\infty}^{+\infty}(Q)$. Taking $K=[0,1]$ and $h\colon K\to \sr L_{-\infty}^{+\infty}(Q)$ to be a path joining $\ga_0$ and $\ga_1$, we conclude from (\ref{L:Sal1}) that $g=F_{n}\circ h$ is a path in $\sr L_{\ka_0}^{+\infty}(Q)$ joining both curves if $n$ is sufficiently large. 
 \end{proof}
 
Thus, if we can find a way to deform $\ga_i$ into $F_{2n}(\ga_i)$ for large $n$, $i=0,1$, then the question of deciding whether $\ga_0$ and $\ga_1$ are homotopic reduces to the easy verification of whether their final lifted frames $\te{\Phi}_{\ga_0}(1)$ and $\te{\Phi}_{\ga_1}(1)$ agree. One way to achieve this is to graft arbitrarily long arcs of circles onto such a curve; this is possible if it is diffuse (see fig.~\ref{F:enxerto} below).

 \subsection*{Grafting non-condensed curves}

 \begin{propa}\label{P:diffuseloose}
 	Let $\ka_0\in \R$ and suppose that $\ga\in \sr L_{\ka_0}^{+\infty}$ is diffuse. Then $\ga$ is homotopic to a circle traversed a number of times.
 \end{propa}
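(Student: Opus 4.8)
The plan is to use the diffuseness hypothesis to build, by a \emph{grafting} argument, a homotopy from $\ga$ to $F_{2N}(\ga)$ for large $N$, and then to exploit the flexibility of curves of the form $F_{2N}(\cdot)$ (lemmas \ref{L:Sal1}--\ref{L:Sal3}) to deform $F_{2N}(\ga)$ onto $F_{2N}$ of a circle, which turns out to be a circle again. By (\ref{C:belowandabove}) and (\ref{P:arbitrary}) we may take $\ka_1=\ka_0\in\R$, $\ka_2=+\infty$, and (after reparametrizing) assume $\ga\in\sr L_{\ka_0}^{+\infty}(I)$ is a closed admissible curve.

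First I would construct the chain of grafts. Since $\ga$ is diffuse there is $p\in\Ss^2$ with $p,-p\in C=\Im(C_\ga)$; write $p=C_\ga(t_1,\theta_1)$ and $-p=C_\ga(t_2,\theta_2)$. As each $C_\ga(t,\cdot)$ is an embedded sub-arc of the great circle $\Ga_t$, necessarily $t_1\neq t_2$ (say $t_1<t_2$), and---after replacing the antipodal pair by a nearby one, or a preliminary small homotopy of $\ga$ in the degenerate case---we may assume $\theta_i\in(0,\rho_0)$, so that the circle $c_i$ through $\ga(t_i)$ tangent to $\ta(t_i)$ with center $C_\ga(t_i,\theta_i)$ has constant geodesic curvature $\cot\theta_i\in(\ka_0,+\infty)$, i.e.\ $c_i\in\sr L_{\ka_0}^{+\infty}$. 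Writing $R_q(s)\in\SO_3$ for the rotation by $s$ about $q\in\Ss^2$, traversing $c_1$ (resp.\ $c_2$) by central angle $s$ acts on frames by left multiplication by $R_p(s)$ (resp.\ by $R_{-p}(s)=R_p(-s)$, the sign being forced by the position of the center). For $s\geq 0$ I would let $\ga_s$ be obtained from $\ga$ by: inserting the arc of $c_1$ of length $s$ at $\ga(t_1)$; then replacing $\ga|_{[t_1,t_2]}$ by $R_p(s)\cdot\ga|_{[t_1,t_2]}$; then inserting the arc of $c_2$ of length $s$ at the point $R_p(s)\ga(t_2)$; then continuing with $\ga|_{[t_2,1]}$. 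The key computation is that, \emph{because $c_1$ and $c_2$ have antipodal centers}, the holonomy $R_p(-s)$ of the second arc simultaneously closes the curve up and cancels the rotation $R_p(s)$ inserted before it, so that $\Phi_{\ga_s}(0)=\Phi_{\ga_s}(1)=I$ for all $s$. Since left multiplication of a frame leaves its logarithmic derivative unchanged, $\La_{\ga_s}$ is $\La_\ga$ with two constant-matrix blocks inserted, whence $\ga\gr\ga_s$; the cocycle condition of (\ref{D:graft}) is routine, so $(\ga_s)_{s\geq 0}$ is a chain of grafts in $\sr L_{\ka_0}^{+\infty}(I)$, and therefore $\ga=\ga_0\iso\ga_{2\pi N}$ for every $N\in\N$, where $\ga_{2\pi N}$ is $\ga$ with $c_1$ grafted $N$ times at $\ga(t_1)$ and $c_2$ grafted $N$ times at $\ga(t_2)$.

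Next, since $\theta_i<\rho_0$, the precise circles grafted are irrelevant up to homotopy: shrinking $c_i$ keeps the curvature above $\ka_0$, so $\ga_{2\pi N}\iso\ga^{[t_1\#N;\,t_2\#N]}$, and sliding the $N$ loops at $\ga(t_2)$ along $\ga$ to $\ga(t_1)$ gives $\ga^{[t_1\#N;\,t_2\#N]}\iso\ga^{[t_1\#2N]}$, which by (\ref{L:Sal2}) is homotopic to $F_{2N}(\ga)$ in $\sr L_{\ka_0}^{+\infty}$ once $N$ is large; hence $\ga\iso F_{2N}(\ga)$ for all sufficiently large $N$. Now $\ga$, being closed, lies in $\sr I(I)=\sr L_{-\infty}^{+\infty}(I)$, whose two components are distinguished by $\te\Phi(1)=\pm\mbf 1$. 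Fix $\rho'\in(0,\rho_0)$ and put $\sig=\sig^{\rho'}_m$ with $m=1$ if $\te\Phi_\ga(1)=-\mbf 1$ and $m=2$ if $\te\Phi_\ga(1)=\mbf 1$; then $\sig\in\sr L_{\ka_0}^{+\infty}(I)$ lies in the same component of $\sr I(I)$ as $\ga$, so (\ref{L:Sal3}) gives $F_{2N}(\ga)\iso F_{2N}(\sig)$ in $\sr L_{\ka_0}^{+\infty}(I)$ for large $N$. Finally, taking $\rho_1=\rho'$ in \eqref{E:F_n} and using that $\Phi_{\sig^{\rho'}_m}(t)$ is the rotation $R_\chi(2\pi m t)$ about the center $\chi$ of $\sig^{\rho'}$, together with $\sig^{\rho'}(0)=e_1$ and $R_\chi(2\pi u)e_1=\sig^{\rho'}(u)$, a direct computation gives $F_{2N}(\sig^{\rho'}_m)=\sig^{\rho'}_{m+2N}$ on the nose. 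Thus $\ga\iso\sig^{\rho'}_{m+2N}$, a circle traversed $m+2N$ times.

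The main obstacle is the chain-of-grafts step: verifying that $(\ga_s)_{s\geq 0}$ genuinely is a chain of grafts inside $\sr L_{\ka_0}^{+\infty}(I)$---that the two growing circular arcs, together with the compensating rotation of the middle portion, always produce a \emph{closed} curve whose geodesic curvature stays above $\ka_0$ and which depends continuously on $s$ (in particular as $s$ sweeps past multiples of $2\pi$). This is exactly where the hypothesis enters, through the cancellation of frame holonomies made possible by the antipodality of the two osculating-circle centers in the caustic band. I would also need to handle the degenerate configuration in which the only antipodal pair of $C$ is realized on the boundary of the caustic band (e.g.\ when $\ga$ itself passes through antipodal points), presumably by a small perturbation reducing it to the generic case.
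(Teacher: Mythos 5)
Your proposal is correct and follows essentially the same route as the paper: graft growing arcs of the two osculating circles whose centers $\pm p$ are the antipodal points of the caustic band, use the antipodality to cancel the frame holonomy so the curve stays closed with frame $I$ and curvature above $\ka_0$, and then invoke (\ref{L:Sal2}) and (\ref{L:Sal3}) to reduce to $F_{2N}$ of a circle. The only cosmetic differences are that the paper performs the insertion at the level of the lifted logarithmic derivative in $\widetilde{\mathfrak{so}}_3$ and verifies the cancellation via $\exp\big(\tfrac{s\chi_1}{2}\big)\exp\big(\tfrac{s\chi_2}{2}\big)=\mbf 1$, whereas you phrase it with rotations in $\SO_3$ and spell out the final identification $F_{2N}(\sig^{\rho'}_m)=\sig^{\rho'}_{m+2N}$ that the paper leaves implicit.
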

 \begin{proof}
 	Let $\ga\colon [0,T]\to \Ss^2$ be parametrized by curvature and let $\te{\La}\colon [0,T]\to \te{\mathfrak{so}}_3$ be its (lifted) logarithmic derivative. Since $\ga$ is diffuse, we can find $0<t_1<t_2<T$ and $\rho_1,\rho_2\in [0,\rho_0]$ such that $C_\ga(t_1,\rho_1)=-C_\ga(t_2,\rho_2)$. By deforming $\ga$ in a neighborhood of $\ga(t_2)$ if necessary, we can actually assume that $\rho_1,\rho_2\in (0,\rho_0)$. Set $z_i=\te{\Phi}(t_i)$,
 	\begin{equation*}
	\chi_i=C_\ga(t_i,\rho_i)=\cos \rho_i\?\ga(t_i)+\sin \rho_i\?\no(t_i)\ \text{and}\ 	\la_i=\cos \rho_i\?\mbf{i}+\sin\rho_i\?\mbf{k}\quad (i=1,2).
 	\end{equation*}
 	Identifying $\Ss^2$ with the unit imaginary quaternions, we have
 	\begin{equation}\label{E:chila}
 		z_i\la_iz_i^{-1}=\chi_i\quad (i=1,2).
 	\end{equation}
 	We will define a family of curves $s\mapsto \ga_s$, $s\geq 0$, as follows: First, let $\te{\La}_s\colon [0,T+2s]\to \te{\mathfrak{so}}_3$ be given by:
 	\begin{equation*}
 		\te{\La}_s(t)=\begin{cases}
 			\te{\La}(t)   &   \text{ if }\quad 0\leq t\leq t_1 \\
 			\frac{1}{2}\la_1   &   \text{ if }\quad t_1\leq t\leq t_1+s \\
 			\te{\La}(t-s)   &   \text{ if }\quad t_1+s\leq t\leq t_2+s \\
			\frac{1}{2}\la_2   &   \text{ if }\quad t_2+s\leq t\leq t_2+2s \\
 			\te{\La}(t-2s)   &   \text{ if }\quad t_2+2s\leq t\leq T+2s \\
 		\end{cases}
 	\end{equation*}
 	Next, let $\La_s\in \mathfrak{so}_3 $ correspond to $\te{\La}_s\in \widetilde{\mathfrak{so}}_3$  and define $\Phi_s$ to be the unique solution to the initial value problem $\Phi_s(0)=I$, $\dot{\Phi}_s=\Phi_s\La_s$. Finally, set $\ga_s=\Phi_se_1$.  Geometrically, when $s=2\pi k$, $\ga_s$ is obtained from $\ga$ by grafting a circle of radius $\rho_1$ traversed $k$ times at $\ga(t_1)$ and another circle of radius $\rho_2$ traversed $k$ times at $\ga(t_2)$ (see fig.~\ref{F:enxerto}).  We claim that $\ga_s\in \sr L_{\ka_0}^{+\infty}(I)$ for all $s\geq 0$.
 	
 	Indeed, we have 
 	\begin{equation*}
 		\te{\Phi}_s(t)=\begin{cases}
 			 \te{\Phi}(t)  &   \text{ if }\quad 0\leq t\leq t_0 \\
 			 z_1\exp\big(\frac{\la_1}{2}(t-t_1)\big) &   \text{ if }\quad t_1\leq t\leq t_1+s \\ 
 			 \exp\big(\tfrac{\chi_1}{2}s\big)\te{\Phi}(t-s) &   \text{ if }\quad t_1+s\leq t\leq t_2+s \\ 
 			 \exp\big(\tfrac{\chi_1}{2}s\big)z_2\exp\big(\tfrac{\la_1}{2}(t-t_2-s)\big)  &   \text{ if }\quad t_2+s\leq t\leq t_2+2s \\ 
 			 \exp\big(\tfrac{\chi_1}{2}s\big)\exp\big(\tfrac{\chi_2}{2}s\big)\te{\Phi}(t-2s)  &   \text{ if }\quad t_2+2s\leq t\leq T+2s 
 		\end{cases}
 	\end{equation*}
 	where we have used \eqref{E:chila} to write
 	\begin{equation*}
 		\big(z_1\exp\big(\tfrac{s\la_1}{2}\big)\big)\big(z_1^{-1}\te{\Phi}(t-s)\big)=\exp\big(\tfrac{s\chi_1}{2}\big)\te{\Phi}(t-s),
 	\end{equation*}
 	which yields the expression for $\te{\Phi}(t)$ when $t\in [t_1, t_1+s]$, and similarly for the interval $[t_2+2s,T+2s]$. In particular, we deduce that the final lifted frame is:
 	\begin{equation*}
 		\te{\Phi}_s(T+2s)= \exp\big(\tfrac{s\chi_1}{2}\big)\exp\big(\tfrac{s\chi_2}{2}\big)\te{\Phi}(T)=\te{\Phi}(T),
 	\end{equation*}
 	as $\chi_2=-\chi_1$ by hypothesis.
 	This proves that each $\ga_s$ has the correct final frame. The curvature $\ka^s$ of $\ga_s$ clearly satisfies $\ka^s>\ka_0$ almost everywhere in $[0,t_1]\cup [t_1+s,t_2+s]\cup [t_2+2s,T+2s]$, because, by construction, the restriction of  $\ga_s$ to each of these intervals is the composition of a rotation of $\Ss^2$ with an arc of $\ga$. Moreover, the restriction of $\ga_s$ to the interval $[t_1,t_1+s]$ is an arc of circle of radius of curvature $\rho_1<\rho_0$; similarly, the restriction of $\ga_s$ to $[t_2+s,t_2+2s]$ is an arc of circle of radius of curvature $\rho_2<\rho_0$. Therefore $\ka^s>\ka_0$ almost everywhere on $[0,T+2s]$, and we conclude that $\ga_s\in \sr L_{\ka_0}^{+\infty}(I)$.
 	
 	We have thus proved that $\ga$ is homotopic to $\ga^{[t_0\#n;t_1\#n]}$ for all $n\in \N$ when $\ga$ is diffuse. The proposition now follows from (\ref{L:Sal2}) and (\ref{L:Sal3}) combined.  
 \end{proof}
 
 \begin{figure}[ht]
	\begin{center}
		\includegraphics[scale=.44]{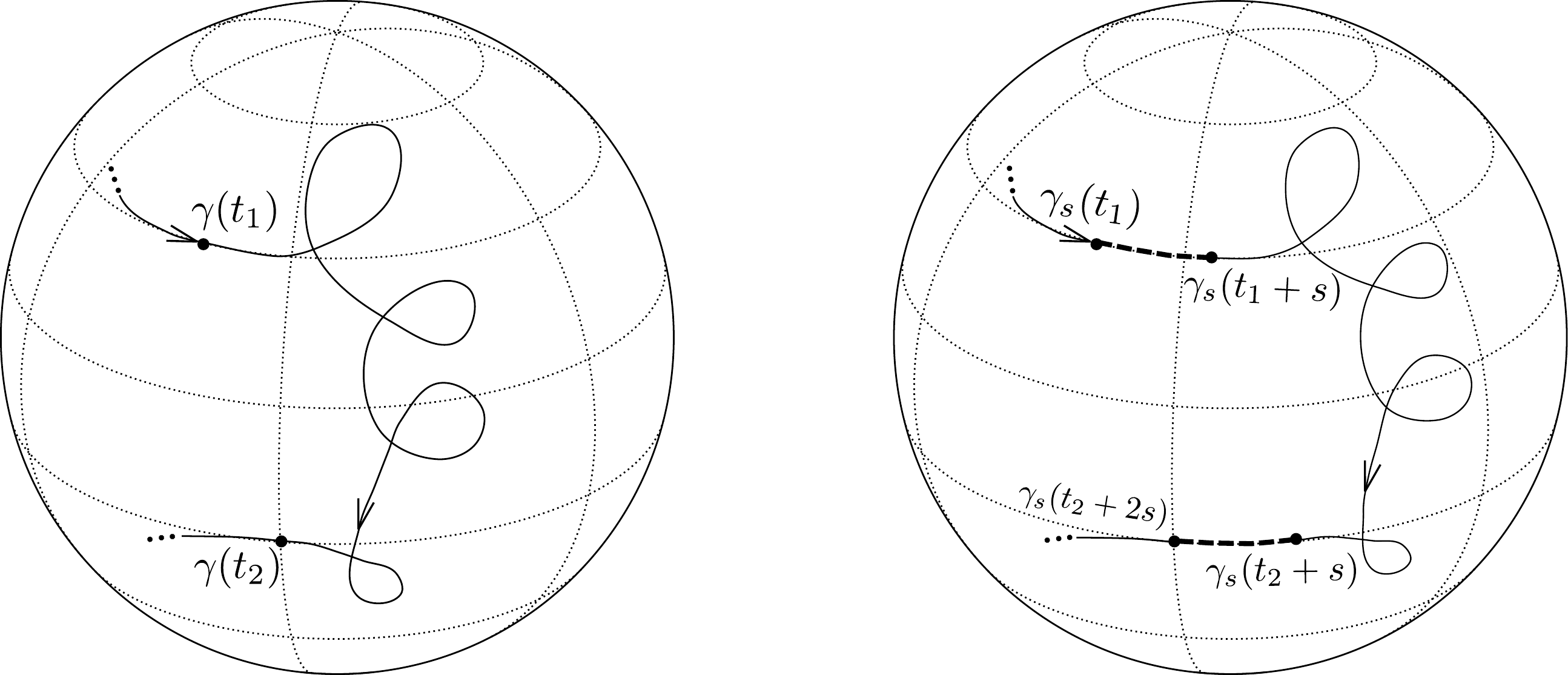}
		\caption{Grafting arcs of circles onto a diffuse curve, as described in (\ref{P:diffuseloose}).}
		\label{F:enxerto}
	\end{center}
\end{figure}

The next result says that we can still graft small arcs of circle onto $\ga$ even when it is not diffuse, as long as it is also not condensed.
 \begin{propa}\label{P:totting}
 	Suppose that $\ga\in \sr L_{\ka_0}^{+\infty}(I)$ is non-condensed. Then there exist $\eps>0$ and a chain of grafts $(\ga_s)$ such that $\ga_0=\ga$, $\ga_s\in \sr L_{\ka_0}^{+\infty}(I)$ and $\tot(\ga_s)=\tot(\ga)+s$ for all $s\in [0,\eps)$.
 \end{propa}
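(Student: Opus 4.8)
The plan is to graft onto $\ga$, at finitely many carefully chosen points, short arcs of circles whose radii of curvature correspond to centres of curvature $\chi_1,\dots,\chi_m\in\Ss^2$ that positively span $0$ in $\R^3$, and then to tune the lengths of these arcs — via the implicit function theorem — so that the total curvature increases by exactly $s$ while the lifted frame at the endpoint is left unchanged. The admissibility of the grafted curve and the bound $\ka>\ka_0$ will be automatic from the choice of radii; the content is in closing the curve up.

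First I would translate the non-condensed hypothesis into convexity: since the image $C$ of $C_\ga$ is not contained in any closed hemisphere, the results of the appendix (cf.\ (\ref{L:closedhemisphere}), (\ref{L:Steinitz})) give that $0$ lies in the interior of $\operatorname{conv}(C)$ in $\R^3$, and in fact $0\in\Int\operatorname{conv}\{\chi_1,\dots,\chi_m\}$ for a suitable finite subset. Because $C_\ga\big((0,1)\times(0,\rho_0)\big)$ is dense in $C$ and for a convex set the interior of its closure equals its interior, I may take $\chi_i=C_\ga(t_i,\theta_i)$ with the $t_i\in(0,1)$ pairwise distinct and $\theta_i\in(0,\rho_0)$, together with weights $c_i>0$ satisfying $\sum_i c_i=1$ and $\sum_i c_i\chi_i=0$, while keeping $0\in\Int\operatorname{conv}\{\chi_1,\dots,\chi_m\}$ (the perturbations needed are harmless since this condition is open). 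As $0$ is interior, the $\chi_i$ affinely span $\R^3$, so there is a $3$-dimensional subspace $V\subs\{v\in\R^m:\sum_i v_i=0\}$ on which $v\mapsto\sum_i v_i\chi_i$ is a linear isomorphism onto $\R^3$, which we identify with the imaginary quaternions $\widetilde{\mathfrak{so}}_3$.

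Next the bookkeeping. Identifying $\Ss^2$ with the unit imaginary quaternions as in the proof of (\ref{P:diffuseloose}), grafting at $\ga(t_i)$ an arc of circle of radius of curvature $\theta_i$, parametrized by curvature and of parameter-length $\ell_i$, raises $\tot$ by $\ell_i$ and multiplies the final lifted frame on the left by $\exp\big(\tfrac{\ell_i}{2}\chi_i\big)$; doing so at all the $t_i$ raises $\tot$ by $\sum_i\ell_i$ and multiplies the final lifted frame by $\prod_i\exp\big(\tfrac{\ell_i}{2}\chi_i\big)$. Writing $\ell_i=s(c_i+v_i)$ with $v\in V$, I would consider
\[
	G(v,s)=\tfrac1s\log\Big(\textstyle\prod_i\exp\big(\tfrac{s(c_i+v_i)}{2}\chi_i\big)\Big),
\]
which extends smoothly across $s=0$ by Hadamard's lemma, with $G(v,0)=\tfrac12\sum_i(c_i+v_i)\chi_i=\tfrac12\sum_i v_i\chi_i$ using $\sum_i c_i\chi_i=0$. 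Then $G(0,0)=0$ and $D_vG(0,0)\colon V\to\R^3$ is the isomorphism $v\mapsto\tfrac12\sum_i v_i\chi_i$, so the implicit function theorem provides $\eps>0$ and a smooth $s\mapsto v(s)$ with $v(0)=0$ and $G(v(s),s)=0$ on $[0,\eps)$. Setting $\ell_i(s)=s(c_i+v_i(s))$, after shrinking $\eps$ one gets $\ell_i(s)>0$, $\ell_i'(s)\to c_i>0$ (so each $\ell_i$ is strictly increasing), $\sum_i\ell_i(s)=s$, and $\prod_i\exp\big(\tfrac{\ell_i(s)}{2}\chi_i\big)=\mbf 1$.

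Finally, define $\ga_s$ to be the curve obtained from $\ga$ by grafting at $\ga(t_i)$ the arc of circle of radius of curvature $\theta_i$ of curvature-length $\ell_i(s)$, for each $i$ — equivalently, the curve whose lifted logarithmic derivative is that of $\ga$ with a constant stretch $\tfrac12(\cos\theta_i\,\mbf i+\sin\theta_i\,\mbf k)$ of length $\ell_i(s)$ inserted at the curvature parameter of $t_i$. Since $\theta_i\in(0,\rho_0)$ the grafted arcs have geodesic curvature in $(\ka_0,+\infty)$, so $\ga_s$ is admissible with curvature in $(\ka_0,+\infty)$; since $\prod_i\exp\big(\tfrac{\ell_i(s)}{2}\chi_i\big)=\mbf 1$ its frame at $1$ equals that of $\ga$, whence $\ga_s\in\sr L_{\ka_0}^{+\infty}(I)$; and $\tot(\ga_s)=\tot(\ga)+\sum_i\ell_i(s)=\tot(\ga)+s$. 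As each $\ell_i$ is increasing, for $s_0<s_1$ the curve $\ga_{s_1}$ is obtained from $\ga_{s_0}$ by grafting (extend every inserted stretch), and the resulting grafting functions satisfy the cocycle identity of (\ref{D:graft}\,(c)); hence $(\ga_s)_{s\in[0,\eps)}$ is a chain of grafts, as required. The main obstacle is precisely this simultaneous control of total curvature, frame closure, and positivity of the $\ell_i$: the non-condensed hypothesis, through $0\in\Int\operatorname{conv}(C)$, is exactly what makes $D_vG(0,0)$ surjective and so unlocks the implicit function theorem; everything else is routine verification.
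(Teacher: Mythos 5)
Your proof is correct and follows essentially the same route as the paper's: non-condensed implies $0\in\Int\hat{C}$ by (\ref{L:closedhemisphere}), one picks centres of curvature $\chi_i=C_\ga(t_i,\theta_i)$ positively spanning $0$, grafts arcs of circles so that the frame-closure condition becomes $\prod_i\exp\big(\tfrac{\ell_i}{2}\chi_i\big)=\mbf 1$, and the implicit function theorem (with surjectivity of the derivative coming exactly from the spanning condition) yields positive lengths $\ell_i(s)$ summing to $s$. The only differences are cosmetic: the paper fixes $m=4$ points in general position via (\ref{L:Steinitz}) and solves for three of the $\sig_i$ as functions of the first, whereas you work with $m$ points and a normalized map $G(v,s)$; both yield the same chain of grafts.
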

 \begin{proof}(In this proof the identification of $\Ss^2$ with the set of unit imaginary quaternions used in (\ref{P:diffuseloose}) is still in force.)
 	Let $\ga\colon [0,T]\to \Ss^2$ be parametrized by curvature and let $\te{\La}\colon [0,T]\to \te{\mathfrak{so}}_3$ be its (lifted) logarithmic derivative. Since $\ga$ is not condensed, $0$ lies in the interior of the convex closure of the image $C$ of $C_\ga$ by (\ref{L:closedhemisphere}). Hence, by (\ref{L:Steinitz}), we can find a 3-dimensional simplex with vertices in $C$ containing $0$ in its interior. In symbols, we can find $0< t_1<t_2<t_3<t_4<T$  and $s_1,s_2,s_3,s_4>0$, $s_1+s_2+s_3+s_4=1$, such that
 	\begin{equation}\label{E:concomb}
 		0=s_1\chi_1+s_2\chi_2+s_3\chi_3+s_4\chi_4,
 	\end{equation} 
 	where $\chi_i=C_\ga(t_i,\rho_i)$, for some $\rho_i\in (0,\rho_0)$, and the $\chi_i$ are in general position. Furthermore, these numbers $s_i$ are the only ones which have these properties (for this choice of the $\chi_i$). Define a function $G\colon \R^4\to \Ss^3$ by 
 	\begin{equation*}
 		G(\sig_1,\sig_2,\sig_3,\sig_4)=\exp\Big(\frac{\sig_1\chi_1}{2}\Big)\exp\Big(\frac{\sig_2\chi_2}{2}\Big)\exp\Big(\frac{\sig_3\chi_3}{2}\Big)\exp\Big(\frac{\sig_4\chi_4}{2}\Big).
 	\end{equation*}
 	Then $G(0,0,0,0)=\tbf{1}$ and 
 	\begin{equation*}
 		DG_{(0,0,0,0)}(a,b,c,d)=\frac{1}{2}\big(a\chi_1+b\chi_2+c\chi_3+d\chi_4\big).
 	\end{equation*}
 	Since the $\chi_i$ are in general position by hypothesis, we can invoke the implicit function theorem to find some $\de>0$ and, without loss of generality, functions $\bar{\sig}_2$,\?$\bar\sig_3$,\?$\bar\sig_4\colon (-\de,\de)\to \R$ of $\sig_1$ such that 
 	\begin{equation*}
\qquad 		G\big(\sig_1,\bar\sig_2(\sig_1),\bar\sig_3(\sig_1),\bar\sig_4(\sig_1)\big)=\tbf{1} \quad (\sig_1\in (-\de,\de)).
 	\end{equation*}

	Differentiating the previous equality with respect to $\sig_1$ at $0$ and comparing \eqref{E:concomb} we deduce that
 	\begin{equation*}
\qquad 		\bar\sig_i'(0)=\frac{s_i}{2s_1}>0\quad (i=2,3,4).
 	\end{equation*}
 	Let $s(\sig_1)=\sig_1+\bar\sig_2(\sig_1)+\bar\sig_3(\sig_1)+\bar\sig_4(\sig_1)$. Then $s'(\sig_1)>0$, hence we can write $\sig_1$, $\sig_2$, $\sig_3$ and $\sig_4$ as a function of $s$ in a neighborhood of $0$. The conclusion is thus that there exist $\eps>0$ and non-negative functions $\sig_1,\sig_2,\sig_3,\sig_4$ of $s$ such that $\sig_1(s)+\sig_2(s)+\sig_3(s)+\sig_4(s)=s$ and
 	\begin{equation*}
 		\exp\Big(\frac{\sig_1\chi_1}{2}\Big)\exp\Big(\frac{\sig_2\chi_2}{2}\Big)\exp\Big(\frac{\sig_3\chi_3}{2}\Big)\exp\Big(\frac{\sig_4\chi_4}{2}\Big)=\tbf{1}\text{\quad for all $s\in [0,+\eps)$}.
 	\end{equation*}
 	We will now use these functions to obtain $\ga_s$, $s\in [0,+\eps)$. 
 	
 	Define $\te{\La}_s\colon [0,T+s]\to \te{\mathfrak{so}_3}$ by:
 	\begin{equation*}
 		\te{\La}_s(t)=\begin{cases}
 			\te{\La}(t)   &   \text{ if }\ \  0\leq t\leq t_1 \\
 			\frac{1}{2}\la_1   &   \text{ if }\ \  t_1\leq t\leq t_1+\sig_1 \\
 			\te{\La}(t-\sig_1)   &   \text{ if }\ \  t_1+\sig_1\leq t\leq t_2+\sig_1 \\
			\frac{1}{2}\la_2   &   \text{ if }\ \  t_2+\sig_1\leq t\leq t_2+\sig_1+\sig_2 \\
 			\te{\La}(t-\sig_1-\sig_2)   &   \text{ if }\ \  t_2+\sig_1+\sig_2\leq t\leq t_3+\sig_1+\sig_2 \\
 			\frac{1}{2}\la_3   &   \text{ if }\ \  t_3+\sig_1+\sig_2\leq t\leq t_3+\sig_1+\sig_2+\sig_3 \\
 			\te{\La}(t-\sig_1-\sig_2-\sig_3)   &   \text{ if }\ \ t_3+\sig_1+\sig_2+\sig_3\leq t\leq t_4+\sig_1+\sig_2+\sig_3 \\
 			\frac{1}{2}{\la_4}   &   \text{ if }\ \  t_4+\sig_1+\sig_2+\sig_3\leq t\leq t_4+s \\
 			\te{\La}(t-s)   &   \text{ if }\ \  t_4+s\leq t\leq T+s \\
 		\end{cases}
 	\end{equation*}
 	where $\sig_i=\sig_i(s)$ ($i=1,2,3,4$) are the functions obtained above. Let $\te{\Phi}_s\colon [0,T+s]\to \Ss^3$ be the solution to the initial value problem $\te{\Phi}'=\te{\Phi}\te{\La}$, $\te{\Phi}(0)=\tbf{1}$ and let $\Phi\colon [0,T+s]\to \SO_3$ be its projection. Then using the relation $\chi_i=z_i\la_iz_i^{-1}$ one finds by a verification entirely similar to the one in the proof of (\ref{P:diffuseloose}) that
 	\begin{equation*}
 		\te{\Phi}_s(T+s)=\exp\Big(\frac{\sig_1\chi_1}{2}\Big)\exp\Big(\frac{\sig_2\chi_2}{2}\Big)\exp\Big(\frac{\sig_3\chi_3}{2}\Big)\exp\Big(\frac{\sig_4\chi_4}{2}\Big)\te{\Phi(T)}=\te{\Phi}(T).
 	\end{equation*} 
 	Hence, each $\ga_s=\Phi_se_1$ has the correct final frame. In addition, over each of the subintervals of $[0,T+s]$ listed above, $\ga_s$ is either the composition of a rotation of $\Ss^2$ with an arc of $\ga$, or an arc of circle of radius $\rho_i\in (0,\rho_0)$ $(i=1,2,3,4)$. We conclude from this that the geodesic curvature $\ka^s$ of $\ga_s$  satisfies $\ka^s>\ka_0$ almost everywhere on $[0,T+s]$, that is, $\ga_s\in \sr L_{\ka_0}^{+\infty}(I)$ as we wished. Finally, 
 	\begin{equation*}
 		\tot(\ga_s)=T+s=\tot(\ga)+s
 	\end{equation*}
 	because $\ga_s$ is parametrized by curvature (see (\ref{L:parametrizedbycurvature})), and $(\ga_s)$ is a chain of grafts by construction.
 \end{proof}

\vfill\eject

\chapter{Condensed Curves}

\label{rotation}The \tdef{rotation number} $N(\eta)$ of a regular closed plane curve $\eta\colon [0,1]\to \R^2$ is simply the degree of its unit tangent vector $\ta\colon \Ss^1\to \Ss^1$ (we may consider $\ga$ and $\ta$ to be defined on $\Ss^1$ since $\ga$ is closed). Suppose now that  $\eta\colon [0,L]\to \R^2$ is parametrized by arc-length, and write 
\begin{equation*}
	\ta(s)=\exp(i\theta(s)),
\end{equation*}
for some angle-function $\theta\colon [0,L]\to \R$. Then the curvature $\ka$ of $\eta$ is given by 
\begin{equation}\label{E:textbook}
\ka(s)=\theta'(s);	
\end{equation}
furthermore, the rotation number $N(\eta)$ of $\eta$ is given by $2\pi N(\eta)=\theta(L)-\theta(0)$. These facts are explained in any textbook on differential geometry. The Whitney-Graustein theorem (\cite{WhiGra}, thm.~1) states that two regular closed plane curves are homotopic through regular closed curves if and only if they have the same rotation number. 

Now suppose $\ga\in \sr L_{\ka_0}^{+\infty}$ has image contained in some closed hemisphere. Let $h_\ga$ be the barycenter, on $\Ss^2$, of the set of closed hemispheres which contain $\Im(\ga)$ (cf.~(\ref{L:closedbarycenter})), and let $\pr\colon \Ss^2\to \R^2$ denote stereographic projection from $-h_\ga$. Define the \tdef{rotation number} $\nu(\ga)$ of $\ga$ by $\nu(\ga)=-N(\eta)$, where $\eta=\pr\circ \ga$. Recall that a curve $\ga \in \sr L_{\ka_0}^{+\infty}$ is called \tdef{condensed} if the image $C$ of its caustic band $C_\ga\colon [0,1]\times [0,\rho_0]\to \Ss^2$ is contained in some closed hemisphere. Because $C_\ga(t,0)=\ga(t)$, any condensed curve is contained in a closed hemisphere, hence we may speak of its rotation number. 

\begin{urem}
It is natural to ask why this notion of rotation number is not extended to a larger class of curves. For instance, if $\ga$ is any admissible curve then, by Sard's theorem, there exists some point $p\in \Ss^2$ not in the image of $\ga$. We could use stereographic projection from $p$ to define the rotation number of $\ga$. The trouble is that it is not clear how $p$ can be chosen so that the resulting number is locally constant (i.e., continuous) on $\sr L_{\ka_0}^{+\infty}$: A different choice of $p$ yields a different rotation number (although its parity remains the same).  In fact, the class of spherical curves for which a meaningful notion of rotation number exists must be restricted, since it is always possible to deform a circle traversed $\nu$ times into a circle traversed $\nu+2$ times in $\sr L_{\ka_0}^{+\infty}$ if $\nu$ is sufficiently large.
\end{urem}

\begin{propa}\label{P:positivecondensed}
	Let $A$ be a connected compact space, $\ka_0> 0$ and $f\colon A\to \sr L_{\ka_0}^{+\infty}(I)$ be such that $f(a)$ is condensed for all $a\in A$. Then there exists $\nu\in \N$ such that $f$ is homotopic in $\sr L_{\ka_0}^{+\infty}(I)$ to the constant map $a\mapsto \sig_\nu$, $\sig_\nu$ a circle traversed $\nu$ times. 
\end{propa}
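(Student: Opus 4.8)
The plan is to exploit that, for a condensed curve $\ga$, the distinguished hemisphere $h_\ga$ depends continuously on $\ga$ (by (\ref{L:causticbarycenter}), applied to the image of $f$, which consists of condensed curves), and that stereographic projection from $-h_\ga$ turns $\ga$ into a regular closed plane curve whose rotation number $\nu(\ga)$ is well-defined. First I would show that $a\mapsto \nu(f(a))$ is locally constant on $A$: composing the continuous map $a\mapsto h_{f(a)}$ with the map $\ga\mapsto \pr_{-h_\ga}\circ\ga$ produces a continuous family of regular closed plane curves, and the plane rotation number is a homotopy invariant of such families, so it is constant on the connected space $A$. Call this common value $\nu$; note $\nu\in\N$ (in fact $\nu\geq 1$) because the geodesic curvature is positive — a condensed curve with $\ka_0>0$ has positive geodesic curvature everywhere and lies in an open hemisphere (by (\ref{L:opencaustic}) the caustic lies in an open hemisphere), so its planar projection has nowhere-vanishing curvature of a fixed sign and hence strictly positive (or strictly negative, fixed by the orientation conventions) rotation number, forcing $|\nu|\geq 1$; the sign convention $\nu(\ga)=-N(\eta)$ arranges $\nu\geq 1$.

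Next I would construct the homotopy. Fix $a\in A$. After applying stereographic projection from $-h_{f(a)}$, the curve $\eta_a=\pr\circ f(a)$ is a regular closed plane curve of rotation number $\nu$ with everywhere positive (Euclidean) curvature of a definite sign. I would use a Whitney–Graustein type deformation, carried out continuously in $a$, to bring $\eta_a$ to a standard round circle traversed $\nu$ times; the point is that such a deformation can be performed keeping the curvature nonvanishing (indeed positive) throughout, since convexity of the rotation-$\nu$ circle and the openness of the hemisphere give us room. Pulling this homotopy back to $\Ss^2$ via the (continuously varying) inverse stereographic projections, and then correcting the frame endpoints (the curves must lie in $\sr L_{\ka_0}^{+\infty}(I)$, so $\Phi_\ga(0)=\Phi_\ga(1)=I$) by composing with a path in $\SO_3$ as in the proof of (\ref{P:arbitrary}), we obtain a homotopy inside $\sr L_{\ka_0}^{+\infty}(I)$ from $f$ to the constant map $a\mapsto\sig_\nu$, where $\sig_\nu$ is a fixed circle traversed $\nu$ times. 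The fact that any two circles traversed $\nu$ times are homotopic in $\sr L_{\ka_0}^{+\infty}(I)$, established in (\ref{L:homocircles}), guarantees the target is unambiguous.

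The main obstacle is the second step: performing the Whitney–Graustein deformation \emph{with uniform control on the geodesic curvature bound $\ka_0$} and \emph{continuously in the parameter $a$}, while also managing the frame endpoints so that one never leaves $\sr L_{\ka_0}^{+\infty}(I)$. The curvature-sign condition $\ka>0$ is preserved by the classical Whitney–Graustein homotopy between convex-ish representatives, but one must check that the full family can be taken through curves with geodesic curvature $>\ka_0$ (not merely $>0$) on the sphere — this is where condensedness and the positivity $\ka_0>0$ are genuinely used, since the standard circle $\sig_\nu$ can be chosen with radius of curvature arbitrarily close to $0$, i.e.\ curvature arbitrarily large, giving slack. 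I expect the cleanest route is: first homotope each $f(a)$ to a curve of very large constant geodesic curvature (a small circle traversed $\nu$ times, up to rotation) using that the planar projections are regularly homotopic through convex curves, then absorb the $\SO_3$ ambiguity. Filling in the details of the uniform curvature estimate is the technical crux; everything else is bookkeeping with the continuous dependence results already proved in \S 2 and \S 3.
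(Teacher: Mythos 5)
Your outline matches the paper's strategy at the level of skeleton (continuous choice of the hemisphere $h_{f(a)}$ via (\ref{L:causticbarycenter}), projection to the plane, a parametrized Whitney--Graustein deformation as in (\ref{L:Whitney}), and the final frame correction), and you have correctly located where the difficulty sits. But the step you defer as ``the technical crux'' is the actual content of the proof, and your sketch of how to resolve it is circular: you propose to ``first homotope each $f(a)$ to a curve of very large constant geodesic curvature (a small circle traversed $\nu$ times)'', which is essentially the statement being proved. Nothing in your argument explains how to increase the geodesic curvature of $f(a)$ above $\ka_0$ by a definite margin \emph{before} the planar deformation, and without that margin the Whitney--Graustein homotopy gives no control: it preserves positivity of the planar curvature but not the lower bound $\ka>\ka_0$ on the sphere.

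The missing idea is the M\"obius dilatation step, (\ref{L:Moebius}). One conjugates the dilatation $z\mapsto rz$ of $\R^2$ by stereographic projection from $-h_{f(a)}$ to get a family $T^s_a$ of M\"obius transformations of $\Ss^2$, and applies $T^s_a$ to $f(a)$. The point --- and this is precisely where condensedness is used, not merely continuity of $h_\ga$ --- is that the osculating circles of $f(a)$ have their centers in the hemisphere determined by $h_{f(a)}$ (because $\Im(C_{f(a)})$ lies in that hemisphere), so shrinking toward $h_{f(a)}$ maps each osculating circle to a strictly smaller circle; hence the geodesic curvature only \emph{increases} along the shrinking, and for $s$ small it exceeds $\ka_0+2$ uniformly in $a$. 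Only then does one project to the tangent plane (the paper uses orthogonal projection and the two-sided curvature comparison of (\ref{L:projection}), valid because the shrunken curves lie in a small disk about $h_{f(a)}$), run (\ref{L:Whitney}) inside that disk with planar curvature kept above $\ka_0+1$, and project back, landing above $\ka_0$. Without the dilatation step your homotopy has no reason to stay in $\sr L_{\ka_0}^{+\infty}(I)$, so as written the proof is incomplete.
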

The idea of the proof is to use M\"obius transformations to make the curves $\eta_a=f(a)$ so small that they become approximately plane curves. The hypothesis that the curves are condensed guarantees that the geodesic curvature does not decrease during the deformation.  A slight variation of the Whitney-Graustein theorem is then used to deform the curves to a circle traversed $\nu$ times, where $\nu$ is the common rotation number of the curves.  

We will also need the following technical result, which is a corollary of the proof of (\ref{P:positivecondensed}).
\begin{cora}\label{C:positivecondensed}
Let $\ka_0>0$ and $\ga\in \sr L_{\ka_0}^{+\infty}$ be a condensed curve. Then there exists a homotopy $s\mapsto \ga_s\in \sr L_{\ka_0}^{+\infty}$ $(s\in [0,1])$ such that $\ga_1=\ga$, $\ga_0$ is a parametrized circle and $\Im(C_{\ga_s})$ is contained in an open hemisphere for each $s\in [0,1)$.
\end{cora}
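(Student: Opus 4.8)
The plan is to read this off from the proof of (\ref{P:positivecondensed}) specialized to a one-point parameter space. I would first reduce to the case $\ga\in\sr L_{\ka_0}^{+\infty}(I)$: writing $Q=\Phi_\ga(0)=\Phi_\ga(1)$, the curve $Q^{-1}\ga$ lies in $\sr L_{\ka_0}^{+\infty}(I)$ and is still condensed, since $Q^{-1}$ is an isometry of $\Ss^2$ and hence $C_{Q^{-1}\ga}=Q^{-1}\circ C_\ga$; once a homotopy with the stated properties is found for $Q^{-1}\ga$, left-translating it by $Q$ gives one for $\ga$, because $\eta\mapsto Q\eta$ takes circles to circles and takes caustic bands contained in an open hemisphere to caustic bands contained in an open hemisphere. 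So assume $\ga\in\sr L_{\ka_0}^{+\infty}(I)$ is condensed. Since $\ka_0>0$ we have $\rho_0=\arccot\ka_0\in(0,\tfrac\pi2)$, and by (\ref{L:causticbarycenter}) there is a distinguished point $h_\ga\in\Ss^2$ lying in the interior of every closed hemisphere containing $\Im(C_\ga)$; let $H_\ga$ be the barycentric one, so $h_\ga\in\Int H_\ga$.

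I would then recall the shape of the homotopy constructed in the proof of (\ref{P:positivecondensed}). Run backwards and applied to the single curve $\ga$, it is the concatenation of two pieces. On $[\tfrac12,1]$ it is a M\"obius contraction: $\ga_s$ is obtained from $T_{r(s)}\circ\ga$ by a reparametrization and a rotation (restoring the frame to $I$, exactly as in that proof), where $T_r$ $(0<r\le 1)$ is the conformal flow of $\Ss^2$ fixing $\pm h_\ga$ and contracting $\Ss^2\ssm\{-h_\ga\}$ toward $h_\ga$, with $T_1=\id$ and $r(\cdot)$ decreasing from $r(1)=1$; the whole point of (\ref{P:positivecondensed}) is that, $\ga$ being condensed, each $T_r\ga$ again has geodesic curvature $>\ka_0$. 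On $[0,\tfrac12]$ it is a Whitney--Graustein deformation carried out among curves whose image lies in a small ball $\bar B(h_\ga,\eps)$ — and, choosing $r(\tfrac12)$ small enough, we may take $\eps<\tfrac\pi2-\rho_0$ — ending at the round circle $\ga_0$ traversed $\nu(\ga)$ times. Thus $\ga_1=\ga$, $\ga_0$ is a parametrized circle, and every $\ga_s$ lies in $\sr L_{\ka_0}^{+\infty}(I)$; what remains is the open-hemisphere assertion for $s<1$.

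For $s\in[0,\tfrac12]$ this is immediate from the triangle inequality: if $\Im(\ga_s)\subseteq\bar B(h_\ga,\eps)$ then every point $C_{\ga_s}(t,\theta)=\cos\theta\,\ga_s(t)+\sin\theta\,\no_{\ga_s}(t)$ is at spherical distance $\theta\le\rho_0$ from $\ga_s(t)$, so
\begin{equation*}
d\big(h_\ga,C_{\ga_s}(t,\theta)\big)\le d\big(h_\ga,\ga_s(t)\big)+\theta<\eps+\rho_0<\tfrac\pi2 ,
\end{equation*}
whence $\Im(C_{\ga_s})\subseteq B(h_\ga,\tfrac\pi2)$. For $s\in[\tfrac12,1)$ we have $r=r(s)<1$; the rotation involved is an isometry and so does not affect whether a set lies in an open hemisphere, so we may treat $\ga_s$ as $T_r\ga$. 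Since $h_\ga\in\Int H_\ga$ and $H_\ga$ is convex, $T_r$ carries $H_\ga$ strictly into its interior (in stereographic coordinates from $-h_\ga$ it is dilation by $r$ about the interior point $h_\ga$), so $\Im(\ga_s)\subseteq T_r(H_\ga)\subseteq\Int H_\ga$. By the observation in the proof of (\ref{L:causticbarycenter}) that the boundary of a caustic-band image is contained in the images of its two edge curves $C_{\ga_s}(\cdot,0)=\ga_s$ and $C_{\ga_s}(\cdot,\rho_0)$, the only way $\Im(C_{\ga_s})$ could meet the bounding great circle of its distinguished hemisphere is through $\Im\big(C_{\ga_s}(\cdot,\rho_0)\big)$. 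Ruling this out for $r<1$ — that is, showing the M\"obius contraction pushes the \emph{whole} caustic band, not merely the geodesic curvature, strictly into an open hemisphere — is the step I expect to be the main obstacle. The natural way to deal with it is to revisit the curvature computation in the proof of (\ref{P:positivecondensed}), which is a strict inequality for $r<1$, use it on the short initial stretch where $r$ is near $1$, and fall back on the triangle-inequality estimate above once $r(s)$ has decreased enough that $T_{r(s)}(H_\ga)\subseteq\bar B(h_\ga,\eps)$. The curve $\ga_1=\ga$ is precisely the one whose caustic band is allowed to touch a great circle, which is why the conclusion is stated only for $s<1$.
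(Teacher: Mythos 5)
Your overall architecture is the same as the paper's: contract $\ga$ by the M\"obius dilations $T^s$ centred at the caustic barycenter $h_\ga$ of (\ref{L:causticbarycenter}), then finish with the planar Whitney--Graustein stage once the curve is small; and your triangle-inequality estimate $d(h_\ga,C_{\ga_s}(t,\theta))\le d(h_\ga,\ga_s(t))+\rho_0<\frac{\pi}{2}$ for the small-curve stage is exactly how the paper handles it. The reduction to $\sr L_{\ka_0}^{+\infty}(I)$ is also fine.

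The gap is the step you yourself flag as ``the main obstacle'': showing that $\Im(C_{\ga_s})$ stays in the open hemisphere $H=\set{p}{\gen{p,h_\ga}>0}$ throughout the M\"obius stage. This is the actual content of the corollary, and your proposed workaround --- reusing the strict curvature inequality of (\ref{L:Moebius}) on a short initial stretch near $r=1$ --- would not close it: a lower bound on the geodesic curvature of $T_r\ga$ says nothing about which hemisphere contains its caustic band (a tiny circle of huge curvature can sit anywhere on $\Ss^2$). The paper's mechanism is different. One first observes (as in (\ref{L:cega})) that both boundary curves $\ga_s=C_{\ga_s}(\cdot,0)$ and $\ce\ga_s=C_{\ga_s}(\cdot,\rho_0)$ have positive geodesic curvature, so each can be tangent to the great circle $\bd H$ at only finitely many parameters $t_0$; at such a tangency the normal $\no_{\ga_s}(t_0)$ is $\pm h_\ga$, hence the whole fibre $C_{\ga_s}(\{t_0\}\times[0,\rho_0])$ is an arc of a geodesic through $h_\ga$, and gnomic/stereographic projection from $-h_\ga$ sends such geodesics to lines through the origin, on which the dilation acts monotonically. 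This yields the two one-sided assertions: if $\Im(C_{\ga_s})\subs\bar H$ then $\Im(C_{\ga_\sig})\subs H$ for $\sig$ slightly below $s$, and if $\Im(C_{\ga_s})\nsubs H$ then $\Im(C_{\ga_\sig})\nsubs\bar H$ for $\sig$ slightly above $s$. A supremum argument on $S=\set{s}{\Im(C_{\ga_s})\nsubs H}$ (using $\Im(C_{\ga_1})\subs\bar H$) then forces $S=\emptyset$. Without this tangency-plus-monotonicity argument, or a genuine substitute for it, the corollary is not proved.
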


We start by defining spaces of closed curves in $\R^2$ which are analogous to the spaces $\sr L_{\ka_1}^{\ka_2}$ of curves on $\Ss^2$.\footnote{These spaces of plane curves will only be considered in this section.} Let $-\infty\leq \ka_1<\ka_2\leq +\infty$. A \tdef{$(\ka_1,\ka_2)$-admissible plane curve} is an element $(c,z,\hat v,\hat w)$ of $\R^2\times \Ss^1\times L^2[0,1]\times L^2[0,1]$. With such a 4-tuple we associate the unique curve $\ga\colon [0,1]\to \R^2$ satisfying
\begin{equation*}
	\ga(t)=c+\int_0^t v(\tau)\ta(\tau)\?d\tau,\quad \ta(0)=z,\quad \ta'(t)=w(t)i\ta(t)\quad (t\in [0,1]),
\end{equation*}
where $v$ and $w$ are given by eq.~\eqref{E:Sobolev} on p.~\pageref{E:Sobolev} and $i=(0,1)$ is the imaginary unit. The space of all $(\ka_1,\ka_2)$-admissible plane curves is thus given the structure of a Hilbert manifold, and we define $\sr W_{\ka_1}^{\ka_2}$ to be its subspace consisting of all closed curves.

Although $\dot\ga$ is defined only almost everywhere for a curve $\ga\in \sr W_{\ka_1}^{\ka_2}$, its unit tangent vector $\ta$ is defined over all of $[0,1]$, and if we parametrize $\ga$ by a multiple of arc-length instead, then $\dot\ga$ is defined and nonzero everywhere. More importantly, since $\ta$ is (absolutely) continuous, we may speak of the rotation number of $\ga$ and \eqref{E:textbook} still holds a.e..


\begin{lemmaa}\label{L:Whitney}
Let $A$ be compact and connected, $\ka_0\geq 0$ and $A\to \sr W_{\ka_0}^{+\infty}$, $a\mapsto \eta_a$, be a continuous map. Then there exists a homotopy $[0,1]\times A\to \sr W_{\ka_0}^{+\infty}$, $(s,a)\mapsto \eta_a^s$, such that $\eta_a^0=\eta_a$ and 
\begin{equation*}
\qquad	\eta_a^1(t)=\sig_N(t+t_a)\text{\quad for all $a\in A$, $t\in [0,1]$},
\end{equation*}
where $\sig_N(t)=R_0\exp(2\pi iN t)$ is a circle traversed $N>0$ times. In addition, if the image of $\eta_a$ is contained in some ball $B(0;R)$ for all $a\in A$, then we can arrange that $\eta_a^s$ have the same property for all $s\in [0,1]$ and $a\in A$.
\end{lemmaa}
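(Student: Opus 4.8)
The plan is to imitate the classical proof of the Whitney–Graustein theorem, but carried out parametrically (with continuous dependence on $a\in A$) and with two extra constraints to respect: the curvature must stay $>\ka_0\geq 0$ throughout, and, in the second assertion, the curves must stay inside a fixed ball $B(0;R)$. First I would observe that the rotation number $N=N(\eta_a)$ is locally constant in $a$, hence constant on the connected space $A$; it is positive because $\ka_0\geq 0$ forces each $\eta_a$ to be a convex curve (its signed curvature is everywhere positive), and a closed convex curve has rotation number $\pm 1$ with the sign fixed by our orientation conventions — so in fact $N=1$ here, though the argument below works verbatim for any fixed $N>0$. Using Lemma~\ref{L:reparametrize} (adapted to the plane, or simply the analogous reparametrization of the pair $(\hat v,\hat w)$ in $\sr W_{\ka_0}^{+\infty}$), I may assume every $\eta_a$ is parametrized by a multiple of arc-length, say on $[0,1]$ with $\abs{\dot\eta_a(t)}=L_a$ constant in $t$.

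The key step is a convexity/averaging deformation. Write the unit tangent as $\ta_a(t)=\exp\big(i\theta_a(t)\big)$ with $\theta_a\colon[0,1]\to\R$ continuous, $\theta_a(1)-\theta_a(0)=2\pi N$, chosen to depend continuously on $a$ (normalize, say, $\theta_a(0)\in[0,2\pi)$). Since $\ka_a=\theta_a'>0$ a.e., $\theta_a$ is strictly increasing, so after a further (continuous, curvature-nondecreasing) reparametrization I may assume $\theta_a$ is affine: $\theta_a(t)=\theta_a(0)+2\pi N t$. This replaces the curvature by a positive constant $2\pi N/L_a$ and leaves only the ``closing up'' to worry about: for a genuine closed curve one needs $\int_0^1 \ta_a=0$, which need no longer hold after the reparametrization. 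The standard fix is to linearly interpolate the tangent-direction function back to affine while simultaneously subtracting off the (now possibly nonzero) average velocity; concretely, deform through
\begin{equation*}
	\eta_a^s(t)=\eta_a^s(0)+\int_0^t \Big(v_a^s(\tau)\ta_a^s(\tau)-s\!\int_0^1 v_a^s(\sigma)\ta_a^s(\sigma)\,d\sigma\Big)d\tau ,
\end{equation*}
with $\ta_a^s$ the interpolation from $\ta_a$ to the affine model and $v_a^s$ chosen so the result is closed and parametrized by a multiple of arc-length. One checks the curvature of $\eta_a^s$ stays positive because at each $s$ the tangent direction is still strictly increasing in $t$ (a convex combination of two strictly increasing angle functions). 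At the end of this stage every $\eta_a$ is a round circle $R_a\exp\big(2\pi i N(t+t_a)\big)$ with $R_a>0$, $t_a$ continuous in $a$; a final radial homotopy $R_a\mapsto (1-s)R_a+sR_0$ (legitimate since scaling only divides the curvature by a positive factor, keeping it $>\ka_0$) normalizes the radius to $R_0$, giving $\eta_a^1(t)=\sig_N(t+t_a)$ as required.

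For the last assertion I would note that every deformation above can be kept inside a fixed ball: reparametrizations do not move the image at all; the interpolation step can be preceded by a uniform shrinking $\eta_a\mapsto \lambda\eta_a$ with $\lambda>0$ small (allowed, as scaling increases curvature, hence keeps it $>\ka_0$) so that the ``average-velocity'' correction term, whose size is controlled by $\sup_a \operatorname{diam}\eta_a\le 2R$, never pushes the curve out of $B(0;R)$; and the final radius $R_0$ may be taken $\le R$. The continuous dependence on $a$ throughout is automatic since every choice (the angle functions $\theta_a$, the lengths $L_a$, the centers and radii, the cut-off parameters) is built from $\eta_a$ by continuous operations. \textbf{The main obstacle} I anticipate is the bookkeeping in the interpolation step: one must simultaneously (i) keep the curve closed, (ii) keep it parametrized by a multiple of arc-length so it genuinely lives in $\sr W_{\ka_0}^{+\infty}$, and (iii) keep $\theta$ strictly increasing so the curvature bound survives — reconciling these three requires choosing the speed function $v_a^s$ and basepoint $\eta_a^s(0)$ carefully rather than by an off-the-shelf formula, and it is here that continuity in $a$ is least transparent and must be argued with some care.
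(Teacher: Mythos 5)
Your overall strategy --- normalize the parametrization, linearly interpolate the tangent-angle functions toward the affine model $\theta(t)=2\pi Nt$, and subtract the average velocity to restore closedness --- is exactly the route the paper takes, but two of your verifications have genuine gaps. First, your justification that the curvature stays positive (``the tangent direction is still strictly increasing in $t$, a convex combination of two strictly increasing angle functions'') addresses the wrong curve: the angle $\theta_a^s=(1-s)\theta_a+s\theta$ is the direction of the \emph{uncorrected} velocity $\bar\tau_a^s$, whereas the tangent direction of $\eta_a^s$ is the argument of $\tau_a^s=\bar\tau_a^s-\int_0^1\bar\tau_a^s$, which is not a convex combination of anything. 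What saves the argument is that $\int_0^1\exp(i\theta_a^s(v))\,dv$ has norm strictly less than $1$ (because $\theta_a^s$ increases by $2\pi N$, so $\exp(i\theta_a^s)$ sweeps out all of $\Ss^1$); feeding this into the curvature formula $\ka_a^s=\frac{L^2\dot\theta_a^s}{\abs{\tau_a^s}^3}\big[1-\det\big(\int_0^1 e^{i\theta_a^s(v)}dv,\,ie^{i\theta_a^s(t)}\big)\big]$ makes the bracket positive. You need this estimate explicitly; the monotonicity of $\theta_a^s$ alone does not give it. Relatedly, the claim that a reparametrization making $\theta_a$ affine ``replaces the curvature by a positive constant'' is incoherent --- curvature is a geometric invariant, unchanged by reparametrization; what you actually construct (and what works) is a \emph{new} curve via the displayed interpolation, not a reparametrization of the old one.

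Second, and more seriously: positivity of $\ka_a^s$ is not enough, since the lemma demands $\ka_a^s>\ka_0$ with $\ka_0$ possibly strictly positive, and nothing in your interpolation controls how close to zero the bracket above (or how large $\abs{\tau_a^s}$) can get. The paper secures this in two steps you omit: a preliminary homothety shrinking every $\eta_a$ to the common length $2\pi NR_1$ with $R_1<1/\ka_0$ \emph{before} interpolating (shrinking only raises curvature, so it is harmless), and, after obtaining $\ka_a^s>C\ka_0$ for some $C>0$ by compactness of $[0,1]\times A$, a final $s$-dependent rescaling $\eta_a^s\mapsto\la(s)\eta_a^s$ with $\la(s)$ small for $s>0$ to force $\ka_a^s>\ka_0$ uniformly; this same rescaling handles the ball condition. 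Your closing ``radial homotopy $R_a\mapsto(1-s)R_a+sR_0$'' needs $R_0$ small for the same reason, since scaling \emph{up} lowers curvature. Two lesser points: the assertion $N=1$ is false --- curves in $\sr W_{\ka_0}^{+\infty}$ are only locally convex, so $N$ is an arbitrary positive integer (you do hedge on this); and your ``main obstacle'' of keeping the deformed curves constant-speed is self-imposed, since $\sr W_{\ka_0}^{+\infty}$ admits arbitrary positive speed and the paper's $\eta_a^s$ are not constant-speed.
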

Thus, given a family of curves in $\sr W_{\ka_0}^{+\infty}$ indexed by a compact connected set, we may deform all of them to the same parametrized circle $\sig_N$, except for the starting point of the parametrization. 

\begin{proof} Since $A$ is connected, all the curves $\eta_a$ have the same rotation number $N$. Moreover, $N>0$ because of \eqref{E:textbook} and the fact that $\ka_0\geq 0$. 



For $\eta\in \sr W_{\ka_0}^{+\infty}$, let $z_\eta=\ta_\eta(0)$, where $\ta_\eta$ is the unit tangent vector to $\eta$. The homotopy $g\colon \sr [0,1]\times A\to \sr W_{\ka_0}^{+\infty}$ by translations,
\begin{equation*}
	g(s,a)(t)=\eta_a(t)-s\big(iz_{\eta_a}+\eta_a(0)\big)\qquad (s,\,t\in [0,1],~a\in A), 
\end{equation*} 	
preserves the curvature and, for any $a\in A$, $g(1,a)$ has the property that it starts at some $z\in \Ss^1$ in the direction $i z$. Thus, we may assume without loss of generality that the original curves $\eta_a$ have this property.

Let $\rho_0=\frac{1}{\ka_0}$, $L(\eta_a)$ denote the length of $\eta_a$, $L_0=\min_{a\in A}\se{L(\eta_a)}$ and let $R_1>0$ satisfy
\begin{equation}\label{E:easy}
	R_1<\min\Big\{\frac{L_0}{2\pi N},\rho_0\Big\}.\footnote{If $\ka_0=0$ then we adopt the convention that $\rho_0=+\infty$.}
\end{equation}
Define $f\colon [0,1]\times A\to \sr W_{\ka_0}^{+\infty}$ to be the homotopy given by
\begin{equation*}
\qquad	f(s,a)(t)=\eta_a(0)+\Big((1-s)+s\frac{2\pi NR_1}{L(\eta_a)}\Big)\big(\eta_a(t)-\eta_a(0)\big)\qquad (s,\,t\in [0,1],~a\in A).
\end{equation*}
Then $f(1,a)$ has length $L=2\pi N R_1$ for all $a\in A$. In addition, the curvature of $f(s,a)$ is bounded from below by $\ka_0$ for all $s\in [0,1]$, $a\in A$ and almost every $t\in [0,1]$, as an easy calculation using \eqref{E:easy} shows. 

The conclusion is that we lose no generality in assuming that the curves $\eta_a$ all have the same length $L=2\pi NR_0$. Further, by (\ref{L:reparametrize}), we can assume that they are all parametrized by a multiple of arc-length. This implies that $\dot\eta_a$ takes values on the circle $L \?\Ss^1$ of radius $L$. Using angle-functions $\theta_{a}$ with $\theta_{a}(0)=0$ and $\theta_{a}(1)=2\pi N$, we can write:
	\begin{equation*}
\qquad		\dot\eta_a(t)=Lz_{a}\exp\big(i\theta_{a}(t)\big)\qquad (t\in [0,1]),
	\end{equation*}
where $z_a=\ta_{\eta_a}(0)$. Let $\theta(t)=2\pi N t$, $t\in [0,1]$, and define 
\begin{equation*}
\quad	\theta_a^s(t)=(1-s)\theta_a(t)+s\theta(t),\qquad \bar\tau_a^s(t)= Lz_a\exp(i\theta_a^s(t)) \quad (s,\,t\in [0,1],~a\in A).
\end{equation*}
Then $\theta_a^s(0)=0$ and $\theta_a^s(1)=2\pi N$ for all $s\in [0,1]$, $a\in A$. The idea is that $\bar\tau_a^s$ should be the tangent vector to a curve; the problem is that this curve need not be closed. We can fix this by defining instead
\begin{equation*}
	\tau_a^s(t)=\bar\tau_a^s(t)-\int_0^1\bar\tau_a^s(v)\,dv,\qquad \eta_a^s(t)=-iz_a+\int_0^t\tau_a^s(v)\,dv.
\end{equation*}

The conditions $\int_0^1\tau_a^s(t)\,dt=0$ and $\tau_a^s(0)=\tau_a^s(1)$ then guarantee that $\eta_a^s$ is a closed curve. Because $\theta_a^s(1)=2\pi N$ and $N>0$, $\bar\tau_a^s$ must traverse all of $L\?\Ss^1$, so that $\int_0^1\bar\tau_a^s(v)\,dv$ lies in the interior of the disk bounded by this circle for any $s\in [0,1]$, $a\in A$. Consequently, $\tau_a^s(t)$ never vanishes. Moreover,
\begin{equation*}
\qquad	\eta_a^0=\eta_a\quad\text{ and }\quad\eta_a^1(t)=-iz_{\eta_a}\exp(2\pi N i t)\quad\text{for all $a\in A$}.
\end{equation*}

Finally,  $\eta_a^s$ has positive curvature for all $s\in [0,1]$ and $a\in A$. Although it is easier to see this using a geometrical argument, the following computation suffices: The curvature $\ka_a^s$ of $\eta_a^s$ is given by
\begin{alignat*}{10}
	\ka_a^s(t)&=\frac{\det\big(\tau_a^s(t),\dot{\tau}_a^s(t)\big)}{\abs{\tau_a^s(t)}^3}=\frac{L^2\dot\theta_a^s(t)}{\abs{\tau_a^s(t)}^3}\Big[1-\det\Big(\int_0^1\exp(i\?\theta_a^s(v))\,dv,\,i\exp(i\?\theta_a^s(t))\Big)\Big].
\end{alignat*}
Because $\theta_a^s=(1-s)\theta_a+s\theta$ is monotone increasing (recall that $\theta_a'=\ka_a>\ka_0\geq 0$ a.e.~by hypothesis), the map $t\mapsto \exp(i\theta_a^s(t))$ runs over all of $\Ss^1$ for any $s$ and $a$. As a consequence, the integral above has norm strictly less than 1, hence so does the determinant. In fact, since $A$ is compact, we can find a constant $C>0$, independent of $a$ and $s$, such that 
\begin{equation*}
	\ka_a^s>C\ka_0.
\end{equation*}
For $\la>0$ and an admissible plane curve $\ga$,  the curve $\la\ga$ has curvature given by $\tfrac{\ka}{\la}$, where $\ka$ is the curvature of  $\ga$. Again using compactness of $A$,  we may find a smooth function $\la\colon [0,1]\to (0,1]$ such that $\la(0)=1$ and $\la(s)$ is as small as necessary for $s\in (0,1]$ to guarantee that $\ka_a^s>\ka_0$ for all $s\in [0,1]$ and $a\in A$ if we replace $\eta_a^s$ by $\la(s)\eta_a^s$. In addition, we can choose $\la$ so that the image of $\la(s)\eta_a^s$ is contained in the ball $B_R(0)$ if this is the case for each $\eta_a$. This establishes the lemma with $R_0=\la(1)$.
\end{proof}

The next result states that the geodesic curvature of a curve $\ga\colon [0,1]\to \Ss^2$ and the curvature of the plane curve obtained by projecting $\ga$ orthogonally on $T_p
\Ss^2$ are roughly the same, as long as the curve is contained in a small neighborhood of $p$.

\begin{lemmaa}\label{L:projection}
	Let $\ka_0<\ka_1<\ka_2$ and $p\in \Ss^2$ be given. Identifying $T_p\Ss^2$ with $\R^2$, with $p$ corresponding to the origin, let $P\colon \Ss^2\to \R^2$ be the orthogonal projection. Then there exists $\eps>0$ such that:
	\begin{enumerate}
		\item [(a)] If $\ga\in \sr L_{\ka_2}^{+\infty}$ satisfies $d(\ga(t),p)<\eps$ for all $t\in [0,1]$, then $\eta=P \circ\ga\in \sr W_{\ka_1}^{+\infty}$.
		\item [(b)]  If $\eta\in \sr W_{\ka_1}^{+\infty}$ satisfies $\abs{\eta(t)}<\eps$ for all $t\in [0,1]$, then $\ga=P^{-1} \circ\eta\in \sr L_{\ka_0}^{+\infty}$.
	\end{enumerate}
\end{lemmaa}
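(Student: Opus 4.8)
After applying a rotation of $\Ss^2$ (an isometry, so geodesic curvatures are unchanged) I may assume $p=e_3$, so that $T_p\Ss^2$ is identified with $\R^2$ via $P(x^1,x^2,x^3)=(x^1,x^2)$ and $P^{-1}(u)=(u,\sqrt{1-|u|^2})$ on the unit disk. The hypothesis in (a) that $d(\ga(t),p)<\eps$ becomes $\ga^3(t)\geq 1-\eps'$ with $\eps'=\eps'(\eps)\to 0$; in (b) the hypothesis $|\eta(t)|<\eps$ forces $\ga^3(t)=\sqrt{1-|\eta(t)|^2}\geq\sqrt{1-\eps^2}$. If $\ka_2=+\infty$ then $\sr L_{\ka_2}^{+\infty}=\emptyset$ and (a) is vacuous, so I assume $\ka_1<\ka_2<+\infty$ in (a) and $-\infty\leq\ka_0<\ka_1$ in (b).

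\textbf{A pointwise curvature identity.} The plan is to relate the geodesic curvature $\ka=\ka_\ga$ of $\ga$ to the Euclidean curvature $\ka_\eta$ of $\eta=P\circ\ga$ without invoking second derivatives, so that the formula is valid a.e.\ for admissible curves. Writing $\Phi_\ga=(\ga\mid\ta\mid\no)$, $v=\abs{\dot\ga}$, so that $\dot\ga=v\ta$ and $\dot\ta=-v\ga+\ka v\no$, one has $\dot\eta=v\ta_\perp$ where $\ta_\perp=(\ta^1,\ta^2)$; in $\ddot\eta=\dot v\,\ta_\perp+v\,\dot\ta_\perp$ the $\dot v$ term drops out of the $2\times 2$ determinant, giving $\ka_\eta=\det(\dot\eta,\ddot\eta)/\abs{\dot\eta}^3=\det(\ta_\perp,\dot\ta_\perp)/(v\abs{\ta_\perp}^3)$, and the orthonormality relations $\no=\ga\times\ta$, $\ga=\ta\times\no$ (in third components) yield $\det(\ta_\perp,\dot\ta_\perp)=v\,\no^3+\ka v\,\ga^3$. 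Hence
\[
\ka_\eta=\frac{\no^3+\ka_\ga\,\ga^3}{\abs{\ta_\perp}^3},\qquad\text{equivalently}\qquad \ka_\ga=\frac{\ka_\eta\abs{\ta_\perp}^3-\no^3}{\ga^3}.
\]
I will also note here the routine point that $P\circ\ga$ (resp.\ $P^{-1}\circ\eta$) is admissible in the sense of \S1, not merely $H^1$ and regular: after reparametrizing by a multiple of arc length via (\ref{L:reparametrize}) one has $v$ constant, and the identity above shows the geodesic curvature of the new curve is an $L^2$ function, which is exactly what the admissibility parametrization \eqref{E:Sobolev} requires.

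\textbf{Estimates.} Since $\gen{\no,\ga}=\gen{\ta,\ga}=0$, one gets $\abs{\no^3}=\abs{\gen{\no,e_3-\ga}}\leq\abs{e_3-\ga}=\sqrt{2-2\ga^3}$ and likewise $\abs{\ta^3}\leq\sqrt{2-2\ga^3}$; thus when $\ga^3\geq 1-\eps'$ we have $\abs{\no^3}\leq\sqrt{2\eps'}$, $\abs{\ta_\perp}\in[\sqrt{1-2\eps'},1]$, and (for $\eps'$ small) $\abs{\ga^3-\abs{\ta_\perp}^3}\leq 3\eps'$. Feeding this into the two displayed formulas gives a bound of the form
\[
\abs{\ka_\eta-\ka_\ga}\leq\de(\eps)+\mu(\eps)\abs{\ka_\ga},\qquad \abs{\ka_\ga-\ka_\eta}\leq\de(\eps)+\mu(\eps)\abs{\ka_\eta},
\]
with $\de(\eps),\mu(\eps)\to 0$ as $\eps\to 0$. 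The point I want to stress is that the error is multiplicative in the curvature; this is what makes the large-curvature regime harmless.

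\textbf{Conclusion.} For (a): $\ka_\ga>\ka_2$ a.e., so $\ka_\eta\geq(1-\mu)\ka_\ga-\de$ when $\ka_\ga\geq 0$ and $\ka_\eta\geq(1+\mu)\ka_\ga-\de$ when $\ka_\ga<0$; since $\ka_\ga$ is bounded below by $\ka_2$, in every case the right-hand side exceeds $\ka_1$ once $\eps$ is small enough in terms of $\ka_1,\ka_2$ only (a three-line case check on the sign of $\ka_2$ and of $\ka_\ga$, using $\de,\mu\to 0$). Hence $\ka_\eta>\ka_1$ a.e.\ and $\eta\in\sr W_{\ka_1}^{+\infty}$. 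Part (b) is identical with $(\ka_0,\ka_1)$ in place of $(\ka_1,\ka_2)$ and the second estimate above, giving $\ka_\ga>\ka_0$ a.e., i.e.\ $\ga\in\sr L_{\ka_0}^{+\infty}$. Taking $\eps$ to be the smaller of the two thresholds proves the lemma. The only genuine obstacle is the uniform control of the comparison when the curvature is large, and it is dispatched precisely by the multiplicative form of the error term together with the one-sided bounds $\ka_\ga>\ka_2$ and $\ka_\eta>\ka_1$; the rest is bookkeeping (the reduction to $p=e_3$, the verification that the projected curves are admissible, and the elementary sign analysis).
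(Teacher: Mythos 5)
Your proof is correct and is essentially the paper's argument: your identity $\ka_\eta=(\no^3+\ka_\ga\,\ga^3)/\abs{\ta_\perp}^3$ is exactly the paper's relation $\ka_\eta=f(\ta)\ka_\ga-g(\ta)$ with the coefficients $f=\ga^3/\abs{\ta_\perp}^3$ and $g=-\no^3/\abs{\ta_\perp}^3$ computed explicitly, and the conclusion in both cases rests on $f\to 1$, $g\to 0$ near $p$ together with the one-sided curvature bounds (the paper phrases this as continuity of $f,g$ on the unit tangent bundle after first reducing to $C^2$ curves by density, whereas you run the estimates a.e.\ directly). The only caveat is that membership in $\sr W_{\ka_1}^{+\infty}$ (resp.\ $\sr L_{\ka_0}^{+\infty}$) via \eqref{E:Sobolev} needs $h_{\ka_1,+\infty}\circ\ka_\eta\in L^2$ rather than $\ka_\eta\in L^2$; this does follow here because your estimate actually shows $\ka_\eta$ is bounded away from $\ka_1$ for small $\eps$, but as written your admissibility remark conflates the two, a point the paper's own proof also leaves implicit.
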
 
In part (a), $d$ denotes the distance function on $\Ss^2$ and the transformation $P^{-1}$ in part (b) is to be understood as the inverse of $P$ when restricted to the hemisphere $\set{q\in \Ss^2}{\gen{q,p}>0}$. 
\begin{proof}
Since the subset of smooth curves is dense in the space of all admissible (plane or spherical) curves, it suffices to prove the lemma for $C^2$ curves. Let $\ga\in \sr L_{\ka_2}^{+\infty}$ be a $C^2$ curve such that $d(\ga(t),p)<\eps$ for all $t\in [0,1]$. If $0<\eps<\frac{\pi}{2}$ then $\eta$ will also be a $C^2$ regular curve. Let $UT\Ss^2$ denote the unit tangent bundle of $\Ss^2$ and $U\subs UT\Ss^2$ the open set consisting of all vectors in the fibers of those $q\in \Ss^2$ with $d(p,q)<\tfrac{\pi}{2}$ ($d$ being the distance on $\Ss^2$). Define $f,g\colon U\to \R$ by
\begin{equation*}
 f(u)=\frac{\det(P(u),P(q\times u))}{\abs{P(u)}^3}\text{\quad and\quad}g(u)=\frac{\det(P(u),P(q))}{\abs{P(u)}^3}\text{\quad for $u\in T_q\Ss^2$},
\end{equation*}
where $\times$ denotes the vector product in $\R^3$. 

Note that we may identify $P$ with its derivative $dP_q\colon T_q\Ss^2\to \R^2$ at any $q\in \Ss^2$, because $P$ is the restriction of a linear transformation $\R^3\to \R^2$. With this observation in mind, a straightforward calculation yields the following expression for the curvature $\ka_\eta$ of $\eta=P\circ \ga$:
	\begin{equation*}
\qquad		\ka_\eta(t)=f(\ta(t))\?\ka_\ga(t)-g(\ta(t))\qquad (t\in [0,1]).
	\end{equation*}
Here $\ta$ is the unit tangent vector to $\ga$.  

Since $f,\,g$ are continuous and  $f(u)=1$, $g(u)=0$ for all unit vectors $u\in T_p\Ss^2$, it follows that there exists $\eps$ such that if $d(p,q)<\eps$, then  
\begin{equation*}
\phantom{\text{\ \ for  }.}\ka_0<f(v)\?\ka_1-g(v) \text{\quad for any $v\in T_q\Ss^2$, $\abs{v}=1$}.
\end{equation*}
Hence, if $d(\ga(t),p)<\eps$ for all $t\in [0,1]$ then $\ka_\eta$ satisfies the conclusion of (a). 

A similar reasoning shows that, by reducing $\eps$ if necessary, we can also arrange for (b) to hold.
\end{proof}

\begin{urem}
	An analogous result to (\ref{L:projection}), with a similar proof, holds for upper bounds on the curvature, or even lower and upper bounds simultaneously. However, since we need neither of these versions, we will not formulate them carefully.
\end{urem}

\begin{lemmaa}\label{L:Moebius}
	Let $h\in \Ss^2$, $H=\set{q\in \Ss^2}{\gen{q,h}\geq 0}$, let $\pr\colon \Ss^2\to \R^2$ denote stereographic projection from ${-}h$. Let $\ka_0>0$ and $\ga\in \sr L_{\ka_0}^{+\infty}$ be such that $\Im(C_\ga)\subs H$. Define $T_r\colon \Ss^2\to \Ss^2$ to be the M\"obius transformation (dilatation) given by
	\begin{equation*}
	\quad	T_r(p)=\pr^{-1}\big(r\pr (p)\big) \quad (r\in (0,1],~p\in \Ss^2).
	\end{equation*}
	Then, given $\ka_1>\ka_0$, there exists $r_0>0$, depending only on $\ka_0$ and $\ka_1$, such that the geodesic curvature $\ka^r$ of $T_r(\ga)$ satisfies $\ka^r>\ka_1$ a.e.~for any $r\in (0,r_0)$.
\end{lemmaa}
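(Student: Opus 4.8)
The plan is to reduce the question to osculating circles and then compute directly, using that $T_r$ is the dilatation $z\mapsto rz$ in the stereographic picture.

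A local computation of the same pointwise character as the one in the proof of (\ref{L:projection}) shows that the geodesic curvature of $T_r(\ga)$ at $T_r(\ga(t))$ is a smooth function of $T_r$ and of the data $\big(\ga(t),\ta(t),\ka(t)\big)$ at $t$ alone; so it is enough to evaluate it on $C^2$ curves, where it is the geodesic curvature of the osculating circle of $T_r(\ga)$ at $T_r(\ga(t))$. Since M\"obius transformations carry circles of $\Ss^2$ to circles and diffeomorphisms preserve order of contact (cf.\ (\ref{L:osculating})), this osculating circle is $T_r(\sig_t)$, where $\sig_t$ is the osculating circle of $\ga$ at $\ga(t)$: its centre on $\Ss^2$ is $\chi_\ga(t)=C_\ga(t,\rho(t))$ and its radius of curvature is $\rho(t)=\arccot\ka(t)\in(0,\rho_0)$. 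As $\rho(t)<\rho_0$, the hypothesis $\Im(C_\ga)\subs H$ forces $\chi_\ga(t)\in H$, i.e.\ $\al(t):=d(h,\chi_\ga(t))\le\frac\pi2$. So the whole problem becomes: bound the radius of curvature of the circle $T_r(\sig_t)$, uniformly in $t$.

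Normalize $h=e_3$; rotating about the $e_3$-axis (which preserves $h$, $\pr$, and $T_r$) we may assume $\chi_\ga(t)$ lies in the $e_1e_3$-plane. A direct calculation with the projection $\pr$ from $-e_3$ then gives that $\pr(\sig_t)$ is the circle of $\R^2$ of radius $\frac{\sin\rho(t)}{D(t)}$ centred at $\big(\frac{\sin\al(t)}{D(t)},0\big)$, where $D(t)=\cos\rho(t)+\cos\al(t)>0$ (using $\rho(t)<\frac\pi2$ and $\al(t)\le\frac\pi2$). Put $b=\frac{r\sin\al(t)}{D(t)}$ and $R=\frac{r\sin\rho(t)}{D(t)}$, so that $T_r(\sig_t)=\pr^{-1}\{(u,w):(u-b)^2+w^2=R^2\}$. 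This circle meets the $e_1e_3$-great circle at the two diametrically opposite points $\pr^{-1}(b\pm R,0)$, which lie at polar angle $2\arctan(b\pm R)$ from $e_3$; hence their spherical distance, equal to twice the radius of curvature of $T_r(\sig_t)$, is $2\arctan(b+R)-2\arctan(b-R)$, and so
\begin{equation*}
	\rho^*(r,t)\;=\;\arctan(b+R)-\arctan(b-R)\;=\;\int_{b-R}^{b+R}\frac{dx}{1+x^2}\;\le\;2R\;=\;\frac{2r\sin\rho(t)}{\cos\rho(t)+\cos\al(t)}\;<\;2r\tan\rho_0,
\end{equation*}
the final inequality using $\cos\al(t)\ge0$ and $0<\rho(t)<\rho_0<\frac\pi2$. (For $r=1$ one recovers $\rho^*(1,t)=\rho(t)$, as must happen.)

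It remains to check that $\ka^r(t)=+\cot\rho^*(r,t)$, that is, that the curvature of $T_r(\sig_t)$ comes out with the \emph{right sign}; this is the only delicate point, and it is exactly where condensedness is used. From the displayed formula, $\rho^*(r,t)=\frac\pi2$ would force $r^2\cdot\frac{\cos\rho(t)-\cos\al(t)}{D(t)}=-1$, which is impossible for $r\le1$ since $\frac{\cos\rho(t)-\cos\al(t)}{D(t)}>-1$ (equivalently $\cos\rho(t)>0$); thus $\rho^*(r,t)<\frac\pi2$ for all $r\in(0,1]$, and --- using continuity in $r$ together with $\ka^1=\ka>0$ --- the cap cut off by $T_r(\sig_t)$ on the left of $T_r(\ga)$ is always the small one, of radius $\rho^*(r,t)$, so $\ka^r(t)=\cot\rho^*(r,t)>0$. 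Finally set $r_0=\dfrac{\rho_1}{2\tan\rho_0}$, which depends only on $\rho_0=\arccot\ka_0$ and $\rho_1=\arccot\ka_1<\rho_0$. For $r\in(0,r_0)$ we get $\rho^*(r,t)<2r\tan\rho_0<\rho_1<\frac\pi2$ for every $t$, whence $\ka^r(t)=\cot\rho^*(r,t)>\cot\rho_1=\ka_1$ for a.e.\ $t$ ($\cot$ being decreasing on $(0,\pi)$), which is what was claimed.
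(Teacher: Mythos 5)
Your proof is correct, and its skeleton is the same as the paper's: reduce to the osculating circle at a.e.\ $t$ (using that $T_r$ preserves order of contact and maps circles of $\Ss^2$ to circles), note that condensedness places the centre $\chi_\ga(t)$ at distance $\leq\frac{\pi}{2}$ from $h$, and then bound the size of the image circle $T_r(\sig_t)$ uniformly in $t$. Where you diverge is in the final estimate. The paper argues softly: since $\rho(t)<\rho_0<\frac{\pi}{2}$ and $d(h,\chi_\ga(t))\leq\frac{\pi}{2}$, the osculating circle lies in the cap $B_d(h;\frac{\pi}{2}+\rho_0)$, and one chooses $r_0$ so that $T_r$ carries this cap into $B_d(h;\rho_1)$, forcing the image circle to have diameter $\leq 2\rho_1$. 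You instead compute the image circle explicitly in stereographic coordinates, obtaining the closed form $\rho^*=\arctan(b+R)-\arctan(b-R)$ and the clean bound $\rho^*\leq 2R<2r\tan\rho_0$, which yields an explicit $r_0=\rho_1/(2\tan\rho_0)$. Your version also makes explicit a point the paper's proof elides: a spherically small circle could a priori have radius of curvature close to $\pi$ rather than close to $0$ if the orientation came out wrong, and your continuity-in-$r$ argument (using $\rho^*\neq\frac{\pi}{2}$ for $r\leq 1$ and $\ka^1=\ka>0$) pins down the sign. So the two proofs buy slightly different things: the paper's is shorter and coordinate-free, yours is more quantitative and more careful about orientation.
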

\begin{proof}
	Suppose that $\ga\in \sr L_{\ka_0}^{+\infty}$ is parametrized by its arc-length and let $\sig$ be a parametrization, also by arc-length, of an arc of the osculating circle to $\ga$ at $\ga(s_0)$, i.e., let $\sig$ satisfy:
	\begin{equation*}
		   \sig(s_0)=\ga(s_0),\qquad \sig'(s_0)=\ga'(s_0),\qquad \sig''(s_0)=\ga''(s_0).
	\end{equation*}
(It makes sense to speak of $\ga''$ (as an $L^2$ map) because $\ga'=\ta$ is $H^1$ by hypothesis.) Then $T_r\circ \sig$ has contact of order 3 with $T_r\circ \ga$ at $s_0$, hence their geodesic curvatures at the corresponding point agree. Therefore, it suffices to prove the result for a circle $\Sig$ whose center $\chi$ lies in $H$. Let $\rho_i=\arccot \ka_i$, $i=0,1$, and $\rho$ be the radius of curvature of $\Sig$,  $\rho<\rho_0<\frac{\pi}{2}$. If $d$ denotes the distance function on $\Ss^2$, then $\Sig\subs B_d\big(h;\frac{\pi}{2}+\rho_0\big)$ (where the latter denotes the set of $q\in \Ss^2$ such that $d(h,q)<\frac{\pi}{2}+\rho_0$). Choose $r_0$ such that 
	\begin{equation*}
		T_r\big(B_d\big(h;\tfrac{\pi}{2}+\rho_0\big)\big)\subs B_d\big(h;\rho_1\big) \text{\ for all $r\in (0,r_0)$.}
	\end{equation*}
Then $T_r(\Sig)$ is a circle, for a M\"obius transformation such as $T_r$ maps circles to circles, and its diameter is at most $2\rho_1$. Thus, its geodesic curvature must be greater than $\ka_1$. Moreover, it is clear that the choice of $r_0$ does not depend on $h$ or on $\Sig$.
\end{proof}


\begin{proof}[Proof of (\ref{P:positivecondensed})]
	Let $\ga_a$ denote $f(a)$ and let $h_a$ be the barycenter of the set of closed hemispheres which contain $\Im(C_{\ga_a})$; by (\ref{L:causticbarycenter}), the map $h\colon A\to \Ss^2$ so defined is continuous. 
	
	Let $\pr_a$ denote stereographic projection $\Ss^2\to \R^2$ from $-h_a$, so that $h_a$ is projected to the origin, and define a family $T^s_a\colon \Ss^2\to \Ss^2$ of M\"obius transformations by:
	\[
\qquad \quad	T^s_a(q)=\pr_a^{-1}(s\pr_a(q))\qquad (q\in \Ss^2,~s\in (0,1],~a\in A).
	\]
Set $\ga_a^s=T^s_a\ga_a$. From (\ref{L:Moebius}) it follows that we can choose $\de>0$ so small that the geodesic curvature of $\ga_a^\de$ is greater than $\ka_0+2$ a.e.~for any $a\in A$. 

Now choose $\eps>0$ as in (\ref{L:projection}), with $\ka_1=\ka_0+1$, $\ka_2=\ka_0+2$. By reducing $\de$ if necessary, we can guarantee that the curves $\ga_a^\de$ have image contained in $B_d(h_a;\eps)$, for each $a$. Let $\eta_a$ be the orthogonal projection of $\ga_a^\de$ onto $T_{h_a}\Ss^2$.  We are then in the setting of (\ref{L:Whitney}). The conclusion is that we can deform all $\eta_a$ to a single circle $\sig_\nu$, modulo the starting point of the parametrization, in such a way that the curves have image contained in $B(0;\eps)$ and curvature greater than $\ka_0+1$ throughout the deformation. By (\ref{L:projection}) again, when we project this homotopy back to $\Ss^2$, the geodesic curvature of the curves is always greater than $\ka_0$.

To sum up, we have described a homotopy $H\colon [0,1]\times A\to \Ss^2$ such that $H(0,a)=\ga_a$ and $H(1,a)$ is a circle traversed $\nu$ times for all $a\in A$; further, the geodesic curvature $\ka_a^s$ of $H(s,a)$ satisfies $\ka_a^s(t)>\ka_0$ for all $s,t\in [0,1]$. These curves $H(a,s)$ do not satisfy $\Phi(0)=I=\Phi(1)$, but we can correct this by setting 
\begin{equation*}
	\bar H(s,a)=\Phi_{H(a,s)}(0)^{-1}H(a,s)\?
\end{equation*}
and using $\bar H$ instead; this has no effect on the geodesic curvature and finishes the proof that $f$ is null-homotopic, since $\bar H(1,a)$ is the same parametrized circle for all $a$.
\end{proof}

We now provide a proof of (\ref{C:positivecondensed}). This result will be used to show that a notion of rotation number for non-diffuse curves, which will be introduced in the next section, coincides with the one presented at the beginning of this section. 

\begin{proof}[Proof of (\ref{C:positivecondensed})]
	Let $h_\ga$ be the barycenter on $\Ss^2$ of the set of closed hemispheres which contain $\Im(C_\ga)$ and, as in the proof of (\ref{P:positivecondensed}), define $\ga_s=T^s\circ \ga$, where
	\begin{equation}\label{E:Moeb}
		T^s(q)=\pr^{-1}(s\pr(q))\quad (q\in \Ss^2,~s\in (0,1])
	\end{equation}
	and $\pr$ denotes stereographic projection from $-h_\ga$. Let $H=\set{p\in \Ss^2}{\gen{p,h_\ga}>0}$. We claim that $\Im(C_{\ga_s})\subs H$ for all $s\in (0,1)$. This follows from the following two assertions:
	\begin{enumerate}
		\item [(i)] If $\Im(C_{\ga_{s}})\subs \bar H$, then there exists $\eps>0$ such that $\Im(C_{\ga_\sig})\subs H$ for all $\sig\in (s-\eps,s)$;
		\item [(ii)] If $\Im(C_{\ga_{s}})\nsubs H$, then there exists $\eps>0$ such that $\Im(C_{\ga_\sig})\nsubs \bar H$ for all $\sig\in (s,s+\eps)$. 
	\end{enumerate}
	For any $s$, the boundary of $\Im(C_{\ga_{s}})$ is contained in the union of the images of $\ga_{s}=C_{\ga_{s}}(\cdot,0)$ and $\ce\ga_{s}=C_{\ga_{s}}(\cdot,\rho_0)$. Moreover, $\ga$  has positive geodesic curvature by hypothesis, and a straightforward calculation shows that $\ce\ga$ also does (the detailed calculations may be found in (\ref{L:cega})). 
	
	If $\Im(C_{\ga_{s}})\subs H$ then (i) is obviously true, since $H$ is an open hemisphere; similarly, (ii) clearly holds if $\Im(C_{\ga_s})\nsubs \bar H$. Suppose then that $\Im(C_{\ga_{s}})\subs \bar H$, but $\Im(C_{\ga_{s}})\nsubs H$ for some $s>0$. This means that there exists $t_0\in [0,1]$ such that either $\ga_s$ or $\ce{\ga}_s$ is tangent to $\bd H$ at $\ga_{s}(t_0)$ or $\ce\ga_s(t_0)$, respectively. In the first case, $\no_{\ga_s}(t_0)=h_\ga$, and in the second $\no_{\ga_s}(t_0)=-h_\ga$. In either case, $C_{\ga_{s}}\big(\{t_0\}\times[0,\rho_0]\big)$ is an arc of the geodesic through $\ga_{s}(t_0)$ and $h_\ga$. Such geodesics through $h_\ga$ are mapped to lines through the origin by $\pr$, hence \eqref{E:Moeb} implies that there exists $\eps>0$ such that $C_\ga(t,\sig)\subs H$ for any $t\in (t_0-\eps,t_0+\eps)$ and $\sig\in (s-\eps,s)$ and $C_\ga(t_0,\sig)\nsubs \bar H$ for any $\sig\in (s,s+\eps)$. Furthermore, since the geodesic curvatures of $\ga$, $\ce\ga$ are positive and $\bd H$ is a geodesic, the set of $t_0\in [0,1]$ where $\ga$, $\ce\ga$ are tangent to $\bd H$ must be finite. This implies (i) and (ii).
	
	Now let $S=\set{s\in (0,1)}{\Im(C_{\ga_s})\nsubs H}$. Assume that $S\neq \emptyset$ and let $s_0=\sup S$. Applying (i) to $\ga_{1}=\ga$ we conclude that there exists $\eps>0$ with $S\cap (1-\eps,1)=\emptyset$. Hence, $s_0<1$ and $\Im(C_{\ga_{s_0}})\nsubs H$ by construction. An application of (ii) yields a contradiction. Thus, $S=\emptyset$.
	
	Let $\rho_0=\arccot\ka_0$  and $r=\frac{\pi}{2}-\rho_0$. Choosing $\de>0$ so that $\Im(\ga_\de)\subs B_d(h_\ga;r)$, and proceeding as in the proof of (\ref{P:positivecondensed}), we can extend $s\mapsto \ga_s$ $(s\in [\de,1])$ to all of $[0,1]$ so that $\ga_0$ is a parametrized circle and $\Im(\ga_s)\subs B_d(h_\ga;r)$ for all $s\in [0,\de]$ (where $d$ denotes the distance function on $\Ss^2$). The inequality $d(\eta(t),C_\eta(t,\theta))=\theta<\rho_0$, which holds for any $\eta\in \sr L_{\ka_0}^{+\infty}$, implies that 
	\begin{equation*}
		d(h_\ga,C_{\ga_s}(t,\theta))<\frac{\pi}{2}\quad\text{for any $t\in [0,1],~\theta\in [0,\rho_0]$ and $s\in [0,\de]$}.
	\end{equation*}
	Hence $\Im(C_{\ga_s})\subs H$ for all $s\in [0,\de]$. The same inclusion for $s\in [\de,1)$ was established above, so the proof is complete.
\end{proof}

\begin{cora}\label{L:super}
	Let $\ka_0>0$ and $1\leq \nu\in \N$.
	\begin{enumerate}
		\item [(a)] The subset $\sr O$ (resp.~$\sr O_\nu$) of $\sr L_{\ka_0}^{+\infty}(I)$ consisting of all condensed curves (resp.~all condensed curves having rotation number $\nu$) is the closure of an open set.
		\item [(b)] If $\ga\in \sr O_\nu$ and $\sr U\subs \sr L_{\ka_0}^{+\infty}(I)$ is any open set containing $\ga$, then $\ga$ is homotopic to a smooth curve within $\sr O_\nu\cap \sr U$.
	\end{enumerate} 
\end{cora}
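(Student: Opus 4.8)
The plan is to realise $\sr O$ (resp.\ $\sr O_\nu$) as the closure of the open set
\[
\sr S=\set{\ga\in\sr L_{\ka_0}^{+\infty}(I)}{\Im(C_\ga)\text{ is contained in some open hemisphere}}
\]
(resp.\ of $\sr S_\nu=\sr S\cap\{\text{rotation number }\nu\}$); both are nonempty, since for $\ka_0>0$ every circle in $\sr L_{\ka_0}^{+\infty}(I)$ has its caustic band inside an open hemisphere. Part (b) will then be obtained by concatenating a path from $\ga$ into $\sr S_\nu$ with a short path to a nearby smooth curve.

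First I would verify that $\sr S$ is open. The map $\ga\mapsto C_\ga$ is continuous from $\sr L_{\ka_0}^{+\infty}(I)$ into $C^0\big([0,1]\times[0,\rho_0],\Ss^2\big)$, because $\ga=\Phi_\ga e_1$ and $\no=\Phi_\ga e_3$ depend continuously (in $C^0$) on the logarithmic derivative through the initial value problem \eqref{E:ivp}. Hence if $\Im(C_\ga)$ lies in an open hemisphere, compactness gives a slightly smaller closed hemisphere still containing it, and every curve near $\ga$ inherits the property; so $\sr S$ is open, and obviously $\sr S\subs\sr O$. On the set of curves contained in a closed hemisphere the rotation number $\nu(\cdot)$, built from the barycenter $h_{(\cdot)}$ of (\ref{L:closedbarycenter}) and stereographic projection from $-h_{(\cdot)}$ (a point never on the curve), is continuous — it is a degree, and $h_{(\cdot)}$ depends continuously on the curve — hence locally constant there, in particular on $\sr S$ and on $\sr O$. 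Thus $\sr S_\nu$ is open and $\sr O_\nu$ is relatively clopen in $\sr O$. For $\ol{\sr S}\subs\sr O$: if $\ga_n\in\sr S$, $\ga_n\to\ga$, choose closed hemispheres with poles $h_n$ containing $\Im(C_{\ga_n})$, pass to a subsequence with $h_n\to h$, and use $C_{\ga_n}\to C_\ga$ uniformly to conclude $\gen{C_\ga(t,\theta),h}\ge0$ for all $(t,\theta)$, i.e.\ $\ga$ is condensed; with the continuity of $\nu$ on $\sr O$ this also gives $\ol{\sr S_\nu}\subs\sr O_\nu$.

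For the opposite inclusions I would use (\ref{C:positivecondensed}). Applying it to a condensed $\ga\in\sr L_{\ka_0}^{+\infty}(I)$ and then conjugating the resulting homotopy pointwise by the continuous family of rotations $\Phi_{\ga_s}(0)^{-1}$ — which preserves geodesic curvatures, keeps every frame equal to $I$ (the deformed curves remain closed), and sends a caustic band into an open, resp.\ closed, hemisphere exactly when the original one does — yields a path $s\mapsto\ga_s$ in $\sr L_{\ka_0}^{+\infty}(I)$ with $\ga_1=\ga$, $\ga_0$ a parametrized circle, and $\ga_s\in\sr S$ for every $s<1$. Then $\ga_s\to\ga$, so $\ga\in\ol{\sr S}$; and if $\ga\in\sr O_\nu$ the whole path lies in $\sr O$, so $\nu(\ga_s)\equiv\nu$ and $\ga_s\in\sr S_\nu$ for $s<1$, whence $\ga\in\ol{\sr S_\nu}$. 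This proves (a). For (b), given $\ga\in\sr O_\nu$ and an open $\sr U\ni\ga$, pick $s_1<1$ with $\ga_s\in\sr U$ for all $s\in[s_1,1]$; the restriction of $s\mapsto\ga_s$ to $[s_1,1]$ is a homotopy inside $\sr O_\nu\cap\sr U$ from $\ga_{s_1}$, which lies in the \emph{open} set $\sr V:=\sr S_\nu\cap\sr U$, to $\ga$. By (\ref{L:dense}) smooth curves are dense and by (\ref{L:Hilbert})(a) the open subset $\sr V$ of a Hilbert manifold is locally path-connected, so there is a smooth $\ga'$ in the path component of $\ga_{s_1}$ in $\sr V$ together with a path from $\ga_{s_1}$ to $\ga'$ inside $\sr V\subs\sr O_\nu\cap\sr U$; concatenating gives a homotopy from $\ga$ to the smooth curve $\ga'$ within $\sr O_\nu\cap\sr U$.

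The step I expect to be the main obstacle is the careful handling of the rotation number: one must show it is continuous (not merely well defined) on the whole space of curves contained in a closed hemisphere, and constant along the auxiliary path $s\mapsto\ga_s$ — which reduces to continuity of the barycenter map (\ref{L:closedbarycenter}) and to checking that the rotation is harmless, i.e.\ that $-h_\ga\notin\Im(\ga)$ and that conjugation by a continuous family of rotations preserves both the ``open-hemisphere'' condition on the caustic band and the condition $\Phi(0)=\Phi(1)=I$.
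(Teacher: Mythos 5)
Your proposal is correct and follows essentially the same route as the paper: the same open set $\sr S$ of curves whose caustic band lies in an open hemisphere, the inclusion $\sr O\subs\ol{\sr S}$ via the M\"obius dilatations of (\ref{C:positivecondensed}), and part (b) by truncating that path and using local path-connectedness of $\sr S_\nu\cap\sr U$ together with density of smooth curves. Your extra care with the normalization $\Phi_{\ga_s}(0)^{-1}$ and with the continuity of $\nu$ on curves contained in a closed hemisphere is welcome but does not change the argument.
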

\begin{proof}
	Let $\sr S\subs \sr O$ be the subset consisting of all curves $\ga\in \sr L_{\ka_0}^{+\infty}(I)$ such that $\Im(C_\ga)$ is contained in an o\sz p\sz e\sz n hemisphere. Then $\sr S$ is open, because if the compact set $C=\Im(C_\ga)$ is such that $\gen{c,h}>0$ for some $h\in \Ss^2$ and all $c\in C$, then the same inequality holds for all $c\in \Im(C_\eta)$ whenever $\eta\in \sr L_{\ka_0}^{+\infty}(I)$ is sufficiently close to $\ga$. Similarly, $\sr O$ is closed. For if $\ga\nin \sr O$, then, by (\ref{L:closedhemisphere}) and (\ref{L:Steinitz}), we can find a 3-dimensional simplex  with vertices in $\Im(C_\ga)$ containing $0\in \R^3$ in its interior. If $\eta\in \sr L_{\ka_0}^{+\infty}(I)$ is sufficiently close to $\ga$ then we can also find a simplex $\De_\eta$ with vertices in $\Im(C_\eta)$ such that $0\in \Int \De_\eta$.  It follows that $\bar{\sr S}\subs \sr O$.
	
	Let $\ga\in \sr O$. Define a family $T^s\colon \Ss^2\to \Ss^2$ of M\"obius transformations by \eqref{E:Moeb}, where $\pr\colon \Ss^2\to \R^2$ denotes stereographic projection from $-h_\ga$, and $h_\ga$ is the barycenter of the set of closed hemispheres which contain $C=\Im(C_\ga)$ (cf.~(\ref{L:causticbarycenter})). Then $\ga_s=T^s\circ \ga\in \sr S$ for all $s\in (0,1)$ by (\ref{C:positivecondensed}), establishing the reverse inclusion $\bar {\sr S}\sups \sr O$. The proof of the assertion about $\sr O_\nu$ is analogous and will be omitted.


To prove (b), let $\eps>0$ be such that $\ga_s=T^s\circ \ga \in \sr U$ for all $s\in [1-\eps,1]$. Choose a path-connected neighborhood $\sr V\subs \sr S\cap \sr U$ of $\ga_{1-\eps}$, and, for $s\in [0,1-\eps]$, let $\ga_s$ be a path in $\sr V$ joining a smooth curve $\ga_0$ to $\ga_{1-\eps}$. As each $\ga_s$ is condensed ($s\in [0,1]$), $\nu(\ga_s)$ is defined for all $s$; since it can only take on integral values, it must be independent of $s$. Thus, $s\mapsto \ga_s$ $(s\in [0,1]$) is the desired path.
\end{proof}


\subsection*{Condensed curves in $\sr L_{\ka_0}^{+\infty}$ for $\ka_0<0$}

The purpose of this subsection is to prove the following result.
\begin{propa}\label{P:negativecondensed}
	Let $\ka_0<0$ and $\ga\in \sr L_{\ka_0}^{+\infty}(I)$ be a condensed curve. Then $\ga$ lies in the same connected component of $\sr L_{\ka_0}^{+\infty}(I)$ as a circle traversed a  number of times.
\end{propa}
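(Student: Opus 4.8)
The plan is to deduce this from the positive--curvature case, Proposition (\ref{P:positivecondensed}): I will deform $\ga$, through curves in $\sr L_{\ka_0}^{+\infty}(I)$, to a curve whose geodesic curvature is everywhere positive, and then apply (\ref{P:positivecondensed}) to the one--point family. The deformation is produced by a one--parameter family of M\"obius transformations (dilations) that shrink $\ga$ toward a well--chosen point; as the curve shrinks its osculating circles shrink as well, so its geodesic curvature tends uniformly to $+\infty$ and in particular becomes positive once the scaling is small enough. The place where the hypothesis $\ka_0<0$ enters decisively is that then $\rho_0=\arccot\ka_0>\tfrac{\pi}{2}$, which is precisely the extra room needed to keep the deformation inside $\sr L_{\ka_0}^{+\infty}(I)$; for $\ka_0\ge 0$ (e.g.\ Little's case) the same construction fails and one needs the quite different methods of (\ref{P:positivecondensed}).

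Some bookkeeping first. Post-composing any $\eta\in \sr L_{\ka_0}^{+\infty}$ with the rotation $\Phi_\eta(0)^{-1}$ preserves geodesic curvatures and carries closed admissible curves into $\sr L_{\ka_0}^{+\infty}(I)$, and by (\ref{L:homocircles}) it is irrelevant which circle we end up near; so it suffices to connect $\ga$ to \emph{some} circle traversed a number of times. Let $h_\ga$ be the barycenter on $\Ss^2$ of the set of closed hemispheres containing $\Im(C_\ga)$, which is continuous in $\ga$ by (\ref{L:causticbarycenter}); let $\pr$ be stereographic projection from $-h_\ga$ and set, as in the proof of (\ref{P:positivecondensed}), $T_s(q)=\pr^{-1}\big(s\,\pr(q)\big)$ and $\ga_s=\Phi_{T_s\ga}(0)^{-1}T_s(\ga)$ for $s\in(0,1]$, so that $\ga_1$ equals $\ga$ up to a harmless rotation and each $\ga_s$ is a closed curve with frame $I$ at both endpoints.

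The heart of the matter is the claim that $\ga_s\in \sr L_{\ka_0}^{+\infty}(I)$ for every $s\in(0,1]$. Granting it, $s\mapsto \ga_s$ is a path in $\sr L_{\ka_0}^{+\infty}(I)$ from $\ga$ to a small curve $\ga_{s_0}$, and for $s_0$ small $\ga_{s_0}$ lies in a tiny neighborhood of $h_\ga$, hence is condensed and has geodesic curvature $>1$ everywhere, so (\ref{P:positivecondensed}) applied in $\sr L_{1/2}^{+\infty}(I)\subseteq \sr L_{\ka_0}^{+\infty}(I)$ finishes the proof. To prove the claim one argues through osculating circles, exactly as in (\ref{L:Moebius}): a dilation is a M\"obius transformation, hence carries the osculating circle $\Sig_t$ of $\ga$ at $\ga(t)$ to the osculating circle of $T_s(\ga)$ at the corresponding point, so the geodesic curvature of $\ga_s$ there equals that of $T_s(\Sig_t)$. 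Using that $\ga$ is condensed — so the center of curvature $\chi_\ga(t)$ lies in the closed hemisphere about $h_\ga$ — together with the fact that $\pr$ turns $\Sig_t$ into a planar circle and $T_s$ into a planar dilation, one tracks the oriented geodesic radius $\tilde r_t(s)$ of $T_s(\Sig_t)$: a direct computation shows it never exceeds $\max\{\tfrac{\pi}{2},\rho(t)\}<\rho_0$, whence the geodesic curvature of $\ga_s$ stays above $\ka_0=\cot\rho_0$.

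The main obstacle is precisely this last computation. It is delicate: if $-h_\ga$ happens to lie ``inside'' some osculating circle of $\ga$ (which can occur at points where $\ga$ is very negatively curved and $\chi_\ga(t)$ is near the boundary of the distinguished hemisphere), then dilating toward $h_\ga$ drives that geodesic curvature to $-\infty$ and the construction leaves $\sr L_{\ka_0}^{+\infty}(I)$. One therefore has to supplement the barycenter $h_\ga$ with the information, furnished by (\ref{L:opencaustic}), that $\Im(\chi_\ga)$ lies in an \emph{open} hemisphere, and — if necessary — precede the dilation by a preliminary deformation (itself kept inside $\sr L_{\ka_0}^{+\infty}(I)$) that brings the caustic into a small cap or removes the offending negatively curved portions of $\ga$; arranging this cleanly is where essentially all the work of the proof lies. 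Once past it, the positivity of the geodesic curvature of $\ga_{s_0}$ for small $s_0$ is routine: $T_{s_0}(\ga)$ lies in a small neighborhood of $h_\ga$, where (\ref{L:projection}) compares its geodesic curvature with the large positive curvature of its orthogonal projection onto $T_{h_\ga}\Ss^2$.
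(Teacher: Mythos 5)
Your plan — shrink $\ga$ by M\"obius dilations toward $h_\ga$ until its curvature is positive, then quote (\ref{P:positivecondensed}) — is not the paper's route, and it has a genuine gap at exactly the step you flag and then leave open: the claim that $\ga_s\in\sr L_{\ka_0}^{+\infty}(I)$ for all $s\in(0,1]$. The osculating-circle reduction is fine (a dilation sends osculating circles to osculating circles), but the ensuing radius computation only goes your way when the projection center $-h_\ga$ lies \emph{outside} the disk of radius $\rho(t)$ about $\chi(t)$, i.e.\ when $d(h_\ga,\chi(t))+\rho(t)<\pi$; in that case the geodesic radius of $T_s(\Sig_t)$ is monotonically decreasing in $s$ and all is well. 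When $-h_\ga$ lies inside that disk, stereographic projection sends the disk to the \emph{unbounded} component, the planar image circle is negatively oriented, and dilation drives the geodesic curvature to $-\infty$, leaving $\sr L_{\ka_0}^{+\infty}$. Condensedness plus (\ref{L:opencaustic}) only give $d(h_\ga,\chi(t))<\tfrac{\pi}{2}$ and $\rho(t)<\rho_0$, hence $d+\rho<\tfrac{\pi}{2}+\rho_0$; this is $<\pi$ precisely when $\rho_0\le\tfrac{\pi}{2}$, i.e.\ $\ka_0\ge 0$. So your framing is inverted: $\rho_0>\tfrac{\pi}{2}$ is not ``extra room'' but the exact reason the dilation argument (lemma (\ref{L:Moebius}), whose proof explicitly uses $\rho<\rho_0<\tfrac{\pi}{2}$) breaks down for $\ka_0<0$. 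The ``preliminary deformation that brings the caustic into a small cap or removes the offending negatively curved portions'' is not constructed anywhere in your write-up, and constructing it is the entire content of the proposition; as it stands the proof is a sketch of an approach whose central difficulty is acknowledged but unresolved.

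For comparison, the paper avoids dilations altogether in the negative case. It first translates $\ga$ by $\rho_1=\tfrac{\pi-\rho_0}{2}$ so as to work in a symmetric space $\sr L_{-\ka_1}^{+\ka_1}$, where the radius of curvature is pinched in $(\bar\rho_1,\pi-\bar\rho_1)$ for some $\bar\rho_1<\tfrac{\pi}{2}$. By (\ref{L:regularisgood}), condensedness implies that the lift of the regular band to the $\nu$-sheeted cover $\Ss^2_\nu$ is a \emph{good band} of width $2\bar\rho_1$; the space of good bands is contractible ((\ref{L:contratil}), (\ref{L:retrato}), (\ref{C:contratil})), so one connects the band of $\ga$ to the band of a circle traversed $\nu$ times through good bands, and (\ref{C:central}) converts each intermediate band into an admissible curve (its central curve) with the required two-sided curvature bound. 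Translating back by $-\rho_1$ gives the desired path in $\sr L_{\ka_0}^{+\infty}(I)$. If you want to salvage your approach you would essentially have to prove something equivalent to this band-theoretic control; there is no short cut through (\ref{L:Moebius}).
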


Let $1\leq \nu\in \N$ and let $\Ss^2_\nu$ denote the $\nu$-sheeted connected covering of $\Ss^2\ssm \se{\pm \text{point}}$, where we may assume that the point is the north pole $N$. We will identify $\Ss^1\times (-\frac{\pi}{2},\frac{\pi}{2})$ with $\Ss^2\ssm \se{\pm N}$ through the homeomorphism $h$ given by $h(z,\phi)=(\cos\phi\? z,\sin\phi)$. This, in turn, yields an identification of $\Ss^2_\nu$ with  $\Ss^1_\nu\times (-\frac{\pi}{2},\frac{\pi}{2})$, where $\Ss^1_\nu$ is the $\nu$-sheeted connected covering space of $\Ss^1$. 
We will prefer to work with the space $\Ss^1_\nu\times (-\frac{\pi}{2},\frac{\pi}{2})$ instead of $\Ss^2_\nu$, but its Riemannian metric is the one induced on the latter space by $\Ss^2$ through the covering map.

\begin{defna}\label{D:acceptable}
\hspace{-3pt}\footnote{These notions will only be used in this subsection.}
Let $0<R<\frac{\pi}{2}$. An \tdef{acceptable band} $A\colon [0,1]\times [0,1]\to \Ss^1_\nu \times (-\frac{\pi}{2},\frac{\pi}{2}) \equiv \Ss^2_\nu$ is a map given by
\begin{equation}\label{E:acceptable}
	A(t,u)=\big(\exp(2\pi \nu it)\?,\?(1-u)\theta_-(t)+u\theta_+(t)\big)\quad (t,\,u\in [0,1])
\end{equation}
and satisfying the following conditions:
\begin{enumerate}
	\item [(i)] $\theta_{\pm}\colon [0,1]\to (-\frac{\pi}{2},\frac{\pi}{2})$ are continuous, $0\leq \theta_+\leq R$ and  $-R\leq \theta_-\leq 0$.
	\item [(ii)] Let $\bd A_{+}$ (resp.~$\bd A_-$) denote the image of $[0,1]\times \se{1}$ (resp.~$[0,1]\times \se{0}$) under $A$. Then $d(p,\bd A_-)\geq R$ and $d(q,\bd A_+)\geq R$ for every $p\in \bd A_+$ and every $q\in \bd A_-$.\footnote{Here and in what follows, $d$ denotes the distance function on $\Ss^2_\nu$ (or on $\Ss^2$).}
\end{enumerate}
	The \tdef{interior} $\ring A$ of $A$ is simply the interior of the image of $A$. The set of all acceptable bands (for fixed $R$) will be denoted by $\sr A$ and furnished with the $C^0$ (uniform) topology.  Finally, we denote by $\sr G$ the subspace of $\sr A$ consisting of all acceptable bands $A$ such that $d(p,\bd A_-)=R=d(q,\bd A_+)$ for any $p\in \bd A_+$ and $q\in \bd A_-$. Such a band will be called \tdef{good} and $R$ its \tdef{width}.
\end{defna}
The motivation for this definition comes from the following lemma.

\begin{lemmaa}\label{L:regularisgood}
	Let $\ka_0=\cot \rho_0<0$ and $\ga\in \sr L_{\ka_0}^{+\infty}$ be a condensed curve having rotation number $\nu$. Then the image of the lift of the regular band $B_\ga\colon [0,1]\times [\rho_0-\pi,0]\to \Ss^2$ of $\ga$ to $\Ss^2_\nu$ is the image of a good band of width $\pi-\rho_0$.
\end{lemmaa}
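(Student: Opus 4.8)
The plan is to unwind the definitions and exhibit the explicit angle-functions $\theta_\pm$ that realize the lifted regular band as an acceptable band, then verify the distance condition that promotes it to a good band. First I would observe that since $\ga$ is condensed with rotation number $\nu$, by definition its image lies in a closed hemisphere, and the rotation number computed via stereographic projection from $-h_\ga$ (where $h_\ga$ is the barycenter from (\ref{L:closedbarycenter})) equals $\nu$. After rotating $\Ss^2$ so that $h_\ga=N$ is the north pole, the projection $\pr$ from $-N=$ south pole lets us regard $\eta=\pr\circ\ga$ as a plane curve of rotation number $-\nu$ (sign from the definition $\nu(\ga)=-N(\eta)$), which lifts to the $\nu$-sheeted cover $\Ss^2_\nu\equiv\Ss^1_\nu\times(-\frac\pi2,\frac\pi2)$. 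Concretely, using the identification $h(z,\phi)=(\cos\phi\,z,\sin\phi)$, a lift of $\ga$ to $\Ss^1_\nu\times(-\frac\pi2,\frac\pi2)$ has first coordinate homotopic to $t\mapsto\exp(2\pi\nu i t)$; reparametrizing (via (\ref{L:reparametrize})) and using that the first coordinate is a degree-$\nu$ map $\Ss^1\to\Ss^1$ with nonvanishing derivative — which holds because the geodesic curvature bound forces $\ka>\ka_0$, hence the turning is monotone after the M\"obius shrinking argument isn't even needed here, only monotonicity of the angular coordinate — I can arrange the first coordinate to be exactly $\exp(2\pi\nu i t)$.

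Next I would identify the two boundary curves of the regular band. By (\ref{D:bands}), $B_\ga(t,\theta)=\cos\theta\,\ga(t)+\sin\theta\,\no(t)$ with $\theta\in[\rho_0-\pi,0]$; the boundary $\bd B_\ga^+$ is $B_\ga(\cdot,0)=\ga$ itself and $\bd B_\ga^-$ is $B_\ga(\cdot,\rho_0-\pi)$, which is the antipode of the curve $\ce\ga=C_\ga(\cdot,\rho_0)$ (the translation of $\ga$ by $\rho_0$, cf.~(\ref{L:essentialtranslation}), reflected through the origin since $\rho_0-\pi=\rho_0-\pi$). Pushing the second ($\phi$-)coordinate through the homeomorphism $h^{-1}$, each section $B_\ga(t,\cdot)$ is an arc of the great circle $\Ga_t$ through $\ga(t)$ with normal $\no(t)$; I would set $\theta_+(t)$ to be the $\phi$-coordinate of $\ga(t)=B_\ga(t,0)$ and $\theta_-(t)$ the $\phi$-coordinate of $B_\ga(t,\rho_0-\pi)$. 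The content to check is: (1) the band has the affine-in-$u$ form \eqref{E:acceptable} — this requires that the $\phi$-coordinate along each section $B_\ga(t,\cdot)$ interpolates \emph{linearly} between its endpoints, which is \emph{not} literally true, so the correct move is to reparametrize the $\theta$-variable of $B_\ga$ by a section-dependent homeomorphism $[\rho_0-\pi,0]\to[0,1]$ matching $\phi$-values, which does not change the image; (2) $0\le\theta_+\le R$ and $-R\le\theta_-\le 0$ with $R=\pi-\rho_0$, i.e.~that the band stays within colatitude/latitude band of width $\pi-\rho_0$ about the equator $\phi=0$ — this follows from the condensedness of $\ga$ together with the bound $d(\ga(t),\chi_\ga(t))=\rho(t)<\rho_0$ and $d(\ga(t),-\chi_\ga(t))=\pi-\rho(t)>\pi-\rho_0$, analogous to the inequality used at the end of the proof of (\ref{C:positivecondensed}).

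Then I would establish that the band is \emph{good}, not merely acceptable: I must show $d(p,\bd A_-)=\pi-\rho_0=d(q,\bd A_+)$ for all $p\in\bd A_+$, $q\in\bd A_-$. The key geometric fact is that in the sphere, the arc $B_\ga(t,\cdot)$ from $\ga(t)$ (on $\bd A_+$) to $B_\ga(t,\rho_0-\pi)$ (on $\bd A_-$) is a geodesic of length exactly $|\rho_0-\pi|=\pi-\rho_0$, and — crucially — this geodesic segment is \emph{perpendicular} to the caustic direction at both ends, which forces it to realize the distance between the two boundary curves. More precisely, by part (b) of (\ref{L:band}), $\frac{\bd B_\ga}{\bd\theta}$ has unit norm and is orthogonal to $\frac{\bd B_\ga}{\bd t}=\frac{|\dot\ga|}{\sin\rho}\sin(\rho-\theta)\ta$, so each vertical segment $B_\ga(t,\cdot)$ meets both boundary curves orthogonally; a standard first-variation argument then shows it minimizes distance from its endpoint on $\bd A_+$ to all of $\bd A_-$ (using that $\pi-\rho_0<\pi$ keeps us inside the injectivity radius relative to these curves, after lifting to $\Ss^2_\nu$ which unwraps the degeneracy). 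I expect \textbf{this last step — proving the width is exactly $R=\pi-\rho_0$ and not merely $\le\pi-\rho_0$ — to be the main obstacle}, because it requires ruling out "shortcuts" between the two boundary curves that go around through other values of $t$; the lift to $\Ss^2_\nu$ and the monotonicity of the angular coordinate (which prevents the band from wrapping back on itself) are what make this work, and I would need to argue carefully that for a condensed curve the regular band embeds in $\Ss^2_\nu$ so that the perpendicular geodesic sections are genuinely distance-realizing. The remaining verifications — continuity of $\theta_\pm$, the affine-in-$u$ normalization, and the endpoint colatitude bounds — are routine given the formulas in (\ref{D:bands}) and (\ref{L:band}).
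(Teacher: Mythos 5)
Your plan follows the paper's proof essentially step for step: condensedness places $\ga$ in the hemisphere $H$ and the other boundary curve $B_\ga(\cdot,\rho_0-\pi)=-C_\ga(\cdot,\rho_0)$ in $-H$, confining the band to a latitude strip of width controlled by $d(\ga(t),\hat\ga(t))=\pi-\rho_0$; the lift to $\Ss^2_\nu$ lets one read off $\theta_\pm$ along meridians; and the width computation rests on the unit norm and orthogonality of $\frac{\partial B_\ga}{\partial\theta}$ from (\ref{L:band}\,(b)). The only difference worth noting is that the paper closes the last step not by a first-variation argument (which alone would give only criticality of the fiber geodesics) but by the direct global estimate of (\ref{L:bandlength}) --- writing an arbitrary competitor joining $\bd A_-$ to $\bd A_+$ in band coordinates and bounding its length below by $\int|\dot\theta|\geq \pi-\rho_0$ --- which is precisely the ``ruling out shortcuts'' computation you correctly identify as the crux.
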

\begin{proof}
	By hypothesis, the image of the caustic band $C_\ga$ is contained in a hemisphere, say, 
	\begin{equation*}
		H=\set{p\in \Ss^2}{\gen{p,N}\geq 0}.
	\end{equation*}
Let $\hat\ga$ be the other boundary curve of $B_\ga$, $\hat\ga(t)=B_\ga(t,\rho_0-\pi)$. Then $\hat\ga(t)=-C_\ga(t,\rho_0)\in -H$ for all $t\in [0,1]$. Since $d(\ga(t),\hat\ga(t))=\pi-\rho_0<\frac{\pi}{2}$, $\Im(\ga)\subs H$ and $\Im(\hat\ga)\subs {-}H$, the image of the regular band is actually contained in $\Ss^1\times [\rho_0-\pi,\pi-\rho_0]$ (where we are identifying $\Ss^2\ssm \se{\pm N}$ with $\Ss^1\times (-\frac{\pi}{2},\frac{\pi}{2})$). 
	
	Let $\te{B}_\ga\colon [0,1]\times [\rho_0-\pi,0]\to \Ss^2_\nu$ be the lift of $B_\ga$ to $\Ss^2_\nu \equiv \Ss^1_\nu\times (-\frac{\pi}{2},\frac{\pi}{2})$. For each $z\in \Ss^1_\nu$, let the \tdef{meridian} $\mu_z$ be the geodesic parametrized by $\mu_z(t)=(z,t)$, $t\in (-\frac{\pi}{2},\frac{\pi}{2})$. By what we have just proved and the fact that $\ga$ has rotation number $\nu$, we may define continuous functions $\theta_{\pm}\colon \Ss^1_\nu\to (-\frac{\pi}{2},\frac{\pi}{2})$ by the relations
	\[
	\mu_z(\theta_+(z))\in \te{B}_\ga([0,1]\times \se{0}) \text{\quad and \quad}\mu_z(\theta_-(z))\in \te{B}_\ga([0,1]\times \se{\rho_0-\pi}). 
	\]
	Then the map $A\colon [0,1]\times [0,1] \to \Ss^1_\nu \times (-\frac{\pi}{2},\frac{\pi}{2}) \equiv \Ss^2_\nu$ given by
\begin{equation*}
	A(t,u)=\big(\exp(2\pi \nu it)\?,\?(1-u)\theta_-(t)+u\theta_+(t)\big)\quad (t,\,u\in [0,1])
\end{equation*}
defines an acceptable band whose image coincides with that of $\te{B}_\ga$. Furthermore, the equality $d(\ga(t),\hat{\ga}(t))=\pi-\rho_0$ implies that $d(p,\bd A_{\pm})\leq \pi-\rho_0$ for any $p\in \bd A_{\mp}$. We claim that $A$ is a good band of width $\pi-\rho_0$. To see this, suppose $\eta\colon [0,1]\to \Ss^2_\nu$ is a piecewise $C^1$ curve joining $\bd A_-$ to $\bd A_+$ and write $\eta(u)=\te{B}_\ga(t(u),\theta(u))$. Then the length is minimized when $\theta$ is monotone and $\dot t(u)=0$ for all $u\in [0,1]$, hence the minimal length is $\pi-\rho_0$; we omit the details since an entirely similar argument is presented in the proof of (\ref{L:bandlength}).
\end{proof}

\begin{lemmaa}\label{L:contratil}
	The space $\sr A$ is contractible.
\end{lemmaa}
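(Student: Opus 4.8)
The plan is to build a strong deformation retraction of $\sr A$ onto a single band. A band $A\in\sr A$ is completely determined by the pair $(\theta_-,\theta_+)$ through \eqref{E:acceptable}, so $\sr A$ is identified, as a topological space, with the set of pairs $(\theta_-,\theta_+)\in C^0([0,1])\times C^0([0,1])$ satisfying conditions (i) and (ii) of (\ref{D:acceptable}), with the uniform topology. Because the first coordinate of $A(t,u)$ is $\exp(2\pi\nu it)$ regardless of $A$, condition (ii) says exactly that
\begin{equation*}
	d\big(A(t_1,1),\,A(t_2,0)\big)\ \ge\ R\qquad\text{for all }t_1,t_2\in[0,1],
\end{equation*}
where $d$ is the distance on $\Ss^2_\nu$, and by (i) the curves $A(\cdot,1)$, $A(\cdot,0)$ take values in the closed sub-bands $\Ss^1_\nu\times[0,R]$ and $\Ss^1_\nu\times[-R,0]$ respectively. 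Two elementary facts will be used: the second coordinate on $\Ss^2_\nu\equiv\Ss^1_\nu\times(-\tfrac\pi2,\tfrac\pi2)$ is $1$-Lipschitz (it is the pull-back of the latitude $\tfrac\pi2-d(N,\cdot)$ on $\Ss^2$), and the covering $P\colon\Ss^2_\nu\to\Ss^2\ssm\{\pm N\}$ is $1$-Lipschitz; hence $d\big(A(t_1,1),A(t_2,0)\big)\ge\theta_+(t_1)-\theta_-(t_2)$ and $d\big(A(t_1,1),A(t_2,0)\big)\ge d_{\Ss^2}\big(PA(t_1,1),PA(t_2,0)\big)$ for every band. In particular the band $A_0$ with $\theta_-\equiv-R$ and $\theta_+\equiv R$ is acceptable, the first bound yielding $2R\ge R$.

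Define $H\colon[0,1]\times\sr A\to\sr A$ by letting $H(s,A)$ be the band with latitude functions $\theta_+^{\,s}=(1-s)\theta_++sR$ and $\theta_-^{\,s}=(1-s)\theta_--sR$. Then $H(0,A)=A$, $H(1,A)=A_0$, and $H$ is continuous for the uniform topology; condition (i) for $H(s,A)$ holds since $\theta_+^{\,s}$ (resp.\ $\theta_-^{\,s}$) is a convex combination of functions valued in $[0,R]$ (resp.\ $[-R,0]$). It remains to check (ii) for $H(s,A)$. Fix $t_1,t_2$, write $\phi_i=\phi_i(s)$ for $\theta_+^{\,s}(t_1)\in[0,R]$ and $\theta_-^{\,s}(t_2)\in[-R,0]$ (affine in $s$, with $\phi_1'\ge0\ge\phi_2'$), and let $\Delta\in(-\pi,\pi]$ be the $\Ss^1$-longitude difference $2\pi\nu(t_1-t_2)\bmod2\pi$ (independent of $s$). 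Suppose first that $H(s,A)(t_1,1)$ and $H(s,A)(t_2,0)$ are separated in $\Ss^1_\nu$ by less than $\pi$; then the minimizing great-circle arc joining their $\Ss^2$-projections lifts to a path joining the two points, so $d=d_{\Ss^2}$, and in (longitude, latitude) coordinates $\cos d_{\Ss^2}=\cos\phi_1\cos\phi_2\cos\Delta+\sin\phi_1\sin\phi_2$. If $\cos\Delta\le0$ this is a sum of two nonpositive terms (using $\cos\phi_i>0$ and $\phi_1\ge0\ge\phi_2$), so $d_{\Ss^2}\ge\tfrac\pi2>R$ for all $s$. If $\cos\Delta>0$, a direct computation gives $\partial_{\phi_1}\cos d_{\Ss^2}\le0$ and $\partial_{\phi_2}\cos d_{\Ss^2}\ge0$ on the relevant ranges, so $\frac{d}{ds}\cos d_{\Ss^2}=\phi_1'\,\partial_{\phi_1}\cos d_{\Ss^2}+\phi_2'\,\partial_{\phi_2}\cos d_{\Ss^2}\le0$; thus $s\mapsto d\big(H(s,A)(t_1,1),H(s,A)(t_2,0)\big)$ is nondecreasing and stays $\ge d\big(A(t_1,1),A(t_2,0)\big)\ge R$. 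Suppose instead the two points are separated in $\Ss^1_\nu$ by at least $\pi$ (possible only for $\nu\ge2$); then any path joining them changes longitude by at least $\pi$, and a routine length estimate — of the same nature as the one in the proof of (\ref{L:regularisgood}) — shows such a path has length at least $\pi-R>R$, so $d\ge R$ for all $s$. In every case $d\big(H(s,A)(t_1,1),H(s,A)(t_2,0)\big)\ge R$, hence $H(s,A)\in\sr A$; thus $H$ is a deformation retraction of $\sr A$ onto $\{A_0\}$ and $\sr A$ is contractible.

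The one genuinely delicate step is the last case analysis: one must verify that the distance on $\Ss^2_\nu$ agrees with the spherical distance exactly when no "forced winding" is present, and establish the elementary winding lower bound $\pi-R$ otherwise. Both are standard facts about the metric $\cos^2\!\phi\,d\beta^2+d\phi^2$ on $\Ss^1_\nu\times(-\tfrac\pi2,\tfrac\pi2)$, in the same spirit as the length-minimization carried out in (\ref{L:regularisgood}) (and referred to there for (\ref{L:bandlength})).
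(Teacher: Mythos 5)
Your homotopy is exactly the one the paper uses: linearly interpolate the latitude functions $\theta_{\pm}$ to the constants $\pm R$, so that $H(1,A)$ is the full band $\Ss^1_\nu\times[-R,R]$. The paper simply asserts that each $A_s$ is acceptable, whereas you verify that condition (ii) of (\ref{D:acceptable}) is preserved — via monotonicity of the spherical distance in the latitudes when the longitude gap is less than $\pi$, and the winding bound $\pi-R>R$ otherwise — and this verification is correct, so your proof is sound and follows the same approach as the paper's.
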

\begin{proof}
 	Let $A\in \sr A$ be given by \eqref{E:acceptable} and let $s\in [0,1]$. Define a family of acceptable bands $A_s$ by
	\begin{equation*}
		A_s(t,u)=\big(\exp(2\pi \nu it)\?,\?(1-u)\theta^s_-(t)+u\theta^s_+(t)\big),
	\end{equation*}
	where
	\begin{equation*}
		\theta^s_+(t)=(1-s)\theta_+(t)+sR\text{\quad and \quad}\theta^s_-(t)=(1-s)\theta_-(t)-sR
	\end{equation*}
	Then the map $\sr A\times [0,1]\to \sr A$ given by $(A,s)\mapsto A_s$ is a contraction of $\sr A$.
\end{proof}

\begin{lemmaa}\label{L:retrato}
	The subspace $\sr G$ is a retract of $\sr A$.
\end{lemmaa}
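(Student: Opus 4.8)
I would prove this by writing down an explicit retraction $r\colon\sr A\to\sr G$ that ``normalizes the width'' of an acceptable band to the value $R$, built from a single geometric move that is then iterated. Given $A\in\sr A$ with latitude functions $\theta_\pm$ (so that its boundary curves $\bd A_\pm$ are the graphs of $\theta_\pm$ over $\Ss^1_\nu$), define the \emph{upper $R$-offset} $U_A\colon\Ss^1_\nu\to[0,R]$ of $\bd A_-$ by
\begin{equation*}
	U_A(z)=\max\set{\phi\in[-\tfrac\pi2,\tfrac\pi2]}{d\big((z,\phi),\bd A_-\big)\leq R},
\end{equation*}
and symmetrically the \emph{lower $R$-offset} $L_A\colon\Ss^1_\nu\to[-R,0]$ of $\bd A_+$. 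Let $P_1(A)$ be the band with latitude functions $(\theta_-,U_A)$ and $P_2(A)$ the band with $(L_A,\theta_+)$.

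First I would check that $P_1,P_2$ map $\sr A$ into $\sr A$. Since the equator is a geodesic, latitude is $1$-Lipschitz, so $d((z,\phi),\bd A_-)\geq\phi$ for $\phi\geq0$ (as $\bd A_-$ has latitude $\leq0$), forcing $U_A(z)\leq R<\tfrac\pi2$; and $d((z,0),\bd A_-)\leq|\theta_-(z)|\leq R$, forcing $U_A(z)\geq0$. Because there is room above, $d((z,U_A(z)),\bd A_-)=R$ exactly, so every point of the new upper boundary is at distance exactly $R$ from $\bd A_-$, hence at distance $\geq R$ from every point of $\bd A_-$; thus condition~(ii) of the definition of acceptable holds for $P_1(A)$. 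The genuinely geometric point is that $U_A$ is continuous in $z$ and $P_1$ continuous in $A$ for the $C^0$ topology: this reduces to the statement that $\phi\mapsto d((z,\phi),\bd A_-)$ is nondecreasing on $[0,\tfrac\pi2)$, which in turn (via the spherical law of cosines, using that $\bd A_-$ has latitude $\leq0$ and $R<\tfrac\pi2$) comes down to checking that a distance-realizing geodesic from $(z,\phi)$, $\phi\geq0$, cannot leave $(z,\phi)$ ``going upward in latitude'' — the type of estimate treated by the convexity results in the appendix. The same monotonicity shows $P_1$ can only lower the upper boundary and $P_2$ can only raise the lower one: acceptability of $A$ gives $d((z,\theta_+(z)),\bd A_-)\geq R$, which by monotonicity places $\theta_+(z)$ at or above $U_A(z)$.

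Now set $A^{(0)}=A$ and alternately $A^{(n+1)}=P_1(A^{(n)})$ for $n$ even, $A^{(n+1)}=P_2(A^{(n)})$ for $n$ odd. By the previous paragraph the upper functions $\theta_+^{(n)}$ decrease and the lower functions $\theta_-^{(n)}$ increase, and all stay in the fixed compact interval $[-R,R]$, so $\theta_+^{(n)}\downarrow\theta_+^\infty$ and $\theta_-^{(n)}\uparrow\theta_-^\infty$ pointwise. Using continuity of $U$ and $L$, one passes to the limit in the relations $\theta_+^{(2k+1)}=U_{A^{(2k)}}$ and $\theta_-^{(2k+2)}=L_{A^{(2k+1)}}$ to conclude that, for the limit band $A^\infty$, $\theta_+^\infty=U_{A^\infty}$ and $\theta_-^\infty=L_{A^\infty}$; equivalently, every point of $\bd A^\infty_+$ is at distance exactly $R$ from $\bd A^\infty_-$ and vice versa, i.e.\ $A^\infty\in\sr G$. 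Define $r(A)=A^\infty$. If $A$ is already good, then $\theta_+$ realizes distance $R$ from $\bd A_-$ and, lying above $\theta_-$, equals the maximum $U_A$, so $P_1(A)=A$ and likewise $P_2(A)=A$; the sequence is then constant and $r(A)=A$. Hence $r$ restricts to the identity on $\sr G$.

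The main obstacle is the continuity of $r$ itself. Each $A^{(n)}$ depends continuously on $A$, so it suffices to prove $A^{(n)}\to r(A)$ uniformly on compact subsets of $\sr A$. My plan is to establish a modulus-of-continuity bound on the functions $\theta_\pm^{(n)}$ that is uniform in $n$ (and in $A$ on compacta): shifting the base point by $\eta$ shifts the graph of $\theta_-$ by at most $\eta$, so the offset functions inherit equicontinuity from a lower bound on the transversality of the distance-realizing geodesics to the meridians; with such a uniform modulus in hand the monotone pointwise convergence upgrades to uniform convergence by a Dini/Arzel\`a--Ascoli argument, giving continuity of $r(A)$ and of $r$. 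The delicate locus is precisely where a distance-realizing geodesic becomes tangent to a meridian — which forces $\theta_+^\infty(z)=0$ there — and at such points one must read off continuity from the uniform convergence of the distance functions rather than from a Lipschitz estimate; handling this (together with the monotonicity lemma underlying Steps~2--3) is where the bulk of the work lies.
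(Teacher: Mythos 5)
Your overall architecture — alternately offsetting from one boundary of the band and then the other, obtaining monotone sequences $\theta_+^{(n)}\downarrow$ and $\theta_-^{(n)}\uparrow$, and defining $r(A)$ as the limit — is the same as the paper's. But there is a genuine gap, and it sits exactly where you say ``the bulk of the work lies'': with your \emph{exact} $R$-offsets, nothing in the construction controls the rate at which $\theta_\pm^{(n)}$ converge, uniformly in $z$ or in $A$. This is not a cosmetic issue. First, a pointwise monotone limit of continuous functions need not be continuous, so you cannot even assert that $A^\infty$ is an acceptable band (hence that $U_{A^\infty}$ makes sense and that $A^\infty\in\sr G$) without first proving the limit functions are continuous; Dini's theorem cannot supply this, since it presupposes continuity of the limit, and the equicontinuity bound you would feed to Arzel\`a--Ascoli is precisely the estimate you have not proved. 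Second, continuity of $r$ requires the convergence $A^{(n)}\to r(A)$ to be uniform at least along convergent sequences in $\sr A$, and again no such uniform rate is available from the exact-offset scheme. Your proposed route through ``a lower bound on the transversality of the distance-realizing geodesics to the meridians'' degenerates exactly at the tangency locus you identify, and you offer no way around it.

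The paper's proof avoids this entirely by a small but decisive change: at stage $n$ it keeps the points within distance $R+2^{-n}$ (not $R$) of the relevant boundary. The slack $2^{-n}$ is chosen summable, and the elementary geometry of the band (latitude functions confined to $[-R,R]$, monotonicity of $t\mapsto d(\mu_z(t),\bd A^n_{\pm})$ along meridians) then gives the \emph{universal} Cauchy estimate $\vert\theta^n_\pm(z)-\theta^m_\pm(z)\vert\leq 2^{-N+1}$ for all $m,n>N$, all $z$, and all $A\in\sr A$ simultaneously. Uniform convergence, continuity of the limit latitude functions, goodness of the limit band, and continuity of $r$ (each finite stage $A\mapsto A^n$ being continuous) all follow at once from this single estimate. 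If you want to salvage your version, the cleanest repair is to adopt the relaxed tolerances; otherwise you must actually prove the uniform modulus-of-continuity bound you sketch, including at meridian-tangency points, and that is a substantial piece of unfinished work rather than a routine verification.
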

\begin{proof}
Let $A\in \sr A$ be given by \eqref{E:acceptable}. Define $A^1=\Im(A)$, $\theta_{\pm}^1=\theta_{\pm}$ and
\begin{equation*}
	\qquad	A^{2}=\set{p\in A^1}{d(p,\bd A^1_{-})\leq R+\tfrac{1}{2}}.
\end{equation*}
We will call a geodesic $\mu_z$ in $\Ss^2_\nu\equiv \Ss^1_\nu\times (-\frac{\pi}{2},\frac{\pi}{2})$ of the form $\se{z}\times (-\frac{\pi}{2},\frac{\pi}{2})$ a \tdef{meridian}, and parametrize it by $\mu_z(t)=(z,t)$. We begin by establishing the following facts:
\begin{enumerate}
	\item [(a)] Each meridian $\mu_z$ intersects $\bd A^2$ at exactly two points $\mu_z(\theta^2_-(z))$ and $\mu_z(\theta^2_+(z))$, with $\theta^2_+\geq 0$ and $\theta^2_-\leq 0$. We define $\bd A^2_{\pm}$ as the set of all $\mu_z(\theta^2_{\pm}(z))$ for $z\in \Ss^1_\nu$.
	\item [(b)] $\bd A^2_-=\bd A^1_-$.
	\item [(c)] $p\in \bd A_{+}^{2}$ if and only if one of the following holds:
		\begin{alignat*}{9}
			 & p\in \bd A^{1}_{+} & & \text{\quad and\quad }d(p,\bd A^1_-)\leq R+\tfrac{1}{2}, \quad \text{or} \\ 
			 & p\in \ring A^1& &\text{\quad and\quad} d(p,\bd A^1_{-})=R+\tfrac{1}{2}.
		\end{alignat*} 
	\item [(d)] The boundary $\bd A^2$ of $A^2$ is the disjoint union of $\bd A^2_+$ and $\bd A^2_-$. Moreover, 
	\begin{equation*}
		R\leq d(p,\bd A^2_-)\leq R+\frac{1}{2}\text{\quad and \quad}R\leq d(q,\bd A^2_+)\leq d(q,\bd A^1_+)
	\end{equation*}
	for any $p\in \bd A^2_+$ and $q\in \bd A^2_-$.  
	\item [(e)] $A^2$ is the (image of) an acceptable band, and the functions in (\ref{D:acceptable}\?(i)) corresponding to $A^2$ are $\theta^2_{\pm}$. Moreover, 
	\begin{equation}\label{E:thetas}
		0\leq \theta^2_+\leq \min\{R+\tfrac{1}{2},\theta^1_+\}\text{\quad and \quad}-R\leq \theta^2_-=\theta^1_-\leq 0.
	\end{equation}
\end{enumerate}

The inclusion $\bd A^1_-\subs \Ss^1_\nu\times [-R,0]$ implies, firstly, that 
\begin{equation}\label{E:fcons}
	A^2\cap \big(\Ss^1_\nu\times [-R,0])=A^1\cap \big(\Ss^1_\nu\times [-R,0]),
\end{equation}
as every point of $A^1\cap \big(\Ss^1_\nu\times [-R,0])$ lies at a  distance less than or equal to $R$ from $\bd A^1_-$. Secondly, it implies that 
\begin{equation*}
	t\mapsto d(\mu_z(t),\bd A^1_-)
\end{equation*} is a monotone decreasing function of $t$ when $t\geq 0$. 

It follows from \eqref{E:fcons} and the properties of $A^1$ that, for any $z\in \Ss^1_\nu$, there exists a unique $\theta^2_-(z)\in [-R,0]$ such that $\mu_z(\theta^2_-(z))\in \bd A^2$, unless $\mu_z(0)\in \bd A^1_+$. In the latter case, $d(\mu_z(0),\bd A^1_-)=R$, $\theta^2_-(z)=-R$ and  $\theta^2_+(z)=0$. If $\mu_z(0)\nin \bd A^1_+$, let $\theta^2_+(z)>0$ be the smallest $t\in (0,R]$ such that either $\mu_z(t)\in \bd A^1_+$ or $d(\mu_z(t),\bd A^1_-)=R+\frac{1}{2}$. Suppose $\mu_z(\theta^2_+(z))\in \bd A^1_+$. Then  $\mu_z(\theta^2_+(z))\in A^2$ (because it lies a distance $\leq R+\frac{1}{2}$ from $\bd A_-^1$), while $\mu_z(t)\nin A^1\sups A^2$ for $t>\theta^2_+(z)$. Thus, $\mu_z(\theta^2_+(z))\in \bd A^2$. If $d(\mu_z(\theta^2_+(z)),\bd A^1_-)=R+\frac{1}{2}$, then again $\mu_z(\theta^2_+(z))\in A^2$ while $\mu_z(t)\nin A^2$ for $t>\theta^2_+(z)$, since, for such $t$, $d(\mu_z(t),\bd A^1_-)>R+\frac{1}{2}$ by the second consequence. Moreover, in both cases $\mu_z(t)$ does not intersect $\bd A^2$ again for $t>0$. This proves (a), (b), (c) and also establishes \eqref{E:thetas}.

Since 
\[
\bd A^2=\bcup_{z\in \Ss^1_\nu}\mu_z\cap \bd A^2,
\]
(a) implies the first assertion of (d). In turn, (b) and (c) together immediately imply that 
\[
R\leq d(p,\bd A^2_-)=d(p,\bd A^1_-)\leq R+\frac{1}{2}
\]
for any $p\in \bd A^2_+$. That $d(q,\bd A^2_+)\leq d(q,\bd A^1_+)$ for any $q\in \bd A^2_-$ follows from the fact that $\bd A^2_+$ lies below $\bd A^1_+$, in the sense that any geodesic joining $\bd A^1_-$ to $\bd A^1_+$ must first intersect a point of $\bd A^2_+$. Indeed, $\theta^2_+(z)\leq \theta^1_+(z)$ for any $z\in \Ss^1_\nu$, as we have already seen in \eqref{E:thetas}. Thus, (d) holds.

By construction, 
\begin{equation*}
	A^2=\set{p\in \Ss^2_\nu\equiv \Ss^1_\nu\times (-\tfrac{\pi}{2},\tfrac{\pi}{2})}{p=(z,\theta)\text{\ for some\ }\theta\in [\theta^2_-(z),\theta^2_+(z)]}.
\end{equation*}
Hence, $A^2$ is the image of the acceptable band given by 
\begin{equation*}
	(t,u)\mapsto \big(\exp(2\pi \nu it)\?,\?(1-u)\theta^2_-(t)+u\theta^2_+(t)\big)\quad (t,\,u\in [0,1]).
\end{equation*}

Using induction and the corresponding versions of items (a)--(e) (whose proofs are the same in the general case), define
\begin{equation*}
\qquad	A^{n+1}=\set{p\in A^n}{d(p,\bd A^n_{(-1)^n})\leq R+2^{-n}}\qquad (n\in \N).
\end{equation*} 
Finally, let $B=\bcap_{n=1}^{+\infty}A^n$. We claim that $B$ is the image of a good band. 

Given $N\in \N$ and $m,n> N$, we have 
\begin{equation*}
	\abs{\theta^n_{\pm}(z)-\theta^m_{\pm}(z)}\leq 2^{-N+1}\text{\ \ for any $z\in \Ss^1_\nu$}
\end{equation*}
by construction. Therefore, $\theta^n_{+}\decr \theta_+$ and $\theta^n_-\incr \theta_-$ for some functions $\theta_{\pm}\colon \Ss^1_\nu\to [-R,R]$, which are continuous as the uniform limit of continuous functions. Moreover, $B$ is the image of the map
\begin{equation*}
	(t,u)\mapsto \big(\exp(2\pi \nu it)\?,\?(1-u)\theta_-(t)+u\theta_+(t)\big)\quad (t,\,u\in [0,1]),
\end{equation*}
again by construction. We claim that $d(x,\bd B_{\pm})=R$ for any $x\in \bd B_{\mp}$. Suppose for a contradiction that $d(p,\bd B_-)<R$ for some $p\in \bd B_+$, and let $pq$ be a geodesic of length $d(p,\bd B_-)$, with $q\in \bd B_-$. Choose neighborhoods $U\ni p$ and $V\ni q$ such that $d(x,y)<R$ for any $x\in U$, $y\in V$. Since $p,q\in \bd B_{\pm}$, by choosing a sufficiently large $n\in \N$, we may find $x\in \bd A^n_+\cap U$ and $y\in \bd A^n_-\cap V$ with $d(x,y)<R$, a contradiction. Similarly, if $d(p,\bd B_-)=R+\eps$ for some $\eps>0$, choose neighborhoods $U\ni p$ and $V\ni q$ such that $d(x,y)\geq R+\frac{\eps}{2}$ for any $x\in U$ and $V\ni q$. Let $N\in \N$ be so large that $2^{-N}<\frac{\eps}{2}$. Since $p,q\in \bd B_{\pm}$, we may find some $n>2N$ and $x\in \bd A^n_+\cap U$, $y\in \bd A^n_-\cap V$. Then $d(x,y)\geq R+ \frac{\eps}{2}>R+2^{-N}$, again a contradiction. The assumption that $d(q,\bd B_+)\neq R$ for some $q\in \bd B_-$ also yields a contradiction. We conclude that $B$ is a good band of width $R$.

If $r\colon \sr A\to \sr G$ is the map which associates to an acceptable band $A$ the good band $B$ obtained by the process described above, then $r(A)=A$ whenever $A\in \sr G$. In addition, we see by induction that the map $A\mapsto A^n$ is continuous on $\sr A$ for every $n\in \N$. Given $\eps>0$, we can arrange that $\norm{A^n-A^m}_{C^0}<\eps$ for any $A\in \sr A$ by choosing $m,\,n\geq N$ and a sufficiently large $N\in \N$. Hence, $r\colon \sr A\to \sr G$ is a retraction.
\end{proof}

\begin{cora}\label{C:contratil}
	The space $\sr G$ is contractible.
\end{cora}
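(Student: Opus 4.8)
The plan is to deduce this immediately from the two preceding lemmas: by (\ref{L:contratil}) the ambient space $\sr A$ of acceptable bands is contractible, and by (\ref{L:retrato}) the subspace $\sr G$ of good bands is a retract of $\sr A$. The only general fact needed is that a retract of a contractible space is itself contractible, and I would simply spell out the one-line argument rather than cite it, so that the corollary is self-contained.

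Concretely: let $i\colon \sr G\inc \sr A$ denote the inclusion and $r\colon \sr A\to \sr G$ the retraction produced in the proof of (\ref{L:retrato}), so that $r\circ i=\id_{\sr G}$. By (\ref{L:contratil}) there is a homotopy $H\colon \sr A\times [0,1]\to \sr A$ with $H(\cdot,0)=\id_{\sr A}$ and $H(\cdot,1)$ equal to a constant map $A\mapsto A_0$, where $A_0\in \sr A$. Define $G\colon \sr G\times [0,1]\to \sr G$ by $G(A,s)=r\big(H(i(A),s)\big)$. This is continuous as a composition of continuous maps, $G(\cdot,0)=r\circ i=\id_{\sr G}$, and $G(\cdot,1)$ is the constant map $A\mapsto r(A_0)$. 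Hence $\id_{\sr G}$ is homotopic to a constant map, i.e.\ $\sr G$ is contractible.

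There is essentially no obstacle here; the entire content of the corollary is packed into (\ref{L:contratil}) and (\ref{L:retrato}). The reason for isolating the statement is that $\sr G$ — the space of good bands of a fixed width $\pi-\rho_0$ — is the model into which we will be able to deform condensed curves in $\sr L_{\ka_0}^{+\infty}$ with $\ka_0<0$ (cf.\ (\ref{L:regularisgood}) and the proof of (\ref{P:negativecondensed})), and its contractibility is exactly what will let that deformation be carried out in a canonical, parameter-independent way.
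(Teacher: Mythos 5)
Your proof is correct and follows exactly the route the paper takes: the corollary is deduced from (\ref{L:contratil}) and (\ref{L:retrato}) via the standard fact that a retract of a contractible space is contractible. Spelling out the one-line homotopy $G(A,s)=r(H(i(A),s))$ is a fine, self-contained way to present what the paper leaves implicit.
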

\begin{proof}
	This is an immediate consequence of (\ref{L:contratil}) and (\ref{L:retrato}).
\end{proof}

\begin{defna}
Let $B$ be a good band of width $R$. A \tdef{track} of $B$ is a curve on $\Ss^2_\nu$ of length $R$ joining a point of $\bd B_+$ to a point of $\bd B_-$. 
\end{defna}
In other words, a track is a length-minimizing geodesic joining $\bd B_+$ to $\bd B_-$; in particular, it is a smooth curve. Also, if $\Ga_1$, $\Ga_2$ are tracks through $p\in \bd B_+$ and $q\in \bd B_-$ then $\Ga_1=\Ga_2$, since two geodesics on $\Ss^2$ intersect at a pair of antipodal points, and  $p$ and $q$ do not map to the same point nor to a pair of antipodal points on $\Ss^2$ under the covering map.
\begin{lemmaa}\label{L:nocrossing}
	Let $B$ be a good band. Then two tracks of $B$ cannot intersect at a point lying in $\ring B$.
\end{lemmaa}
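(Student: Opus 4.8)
The plan is to argue by contradiction, reducing everything to the elementary fact that a length‑minimising curve in a Riemannian manifold is, when parametrised by arc length, a smooth geodesic. Suppose $\Ga_1\neq\Ga_2$ are tracks of $B$ meeting at a point $x\in\ring B$, where $\Ga_i$ joins $p_i\in\bd B_+$ to $q_i\in\bd B_-$. By the remark following the definition of track, each $\Ga_i$ is a length‑minimising geodesic of length $R$ (the width of $B$); since $x$ is interior to $B$ while the endpoints of a track lie on $\bd B_+\cup\bd B_-$, the point $x$ is distinct from $p_i$ and $q_i$, hence lies in the interior of each $\Ga_i$, and therefore $d(p_i,x)+d(x,q_i)=R$. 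Write $\ell$ for length of a curve.

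Now form two ``crossed'' paths: let $\al$ follow $\Ga_1$ from $p_1$ to $x$ and then $\Ga_2$ from $x$ to $q_2$, and let $\be$ follow $\Ga_2$ from $p_2$ to $x$ and then $\Ga_1$ from $x$ to $q_1$. Then
\begin{equation*}
	\ell(\al)+\ell(\be)=\big(d(p_1,x)+d(x,q_1)\big)+\big(d(p_2,x)+d(x,q_2)\big)=2R.
\end{equation*}
On the other hand $\al$ joins $\bd B_+$ to $\bd B_-$, so since $B$ is good of width $R$ we have $\ell(\al)\geq d(p_1,\bd B_-)=R$, and likewise $\ell(\be)\geq R$. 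Combined with the displayed identity, this forces $\ell(\al)=\ell(\be)=R$.

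In particular $\al$ is a path of length $R$ from $p_1$ to $q_2$, while $d(p_1,q_2)\geq d(p_1,\bd B_-)=R$; thus $\al$ realises the distance $d(p_1,q_2)$ and, reparametrised by arc length, is a smooth geodesic. Being smooth at $x$, its incoming and outgoing velocities at $x$ coincide; but these are exactly the unit tangent vectors of $\Ga_1$ and of $\Ga_2$ at $x$, coherently oriented from $\bd B_+$ towards $\bd B_-$. Since a geodesic is determined by a point and a tangent vector there, $\Ga_1$ and $\Ga_2$ lie on one and the same geodesic of $\Ss^2_\nu$; moreover the equalities $\ell(\al)=\ell(\be)=R$ give $d(x,q_1)=d(x,q_2)$ and $d(x,p_1)=d(x,p_2)$, so $\Ga_1$ and $\Ga_2$ have the same endpoints and therefore coincide (as noted just before the statement of the lemma). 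This contradicts $\Ga_1\neq\Ga_2$.

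The argument is short, and I expect no serious obstacle; the two points that must be handled with a little care are the passage from ``$\al$ is minimising'' to ``$\Ga_1$ and $\Ga_2$ agree near $x$'' — which relies on $x$ being a genuine interior point of both tracks, so that $\al$ has an honest corner at $x$ unless the two tangent directions already match, and this is exactly where the hypothesis $x\in\ring B$ is used — and the use of the width equality $d(p,\bd B_-)=R$ for $p\in\bd B_+$, which is precisely the definition of ``good'' and is what yields the lower bounds $\ell(\al),\ell(\be)\geq R$.
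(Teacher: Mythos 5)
Your proof is correct, and it rests on the same basic device as the paper's — crossing the two tracks at $x$ and extracting a contradiction from the goodness of $B$ — but the execution is genuinely different. The paper runs a trichotomy on $xq_1$ versus $xq_2$: in the two strict cases one of the crossed paths $p_1xq_2$ or $p_2xq_1$ is already shorter than $R$, and in the equality case it shortcuts the corner at $x$ inside a normal neighborhood via the triangle inequality, in every case producing a path from $\bd B_+$ to $\bd B_-$ of length less than $R$. Your identity $\ell(\al)+\ell(\be)=2R$ together with the lower bounds $\ell(\al),\ell(\be)\geq R$ collapses the three cases into one, and you replace the explicit corner-cut by the abstract fact that a length-minimising path parametrised by arc length is a smooth geodesic; the contradiction you reach is with the distinctness of the tracks (via uniqueness of geodesics with prescribed position and velocity at $x$, plus the matching of the arc-length distances to the endpoints) rather than with the width of the band. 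A small bonus of your version is that it transparently disposes of the degenerate possibility that the two tracks are tangent at $x$: the paper's corner-cutting step needs a genuine corner for the strict triangle inequality, and handles the tangent case only implicitly, whereas for you that case is exactly the one where the tracks are shown to coincide.
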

\begin{proof}
	Suppose for the sake of obtaining a contradiction that two tracks $p_1q_1$ and $p_2q_2$, with $p_i\in \bd B_+$ and $q_i\in \bd B_-$, intersect at a point $x\in \ring B$ (see fig.~\ref{F:tracks}). Then one of the following must occur:\footnote{Here $ab$ denotes the segment of the corresponding geodesic and also its length.}

\begin{figure}[ht]
	\begin{center}
		\includegraphics[scale=.40]{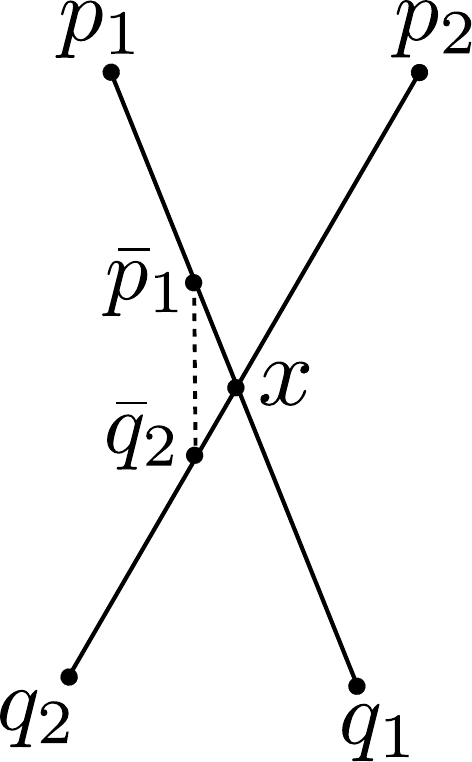}
		\caption{}
		\label{F:tracks}
	\end{center}
\end{figure}

\begin{enumerate}
	\item [(i)] $xq_1=xq_2$; 
	\item [(ii)]  $xq_1>xq_2$;
	\item [(iii)] $xq_1<xq_2$.
\end{enumerate}

If (i) holds, let $\bar{p}_1,\bar{q}_2$ be points on $p_1x$ and $xq_2$, respectively, which lie in a normal neighborhood of $x$. Then, by the triangle inequality,
\begin{equation*}
	R=p_1q_1=p_1x+xq_2>p_1\bar{p}_1+\bar{p}_1\bar{q}_2+\bar{q}_2q_2.
\end{equation*}
This contradicts the fact that $B$ is a good band of width $R$.

If (ii) holds then $R=p_1q_1>p_1x+xq_2$. Again, this contradicts the fact that $p_1q_1$ is a path of minimal length joining $p_1$ to $\bd B_-$. Similarly, if (iii) holds then $R=p_2q_2>p_2x+xq_1$, contradicting the fact that $p_2q_2$ is a path of minimal length joining $p_2$ to $\bd B_-$.
\end{proof}

\begin{urem}
	Note that this result may be false for an acceptable band. In the proof, we have implicitly used the fact that if $pq$ is a path of minimal length joining $p\in \bd B_+$ to $\bd B_-$ then $pq$ is also a path of minimal length joining $q$ to $\bd B_+$, and this is not necessarily true for an acceptable band.
\end{urem}

\begin{lemmaa}\label{L:uniquetrack}
	Every point in the interior of a good band $B$ lies in a unique track of $B$.
\end{lemmaa}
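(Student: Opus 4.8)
The statement has two halves: at most one track through each interior point (uniqueness), and at least one (existence). Only the second is substantial.

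\textbf{Uniqueness and preliminaries.} If some $x\in\ring B$ lay on two distinct tracks, these would meet at $x\in\ring B$, contradicting (\ref{L:nocrossing}); so uniqueness is immediate. I record a few facts about a single track $\Ga$ on which everything below rests. Since $R<\tfrac\pi2<\pi$, a minimizing geodesic segment of length $R$ is the unique minimizing geodesic between its endpoints and is a simple arc. A track joins a point $p\in\bd B_+$ to a point $q\in\bd B_-$, and these curves are disjoint (as $d(p,\bd B_-)=R>0$), so the endpoints of $\Ga$ are distinct, one on each of $\bd B_\pm$. The interior of $\Ga$ lies in $\ring B$: if $\Ga$ left $B$ it would have to re-enter through the \emph{same} boundary component, since $\Ss^2_\nu\ssm B$ has exactly the two components $\set{(z,\theta)}{\theta>\theta_+(z)}$ and $\set{(z,\theta)}{\theta<\theta_-(z)}$, and then a proper sub-arc of $\Ga$ would join $\bd B_+$ to $\bd B_-$ with length $<R=d(\bd B_+,\bd B_-)$, absurd. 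Finally, because the width is realized symmetrically, $\Ga$ is also a minimizing geodesic from its $\bd B_-$-endpoint to $\bd B_+$; hence, by the first variation formula, $\Ga$ meets each of $\bd B_\pm$ orthogonally, and it follows that the track issuing from $p\in\bd B_+$ is unique, namely the geodesic leaving $p$ along the inward unit normal to $\bd B_+$.

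\textbf{Existence.} Consider the inward normal exponential map $G\colon\bd B_+\times[0,R]\to\Ss^2_\nu$, $G(p,s)=\exp_p\big(s\,v_{\mathrm{in}}(p)\big)$, where $v_{\mathrm{in}}(p)$ is the inward unit normal to $\bd B_+$ at $p$. By the previous paragraph $s\mapsto G(p,s)$ is precisely the track through $p$, so $G$ is continuous, $G(\cdot,0)=\mathrm{id}_{\bd B_+}$, and $G(\bd B_+\times\{R\})\subset\bd B_-$. Running the same discussion from the $\bd B_-$ side shows that $p\mapsto G(p,R)$ is a continuous bijection $\bd B_+\to\bd B_-$, hence (both being circles) a homeomorphism. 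On the open annulus $\bd B_+\times(0,R)$ the map $G$ is injective: two coinciding values in $\ring B$ come either from the same track (impossible, tracks being simple) or from two distinct tracks (impossible by (\ref{L:nocrossing})). By invariance of domain, $G$ maps $\bd B_+\times(0,R)$ homeomorphically onto an open subset of $\Ss^2_\nu$ contained in $\ring B$; this subset is also closed in $\ring B$, since a limit point would, by compactness of $\bd B_+\times[0,R]$, be a value $G(p,s)$ with $s\in(0,R)$ (the cases $s=0,R$ give points of $\bd B$). As $\ring B$ is connected, this subset is all of $\ring B$, so every interior point of $B$ lies on the track $G(p,\cdot)$ for a (unique) $p$.

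\textbf{The main difficulty.} Everything above is clean \emph{provided} the boundary curves $\bd B_\pm$ are of class $C^1$, so that ``inward unit normal'' and ``normal exponential map'' make sense and the first-variation argument applies; but (\ref{D:acceptable}) only requires the functions $\theta_\pm$ to be continuous. The crux of a complete proof is therefore to upgrade this regularity: I expect the good-band condition $d\big((z,\theta_+(z)),\bd B_-\big)\equiv R$ and its symmetric counterpart, together with $R<\tfrac\pi2$, to force $\theta_\pm$ to be $C^1$ — constant-width conditions of this type are rigid — and would try to establish this first. Failing that, one can bypass differentiability by replacing $G$ with the evaluation map on the \emph{compact} space $\sr T$ of all tracks (closed, hence compact, in the unit tangent bundle of $\Ss^2_\nu$ once each track is recorded by its initial vector): existence of a track through each boundary point comes from compactness, injectivity over the interior from (\ref{L:nocrossing}) and simplicity, and one then shows the image of $\sr T\times(0,R)$ is open and closed in $\ring B$. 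This variant trades the normal-exponential machinery for a more delicate invariance-of-domain argument, and carrying it through is where the real work lies.
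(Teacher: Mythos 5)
Your uniqueness argument is exactly the paper's (it is immediate from (\ref{L:nocrossing})), but the existence half of your proposal has a genuine gap that you yourself flag and do not close. The entire normal-exponential construction $G(p,s)=\exp_p(s\,v_{\mathrm{in}}(p))$ presupposes that $\bd B_{\pm}$ are $C^1$ curves with well-defined inward normals, and the definition of a good band only gives continuity of $\theta_{\pm}$; the first-variation argument showing a track meets $\bd B_{\pm}$ orthogonally, the claim that the track issuing from $p$ is unique and varies continuously in $p$, and the invariance-of-domain step all depend on this unestablished regularity. Your proposed fixes (proving a constant-width rigidity statement, or an evaluation map on the compact space of tracks) are only sketched, and the second still requires showing the image of the interior tracks is \emph{open} in $\ring B$, which again needs some local structure you have not supplied. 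As it stands, the proof is not complete.

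The paper avoids all of this by a softer argument you may want to compare with. Let $T$ be the union of all tracks. One first shows $a\in T$ if and only if $d(a,\bd B_+)+d(a,\bd B_-)=R$: the forward direction is trivial, and conversely the concatenation of minimizing geodesics from $a$ to $\bd B_+$ and to $\bd B_-$ is a path of length $R$ joining the two boundary components, hence a track by definition. This characterization makes $T$ closed with no regularity needed. If some component $V$ of $\ring B\ssm T$ were non-empty, its boundary lies in $T$; picking two boundary points of $V$ lying on distinct tracks and then a track $\Ga$ starting at a suitable point of $\bd B_-$, one uses (\ref{L:nocrossing}) to force $\Ga$ to enter $V$, a contradiction. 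So existence follows from closedness of $T$ plus connectedness plus the non-crossing lemma, entirely bypassing the differentiability of the boundary that blocks your approach.
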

\begin{figure}[ht]
	\begin{center}
		\includegraphics[scale=.35]{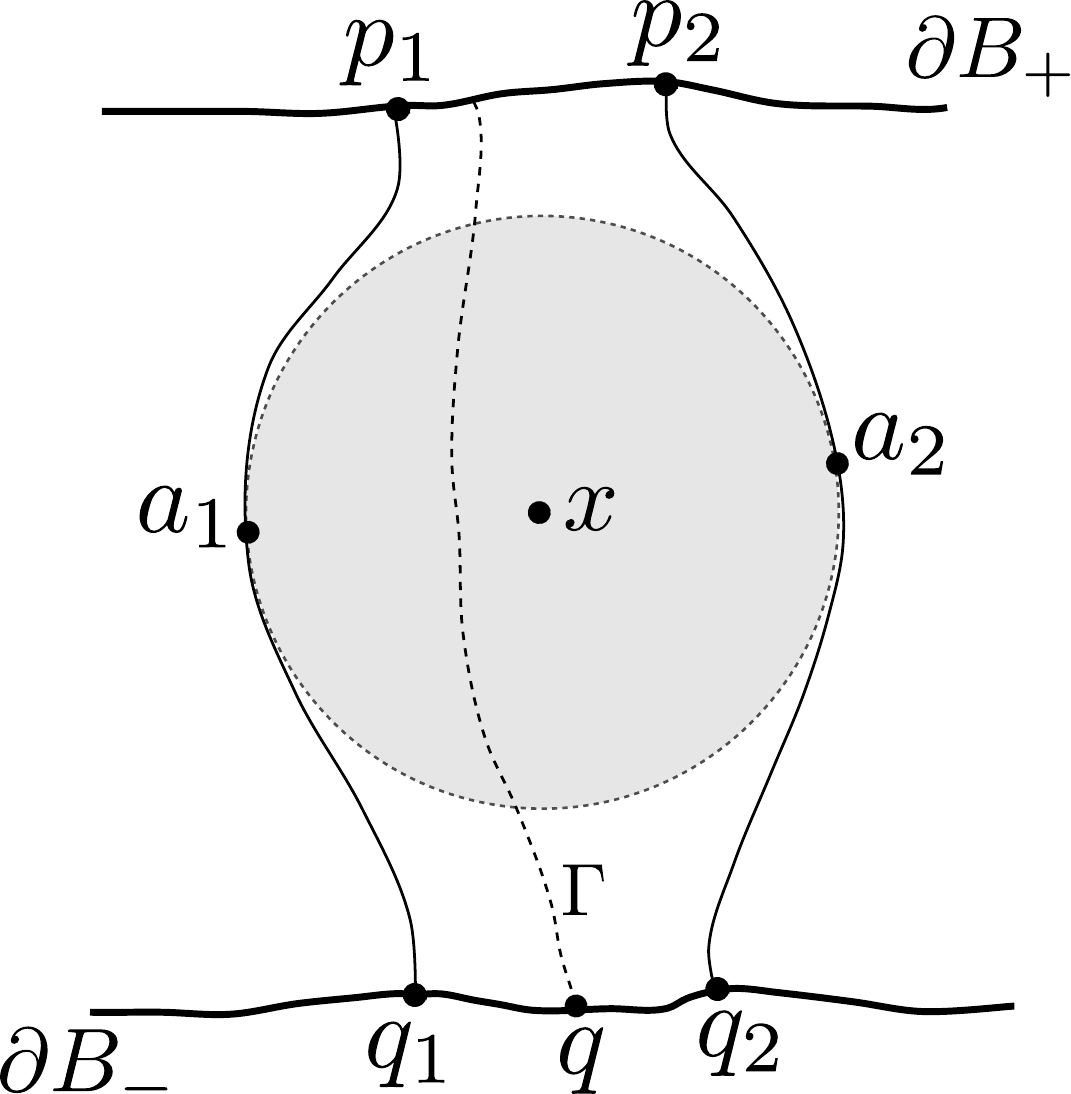}
		\caption{}
		\label{F:tracks2}
	\end{center}
\end{figure}
\begin{proof}
	Let $R$ be the width of $B$ and let $T\subs \Im(B)$ consist of all points which lie on some track of $B$. It is clear from the definitions that $\bd B_{\pm}\subs T$. We claim that $a\in T$ if and only if 
	\begin{equation}\label{E:sum}
		d(a,\bd B_+)+d(a,\bd B_-)=R
	\end{equation}
	The existence of a track through $a$ implies that $d(a,\bd B_+)+d(a,\bd B_-)\leq R$. If the inequality were strict, then there would exist a path of length less than $R$ joining $\bd B_+$ to $\bd B_-$, which is impossible. Conversely, suppose \eqref{E:sum} holds, and let $p\in \bd B_+$, $q\in \bd B_-$ be the points of $\bd B_+$ (resp.~$\bd B_-$) which are closest to $a$. Then the concatenation of the geodesics $pa$ and $aq$ is a path of length $R$ joining $\bd B_+$ to $\bd B_-$, i.e., a track. Hence, $a\in T$. 
	
	The characterization of $T$ that we have established implies that the latter is a closed set. Now suppose that $x\nin T$, let $V$ be the component of $\ring B\ssm T$ containing $x$ (see fig.~\ref{F:tracks2}, where $V$ is depicted as a gray open ball). Since $T$ is closed, any point in $\bd V$ lies in $T$. Choose points $a_1,a_2\in \bd V\ssm (\bd B_+\cup \bd B_-)$ such that the (unique) tracks $p_iq_i$ going through $a_i$ do not coincide, where $p_i\in \bd B_+$ and $q_i\in \bd B_-$ ($i=1,2$). Such points $a_i$ exist because otherwise $V= \ring B$, which is absurd since any point on a track lies in $T$. Because the tracks are distinct, at least one of $p_1\neq p_2$ or $q_1\neq q_2$ must hold. Assume without loss of generality that $q_1\neq q_2$, and let $q\in \bd B_-$ be such that it is possible to join $q$ to $x$ in $\Im(B)$ without crossing $p_1q_1$ nor $p_2q_2$. Let $\Ga$ be a track through $q$. Then $\Ga$ joins $q$ to $\bd B_+$, but it does not intersect $p_1q_1$ nor $p_2q_2$ by  (\ref{L:nocrossing}). It follows that $\Ga$ must contain points of $V$, a contradiction which shows that  $T=\Im(B)$. In other words,  every point of $\Im(B)$ lies in a track of $B$; uniqueness has already been established in (\ref{L:nocrossing}).
\end{proof}

\begin{cora}
	Let $B$ be a good band of width $R$. Then $d(a,\bd B_+)+d(a,\bd B_-)=R$ for any $a\in \Im(B)$.\qed
\end{cora}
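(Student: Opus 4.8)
The plan is to read this corollary off from (\ref{L:uniquetrack}) and the argument used in its proof, with only the two boundary curves requiring separate (trivial) attention.

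First I would dispose of the boundary cases. If $a\in \bd B_+$, then $d(a,\bd B_+)=0$, while $d(a,\bd B_-)=R$ directly from the definition of a good band of width $R$; hence the asserted equality holds. The case $a\in \bd B_-$ is identical by symmetry. Neither case needs the notion of a track.

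Next, for $a\in \ring B$, I would invoke (\ref{L:uniquetrack}) to obtain the (unique) track $\Ga$ of $B$ through $a$, running from some $p\in \bd B_+$ to some $q\in \bd B_-$. Since $\Ga$ is a length-minimizing geodesic of total length $R$ and $a$ lies on it, its two subarcs from $p$ to $a$ and from $a$ to $q$ are themselves minimizing and have lengths summing to $R$; therefore $d(a,\bd B_+)+d(a,\bd B_-)\leq R$. For the reverse inequality I would show that strictness is impossible: if $p'\in \bd B_+$ and $q'\in \bd B_-$ realize the distances from $a$, then concatenating the minimizing geodesics $p'a$ and $aq'$ produces a path from $\bd B_+$ to $\bd B_-$ of length $d(a,\bd B_+)+d(a,\bd B_-)$, and any such path has length at least $R$, because a path issuing from a point $p\in\bd B_+$ cannot reach $\bd B_-$ in length less than $d(p,\bd B_-)=R$. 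This forces equality. (In fact this is exactly the equivalence ``$a$ lies on a track if and only if $d(a,\bd B_+)+d(a,\bd B_-)=R$'' established in the proof of (\ref{L:uniquetrack}); combined with the conclusion $T=\Im(B)$ of that lemma, the corollary is immediate.)

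I expect essentially no obstacle here: the statement is a bookkeeping consequence of (\ref{L:uniquetrack}). The only point deserving care is that (\ref{L:uniquetrack}) is phrased for interior points, which is why the two boundary curves $\bd B_+$ and $\bd B_-$ are handled by hand above.
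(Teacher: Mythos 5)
Your proposal is correct and follows exactly the route the paper intends: the corollary is stated with no proof precisely because it is the characterization $a\in T\Leftrightarrow d(a,\bd B_+)+d(a,\bd B_-)=R$ established inside the proof of (\ref{L:uniquetrack}), combined with the conclusion $T=\Im(B)$ there. Your separate (trivial) treatment of the boundary curves is a reasonable extra precaution, though the paper's proof of the lemma already concludes $T=\Im(B)$ rather than $T=\ring B$, so even that case is covered.
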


\begin{lemmaa}\label{L:central}
	Let $B$ be a good band of width $R$ and let $0<r<R$. Then the set $\ga_r$ consisting of all those points in $\ring B$ at distance $r$ from $\bd B_+$ is (the image of) a closed admissible curve whose radius of curvature $\rho$ satisfies $r\leq \rho\leq \pi-R+r$ almost everywhere.
\end{lemmaa}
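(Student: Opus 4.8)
The plan is to parametrize $\ga_r$ explicitly using the structure of the good band $B$, and then to compute its radius of curvature by comparison with circles. First I would use the description of $B$ as the image of the map $(t,u)\mapsto(\exp(2\pi\nu it),(1-u)\theta_-(t)+u\theta_+(t))$ together with (\ref{L:uniquetrack}): every point of $\ring B$ lies on a unique track, i.e.\ a length-minimizing geodesic of length $R$ from $\bd B_+$ to $\bd B_-$. Mapping tracks down to $\Ss^2$ via the covering projection, each track becomes an arc of a geodesic (great circle) of $\Ss^2$ of length $R<\pi$. For a point on a track at distance $r$ from $\bd B_+$ (hence distance $R-r$ from $\bd B_-$), sliding it along its track gives a continuous curve; the map ``$z\in\Ss^1_\nu\mapsto$ the point of $\ga_r$ on the track through $z$'' should be the desired parametrization once I check it is admissible, i.e.\ that it is $H^1$ with a well-defined unit tangent everywhere. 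Continuity follows from (\ref{L:uniquetrack}) (uniqueness of tracks makes the assignment single-valued) and the continuity of $\theta_\pm$; I would need to argue that $\ga_r$ has a continuous unit tangent and that its velocity lies in $L^2$ — this is where the structure of the problem bites, since the band's boundary curves $\theta_\pm$ are only continuous, not smooth.

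The core estimate $r\le\rho\le\pi-R+r$ a.e.\ is where I expect the real work. The upper and lower bounds should come from two comparison principles. For the lower bound $\rho\ge r$: at a point $p\in\ga_r$ where $\rho$ is defined, the osculating circle to $\ga_r$ at $p$ is contained in $\Ss^2$ (as noted in the excerpt before (\ref{L:osculating})), and its center is at distance $\rho$ from $p$ along $\Ss^2$. The normal direction to $\ga_r$ at $p$ should, by the ``equidistant curve'' structure, point along the track through $p$ — essentially because $\ga_r$ is a level set of the function $d(\cdot,\bd B_+)$ restricted to the band, whose gradient is the unit tangent to the track. If the curve $\ga_r$ bent more sharply than radius $r$ toward $\bd B_+$, its osculating circle on that side would cross $\bd B_+$ at distance $<r$, contradicting that $\ga_r$ consists of points \emph{at distance exactly} $r$ from $\bd B_+$ (a point strictly inside the osculating disk would be closer). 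Symmetrically, bending too sharply toward $\bd B_-$ with radius $<R-r$ would contradict the distance-$R-r$ property; combined with $\rho=\pi-(\text{radius toward the other side})$ for the reversed orientation, this yields $\pi-\rho\ge R-r$, i.e.\ $\rho\le\pi-R+r$. I would make this rigorous by the density argument (\ref{L:dense})-style reduction: approximate $\ga_r$ by $C^2$ curves, or better, work directly with the a.e.-defined geodesic curvature via the osculating-circle characterization and the triangle inequality on $\Ss^2$, exactly as in the proof of (\ref{L:nocrossing}).

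It remains to check that $\ga_r$ is a \emph{closed} curve and that the rotation/winding behavior is as expected: closedness is immediate since $\Ss^1_\nu$ is a circle and the parametrization descends consistently (it is periodic with the same period as the band). The main obstacle, as flagged, is regularity: proving that the equidistant set $\ga_r$ is genuinely an admissible curve (continuous unit tangent, $L^2$ velocity) rather than merely a continuous set, given that the boundary data $\theta_\pm$ are only $C^0$. I would handle this by noting that the signed distance function to $\bd B_+$ within the band is, by (\ref{L:uniquetrack}) and the no-crossing lemma (\ref{L:nocrossing}), a well-behaved coordinate: the ``track foliation'' gives a homeomorphism $B\cong\Ss^1_\nu\times[0,R]$, and in these coordinates $\ga_r$ is the slice at height $r$, smooth in the track-direction; the geodesic curvature bound then transfers from the corresponding bound for equidistants of geodesics on $\Ss^2$. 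If full $H^1$-regularity is awkward to extract from merely continuous $\theta_\pm$, the cleaner route is to invoke that $B$ arises (via (\ref{L:regularisgood})) as the lift of a regular band $B_\ga$ of an actual admissible curve $\ga$, so the track foliation inherits enough regularity; but since the lemma is stated for an abstract good band, I would instead give the self-contained argument via the distance function, citing the appendix results on convexity in $\Ss^n$ as needed.
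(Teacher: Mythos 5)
Your overall strategy --- foliate $\ring B$ by tracks, realize $\ga_r$ as the level set $\set{p\in \ring B}{d(p,\bd B_+)=r}$, and obtain the curvature bounds by comparison with the parallel curves at the other levels --- is the same as the paper's. The genuine gap is in the regularity step, which you flag but do not resolve. The paper's mechanism is quantitative: it defines $\no(p)$ as the unit tangent at $p$ to the unique track through $p$ (and $\ta(p)=\no(p)\times p$) and proves that $\no$ is \emph{Lipschitz} on every compact $K\subs \ring B$. This is the step you are missing: two nearby tracks $\Ga_{a_0},\Ga_{a_1}$, extended as geodesics, meet at a vertex $a_2$ which by (\ref{L:nocrossing}) lies outside $\ring B$, so the side $a_0a_2$ has length at least $d_0=\min\{d(K,\bd B_+),d(K,\bd B_-)\}>0$; the spherical law of sines in $\tri a_0a_1a_2$ then bounds the angle between $\no(a_0)$ and $\no(a_1)$ by a constant times $a_0a_1$. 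With this, $\ga_r$ is produced as the integral curve of the Lipschitz field $\ta$, its frame $\Phi_{\ga_r}$ is absolutely continuous with bounded (hence $L^2$) derivative, and admissibility and closedness ($d(\ga_r(t),\bd B_+)$ has vanishing derivative) follow. Your substitutes --- the homeomorphism $B\home \Ss^1_\nu\times[0,R]$, or ``citing the appendix results on convexity'' --- cannot deliver this: a homeomorphism carries no differentiability, and the appendix concerns convex hulls and hemispheres, not regularity of equidistant sets. (Your other fallback, importing regularity from (\ref{L:regularisgood}), you correctly rule out yourself, since the lemma must hold for abstract good bands, as (\ref{C:contratil}) requires.)

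Second, your osculating-circle argument for $r\leq\rho\leq\pi-R+r$ is not sound as written: from ``the osculating circle on that side would cross $\bd B_+$'' you cannot conclude that a point of $\ga_r$ lies at distance less than $r$ from $\bd B_+$, because the osculating circle agrees with the curve only to second order and the curve may cross it. The paper's argument avoids this entirely: for each $\theta\in(r-R,r)$ the translation of $\ga_r$ by $\theta$ is the curve $\ga_{r-\theta}$, which is again an integral curve of $\ta$ and hence regular; equation \eqref{E:partialt2} of (\ref{L:band}) then forces $\sin(\rho(t)-\theta)>0$, i.e.\ $0<\rho(t)-\theta<\pi$, for a.e.\ $t$ and all such $\theta$, and letting $\theta\to r$ and $\theta\to r-R$ gives exactly $r\leq\rho\leq\pi-R+r$. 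Your intuition (no focal points before either boundary) is the right one, but it must be routed through the regularity of the whole family of parallels, not through the osculating circle at a single point.
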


\begin{proof}
	For $p\in \ring B$, let $\Ga_p\colon [0,R]\to \Ss^2_\nu$ denote the unique track through $p$, parametrized by arc-length, with $\Ga_p(0)\in \bd B_-$ and $\Ga_p(R)\in \bd B_+$. Define vector fields $\no$ and $\ta$ on $\ring B$ by letting $\no(p)$ be the unit tangent vector to $\Ga_p$ at $p$ and $\ta(p)=\no(p)\times p$. We claim that the restriction of $\no$ (and consequently that of $\ta$) to any compact subset $K$ of $\ring B$ satisfies a Lipschitz condition. Let $d_0<\min\{d(K,\bd B_+)\?,\?d(K,\bd B_-)\}$, let $a_0,a_1\in K$, with $a_1$ close to $a_0$, and consider the (spherical) triangle having $\Ga_{a_0}$, $\Ga_{a_1}$, $a_0a_1$ as sides and $a_0$, $a_1$, $a_2$ as vertices (see fig.~\ref{F:tracks3}). The point $a_2$ must lie outside of $\ring B$ by (\ref{L:nocrossing}). Let $p_0$ be the point where the geodesic segment $a_0a_2$ intersects $\bd B_{\pm}$. Then
	\[
	a_0a_2\geq a_0p_0\geq  d_0.
	\]
	 Hence, by the law of sines (for spherical triangles) applied to $\tri a_0a_1a_2$,
	\begin{equation*}
		\frac{\sin a_2	}{\sin(a_0a_1)}=\frac{\sin{a_1}}{\sin(a_0a_2)}\leq \frac{1}{\sin d_0},
	\end{equation*}
	Using parallel transport we may compare
	\begin{equation*}
		\frac{\angle (\no(a_0),\no(a_1))}{a_0a_1}\text{\quad with \quad} \frac{\san a_2}{a_0a_1}\approx \frac{\sin a_2}{\sin(a_0a_1)}
	\end{equation*} 
	to obtain a Lipschitz condition satisfied by the former, but we omit the computations.
	\begin{figure}[ht]
		\begin{center}
			\includegraphics[scale=.33]{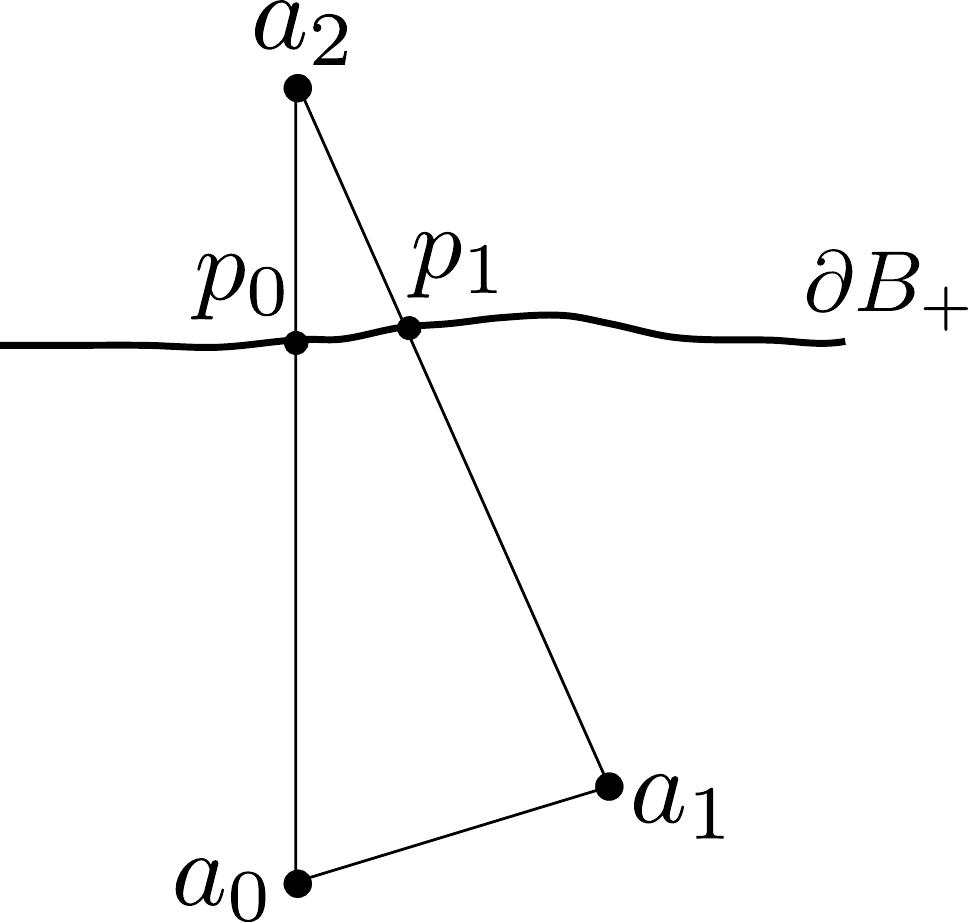}
			\caption{}
			\label{F:tracks3}
		\end{center}
	\end{figure}
	
	Now given $p\in \ring B$ at distance $r$ from $\bd B_+$, $0<r<R$, let $\ga_r$ be the integral curve through $p$ of the vector field $\ta$. Then $\ga_r$ is parametrized by arc-length and its frame is given by
	\begin{equation*}
		\Phi_{\ga_r}(t)=\begin{pmatrix}
		| & | & | \\
		\ga_r(t) & \ta(\ga_r(t)) & \no(\ga_r(t)) \\
		| & | & |
		\end{pmatrix}
	\end{equation*}
	by construction. If $d(t)=d(\ga_r(t),\bd B_+)$ then $\dot d\equiv 0$, since $\ta(\ga_r(t))$ is orthogonal to the track through $\ga_r(t)$ for every $t$. Hence $d$ is constant, equal to $r$, and $\ga_r$ is a closed curve. Moreover, since $\ta$ and $\no$ satisfy a Lipschitz condition when restricted to the image of $\ga_r$, we see that the entries of $\Phi_{\ga_r}$ are absolutely continuous with bounded derivative. In particular, these derivatives belong to $L^2$. We conclude that $\ga_r$ is admissible.
	
	For $r-R<\theta<r$, the curve $\ga_{r-\theta}$ is the translation of $\ga_r$ by $\theta$ (as defined on p.~\pageref{E:translation}, eq.~\eqref{E:translation}) by construction. Since $\ga_{r-\theta}$ is also regular, we deduce from \eqref{E:partialt2} in (\ref{L:band}) that the radius of curvature $\rho$ of $\ga_r$ satisfies
	\begin{equation*}
		0<\rho(t)-\theta<\pi
	\end{equation*}
	for all $t$ at which $\rho$ is defined and all $\theta$ in $(r-R,r)$. Therefore, $r\leq \rho\leq \pi-R+r$ a.e.. 
\end{proof}
	
	\begin{cora}\label{C:central}
	Let $B$ be a good band of width $R$ and let $0<r<R$. Then the central curve $\ga_{\frac{R}{2}}$ is an admissible curve whose radius of curvature is restricted to $\big[\frac{R}{2},\pi-\frac{R}{2}\big]$.\hfill\qed
\end{cora}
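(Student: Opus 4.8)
The plan is simply to specialize Lemma (\ref{L:central}) to the value $r=\frac{R}{2}$. Since $0<\frac{R}{2}<R$, the hypothesis of that lemma is met with this choice of $r$, and it tells us at once that $\ga_{R/2}$ is (the image of) a closed admissible curve. Moreover, the bound on the radius of curvature furnished by (\ref{L:central}) is $r\leq \rho\leq \pi-R+r$ almost everywhere; inserting $r=\frac{R}{2}$ gives $\frac{R}{2}\leq \rho\leq \pi-R+\frac{R}{2}=\pi-\frac{R}{2}$ a.e., which is exactly the asserted restriction. So there is nothing to check beyond reading off the conclusion of (\ref{L:central}) at a single numerical value of its parameter.

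It is worth recalling why the term ``central curve'' is appropriate, although this plays no logical role in the proof: by the corollary immediately following (\ref{L:central}), every $a\in \Im(B)$ satisfies $d(a,\bd B_+)+d(a,\bd B_-)=R$, so a point lying at distance $\frac{R}{2}$ from $\bd B_+$ is automatically at distance $\frac{R}{2}$ from $\bd B_-$ as well. Thus $\ga_{R/2}$ is genuinely equidistant from the two boundary curves of $B$, which justifies the name and also explains why the particular value $r=\frac{R}{2}$ is singled out.

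I do not expect any obstacle: the statement is a direct one-line consequence of (\ref{L:central}). The only thing to note is that the clause ``let $0<r<R$'' in the corollary's statement is a harmless leftover; only the instance $r=\frac{R}{2}$ is actually used, and the conclusion is then independent of any further choice of $r$.
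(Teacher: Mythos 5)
Your proof is correct and is exactly the argument the paper intends: the corollary is stated with a bare \qed precisely because it is the instance $r=\frac{R}{2}$ of Lemma (\ref{L:central}), whose curvature bound $r\leq\rho\leq\pi-R+r$ specializes to $\frac{R}{2}\leq\rho\leq\pi-\frac{R}{2}$. Your side remark that the hypothesis ``let $0<r<R$'' is a vestigial leftover in the corollary's statement is also accurate.
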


Before finally presenting a proof of (\ref{P:negativecondensed}), we extend the definition of the regular band of a curve to any space $\sr L_{\ka_1}^{\ka_2}$. 
\begin{defna}
Let $\ga\in \sr L_{\ka_1}^{\ka_2}$. The \tdef{\tup(regular\tup) band} $B_\ga$ spanned by $\ga$ is the map:
	\begin{equation*}
			B_\ga\colon [0,1]\times [\rho_1-\pi,\rho_2]\to \Ss^2,\quad B_\ga(t,\theta)=\cos \theta\, \ga(t)+\sin \theta\, \no(t).
	\end{equation*}
\end{defna}
The statement and proof of (\ref{L:band}) still hold, except for obvious modifications.
	
	\begin{proof}[Proof of (\ref{P:negativecondensed})] By (\ref{L:C^2}), we may assume that $\ga\in \sr L_{\ka_0}^{+\infty}(I)$ $(\ka_0<0)$ is of class $C^2$. Let $\rho_\ga$ denote its radius of curvature, $\rho_0=\arccot \ka_0$, 
	\begin{equation}\label{E:centranslation}
		\rho_1=\frac{\pi-\rho_0}{2},\ \ \ka_1=\cot \rho_1
	\end{equation}
	(compare (\ref{C:twoviewpoints})) and let $\eta$ be the translation of $\ga$ by $\rho_1$. Then the radius of curvature $\rho_\eta$ of $\eta$ satisfies $\rho_1<\rho_\eta<\pi-\rho_1$. Since $\rho_\eta$ is continuous, there exists $\bar\rho_1$ with $\rho_1<\bar\rho_1<\frac{\pi}{2}$ such that 
	\begin{equation*}
\bar\rho_1<\rho_\eta<\pi-\bar\rho_1.
	\end{equation*}
	In particular, the regular band of $\eta$ may be extended from $[0,1]\times [-\rho_1,\rho_1]$ to $[0,1]\times [-\bar\rho_1,\bar\rho_1]$. Consider the space $\sr G$ of good bands of width $R=2\bar\rho_1$ and the corresponding space $\sr A\sups \sr G$ of acceptable bands. Let $B_0$ the regular band of $\eta$ (whose image is the same as that of the regular band of $\ga$), and $B_1$ be the regular band of a condensed circle in $\sr L_{\ka_0}^{+\infty}$ traversed $\nu$ times, where $\nu$ is the rotation number of $\ga$. The combination of (\ref{L:regularisgood}), (\ref{C:contratil}) and  (\ref{C:central}) yields a homotopy $s\mapsto \eta_s$ from $\eta=\eta_0$ to a circle $\eta_1$ traversed $\nu$ times, where $\eta_s$ is the central curve of a good band $B_s$, $s\in [0,1]$. Moreover, (\ref{C:central}) guarantees that the radius of curvature $\rho_{\eta_s}$ of $\eta_s$ satisfies $\bar\rho_1\leq \rho_{\eta_s}\leq \pi-\bar\rho_1$ for each $s\in [0,1]$. Consequently, 
	\begin{equation*}
		\rho_1<\rho_{\eta_s}<\pi-\rho_1\text{\quad for each $s\in [0,1]$}
	\end{equation*}
	and it follows that $s\mapsto \eta_s$ is a path in $\sr L_{-\ka_1}^{+\ka_1}$ from $\eta$ to a parametrized circle. If we let $\ga_s$ be the translation of $\eta_s$ by $-\rho_1$, then $\ga_0$ is the original curve $\ga$, and $s\mapsto \ga_s$ is a path in $\sr L_{\ka_0}^{+\infty}$ from $\ga$ to a circle $\ga_1$ traversed $\nu$ times.
	
	We have proved that $\ga\in \sr L_{\ka_0}^{+\infty}(I)$ lies in the same component of $\sr L_{\ka_0}^{+\infty}$ as a circle traversed a number of times. The latter space may be replaced by $\sr L_{\ka_0}^{+\infty}(I)$ without altering the conclusion by the usual trick of substituting $\ga_s$ by $\Phi_{\ga_s}(0)^{-1}\ga_s$ ($s\in [0,1]$).
	\end{proof}

\vfill\eject
\chapter{Non-diffuse Curves}
In this section we define a notion of rotation number for any non-diffuse curve in $\sr L_{\ka_0}^{+\infty}$ and prove a bound on the total curvature of such a curve which depends only on its rotation number and $\ka_0$ (prop.~(\ref{P:totbound})).
\begin{lemmaa}\label{L:connectedness}
Suppose $X$ is a connected, locally connected topological space and $C\neq \emptyset$ is a closed connected subspace. Let $\Du_{\al \in J}B_\al$ be the decomposition of $X\ssm C$ into connected components. Then:
\begin{enumerate}
	\item [(a)] $\bd B_\al\subs C$ for all $\al\in J$.
	\item [(b)]  For any $J_0\subs J$, the union $C\cup \bcup_{\be \in J_0}B_\be$ is also connected.
\end{enumerate} 
\end{lemmaa}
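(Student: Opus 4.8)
The plan is to prove this as a standard point-set topology fact, working entirely from the definitions of connected component and boundary.

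For part (a), fix $\al\in J$. Since $B_\al$ is a connected component of the open subset $X\ssm C$ of the locally connected space $X$, the set $B_\al$ is open in $X$ (components of open subsets of locally connected spaces are open). Hence $\bd B_\al = \ol{B_\al}\ssm B_\al$. It therefore suffices to show $\ol{B_\al}\ssm B_\al \subs C$, equivalently that no point of $X\ssm C$ lies in $\ol{B_\al}\ssm B_\al$. Suppose $p\in (X\ssm C)\cap \ol{B_\al}$; let $B_\be$ be the component of $X\ssm C$ containing $p$. If $\be\neq \al$, then $B_\be$ is an open neighborhood of $p$ disjoint from $B_\al$ (distinct components are disjoint), contradicting $p\in\ol{B_\al}$. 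So $\be=\al$, i.e.\ $p\in B_\al$. This shows $\bd B_\al\subs C$.

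For part (b), write $Y = C\cup \bcup_{\be\in J_0}B_\be$. I will show $Y$ is connected by showing every continuous map $f\colon Y\to \{0,1\}$ (with $\{0,1\}$ discrete) is constant. Since $C$ is connected and nonempty, $f$ is constant on $C$, say $f\equiv c$ on $C$. Fix $\be\in J_0$. The set $B_\be$ is connected, so $f$ is constant on $B_\be$, say $f\equiv c_\be$. Now by part (a), $\bd B_\be\subs C$, and $\bd B_\be\neq\emptyset$: indeed $B_\be$ is a nonempty proper subset of the connected space $X$ (proper because $C\neq\emptyset$ is disjoint from it), so it cannot be both open and closed, hence $\ol{B_\be}\neq B_\be$, i.e.\ $\bd B_\be\neq\emptyset$. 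Pick $q\in \bd B_\be\subs C\subs Y$. Since $B_\be$ is open in $X$, it is open in $Y$; but also $q\in\ol{B_\be}$, so every neighborhood of $q$ in $Y$ meets $B_\be$. Thus $q$ is a limit point of $B_\be$ in $Y$, and continuity of $f$ at $q$ forces $c_\be = f(q) = c$. As $\be\in J_0$ was arbitrary, $f\equiv c$ on all of $Y$, so $Y$ is connected.

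The only mild subtlety — the place I would be most careful — is the use of local connectedness to guarantee that the components $B_\al$ are open (needed both for the boundary computation and for the neighborhood argument in (b)); everything else is routine. One should also note the degenerate possibility $X\ssm C=\emptyset$, in which case $J=\emptyset$ and both statements are vacuous or reduce to the connectedness of $C=X$.
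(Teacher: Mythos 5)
Your proof is correct and follows essentially the same route as the paper's: part (a) rests on local connectedness making the components $B_\al$ open (the paper phrases this as extracting a connected neighborhood of a putative boundary point disjoint from $C$), and part (b) rests on the observation that $\bd B_\be$ is a nonempty subset of $C$, which glues each $B_\be$ to $C$. The only cosmetic difference is in (b), where you use the two-valued continuous function criterion while the paper writes $C\cup B_\be=C\cup\ol{B_\be}$ and invokes the union-of-connected-sets-with-a-common-point criterion; these are interchangeable.
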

\begin{proof} 
Assume (a) is false, and let $p\in \bd B_\al\ssm C$ for some $\al$. Since $C$ is closed and $X$ locally connected, we can find a connected neighborhood $U\ni p$ which is disjoint from $C$. But $U\cap B_{\al}\neq \emptyset$ and $B_{\al}$ is a connected component of $X\ssm C$, hence $U\subs B_{\al}$, contradicting the fact that $p\in \bd B_{\al}$. Therefore $\bd B_{\al}\subs C$ as claimed. Moreover, $\bd B_{\al}\neq \emptyset$, otherwise $X=B_\al\du (X\ssm B_\al)$ would be a decomposition of the connected space $X$ into two open sets. Now, for $\be\in J$, set $A_\be=C\cup B_\be=C\cup \ol{B}_{\be}$. Each $A_\be$ is a union of two connected sets with non-empty intersection, hence is itself connected. Similarly, $C\cup \bcup_{\be\in J}B_\be=\bcup_{\be\in J}A_\be$ is connected as the union of a family of connected sets with a point in common.
\end{proof}

We will also need the following well-known results.\footnote{Part (b) of (\ref{T:Schoenflies}) is an immediate corollary of the Riemann mapping theorem and part (c) is the 2-dimensional case of the annulus theorem.}

\begin{thma}\label{T:Schoenflies}Let $A\subs \Ss^2$ be a connected open set.
	\begin{enumerate}
		\item [(a)] $A$ is simply-connected if and only if $\Ss^2\ssm A$ is connected.
		\item [(b)] If $A$ is simply-connected and $\Ss^2\ssm A\neq \emptyset$, then $A$ is homeomorphic to an open disk.
		\item[(c)] Let $S_{\pm}\subs \Ss^2$ be disjoint and homeomorphic to $\Ss^1$. Then the closure of the region bounded by $S_{-}$ and $S_+$ is homeomorphic to $\Ss^1\times [-1,1]$.\qed
	\end{enumerate}
\end{thma}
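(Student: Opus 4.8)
The plan is to obtain all three statements from classical results: Alexander duality for (a), the Riemann mapping theorem for (b), and the Jordan--Schoenflies theorem for (c); none of these is new, so references (e.g.~Moise's book on geometric topology) would accompany the argument, but the deductions are short enough to include.

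For part (a), I would first recall that a connected open subset $A\subs \Ss^2$ is homotopy equivalent to a graph, so $\pi_1(A)$ is free; hence $A$ is simply-connected if and only if $H_1(A;\Z)=0$. Writing $K=\Ss^2\ssm A$, which is compact, Alexander duality gives $H_1(A;\Z)=\tilde H_1(\Ss^2\ssm K;\Z)\cong \tilde H^0(K;\Z)$, and the right-hand group vanishes precisely when $K$ is connected. (One should use the form of Alexander duality valid for arbitrary compact subsets of $\Ss^n$, with \v Cech cohomology on the $K$ side, so that no local niceness of $K$ is needed.) This yields the equivalence ``$A$ simply-connected $\iff \Ss^2\ssm A$ connected''.

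For part (b), assume $A$ is simply-connected with $\Ss^2\ssm A\neq \emptyset$. If $\Ss^2\ssm A$ is a single point $p$, then $A=\Ss^2\ssm\{p\}\home \R^2$, which is homeomorphic to an open disk. Otherwise choose $p\in \Ss^2\ssm A$; identifying $\Ss^2\ssm\{p\}$ with $\C$, we may view $A$ as a simply-connected domain in $\C$ that still omits at least one point of $\C$, i.e.~a proper simply-connected subdomain of $\C$. By the Riemann mapping theorem $A$ is biholomorphic, hence homeomorphic, to the open unit disk.

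For part (c), I would argue as follows. By Jordan--Schoenflies each $S_\pm$ separates $\Ss^2$ into two components, the closure of each being a closed $2$-disk with boundary $S_\pm$. Since $S_+$ is connected and disjoint from $S_-$, it lies in one component of $\Ss^2\ssm S_-$; let $D$ be the closure of that component, a closed disk with $\bd D=S_-$ and $S_+\subs \Int D$. Let $D'$ be the closure of the component of $\Ss^2\ssm S_+$ that does not meet $S_-$; then $D'$ is a closed disk with $\bd D'=S_+$ and $D'\subs \Int D$. The closure of the region bounded by $S_-$ and $S_+$ is exactly $R=D\ssm \Int D'$, and applying Schoenflies once more to carry $S_+$ to a round circle inside $\Int D$ identifies $R$ with a round annulus, i.e.~with $\Ss^1\times[-1,1]$. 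The step deserving the most care is this last one: it uses not merely that $S_+$ bounds a disk but the uniqueness part of Schoenflies, namely that a closed disk with an open subdisk removed (the closure of the subdisk lying in the interior) is always an annulus; identifying ``the region bounded by $S_-$ and $S_+$'' with $D\ssm\Int D'$ also needs the separation statements above. That is where I expect the only real friction.
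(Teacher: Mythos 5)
Your proposal is correct, and it follows exactly the route the paper itself indicates: the paper states this theorem without proof (the \qed\ is immediate), with a footnote attributing (b) to the Riemann mapping theorem and (c) to the two-dimensional annulus theorem, which is what your arguments flesh out; your Alexander-duality derivation of (a) is the standard one. No gaps worth flagging beyond the annulus-theorem step you already identify as the only point of friction.
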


\begin{lemmaa}\label{L:annulus}
	Let $U_{\pm}\subs  \Ss^2$ be homeomorphic to open disks, $U_-\cup U_+=\Ss^2$. Then \[
	U_-\cap U_+\home \Ss^1\times (-1,1).\]
\end{lemmaa}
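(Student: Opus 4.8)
The statement is essentially a corollary of the Schoenflies-type results in (\ref{T:Schoenflies}), and the plan is to extract two disjoint Jordan curves whose complementary annular region is exactly $U_-\cap U_+$. First I would observe that $\Ss^2\ssm U_+$ is a nonempty compact set contained in $U_-$ (since $U_-\cup U_+=\Ss^2$), and likewise $\Ss^2\ssm U_-$ is a nonempty compact subset of $U_+$; these two sets are disjoint. I would like to ``thicken'' each of them to a closed disk with Jordan-curve boundary sitting inside the appropriate open disk. Concretely, since $U_-$ is homeomorphic to an open disk, fix a homeomorphism $\psi_-\colon U_-\to \R^2$; the set $\psi_-(\Ss^2\ssm U_+)$ is compact, so it lies in some large open round disk $D$ whose closure is still contained in $\R^2$, and $S_-:=\psi_-^{-1}(\bd D)$ is a Jordan curve in $U_-$ bounding a closed topological disk $\Delta_-:=\psi_-^{-1}(\ol D)$ with $\Ss^2\ssm U_+\subs \Int\Delta_-$. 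Symmetrically I would produce a Jordan curve $S_+\subs U_+$ bounding a closed disk $\Delta_+$ with $\Ss^2\ssm U_-\subs \Int\Delta_+$.

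Next I would check that $S_-$ and $S_+$ can be taken disjoint, and more precisely that $\Delta_-$ and $\Delta_+$ cover $\Ss^2$ with $\Delta_-\cap \Delta_+\subs U_-\cap U_+$. Since $\Ss^2\ssm U_+\subs \Int\Delta_-$ we have $\Ss^2\ssm\Int\Delta_-\subs U_+$, i.e.\ the closed set $\Ss^2\ssm\Int\Delta_-$ is compact in $U_+$; after choosing $D$ in the $U_-$ picture first, I would then choose the $U_+$-disk $\Delta_+$ large enough to contain this compact set in its interior, which is possible because $\Ss^2\ssm\Int\Delta_-\subs U_+\home$ open disk. Then $\Int\Delta_-\cup\Int\Delta_+=\Ss^2$, so $\Ss^2\ssm\Int\Delta_+\subs\Int\Delta_-$, which forces $S_+\subs\Int\Delta_-\cap U_+\subs U_-\cap U_+$ and similarly $S_-\subs U_-\cap U_+$; disjointness of $S_-,S_+$ follows since $S_-\subs\Int\Delta_-$ while $S_+\cap\Int\Delta_-$ could still meet — so I would instead argue that the annular region between them is $R:=\Ss^2\ssm(\Int\Delta_-\cup\Int\Delta_+)$... more carefully: set $A_-:=\Int\Delta_-$, $A_+:=\Int\Delta_+$, both open disks with $A_-\cup A_+=\Ss^2$; then $A_-\cap A_+$ is an open connected set (it is $\Ss^2$ minus two disjoint closed disks, hence connected), its complement $\Ss^2\ssm(A_-\cap A_+)=\Delta_-^c\cup\Delta_+^c$ is the disjoint union of two closed disks, and a standard argument (or (\ref{T:Schoenflies}\,(c)) applied to $S_-$ and $S_+$, which are disjoint Jordan curves) shows $\ol{A_-\cap A_+}\home\Ss^1\times[-1,1]$, hence $A_-\cap A_+\home\Ss^1\times(-1,1)$. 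Since $A_-\cap A_+\subs U_-\cap U_+$ and $A_\pm$ can be taken so that $\Delta_\pm\subs U_\pm$, we also get the reverse kind of bound, and in fact by taking an exhaustion one sees $U_-\cap U_+=\bigcup A_-\cap A_+$ is itself an increasing union of open annuli.

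The cleanest route, which I would actually write up, is: $U_-\cap U_+$ is an open connected subset of $\Ss^2$ (connected because its complement $(\Ss^2\ssm U_-)\cup(\Ss^2\ssm U_+)$ is a disjoint union of two nonempty \emph{connected} sets, each being the complement of an open disk in $\Ss^2$, hence connected, so $U_-\cap U_+$ has exactly two complementary components and is not simply-connected). By (\ref{T:Schoenflies}\,(a)) it is \emph{not} simply-connected. Then I would invoke the classification of open connected surfaces, or more elementarily: $U_-\cap U_+$ embeds in $\Ss^2$, is connected and open with complement having exactly two components, neither a single point necessarily — and I would finish by the argument of the previous paragraph, producing the two Jordan curves $S_\pm$ and applying (\ref{T:Schoenflies}\,(c)) to conclude the closure of the region they bound is $\Ss^1\times[-1,1]$, then noting $U_-\cap U_+$ is the interior of that region (or an open sub-annulus homeomorphic to $\Ss^1\times(-1,1)$ by a direct exhaustion/isotopy argument). \textbf{The main obstacle} is the bookkeeping that makes the two approximating disks $\Delta_-\subs U_-$ and $\Delta_+\subs U_+$ simultaneously cover $\Ss^2$ with disjoint Jordan-curve boundaries \emph{and} have $\Delta_-\cap\Delta_+$ capturing all of $U_-\cap U_+$ up to homeomorphism; this requires choosing the disks in the right order (first $\Delta_-$ swallowing $\Ss^2\ssm U_+$, then $\Delta_+$ swallowing $\Ss^2\ssm\Int\Delta_-$) and a short point-set verification that nothing is left over, after which (\ref{T:Schoenflies}) does all the real work.
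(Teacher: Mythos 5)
Your overall picture is right and is close in spirit to the paper's proof (nested circles near the two ``ends'' of $U_-\cap U_+$, plus (\ref{T:Schoenflies}\,(c))), but there is a genuine gap exactly where the real work lies. A single pair of Jordan curves $S_\pm$ produces, via (\ref{T:Schoenflies}\,(c)), one \emph{compact} annulus $\ol{A_-\cap A_+}\home \Ss^1\times[-1,1]$ that is properly contained in $U_-\cap U_+$; your assertion that ``$U_-\cap U_+$ is the interior of that region'' is false (e.g.\ $U_-\ssm \Delta_-$ is a nonempty subset of $U_-\cap U_+$ lying outside $\Delta_-$). So everything hinges on the step you defer to ``a direct exhaustion/isotopy argument'': showing that $U_-\cap U_+$, written as an increasing union of such compact annuli, is itself homeomorphic to $\Ss^1\times(-1,1)$. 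This is not automatic. An increasing union of open annuli need not be an annulus unless the inclusions are essential (here they are, because each annulus separates $\Ss^2\ssm U_-$ from $\Ss^2\ssm U_+$, but this must be said), and even then one has to build a single homeomorphism compatibly across all the stages. The paper does precisely this: it proves (a) that a homeomorphism of one boundary circle of a cylinder onto $\Ss^1\times\{-1\}$ extends to the whole cylinder, and (b) that a ``tower of cylinders'' glued along boundary circles is homeomorphic to $\Ss^1\times(-1,1)$, and then exhausts $U_-\cap U_+$ by the bi-infinite family of annuli between the circles $h_\pm\big((1-\tfrac{1}{n\pm i})\Ss^1\big)$. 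Your sketch never supplies (a) or (b), and without them the conclusion does not follow.

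Two smaller points. Your ``cleanest route'' via the classification of noncompact surfaces is the alternative the paper itself mentions in the remark after the lemma, noting it needs ``a little more work''; invoking it without identifying the genus and end structure of $U_-\cap U_+$ is not a proof. And your argument that $U_-\cap U_+$ is connected (``its complement is a disjoint union of two connected sets, so it has exactly two complementary components'') is a non sequitur: an open subset of $\Ss^2$ whose complement has two components need not be connected (remove two disjoint circles from $\Ss^2$ and you get three components). Connectedness here comes out of the exhaustion itself, not from counting complementary components.
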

\begin{proof} We first make two claims:
\begin{enumerate}
	\item [(a)] Suppose $C\home \Ss^1\times [-1,1]$ and $h\colon \bd C_-\to \Ss^1\times \se{-1}$ is a homeomorphism, where $\bd C_-$ is one of the boundary circles of $C$. Then $h$ may be extended to a homeomorphism $H\colon C\to \Ss^1\times [-1,1]$.
	\item [(b)]  Let $M$ be a tower of cylinders, in the sense that:
	\begin{enumerate}
		\item [(i)]  $M_i\home \Ss^1\times [-1,1]$ for each $i\in \Z$;
		\item [(ii)] $M=\bcup_{i\in \Z}M_i$ and $M$ has the weak topology determined by the $M_i$;
		\item [(iii)] $M_i\cap M_j=\emptyset$ for $j\neq i\pm 1$ and $M_{i}\cap M_{i+1}=S_{i}^+=S_{i+1}^-$, where $S_i^{\pm}$ are the boundary circles of $M_i$.
	\end{enumerate} Then $M\home \Ss^1\times (-1,1)$.
\end{enumerate}
Claim (a) is obviously true if $C=\Ss^1\times [-1,1]$: Just set $H(z,t)=(h(z),t)$. In the general case let $F\colon C\to \Ss^1\times [-1,1]$ be a homeomorphism. Note that $\bd C$ is well-defined as the inverse image of $\Ss^1\times \se{\pm 1}$ ($p\in \bd C$ if and only if $U\ssm \se{p}$ is contractible whenever $U$ is a sufficiently small neighborhood of $p$). Hence $\bd C$ consists of two topological circles, $\bd C_{\pm}=F^{-1}\big(\Ss^1\times \se{\pm 1}\big)$. Let $f=F|_{\bd C_-}$ and let $g=h\circ f^{-1}\colon \Ss^1\to \Ss^1$. As we have just seen, we can extend $g$ to a self-homeomorphism $G$ of $\Ss^1\times [-1,1]$. Now define $H\colon C\to \Ss^1\times [-1,1]$ by $H=G\circ F$. Then $H|_{\bd C_-}=g\circ f=h$, as desired.

To prove claim (b), let $H_0\colon M_0\to \Ss^1\times [-\frac{1}{2},\frac{1}{2}]$ be any homeomorphism. By applying (a) to $M_{\pm 1}$ and $h_{\pm 1}=H_0|_{S_0^{\pm}}$, we can extend $H_0$ to a homeomorphism 
\[
H_1\colon M_0\cup M_{\pm 1}\to \Ss^1\times \Big[{-}\tfrac{2}{3},\tfrac{2}{3}\Big],
\]
and, inductively, to a homeomorphism 
\begin{equation*}
\qquad	H_k\colon \bcup_{\abs{i}\leq k} M_i\to \Ss^1\times \Big[-1+\frac{1}{k+2},1-\frac{1}{k+2}\Big]\qquad (k\in \N).
\end{equation*}
Finally, let $H\colon M\to \Ss^1\times (-1,1)$ be defined by $H(p)=H_i(p)$ if $p\in M_i$. Then $H$ is bijective, continuous and proper, so it is the desired homeomorphism.

Returning to the statement of the lemma, note first that $\bd U_{\pm}\subs U_{\mp}$. Indeed, if $p\in \bd U_-\cap(\Ss^2\ssm U_+)$ then $p\nin U_-\cup U_+=\Ss^2$, hence no such $p$ exists. Let $h_{\pm}\colon B(0;1)\to U_{\pm}$ be homeomorphisms, and define $f_{\pm}\colon [0,1)\to \R$ by 
\begin{equation*}
	f_{\pm}(r)=\sup\set{d\big(p,\bd U_{\pm}\big)}{p\in h_{\pm}(r\Ss^1)},
\end{equation*}
where $d$ denotes the distance on $\Ss^2$. We claim that $\lim_{r\to 1}f_{\pm}(r)=0$. Observe first that $f_{\pm}$ is strictly decreasing, for if $q\in h_{\pm}(r_0\Ss^1)$, $r_0<r$, then any geodesic joining $q$ to $\bd U_{\pm}$ intersects $h(r\Ss^1)$. Hence the limit exists; if it were positive, then $U_{\pm}$ would be at a positive distance from $\bd U_{\pm}$, which is absurd.

Now choose $n\in \N$ such that 
\begin{equation*}
	f_{\pm}(t)<\frac{1}{2}\min\se{d\big(\bd U_{-},\Ss^2\ssm U_+\big),d\big(\bd U_{+},\Ss^2\ssm U_-\big)}
\end{equation*} 
for any $t>1-\tfrac{1}{n}$. Set 
\begin{equation*}
	S_i=h_+\Big(\Big(1-\frac{1}{n+i}\Big)\Ss^1\Big)\text{ for $i>0$ and }	S_i=h_-\Big(\Big(1-\frac{1}{n-i}\Big)\Ss^1\Big)\text{ for $i<0$}.
\end{equation*}	 
Finally, let $M_0$ be the region of $U_-\cap U_+$ bounded by $S_{1}$ and $S_{-1}$ and, for $i>0$ (resp.~$<0$), let $M_i$ the region bounded by $S_i$ and $S_{i+1}$ (resp.~$S_{i-1}$). Using (\ref{T:Schoenflies}\?(c)) we see that $U_-\cap U_+=\bcup M_i$ is a tower of cylinders as in claim (b), and we conclude that $U_-\cap U_+\home \Ss^1\times (-1,1)$.
\end{proof}

\begin{urem}
	Another proof of the previous result can be obtained as follows: Since $U_{\pm}$ are each contractible, the Mayer-Vietoris sequence yields immediately that $U_-\cap U_+$ has the homology of $\Ss^1$. Together with a little more work it then follows from the classification of noncompact surfaces that $U_-\cap U_+\home \Ss^1\times (-1,1)$.
\end{urem}

We now return to spaces of curves.

\begin{defns}\label{D:BCD}
For fixed $\ka_0\in \R$ and $\ga\in \sr L_{\ka_0}^{+\infty}$, let $C$ denote the image of $C_\ga$ and $D=-C$. Assuming $\ga$ non-diffuse (meaning that $C\cap D=\emptyset$), let $\hat{C}$ (resp.~$\hat{D}$) be the connected component of $\Ss^2\ssm D$ containing $C$ (resp.~the component of $\Ss^2\ssm C$ containing $D$) and let $B=\hat{C}\cap \hat{D}$. 
\end{defns}
\begin{figure}[ht]
	\begin{center}
		\includegraphics[scale=.24]{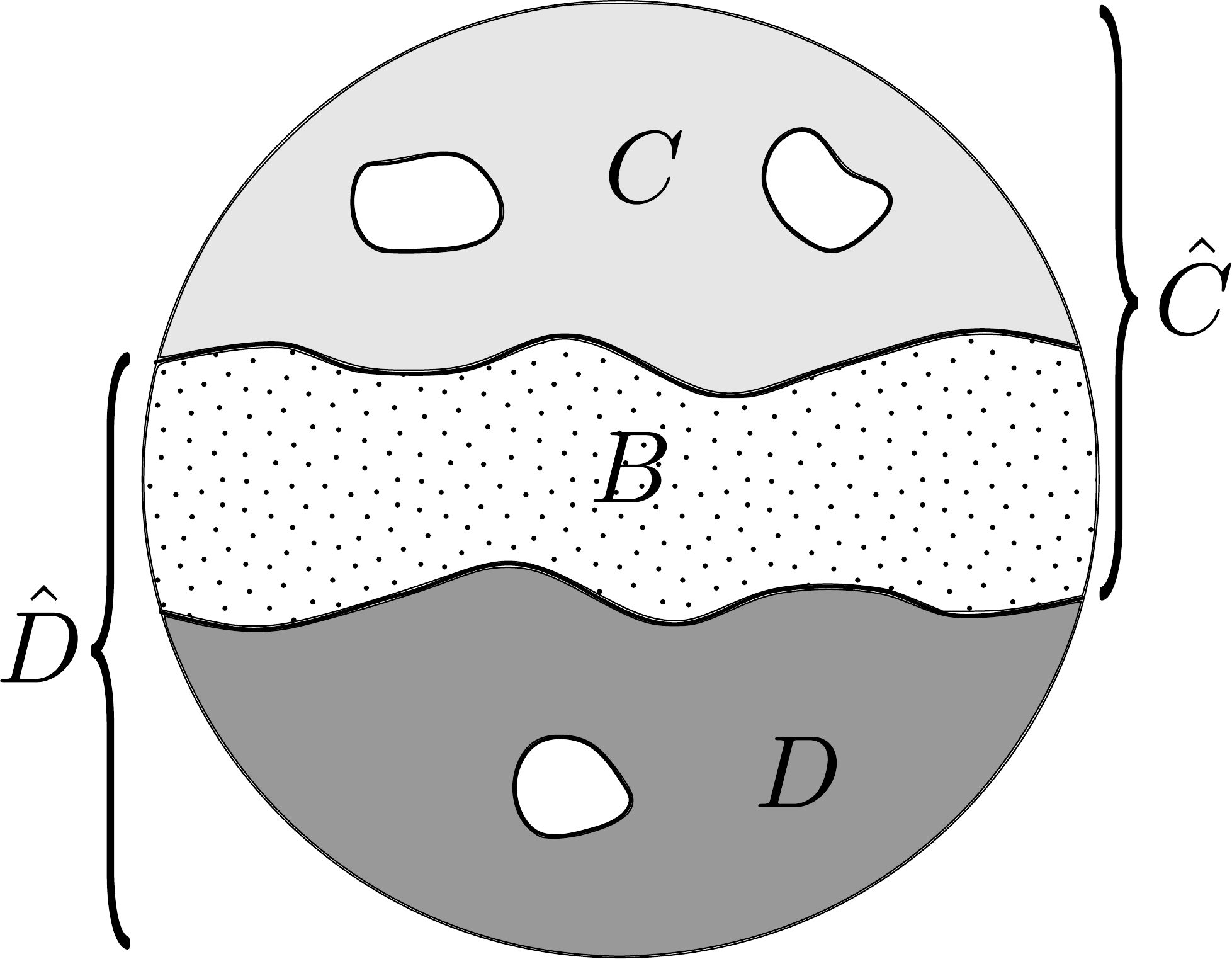}
		\caption{A sketch of the sets defined in (\ref{D:BCD}) for a non-diffuse curve $\ga\in \sr L_{\ka_0}^{+\infty}$. The lightly shaded region is $C$ and the darkly shaded region is $D={-}C$; both are closed. The dotted region represents $B$, which is homeomorphic to $\Ss^1\times (-1,1)$ by (\ref{L:Bannulus}\?(c)).}
		\label{F:bcd}
	\end{center}
\end{figure}

\begin{lemmaa}\label{L:Bannulus}Let the notation be as in (\ref{D:BCD}).
\begin{enumerate}
	\item [(a)] $C$ and $D$ are at a positive distance from each other.
	\item [(b)] $B\subs \Ss^2\ssm (C\cup D)$ is open and consists of all $p\in \Ss^2$ such that: there exists a path $\eta\colon [-1,1]\to \Ss^2$ with 
		\begin{equation*}
			\eta(-1)\in D, \quad\eta(1)\in C,\quad \eta(0)=p\quad\text{and}\quad\eta(-1,1)\subs \Ss^2\ssm (C\cup D).
		\end{equation*} 
	\item [(c)] The set $B$ is homeomorphic to $\Ss^1\times (-1,1)$.
\end{enumerate}
\end{lemmaa}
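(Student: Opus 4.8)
The three assertions will be proved in order, each feeding into the next. For (a), I would invoke (\ref{L:opencaustic}) applied to the caustic $\chi_\ga$: actually the cleaner route is to note that $C$ is compact (being the continuous image of the compact rectangle $[0,1]\times[0,\rho_0]$) and so is $D=-C$; since we assume $\ga$ is non-diffuse, $C\cap D=\emptyset$, and two disjoint compact subsets of the metric space $\Ss^2$ are automatically at positive distance. This is the one genuinely easy part.

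For (b), I would first observe that $\hat C$ and $\hat D$ are open: $\Ss^2\ssm D$ and $\Ss^2\ssm C$ are open in the locally connected space $\Ss^2$, so their connected components are open, hence $B=\hat C\cap\hat D$ is open, and $B$ is disjoint from $C\cup D$ because $\hat C\subs\Ss^2\ssm D$ and $\hat D\subs\Ss^2\ssm C$ while $\hat C\cap C$ and $\hat D\cap D$ — wait, I need $B\cap C=\emptyset$: since $B\subs\hat D\subs\Ss^2\ssm C$, indeed $B\cap C=\emptyset$, and symmetrically $B\cap D=\emptyset$. For the path characterization, one inclusion is: given $p\in B$, since $p\in\hat C$ (a connected open subset of the locally path-connected $\Ss^2$, hence path-connected) there is a path in $\hat C\subs\Ss^2\ssm D$ from $p$ to a point of $C$, and similarly a path in $\hat D\subs\Ss^2\ssm C$ from $p$ to a point of $D$; I would need to arrange that these paths stay in $\Ss^2\ssm(C\cup D)$ except at the endpoints, which follows by stopping each path the first time it meets $C$ (resp.\ $D$) and using that $\hat C$ avoids $D$ and $\hat D$ avoids $C$. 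Concatenating gives the required $\eta$. Conversely, if such an $\eta$ exists, then $\eta([0,1])$ is a connected subset of $\Ss^2\ssm D$ meeting $C$, so it lies in $\hat C$, whence $p\in\hat C$; symmetrically $\eta([-1,0])$ lies in $\hat D$, so $p\in\hat D$; thus $p\in B$. The mild care needed here — truncating the paths at their first hitting times of the closed sets $C$, $D$ — is where I'd be most careful, but it is routine.

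For (c), the strategy is to reduce to (\ref{L:annulus}). I would set $U_+=\hat C$ and want $U_-$ to be ``$\hat D$ complemented'' so that $U_-\cup U_+=\Ss^2$ and each is an open disk. The key topological input is (\ref{T:Schoenflies}): I'd argue that $\hat C$ is simply-connected, hence homeomorphic to an open disk. To see $\hat C$ is simply-connected, by (\ref{T:Schoenflies}\?(a)) it suffices to show $\Ss^2\ssm\hat C$ is connected. Now $\Ss^2\ssm\hat C$ is a closed set containing $D$; one shows every connected component of $\Ss^2\ssm\hat C$ meets $D$ — indeed a component not meeting $D$ would be an open-and-closed piece of $\Ss^2\ssm D$ disjoint from the component $\hat C$... this needs the fact that $\Ss^2\ssm\hat C$ has no component disjoint from $D$, which follows because $\Ss^2\ssm\hat C=\bigcup(\text{components of }\Ss^2\ssm D\text{ other than }\hat C)\cup D$ and by (\ref{L:connectedness}\?(b)) (applied with $X=\Ss^2$, $C=D$) the union of $D$ with any subcollection of components of $\Ss^2\ssm D$ is connected — in particular $\Ss^2\ssm\hat C$, being $D$ together with all the other components, is connected. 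Hence $\hat C$ is simply-connected; since $\Ss^2\ssm\hat C\supseteq D\neq\emptyset$, (\ref{T:Schoenflies}\?(b)) gives $\hat C\home$ open disk. Symmetrically $\hat D\home$ open disk. Finally I must choose $U_-$ so that the hypotheses of (\ref{L:annulus}) literally hold: take $U_-=\Ss^2\ssm\overline{(\text{something})}$ — more precisely, I would verify directly that $\hat C\cup\hat D=\Ss^2$. Suppose $p\notin\hat C\cup\hat D$; then $p\in\Ss^2\ssm\hat C$ which, as just shown, is connected and contained in $\Ss^2\ssm C$ (since $\hat C\supseteq C$), so $\Ss^2\ssm\hat C\subseteq\hat D$ (it's a connected subset of $\Ss^2\ssm C$ meeting $D$), contradicting $p\notin\hat D$. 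Thus $\hat C\cup\hat D=\Ss^2$, and setting $U_\pm=\hat C,\hat D$ in (\ref{L:annulus}) yields $B=\hat C\cap\hat D\home\Ss^1\times(-1,1)$.

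\textbf{Main obstacle.} The genuinely substantive point is part (c), and within it the verification that $\Ss^2\ssm\hat C$ is connected so that $\hat C$ is a disk; I expect to lean on (\ref{L:connectedness}\?(b)) for exactly this, after which everything else is bookkeeping with the Jordan–Schoenflies and annulus theorems already quoted. Parts (a) and (b) are straightforward point-set topology, with the only finesse being the first-hitting-time truncation of paths in (b).
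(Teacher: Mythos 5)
Your proposal is correct and follows essentially the same route as the paper: (a) by compactness, (b) by openness of components plus first-hitting-time truncation of paths, and (c) by using (\ref{L:connectedness}\,(b)) to get connectedness of $\Ss^2\ssm\hat{C}$, then (\ref{T:Schoenflies}) to identify $\hat{C}$, $\hat{D}$ with open disks, and finally (\ref{L:annulus}). The only (harmless) variation is in verifying $\hat{C}\cup\hat{D}=\Ss^2$: you deduce it directly from the connectedness of $\Ss^2\ssm\hat{C}$ (a connected set containing $D$ and avoiding $C$ must lie in $\hat{D}$), whereas the paper argues pointwise through the components of $\Ss^2\ssm C$; both are valid.
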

\begin{proof}
The proof of each item will be given separately. 
\begin{enumerate}
	\item [(a)] This is clear, since $C$ and $D$ are compact sets which, by hypothesis, do not intersect.
	\item [(b)] Being components of open sets,  $\hat{C}$ and $\hat{D}$ are open, hence so is $B$. 
	
	Suppose $p\in B$. Since $p\in \hat{C}$, there exists $\eta_+\colon [0,1]\to \Ss^2$ such that 
	\begin{equation*}
		\eta_+(0)=p,\quad \eta_+(1)\in C\quad\text{and}\quad \eta_+[0,1]\subs \Ss^2\ssm D.
	\end{equation*} 
	We can actually arrange that $\eta_+[0,1)\subs \Ss^2\ssm (C\cup D)$ by restricting the domain of $\eta_+$ to $[0,t_0]$, where $t_0=\inf\set{t\in [0,1]}{\eta_+(t)\in C}$ and reparametrizing; note that $t_0>0$ because $B$ is open and disjoint from $C$. Similarly, there exists $\eta_-\colon [-1,0]\to \Ss^2$ such that 
		\begin{equation*}
		\eta_-(-1)\in D,\quad \eta_-(0)=p\quad\text{and}\quad \eta_-(-1,0]\subs \Ss^2\ssm (C\cup D).
	\end{equation*} 
	Thus, $\eta=\eta_-\ast\eta_+$ satisfies all the requirements stated in (b).

	Conversely, suppose that such a path $\eta$ exists. Then $p\in \hat{C}$, for there is a path $\eta_+=\eta|_{[0,1]}$ joining $p$ to a point of $C$ while staying outside of $D$ at all times. Similarly, $p\in \hat{D}$, whence $p\in B$.
	
	\item[(c)] The set $\hat{C}$ is open and connected by definition. Its complement is also connected by (\ref{L:connectedness}\?(b)), as it consists of $D$ and the components of $\Ss^2\ssm D$ distinct from $\hat{C}$. From (\ref{T:Schoenflies}\?(a)) it follows that $\hat{C}$ is simply-connected. Further, $\hat{C}\cap D=\emptyset$, hence the complement of $\hat{C}$ is non-empty and (\ref{T:Schoenflies}\?(b)) tells us that $\hat{C}$ is homeomorphic to an open disk. By symmetry, the same is true of  $\hat{D}$. 
	
	We claim that $\hat{C}\cup \hat{D}=\Ss^2$. To see this suppose $p\nin C$, and let $A$ be the component of $\Ss^2\ssm C$ containing $p$. If $A\cap D\neq \emptyset$ then $A=\hat{D}$ by definition. Otherwise $A\cap D=\emptyset$, hence there exists a path in $\Ss^2\ssm D$ joining $p$ to $\bd A$. By (\ref{L:connectedness}\?(a)), $\bd A\subs C$, consequently $A\subs \hat{C}$. In either case, $p\in \hat{C}\cup \hat{D}$.
	
	We are thus in the setting of (\ref{L:annulus}), and the conclusion is that
	\begin{equation*}
		B=\hat{C}\cap \hat{D}\home \Ss^1\times (-1,1).\qedhere
	\end{equation*}	
\end{enumerate}
\end{proof}

In what follows let $\bd B_\ga$ be the restriction of $B_\ga$ to $[0,1]\times \se{0,\rho_0-\pi}$, let 
\begin{equation*}
	\hat{B}=\Im(B_\ga)\ssm \Im(\bd B_\ga),
\end{equation*}
and let 
\begin{equation*}
	\bar{B}_\ga\colon \Ss^1\times [\rho_0-\pi,0]\to \Ss^2
\end{equation*}
be the unique map satisfying $\bar{B}_\ga\circ (\pr\times \id)=B_\ga$, $\pr(t)=\exp(2\pi it)$. 

\begin{lemmaa}\label{L:coveredband}
	Let $\ka_0\in \R$ and suppose that $\ga\in \sr L_{\ka_0}^{+\infty}$ is non-diffuse. Then:
	\begin{enumerate}
		\item [(a)] For any $t\in [0,1]$, $B_\ga\big(\se{t}\times (\rho_0-\pi,0)\big)$ intersects $B$.
		\item [(b)] $B\subs \hat{B}$.
		\item [(c)] $\bar{B}_\ga^{-1}(q)$ is a finite set for any $q\in \Ss^2$ and $\bar{B}_\ga\colon \bar{B}_\ga^{-1}(\hat{B})\to \hat{B}$ is a covering map. 
	\end{enumerate} 
\end{lemmaa}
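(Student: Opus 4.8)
The plan is to prove (a), (b) and (c) in that order, since (b) uses (a) and (c) uses both, leaning throughout on two facts: that $\ga$ is closed, so $B_\ga$ factors as $\bar B_\ga\circ(\pr\times\id)$; and that $B_\ga(t,0)=\ga(t)=C_\ga(t,0)$ lies in $C$ while $B_\ga(t,\rho_0-\pi)=-C_\ga(t,\rho_0)$ lies in $D=-C$, so in particular $\Im(\bd B_\ga)\subs C\cup D$. For (a), I would fix $t$ and study the arc $\al(\theta)=B_\ga(t,\theta)$, $\theta\in[\rho_0-\pi,0]$, whose endpoints lie in $D$ and $C$ respectively. As $C$ and $D$ are closed and disjoint, the numbers $\theta_1=\max\{\theta\in[\rho_0-\pi,0]:\al(\theta)\in D\}$ and $\theta_2=\min\{\theta\in[\theta_1,0]:\al(\theta)\in C\}$ are well defined and satisfy $\rho_0-\pi\le\theta_1<\theta_2\le 0$, with $\al(\theta)\nin C\cup D$ for all $\theta\in(\theta_1,\theta_2)$. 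Then $\al|_{[\theta_1,\theta_2]}$, reparametrised onto $[-1,1]$, is exactly a path of the kind described in (\ref{L:Bannulus}(b)), so its midpoint $\al\big(\tfrac{\theta_1+\theta_2}{2}\big)$ lies in $B$; since $\tfrac{\theta_1+\theta_2}{2}\in(\rho_0-\pi,0)$, this is the assertion.

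For (b), note first that $B\cap\Im(\bd B_\ga)=\emptyset$, because $\Im(\bd B_\ga)\subs C\cup D$ while $B\subs\Ss^2\ssm(C\cup D)$ by (\ref{L:Bannulus}(b)); thus it remains to show $B\subs\Im(B_\ga)$. By (\ref{L:band}(a)) the map $\bar B_\ga$ has invertible derivative at every point, hence is a local diffeomorphism on the open surface $\Ss^1\times(\rho_0-\pi,0)$, so $U:=\bar B_\ga\big(\Ss^1\times(\rho_0-\pi,0)\big)$ is open in $\Ss^2$. A point of $\hat B$ cannot have a preimage on $\Ss^1\times\{0,\rho_0-\pi\}$ (that would put it in $\Im(\bd B_\ga)$), so $\hat B=U\cap\big(\Ss^2\ssm\Im(\bd B_\ga)\big)$; in particular $\hat B$ is open, and $B\cap\Im(B_\ga)=B\cap U$. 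Hence $B$ is the disjoint union of the two relatively open sets $B\cap U$ and $B\ssm\Im(B_\ga)$; since $B$ is connected and $B\cap U\ne\emptyset$ by (a), the second set is empty, so $B\subs\Im(B_\ga)$, and therefore $B\subs\hat B$.

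For (c), finiteness is quick: by (\ref{L:band}(a)) the derivative of $\bar B_\ga$ is invertible at every point of the compact manifold-with-boundary $\Ss^1\times[\rho_0-\pi,0]$, so $\bar B_\ga$ is locally injective there, whence each fibre $\bar B_\ga^{-1}(q)$ is discrete; being also closed in a compact space, it is finite. For the covering statement, observe that $\bar B_\ga^{-1}(\hat B)$ lies in the interior $\Ss^1\times(\rho_0-\pi,0)$ (again because $\hat B\cap\Im(\bd B_\ga)=\emptyset$), where $\bar B_\ga$ is a local homeomorphism, and that $\hat B$ is open in $\Ss^2$; it then suffices to produce an evenly covered neighbourhood of each $q\in\hat B$. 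I would write $\bar B_\ga^{-1}(q)=\{p_1,\dots,p_k\}$, choose disjoint open sets $W_i\ni p_i$ each carried homeomorphically by $\bar B_\ga$ onto an open set containing $q$, and then rule out a sequence $q_n\to q$ in $\hat B$ whose $\bar B_\ga$-preimages escape $\bcup_iW_i$: by compactness of $\Ss^1\times[\rho_0-\pi,0]$ such preimages would have a subsequential limit outside $\bcup_iW_i$ mapping to $q$, hence lying on $\Ss^1\times\{0,\rho_0-\pi\}$, which is impossible since $q\nin\Im(\bd B_\ga)$. A small enough connected neighbourhood of $q$ contained in $\bcap_i\bar B_\ga(W_i)\cap\hat B$ is then evenly covered, so $\bar B_\ga\colon\bar B_\ga^{-1}(\hat B)\to\hat B$ is a covering map.

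I expect the only genuinely delicate point to be the last step of (c) — equivalently, that $\bar B_\ga$ is proper over $\hat B$, i.e.\ that its fibres over $\hat B$ cannot accumulate on the boundary of the band. This is exactly where the hypothesis that $\ga$ is non-diffuse is used, via the disjointness of $\hat B$ from $\Im(\bd B_\ga)\subs C\cup D$; everything else is routine point-set topology resting on the local-diffeomorphism property of $\bar B_\ga$ supplied by (\ref{L:band}).
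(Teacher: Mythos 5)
Your proposal is correct and follows essentially the same route as the paper: part (a) via a first/last-crossing argument on the fiber arc combined with the characterization of $B$ in (\ref{L:Bannulus}\,(b)), part (b) via a clopen decomposition of the connected set $B$ using that $\bar B_\ga$ is an immersion and $\Im(B_\ga)$ is compact, and part (c) via discreteness of fibers plus a compactness argument producing evenly covered neighborhoods (the paper constructs the distinguished neighborhood directly by removing the closed image of the complement of the chosen preimage patches, which is the non-sequential version of your argument). No gaps.
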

\begin{proof} We split the proof into parts.
\begin{enumerate}
	\item [(a)] Note first that $B_\ga(t,0)\in C$  and $B_\ga(t,\rho_0-\pi)\in D$ for any $t\in [0,1]$ by definition. Let 
	\begin{alignat*}{5}
		\theta_1=&\inf\set{\theta\in [\rho_0-\pi,0]}{B_\ga(t,\theta)\in C},\\
		\theta_0=&\sup\set{\theta\in [\rho_0-\pi,\theta_1]}{B_\ga(t,\theta)\in D}.
	\end{alignat*}	
	Then $\theta_0<\theta_1$ by (\ref{L:Bannulus}\?(a)). Let $\eta=B_\ga|_{\se{t} \times [\theta_0,\theta_1]}$. Then
	\begin{equation*}
		\eta(\theta_0)\in D,\quad \eta(\theta_1)\in C\quad \text{and}\quad \eta(\theta_0,\theta_1)\subs \Ss^2\ssm (C\cup D)
	\end{equation*}
	by construction. Therefore, any point $\eta(\theta)$ for $\theta \in (\theta_0,\theta_1)$ satisfies the characterization of $B$ given in (\ref{L:Bannulus}\?(b)), and we conclude that 
	\begin{equation*}
		B_\ga\big(\se{t}\times (\theta_0,\theta_1)\big)\subs B.
	\end{equation*}
	\item [(b)] Let $B_0=B\cap \Im(B_\ga)$. By part (a), $B_0\neq \emptyset$. Since $\Im(\bd B_\ga)\subs C\cup D$, while $B\cap (C\cup D)=\emptyset$ by definition, $B\cap \Im(\bd B_\ga)=\emptyset$. Hence,
		\begin{equation*}
		B_0=B\cap \bar{B}_\ga\big(\Ss^1\times (\rho_0-\pi,0)\big),
	\end{equation*}  
which is an open set because $\bar{B}_\ga$ is an immersion, by (\ref{L:band}\?(a)). Since $\Im(B_\ga)$ is compact, $B_0$ is also closed in $B$. But $B$ is connected by (\ref{L:Bannulus}\?(c)), consequently $B_0=B$ and $B\subs \hat{B}$.
	 
	\item [(c)] Let $q\in \Ss^2$ be arbitrary. The set $\bar{B}_\ga^{-1}(q)$ is discrete because $\bar{B}_\ga$ is an immersion, and it is compact as a closed subset of $\Ss^2$. Hence, it must be finite. Now suppose $q\in \hat{B}$. Let $\bar{B}_\ga^{-1}(q)=\se{p_i}_{i=1}^n$ and choose disjoint open sets $U_i\ni p_i$ restricted to which $\bar{B}_\ga$ is a  diffeomorphism. Let $U=\bcup_{i=1}^nU_i$ and
	\begin{equation*}
		W=\bar{B}_\ga(U_1)\cap \dots \cap \bar{B}_\ga(U_n)\ssm \bar{B}_\ga\big(\Ss^1\times [\rho_0-\pi,0]\ssm U\big).
	\end{equation*}
	Then $W$ is a distinguished neighborhood of $q$, in the sense that $\bar{B}_\ga^{-1}(W)=\Du_{i=1}^n V_i$ and $\bar{B}_\ga\colon V_i\to W$ is a diffeomorphism for each $i$, where 
	\begin{equation*}
		V_i=\bar{B}_\ga^{-1}(W)\cap U_i.\qedhere
	\end{equation*}	
\end{enumerate}
\end{proof}

Parts (b) and (c) of (\ref{L:coveredband}) allow us to introduce a useful notion which essentially counts how many times a non-diffuse curve winds around $\Ss^2$.
\begin{defna}\label{D:rotationnumber}
	Let $\ka_0\in \R$ and suppose that $\ga\in \sr L_{\ka_0}^{+\infty}$ is non-diffuse. We define the \tdef{rotation number} $\nu(\ga)$ of $\ga$ to be the number of sheets of the covering map $\bar{B}_\ga\colon \bar{B}_\ga^{-1}(B)\to B$.
\end{defna}

\begin{urem}\label{R:rotation}
Suppose now that $\ka_0> 0$ and $\ga\in \sr L_{\ka_0}^{+\infty}$ is not only non-diffuse but also condensed (meaning that $C$ is contained in a closed hemisphere). In this case, a ``more natural'' notion of the rotation number of $\ga$ is available, as described on p.~\pageref{rotation}. Let us temporarily denote by $\bar\nu(\ga)$ the latter rotation number. We claim that $\bar\nu(\ga)=\nu(\ga)$ for any condensed and non-diffuse curve $\ga$. It is easy to check that this holds whenever $\ga$ is a circle traversed a number of times. If $\ga_s$ ($s\in [0,1]$) is a continuous family of curves of this type then $\nu(\ga_s)=\nu(\ga_0)$ and $\bar{\nu}(\ga_s)=\bar{\nu}(\ga_0)$ for any $s$, since $\nu$ and $\bar\nu$ can only take on integral values and every element in their definitions depends continuously on $s$. Moreover, it follows from (\ref{C:positivecondensed}) that any condensed and non-diffuse curve is homotopic through curves of this type to a circle traversed a number of times. 
\end{urem}	
	
\begin{propa}\label{P:totbound}
		Let $\ka_0\in \R$ and suppose that $\ga\in \sr L_{\ka_0}^{+\infty}$ is non-diffuse. Then there exists a constant $K$ depending only on $\ka_0$ such that
		\begin{equation*}
			\tot(\ga)\leq K\nu(\ga).
		\end{equation*}
\end{propa}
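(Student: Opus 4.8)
The idea is to use the band $B_\ga\colon [0,1]\times [\rho_0-\pi,0]\to\Ss^2$ together with the covering map $\bar B_\ga\colon \bar B_\ga^{-1}(\hat B)\to \hat B$ from (\ref{L:coveredband}) to estimate the total curvature of $\ga$ in terms of the number of sheets $\nu(\ga)$ and a fixed quantity depending only on $\ka_0$. Recall from (\ref{D:bycurvature}) that $\tot(\ga)=\int_0^1 K(t)\abs{\dot\ga(t)}\,dt$ with $K=\csc\rho$, and from (\ref{L:band}) that $\frac{\bd B_\ga}{\bd t}(t,\theta)=\frac{\abs{\dot\ga(t)}}{\sin\rho(t)}\sin(\rho(t)-\theta)\?\ta(t)$ while $\frac{\bd B_\ga}{\bd\theta}$ is a unit vector orthogonal to it. Hence the Jacobian of $B_\ga$ at $(t,\theta)$ has absolute value $\frac{\abs{\dot\ga(t)}}{\sin\rho(t)}\sin(\rho(t)-\theta)$, and integrating over $\theta\in[\rho_0-\pi,0]$ one gets
\begin{equation*}
	\int_{\rho_0-\pi}^{0}\frac{\abs{\dot\ga(t)}}{\sin\rho(t)}\sin(\rho(t)-\theta)\,d\theta=\frac{\abs{\dot\ga(t)}}{\sin\rho(t)}\big(\cos(\rho(t)-\rho_0+\pi-\pi)\text{-type terms}\big),
\end{equation*}
which after the elementary evaluation equals $\frac{\abs{\dot\ga(t)}}{\sin\rho(t)}\big(1+\cos(\rho_0-\rho(t))\big)$; since $0<\rho(t)<\rho_0$ this lies between $\frac{\abs{\dot\ga(t)}}{\sin\rho(t)}$ and $2\frac{\abs{\dot\ga(t)}}{\sin\rho(t)}$. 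In particular the area of the band, counted with multiplicity (i.e.\ $\int_{[0,1]\times[\rho_0-\pi,0]}\abs{\det DB_\ga}$), is bounded below by $\int_0^1 \frac{\abs{\dot\ga(t)}}{\sin\rho(t)}\,dt=\int_0^1 K(t)\abs{\dot\ga(t)}\,dt=\tot(\ga)$.

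\textbf{Key steps.} First I would establish the Jacobian computation above and conclude $\tot(\ga)\le \int_{[0,1]\times[\rho_0-\pi,0]}\abs{\det DB_\ga}\,dt\,d\theta$. Second, I would pass to the quotient map $\bar B_\ga\colon \Ss^1\times[\rho_0-\pi,0]\to\Ss^2$, whose multiplicity-counted area equals that of $B_\ga$, and restrict attention to the part lying over $\hat B$: by (\ref{L:coveredband}\?(b)) we have $B\subs\hat B$, and by (\ref{L:coveredband}\?(a)) every vertical slice $\bar B_\ga(\se{t}\times(\rho_0-\pi,0))$ meets $B$, so in fact the image $\Im(\bar B_\ga)$ minus the two boundary curves is contained in $\hat B$ (this is exactly (\ref{L:coveredband}\?(b)) once one notes $\hat B$ is an open set whose closure avoids nothing relevant — more carefully, $\hat B=\Im(\bar B_\ga)\ssm\Im(\bd B_\ga)$ up to a null set, so the multiplicity-counted area of $\bar B_\ga$ equals the multiplicity-counted area over $\hat B$). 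Third, since $\bar B_\ga$ restricted to $\bar B_\ga^{-1}(\hat B)$ is a covering of degree $\nu(\ga)$ onto $\hat B$, the multiplicity-counted area over $\hat B$ is exactly $\nu(\ga)\cdot V(\hat B)\le \nu(\ga)\cdot V(\Ss^2)=4\pi\,\nu(\ga)$. Wait — $\hat B$ need not be contained in a hemisphere, but it is a subset of $\Ss^2$ so its area is at most $4\pi$ regardless. Combining the three steps yields $\tot(\ga)\le 4\pi\,\nu(\ga)$, so one may take $K=4\pi$ (independent even of $\ka_0$; if a dependence is genuinely needed, the sharper bound $1+\cos(\rho_0-\rho)\le 1+\cos(\rho_0-\rho_0)=2$ and area over a suitable region give the stated form).

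\textbf{Main obstacle.} The delicate point is the bookkeeping in the second step: one must argue carefully that the ``area with multiplicity'' of the band map $\bar B_\ga$ genuinely equals $\nu(\ga)$ times the area of $\hat B$, rather than merely being bounded by it. The subtlety is that $\bar B_\ga$ covers $\hat B$ evenly (that is the content of (\ref{L:coveredband}\?(c)) together with (\ref{D:rotationnumber})), but $\hat B\ssm B$ is covered by $\bar B_\ga$ with possibly varying and larger multiplicity; however for the inequality direction we only need that each point of $\hat B$ has at least $\nu(\ga)$ preimages — and actually we want an \emph{upper} bound on $\tot(\ga)$, which forces us to go the other way. The correct route is: the multiplicity-counted area of $\bar B_\ga$ over \emph{all} of $\Ss^2$ is finite, and over $\hat B$ it is at least $\nu(\ga)\cdot(\text{area covered evenly})$; since $\tot(\ga)$ is bounded by this same multiplicity-counted area over $\Im(\bar B_\ga)$, and $\Im(\bar B_\ga)\ssm\Im(\bd B_\ga)=\hat B$ on which the degree is at most some $N$ — here one needs a uniform bound on the number of sheets in terms of $\nu(\ga)$ alone. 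In fact (\ref{L:coveredband}\?(c)) gives $\bar B_\ga^{-1}(q)$ finite for every $q$, and a connectedness/continuity argument analogous to (\ref{R:rotation}) shows the maximal multiplicity over $\hat B$ is controlled by $\nu(\ga)$; turning this into the clean constant $K$ is where the real work lies, and I expect the author resolves it by a direct geometric count of how the slices $B_\ga(\se{t}\times(\rho_0-\pi,0))$ sweep across $B$.
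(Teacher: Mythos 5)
Your overall strategy --- bound $\tot(\ga)$ by the area of the band counted with multiplicity, then bound that multiplicity using $\nu(\ga)$ --- is in fact the same idea that drives the paper's proof (the paper's Gauss--Bonnet identity on a rectangle $R=B_\ga|_{[\tau_0,\tau_1]\times[\rho_0-\pi,0]}$ is precisely the statement that $\int_{\tau_0}^{\tau_1}\big(\cos\rho+\cos(\rho_0-\rho)\big)\frac{\abs{\ga'}}{\sin\rho}\,dt=\mathrm{Area}(R)$, i.e.\ your fiberwise Jacobian integral). But the step you yourself flag as ``where the real work lies'' is a genuine gap, and your proposed route through (\ref{L:coveredband}\,(c)) does not close it. The number $\nu(\ga)$ is by definition the number of sheets of $\bar B_\ga$ over $B$ only; the covering $\bar B_\ga^{-1}(\hat B)\to\hat B$ has locally constant degree, so $\nu(\ga)$ controls the multiplicity only on the connected component of $\hat B$ containing $B$, and nothing in (\ref{L:coveredband}) bounds the degree over the rest of $\hat B$ (or over $\Im(\bd B_\ga)$) in terms of $\nu(\ga)$. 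Since you need an \emph{upper} bound on the total multiplicity-counted area, the inequality runs the wrong way exactly where you need it. The paper's missing ingredient is this: pick $b\in B$ (so $-b\in B$ too), and consider the height functions $h=\gen{\ga,b}$, $\hat h=\gen{\hat\ga,b}$. Their common critical points are exactly the $t$ with $\pm b\in B_\ga(\se{t}\times(\rho_0-\pi,0))$, hence there are exactly $2\nu(\ga)$ of them; they are nondegenerate; and between consecutive critical points the band is \emph{injective}, so each such rectangle has multiplicity one and area $<4\pi$. Summing the $2\nu(\ga)$ pieces gives the bound. It is this decomposition of $[0,1]$ into $2\nu(\ga)$ intervals of injectivity --- not the covering structure over $\hat B$ --- that converts the sheet count into a global multiplicity bound.

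There is also a computational slip worth correcting: $\int_{\rho_0-\pi}^{0}\sin(\rho(t)-\theta)\,d\theta=\cos\rho(t)+\cos(\rho_0-\rho(t))$, not $1+\cos(\rho_0-\rho(t))$, and this quantity is \emph{not} bounded below by $1$ when $\ka_0<0$ (take $\rho$ and $\rho_0$ both near $\pi$, where it tends to $0$). The correct uniform lower bound is $2\cos\big(\tfrac{\rho_0}{2}\big)\cos\big(\rho-\tfrac{\rho_0}{2}\big)\geq 2\cos^2\big(\tfrac{\rho_0}{2}\big)$, which degenerates as $\rho_0\to\pi$; this is exactly why the constant in the proposition must depend on $\ka_0$, and the paper's constant is $K=4\pi/\cos^2(\rho_0/2)$. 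Your claimed $\ka_0$-independent $K=4\pi$ is an artifact of the erroneous lower bound.
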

\begin{proof}
It is easy to check that being non-diffuse is an open condition. Using (\ref{L:dense}), we deduce that the closure of the subset of all $C^2$ non-diffuse curves in $\sr L_{\ka_0}^{+\infty}$ contains the set of all (admissible) non-diffuse curves. Therefore, we lose no generality in restricting our attention to $C^2$ curves.

Let $b\in B$ be arbitrary; we have $B=-B$, hence  $-b\in B$ also. Let $\hat{\ga}$ be the other boundary curve of $B_\ga$:
\begin{equation*}
\quad	\hat{\ga}(t)=B_\ga(t,\rho_0-\pi)=-\cos\rho_0\, \ga(t)-\sin\rho_0\,\no(t)\quad (t\in [0,1]).
\end{equation*}
Then
\begin{equation}\label{E:GB1}
\quad	\hat{\ga}'(t)=\big(\ka(t)\sin\rho_0-\cos\rho_0\big)\ga'(t)=\frac{\sin(\rho_0-\rho(t))}{\sin\rho(t)}\ga'(t)\quad (t\in [0,1]).\footnote{In this proof, derivatives with respect to $t$ are denoted using a $'$ to simplify the notation.}
\end{equation}
(Here, as always, $\ka=\cot \rho$ is the geodesic curvature of $\ga$.) In particular, the unit tangent vector $\hat{\ta}$ to $\hat\ga$ satisfies $\hat{\ta}=\ta$. By (\ref{C:radiusofcurvature}), the geodesic curvature $\hat\ka$ of $\hat\ga$ is given by
\begin{equation}\label{E:GB2}
\quad	\hat\ka(t)=\cot(\rho(t)-(\rho_0-\pi))=\cot(\rho(t)-\rho_0)\quad (t\in [0,1]).
\end{equation} 
Define $h,\hat{h}\colon [0,1]\to (-1,1)$ by 
\begin{equation}\label{E:GB9}
	h(t)=\gen{\ga(t),b}\text{\quad and\quad} \hat{h}(t)=\gen{\hat{\ga}(t),b}.
\end{equation}
These functions measure the ``height'' of $\ga$ and $\hat{\ga}$ with respect to $\pm b$.  We cannot have $\abs{h(t)}=1$ nor $\vert\hat{h}(t)\vert=1$ because the images of $\ga$ and $\hat{\ga}$ are contained in $C$ and $D$ respectively, which are disjoint from $B$ (by definition (\ref{D:BCD})).  Also,
\begin{alignat}{9}\label{E:GB10}
	h'(t)=&\abs{\ga'(t)}\gen{b,\ta(t)},\quad&\quad \hat{h}'(t)&=\frac{\sin(\rho_0-\rho(t))}{\sin\rho(t)}h'(t).
\end{alignat}
Let $\Ga_t$ be the great circle whose center on $\Ss^2$ is $\ta(t)$, 
\begin{equation*}
	\Ga_t=\set{\cos \theta\, \ga(t)+\sin\theta\, \no(t)}{\theta\in [-\pi,\pi)}.
\end{equation*} 
We have $\ga(t),\,\hat{\ga}(t)\in \Ga_t$ by definition. Moreover, the following conditions are equivalent:
\begin{enumerate}
	\item [(i)] $b\in \Ga_t$.
	\item [(ii)]  $h'(t)=0$.
	\item [(iii)] $\hat{h}'(t)=0$.
	\item [(iv)] The segment $B_\ga\big(\se{t}\times (\rho_0-\pi,0)\big)$ contains either $b$ or $-b$.
\end{enumerate} 
The equivalence of the first three conditions follows from \eqref{E:GB10}. The equivalence (i)$\?\leftrightarrow\?$(iv) follows from the facts that $b\nin C\cap D$ and that $\Ga_t$ is the union of the segments $\pm B_\ga\big(\se{t}\times (\rho_0-\pi,0)\big)$ and $\pm C_\ga\big(\se{t}\times [0,\rho_0]\big)$ (see fig.~\ref{F:Gammat}, p.~\pageref{F:Gammat}). The equivalence of the last three conditions tells us that $h$ and $\hat{h}$ have exactly $2\nu(\ga)$ critical points, for each of $B_\ga^{-1}(b)$ and $B_\ga^{-1}(-b)$ has cardinality $\nu(\ga)$, by definition (\ref{D:rotationnumber}).

Suppose that $\tau$ is a critical point of $h$ and $\hat{h}$. Because $b\in \Ga_\tau\ssm (C\cup D)$, we can write
\begin{equation}\label{E:thetae}
	b=\cos \theta\, \ga(\tau)+\sin \theta\, \no(\tau)\text{, for some $\theta\in (\rho_0-\pi,0)\cup (\rho_0,\pi)$}.
\end{equation}
A straightforward calculation shows that:
\begin{equation*}
	h''(\tau)=\gen{\ga''(\tau),b}=\frac{\abs{\ga'(\tau)}^2}{\sin\rho(\tau)}\sin(\theta-\rho(\tau)).
\end{equation*} 
Using \eqref{E:thetae} and $0<\rho(\tau)<\rho_0$ we obtain that either
\begin{equation*}
	-\pi<\theta-\rho(\tau)<0\text{\ \ or\ \ }0<\theta-\rho(\tau)<\pi.
\end{equation*}
In any case, we deduce that $h''(\tau)\neq 0$. The proof that $\tau$ is a nondegenerate critical point of $\hat{h}$ is analogous: one obtains by another calculation that 
\begin{equation*}
	\hat h''(\tau)=\frac{\abs{\ga'(\tau)}^2}{\sin^2(\rho(\tau))}\sin(\rho_0-\rho(\tau))\?\sin(\theta-\rho(\tau)),
\end{equation*}
and it follows from the above inequalities that $\hat{h}''(\tau)\neq 0$. In particular, two neighboring critical points $\tau_0<\tau_1$ of $h$ (and $\hat h$) cannot be both maxima or both minima for $h$ (and $\hat{h}$). We will prove the proposition by obtaining an upper bound for $\tot\big(\ga|_{[\tau_0,\tau_1]}\big)$.

\begin{figure}[ht]
	\begin{center}
		\includegraphics[scale=.36]{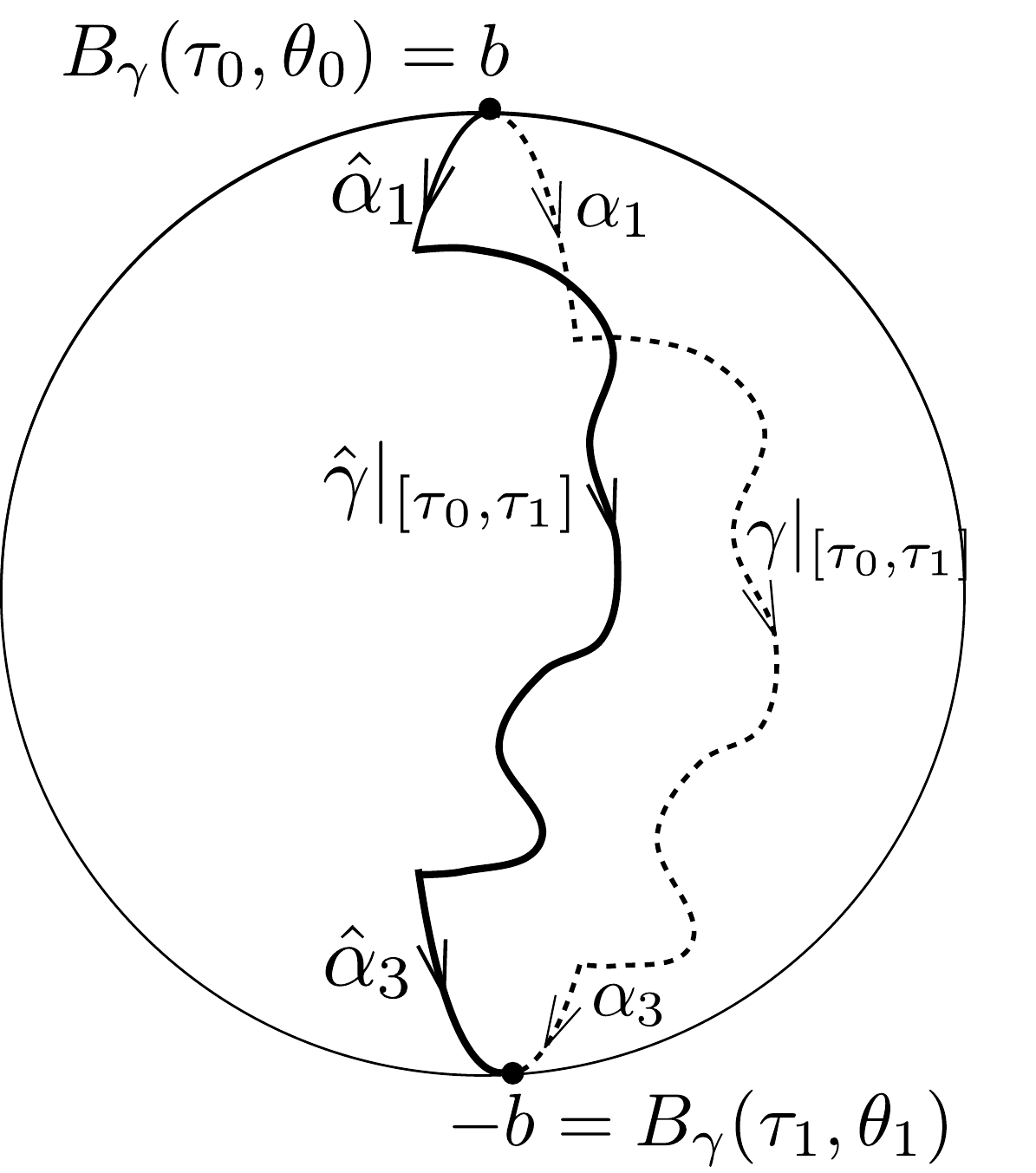}
		\caption{An illustration of the boundary of the rectangle $R=B_\ga|_{ [ \tau_0,\tau_1]\times [\rho_0-\pi,0] }$ considered in the proof of (\ref{P:totbound}).}
		\label{F:rectangle}
	\end{center}
\end{figure}

We first claim that $B_\ga|_{[\tau_0,\tau_1]\times [\rho_0-\pi,0]}$ is injective. Suppose for concreteness that $h'<0$ throughout $(\tau_0,\tau_1)$ and that $b=B_\ga(\tau_0,\theta_0)$, $-b=B_\ga(\tau_1,\theta_1)$, where $\theta_0,\theta_1\in (\rho_0-\pi,0)$.  Let $\al=\al_1\ast\al_2\ast\al_3$ be the concatenation of the curves $\al_i\colon [0,1]\to \Ss^2$ given by 
\begin{alignat*}{9}
\al_1(t)=&B_\ga\big(\tau_0\?,\?(1-t)\theta_0\big),\quad \al_2(t)=\ga\big((1-t)\tau_0+t\tau_1\big),\\
\al_3(t)=&B_\ga\big(\tau_1\?,\? t\theta_1\big),
\end{alignat*}
as sketched in fig.~\ref{F:rectangle}. Similarly, let $\hat\al$ be the concatenation of the curves $\hat\al_i\colon [0,1]\to \Ss^2$,   
\begin{alignat*}{9}
\hat\al_1(t)=&B_\ga\big(\tau_0\?,\?(1-t)\theta_0+t(\rho_0-\pi)\big),\quad \hat\al_2(t)=\hat\ga\big((1-t)\tau_0+t\tau_1\big),\\
\hat\al_3(t)=&B_\ga\big(\tau_1\?,\?(1-t)(\rho_0-\pi)+t\theta_1\big).
\end{alignat*}
Define six functions $h_i,\hat{h}_i\colon [0,1]\to [-1,1]$ by the formulas 
\begin{equation*}
\quad	h_i(t)=\gen{\al_i(t),b}\text{\quad and\quad }\hat h_i(t)=\gen{\hat\al_i(t),b}\quad (i=1,2,3).
\end{equation*}	 Note that $h_2$ is essentially the restriction of $h$ to $[\tau_0,\tau_1]$ and similarly for $\hat{h}_2$ (see \eqref{E:GB9}). Moreover, all of these functions are monotone decreasing. For $i=2$ this is immediate from \eqref{E:GB10} and the hypothesis that $h'<0$ on $(\tau_0,\tau_1)$. For $i=1,3$ this follows from the fact that $\al_i$, $\hat{\al}_i$ are geodesic arcs through $\pm b$, and our choice of orientations for these curves. 

Because the map $B_\ga|_{[\tau_0,\tau_1]\times [\rho_0-\pi,0]}$ is an immersion, if  $B_\ga$ is not injective then either $\al$ and $\hat{\al}$ intersect each other, or one of them has a self-intersection. We can discard the possibility that either curve has a self-intersection from the fact that all functions $h_i$, $\hat{h}_i$ are monotone decreasing. Further, since $B\home \Ss^1\times (-1,1)$, we can find a Jordan curve $\be\colon [0,1]\to B$  through $\pm b$ winding once around the $\Ss^1$ factor. If $\al$ and $\hat\al$ intersect (at some point other than $\al(0)=\hat{\al}(0)$ or $\al(1)=\hat{\al}(1)$), then this must be an intersection of $\ga$ and $\hat\ga$. This is impossible because $\be$, which has image in $B$, separates $C$ and $D$, which contain the images of $\ga$ and $\hat\ga$, respectively.

Thus, $R=B_\ga|_{[\tau_0,\tau_1]\times [\rho_0-\pi,0]}$ is diffeomorphic to a rectangle, and its boundary consists of $\hat\ga|_{[\tau_0,\tau_1]}$, $\ga|_{[\tau_0,\tau_1]}$ (the latter with reversed orientation) and the two geodesic arcs $B_\ga\big(\se{\tau_0}\times [\rho_0-\pi,0]\big)$ and $B_\ga\big(\se{\tau_1}\times [\rho_0-\pi,0]\big)$. Recall from (\ref{L:band}) that $\tfrac{\bd B_\ga}{\bd t}$ is always orthogonal to $\tfrac{\bd B_\ga}{\bd \theta}$. Using Gauss-Bonnet we deduce that
\begin{equation*}
	\Big(\frac{\pi}{2}+\frac{\pi}{2}+\frac{\pi}{2}+\frac{\pi}{2}\Big)+\int_{\tau_0}^{\tau_1}\hat\ka(t)\abs{\hat\ga'(t)}\,dt-\int_{\tau_0}^{\tau_1}\ka(t)\abs{\ga'(t)}\,dt+\text{Area}(R)=2\pi.
\end{equation*}
Using \eqref{E:GB1}, \eqref{E:GB2} and the fact that $\text{Area}(R)<\text{Area}(\Ss^2)=4\pi$ we obtain:
\begin{equation}\label{E:GB3}
	\int_{\tau_0}^{\tau_1}\Big(\cot\rho(t)+\frac{\sin(\rho_0-\rho(t))}{\sin\rho(t)}\cot(\rho_0-\rho(t))\Big)\abs{\ga'(t)}\,dt<4\pi.
\end{equation}
Let us see how this yields an upper bound for $\tot\big(\ga|_{[\tau_0,\tau_1]}\big)$. From $\cos(x)+\cos(y)=2\cos\big(\frac{x+y}{2}\big)\cos\big(\frac{x-y}{2}\big)$ and $\abs{\rho(t)-\frac{\rho_0}{2}}<\frac{\rho_0}{2}$ we deduce that 
\begin{alignat*}{9}
	&\sin\rho(t)\Big(\cot\rho(t)+\frac{\sin(\rho_0-\rho(t))}{\sin\rho(t)}\cot(\rho_0-\rho(t))\Big)\\
	=&\cos\rho(t)+\cos(\rho_0-\rho(t))=2\cos \Big( \frac{\rho_0}{2} \Big)\cos \Big(\rho(t)- \frac{\rho_0}{2} \Big)\geq 2\cos^2 \Big( \frac{\rho_0}{2} \Big).
\end{alignat*}
The Euclidean curvature $K$ of $\ga$ thus satisfies
\begin{alignat}{9}
	K(t)&=\sqrt{1+\ka(t)^2}=\sqrt{1+\cot\rho(t)^2}=\csc \rho(t) \label{E:GB4}
	\\ &\leq\frac{1}{2\cos^2 \big( \frac{\rho_0}{2} \big)}\Big(\cot\rho(t)+\frac{\sin(\rho_0-\rho(t))}{\sin\rho(t)}\cot(\rho_0-\rho(t))\Big). \notag
\end{alignat}	
Combining \eqref{E:GB3} and \eqref{E:GB4} we obtain:
\begin{alignat*}{9}
	&\tot\big(\ga|_{[\tau_0,\tau_1]}\big)=\int_{\tau_0}^{\tau_1}K(t)\abs{\ga'(t)}\,dt <\frac{2\pi}{\cos^2\Big(\frac{\rho_0}{2}\Big)}.
\end{alignat*}
Extending $\ga$ to all of $\R$ by declaring it to be $1$-periodic and choosing consecutive critical points $\tau_0<\tau_1<\dots<\tau_{2\nu(\ga)-1}<\tau_{2\nu(\ga)}$, so that $\tau_{2\nu(\ga)}=\tau_0+1$, we finally conclude from the previous estimate (with $[\tau_{i-1},\tau_i]$ in place of $[\tau_0,\tau_1]$)  that 
\begin{equation*}
	\tot(\ga)=\sum_{i=1}^{2\nu(\ga)}\tot\big(\ga|_{[\tau_{i-1},\tau_i]}\big) < \frac{4\pi}{\cos^2\Big(\frac{\rho_0}{2}\Big)}\,\nu(\ga).\qedhere
\end{equation*}
\end{proof}

\vfill\eject
\chapter{Homotopies of Circles}\label{S:7}
Let $k\geq 1$ be an integer. The \tdef{bending of the $k$-equator} is an explicit homotopy (to be defined below) from a great circle traversed $k$ times to a great circle traversed $k+2$ times. It is an ``optimal'' homotopy of this type, in the following sense: It is possible to deform a circle traversed $k$ times into a circle traversed $k+2$ times in $\sr L_{-\ka_1}^{+\ka_1}(I)$ if and only if we may carry out the bending of the $k$-equator in this space (meaning that the absolute value of the geodesic curvature is bounded by $\ka_1$ throughout the bending). A special case of this construction was considered by Saldanha in \cite{Sal3}.

\begin{figure}[ht]
	\begin{center}
		\includegraphics[scale=.30]{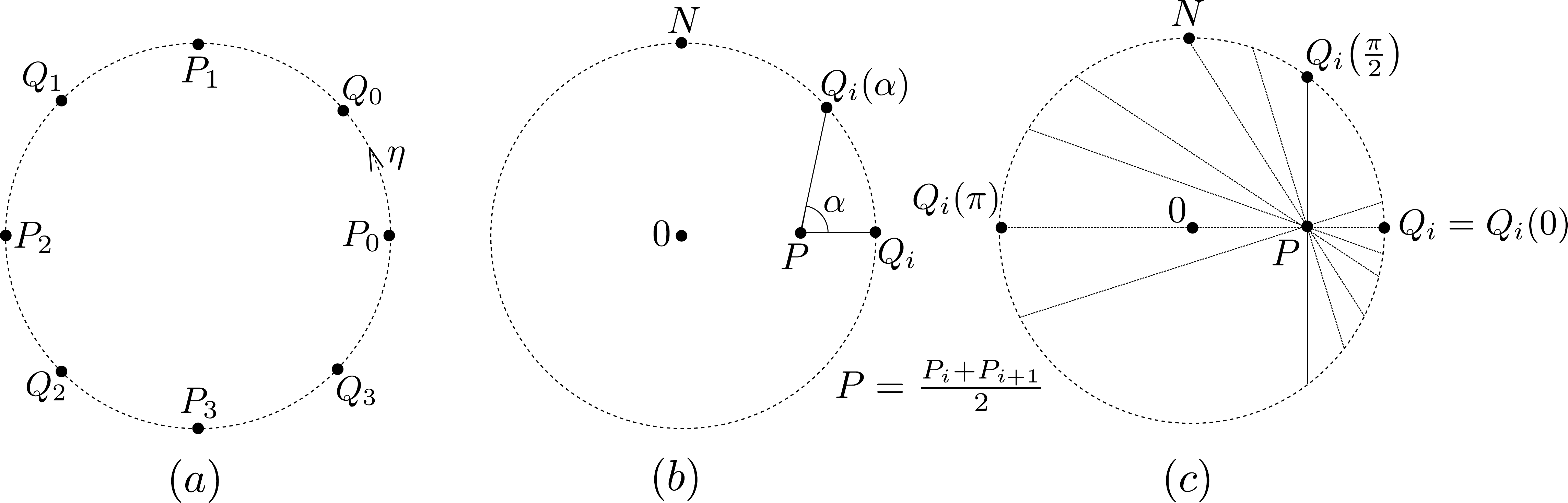}
		\caption{}
		\label{F:fourcorners}
	\end{center}
\end{figure}

Let $N=(0,0,1)\in \Ss^2$ be the north pole, let
\begin{equation*}
\qquad	\eta(t)=\big( \cos(2k\pi t),\sin(2k\pi t),0 \big)\quad (t\in [0,1])
\end{equation*}
be a parametrization of the equator traversed $k\geq 1$ times ($k\in \N$) and let 
\begin{equation*}
\qquad	P_i=\eta\Big(\frac{i}{2k+2}\Big),\quad Q_i=\eta\Big(\frac{i+\tfrac{1}{2}}{2k+2}\Big) \quad (i=0,1,\dots,2k+1),
\end{equation*}
as illustrated in fig.~\ref{F:fourcorners}\?(a) for $k=1$. Define $Q_i(\al)$ (see fig.~\ref{F:fourcorners}\?(b)) to be the unique point in the geodesic through $N$ and $Q_i$ such that 
\begin{equation*}
\qquad	\sphericalangle Q_i \Big(\frac{P_i+P_{i+1}}{2}\Big)Q_{i}(\al)=\al \quad (-\pi\leq \al\leq \pi,~i=0,1,\dots,2k+1).
\end{equation*}

Let $A_i(\al)\subs \Ss^2$ be the arc of circle through $P_iQ_i(\al)P_{i+1}$, with orientation determined by this ordering of the three points, and define
	\begin{equation*}
\qquad		\sig_{\al,i}\colon \Big[0,\frac{1}{2k+2}\Big]\to \Ss^2 \quad(0\leq \al\leq \pi,~i=0,\dots,2k+1)
	\end{equation*}
to be a parametrization of $A_i((-1)^i\al)$ by a multiple of arc-length, as illustrated in fig.~\ref{F:fourcorners2} below for $k=1$. Note that $A_i(0)$ is just $\frac{k}{2k+2}$ of the equator, while $A_i(\pi)$ is the ``complement'' of $A_i(0)$, which is $\frac{k+2}{2k+2}$ of the equator.
	
	Let $\sig_\al\colon [0,1]\to \Ss^2$ be the concatenation of all the $\sig_{\al,i}$, for $i$ increasing from $0$ to $2k+1$ (as in fig.~\ref{F:fourcorners2}). Then $\sig_0$ is the equator traversed $k$ times, while $\sig_\pi$ is the equator traversed $k+2$ times, in the opposite direction. The curve $\sig_\al$ is closed and regular for all $\al\in [0,\pi]$. However, its geodesic curvature is a step function, taking the value $(-1)^i\ka(\al)$ for $t\in (\frac{i}{2k+2},\frac{i+1}{2k+2})$, where $\ka(\al)$ depends only on $\al$. At the points $t=\frac{i}{2k+2}$ the curvature is not defined, except for $\al=0,\pi$, when the curvature vanishes identically. 
	
	We are only interested in the maximum value of $\ka(\al)$ for $0\leq \al\leq \pi$, which can be easily determined. For any $\al$, the center of the circle $C$ of which $A_i(\al)$ is an arc is contained in the plane $\Pi_1$ through $0$, $Q_i$ and $N$, since this plane is the locus of points equidistant from $P_i$ and $P_{i+1}$ ($\Pi_1$ is the plane of figures \ref{F:fourcorners}\?(b) and \ref{F:fourcorners}\?(c)). By definition, $C$ is contained in the plane $\Pi_2$ through $P_i$, $Q_i(\al)$ and $P_{i+1}$. Thus, the center of $C$ lies in the line $\Pi_1\cap \Pi_2=PQ_k(\al)$, and the segment of this line bounded by $\Ss^2$ is a diameter of $C$. Clearly, this diameter is shortest when $\al=\tfrac{\pi}{2}$ (see fig.~\ref{F:fourcorners}\?(c)). (More precisely, the shortest chord through a point lying in the interior of a circle is the one which is perpendicular to the diameter through this point; the proof is an exercise in elementary geometry.) The corresponding spherical radius is $\rho=\frac{k\pi}{2k+2}$, hence \tit{the maximum value attained by $\ka(\al)$ for $0\leq \al\leq \pi$ is} 
	\begin{equation*}
		\ka(\tfrac{\pi}{2})=\cot\Big(\frac{k\pi}{2k+2}\Big)=\tan\Big(\frac{\pi}{2k+2}\Big),
	\end{equation*}
	\tit{and the minimum value is $-\ka(\tfrac{\pi}{2})$.}
	\begin{defna}\label{D:bending}
	Let $\sig_\al$ be as in the discussion above ($0\leq \al\leq \pi$) and assume that
	\begin{equation}\label{E:kappa1}
	\ka_1>\tan\Big(\frac{\pi}{2k+2}\Big).		
	\end{equation}	The \tdef{bending of the $k$-equator} is the family of curves $\eta_s\in \sr L_{-\ka_1}^{+\ka_1}(I)$ given by:
	\begin{equation*}
		\eta_s(t)=\big(\Phi_{\sig_{s\pi}}(0)\big)^{-1}\sig_{s\pi}(t)\quad (s,t\in [0,1]).
	\end{equation*}
	\end{defna}
	\begin{figure}[ht]
	\begin{center}
		\includegraphics[scale=.29]{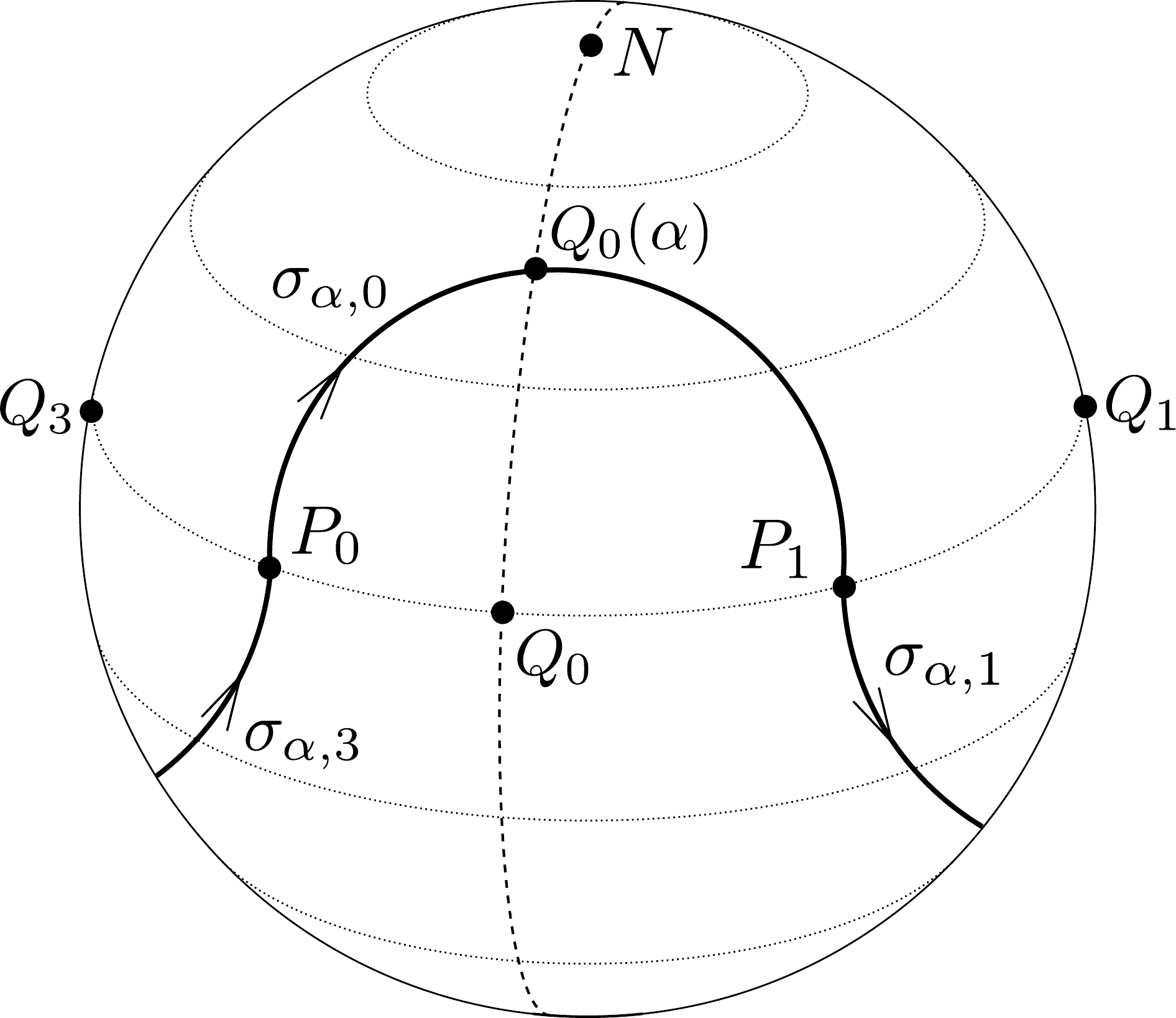}
		\caption{An illustration of the bending of the 1-equator. The curve $\sig_\al$ is the concatenation of $\sig_{\al,0},\dots,\sig_{\al,3}$.}
		\label{F:fourcorners2}
	\end{center}
\end{figure}
Note that $\eta_0$ is the equator of $\Ss^2$ traversed $k$ times and $\eta_1$ is the equator traversed $k+2$ times, in the same direction. The following result is an immediate consequence of the discussion above.
\begin{propa}\label{P:borderline0}
	Let $\ka_0=\cot \rho_0\in \R$ and let $\sig_k, \sig_{k+2}\in \sr L_{\ka_0}^{+\infty}(I)$ be circles traversed $k$ and $k+2$ times, respectively. Then $\sig_k$ lies in the same component of $\sr L_{\ka_0}^{+\infty}(I)$ as $\sig_{k+2}$ if 
	\begin{equation}
		k\geq \left\lfloor{\frac{\pi}{\rho_0}}\right\rfloor.
	\end{equation}
\end{propa}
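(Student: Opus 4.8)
The plan is to reduce the claim to the bending of the $k$-equator, which has already been constructed in (\ref{D:bending}). First I would recall from (\ref{L:homocircles}) that it does not matter which circle traversed $k$ times (resp.\ $k+2$ times) we pick, so it suffices to exhibit one convenient pair joined by a path in $\sr L_{\ka_0}^{+\infty}(I)$. By (\ref{C:twoviewpoints}), there is a homeomorphism $\sr L_{-\ka_1}^{+\ka_1}(I)\home \sr L_{\ka_0}^{+\infty}(I)$ whenever $\rho_0=\pi-2\rho_1$, i.e.\ $\rho_1=\tfrac{\pi-\rho_0}{2}$; moreover, by (\ref{R:circletocircle}), this homeomorphism carries circles traversed $j$ times to circles traversed $j$ times. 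So it is enough to show that $\sig_k$ and $\sig_{k+2}$ lie in the same component of $\sr L_{-\ka_1}^{+\ka_1}(I)$ for this value of $\rho_1$.

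Next I would invoke (\ref{D:bending}): the bending of the $k$-equator is a family $\eta_s$, $s\in[0,1]$, with $\eta_0$ the equator traversed $k$ times and $\eta_1$ the equator traversed $k+2$ times (in the same direction), and it lies inside $\sr L_{-\ka_1}^{+\ka_1}(I)$ provided the bound \eqref{E:kappa1} holds, namely
\begin{equation*}
	\ka_1>\tan\Big(\frac{\pi}{2k+2}\Big).
\end{equation*}
Writing $\rho_1=\arccot\ka_1$, this is equivalent to $\rho_1<\tfrac{\pi}{2}-\tfrac{\pi}{2k+2}=\tfrac{k\pi}{2k+2}$. Substituting $\rho_1=\tfrac{\pi-\rho_0}{2}$ gives the condition $\tfrac{\pi-\rho_0}{2}<\tfrac{k\pi}{2k+2}$, which simplifies to $\rho_0>\tfrac{\pi}{k+1}$, i.e.\ $k+1>\tfrac{\pi}{\rho_0}$, i.e.\ $k\geq\fl{\tfrac{\pi}{\rho_0}}$. (One should double-check the boundary case: if $k=\fl{\tfrac{\pi}{\rho_0}}$ exactly and $\tfrac{\pi}{\rho_0}$ happens to be an integer, then $\rho_0=\tfrac{\pi}{k}>\tfrac{\pi}{k+1}$, so the strict inequality still holds; and in general $k\geq\fl{\pi/\rho_0}$ forces $k+1>\pi/\rho_0$ since $k+1>\fl{\pi/\rho_0}\geq\pi/\rho_0$ is false only when $\pi/\rho_0$ is an integer equal to $k+1$, which cannot happen because then $\fl{\pi/\rho_0}=k+1>k$. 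So the arithmetic is consistent, but I would lay this out carefully.) Thus, under the hypothesis $k\geq\fl{\pi/\rho_0}$, the bending gives a path in $\sr L_{-\ka_1}^{+\ka_1}(I)$ from $\sig_k$ to $\sig_{k+2}$.

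Finally I would transport this path back through the homeomorphism of (\ref{C:twoviewpoints}) to obtain a path in $\sr L_{\ka_0}^{+\infty}(I)$ between a circle traversed $k$ times and a circle traversed $k+2$ times; by (\ref{L:homocircles}) these are in the same components as the given $\sig_k,\sig_{k+2}$, which completes the proof. I do not expect a serious obstacle here: the statement is essentially a bookkeeping consequence of the already-constructed bending homotopy together with the reparametrization/translation machinery of \S 1. The only point requiring genuine care is the translation between the curvature bound $\ka_1>\tan(\pi/(2k+2))$ in \eqref{E:kappa1} and the floor inequality $k\geq\fl{\pi/\rho_0}$ — in particular verifying that the inequality is of the right (strict vs.\ non-strict) type and that the correspondence $\rho_0=\pi-2\rho_1$ of (\ref{C:twoviewpoints}) is being applied with the correct orientation of the interval. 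I would also remark that in (\ref{D:bending}) the endpoints $\eta_0,\eta_1$ are genuinely circles with frame $I$ at both endpoints (this is arranged by the left-multiplication by $\Phi_{\sig_{s\pi}}(0)^{-1}$), so no further correction of the initial/final frame is needed.
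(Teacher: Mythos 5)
Your proposal is correct and follows essentially the same route as the paper: choose $\rho_1=\tfrac{\pi-\rho_0}{2}$ so that $\ka_1=\cot\rho_1$ satisfies \eqref{E:kappa1}, run the bending of the $k$-equator in $\sr L_{-\ka_1}^{+\ka_1}(I)$, transport it through the homeomorphism of (\ref{C:twoviewpoints}), and finish with (\ref{L:homocircles}). The only difference is that you spell out the arithmetic equivalence between \eqref{E:kappa1} and $k\geq\fl{\pi/\rho_0}$ (which the paper leaves implicit); your conclusion there is right, though the cleanest justification is simply $k+1\geq\fl{\pi/\rho_0}+1>\pi/\rho_0$.
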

	\begin{proof}
		Let $\rho_1=\frac{\pi-\rho_0}{2}$, so that $\ka_1=\cot\rho_1$ satisfies \eqref{E:kappa1}.  Let $\ga_s$ ($s\in [0,1]$) be the image of the bending $\eta_s$ of the $k$-equator under the homeomorphism $\sr L_{-\ka_1}^{+\ka_1}(I)\home \sr L_{\ka_0}^{+\infty}(I)$ of (\ref{C:twoviewpoints}). Then $\ga_0$ is some circle traversed $k$ times, while $\ga_1$ is a circle traversed $k+2$ times. Using (\ref{L:homocircles}) we deduce that $\sig_k\iso \ga_0\iso \ga_1\iso \sig_{k+2}$, hence $\sig_k$ and $\sig_{k+2}$ lie in the same component of $\sr L_{\ka_0}^{+\infty}(I)$.
	\end{proof}
\begin{cora}\label{C:negacircles}
	Let $\rho_i=\arccot(\ka_i)$, $i=1,2$, and suppose that $\rho_1-\rho_2>\frac{\pi}{2}$. Let $\sig_{k_0},\sig_{k_1}\in\sr L_{\ka_1}^{\ka_2}(I)$ (resp.~$\sr L_{\ka_1}^{\ka_2}$) be two parametrized circles traversed $k_0$ and $k_1$ times, respectively. Then $\sig_{k_0}$ and $\sig_{k_1}$ lie in the same connected component if and only if $k_0\equiv k_1\pmod 2$.
\end{cora}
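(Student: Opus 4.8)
The plan is to deduce Corollary~\ref{C:negacircles} from Proposition~\ref{P:borderline0} together with the basic facts already established about circles and lifted frames. First I would reduce to the space $\sr L_{\ka_1}^{\ka_2}(I)$: by (\ref{P:arbitrary}) we have $\sr L_{\ka_1}^{\ka_2}\home \SO_3\times \sr L_{\ka_1}^{\ka_2}(I)$, and since $\SO_3$ is connected, two circles in $\sr L_{\ka_1}^{\ka_2}$ lie in the same component if and only if the corresponding circles in $\sr L_{\ka_1}^{\ka_2}(I)$ do; moreover by (\ref{L:homocircles}) it does not matter which circle traversed $k$ times we pick. Next I would invoke (\ref{C:belowandabove}) (or directly (\ref{T:size})) to pass to a space of the form $\sr L_{\ka_0}^{+\infty}(I)$ with $\rho_0 = \rho_1 - \rho_2 > \frac{\pi}{2}$, under a homeomorphism sending circles traversed $k$ times to circles traversed $k$ times (cf.~(\ref{R:circletocircle})). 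So it suffices to prove the statement in $\sr L_{\ka_0}^{+\infty}(I)$ when $\rho_0 > \frac{\pi}{2}$.

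For the ``if'' direction: when $\rho_0 > \frac{\pi}{2}$ we have $\fl{\pi/\rho_0} = 1$, so Proposition~\ref{P:borderline0} applies to every $k \geq 1$, giving $\sig_k \iso \sig_{k+2}$ in $\sr L_{\ka_0}^{+\infty}(I)$ for all $k \geq 1$. Hence all circles traversed an odd number of times lie in one component, and all circles traversed a positive even number of times lie in one (possibly the same, but that is irrelevant for this direction) component; in particular $k_0 \equiv k_1 \pmod 2$ implies $\sig_{k_0}$ and $\sig_{k_1}$ are homotopic. (One should note $k_0,k_1\geq 1$, as a circle is traversed at least once.)

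For the ``only if'' direction I would use the lifted frame invariant from (\ref{L:bit}): if $\sig_{k_0}$ and $\sig_{k_1}$ lie in the same component of $\sr L_{\ka_0}^{+\infty}(I) \subs \sr L_{-\infty}^{+\infty}(I)$, then $\te\Phi_{\sig_{k_0}}(1) = \te\Phi_{\sig_{k_1}}(1)$ in $\Ss^3$. A direct computation of the frame of a circle traversed $k$ times — or simply the observation (already used repeatedly, e.g.~after (\ref{E:F_n})) that traversing an extra loop multiplies the final lifted frame by $-1$ — shows that $\te\Phi_{\sig_k}(1) = (-1)^k\,z$ for a fixed $z \in \Ss^3$ independent of $k$. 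Therefore $\te\Phi_{\sig_{k_0}}(1) = \te\Phi_{\sig_{k_1}}(1)$ forces $(-1)^{k_0} = (-1)^{k_1}$, i.e.~$k_0 \equiv k_1 \pmod 2$. Finally, transporting this conclusion back through the homeomorphisms $\sr L_{\ka_0}^{+\infty}(I) \home \sr L_{\ka_1}^{\ka_2}(I)$ and $\sr L_{\ka_1}^{\ka_2} \home \SO_3 \times \sr L_{\ka_1}^{\ka_2}(I)$ completes the proof.

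I do not expect any serious obstacle here: all the real work is in Proposition~\ref{P:borderline0} and Lemma~\ref{L:bit}. The only mildly delicate point is making sure the parity invariant $(-1)^k$ for the lifted frame of $\sig_k$ is correctly recorded (one must lift the loop $\Phi_{\sig_k}\colon [0,1]\to\SO_3$, which winds $k$ times around a circle representing the nontrivial element of $\pi_1(\SO_3)$, so the lift closes up iff $k$ is even — this is exactly the $\sr I_\pm$ dichotomy recalled in Example~(a) after (\ref{F:components})), and to keep track that all homeomorphisms invoked send $k$-fold circles to $k$-fold circles, which is guaranteed by (\ref{R:circletocircle}) and (\ref{L:homocircles}).
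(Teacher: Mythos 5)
Your proposal is correct and follows essentially the same route as the paper: reduce to $\sr L_{\ka_0}^{+\infty}(I)$ with $\rho_0=\rho_1-\rho_2>\frac{\pi}{2}$ via (\ref{P:arbitrary}), (\ref{C:belowandabove}) and (\ref{R:circletocircle}), get the ``if'' direction from (\ref{P:borderline0}) since $\fl{\pi/\rho_0}=1$, and the ``only if'' direction from the parity of the final lifted frame. The paper's own proof is terser and leaves the lifted-frame step implicit, so your explicit appeal to (\ref{L:bit}) and the computation $\te\Phi_{\sig_k}(1)=(-1)^k\te\Phi_{\sig_k}(0)$ is a welcome filling-in of that detail rather than a different argument.
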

\begin{proof}
	Under the homeomorphism $\sr L_{\ka_1}^{\ka_2}(I)\home \sr L_{\ka_0}^{+\infty}(I)$ of (\ref{C:belowandabove}), the condition $\rho_1-\rho_2>\frac{\pi}{2}$ translates into $\rho_0>\frac{\pi}{2}$. The result is an immediate consequence of (\ref{P:arbitrary}), (\ref{L:homocircles}) and (\ref{P:borderline0}).
\end{proof}

\subsection*{Homotopies of condensed curves} The previous corollary settles the question of when two circles in $\sr L_{\ka_0}^{+\infty}(I)$ lie in the same component for $\ka_0<0$. Because of this, we will assume for the rest of the section that $\ka_0\geq 0$; the following proposition implies the converse to (\ref{P:borderline0}), and together with it, settles the same question in this case.
\begin{propa}\label{P:borderline}
	Let $\ka_0=\cot \rho_0\geq 0$ and let 
	\[
	n=\fl{\frac{\pi}{\rho_0}}+1.
	\]
	Suppose that $s\mapsto \ga_s\in \sr L_{\ka_0}^{+\infty}(I)$ is a homotopy, with $\ga_0$ condensed and $\nu(\ga_0)\leq n-2$ $(s\in [0,1])$. Then $\ga_s$ is condensed and $\nu(\ga_s)=\nu(\ga_0)$ for all $s\in [0,1]$.
\end{propa}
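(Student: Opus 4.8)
The plan is to show that the subset of $\sr L_{\ka_0}^{+\infty}(I)$ consisting of condensed curves of rotation number exactly $j$ is both open and closed in $\sr L_{\ka_0}^{+\infty}(I)$ whenever $j\leq n-2$; this immediately gives the proposition, since a path $s\mapsto \ga_s$ starting in such a set must remain in it. Openness of the set of condensed curves with a fixed rotation number (within the condensed locus) is essentially (\ref{L:super}\?(a)), together with the continuity of $\nu$ on the condensed locus and its integer-valuedness. So the real content is \emph{closedness}: I must rule out the possibility that a sequence (or path) of condensed curves of rotation number $j\leq n-2$ converges to a curve that is either non-condensed, or condensed of a different rotation number.

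First I would handle the change of rotation number: if $\ga_s$ is condensed for all $s$ in some interval and $\nu(\ga_s)$ is constant equal to $j$ on the interior, then by continuity of the barycenter map $h$ on the condensed locus (\ref{L:causticbarycenter}) and of the stereographic-projection/rotation-number construction, $\nu$ is locally constant on the condensed locus, so it cannot jump. Hence the only way to leave the set is to become non-condensed. To preclude this, I would use the total-curvature bound. A condensed curve $\ga$ is in particular non-diffuse, so by (\ref{R:rotation}) its rotation number in the sense of (\ref{D:rotationnumber}) agrees with the ``stereographic'' one, and by (\ref{P:totbound}) we get $\tot(\ga)\leq K\nu(\ga)$ with $K$ depending only on $\ka_0$. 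On the other hand, along a convergent family the curves are parametrized by curvature (after applying (\ref{L:reparametrize})) and $\tot(\ga_s)$ depends continuously on $s$; moreover a non-condensed curve must have total curvature bounded \emph{below} by a quantity forcing $\nu\geq n-1$. More precisely, I would argue that a curve $\ga\in\sr L_{\ka_0}^{+\infty}$ which is condensed and whose caustic band degenerates in the limit to touch antipodal points would, by the grafting/adding-loops analysis of \S 4 combined with the bending construction of this section, be forced to have rotation number at least $n-1$: this is exactly the threshold $k\geq\fl{\pi/\rho_0}=n-1$ at which $\sig_k$ and $\sig_{k+2}$ become homotopic.

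The cleanest route, and the one I would actually take, is to avoid re-deriving the threshold and instead reduce to (\ref{T:disconnects})/(\ref{P:borderline0}) directly. Suppose $\ga_0$ is condensed with $\nu(\ga_0)=j\leq n-2$ and, for contradiction, that the homotopy $s\mapsto \ga_s$ leaves the condensed locus. Let $s_0=\sup\{s:\ga_s\text{ is condensed with }\nu(\ga_s)=j\}$; by openness of this set (via (\ref{L:super}\?(a)) and local constancy of $\nu$) and compactness, $\ga_{s_0}$ lies in the closure, which by (\ref{L:super}\?(a)) is the condensed locus, and there $\nu$ is continuous, so $\ga_{s_0}$ is condensed with $\nu(\ga_{s_0})=j$, contradicting the definition of $s_0$ unless $s_0=1$. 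Thus $\ga_s$ stays condensed and of rotation number $j$ for all $s$. The one delicate point that remains is the passage ``closure of the set of condensed curves of rotation number $j$ equals itself'': here I would invoke (\ref{L:super}\?(a)), which says $\sr O_\nu$ is the closure of an open set, but I actually need that $\sr O_j$ is itself \emph{closed}. For that, I would show: if $\eta\in\sr L_{\ka_0}^{+\infty}(I)$ is a limit of condensed curves $\eta_m$ with $\nu(\eta_m)=j\leq n-2$, then $\eta$ is non-diffuse (being non-diffuse is open, and being diffuse is open, by the compactness-of-$C_\ga$-image arguments used repeatedly in \S 3 and \S 6 — so the condensed-and-non-diffuse locus is closed along any family not hitting the diffuse locus), hence $\nu(\eta)$ is defined and equals $\lim\nu(\eta_m)=j$; and $\eta$ is condensed because the condition ``$\Im(C_\ga)$ lies in a closed hemisphere'' is closed (limit of closed hemispheres containing $\Im(C_{\eta_m})$ contains $\Im(C_\eta)$, using continuity of $C_\ga$ in $\ga$ and compactness of the space of hemispheres).

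The main obstacle I anticipate is making rigorous the claim that a curve obtained as a limit of condensed curves cannot suddenly become diffuse while keeping bounded rotation number — i.e., controlling the transition between the condensed and diffuse regimes. The total-curvature bound (\ref{P:totbound}) is the right tool: it bounds $\tot$ on the non-diffuse (in particular condensed) side in terms of $\nu$, while a diffuse curve can be grafted to have arbitrarily large $\tot$ (\S 4); but one needs that $\tot$ cannot blow up along a $C^0$- (or $H^1$-) convergent family, which requires working with the curvature parametrization and lower semicontinuity of $\tot$ — or, alternatively, simply noting that if $\ga_m\to\eta$ in $\sr L_{\ka_0}^{+\infty}(I)$ then the regular bands $B_{\ga_m}$ converge to $B_\eta$ uniformly, so $\Im(C_{\ga_m})\to\Im(C_\eta)$ in the Hausdorff metric, and a Hausdorff limit of sets each contained in a closed hemisphere is again contained in a closed hemisphere, which settles condensedness of the limit without any curvature estimate at all. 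With that observation the whole argument collapses to the open/closed dichotomy plus local constancy of $\nu$, and (\ref{P:totbound}) is needed only to guarantee that $\nu(\ga_s)$ stays finite, which is automatic once $\ga_s$ is known to be condensed.
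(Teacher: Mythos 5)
There is a genuine gap. Your final ``open and closed'' argument requires that the set $\sr O_j$ of condensed curves with rotation number $j$ be \emph{open} in $\sr L_{\ka_0}^{+\infty}(I)$, and that is precisely what is not available: by (\ref{L:super}\?(a)) the condensed locus is closed (the closure of an open set), not open, and its boundary consists of \emph{equatorial} curves, i.e.\ condensed curves whose caustic band lies in a closed hemisphere but in no open one. Such a curve can perfectly well be a limit of non-condensed curves, so when you set $s_0=\sup\{s:\ga_s\text{ condensed with }\nu(\ga_s)=j\}$ and conclude that $\ga_{s_0}$ is condensed with $\nu(\ga_{s_0})=j$, nothing in your argument prevents $\ga_s$ from being non-condensed for $s>s_0$; the ``contradiction with the definition of $s_0$'' does not materialize. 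Indeed, openness of $\sr O_j$ is essentially equivalent to the proposition you are proving (it is the content of (\ref{P:condensed})), so invoking it here is circular. The telltale sign is that your closing argument never uses the hypothesis $\nu(\ga_0)\leq n-2$ — yet the statement is false without it, since for $k\geq n-1$ a circle traversed $k$ times can be deformed to one traversed $k+2$ times by (\ref{P:borderline0}), and such a homotopy must leave the condensed locus.

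The missing ingredient is a quantitative estimate at the boundary: one must show that an \emph{equatorial} curve necessarily has rotation number at least $n-1$, so that a path starting at a condensed curve with $\nu\leq n-2$ can never reach the boundary of the condensed locus. This is the paper's Lemma (\ref{L:equatorial}), proved by a careful count of the common critical points of the height functions $h=\gen{\ga,N}$ and $\ce h=\gen{\ce\ga,N}$ relative to the equator $E_\ga$, together with the concavity inequality (\ref{L:calculus}); this is where the threshold $\fl{\pi/\rho_0}$ actually enters. Your first sketch gestures at the right statement (``forced to have rotation number at least $n-1$'') but offers no proof: neither the grafting analysis of \S 4 nor the bending construction yields it, and (\ref{P:totbound}) gives an upper bound $\tot(\ga)\leq K\nu(\ga)$, not the lower bound on $\nu$ that you would need. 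The parts of your proposal that do work — closedness of the condensed locus under Hausdorff limits of hemispheres, local constancy of $\nu$ there via (\ref{L:causticbarycenter}), and the reduction to $C^2$ curves — coincide with the framing of the paper's proof, but the heart of the argument is absent.
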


In particular, taking $\ga_0$ to be a circle $\sig_k$ traversed $k$ times for $k\leq n-2$, we conclude that it is not possible to deform $\sig_k$ into a circle traversed $k+2$ times in $\sr L_{\ka_0}^{+\infty}$. The proof of (\ref{P:borderline}) will be broken into several parts. We start with the definition of an equatorial curve, which is just a borderline case of a condensed curve.
\begin{defna}
Let $\ka_0\geq 0$. We shall say that a curve $\ga\in \sr L_{\ka_0}^{+\infty}$ is \tdef{equatorial} if the image $C$ of its caustic band is contained in a closed hemisphere, but not in any open hemisphere. Let 
\[
H_\ga=\set{p\in \Ss^2}{\gen{p,h_\ga}\geq 0}
\]
be a closed hemisphere containing $\ga$, and let
\[
E_\ga=\set{p\in \Ss^2}{\gen{p,h_\ga}=0}
\]
denote the corresponding \tdef{equator}. Also, let $\ce \ga\colon [0,1]\to \Ss^2$ be the curve given by 
\begin{equation*}
	\ce\ga(t)=C_\ga(t,\rho_0).
\end{equation*}
\end{defna}

\begin{lemmaa}\label{L:cega}
	Let $\ka_0\geq 0$, let $\ga\in \sr L_{\ka_0}^{+\infty}$ be an equatorial curve of class $C^2$. Then:
	\begin{enumerate}
		\item [(a)] The hemisphere $H_\ga$ and the equator $E_\ga$ defined above are uniquely determined by $\ga$.
		\item [(b)] The geodesic curvature $\ce{\ka}$ of $\ce{\ga}$ is given by:
		\begin{equation*}\label{E:cecurvature}
				\ce\ka=\cot(\rho_0-\rho)>0.
		\end{equation*}
	\end{enumerate}
\end{lemmaa}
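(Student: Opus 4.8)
The result has two parts: (a) $H_\ga$ (equivalently $E_\ga$) is unique, and (b) the geodesic curvature of $\ce\ga$ is $\cot(\rho_0-\rho)>0$. For part (a) I would argue exactly as in the uniqueness step inside the proof of (\ref{L:causticbarycenter}): if $C=\Im(C_\ga)$ were contained in two distinct closed hemispheres $H_1,H_2$, it would be contained in the closed lune $H_1\cap H_2$, whose ``corners'' are the two points of $E_1\cap E_2$. The boundary of $C$ lies in the union of the images of $\ga=C_\ga(\cdot,0)$ and $\ce\ga=C_\ga(\cdot,\rho_0)$; both of these curves are regular (possess everywhere-defined unit tangent vectors), so neither can pass through an isolated corner point. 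Hence $C$ would avoid a neighbourhood of those corners, forcing $C$ into an open hemisphere, contradicting the hypothesis that $\ga$ is equatorial. Since $\ga$ being equatorial already asserts $C$ is in some closed hemisphere, $H_\ga$ exists, and the above shows it is unique; $E_\ga=\bd H_\ga$ is then determined as well.

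**Part (b).** For the curvature formula I would compute directly. Recall $\ce\ga(t)=C_\ga(t,\rho_0)=\cos\rho_0\,\ga(t)+\sin\rho_0\,\no(t)$, which is precisely the translation $\ga_{\rho_0}$ of $\ga$ by $\theta=\rho_0$ in the sense of \eqref{E:translation}. Now I must check that $\theta=\rho_0$ satisfies the admissibility condition \eqref{E:regulartranslation}, i.e. $\rho_0-\pi\le \rho_0\le \rho_2$ where here the curvature bounds are $(\ka_0,+\infty)$, so $\rho_2=\arccot(+\infty)=0$ and $\rho_1=\arccot\ka_0=\rho_0$. The condition reads $\rho_0-\pi\le\rho_0\le 0$, which fails (we'd need $\rho_0\le 0$). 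So (\ref{C:radiusofcurvature}) does not apply verbatim at the endpoint $\theta=\rho_0$; this is the borderline/degenerate case and is the main subtlety. The way around it: by (\ref{L:dense}) it suffices to treat $C^2$ (in fact we may take $\ga$ to be a circle locally via the osculating-circle argument of (\ref{L:osculating}), though here an honest computation is just as quick). For $\theta<\rho_0$ strictly, (\ref{C:radiusofcurvature}) gives that the translation $\ga_\theta$ has radius of curvature $\rho-\theta\in(0,\rho_0-\theta)$; letting $\theta\to\rho_0$ by continuity of the frame and of the geodesic curvature (which is given by the a.e.-defined formula $\ka=\tfrac1v\langle\dot\ta,\no\rangle$) the radius of curvature of $\ce\ga=\ga_{\rho_0}$ at $\ce\ga(t)$ is $\rho(t)-\rho_0+\pi$ — the $+\pi$ appearing because the orientation of $\ce\ga$ is reversed relative to what the naive ``$\rho-\theta$'' would give once $\rho-\theta$ becomes negative; equivalently $\ce\ga$ traverses its osculating circles with the opposite orientation, so its radius of curvature is $\pi-(\rho_0-\rho)$. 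Hence its geodesic curvature is $\cot(\pi-(\rho_0-\rho))=-\cot(\rho_0-\rho)$... and here I need to be careful with the sign convention: the formula in the statement is $\ce\ka=\cot(\rho_0-\rho)$. Since $0<\rho<\rho_0\le\tfrac\pi2$ (because $\ka_0\ge0$), we have $0<\rho_0-\rho<\tfrac\pi2$, so $\cot(\rho_0-\rho)>0$, which matches the claimed positivity; the bookkeeping of orientation must be arranged so that the sign comes out as stated.

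**Cleanest route to (b).** Rather than taking a limit, I would differentiate $\ce\ga$ directly. Using the Frenet relations $\ta'=-\ga+\ka\no$, $\no'=-\ka\ta$ (in arc-length $s$ of $\ga$) one gets $\dot{\ce\ga}(s)=(\cos\rho_0 - \ka\sin\rho_0)\,\ta(s) + \sin\rho_0\,\ga(s)\cdot 0$... more precisely $\ce\ga{}'=\cos\rho_0\,\ta+\sin\rho_0\,\no' = (\cos\rho_0-\ka\sin\rho_0)\ta$, so $\ce\ga{}'=\dfrac{\sin(\rho_0-\rho)}{\sin\rho}\,\ta$, which is negative-free precisely when $\rho<\rho_0$ — so $\ce\ga$ is regular with unit tangent $+\ta$. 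Then its unit normal is $\ce\ga\times\ce{\ta}$, and one more differentiation together with the definition \eqref{E:geodesiccurvature} of geodesic curvature produces $\ce\ka=\cot(\rho_0-\rho)$ after routine trigonometric simplification (using $\ce\ga=\cos\rho_0\,\ga+\sin\rho_0\,\no$ and $\no=\cot\rho$–data). This computation is essentially the one already carried out in the proof of (\ref{L:caustic}) for $\chi=C_\ga(\cdot,\rho)$ and in \eqref{E:GB2} of (\ref{P:totbound}); I would cite \eqref{E:GB1}--\eqref{E:GB2} and (\ref{C:radiusofcurvature}) and fill in the three-line trig identity. Positivity is then immediate from $0<\rho_0-\rho<\pi/2$. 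The only genuine obstacle is the sign/orientation convention at the degenerate endpoint, which the direct computation of $\ce\ga{}'$ settles once and for all.
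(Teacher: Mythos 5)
Your argument for (a) is the paper's argument essentially verbatim (the closed-lune observation, plus the fact that $\ga$ and $\ce{\ga}$, whose images bound $C$, have unit tangent vectors everywhere and so cannot pass through the corner points $E_1\cap E_2$), and your ``cleanest route'' to (b) — differentiate $\ce{\ga}=\cos\rho_0\,\ga+\sin\rho_0\,\no$ twice and feed the result into the geodesic-curvature formula — is exactly the paper's computation, which yields $\ce\ka=\gen{\ce\ga,\ce\ga'\times\ce\ga''}/\abs{\ce\ga'}^3=\cot(\rho_0-\rho)$.

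One correction, at precisely the point you yourself flagged as the only genuine obstacle: your coefficient has the wrong sign. Since $\ka=\cot\rho$,
\begin{equation*}
\cos\rho_0-\ka\sin\rho_0=\frac{\sin(\rho-\rho_0)}{\sin\rho}<0
\end{equation*}
because $0<\rho<\rho_0$; so $\ce\ga'$ is a \emph{negative} multiple of $\ta$ and the unit tangent of $\ce\ga$ is $-\ta$, not $+\ta$. (Consistently, \eqref{E:GB1} shows that the antipodal curve $\hat\ga=-\ce\ga$ has tangent a positive multiple of $\ga'$.) The slip happens not to corrupt the final value: replacing $\ce{\ta}$ by $-\ce{\ta}$ flips both $\dot{\ce{\ta}}$ and $\ce{\no}=\ce\ga\times\ce{\ta}$, so $\gen{\dot{\ce{\ta}},\ce{\no}}$ is unchanged, and the paper's determinant formula absorbs the sign through the $\abs{\ce\ga'}^3$ in the denominator. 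But as written your assertion about the tangent direction is false, and a reader who takes $\ce{\no}=\ce\ga\times(+\ta)$ while differentiating the true unit tangent $-\ta$ would land on $-\cot(\rho_0-\rho)$; fix the sign of the tangent (or just quote the determinant formula) and the rest goes through. Your preliminary route via (\ref{C:radiusofcurvature}) and a limit $\theta\to\rho_0$ is rightly abandoned — the hypothesis \eqref{E:regulartranslation} genuinely fails at $\theta=\rho_0$, and the direct computation is the clean way out.
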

\begin{proof} Suppose that $C=\Im(C_\ga)$ is contained in distinct closed hemispheres $H_1$ and $H_2$. Then it is contained in the closed lune $H_1\cap H_2$. Since the curves $\ga, \ce{\ga}$, whose images form the boundary of $C$, have a unit tangent vector at all points, they cannot pass through either of the points in $E_1\cap E_2$ (where $E_i$ is the equator corresponding to $H_i$). It follows that $C$ is contained in an open hemisphere, a contradiction which establishes (a).

For part (b) we calculate:\footnote{For the rest of the section we denote derivatives with respect to $t$ by a $'$ to unclutter the notation.}
\begin{alignat}{9}\label{E:cefirst}
	\ce{\ga}'(t)&=\abs{\ga'(t)}\big(\cos\rho_0-\ka(t)\sin\rho_0\big)\ta(t) \\ \label{E:cesecond}
	\ce{\ga}''(t)&=\abs{\ga'(t)}^2\big(\cos\rho_0-\ka(t)\sin\rho_0\big)\big(-\ga(t)+\ka(t)\no(t)\big)+\la(t)\ta(t),
\end{alignat}
where $\ka$, $\ta$ and $\no$ denote the geodesic curvature of and unit and normal vectors to $\ga$, respectively, and the value of $\la(t)$ is irrelevant to us. Hence,
\begin{equation*}
	\ce\ka=\frac{\gen{\ce\ga\,,\,\ce\ga'\times \ce\ga''}}{\abs{\ce\ga'}^3}=\frac{\ka\cos\rho_0+\sin\rho_0}{\abs{\cos\rho_0-\ka\sin\rho_0}}=\frac{\cos(\rho_0-\rho)}{\abs{\sin(\rho-\rho_0)}}=\cot(\rho_0-\rho).\qedhere
\end{equation*}
\end{proof}

\begin{lemmaa}\label{L:borderline}
	Let $\ka_0\geq 0$ and $\ga\in \sr L_{\ka_0}^{+\infty}$ be an equatorial curve of class $C^2$. Take $N\in E_\ga$ and define $h,\ce{h}\colon [0,1]\to \R$ by 
	\begin{equation}\label{E:borderline}
		h(t)=\gen{\ga(t),N},\quad \ce h(t)=\gen{\ce\ga(t),N}.
	\end{equation}
	\begin{enumerate}
		\item [(a)] The following conditions are equivalent:
	\begin{enumerate}
	\item [(i)] $\pm N\in \Ga_\tau$ for some $\tau\in [0,1]$.
	\item [(ii)]  $\tau\in [0,1]$ is a critical point of $h$.
	\item [(iii)]  $\tau\in [0,1]$ is a critical point of $\ce h$.
\end{enumerate} 
		\item [(b)] If $\tau$ is a common critical point of $h$, $\ce h$, then $h''(\tau)\ce h''(\tau)<0$.
		\item [(c)] If $\tau<\bar\tau$ are neighboring critical points then $h''(\tau)h''(\bar\tau)<0$ and  $\ce{h}''(\tau)\ce h''(\bar\tau)<0$.
	\end{enumerate} 
\end{lemmaa}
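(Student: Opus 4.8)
\textbf{Proof proposal for (\ref{L:borderline}).}

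The plan is to mimic, in the equatorial setting, the analysis of the ``height functions'' carried out in the proof of (\ref{P:totbound}), where the role of the separating annulus $B$ and the point $b$ is now played by the equator $E_\ga$ and the pole $N\in E_\ga$. First I would record the basic formulas: differentiating \eqref{E:borderline} and using $\ga'=\abs{\ga'}\ta$ together with \eqref{E:cefirst} gives
\begin{equation*}
h'(t)=\abs{\ga'(t)}\gen{\ta(t),N},\qquad \ce h'(t)=\abs{\ga'(t)}\big(\cos\rho_0-\ka(t)\sin\rho_0\big)\gen{\ta(t),N}=\frac{\sin(\rho_0-\rho(t))}{\sin\rho(t)}\,h'(t).
\end{equation*}
Since $\ga\in\sr L_{\ka_0}^{+\infty}$ forces $0<\rho<\rho_0$, the factor $\tfrac{\sin(\rho_0-\rho)}{\sin\rho}$ is strictly positive, so $h'(t)=0\Leftrightarrow \ce h'(t)=0\Leftrightarrow \gen{\ta(t),N}=0$. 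Because $\Ga_\tau$ is the great circle with pole $\ta(\tau)$, the condition $\gen{\ta(\tau),N}=0$ is exactly $N\in\Ga_\tau$, equivalently $\pm N\in\Ga_\tau$. This proves (a).

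For (b), suppose $\tau$ is a common critical point, so $\gen{\ta(\tau),N}=0$, i.e.\ $N=\cos\theta\,\ga(\tau)+\sin\theta\,\no(\tau)$ for some $\theta$. Since $\ga$ is equatorial and $N\in E_\ga$, while $\Im(\ga)$ and $\Im(\ce\ga)$ are contained in the closed hemisphere $H_\ga$ (indeed their interiors, as they have unit tangent vectors everywhere and cannot be tangent to $E_\ga$ at a point of $E_\ga$ unless—here I would argue as in (\ref{L:cega}(a))/(\ref{L:opencaustic}) that $\pm N\notin\Im(\ga)\cup\Im(\ce\ga)$), one gets $h(\tau)=\gen{\ga(\tau),N}=\cos\theta\neq\pm1$ and likewise $\ce h(\tau)\neq\pm1$; more precisely $N\in\Ga_\tau\ssm(C\cup D)$ forces $\theta\in(-\pi,0)\cup(\rho_0,\pi)$ after accounting for which arc of $\Ga_\tau$ meets $C_\ga\big(\se{\tau}\times[0,\rho_0]\big)$ (cf.\ fig.~\ref{F:Gammat}). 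A direct computation, exactly as in (\ref{P:totbound}), gives
\begin{equation*}
h''(\tau)=\gen{\ga''(\tau),N}=\frac{\abs{\ga'(\tau)}^2}{\sin\rho(\tau)}\sin(\theta-\rho(\tau)),\qquad
\ce h''(\tau)=\frac{\abs{\ga'(\tau)}^2}{\sin^2\rho(\tau)}\sin(\rho_0-\rho(\tau))\,\sin(\theta-\rho(\tau)),
\end{equation*}
where the second identity follows by differentiating \eqref{E:cesecond} and pairing with $N$, using $\gen{\ga(\tau),N}=\cos\theta$, $\gen{\no(\tau),N}=\sin\theta$. Hence
\begin{equation*}
h''(\tau)\,\ce h''(\tau)=\frac{\abs{\ga'(\tau)}^4}{\sin^3\rho(\tau)}\sin(\rho_0-\rho(\tau))\,\sin^2(\theta-\rho(\tau)).
\end{equation*}
Now $0<\rho(\tau)<\rho_0\le\pi$ makes $\sin(\rho_0-\rho(\tau))>0$; the remaining issue is the \emph{sign} of this product, so I must show it is in fact negative, i.e.\ that the two functions $h$ and $\ce h$ have second derivatives of opposite sign at $\tau$—wait: the displayed product is manifestly $\ge0$. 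The resolution is that $\ce h''$ picks up an extra sign because $\cos\rho_0-\ka\sin\rho_0=\tfrac{\sin(\rho_0-\rho)}{\sin\rho}$ which, for $\rho\in(0,\rho_0)$, is positive, so in fact $h$ and $\ce h$ have the \emph{same} convexity at $\tau$; I expect the correct statement being used downstream is that the product is nonzero (both critical points are nondegenerate), and I would check the exact sign convention in \eqref{E:GB9}–\eqref{E:thetae} of (\ref{P:totbound})—there the analogous product $h''\ce h''$ came out positive as well. \textbf{This sign bookkeeping is the main obstacle:} I would resolve it by carefully tracking the orientation of $\Ga_\tau$ and which of the two subarcs $\pm B_\ga(\se{\tau}\times(\rho_0-\pi,0))$, $\pm C_\ga(\se{\tau}\times[0,\rho_0])$ contains $N$, exactly as in the four-way equivalence in (\ref{P:totbound}); the upshot is that from $N\notin C\cup D$ one gets either $-\pi<\theta-\rho(\tau)<0$ or $0<\theta-\rho(\tau)<\pi$, so $\sin^2(\theta-\rho(\tau))>0$ and both $h''(\tau)$ and $\ce h''(\tau)$ are nonzero, giving $h''(\tau)\ce h''(\tau)\ne 0$; the claimed negativity in (b) then follows once the orientation is pinned down so that $h''(\tau)$ and $\ce h''(\tau)$ carry opposite signs (the factor $\cos\rho_0-\ka\sin\rho_0$ changes sign across $\rho=\rho_0$, and for the curve $\ce\ga$ one works on the far side).

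Part (c) is then formal. By part (a), $h$ and $\ce h$ have exactly the same critical set, and by part (b) combined with the nondegeneracy just established, every such $\tau$ is a nondegenerate critical point of both $h$ and $\ce h$. A nondegenerate critical point is a strict local max or min, and between two neighbouring strict local extrema of a $C^2$ function the type must alternate (two consecutive maxima would force an intervening minimum, hence an intervening critical point, contradicting ``neighbouring''). Therefore $h''(\tau)h''(\bar\tau)<0$ and $\ce h''(\tau)\ce h''(\bar\tau)<0$ for neighbouring critical points $\tau<\bar\tau$, which is (c). Throughout I would invoke (\ref{L:dense}) to reduce, if needed, to $C^2$ curves, though the hypothesis here already supplies that.
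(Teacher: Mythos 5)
Parts (a) and (c) of your proposal are essentially the paper's argument and are fine: (a) needs only that the factor relating $\ce h'$ to $h'$ is nonvanishing, and (c) is the standard alternation of nondegenerate critical points once (b) is in hand. The problem is (b), where you have a genuine sign error that you noticed but did not resolve. Differentiating $\ce\ga=\cos\rho_0\,\ga+\sin\rho_0\,\no$ with $\no'=-\abs{\ga'}\ka\,\ta$ gives
\begin{equation*}
	\ce h'(t)=\abs{\ga'(t)}\big(\cos\rho_0-\ka(t)\sin\rho_0\big)\gen{\ta(t),N}=\frac{\sin(\rho(t)-\rho_0)}{\sin\rho(t)}\,h'(t),
\end{equation*}
and since $0<\rho<\rho_0\leq\frac{\pi}{2}$ the factor $\frac{\sin(\rho-\rho_0)}{\sin\rho}$ is strictly \emph{negative} — you wrote $\frac{\sin(\rho_0-\rho)}{\sin\rho}$, which is positive. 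Geometrically, $\ce\ga=C_\ga(\cdot,\rho_0)$ lies beyond the caustic locus $\theta=\rho(t)$, where $\frac{\bd B_\ga}{\bd t}$ reverses direction (cf.~\eqref{E:partialt2}); this reversal is exactly why $h$ and $\ce h$ have opposite convexity at a common critical point. With the correct factor one gets $\ce h''(\tau)=\frac{\abs{\ga'(\tau)}^2}{\sin^2\rho(\tau)}\sin(\theta-\rho(\tau))\sin(\rho(\tau)-\rho_0)$ and hence $h''(\tau)\ce h''(\tau)=\frac{\abs{\ga'(\tau)}^4}{\sin^3\rho(\tau)}\sin^2(\theta-\rho(\tau))\sin(\rho(\tau)-\rho_0)<0$, using that $N\nin C_\ga\big([0,1]\times(0,\rho_0)\big)$ forces $\theta\in[\rho_0-\pi,0]\cup[\rho_0,\pi]$ and therefore $\sin^2(\theta-\rho(\tau))>0$. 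So the lemma as stated is correct and does assert strict negativity, not merely nondegeneracy.

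Your appeal to (\ref{P:totbound}) for reassurance is also the source of the confusion: there the second height function is built from $\hat\ga=B_\ga(\cdot,\rho_0-\pi)=-C_\ga(\cdot,\rho_0)=-\ce\ga$, the antipode of $\ce\ga$, which flips the sign once more; that is why the product $h''\hat h''$ in that proof carries the factor $\sin(\rho_0-\rho)>0$ while the product $h''\ce h''$ here carries $\sin(\rho-\rho_0)<0$. As written, your proof of (b) concludes only that the product is nonzero and defers the sign to an unspecified orientation argument, so (b) — and with it the alternation statement in (\ref{L:equatorial}) that depends on $h$ and $\ce h$ having opposite extremum types at each common critical point — is not established.
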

Recall that $\Ga_t$ is the great circle 
\begin{equation*}
\Ga_t=\set{\cos \theta\, \ga(t)+\sin \theta\, \no(t)}{\theta\in [-\pi,\pi)}.
\end{equation*}
Part (b) implies in particular that all critical points of $h$, $\ce h$ are nondegenerate.
\begin{proof} A straightforward calculation using (\ref{E:cefirst}) shows that:
\begin{equation}\label{E:tau1}
\qquad	h'(t)=\abs{\ga'(t)}\gen{N,\ta(t)},\quad \ce{h}'(t)=\frac{\sin(\rho(t)-\rho_0)}{\sin\rho(t)}h'(t)\qquad (t\in [0,1]).
\end{equation}
The equivalence of the conditions in (a) is immediate from this and the definition of $\Ga_t$. 

From $\pm N\in E_\ga$ and $C=\Im(C_\ga)\subs H_\ga$, it follows that $\pm N\nin C \big([0,1]\times (0,\rho_0) \big)$. Thus, if $\tau$ is a critical point of $h$, $\ce h$, i.e., if $N\in \Ga_\tau$ then we can write
\begin{equation}\label{E:range}
	N=\cos \theta\, \ga(\tau)+\sin \theta\, \no(\tau)\text{\ \ for some $\theta\in [\rho_0-\pi,0]\cup [\rho_0,\pi]$.}
\end{equation}
Another calculation, with the help of \eqref{E:cesecond}, yields:
	\begin{equation*}
		h''(\tau)=\frac{\abs{\ga'(\tau)}^2}{\sin\rho(\tau)}\sin\big(\theta-\rho(\tau)\big),\quad \ce h''(\tau)=\frac{\abs{\ga'(\tau)}^2}{\sin^2\rho(\tau)}\sin\big(\theta-\rho(\tau)\big)\sin\big(\rho(\tau)-\rho_0\big)
	\end{equation*}
	Taking the possible values for $\theta$ in \eqref{E:range} and $0<\rho(\tau)<\rho_0$ into account, we deduce that
	\begin{equation*}
		h''(\tau)\ce h''(\tau)=\frac{\abs{\ga'(\tau)}^4}{\sin^3\rho(\tau)}\sin^2\big(\theta-\rho(\tau)\big)\sin\big(\rho(\tau)-\rho_0\big)<0,
	\end{equation*}
	since all terms here are positive except for $\sin\big(\rho(\tau)-\rho_0\big)$. This proves (b).
	
	For part (c), suppose that $\tau<\bar\tau$ are neighboring critical points, but $h''(\tau)h''(\bar\tau)>0$. This means that $h'$ vanishes at $\tau,\bar\tau$ and takes opposite signs on the intervals $(\tau,\tau+\eps)$ and $(\bar\tau-\eps,\bar\tau)$ for small $\eps>0$. Hence, it must vanish somewhere in $(\tau,\bar\tau)$, a contradiction. The proof for $\ce h$ is the same. 
\end{proof}

Let $\ka_0\geq 0$, $\ga\in \sr L_{\ka_0}^{+\infty}$ be an equatorial curve and  $\pr\colon \Ss^2\to \R^2$ denote the stereographic projection from $-h_\ga$, where $H_\ga=\set{p\in \Ss^2}{\gen{p,h_\ga}\geq 0}$. As for any condensed curve, we may define a (non-unique) continuous angle function $\theta$ by the formula:
\begin{equation*}
\qquad	\exp(i\theta(t))=\ta_\eta(t),\quad \eta(t)=\pr\circ \ga(t)\quad (t\in [0,1]);
\end{equation*}
here $\ta_\eta$ is the unit tangent vector, taking values in $\Ss^1$, of the plane curve $\eta$. The function $\theta$ is strictly decreasing since $\ka_0\geq 0$, and 
\begin{equation*}
	2\pi\nu(\ga)=\theta(0)-\theta(1).
\end{equation*}
\begin{figure}[ht]
	\begin{center}
		\includegraphics[scale=.198]{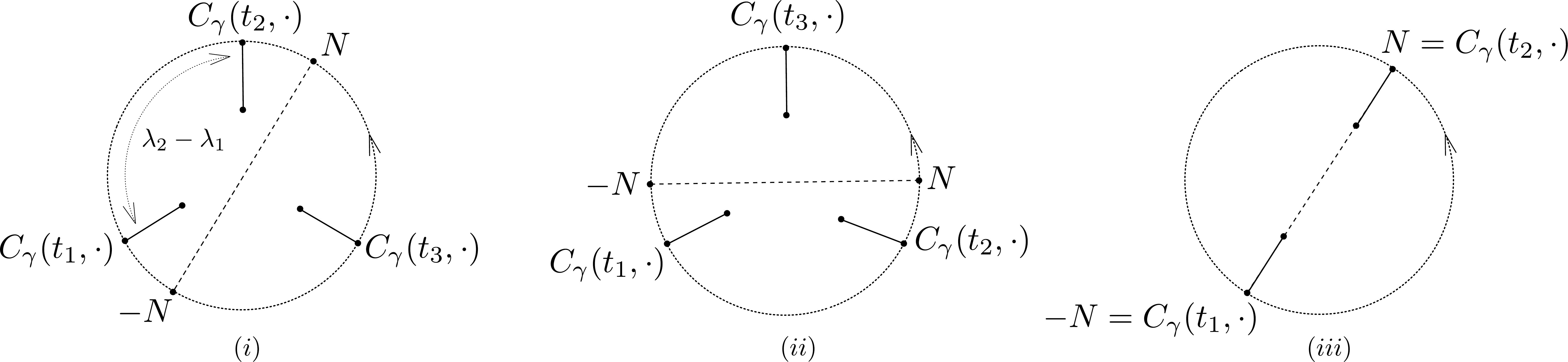}
		\caption{Three possibilities for an equatorial curve $\ga$. The circle represents $E_\ga$ and its interior represents $H_\ga$, seen from above.}
		\label{F:borderline}
	\end{center}
\end{figure}

\begin{lemmaa}\label{L:equatorial}
	Let $\ka_0\geq 0$, $\ga\in \sr L_{\ka_0}^{+\infty}$ be an equatorial curve of class $C^2$ and
	\begin{equation*}\label{E:n2}
				n=\left\lfloor{\frac{\pi}{\rho_0}}\right\rfloor+1.
	\end{equation*} 
	Then $\nu(\ga)\geq n-1$.
\end{lemmaa}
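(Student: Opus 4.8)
The idea is to exploit the structure established in Lemma~(\ref{L:borderline}). Let $\ga\in \sr L_{\ka_0}^{+\infty}$ be equatorial of class $C^2$ (the general case follows by density, (\ref{L:dense})), pick $N\in E_\ga$, and consider the height functions $h(t)=\gen{\ga(t),N}$ and $\ce h(t)=\gen{\ce\ga(t),N}$ of (\ref{E:borderline}). By (\ref{L:borderline}\?(a)), the critical points of $h$ and of $\ce h$ coincide, say $\tau_1<\dots<\tau_{2m}$ (cyclically, since $\ga$ is closed and $h$ is periodic), where each $\tau_j$ is a point at which $\pm N\in \Ga_{\tau_j}$; by (\ref{L:borderline}\?(b),(c)) these are all nondegenerate and consecutive ones alternate between maxima and minima for both $h$ and $\ce h$. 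So the number of critical points is even, $2m$, and $m\geq 1$ since $\ga$ is not contained in an open hemisphere (there must be points of $C_\ga$ on $E_\ga$). I will show first that $\nu(\ga)=m$, identifying this with the rotation number coming from the stereographic projection $\pr$ from $-h_\ga$: between consecutive critical points of $h$, the curve $\eta=\pr\circ\ga$ goes monotonically once across, so the tangent angle $\theta$ decreases; counting the total decrease $\theta(0)-\theta(1)=2\pi\nu(\ga)$ against the $2m$ half-turns forced by the alternation gives $\nu(\ga)\geq m$, and since $\ga$ is equatorial (on the boundary of the condensed locus) one in fact gets equality — but for the present statement I only need the lower bound $\nu(\ga)\geq m$, so I can afford to be slightly lossy here.

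The heart of the matter is a lower bound $m\geq n-1=\fl{\pi/\rho_0}$. The plan is to look at one arc $\ga|_{[\tau_j,\tau_{j+1}]}$ between consecutive critical points and bound its total turning from below. On such an arc $h$ is strictly monotone, so $\gen{N,\ta(t)}$ has a constant sign and never vanishes in the open interval; geometrically this says the arc crosses from one side of the great circle $\{p:\gen{p,N}=0\}$ to the other and its tangent direction relative to $N$ is controlled. Using the differential equation for the frame (\ref{E:frenet1}) and the constraint $0<\rho(t)<\rho_0$ (equivalently $\ka(t)>\ka_0$) one gets a lower bound for $\int_{\tau_j}^{\tau_{j+1}}\ka(t)\abs{\dot\ga(t)}\,dt$, the integral of the geodesic curvature over the arc; concretely, since the tangent vector $\ta$ must rotate by an angle that, together with the curvature constraint, cannot be ``too small'' — the critical points are where $N\in\Ga_{\tau_j}$, i.e.\ where $\ta(\tau_j)\perp N$ — one finds that each such arc contributes more than $\rho_0$ to the total turning, or rather that $2m$ arcs together must account for a total turning exceeding $2\pi$ in a way that forces $2m\rho_0 > 2\pi$, hence $m > \pi/\rho_0$, hence $m\geq \fl{\pi/\rho_0}+1 = n$... . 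I expect the precise bound to come out as $m\geq n-1$ after being careful about the borderline: the correct statement should be that the number of pairs of critical points is at least $\fl{\pi/\rho_0}$, giving $\nu(\ga)=m\geq n-1$.

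The cleanest way to make the turning estimate rigorous is probably via the Gauss--Bonnet argument already used in the proof of (\ref{P:totbound}): apply Gauss--Bonnet to the ``rectangle'' $R_j=B_\ga|_{[\tau_j,\tau_{j+1}]\times[\rho_0-\pi,0]}$ (or to the lune-like region cut out in the caustic band), whose four corners are right angles by (\ref{L:band}\?(b)), whose two ``vertical'' sides are geodesic arcs (pieces of $\Ga_{\tau_j}$, $\Ga_{\tau_{j+1}}$), and whose horizontal sides are $\ga|_{[\tau_j,\tau_{j+1}]}$ and $\ce\ga|_{[\tau_j,\tau_{j+1}]}$ with curvatures $\ka$ and $\ce\ka=\cot(\rho_0-\rho)$ respectively (Lemma~(\ref{L:cega}\?(b))). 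This yields an identity relating $\int\ka\abs{\dot\ga} + \int\ce\ka\abs{\dot{\ce\ga}}$ to $2\pi$ minus the area of $R_j$; bounding the area above by $4\pi$ (the area of $\Ss^2$) and using $\cos\rho+\cos(\rho_0-\rho)\geq 2\cos^2(\rho_0/2)>0$ as in (\ref{P:totbound}), one gets that each arc has total curvature (Euclidean length of the tangent indicatrix) at most a constant, and more importantly a matching \emph{lower} bound on the angular progress that forces the number of arcs to be at least $\fl{\pi/\rho_0}$. The main obstacle is getting the constant exactly right so that the count is $n-1$ and not $n-2$ or $n$; this requires watching the inequalities at the equatorial boundary case carefully, and verifying the identification $\nu(\ga)=m$ using (\ref{C:positivecondensed}) and (\ref{R:rotation}) to reduce to circles where everything is computable. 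Once the arc count is pinned down, the inequality $\nu(\ga)\geq n-1$ is immediate.
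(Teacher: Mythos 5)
Your starting point is the right one (the common critical points of $h$ and $\ce h$ from (\ref{L:borderline}) and their alternation), but both quantitative steps of the plan are left unresolved, and the proposed mechanism for the crucial one would not deliver the sharp bound. First, the link between the critical-point count $2m$ and $\nu(\ga)$ is asserted as $\nu(\ga)\geq m$ and deferred; in fact the count goes the other way in the main configuration (the paper gets $\nu(\ga)=(k_1+k_2+k_3-2)/2$, i.e.\ $\nu=m-1$, in its case (i)), so if you only prove $m\geq n-1$ you end with $\nu(\ga)\geq n-2$, which is not the statement. Second, the heart of the argument — that there are at least $2(n-1)$ critical points, with the exact floor $\fl{\pi/\rho_0}$ — cannot come from Gauss--Bonnet total-turning estimates in the style of (\ref{P:totbound}): that argument produces the upper bound $\tot(\ga|_{[\tau_j,\tau_{j+1}]})<2\pi/\cos^2(\rho_0/2)$, and constants of this kind are far too lossy to recover a sharp count (note also that your inequality ``each arc contributes more than $\rho_0$ \dots\ forces $2m\rho_0>2\pi$'' is inverted: to force $m$ large you need each unit to contribute \emph{at most} $\rho_0$ to a quantity bounded \emph{below}). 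The mechanism that actually yields the sharp count is an exact identity, not an estimate: at a common critical point $\tau$ both $\ga(\tau)$ and $\ce\ga(\tau)$ lie on the great circle $\Ga_\tau\ni N$ at spherical distance exactly $\rho_0$ from one another, so their latitudes with respect to $N$ differ by exactly $\rho_0$; telescoping $\sum_j(M_j-m_j)=k\rho_0$ against the interleaving $M_j>m_{j+1}$ then bounds $k\rho_0$ from below by the total latitude that must be traversed.

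The remaining — and unaddressed — ingredient is a lower bound on that total latitude budget, and this is where the hypothesis ``equatorial'' actually enters. For a single choice of $N$, an equatorial curve need not have many critical points at all; one must first use (\ref{L:closedhemisphere}) and (\ref{L:Steinitz}) to locate either an antipodal pair in $C\cap E_\ga$ or three points of $C\cap E_\ga$ whose convex hull contains the origin, split $[0,1]$ into the corresponding subintervals, and choose a different $N\in E_\ga$ for each. In the ``spread out'' configuration the three latitude spans sum to $2\pi$ and the count follows; in the clustered configuration the spans are smaller and one needs the concavity inequality (\ref{L:calculus}) (plus the $\pm2$ correction in the half-turn count going the right way) to still reach $n-1$; the antipodal case is a third, separate computation. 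Without this case analysis and the exact $\rho_0$-gap identity, the plan does not close.
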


\begin{proof}
	Let $C=\Im(C_\ga)$, $H=H_\ga$ be the closed hemisphere containing $\ga$ and $E=E_\ga$ be the corresponding equator, oriented so that $H$ lies to its left. It follows from the combination of (\ref{L:basicconvex}), (\ref{L:Steinitz}) and (\ref{L:closedhemisphere}) that either we can find two antipodal points in $C\cap E$ or we can choose $t_1<t_2<t_3$ and $\theta_i\in \se{0,\rho_0}$ such that $0$ is a convex combination of the points $C_\ga(t_i,\theta_i)\in C\cap E$. There are three possibilities, as depicted in fig.~\ref{F:borderline}; the only difference between the first two is the order of the points in the orientation of $E$.

In cases (i) and (ii), choose $N$ in $E$ so that 
\[
\gen{C_\ga(t_2,\theta_2),N}=-\gen{C_\ga(t_1,\theta_1),N}>0.
\]
Let $h$ and $\ce h$ be as in \eqref{E:borderline} and define latitude functions $\la$, $\ce\la$ by
\begin{equation*}
\qquad	\la(t)=\arcsin(h(t)),\quad \ce\la(t)=\arcsin(\ce h(t))\qquad (t\in [0,1]).
\end{equation*} 
Let $\tau_1<\dots<\tau_{k_1}$ be all the common critical points of these functions in the interval $[t_1,t_2)$, and let 
\begin{equation*}
	m_j=\min\{\la(\tau_j),\ce\la(\tau_j)\},\quad M_j=\max\{\la(\tau_j),\ce\la(\tau_j)\}.
\end{equation*}	
From (\ref{L:borderline}\?(a)), we deduce that 
\begin{equation}\label{E:bl1}
\qquad	M_j-m_j=\rho_0 \text{\quad for all $j=1,\dots,k_1$},
\end{equation}
while from (\ref{L:borderline}\?(b)) and (\ref{L:borderline}\?(c)), we deduce that the $\tau_j$ are alternatingly maxima and minima of $\la$ (resp.~minima and maxima of $\ce\la$) as $j$ goes from $1$ to $k_1$, whence
	\begin{equation}\label{E:bl2}
\qquad		M_j>m_{j+1}\text{\quad for all $j=1,\dots,k_1-1$}.
	\end{equation}
	 Let 
	 \[
	 \la_2=\max\se{\la(t_2),\ce\la(t_2)}\text{\quad and \quad}\la_1=\min\{\la(t_1),\ce{\la}(t_1)\}=-\la_2.
	 \]
	Then $\la_2-\la_1$ is just the angle between $C_\ga(t_1,\cdot)\cap E$ and $C_\ga(t_2,\cdot)\cap E$ measured along $E$, as depicted in fig.~\ref{F:borderline}\?(i). For the rest of the proof we consider each case separately.
	 
 In case (i), 
 \begin{equation}\label{E:bl3}
 	m_1\leq \la_1\text{\quad and \quad}\la_2\leq M_{k_1}.
 \end{equation}
	Combining \eqref{E:bl1}, \eqref{E:bl2} and \eqref{E:bl3}, we find that 
	\begin{equation}\label{E:casei}
		k_1\rho_0=\sum_{j=1}^{k_1}(M_j-m_j)>\sum_{j=1}^{k_1-1}(m_{j+1}-m_{j})+M_{k_1}-m_{k_1}=M_{k_1}-m_1\geq \la_2-\la_1.
	\end{equation}
		Let there be $k_2$ (resp.~$k_3$) critical points of $h$, $\ce h$ in the interval $[t_2,t_3)$ (resp.~$[t_3,t_1+1)$), where for the latter we are considering $\ga$ as a 1-periodic curve. Then an analogous result to \eqref{E:casei} holds for $k_2$ and $k_3$, and summing all three inequalities we conclude that  
		 \begin{equation*}
		 	k_1+k_2+k_3>\frac{2\pi}{\rho_0}\geq 2(n-1).
		 \end{equation*}
In case (i), the number of half-turns of the tangent vector to the image of $\ga$ under stereographic projection through $-h_\ga$ in $[0,1]$ is given by $k_1+k_2+k_3-2$. Hence, 
		 \begin{equation*}
		 	\nu(\ga)=\frac{k_1+k_2+k_3-2}{2}> n-2,
		 \end{equation*}
		 as claimed. 
		 
		 In case (ii), a direct calculation using basic trigonometry shows that
		 \begin{alignat*}{9}
		 	&m_1<\arcsin ( \cos \rho_0 \sin \la_1)=-\arcsin(\cos \rho_0 \sin \la_2)\\ \text{and\quad }&M_{k_1}>\arcsin(\cos\rho_0\sin\la_2).
		 \end{alignat*}
		 Combining this with \eqref{E:bl1} and \eqref{E:bl2}, we obtain that 
		 \begin{alignat*}{9}
		 	k_1\rho_0=\sum_{j=1}^{k_1}(M_j-m_j)&>\sum_{j=1}^{k_1-1}(m_{j+1}-m_j)+M_{k_1}-m_{k_1}\\ 
		 	&=M_{k_1}-m_1>2\arcsin(\cos\rho_0\sin \la_2),
		 \end{alignat*}
		 and similarly for $k_2$ and $k_3$, where the latter denote the number of critical points of $h$, $\ce h$ in the intervals $[t_2,t_3)$ and $[t_3,t_1+1)$, respectively. More precisely, we have 
		 \begin{equation}\label{E:bl4}
		 	k_1+k_2+k_3>\frac{2}{\rho_0}\sum_{i=1}^3 \arcsin(\cos\rho_0\sin\la_{2i}),
		 \end{equation}
where $\la_4=\max\se{\la(t_3),\ce\la(t_3)}$, $\la_6=\max\se{\la(t_1),\ce\la(t_1)}$ and these latitudes are measured with respect to the chosen points $\pm N$ corresponding to each of the intervals $[t_2,t_3]$ and $[t_3,t_3+1]$.  In case (ii), the number of half-turns of the tangent vector to the image of $\ga$ under stereographic projection through $-h_\ga$ in $[0,1]$ is given by $k_1+k_2+k_3-2$. Hence, it follows from \eqref{E:bl4} and lemma (\ref{L:calculus}) below that 
		 \begin{equation*}
		 	\nu(\ga)=\frac{k_1+k_2+k_3+2}{2}>\Big(\frac{\pi}{\rho_0}-2\Big)+1\geq n-2,
		 \end{equation*}
		 as we wished to prove.
		 
		 Finally, in case (iii), we may choose $\pm N\in E\cap C$, that is, we may find $t_1<t_2$ and $\theta_i\in \se{0,\rho_0}$ such that 
		 \begin{equation*}
		 	N=C_\ga(t_2,\theta_2)=-C_\ga(t_1,\theta_1).
		 \end{equation*}
		 In this case $\la_2-\la_1=\pi$ and
		 \begin{equation*}
		 	\nu(\ga)=\frac{k_1+k_2-2}{2},
		 \end{equation*} 
		 where $k_1$ (resp. $k_2$) is the number of critical points of $h,\,\ce h$ in $[t_1,t_2]$ (resp.~$[t_2,t_1+1]$). Note that $t_1$, $t_2$ are critical points of $h$ which are counted twice in the sum $k_1+k_2$ (under the identification of $t_1$ with $t_1+1$); this is the reason why we need to subtract $2$ from $k_1+k_2$ to calculate the number of half-turns of the tangent vector.  Using \eqref{E:bl2} one more time, we deduce that 
		 \begin{equation*}
		  k_1\rho_0=\sum_{j=1}^{k_1}(M_j-m_j)>\sum_{j=1}^{k_1-1}(m_{j+1}-m_j)+M_{k_1}-m_{k_1}=M_{k_1}-m_1=\la_2-\la_1=\pi;
		 \end{equation*}
		 similarly, $k_2\rho_0>\pi$. Therefore,
		 \begin{equation*}
		 	\nu(\ga)=\frac{k_1+k_2-2}{2}>\frac{\pi}{\rho_0}-1\geq n-2.\qedhere
		 \end{equation*}
\end{proof}

Here is the technical lemma that was invoked in the proof of (\ref{L:equatorial}).
\begin{lemmaa}\label{L:calculus}
	Let $\la_2+\la_4+\la_6=\pi$, $0\leq \la_i\leq \frac{\pi}{2}$ and $0< \rho_0\leq \frac{\pi}{2}$. Then   
	\begin{equation*}
		\arcsin(\cos\rho_0\sin\la_2)+\arcsin(\cos\rho_0\sin\la_4)+\arcsin(\cos\rho_0\sin\la_6)\geq \pi-2\rho_0
	\end{equation*}
\end{lemmaa}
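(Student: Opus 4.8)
The plan is to reduce the inequality to a concavity/monotonicity statement about a single auxiliary function. Write $f(\la)=\arcsin(\cos\rho_0\sin\la)$ for $\la\in[0,\frac{\pi}{2}]$, and note first the two special values $f(0)=0$ and $f(\frac{\pi}{2})=\arcsin(\cos\rho_0)=\frac{\pi}{2}-\rho_0$. Thus if any of the $\la_i$ equals $0$ the claim is immediate (the remaining two sum to $\pi$, forcing both to equal $\frac{\pi}{2}$, and the left side becomes $\pi-2\rho_0$ exactly), so we may assume all $\la_i\in(0,\frac{\pi}{2}]$. The inequality to prove, $f(\la_2)+f(\la_4)+f(\la_6)\ge\pi-2\rho_0$, under the constraint $\la_2+\la_4+\la_6=\pi$, has the shape of a Jensen-type estimate; the obstacle is that $f$ is concave on $[0,\frac{\pi}{2}]$ (one checks $f''\le 0$ by a routine differentiation), and concavity gives an \emph{upper} bound for a sum at fixed total, not the lower bound we want. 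So the right tool is not Jensen but an analysis of the extrema of $F(\la_2,\la_4,\la_6)=\sum f(\la_i)$ on the simplex $\{\la_i\ge 0,\ \sum\la_i=\pi\}$.

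The key step is therefore to minimize $F$ on this compact simplex. At an interior critical point the Lagrange condition forces $f'(\la_2)=f'(\la_4)=f'(\la_6)$; since $f'(\la)=\dfrac{\cos\rho_0\cos\la}{\sqrt{1-\cos^2\rho_0\sin^2\la}}$ is strictly decreasing on $(0,\frac{\pi}{2})$ (again a direct computation, using $0<\rho_0\le\frac{\pi}{2}$ so $\cos\rho_0\ge 0$), the only interior critical point is the symmetric one $\la_2=\la_4=\la_6=\frac{\pi}{3}$, and there $F=3\arcsin(\cos\rho_0\cdot\frac{\sqrt3}{2})$; one verifies this exceeds $\pi-2\rho_0$, so the interior point is not the minimum. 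Hence the minimum is attained on the boundary of the simplex, i.e. when some $\la_i=0$ or some $\la_i=\frac{\pi}{2}$. The case $\la_i=0$ was handled above and gives exactly $\pi-2\rho_0$. If instead, say, $\la_6=\frac{\pi}{2}$, then $\la_2+\la_4=\frac{\pi}{2}$ and we must show $f(\la_2)+f(\frac{\pi}{2}-\la_2)\ge\pi-2\rho_0-f(\frac{\pi}{2})=\frac{\pi}{2}-\rho_0$; this is a one-variable inequality $g(\la):=f(\la)+f(\frac{\pi}{2}-\la)\ge\frac{\pi}{2}-\rho_0$ on $[0,\frac{\pi}{2}]$, with equality at the endpoints $g(0)=g(\frac{\pi}{2})=f(\frac{\pi}{2})=\frac{\pi}{2}-\rho_0$, and concavity of $f$ makes $g$ concave, hence $g\ge$ its endpoint value throughout. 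Combining the boundary cases shows the global minimum of $F$ equals $\pi-2\rho_0$, which is the assertion.

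The main obstacle I anticipate is purely bookkeeping: verifying the sign of $f''$ and the monotonicity of $f'$ cleanly, and making sure the boundary-reduction argument is airtight (the simplex has edges as well as facets, but on an edge two coordinates are free with one linear relation, so the edge minimum is itself either at a vertex or at an interior edge critical point, and the same Lagrange/monotonicity argument applies one dimension down). An alternative, possibly slicker, route avoids calculus almost entirely: set $\mu_i=\frac{\pi}{2}-\la_i$, so $\mu_2+\mu_4+\mu_6=\frac{\pi}{2}$ with $\mu_i\in[0,\frac{\pi}{2})$, rewrite $f(\la_i)=\arccos\!\big(\text{something}\big)$ using spherical trigonometry, and interpret the left-hand side as the sum of certain sides or angles of a spherical configuration, where the inequality becomes a comparison between a geodesic polygon and a ``straightened'' one; but I would only pursue this if the elementary calculus proof above turns out to have a hidden snag. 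I expect the calculus proof to go through without difficulty, so I would write that one up.
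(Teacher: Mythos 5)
Your argument is essentially correct, but it takes a longer road than the paper, and the detour forces you to assert one nontrivial fact without proof. The paper's proof is a two-line consequence of the concavity you yourself establish: the constraint set $\{\la_2+\la_4+\la_6=\pi,\ 0\le\la_i\le\tfrac{\pi}{2}\}$ is precisely the triangle with vertices $A=(0,\tfrac{\pi}{2},\tfrac{\pi}{2})$, $B=(\tfrac{\pi}{2},0,\tfrac{\pi}{2})$, $C=(\tfrac{\pi}{2},\tfrac{\pi}{2},0)$, and a concave function on a simplex lies above the affine interpolation of its vertex values, so $F(p)\ge s_1F(A)+s_2F(B)+s_3F(C)=\pi-2\rho_0$ for every $p=s_1A+s_2B+s_3C$. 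Your remark that ``concavity gives an upper bound, not the lower bound we want'' conflates Jensen for the barycenter with the extreme-point principle; the latter is exactly the lower bound you need, and you in fact use it yourself one dimension down when you argue $g(\la)=f(\la)+f(\tfrac{\pi}{2}-\la)\ge$ its (equal) endpoint values on an edge. Two further points. First, your Lagrange route leaves a real obligation unmet: ``one verifies'' that $3\arcsin\bigl(\tfrac{\sqrt{3}}{2}\cos\rho_0\bigr)\ge\pi-2\rho_0$ is itself the symmetric instance of the lemma, with equality at both $\rho_0=0$ and $\rho_0=\tfrac{\pi}{2}$, so it is not a throwaway check (it amounts to $\tfrac{\sqrt3}{2}\cos\rho_0\ge\sin\tfrac{\pi-2\rho_0}{3}$, which needs its own derivative comparison); note also that when $\rho_0=\tfrac{\pi}{2}$ every interior point is critical since $f\equiv 0$, though that case is trivial. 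Second, the domain you minimize over must be the truncated simplex including the constraints $\la_i\le\tfrac{\pi}{2}$ — over the full simplex $\{\la_i\ge0,\ \sum\la_i=\pi\}$ the infimum would be $0$ (at $(\pi,0,0)$, since $f(\pi)=0$) and the statement would be false; you clearly intend the truncated region, but the write-up should say so. With those repairs your proof goes through; the paper's version simply renders all of it unnecessary.
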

\begin{proof}
	Let $f\colon [0,\pi]\to \R$ be the function given by $f(t)=\arcsin(\cos\rho_0\sin t)$. Then
	\begin{equation*}
		f''(t)=-\frac{\sin^2\rho_0\cos\rho_0\sin t}{\big(1-\cos^2\rho_0\sin^2t\big)^{\frac{3}{2}}},
	\end{equation*}
	so that $f''(t)\leq 0$ for all $t\in (0,\pi)$ and $f$ is a concave function. Consequently,
	\begin{equation}\label{E:concave}
f(s_1a+s_2b+s_3c)\geq s_1f(a)+s_2f(b)+s_3f(c)
	\end{equation}
for any $a,b,c\in [0,\pi]$,\ $s_i\in [0,1]$,\ $s_1+s_2+s_3=1$. Define $g\colon T\to \R$ by $g(x,y,z)=f(x)+f(y)+f(z)$, where
	\begin{equation*}
		T=\set{(x,y,z)\in\R^3}{x+y+z=\pi,\ \?x,\? y,\? z\in \big[0,\tfrac{\pi}{2}\big]}.
	\end{equation*}
In other words, $T$ is the triangle with vertices $A=(0,\frac{\pi}{2},\frac{\pi}{2})$, $B=(\frac{\pi}{2},0,\frac{\pi}{2})$ and $C=(\frac{\pi}{2},\frac{\pi}{2},0)$. It follows from \eqref{E:concave} (applied three times) that
\begin{equation}\label{E:concaveg}
g(s_1u+s_2v+s_3w)\geq s_1g(u)+s_2g(v)+s_3g(w)
\end{equation}
for any $u,v,w\in T,~s_i\in [0,1]$, $s_1+s_2+s_3=1$. Moreover, a direct verification shows that 
\begin{equation*}
	g(A)=g(B)=g(C)=2\arcsin(\cos\rho_0)=\pi-2\rho_0.
\end{equation*}
If $p\in T$ then we can write
\begin{equation*}
	p=s_1A+s_2B+s_3C\text{ for some $s_1,\,s_2,\,s_3\in [0,1]$ with $s_1+s_2+s_3=1$.}
\end{equation*} Therefore, \eqref{E:concaveg} guarantees that
\[
g(p)\geq s_1g(A)+s_2g(B)+s_3g(C)=\pi-2\rho_0.\qedhere
\]
\end{proof}

\begin{proof}[Proof of (\ref{P:borderline})] If $\ga_s$ is condensed for all $s\in [0,1]$, then $s\mapsto \nu(\ga_s)$ is defined and constant, since it can only take on integral values. Thus, if the assertion is false, there must exist $s\in [0,1]$, say $s=1$, such that $\ga_{s}$ is not condensed. By (\ref{P:positivecondensed}), $\ga_0$ is homotopic to a circle traversed $\nu(\ga_0)$ times. Moreover, the set of non-condensed curves is open. Together with (\ref{L:C^2}), this shows that there exist $C^2$ curves $\ga_{-1},\ga_2$ such that:
\begin{enumerate}
	\item [(i)] There exist a path joining $\ga_{-1}$ to $\ga_0$ and a path joining $\ga_1$ to $\ga_2$ in $\sr L_{\ka_0}^{+\infty}(I)$;
	\item [(ii)] $\ga_{-1}$ is condensed and has rotation number $\nu(\ga_0)$; 
	\item [(iii)] $\ga_2$ is not condensed.
\end{enumerate}
Consider the map $f\colon \Ss^0\to \sr L_{\ka_0}^{+\infty}(I)$ given by $f(-1)=\ga_{-1}$, $f(1)=\ga_2$. The existence of the homotopy $\ga_s$ $(s\in [0,1]$) tells us that $f$ is nullhomotopic in $\sr L_{\ka_0}^{+\infty}(I)$. By (\ref{L:C^2}), $f$ must be nullhomotopic in $\sr C_{\ka_0}^{+\infty}(I)$. In other words, we may assume at the outset that each $\ga_s$ is of class $C^2$ ($s\in [0,1]$). 

With this assumption in force, let $s_0$ be the infimum of all $s\in [0,1]$ such that $\ga_s$ is not condensed, and let $\ga=\ga_{s_0}$. Then $\ga$ must be condensed by (\ref{L:closedhemisphere}), and it must be equatorial by our choice of $s_0$. In addition, $\nu(\ga_s)$ must be constant ($s\in [0,s_0]$), since it can only take on integral values. This contradicts (\ref{L:equatorial}).
\end{proof}

\vfill\eject
\chapter{Proofs of the Main Theorems}
We will now collect some of the results from the previous sections in order to prove the theorems stated in \S \ref{S:main}. We repeat their statements here for convenience.

\newcounter{provisory}
\setcounter{provisory}{\thechapter}
\setcounter{chapter}{3}
\setcounter{thma}{1}

\begin{thma}
		Let $-\infty\leq \ka_1<\ka_2\leq +\infty$. Every curve in $\sr L_{\ka_1}^{\ka_2}(I)$ (resp.~$\sr L_{\ka_1}^{\ka_2}$) lies in the same component as a circle traversed $k$ times, for some $k\in \N$ (depending on the curve).
\end{thma}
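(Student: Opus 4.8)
The plan is to reduce everything to the model spaces $\sr L_{\ka_0}^{+\infty}(I)$ and then to split according to whether the curve is diffuse or condensed, invoking the results of \S 6 to reduce the general curve to one of these two types. First, since $\sr L_{\ka_1}^{\ka_2}\home \SO_3\times \sr L_{\ka_1}^{\ka_2}(I)$ by (\ref{P:arbitrary}) and $\SO_3$ is connected, the ``resp.'' version follows from the one for $\sr L_{\ka_1}^{\ka_2}(I)$ (a circle traversed $k$ times corresponds to a pair consisting of a point of $\SO_3$ and such a circle). Next, by (\ref{C:belowandabove}) and (\ref{R:circletocircle}) we may assume $\ka_1=\ka_0\in[-\infty,+\infty)$ and $\ka_2=+\infty$, the homeomorphism there carrying circles traversed $k$ times to circles traversed $k$ times. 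So fix $\ga\in \sr L_{\ka_0}^{+\infty}(I)$.

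If $\ga$ is diffuse, (\ref{P:diffuseloose}) produces a homotopy within $\sr L_{\ka_0}^{+\infty}(I)$ from $\ga$ to a circle traversed a number of times, and we are done. If $\ga$ is condensed, then according to the sign of $\ka_0$ one of (\ref{P:positivecondensed}) (applied with $A$ a point) or (\ref{P:negativecondensed}) places $\ga$ in the same component as a circle traversed a number of times (the borderline case $\ka_0=0$ being Little's theorem, \cite{Little}). The remaining case is that $\ga$ is neither diffuse nor condensed; here the idea is to graft arcs of circles onto $\ga$, steadily increasing its total curvature, until it is forced to become diffuse or condensed. Since $\ga$ is non-condensed, (\ref{P:totting}) yields $\eps>0$ and a chain of grafts $(\ga_s)_{s\in[0,\eps)}$ in $\sr L_{\ka_0}^{+\infty}(I)$ with $\ga_0=\ga$ and $\tot(\ga_s)=\tot(\ga)+s$. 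Let $T\in(0,+\infty]$ be the supremum of those $T'$ for which there is a chain of grafts $(\ga_s)_{s\in[0,T')}$, $\ga_0=\ga$, $\ga_s\in \sr L_{\ka_0}^{+\infty}(I)$, $\tot(\ga_s)=\tot(\ga)+s$, with every $\ga_s$ non-condensed; by (\ref{L:chainofgrafts}) this chain extends uniquely to $[0,T]$ when $T<\infty$. If some $\ga_{s_0}$ along the chain is diffuse we are back in the diffuse case. Otherwise every $\ga_s$, $s<T$, is non-diffuse and non-condensed, so its rotation number $\nu(\ga_s)$ is defined by (\ref{D:rotationnumber}); being integer-valued and locally constant in $s$ (cf.~(\ref{R:rotation})), $\nu(\ga_s)\equiv \nu(\ga)$, and then (\ref{P:totbound}) gives $\tot(\ga)+s=\tot(\ga_s)\leq K\nu(\ga)$, forcing $T<\infty$. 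The limit curve $\ga_T$ is non-diffuse (else we are done); it cannot be non-condensed either, for otherwise (\ref{P:totting}) applied to $\ga_T$ would produce a chain of grafts extending the original one past $T$ (composing grafting functions via (\ref{L:grafting})), contradicting maximality of $T$. Hence $\ga_T$ is condensed. In every case $\ga$ is homotopic, within $\sr L_{\ka_0}^{+\infty}(I)$, to a curve that is diffuse or condensed, and the two cases already treated finish the argument.

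The main obstacle, beyond the bookkeeping of the reductions, is the last step: it rests on the tension between (\ref{P:totting}), which says a non-condensed curve can be grafted so as to raise its total curvature without obstruction, and (\ref{P:totbound}), which says a non-diffuse curve cannot have total curvature exceeding a fixed multiple of its rotation number. Together with the local constancy of $\nu$ and the extension lemma (\ref{L:chainofgrafts}), these force the grafting process to terminate at a curve of one of the two manageable types. The openness of the condition ``non-condensed'' (used tacitly above, exactly as in the proof of (\ref{P:borderline})) is what guarantees that the maximal chain actually reaches a boundary curve $\ga_T$ that is condensed.
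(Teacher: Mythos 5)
Your proposal is correct and follows essentially the same route as the paper: the same reductions via (\ref{P:arbitrary}) and (\ref{C:belowandabove}), the same case split into diffuse ((\ref{P:diffuseloose})) and condensed ((\ref{P:positivecondensed}), (\ref{P:negativecondensed}), Little), and the same termination argument for the grafting chain by playing (\ref{P:totting}) against (\ref{P:totbound}) and the constancy of $\nu$, with (\ref{L:chainofgrafts}) closing the chain at its endpoint. The only difference is cosmetic: the paper frames the last step as a contradiction with a Zorn's-lemma maximal chain (so that every curve in the chain is automatically non-condensed and non-diffuse), whereas you argue directly with a supremal parameter $T$, which forces you to note in passing that if the extension past $T$ produces a condensed or diffuse curve you are done anyway.
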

\begin{proof}
By the homeomorphism $\sr L_{\ka_1}^{\ka_2}\home \SO_3\times \sr L_{\ka_1}^{\ka_2}(I)$ of (\ref{P:arbitrary}), it does not matter whether we prove the theorem for $\sr L_{\ka_1}^{\ka_2}$ or for $\sr L_{\ka_1}^{\ka_2}(I)$. Further, by (\ref{C:belowandabove}), it suffices to consider spaces of type $\sr L_{\ka_0}^{+\infty}$, for $\ka_0\in \R$. If $\ga\in \sr L_{\ka_0}^{+\infty}$ is diffuse, then it is homotopic to a circle by (\ref{P:diffuseloose}). If it is condensed, then the same conclusion holds by (\ref{P:positivecondensed}) (when $\ka_0> 0$), (\ref{P:negativecondensed}) (when $\ka_0<0$) and Little's theorem (when $\ka_0=0$). 
	
	Assume then that $\ga$ is neither homotopic to a condensed nor to a diffuse curve. Since $\ga$ itself is non-condensed by hypothesis, (\ref{P:totting}) guarantees that we may find $\eps>0$ and a chain of grafts $(\ga_s)$  with $\ga_0=\ga$ and $\ga_s\in \sr L_{\ka_0}^{+\infty}$ for all $s\in [0,\eps)$. Let $(\ga_s)$, $s\in J$, be a maximal chain of grafts starting at $\ga=\ga_0$, where $J$ is an interval of type $[0,\sig)$ or $[0,\sig]$. That such a chain exists follows by a straightforward  argument involving Zorn's lemma, since the grafting relation is an equivalence relation, as proved in (\ref{L:eqrel}).\footnote{By reasoning more carefully it would be possible to avoid using Zorn's lemma.} By hypothesis, no curve $\ga_s$ is diffuse, hence $\nu(\ga_s)$ is well-defined and independent of $s$, and (\ref{P:totbound}) yields that $\sig<+\infty$. If the interval is of the first type, then we obtain a contradiction from (\ref{L:chainofgrafts}), and if the interval is closed, then we can apply (\ref{P:totting}) to $\ga_\sig$ to extend the chain, again contradicting the choice of $J$. We conclude that $\ga$ must be homotopic either to a condensed or to a diffuse curve. In any case, $\ga$ is homotopic in $\sr L_{\ka_0}^{+\infty}$ to a circle traversed a number of times, as claimed.
\end{proof}

\begin{thma}
	Let $-\infty\leq \ka_1<\ka_2\leq +\infty$ and let $\sig_k\in \sr L_{\ka_1}^{\ka_2}(I)$ (resp.~$\sr L_{\ka_1}^{\ka_2}$) denote any circle traversed $k\geq 1$ times. Then $\sig_k$,~$\sig_{k+2}$ lie in the same component of $\sr L_{\ka_1}^{\ka_2}(I)$ (resp.~$\sr L_{\ka_1}^{\ka_2}$) if and only if 
	\begin{equation*}
\qquad		k\geq \left\lfloor{\frac{\pi}{\rho_1-\rho_2}}\right\rfloor\quad (\rho_i=\arccot\ka_i,~i=1,2).
	\end{equation*}
\end{thma}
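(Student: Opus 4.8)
By the homeomorphism $\sr L_{\ka_1}^{\ka_2}\home \SO_3\times \sr L_{\ka_1}^{\ka_2}(I)$ of (\ref{P:arbitrary}) and the connectedness of $\SO_3$, it suffices to prove the statement for $\sr L_{\ka_1}^{\ka_2}(I)$. By (\ref{C:belowandabove}) we may further reduce to a space of type $\sr L_{\ka_0}^{+\infty}(I)$, where $\ka_0=\cot(\rho_1-\rho_2)$; under the homeomorphism the circle $\sig_k$ traversed $k$ times goes to a circle traversed $k$ times by (\ref{R:circletocircle}), and $\fl{\pi/(\rho_1-\rho_2)}=\fl{\pi/\rho_0}$. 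So the whole theorem is equivalent to the following: in $\sr L_{\ka_0}^{+\infty}(I)$, writing $n=\fl{\pi/\rho_0}+1$, the circles $\sig_k$ and $\sig_{k+2}$ lie in the same component if and only if $k\geq n-1$.

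\textbf{The ``if'' direction.} This is essentially already done. If $\ka_0<0$, then $\rho_0>\pi/2$, so $\fl{\pi/\rho_0}=1$ and the condition $k\geq n-1=1$ is automatic; (\ref{C:negacircles}) (itself a consequence of (\ref{P:borderline0})) gives that any two circles of the same parity are homotopic. If $\ka_0\geq 0$, then (\ref{P:borderline0}) states precisely that $\sig_k\iso\sig_{k+2}$ whenever $k\geq\fl{\pi/\rho_0}=n-1$. So nothing new is required here beyond citing these results.

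\textbf{The ``only if'' direction.} Suppose $k\leq n-2$ and assume for contradiction that $s\mapsto\ga_s\in\sr L_{\ka_0}^{+\infty}(I)$, $s\in[0,1]$, is a homotopy with $\ga_0=\sig_k$ and $\ga_1=\sig_{k+2}$. When $\ka_0<0$ this cannot happen for parity reasons: the final lifted frames $\te\Phi_{\sig_k}(1)$ and $\te\Phi_{\sig_{k+2}}(1)$ differ by a sign in $\Ss^3$ (as in Example (a) on p.~\pageref{E:Hirsch}, since $\sr L_{\ka_0}^{+\infty}\subs\sr L_{-\infty}^{+\infty}$), so (\ref{L:bit}) forbids a homotopy between circles of opposite parity; but $k$ and $k+2$ have the same parity, so in fact when $\ka_0<0$ the condition $k\geq n-1$ is vacuous and there is nothing to prove. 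Hence assume $\ka_0\geq 0$. Now $\sig_k$ is a circle, hence condensed, with $\nu(\sig_k)=k\leq n-2$. Apply (\ref{P:borderline}) with $\ga_0=\sig_k$: it concludes that $\ga_s$ is condensed and $\nu(\ga_s)=k$ for \emph{all} $s\in[0,1]$. In particular $\ga_1=\sig_{k+2}$ would be a condensed curve with rotation number $k$; but a circle traversed $k+2$ times has rotation number $k+2\neq k$ (using (\ref{R:rotation}), which identifies the two notions of rotation number for condensed non-diffuse curves, together with the fact that $\nu$ of a $j$-fold circle is $j$). This contradiction completes the proof.

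\textbf{Main obstacle.} The substantive content is entirely packaged in (\ref{P:borderline0}) (for the ``if'' part) and (\ref{P:borderline}) (for the ``only if'' part), both proved in \S\ref{S:7}; so at the level of this theorem the only real care needed is the bookkeeping: tracking that the floor function is unchanged under the reductions of (\ref{C:belowandabove}), checking that circles map to circles of the same multiplicity, and handling the $\ka_0<0$ case separately so that one never tries to invoke (\ref{P:borderline}) (which assumes $\ka_0\geq 0$). The parity observation via (\ref{L:bit}) is what makes the $\ka_0<0$ case clean.
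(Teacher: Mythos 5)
Your proposal is correct and follows essentially the same route as the paper, whose proof of this theorem is precisely the citation of (\ref{L:homocircles}), (\ref{P:borderline0}) and (\ref{P:borderline}) combined with the reductions (\ref{P:arbitrary}) and (\ref{C:belowandabove}); you have simply written out the bookkeeping (floor invariance, circles mapping to circles, the vacuity of the ``only if'' direction when $\ka_0<0$) that the paper leaves implicit. The detour through (\ref{L:bit}) in the $\ka_0<0$ case is unnecessary, as you yourself note, but harmless.
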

\begin{proof}
This follows from the combination of (\ref{L:homocircles}), (\ref{P:borderline0}) and (\ref{P:borderline}), if we use the homeomorphisms in (\ref{P:arbitrary}) and (\ref{C:belowandabove}).
\end{proof}

\setcounter{chapter}{\theprovisory}
\setcounter{thma}{0}

\begin{propa}\label{P:condensed}
	Let $\ka_0=\cot \rho_0\geq 0$, 
	\begin{equation*}
		n=\fl{\frac{\pi}{\rho_0}}+1.
	\end{equation*}
	Then the set $\sr O_\nu$ of all condensed curves $\ga\in \sr L_{\ka_0}^{+\infty}(I)$ satisfying $\nu(\ga)=\nu$ for some fixed $\nu\leq n-2$ is a contractible connected component of $\sr L_{\ka_0}^{+\infty}(I)$. 
\end{propa}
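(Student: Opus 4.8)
The plan is to prove that $\sr O_\nu$ is, simultaneously, an open subset, a closed subset, and a contractible subset of $\sr L_{\ka_0}^{+\infty}(I)$; since the latter space is a Hilbert manifold (hence locally connected), this will show it is a connected component with the claimed homotopy type. First I would establish that $\sr O_\nu$ is a connected component. Openness: by (\ref{L:super}\?(a)), the subset $\sr O$ of condensed curves is the closure of an open set, and in fact the proof there shows that $\sr O_\nu$ is the closure of the open set $\sr S_\nu$ of curves $\ga$ with $\Im(C_\ga)$ in an \emph{open} hemisphere and $\nu(\ga)=\nu$; so it remains to show $\sr O_\nu$ is also open. This is where (\ref{P:borderline}) does the work: if $\ga\in \sr O_\nu$ with $\nu\leq n-2$ and $\eta$ is any curve sufficiently close to $\ga$ in $\sr L_{\ka_0}^{+\infty}(I)$, then the straight-line path (in the Hilbert manifold chart) from $\ga$ to $\eta$ is a homotopy starting at a condensed curve with rotation number $\nu\leq n-2$, and (\ref{P:borderline}) forces every curve on that path — in particular $\eta$ — to be condensed with rotation number $\nu$. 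Hence a neighborhood of $\ga$ lies in $\sr O_\nu$, so $\sr O_\nu$ is open.

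Closedness: I would argue that $\sr O_\nu$ is closed in $\sr L_{\ka_0}^{+\infty}(I)$ directly. That $\sr O$ is closed was already shown in the proof of (\ref{L:super}\?(a)) (using (\ref{L:closedhemisphere}) and (\ref{L:Steinitz})). To see that the condition $\nu(\ga)=\nu$ is also closed within $\sr O$, suppose $\ga_m\to \ga$ with each $\ga_m$ condensed of rotation number $\nu$ and $\ga$ condensed. The rotation number $\nu(\cdot)$ of a condensed curve is, by its definition via stereographic projection from $-h_\ga$ (with $h_\ga$ depending continuously on the curve by (\ref{L:causticbarycenter}) and (\ref{L:closedbarycenter})), a locally constant integer-valued function on $\sr O$; alternatively, appeal to (\ref{R:rotation}) identifying it with the covering-space rotation number. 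Either way $\nu(\ga)=\nu$. Combined with openness, $\sr O_\nu$ is a nonempty (it contains a circle traversed $\nu$ times) clopen set, hence a union of connected components; and since any $\ga\in\sr O_\nu$ is, by (\ref{P:positivecondensed}) applied to the one-point space (or directly to the homotopy furnished by (\ref{C:positivecondensed})), homotopic within $\sr L_{\ka_0}^{+\infty}(I)$ to the circle $\sig_\nu$ — a homotopy that, by (\ref{P:borderline}), stays inside $\sr O_\nu$ — the set $\sr O_\nu$ is connected, so it is a single connected component.

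Contractibility: I would use (\ref{P:positivecondensed}) in its full parametrized form. Let $A=\sr O_\nu$ and $f=\mathrm{id}\colon \sr O_\nu\to \sr L_{\ka_0}^{+\infty}(I)$; although (\ref{P:positivecondensed}) is stated for $A$ compact, the homotopy it constructs is built pointwise from the continuous barycenter map $h\colon\sr O\to\Ss^2$ of (\ref{L:causticbarycenter}), the associated family of Möbius dilations $T^s_\ga$, the orthogonal projection to $T_{h_\ga}\Ss^2$, and the Whitney–Graustein deformation of (\ref{L:Whitney}); the only place compactness was invoked was to get \emph{uniform} bounds (on $\vert\La_{f(a)}\vert$, on $\ka^s_a$, and to choose the scaling function $\la$). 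To remove this, I would instead run the construction with compactness replaced by local finiteness: cover $\sr O_\nu$ by open sets on which the relevant quantities are bounded, perform the deformation locally with scaling parameters taken small enough on each piece, and patch. More cleanly, since all I need is that $\sr O_\nu$ is weakly contractible — by (\ref{L:Hilbert}\?(b)) a weakly contractible Hilbert manifold is contractible — it suffices to show every continuous $f\colon K\to \sr O_\nu$ with $K$ a finite CW complex (in particular $K=\Ss^m$) is nullhomotopic in $\sr O_\nu$; such a $K$ is compact, so (\ref{P:positivecondensed}) applies verbatim to deform $f$ to a constant map $a\mapsto\sig_\nu$, and (\ref{P:borderline}) guarantees the deformation never leaves $\sr O_\nu$. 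Hence $\pi_m(\sr O_\nu)=0$ for all $m$, so $\sr O_\nu$ is contractible.

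The main obstacle is the bookkeeping needed to promote (\ref{P:positivecondensed}) from compact parameter spaces to a genuine contraction of $\sr O_\nu$ while certifying that the deformation stays within $\sr O_\nu$; the clean way around it, as above, is to verify weak contractibility (only compact test spaces occur) and invoke (\ref{L:Hilbert}\?(b)). A secondary point requiring care is the openness argument: one must check that the straight-line chart-path from $\ga$ to a nearby $\eta$ genuinely lies in $\sr L_{\ka_0}^{+\infty}(I)$ — which it does, since that space is, by construction, an affine subspace $\E$ cut out by the linear endpoint-frame conditions, so chart line segments are admissible homotopies — before feeding it into (\ref{P:borderline}).
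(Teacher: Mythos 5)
Your overall strategy coincides with the paper's: combine (\ref{P:positivecondensed}) (nullhomotopy of compact families of condensed curves), (\ref{P:borderline}) (invariance of ``condensed with rotation number $\nu\leq n-2$'' along homotopies), and (\ref{L:Hilbert}(a),(b)). Two steps, however, do not hold as written. The first is your openness argument, which rests on the claim that $\sr L_{\ka_0}^{+\infty}(I)$ is an affine subspace of $\E$ ``cut out by the linear endpoint-frame conditions'', so that the chart segment from $\ga$ to a nearby $\eta$ is a path in $\sr L_{\ka_0}^{+\infty}(I)$. This is false: the condition $\Phi_\ga(1)=I$ is the time-one value of the solution of the nonlinear initial value problem \eqref{E:ivp}, viewed as a function of $(\hat v,\hat w)$, and $\sr L_{\ka_0}^{+\infty}(I)$ is a genuinely curved codimension-3 submanifold of $\E\equiv\sr L_{\ka_0}^{+\infty}(I,\cdot)$. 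The straight segment between two of its points consists, in general, of curves that are not closed and do not have frame $I$ at the endpoints, so (\ref{P:borderline}) cannot be applied to it. The repair is what the paper does: (\ref{P:borderline}) shows directly that $\sr O_\nu$ is a union of path components (hence, by (\ref{L:Hilbert}(a)), of connected components), so once connectedness is known it is a single component and is automatically open and closed; your separate closedness argument then becomes superfluous.

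The second issue is that the case $\ka_0=0$ is allowed by the statement but not covered by your argument: both (\ref{P:positivecondensed}) and (\ref{C:positivecondensed}) are stated, and proved via (\ref{L:Moebius}), only for $\ka_0>0$, and the M\"obius-dilation step genuinely degenerates at $\ka_0=0$, where the osculating circles may approach great circles and no uniform $r_0$ exists. For $\ka_0=0$ one has $n=3$ and $\nu=1$, so the assertion reduces to the known fact that the component of $\sr L_0^{+\infty}(I)$ containing a once-traversed circle is contractible; the paper disposes of this case by citing \cite{ShaTop}, and you must either do the same or supply a separate argument.
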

\begin{proof}
	When $\ka_0=0$, this result is equivalent to the assertion that the component of $\sr L_0^{+\infty}(I)$ containing a circle traversed once is contractible; this result is not new, and a proof can be found in \cite{ShaTop}. When $\ka_0>0$, (\ref{P:positivecondensed}) guarantees that $\sr O_\nu$ is weakly contractible and, in particular, connected. Proposition (\ref{P:borderline}) then implies that $\sr O_\nu$ must be a connected component of $\sr L_{\ka_0}^{+\infty}(I)$. Using (\ref{L:Hilbert}\?(a)) we deduce that $\sr O_\nu$ is an open subset of this space. Hence $\sr O_\nu$ is also a Hilbert manifold, and it must be contractible by (\ref{L:Hilbert}\?(b)).
\end{proof}
\begin{urem}
	Note that if $\ka_0<0$ (that is, if $\rho_0>\frac{\pi}{2}$), then it is a consequence of (\ref{T:allisround}) and (\ref{T:disconnects}) that $\sr L_{\ka_0}^{+\infty}(I)$ has only $n=2$ components, and the conclusion of (\ref{P:condensed}) does not make sense in this case (no curve $\ga$ satisfies $\nu(\ga)\leq 0$). Moreover, these two components are far from being contractible: Even for $\ka_0=-\infty$, the (co)homology groups of $\sr I=\sr L_{-\infty}^{+\infty}(I)\iso \Om\Ss^3\du \Om\Ss^3$ are non-trivial in infinitely many dimensions. 
\end{urem}	
\setcounter{chapter}{3}
\setcounter{thma}{0}
Our main theorem is a combination of the three previous results.

\begin{thma}
	Let $-\infty\leq \ka_1<\ka_2\leq +\infty$, $\rho_i=\arccot \ka_i$ \tup{(}$i=1,2$\tup{)} and $\fl{x}$ denote the greatest integer smaller than or equal to $x$. Then $\sr L_{\ka_1}^{\ka_2}$ has exactly $n$ connected components $\sr L_1,\dots,\sr L_n$, where
	\begin{equation*}
\qquad		n=\fl{\frac{\pi}{\rho_1-\rho_2}}+1
	\end{equation*} 
and  $\sr L_j$ contains circles traversed $j$ times $(1\leq j\leq n)$. The component $\sr L_{n-1}$ also contains circles traversed $(n-1)+2k$ times, and $\sr L_n$ contains circles traversed $n+2k$ times, for $k\in \N$. Moreover, each of $\sr L_1,\dots,\sr L_{n-2}$ is homotopy equivalent to $\SO_3$ $(n\geq 3)$.
\end{thma}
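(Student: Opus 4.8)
The plan is to assemble the statement from the three preparatory results just recalled, together with the homeomorphisms of \S 1. First, by (\ref{P:arbitrary}) we have $\sr L_{\ka_1}^{\ka_2}\home \SO_3\times \sr L_{\ka_1}^{\ka_2}(I)$, so it suffices to prove the analogous statement for $\sr L_{\ka_1}^{\ka_2}(I)$ (with ``homotopy equivalent to $\SO_3$'' replaced by ``contractible''), and then tensor with the connected space $\SO_3$ at the end. By (\ref{C:belowandabove}) we may further assume $\ka_2=+\infty$, i.e.\ that the space is of the form $\sr L_{\ka_0}^{+\infty}(I)$ with $\rho_0=\rho_1-\rho_2=\arccot\ka_0$, and under this homeomorphism circles traversed $k$ times correspond to circles traversed $k$ times (cf.\ (\ref{R:circletocircle})). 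So everything reduces to: $\sr L_{\ka_0}^{+\infty}(I)$ has exactly $n=\fl{\pi/\rho_0}+1$ components, represented by $\sig_1,\dots,\sig_n$, with the stated behaviour of higher multiples, and $\sr L_1(I),\dots,\sr L_{n-2}(I)$ contractible.

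Next I would establish that every component contains some circle $\sig_k$: this is exactly Theorem (\ref{T:allisround}) (the version already proved for $\sr L_{\ka_0}^{+\infty}(I)$). Hence the components are enumerated by the equivalence classes of the circles $\se{\sig_k}_{k\geq 1}$ under ``lies in the same component.'' By Theorem (\ref{T:disconnects}), $\sig_k\iso\sig_{k+2}$ if and only if $k\geq \fl{\pi/\rho_0}=n-1$. Also, $\sig_k$ and $\sig_{k+1}$ always lie in different components: their final lifted frames $\te\Phi_{\sig_k}(1)$ and $\te\Phi_{\sig_{k+1}}(1)$ differ by a sign in $\Ss^3$ (one is $+\mbf 1$, the other $-\mbf 1$, reading off $\pi_1(\SO_3)$), so by (\ref{L:bit}) they cannot be homotopic. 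Combining: for $1\leq k\leq n-1$ the circles $\sig_1,\dots,\sig_{n-1}$ lie in pairwise distinct components (parity distinguishes neighbours, and for $k<n-1$ the relation $\sig_k\iso\sig_{k+2}$ fails), while $\sig_{n-1}\iso\sig_{n-1+2}\iso\sig_{n-1+4}\iso\cdots$ and $\sig_n\iso\sig_{n+2}\iso\cdots$. The components $\sr L_{n-1}$ and $\sr L_n$ are still distinct from each other and from $\sr L_1,\dots,\sr L_{n-2}$ by parity together with the count. This gives exactly $n$ components with the asserted circle content; I should also note the boundary case $n\leq 2$ (i.e.\ $\rho_0>\pi/2$, $\ka_0<0$), where $\fl{\pi/\rho_0}=1$ and the two components are distinguished purely by parity, consistently with (\ref{C:negacircles}).

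For the homotopy type of $\sr L_1(I),\dots,\sr L_{n-2}(I)$ when $n\geq 3$ (so $\ka_0\geq 0$, since $n\geq 3$ forces $\rho_0\leq\pi/2$): the component $\sr L_j(I)$ contains $\sig_j$ with $j\leq n-2$, and by (\ref{P:borderline}) any curve homotopic to a condensed curve of rotation number $j\leq n-2$ stays condensed with the same rotation number, so $\sr L_j(I)$ equals the set $\sr O_j$ of condensed curves of rotation number $j$; by Proposition (\ref{P:condensed}) this set is a contractible connected component of $\sr L_{\ka_0}^{+\infty}(I)$. (One still has to check that $\sr L_j(I)$ for $j\leq n-2$ consists only of condensed curves, i.e.\ no diffuse or neither-type curve lands there — but a diffuse curve can be deformed to a circle $\sig_k$ with $k\geq n-1$ by the grafting of (\ref{P:diffuseloose}) combined with (\ref{P:totbound})-type total-curvature bounds, so a diffuse curve cannot have a representative circle of multiplicity $\leq n-2$; and a neither-type curve is, by (\ref{T:allisround})'s proof, homotopic to a condensed or a diffuse one, the diffuse case being excluded as just noted.) Finally, pulling back through (\ref{C:belowandabove}) and (\ref{P:arbitrary}), $\sr L_j\home \SO_3\times \sr L_j(I)\iso \SO_3$ for $1\leq j\leq n-2$. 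The main obstacle is bookkeeping rather than a new idea: one must carefully verify that each $\sr L_j(I)$ with $j\leq n-2$ is purely condensed (so that (\ref{P:condensed}) applies), which requires combining (\ref{T:allisround}), (\ref{P:diffuseloose}), (\ref{P:borderline}) and the total-curvature bound (\ref{P:totbound}) to rule out diffuse and mixed-type curves in those low components.
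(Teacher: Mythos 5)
Your proof is correct and follows essentially the same route as the paper: reduce via (\ref{P:arbitrary}) and (\ref{C:belowandabove}), enumerate the components with (\ref{T:allisround}), (\ref{T:disconnects}) and the lifted-frame parity argument, and obtain contractibility of the low components from (\ref{P:condensed}). The ``main obstacle'' you flag --- checking that $\sr L_j(I)$ for $j\le n-2$ consists only of condensed curves --- is already absorbed into (\ref{P:condensed}), which asserts that $\sr O_j$ is itself a connected component of $\sr L_{\ka_0}^{+\infty}(I)$; since that component contains $\sig_j$, the component of $\sig_j$ must coincide with $\sr O_j$, and no separate exclusion of diffuse or mixed-type curves is required.
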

\begin{proof}
	All of the assertions of the theorem but the last one follow from (\ref{T:allisround}), (\ref{T:disconnects}) and the homeomorphism $\sr L_{\ka_1}^{\ka_2}\home \SO_3\times \sr L_{\ka_1}^{\ka_2}(I)$ of (\ref{P:arbitrary}). 
	
	Assume that $n\geq 3$ and let $\sig_k\in \sr L_{\ka_1}^{\ka_2}(I)$ be a circle traversed $k\leq n-2$ times. In the notation of (\ref{P:condensed}), the connected component $\sr L_k(I)$ of $\sr L_{\ka_1}^{\ka_2}(I)$ containing $\sig_k$ is mapped to the component $\sr O_k$ under the homeomorphism $\sr L_{\ka_1}^{\ka_2}(I)\home \sr L_{\ka_0}^{+\infty}(I)$ of (\ref{C:belowandabove}), because $\sig_k$ is mapped to another circle traversed $k$ times (cf.~(\ref{R:circletocircle})). Therefore, $\sr L_k(I)$ is contractible by (\ref{P:condensed}). The last assertion of the theorem is deduced from this and the homeomorphism $\sr L_{\ka_1}^{\ka_2}\home \SO_3\times \sr L_{\ka_1}^{\ka_2}(I)$.
\end{proof}
\setcounter{chapter}{\theprovisory}

Theorem (\ref{T:components}) characterizes the connected components of $\sr L_{\ka_1}^{\ka_2}$ in terms of the circles that they contain. However, a  more direct characterization in terms of the properties of a curve is also available.
\begin{thma}\label{T:direct}
	Let $\ka_0\in \R$ and let $\sr L_1,\dots,\sr L_n$ be the connected components of $\sr L_{\ka_0}^{+\infty}$, as described in (\ref{T:components}). Then $\ga\in \sr L_{\ka_0}^{+\infty}$ lies in:
	\begin{enumerate}
	\item [(i)] $\sr L_j$ $(1\leq j\leq n-2)$ if and only if it is condensed with rotation number $j$.
	\item [(ii)] $\sr L_{n-1}$ if and only if $\te{\Phi}_\ga(1)=(-1)^{n-1}\te{\Phi}_\ga(0)$ and either it is non-condensed or condensed with rotation number $\nu(\ga)\geq n-1$.
	\item [(iii)] $\sr L_{n}$ if and only if $\te{\Phi}_\ga(1)=(-1)^{n}\te{\Phi}_\ga(0)$ and either it is non-condensed or condensed with rotation number $\nu(\ga)\geq n-1$.
\end{enumerate}
\end{thma}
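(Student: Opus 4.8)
The plan is to reduce the classification statement (\ref{T:direct}) to the results already assembled about $\sr L_{\ka_0}^{+\infty}$ and its components. First I would reduce to $\ka_0\geq 0$, since for $\ka_0<0$ one has $\rho_0>\tfrac{\pi}{2}$, so $n=2$, and then (i) is vacuous while (ii) and (iii) are exactly the statement that the two components of $\sr L_{\ka_0}^{+\infty}$ are distinguished by the final lifted frame $\te\Phi_\ga(1)=\pm\te\Phi_\ga(0)$; this was already observed in Example (b) following (\ref{T:components}) (using that $\sr L_{\ka_0}^{+\infty}$ embeds in $\sr L_{-\infty}^{+\infty}=\sr I$ and applying the Hirsch--Smale description together with (\ref{L:bit})). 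So assume $\ka_0=\cot\rho_0\geq 0$ and $n=\fl{\pi/\rho_0}+1$.

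For (i): Proposition (\ref{P:condensed}) says precisely that for each fixed $\nu\leq n-2$ the set $\sr O_\nu$ of condensed curves with rotation number $\nu$ is a connected component of $\sr L_{\ka_0}^{+\infty}(I)$, hence (via (\ref{P:arbitrary})) of $\sr L_{\ka_0}^{+\infty}$. Since $\sr O_\nu$ contains the circle traversed $\nu$ times, it is the component $\sr L_\nu$ in the labelling of (\ref{T:components}). This gives both directions of (i): membership in $\sr L_j$ forces the curve into $\sr O_j$ (as the $\sr O_j$, $j\le n-2$, are pairwise disjoint components and exhaust all curves that are condensed of rotation number $\le n-2$), and conversely.

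For (ii) and (iii): A curve $\ga\in\sr L_{\ka_0}^{+\infty}$ that is not accounted for by (i) is, by (\ref{T:allisround}), homotopic to a circle traversed $k$ times for some $k$; by (i) and the disjointness of the $\sr O_j$, such a $\ga$ cannot lie in any $\sr L_j$ with $j\le n-2$, so it lies in $\sr L_{n-1}$ or $\sr L_n$, i.e.\ $k\equiv n-1$ or $k\equiv n\pmod 2$ — equivalently $\te\Phi_\ga(1)=(-1)^{n-1}\te\Phi_\ga(0)$ or $\te\Phi_\ga(1)=(-1)^{n}\te\Phi_\ga(0)$, using (\ref{L:bit}) and the fact that the circle traversed $k$ times has $\te\Phi(1)=(-1)^k\te\Phi(0)$. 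It remains to check that the ``exceptional'' curves are exactly the non-condensed ones together with the condensed ones of rotation number $\geq n-1$. The forward implication is clear: a condensed curve with $\nu(\ga)\le n-2$ is in some $\sr L_j$, $j\le n-2$, by (i), so is excluded. For the converse I would argue: if $\ga$ is non-condensed it is not in any $\sr O_j$, and if $\ga$ is condensed with $\nu(\ga)\geq n-1$ then again it is not in $\sr O_j$ for $j\le n-2$ (these have rotation number exactly $j$); in either case $\ga\in\sr L_{n-1}\cup\sr L_n$, and which one is determined by the parity of $\te\Phi_\ga(1)/\te\Phi_\ga(0)\in\{\pm1\}$, as above. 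Finally one must confirm that $\sr L_{n-1}$ and $\sr L_n$ really are distinguished by this sign and contain nothing else — but this is exactly the content of (\ref{T:components}) (circles traversed $(n-1)+2k$ lie in $\sr L_{n-1}$, those traversed $n+2k$ lie in $\sr L_n$) combined with (\ref{T:allisround}).

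The main obstacle, such as it is, is bookkeeping rather than mathematics: one has to be careful that the labelling $\sr L_1,\dots,\sr L_n$ of (\ref{T:components}) is consistent with the labelling $\sr O_1,\dots,\sr O_{n-2}$ of (\ref{P:condensed}) under the homeomorphisms of (\ref{C:belowandabove}) and (\ref{P:arbitrary}) — here (\ref{R:circletocircle}) is needed, since it guarantees those homeomorphisms send a $k$-fold circle to a $k$-fold circle, so the indices match. One also has to note that the rotation number $\nu$ of a condensed curve is the same invariant whether computed via stereographic projection (p.~\pageref{rotation}) or via the covering $\bar B_\ga$ of Definition (\ref{D:rotationnumber}); this is Remark (\ref{R:rotation}). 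With these identifications in hand the proof is essentially a matter of citing (\ref{P:condensed}), (\ref{T:allisround}), (\ref{T:components}) and (\ref{L:bit}) in the right order.
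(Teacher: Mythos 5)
Your proposal is correct and follows essentially the same route as the paper, whose proof of this theorem is simply the citation of (\ref{T:components}) and (\ref{P:condensed}); your write-up merely makes explicit the bookkeeping (via (\ref{T:allisround}), (\ref{L:bit}), (\ref{P:arbitrary}), (\ref{R:circletocircle}) and (\ref{R:rotation})) that the paper leaves implicit.
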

\begin{proof}
	This follows from (\ref{T:components}) and (\ref{P:condensed}).	
\end{proof}

Recall that $\te{\Phi}\colon [0,1]\to \Ss^3$ is the lift of the frame $\Phi_\ga\colon [0,1]\to \SO_3$ of $\ga$ to $\Ss^3$ (cf.~(\ref{D:liftedframe})). When $-\infty\leq \ka_0<0$ (resp.~$\rho_1-\rho_2>\frac{\pi}{2}$) we have $n=2$, and this characterization of the two components $\sr L_1$, $\sr L_2$ of $\sr L_{\ka_0}^{+\infty}$ (resp.~$\sr L_{\ka_1}^{\ka_2}$) may be simplified to: $\ga$ lies in  $\sr L_i$ if and only if $\te{\Phi}_\ga(1)=(-1)^{i}\te{\Phi}_\ga(0)$.

\begin{lemmaa}\label{L:direct}
	Let $-\infty\leq \ka_1<\ka_2\leq +\infty$, $\rho_i=\arccot \ka_i$ and $\ga_i\in \sr L_{\ka_1}^{\ka_2}$ $(i=1,2)$. Then $\ga_1$ lies in the same component of $\sr L_{\ka_1}^{\ka_2}$ as $\ga_2$ if and only if the corresponding translations $\bar \ga_i$ of $\ga_i$ by $\rho_2$,
\begin{equation*}
	\bar\ga_i(t)=\cos \rho_2\?\ga_i(t)+\sin\rho_2\?\no_i(t)\quad (t\in [0,1],~i=1,2),
\end{equation*}
lie in the same connected component of $\sr L_{\ka_0}^{+\infty}$, where $\ka_0=\cot(\rho_1-\rho_2)$.\qed
\end{lemmaa}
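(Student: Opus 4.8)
The statement of Lemma (\ref{L:direct}) is essentially a corollary of Theorem (\ref{T:size}) specialized to $Q=I$, made functorial in the curve: the map $\ga\mapsto\bar\ga$ is nothing but the translation $\ga\mapsto\ga_{\rho_2}$ that was shown in (\ref{C:belowandabove}) to be the homeomorphism $\sr L_{\ka_1}^{\ka_2}(I)\home\sr L_{\ka_0}^{+\infty}(I)$ when $\ka_0=\cot(\rho_1-\rho_2)$. The plan is therefore to reduce the closed-curve spaces to the based ones, invoke the already-established homeomorphism, and observe that it takes connected components to connected components.

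First I would recall that translation by $\theta=\rho_2$ satisfies the hypothesis \eqref{E:regulartranslation}: for any $\ga\in\sr L_{\ka_1}^{\ka_2}$ the radius of curvature obeys $\rho_2<\rho(t)<\rho_1$ a.e., so the condition $\rho_1-\pi\le\theta\le\rho_2$ holds with $\theta=\rho_2$. Hence by (\ref{L:essentialtranslation}) and (\ref{C:radiusofcurvature}) the curve $\bar\ga_i=(\ga_i)_{\rho_2}$ is admissible with radius of curvature in $(0,\rho_1-\rho_2)$, i.e. $\bar\ga_i\in\sr L_{\ka_0}^{+\infty}(\cdot,\cdot)$ with $\ka_0=\cot(\rho_1-\rho_2)$; moreover by (\ref{L:inversetranslation}) the inverse translation by $-\rho_2$ recovers $\ga_i$, so $\ga\mapsto\bar\ga$ is a homeomorphism between the ambient Hilbert manifolds of admissible curves. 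Then I would use (\ref{E:rtheta}): the frames transform by $\Phi_{\bar\ga}=\Phi_\ga\,R_{\rho_2}$, so $\Phi_{\bar\ga}(0)=\Phi_{\bar\ga}(1)$ if and only if $\Phi_\ga(0)=\Phi_\ga(1)$ (conjugate the relation $\Phi_\ga(1)=\Phi_\ga(0)$ by $R_{\rho_2}$). Consequently the homeomorphism $\ga\mapsto\bar\ga$ restricts to a homeomorphism $\sr L_{\ka_1}^{\ka_2}\home\sr L_{\ka_0}^{+\infty}$.

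Finally, a homeomorphism carries connected components to connected components, and since these spaces are Hilbert manifolds, by (\ref{L:Hilbert}\?(a)) connected components coincide with path components; hence $\ga_1,\ga_2$ lie in the same component of $\sr L_{\ka_1}^{\ka_2}$ if and only if $\bar\ga_1,\bar\ga_2$ lie in the same component of $\sr L_{\ka_0}^{+\infty}$. This completes the argument.

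There is essentially no obstacle here: the only point requiring a moment's care is checking that the translation is well-defined and invertible as a map of the whole space (not just of a single curve), which is exactly the content of (\ref{L:essentialtranslation}) and (\ref{L:inversetranslation}) combined with the continuous dependence of $\no$ on $\ga$ built into the admissible-curve formalism of \S 1. Everything else is a direct transcription of (\ref{T:size}) and (\ref{C:belowandabove}) with $Q=I$ replaced by the condition $\Phi_\ga(0)=\Phi_\ga(1)$, using that $R_{\rho_2}^{-1}IR_{\rho_2}=I$.
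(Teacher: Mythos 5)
Your proposal is correct and follows exactly the route the paper takes: the paper's proof is the single observation that translation by $\rho_2$ is a homeomorphism of $\sr L_{\ka_1}^{\ka_2}$ onto $\sr L_{\ka_0}^{+\infty}$ (as established in (\ref{T:size})), hence preserves connected components. Your additional verifications — that $\theta=\rho_2$ satisfies \eqref{E:regulartranslation}, that $\Phi_{\bar\ga}=\Phi_\ga R_{\rho_2}$ converts the closedness condition $\Phi_\ga(0)=\Phi_\ga(1)$ into the corresponding condition for $\bar\ga$, and that the inverse translation recovers $\ga$ — are exactly the details the paper leaves implicit.
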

\begin{proof}
	The proof is immediate, since translation by $\rho_2$ is a homeomorphism from $\sr L_{\ka_1}^{\ka_2}$ onto $\sr L_{\ka_0}^{+\infty}$, as was seen in (\ref{T:size}).
\end{proof}

Combining (\ref{T:direct}) and (\ref{L:direct}) we obtain a simple procedure to check whether two curves $\ga_1,\ga_2\in \sr L_{\ka_1}^{\ka_2}$ lie in the same component of $\sr L_{\ka_1}^{\ka_2}$, provided only that we have parametrizations of $\ga_1$ and $\ga_2$.

\vfill\eject
\chapter{The Inclusion $\sr L_{\ka_1}^{\ka_2}\inc \sr L_{-\infty}^{+\infty}$}\label{S:band}
The objective of this section is to prove that the inclusion $\sr L_{\ka_1}^{\ka_2}\inc \sr L_{-\infty}^{+\infty}$ is not a homotopy equivalence when 
\begin{equation*}
	\rho_1-\rho_2\leq \frac{2\pi}{3},
\end{equation*}
where $\rho_i=\arccot(\ka_i)$, $i=1,2$. This section is to a large extent independent of the rest of the work. In particular, we do not use the caustic band, only the regular band. Since we will be working mostly with spaces of type $\sr L_{-\ka_1}^{+\ka_1}$ (as we are allowed to do, by (\ref{C:symmetricinterval})), we start by modifying its definition to suit our needs.

\subsection*{The band spanned by a curve} Throughout this subsection, let $-\infty\leq \ka_1<\ka_2\leq +\infty$ be fixed and let $\rho_1=\arccot \ka_1$, $\rho_2=\arccot \ka_2$. In order to get rid of the distinguished position of the endpoints of $[0,1]$, we shall extend the domain of definition of all (closed) curves $\ga\in \sr L_{\ka_1}^{\ka_2}(I)$ to $\R$ by declaring them to be 1-periodic. 

\begin{defns}\label{D:band}
Let $\ga\colon \R\to \Ss^2$, $\ga\in \sr L_{\ka_1}^{\ka_2}(I)$.
\begin{enumerate}
	\item The \tdef{\tup(regular\tup) band} $B_\ga$ spanned by $\ga$ is the map:
	\begin{equation}\label{E:band}
			B_\ga\colon \R\times [\rho_1-\pi,\rho_2]\to \Ss^2,\quad B_\ga(t,\theta)=\cos \theta\, \ga(t)+\sin \theta\, \no(t).
	\end{equation}
	\item $B_\ga$ is \tdef{simple} if it is injective when restricted to $[0,1)\times [\rho_1-\pi,\rho_2]$.
	\item $B_\ga$ is \tdef{quasi-simple} if it is injective when restricted to $[0,1)\times (\rho_1-\pi,\rho_2)$.
	\item The \tdef{boundary curves} of $B_\ga$ are the curves $\be_+,\be_-\colon \R\to \Ss^2$ given by:
	\begin{equation}\label{E:boundarycurves}
		\be_+\colon t\mapsto B_\ga(t,\rho_2)\quad\text{ and }\quad\be_-\colon t\mapsto B_\ga(t,\rho_1-\pi).
	\end{equation}
\end{enumerate}
\end{defns}	 

Clearly, $B_\ga$ is also 1-periodic in $t$. Aside from the periodicity, the definition of regular band in (\ref{D:bands}) is subsumed in (\ref{D:band}). Here are some further basic properties of $B_\ga$.

\begin{lemmaa}\label{L:band1}
Let $\ga\in \sr L_{\ka_1}^{\ka_2}(I)$ and let $B_\ga\colon \R\times [\rho_1-\pi,\rho_2]\to \Ss^2$ be the band spanned by $\ga$. Then:
	\begin{enumerate}
		\item The derivative of $B_\ga$ is an isomorphism at every point.
		\item $\frac{\bd B_{\ga}}{\bd \theta}(t,\theta)$ has norm 1 and is orthogonal to $\frac{\bd B_{\ga}}{\bd t}(t,\theta)$. Moreover, 
		\begin{equation*}
			\frac{\bd B_{\ga}}{\bd t}(t,\theta)\times \frac{\bd B_{\ga}}{\bd \theta}(t,\theta)=\la\? B_\ga(t,\theta),\text{ with $\la >0$}.
		\end{equation*}
		\item If $B_\ga$ is quasi-simple, then the restriction of $B_\ga$ to $(\rho_1-\pi,\rho_2)$ is a covering map onto its image.
	\end{enumerate} 
\end{lemmaa}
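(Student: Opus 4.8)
\textbf{Proof proposal for Lemma~(\ref{L:band1}).}

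The plan is to imitate the proof of (\ref{L:band}), which handled the same formula in the non-periodic, one-sided case, and to extract part (c) from part (b) together with a covering-space argument. For parts (a) and (b), I would compute the two partial derivatives of $B_\ga$ directly from \eqref{E:band}. Differentiating in $\theta$ gives $\frac{\bd B_\ga}{\bd\theta}(t,\theta)=-\sin\theta\,\ga(t)+\cos\theta\,\no(t)$, which is a unit vector lying in the geodesic circle $\Ga_t$ orthogonal to $\ga$ at $\ga(t)$; it is automatically orthogonal to $\ta(t)$ and to $\frac{\bd B_\ga}{\bd t}$, since the latter is a multiple of $\ta(t)$. Differentiating in $t$ and using $\dot\ga=\abs{\dot\ga}\ta$, $\dot\no=-\abs{\dot\ga}\ka\,\ta$ (from the frame equations \eqref{E:frenet1}) gives $\frac{\bd B_\ga}{\bd t}(t,\theta)=\abs{\dot\ga(t)}\big(\cos\theta-\ka(t)\sin\theta\big)\ta(t)=\frac{\abs{\dot\ga(t)}}{\sin\rho(t)}\sin(\rho(t)-\theta)\,\ta(t)$. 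Because $\ga\in\sr L_{\ka_1}^{\ka_2}(I)$ we have $\rho_2<\rho(t)<\rho_1$ a.e., hence $\rho_1-\pi<\theta<\rho(t)-\pi+\pi=\dots$; more precisely, for $\theta\in[\rho_1-\pi,\rho_2]$ one has $0<\rho(t)-\theta<\pi$, so $\sin(\rho(t)-\theta)>0$ and the scalar factor multiplying $\ta(t)$ is strictly positive. Therefore $\frac{\bd B_\ga}{\bd t}$ and $\frac{\bd B_\ga}{\bd\theta}$ are linearly independent at every point, which proves (a); and since $\big(\ga(t),\ta(t),\frac{\bd B_\ga}{\bd\theta}(t,\theta)\big)$ is, up to the positive rotation by $\theta$ in the $(\ga,\no)$-plane, a positively oriented orthonormal frame, the cross product $\frac{\bd B_\ga}{\bd t}\times\frac{\bd B_\ga}{\bd\theta}$ is a positive multiple of $B_\ga(t,\theta)$, giving (b). One small point worth a remark: $\ga$ need only be admissible, so $\abs{\dot\ga}$ and $\ka$ are defined only a.e.; but $B_\ga$ and its two partials are still defined everywhere (the partials involve $\ta,\no$, which are absolutely continuous, and $\abs{\dot\ga}$, which is in $L^2$ but positive a.e.), and the computation of $\frac{\bd B_\ga}{\bd t}$ holds wherever $\ga$ is differentiable, which is enough for the rank statement to hold on a full-measure set; alternatively one reduces to the $C^2$ case by (\ref{L:dense}). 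I expect this is the cleanest route and I would phrase part (a)--(b) exactly as the restriction-plus-periodicity of (\ref{L:band}) to keep the argument short.

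For part (c), the idea is standard: an injective local diffeomorphism from a manifold onto its image need not be a covering map in general, but here the source $\R\times(\rho_1-\pi,\rho_2)$ is $1$-periodic and the relevant piece is essentially compact in the $t$-direction modulo the $\Z$-action, which forces properness. Concretely, write $A=B_\ga|_{\R\times(\rho_1-\pi,\rho_2)}$; by quasi-simplicity $A$ factors through the quotient $(\R/\Z)\times(\rho_1-\pi,\rho_2)$ as an injective immersion $\bar A$, and it suffices to show $\bar A$ is a covering onto its image $\hat B$. By (a), $\bar A$ is an open map, so $\hat B$ is open in $\Ss^2$ and $\bar A$ is an open, continuous, injective surjection onto $\hat B$; I then argue it is proper by the following: given $q\in\hat B$, take a compact neighborhood; its preimage under $\bar A$ is closed, and bounded in the $(\rho_1-\pi,\rho_2)$-direction only after one checks the fibre cannot escape to the ends $\theta\to\rho_1-\pi$ or $\theta\to\rho_2$ — which it cannot, precisely because near those ends $B_\ga$ maps into a neighborhood of the boundary curves $\be_\pm$, so a small enough neighborhood of $q\in\hat B$ (which is at positive distance from $\Im(\be_+)\cup\Im(\be_-)$, being an open subset disjoint from them after shrinking) has preimage contained in a compact $(\R/\Z)\times[\rho_1-\pi+\delta,\rho_2-\delta]$. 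Since a proper injective local homeomorphism onto its (locally compact, Hausdorff) image is a homeomorphism, and here more is true — the covering is a homeomorphism because $\bar A$ is injective — I actually only need: $\bar A$ is a homeomorphism onto $\hat B$, and hence trivially a covering map. Alternatively, and perhaps more in the spirit of Lemma~(\ref{L:coveredband}\?(c)) later in the paper, I can exhibit distinguished neighborhoods directly: each point of $\hat B$ has finitely many (here, exactly one) preimages because the fibre is discrete and contained in a compact set, and around each preimage $\bar A$ is a diffeomorphism by (a); intersecting the images of these charts and removing the image of the complement gives an evenly covered neighborhood.

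The main obstacle is the properness/escape-to-the-boundary issue in part (c): one must rule out that preimages of a point of $\hat B$ accumulate at $\theta=\rho_1-\pi$ or $\theta=\rho_2$. The clean way to handle this is to observe that the open set $\hat B$ is, by definition, disjoint from $\Im(\be_+)$ and $\Im(\be_-)$, so any $q\in\hat B$ has a neighborhood $W\subs\hat B$ whose closure is still disjoint from those two compact curves; then continuity of $B_\ga$ forces $B_\ga^{-1}(W)$ to stay in $(\R/\Z)\times[\rho_1-\pi+\delta,\rho_2-\delta]$ for some $\delta>0$, which is compact, so the fibre over any point of $W$ is finite and $\bar A$ restricted over $W$ is a covering. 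Everything else — the identities in (a) and (b) — is routine differentiation identical to (\ref{L:band}), and I would present it tersely, citing that lemma.
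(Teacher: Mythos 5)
Your proof is correct and follows essentially the same route as the paper: parts (a) and (b) by the computation of (\ref{L:band}), and part (c) by factoring $B_\ga$ through the quotient $\Ss^1\times(\rho_1-\pi,\rho_2)$ via $\pr\times\id$, observing that quasi-simplicity together with part (a) makes the induced map an injective open immersion, hence a diffeomorphism onto its image, so that $B_\ga$ is a covering map as the composition of a covering with a diffeomorphism. The properness/escape-to-the-boundary detour is superfluous once you have that observation (and the claim that $\hat B$ is ``by definition'' disjoint from $\Im(\be_+)\cup\Im(\be_-)$ is not quite right as stated --- it is essentially the content of (\ref{L:separates}) and itself requires quasi-simplicity); but since you explicitly reduce to the injective-open-map argument, nothing essential is missing.
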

\begin{proof} 
	The proofs of (a) and (b) are practically identical to those of the corresponding items in (\ref{L:band}), so they will be ommitted. 
For part (c), consider the unique map $\bar{B}_\ga\colon \Ss^1\times (\rho_1-\pi,\rho_2)\to \Ss^2$ making the following diagram commute:
\begin{equation}\label{E:Bbar}
\xymatrix{
		\R \times (\rho_1-\pi,\rho_2) \ar[r]^{\qquad B_\ga} \ar[d]_{\pr\times \id}  & \Ss^2 \\
	\Ss^1\times (\rho_1-\pi,\rho_2) \ar@{..>}[ur]_{\bar{B}_\ga} &  
}
\end{equation}
where $\pr(t)=\exp(2\pi it)$. Since $\bar{B}_\ga$ is a diffeomorphism and $\pr\times \id$ is a covering map, $B_\ga$ is also a covering map (onto its image). 
\end{proof}

\begin{lemmaa}\label{L:separates}
	Let $\ga\in \sr L_{\ka_1}^{\ka_2}(I)$ and suppose that $B_\ga$ is quasi-simple. For fixed $\varphi$ satisfying $\rho_1-\pi<\varphi<\rho_2$, the curve $\ga_\varphi\colon t\mapsto B_\ga(t,\varphi)$ separates $\Ss^2$ into two connected components, one containing $B_\ga\big(\R\times [\rho_1-\pi,\varphi)\big)$ and the other containing $B_\ga\big(\R\times (\varphi,\rho_2]\big)$.
\end{lemmaa}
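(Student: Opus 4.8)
\textbf{Proof plan for (\ref{L:separates}).}
The strategy is to reduce the statement to a standard fact about embedded circles on $\Ss^2$ (the Jordan curve theorem, in the form given by (\ref{T:Schoenflies})) together with a connectedness argument. First I would observe that the curve $\ga_\varphi\colon t\mapsto B_\ga(t,\varphi)$ is a simple closed curve on $\Ss^2$: it is closed because $B_\ga$ is $1$-periodic in $t$, it is $C^1$ and regular because by (\ref{L:band1}\?(b)) the partial derivative $\frac{\bd B_\ga}{\bd t}$ is nonzero (being part of an orthonormal-up-to-scaling frame together with $\frac{\bd B_\ga}{\bd \theta}$), and it is injective on $[0,1)$ because $\varphi\in(\rho_1-\pi,\rho_2)$ and $B_\ga$ is quasi-simple. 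Hence, by (\ref{T:Schoenflies}\?(a))--(b) applied to each complementary component (or directly by the Jordan curve theorem on $\Ss^2$), $\Ss^2\ssm \ga_\varphi(\R)$ has exactly two connected components $U_+$ and $U_-$, each homeomorphic to an open disk, with common boundary $\ga_\varphi(\R)$.

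Next I would show that each of the two sets $B_\ga\big(\R\times[\rho_1-\pi,\varphi)\big)$ and $B_\ga\big(\R\times(\varphi,\rho_2]\big)$ is connected and disjoint from $\ga_\varphi(\R)$, so that each is contained in a single one of $U_+$, $U_-$. Connectedness is clear since each is the continuous image of a connected set. Disjointness from $\ga_\varphi(\R)$ is precisely where quasi-simplicity is used: if $B_\ga(t,\theta)=B_\ga(s,\varphi)$ with $\theta\neq\varphi$ and both $\theta,\varphi\in(\rho_1-\pi,\rho_2)$, this contradicts injectivity of $B_\ga$ on $[0,1)\times(\rho_1-\pi,\rho_2)$; the endpoint values $\theta=\rho_1-\pi$ and $\theta=\rho_2$ also cannot hit $\ga_\varphi(\R)$, because along each vertical segment $\{t\}\times[\rho_1-\pi,\rho_2]$ the map $\theta\mapsto B_\ga(t,\theta)$ is an arc of a great circle (namely $\Ga_t$) traversed with unit speed over an interval of length $\pi$, hence injective, so $B_\ga(t,\rho_1-\pi)$ and $B_\ga(t,\rho_2)$ differ from $B_\ga(t,\varphi)$, and a collision $B_\ga(t,\rho_2)=B_\ga(s,\varphi)$ with $t\neq s$ would again violate quasi-simplicity after perturbing $\rho_2$ slightly inward — more cleanly, I would simply take a sequence $\varphi_k\decr\varphi$ (or $\incr$) inside the open interval, note $B_\ga(t,\rho_2)=\lim$ along the quasi-simple interior where no collisions occur, and pass to the limit; alternatively one argues directly that $B_\ga(t,\rho_2)\in\ol{U}$ for the appropriate $U$ by continuity. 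It remains to check that the two families land in \emph{different} components. For this I would fix a point $t_0$ and look at the vertical great-circle arc $\theta\mapsto B_\ga(t_0,\theta)$; it crosses $\ga_\varphi(\R)$ transversally at $\theta=\varphi$ (transversality follows from (\ref{L:band1}\?(a)), since $\frac{\bd B_\ga}{\bd t}$ and $\frac{\bd B_\ga}{\bd\theta}$ are linearly independent, so $\frac{\bd B_\ga}{\bd\theta}(t_0,\varphi)$ is not tangent to $\ga_\varphi$). A path crossing an embedded circle transversally at exactly one point must have its two endpoints-sides in the two different complementary regions; applying this to the sub-arcs $\theta\in[\rho_1-\pi,\varphi]$ and $\theta\in[\varphi,\rho_2]$ shows $B_\ga(t_0,\rho_1-\pi)$ and $B_\ga(t_0,\rho_2)$ lie on opposite sides, hence the two families are in different components. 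Finally, by connectedness each whole family lies in the one component determined by its representative at $t_0$.

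The main obstacle I anticipate is the transversality / "crossing an embedded circle once" step: making precise that the vertical arc meets $\ga_\varphi(\R)$ at the single parameter value $\theta=\varphi$ and nowhere else. That it meets it at $\theta=\varphi$ is immediate; that it meets it nowhere else for $\theta\neq\varphi$ is exactly the no-collision fact established in the previous paragraph from quasi-simplicity (and the unit-speed-great-circle behaviour in $\theta$). Once that is in hand, the statement that a transversal arc crossing a Jordan curve on $\Ss^2$ exactly once has its endpoints in distinct complementary components is a standard consequence of the Jordan curve theorem (via a local-degree or winding argument), and I would invoke it as such, citing (\ref{T:Schoenflies}\?(a)) for the two-component decomposition. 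I would keep the write-up short: state that $\ga_\varphi$ is a $C^1$ embedded circle, invoke (\ref{T:Schoenflies}) for the two disk components, use quasi-simplicity plus the great-circle structure of the $\theta$-slices for disjointness and the single-crossing property, and conclude by connectedness of the images of $\R\times[\rho_1-\pi,\varphi)$ and $\R\times(\varphi,\rho_2]$.
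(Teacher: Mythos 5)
Your overall architecture is sound and the first two stages (injectivity of $\ga_\varphi$ from quasi-simplicity, the Jordan curve theorem via (\ref{T:Schoenflies}), and disjointness of the two half-bands from $S=\ga_\varphi(\R)$) coincide with the paper's proof. Where you diverge is the last step. The paper does \emph{not} use a transversal-crossing argument: it observes that $U=B_\ga\big(\R\times(\rho_1-\pi,\rho_2)\big)$ is an \emph{open neighborhood of $S$} whose complement of $S$ is contained in $A_-\cup A_+$; if both $A_-$ and $A_+$ lay in the same component $V_+$, then $V_-$ would be disjoint from the neighborhood $U$ of $S$, contradicting $\bd V_-=S$. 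This is slicker than your route, because it needs no local analysis at a crossing point — a real advantage here, since the curves are only admissible ($H^1$), so $\frac{\bd B_\ga}{\bd t}$ exists only a.e.\ and "transversality of $\ga_\varphi$ with the meridian arc" is not literally available at every $t_0$; to make your version rigorous you would instead invoke (\ref{L:band1}\,(c)) to say that $B_\ga$ restricted to a small open rectangle around $(t_0,\varphi)$ is a homeomorphism onto an open set, so that locally the arc $\theta\mapsto B_\ga(t_0,\theta)$ passes from one side of $S$ to the other. With that substitution your argument goes through.

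One step of your write-up is stated in the wrong direction: to exclude $\be_+(t)\in S$ you propose "take a sequence $\varphi_k\to\varphi$ \dots and pass to the limit," but a limit of non-colliding interior points can perfectly well collide, so this proves nothing. The correct move (and the paper's) is the one you mention first and then discard: if $B_\ga(t,\rho_2)=B_\ga(s,\varphi)$, then since $U_\eps=B_\ga\big(\R\times(\varphi-\eps,\varphi+\eps)\big)$ is \emph{open} (the restriction of $B_\ga$ to the open strip is an open map), continuity forces $B_\ga(t,\theta)\in U_\eps$ for $\theta$ slightly below $\rho_2$, producing a collision between two points of the open strip with distinct $\theta$-coordinates and contradicting quasi-simplicity. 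Keep the "perturb inward" version and make the openness of $U_\eps$ explicit; drop the limiting argument.
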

\begin{proof}
By (\ref{L:band1}\?(b)), $B_\ga$ is an immersion. Consequently, 
\begin{equation*}
 	U=B_\ga\big(\R\times (\rho_1-\pi,\rho_2)\big)\text{\ \  and\ \  } U_{\eps}=B_\ga\big(\R\times (\varphi-\eps,\varphi+\eps)\big)
 \end{equation*}
are open sets, for any $\eps>0$ satisfying $\eps<\min\{\rho_2-\varphi,\varphi+\pi-\rho_1\}$. Let $S$ denote the image of $\ga_\varphi$. If $\be_+(t')\in S$ for some $t'$, then $B_\ga(t',\theta)\in U_\eps$ for all $\theta$ close to $\rho_2$. This contradicts the fact that $B_\ga$ is quasi-simple.  Hence $\ga_\varphi$ does not intersect $\be_+$, and for the same reason it does not intersect $\be_-$ either. Now let \begin{equation*}
A_-=B_\ga\big(\R\times [\rho_1-\pi,\varphi)\big),\quad A_+=B_\ga\big(\R\times (\varphi,\rho_2]\big).
\end{equation*}
By the Jordan curve theorem, the simple closed curve $\ga_\varphi$ separates $\Ss^2$ into two connected components $V_+$, $V_-$, and $S$ is the boundary of each. Let $V_+$ be the component which contains $A_+$. If $A_-\subs V_+$ then all of $U\ssm S$ would be contained in $V_+$. Since $U$ is a neighborhood of $S$, this would give $\bd V_-\cap S=\emptyset$, a contradiction. Hence $A_-\subs V_-$.
\end{proof}

\begin{lemmaa}\label{L:selfintersection}
	Let $\ga\in \sr L_{\ka_1}^{\ka_2}(I)$, with $B_\ga$ quasi-simple. If $B_\ga(t_1,\theta_1)=B_\ga(t_2,\theta_2)$ then:
	\begin{enumerate}
		\item [(a)] $\theta_1=\theta_2\in \se{\rho_1-\pi,\rho_2}$.
		\item [(b)] $\frac{\bd B_\ga}{\bd \theta}(t_2,\theta_2)=-\frac{\bd B_\ga}{\bd \theta}(t_1,\theta_1)$, $\frac{\bd B_\ga}{\bd t}(t_2,\theta_2)=-\mu\?\frac{\bd B_\ga}{\bd t}(t_1,\theta_1)$, $\mu>0$, and $\ta(t_2)=-\ta(t_1)$, unless $t_1-t_2\in \Z$.
	\end{enumerate}
\end{lemmaa}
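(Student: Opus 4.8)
\textbf{Proof proposal for (\ref{L:selfintersection}).}

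The plan is to exploit the geometry of the band $B_\ga$ together with the quasi-simplicity hypothesis. First I would prove (a). Suppose $B_\ga(t_1,\theta_1)=B_\ga(t_2,\theta_2)=:p$ with $(t_1,\theta_1)$ and $(t_2,\theta_2)$ distinct modulo the periodicity. If both $\theta_1,\theta_2$ were in the open interval $(\rho_1-\pi,\rho_2)$ then by (\ref{L:band1}\?(c)) the restriction $\bar B_\ga\colon \Ss^1\times(\rho_1-\pi,\rho_2)\to\Ss^2$ is injective (since $B_\ga$ is quasi-simple), contradicting $B_\ga(t_1,\theta_1)=B_\ga(t_2,\theta_2)$; so at least one of $\theta_1,\theta_2$ is an endpoint, say $\theta_1\in\{\rho_1-\pi,\rho_2\}$. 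Now I claim $\theta_2$ must also be that same endpoint. If $\theta_2\in(\rho_1-\pi,\rho_2)$, apply (\ref{L:separates}) with $\varphi=\theta_2$: the curve $\ga_\varphi$ separates $\Ss^2$, and $\be_+$, $\be_-$ (the boundary curves) lie strictly in the two complementary open regions $V_\pm$; but $p=B_\ga(t_1,\theta_1)$ lies on a boundary curve and simultaneously equals $B_\ga(t_2,\varphi)\in S=\Im(\ga_\varphi)$, which is disjoint from both boundary curves by the argument in the proof of (\ref{L:separates}) --- contradiction. Hence $\theta_2\in\{\rho_1-\pi,\rho_2\}$ too. Finally, to see $\theta_1=\theta_2$: suppose $\theta_1=\rho_2$ and $\theta_2=\rho_1-\pi$ (the two different endpoints). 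Then $p$ lies on both $\be_+$ and $\be_-$. Pick $\varphi$ strictly between; by (\ref{L:separates}), $\be_+\subs\ol{V_+}$, $\be_-\subs\ol{V_-}$, and since neither boundary curve meets $S$, actually $\be_+\subs V_+$ and $\be_-\subs V_-$, which are disjoint --- so $p$ cannot lie on both. Therefore $\theta_1=\theta_2\in\{\rho_1-\pi,\rho_2\}$, proving (a).

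For (b), set $\theta:=\theta_1=\theta_2$, an endpoint of the $\theta$-interval. Near $(t_i,\theta)$ the image $B_\ga(\R\times[\rho_1-\pi,\rho_2])$ lies, by (\ref{L:separates}), entirely on one side of the boundary curve through $B_\ga(t_i,\theta)$. Since $B_\ga$ is an immersion at each $(t_i,\theta)$ by (\ref{L:band1}\?(a)), each point $(t_i,\theta)$ has a neighborhood mapped diffeomorphically onto a half-disk-shaped neighborhood of $p$ lying on one side of $S'$ (the local image of the relevant boundary curve). At $p$ these two half-disk images must coincide (both equal the local one-sided neighborhood of $p$ cut out by the single boundary curve through $p$). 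The vector $\frac{\bd B_\ga}{\bd\theta}(t_i,\theta)$ is the unit inward normal to the boundary curve (it points into the band), and $\frac{\bd B_\ga}{\bd t}(t_i,\theta)$ is tangent to the boundary curve; moreover by (\ref{L:band1}\?(b)) the ordered pair $\big(\frac{\bd B_\ga}{\bd t},\frac{\bd B_\ga}{\bd\theta}\big)$ is positively oriented relative to $p$ (i.e.\ $\frac{\bd B_\ga}{\bd t}\times\frac{\bd B_\ga}{\bd\theta}=\la B_\ga$, $\la>0$). Comparing the two local parametrizations of the same one-sided neighborhood of $p$: the inward normals must agree up to sign; if they agreed, the two images would lie on the same side and the $\bd_t$-directions would agree up to a positive scalar, but then the band would genuinely overlap itself in a full neighborhood, contradicting quasi-simplicity (the overlap would include interior points $B_\ga(t,\vartheta)$ with $\vartheta$ in the open interval). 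Hence $\frac{\bd B_\ga}{\bd\theta}(t_2,\theta)=-\frac{\bd B_\ga}{\bd\theta}(t_1,\theta)$; orientation then forces $\frac{\bd B_\ga}{\bd t}(t_2,\theta)=-\mu\frac{\bd B_\ga}{\bd t}(t_1,\theta)$ with $\mu>0$. Since, by \eqref{E:partialt2}, $\frac{\bd B_\ga}{\bd t}(t,\theta)$ is a positive multiple of $\ta(t)$ when $\theta\in(\rho_1-\pi,\rho_2)$ and a nonzero multiple of $\ta(t)$ at the endpoints (as $\sin(\rho(t)-\theta)\neq0$ there), the relation on $\bd_t B_\ga$ gives $\ta(t_2)=-\ta(t_1)$, unless $t_1\equiv t_2\pmod{\Z}$, in which case the two points are not genuinely distinct in $t$ and the boundary curve simply passes through $p$ once.

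The main obstacle I anticipate is the bookkeeping in (b): carefully turning ``the two local images of $B_\ga$ coincide near $p$'' into the precise sign statements, and ruling out the ``same-side'' configuration cleanly. This requires (i) identifying $\frac{\bd B_\ga}{\bd\theta}$ at an endpoint as the inward unit normal to the boundary curve --- which follows directly from \eqref{E:partialtheta} and the definition \eqref{E:boundarycurves}, since $B_\ga(t,\cdot)$ traces the great circle $\Ga_t$ and moving $\theta$ off the endpoint pushes into the band --- and (ii) invoking (\ref{L:separates}) to know that a whole one-sided neighborhood of each $p$-preimage maps into the band, so that if the inward normals agreed the images would overlap on an open set meeting $B_\ga(\R\times(\rho_1-\pi,\rho_2))$, violating quasi-simplicity. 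Everything else (parts (a), and the orientation/scalar deductions in (b)) is a direct consequence of (\ref{L:band1}), (\ref{L:separates}), the Jordan curve theorem, and formulas \eqref{E:partialtheta}, \eqref{E:partialt2}.
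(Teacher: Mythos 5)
Your part (a) is correct and is just a careful spelling-out of what the paper dismisses as ``an immediate corollary of (\ref{L:separates})''. The problem is in part (b), at the step where you pass from ``each preimage has a half-disk neighborhood mapped into the band'' to ``the inward normals must agree up to sign''. Your justification is the claim that ``at $p$ these two half-disk images must coincide'', and that claim is false: the conclusion of the lemma itself shows that the two half-disks are tangent at $p$ but lie on \emph{opposite} sides of the common tangent direction, meeting only at $p$ (this is exactly the configuration in fig.~\ref{F:beta1beta2}); in particular the local image of the band near $p$ is not a one-sided neighborhood cut out by ``the single boundary curve through $p$''. If the half-disks really coincided, their open interiors would coincide, which is the most flagrant possible violation of quasi-simplicity. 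So as written you have no valid argument that the two arcs of $\be_+$ through $p$ are tangent, i.e.\ that $\frac{\bd B_\ga}{\bd t}(t_2,\theta)$ is a real multiple of $\frac{\bd B_\ga}{\bd t}(t_1,\theta)$; the transverse-crossing case is never excluded by a correct step.

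The gap is repairable in two ways. The paper's route: choose $\eps$ with $(t_1-\eps,t_1+\eps)+\Z$ disjoint from $(t_2-\eps,t_2+\eps)$ and set $U_1=B_\ga\big((t_1-\eps,t_1+\eps)\times(\rho_1-\pi,\rho_2)\big)$; if the two $\bd_t$-vectors were not parallel, then $B_\ga(t_2+u,\rho_2)$ or $B_\ga(t_2-u,\rho_2)$ would lie in the open set $U_1$ for all small $u>0$, producing a coincidence $B_\ga(t_2\pm u,\rho_2)=B_\ga(t',\theta')$ with $\theta'$ in the open interval and $t_2\pm u-t'\nin\Z$ --- forbidden by your own part (a). Alternatively, stay within your framework but use \emph{disjointness} of the interiors (not coincidence) throughout: the interiors of the two half-disks consist of points $B_\ga(\cdot,\vartheta)$ with $\vartheta$ interior, so quasi-simplicity makes them disjoint; two half-disks at $p$ whose boundary arcs cross transversally always share an open wedge, and two tangent half-disks on the same side share an open set as well, so the only admissible configuration is tangent-and-opposite. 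Either repair yields $\frac{\bd B_\ga}{\bd\theta}(t_2,\theta)=-\frac{\bd B_\ga}{\bd\theta}(t_1,\theta)$, after which your orientation argument goes through. One small correction at the end: by \eqref{E:partialt2}, $\sin(\rho(t)-\theta)>0$ even at the endpoints, since $\rho_2<\rho(t)<\rho_1$ forces $0<\rho(t)-\theta<\pi$ for all $\theta\in[\rho_1-\pi,\rho_2]$; so $\frac{\bd B_\ga}{\bd t}(t,\theta)$ is a \emph{positive} multiple of $\ta(t)$ there --- you need positivity, not merely nonvanishing, to conclude $\ta(t_2)=-\ta(t_1)$.
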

In other words, if $B_\ga$ is quasi-simple then all of its self-intersections are either self-intersections of $\be_+$ or of $\be_-$, and they are actually points of self-tangency.

\begin{proof} Part (a) is an immediate corollary of (\ref{L:separates}). Assume that $t_1-t_2\nin \Z$ and, for the sake of concreteness, that $\theta_1=\theta_2=\rho_2$. Choose $\eps>0$ such that $(t_1-\eps,t_1+\eps)+\Z$ does not intersect $(t_2-\eps,t_2+\eps)$ and let $U_1$ be the open set 
\begin{equation*}
	U_1=B_\ga\big((t_1-\eps,t_1+\eps)\times (\rho_1-\pi,\rho_2)\big).
\end{equation*}
If $\frac{\bd B_\ga}{\bd t}(t_2,\rho_2)$ is not a positive or negative multiple of $\frac{\bd B_\ga}{\bd t}(t_1,\rho_2)$, then either $B_\ga(t_2+u,\rho_2)\in U_1$ or $B_\ga(t_2-u,\rho_2)\in U_1$ for all sufficiently small $u>0$. This contradicts the fact that $B_\ga$ is quasi-simple. Hence 
\begin{equation*}
	\frac{\bd B_\ga}{\bd t}(t_2,\rho_2)=\pm \mu\? \frac{\bd B_\ga}{\bd t}(t_1,\rho_2),\ \mu>0, \text{\quad and\quad}\frac{\bd B_\ga}{\bd \theta}(t_2,\rho_2)=\pm \frac{\bd B_\ga}{\bd \theta}(t_1,\rho_2),
\end{equation*}
the latter being a consequence of the former, by (\ref{L:band1}\?(b)). If we had $\frac{\bd B_\ga}{\bd \theta}(t_1,\rho_2)=\frac{\bd B_\ga}{\bd \theta}(t_2,\rho_2)$, then $B_\ga(t_2,\rho_2-u)\in U_1$ for all sufficiently small $u>0$, again contradicting the fact that $B_\ga$ is quasi-simple. Hence 
\[
\frac{\bd B_\ga}{\bd \theta}(t_2,\rho_2)=-\frac{\bd B_\ga}{\bd \theta}(t_1,\rho_2)
\]
and (\ref{L:band1}\?(b)) then yields $\frac{\bd B_\ga}{\bd t}(t_2,\rho_2)=-\mu\?\frac{\bd B_\ga}{\bd t}(t_1,\rho_2)$, $\mu>0$. Together with eq.~\eqref{E:partialt2} of \S 3, this implies that  $\ta(t_2)=-\ta(t_1)$.
\end{proof}

\begin{lemmaa}\label{L:bandlength}
	Let $\ga\in \sr L_{\ka_1}^{\ka_2}(I)$, with $B_\ga$ quasi-simple. Let $\al\colon [0,1]\to \Ss^2$ be a $C^1$ curve of length $L$ such that $\al(0)$ lies in the image of $\be_-$ and $\al(1)$ in the image of $\be_+$. Then $L\geq \pi-(\rho_1-\rho_2)$, and equality holds if and only if $\al$ is an orientation-preserving reparametrization of a curve $\theta\mapsto B_\ga(t_0,\theta)$, $\theta\in [\rho_1-\pi,\rho_2]$.
\end{lemmaa}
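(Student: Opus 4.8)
The plan is to lift the curve $\al$ to the covering space $\Ss^2_\nu$ (or, more precisely, to work with the unrolled band $\bar B_\ga$ over $\R\times[\rho_1-\pi,\rho_2]$), where the band becomes an embedding and the geometry is easy to analyze, and then to compute lengths of paths that cross the band from one boundary to the other. First I would note that $\al$ is homotopic (rel endpoints, through curves joining $\be_-$ to $\be_+$) to a curve that we may assume never leaves the image of $B_\ga$: any excursion outside $U=B_\ga(\R\times(\rho_1-\pi,\rho_2))$ starts and ends at a boundary point, so by (\ref{L:selfintersection}) the boundary curves are self-tangent there and the excursion can be replaced by a shorter (or equal-length) arc running along the boundary. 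Hence it suffices to bound the length of a $C^1$ curve $\al$ with image in $\overline{U}=\Im(B_\ga)$ joining $\Im(\be_-)$ to $\Im(\be_+)$.

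Next I would use quasi-simplicity: by (\ref{L:band1}\?(c)) the restriction of $B_\ga$ to $\R\times(\rho_1-\pi,\rho_2)$ is a covering onto $U$, so after possibly restricting $\al$ to the sub-arc between its last visit to $\be_-$ and its first visit to $\be_+$, we may lift the interior portion of $\al$ through $B_\ga$ to obtain a curve $u\mapsto (t(u),\theta(u))$ with $\theta(u)\in[\rho_1-\pi,\rho_2]$, $\theta(0)=\rho_1-\pi$, $\theta(1)=\rho_2$, and $t$ absolutely continuous. By (\ref{L:band1}\?(b)) the two partial derivatives $\frac{\bd B_\ga}{\bd t}$ and $\frac{\bd B_\ga}{\bd\theta}$ are orthogonal, the latter having norm $1$; therefore
\begin{equation*}
\abs{\dot\al(u)}^2=\Big\vert\frac{\bd B_\ga}{\bd t}\Big\vert^2\dot t(u)^2+\dot\theta(u)^2\ \geq\ \dot\theta(u)^2,
\end{equation*}
so $\abs{\dot\al(u)}\geq\abs{\dot\theta(u)}$ and
\begin{equation*}
L=\int_0^1\abs{\dot\al(u)}\,du\ \geq\ \int_0^1\abs{\dot\theta(u)}\,du\ \geq\ \Big\vert\int_0^1\dot\theta(u)\,du\Big\vert=\rho_2-(\rho_1-\pi)=\pi-(\rho_1-\rho_2).
\end{equation*}
This gives the inequality. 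For the equality case, $L=\pi-(\rho_1-\rho_2)$ forces both inequalities above to be equalities a.e.: $\dot\theta$ has constant sign (so $\theta$ is monotone, hence by the endpoint conditions monotone increasing), and $\frac{\bd B_\ga}{\bd t}(t(u),\theta(u))\,\dot t(u)=0$ a.e. Since $\frac{\bd B_\ga}{\bd t}(t,\theta)$ is a nonzero multiple of $\ta(t)$ by (\ref{E:partialt2}) of \S3 and never vanishes for $\theta$ in the relevant range, we get $\dot t\equiv 0$, i.e.\ $t(u)\equiv t_0$ is constant; thus $\al$ is the meridian arc $\theta\mapsto B_\ga(t_0,\theta)$ traversed in the orientation-preserving direction, reparametrized. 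Conversely such a meridian arc obviously has length $\rho_2-(\rho_1-\pi)$.

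The main obstacle is the careful treatment of the reduction in the first paragraph — namely justifying that a length-minimizing competitor may be taken inside $\overline{U}$ and that lifting through the covering $B_\ga$ is legitimate for a merely $C^1$ curve whose interior may still touch the boundary $\Im(\be_\pm)$. One clean way to handle this is to first establish the inequality for curves $\al$ with image in the open set $U$ together with the two endpoints on the boundary curves, using the covering-space lift there, and then pass to the general case by a limiting/approximation argument (pushing $\al$ slightly into the interior of the band, which changes its length by an arbitrarily small amount because $B_\ga$ is an immersion with uniformly bounded derivatives on the compact domain); the boundary self-tangency from (\ref{L:selfintersection}) guarantees no length is gained when an excursion leaves $\Im(B_\ga)$ entirely. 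Everything else is the elementary orthogonal-decomposition computation above, so once the topological bookkeeping is pinned down the proof is short.
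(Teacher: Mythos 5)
Your core argument coincides with the paper's: lift through the covering $B_\ga|_{\R\times(\rho_1-\pi,\rho_2)}$ of (\ref{L:band1}\?(c)), use the orthogonality and the normalization $\abs{\tfrac{\bd B_\ga}{\bd\theta}}=1$ from (\ref{L:band1}\?(b)) to get $\abs{\dot\al}\geq\abs{\dot\theta}$, integrate, and characterize equality by $\dot\theta$ of constant sign and $\dot\tau\equiv 0$. The gap is in the reduction that precedes this. The claim in your first paragraph that an excursion outside $U$ ``can be replaced by a shorter (or equal-length) arc running along the boundary'' is not justified and is not true in general: (\ref{L:selfintersection}) gives you tangency of the boundary curves at their self-intersections, not any length comparison, and near a self-tangency of $\be_\pm$ joining distant parameter values an excursion can be arbitrarily short while every boundary arc connecting its exit point to its re-entry point is long. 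Fortunately this replacement is unnecessary: restricting $\al$ to the sub-arc between its last visit to $\Im(\be_-)$ and its first subsequent visit to $\Im(\be_+)$ (which you also do) only decreases $L$, so it suffices to bound that sub-arc.

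For that sub-arc you still owe two facts that you assert rather than prove: that its interior actually lies in $U$ (a priori it could leave $\Im(B_\ga)$ on the ``outside'' of $\be_-$ and return on the ``outside'' of $\be_+$, in which case there is no lift), and that the lifted coordinate satisfies $\theta(0{+})=\rho_1-\pi$ and $\theta(1{-})=\rho_2$. Both follow from (\ref{L:separates}), which is the lemma the paper leans on here and which you never invoke: for each $\varphi\in(\rho_1-\pi,\rho_2)$ the curve $\ga_\varphi$ separates $B_\ga(\R\times[\rho_1-\pi,\varphi))$ from $B_\ga(\R\times(\varphi,\rho_2])$, so the sub-arc must cross $\Im(\ga_\varphi)\subs U$; since its interior is connected and avoids $\bd U\subs \Im(\be_-)\cup\Im(\be_+)$, it lies in $U$. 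The same separation statement, applied with $\varphi$ tending to each endpoint of $(\rho_1-\pi,\rho_2)$, forces $\theta(u)<\varphi$ for $u$ near $0$ and $\theta(u)>\varphi$ for $u$ near $1$, which pins down the two limits. Your proposed fix of ``pushing $\al$ slightly into the interior of the band'' presupposes that $\al$ already lies in the band, so it does not address this. With (\ref{L:separates}) inserted at these two points, your proof becomes the paper's.
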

More concisely: If a curve crosses the band, it must have length $\geq\pi-(\rho_1-\rho_2)$.
\begin{proof} 
	Let 
	\begin{equation*}
		t_0=\sup\set{t\in [0,1]}{\al(t)\in \be_-(\R)}\quad\text{and}\quad t_1=\inf\set{t>t_0}{\al(t)\in \be_+(\R)}
	\end{equation*}
By (\ref{L:selfintersection}), the images of $\be_-$ and $\be_+$ do not intersect each other,  whence $t_0<t_1$. We lose no generality in assuming that $t_0=0$, $t_1=1$.

	Let $\tau_{0}\in [0,1)$ and $\theta_{0}\in (\rho_1-\pi,\rho_2)$ be the unique numbers satisfying $B_\ga(\tau_0,\theta_0)=\al(\frac{1}{2})$. The image of $(0,1)$ by $\al$ is completely contained in $B_\ga\big(\R\times (\rho_1-\pi,\rho_2)\big)$, because of the way $t_0$ and $t_1$ were chosen. Consequently, by (\ref{L:band1}\?(c)), there exist unique $C^1$ functions $\tau\colon (0,1)\to \R$, $\theta\colon (0,1)\to (\rho_1-\pi,\rho_2)$ making the following diagram of pointed maps commute:
\begin{equation*}
	\xymatrix{
		\big((0,1)\,,\,\frac{1}{2}\big)\ar[r]^{\al} \ar@{..>}[d]_{\tau\times \theta} &  \big(\Ss^2\,,\,\al\big(\frac{1}{2}\big)\big) \\
		\big(\R\times (\rho_1-\pi,\rho_2)\,,\,(\tau_0,\theta_0)\big)\ar[ur]_{B_\ga} & 
	}
\end{equation*}

The length $L$ of $\al$ is therefore given by:
\begin{alignat*}{9}
	L&=\int_0^1 \abs{\dot\al(u)}\,du=\lim_{\de\to 0+}\int_\de^{1-\de}\abs{\dot\al(u)}\,du \\
	&=\lim_{\de\to 0^+}\int_\de^{1-\de}\Big|\dot\tau(u)\frac{\bd B_\ga}{\bd t}(\tau(u),\theta(u))+\dot\theta(u)\frac{\bd B_\ga}{\bd \theta}(\tau(u),\theta(u))\Big|\,du \\
	&\geq\lim_{\de\to 0^+}\int_\de^{1-\de}|\dot\theta(u)|\,du \geq \lim_{\de\to 0^+}\Big|\int_\de^{1-\de}\dot\theta(u)\,du\Big|=\abs{\theta(1-)-\theta(0+)},
\end{alignat*}
where in the first inequality we have used the facts that $\frac{\bd B_\ga}{\bd t}\perp \frac{\bd B_\ga}{\bd \theta}$ and that the latter has norm 1, as proved in (\ref{L:band1}\?(b)). 

We claim that the limits $\theta(0+)$ and $\theta(1-)$ exist and are equal to $\rho_1-\pi$ and $\rho_2$, respectively. Let $\varphi\in (\rho_1-\pi,\rho_2)$ be given and let 
\begin{equation*}
	A_-=B_\ga\big(\R\times [\rho_1-\pi,\varphi)\big)\text{\quad and\quad}A_+=B_\ga\big(\R\times (\varphi,\rho_2]\big).
\end{equation*}
As we saw in (\ref{L:separates}), $A_-$ and $A_+$ are contained in different connected components of $\Ss^2\ssm \ga_{\varphi}(\R)$. These components are open sets and $\al(0)\in A_-$ by hypothesis, hence, by continuity, there exists $\de>0$ such that $\al([0,\de))\subs A_-$. This implies that $\theta(u)<\varphi$ for all $u\in (0,\de)$. Because we can choose $\varphi$ arbitrarily close to $\rho_1-\pi$, this shows that $\theta(0+)=\rho_1-\pi$. Similarly, $\theta(1-)=\rho_2$. Therefore $L\geq \pi-(\rho_1-\rho_2)$.

Furthermore, $L= \pi-(\rho_1-\rho_2)$ if and only if  $\dot\theta$ does not change sign in $(0,1)$ and $\dot\tau(u)=0$ for all $u\in (0,1)$ (recall that, by (\ref{L:band1}\?(a)), $\frac{\bd B_\ga}{\bd t}$ never vanishes). In other words, $\al$ is an orientation-preserving reparametrization of the curve $\theta\mapsto B_\ga\big(\tau\big(\frac{1}{2}\big),\theta\big)$, $\theta\in [\rho_1-\pi,\rho_2]$.
\end{proof}


\subsection*{The Topology of $\sr L_{-\ka_1}^{+\ka_1}(I)$ for $0<\ka_1\leq \sqrt{3}$}

Our next goal is to prove some basic facts about the topology of $\sr L_{-\ka_1}^{+\ka_1}(I)$ for any (fixed) $\ka_1$ satisfying $0<\ka_1\leq \sqrt{3}$. We shall extend the domain of curves in this space to $\R$ by declaring them to be 1-periodic. 
\begin{defna}
Let $\sr A$ denote the subspace of $\sr L_{-\ka_1}^{+\ka_1}(I)$ $(0<\ka_1\leq \sqrt{3})$ consisting of all curves $\ga$ such that 
\begin{equation}\label{E:antipodal}
\qquad	\ga\big(t+\tfrac{1}{2}\big)=-\ga(t)\ \ \text{for all $t\in \R$}.
\end{equation}
\end{defna}

By (\ref{D:band}), the band of a curve in $\sr L_{-\ka_1}^{+\ka_1}(I)\sups \sr A$ ($0<\ka_1\leq \sqrt{3}$) is defined on 
\[
\R\times [-\rho_1,\rho_1]\sups \R\times \big[{-}\tfrac{\pi}{6},\tfrac{\pi}{6}\big].
\]
For our purposes it will suffice to consider the restriction of $B_\ga$ to the latter set. 

\begin{rema}\label{R:A}
	If $\ga\in \sr A$, then $\Phi_{\ga}(\frac{1}{2})=Q_{\mbf{k}}$, where 
\begin{equation*}
Q_{\mbf{k}}=	\begin{pmatrix}
		  -1 &  0 & 0 \\
		  0 &  -1 & 0  \\
		  0 & 0 & 1
	\end{pmatrix}
\end{equation*}
is the image of the quaternion $\mbf{k}$ (and of $-\mbf{k}$) under the projection $\Ss^3\to \SO_3$. In fact, $\sr A\home \sr L_{-\ka_1}^{+\ka_1}(Q_{\mbf k})$, because \eqref{E:antipodal} implies that a curve in $\sr A$ is uniquely determined by its restriction to $[0,\frac{1}{2}]$. 
\end{rema} 

\begin{urem}
	A curve $\ga\colon \R\to \Ss^2$ in $\sr A$ corresponds to a closed curve $\bar{\ga}\colon \Ss^1\to \RP^2$ induced by $\ga$ as follows:
\begin{equation*}
	\xymatrix{
		\R \ar[r]^{\ga\ } \ar[d]_p  & \Ss^2\ar[d]^q \\
		\Ss^1 \ar@{..>}[r]_{\bar{\ga}\ \ } & \RP^2 
	}
\end{equation*} 
Here $p(t)=\exp(4\pi it)$ and $q$ is the usual covering map. Thus, we could also view $\sr A$ as a space of closed curves in $\RP^2$ having curvature bounded by $\ka_1$. (Even though $\RP^2$ is not orientable, we can still speak of the unsigned curvature of a curve in $\RP^2$.) We shall not make any use of this interpretation in the sequel, however.
\end{urem}	

Examples of curves contained in $\sr A$ are the geodesics $\sig_m\colon \R\to \Ss^2$ given by: 
\begin{equation}\label{E:sigmas}
\phantom{\quad m=0,1,2}\sig_m(t)=\big(\cos \big((2m+1)\?2\pi t\big),\sin \big((2m+1)\?2\pi t\big),0\big)\quad (m=0,1,2).
\end{equation}
One checks directly that
\begin{equation*}
\phantom{\quad (m=0,1,2)}\te{\Phi}_{\sig_m}(t)=\cos\big((2m+1)\pi t\big)\?\mbf{1}+\sin\big((2m+1)\pi t\big)\?\mbf{k} \quad (m=0,1,2),
\end{equation*}
so that $\te{\Phi}_{\sig_m}(\frac{1}{2})=(-1)^m\mbf{k}$. Therefore, by (a slightly different version of) lemma (\ref{L:bit}), $\sig_1$ does not lie in the same connected component of $\sr A$ as $\sig_0$, $\sig_2$. We shall see later that $\sig_0$ and $\sig_2$ do not lie in the same connected component either. In fact, we have the following result.

\begin{propa}\label{L:main}
	Let $0<\ka_1\leq \sqrt{3}$. The subspace 
	\begin{equation*}
		\sr A_0=\set{\ga\in \sr A}{B_{\ga}\text{ is simple}}\subs \sr A,
	\end{equation*}
	which contains $\sig_0$ but not $\sig_2$, is both open and closed in $\sr A$.
\end{propa}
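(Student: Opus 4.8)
The strategy is the usual clopen argument: show that $\sr A_0$ is open in $\sr A$ and that its complement $\sr A\ssm\sr A_0$ is also open, so that $\sr A_0$ is a union of connected components; the fact that $\sig_0\in\sr A_0$ while $\sig_2\notin\sr A_0$ should be immediate from the definition (the band of $\sig_2$, an equator traversed $5$ times, has massive overlap on $[0,1)\times[-\rho_1,\rho_1]$, whereas the band of $\sig_0$, being a thin annular strip around a great circle, is injective when $\ka_1\leq\sqrt3$, i.e.\ $\rho_1\geq\pi/6$, so that $2\rho_1+(\pi-2\rho_1)=\pi$ fits without wraparound — this is exactly the numerical role of the hypothesis $\ka_1\leq\sqrt3$). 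So the real content is the two openness statements.

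\textbf{Openness of $\sr A_0$.} Here I would argue that simplicity of $B_\ga$ is stable under $C^r$-small (indeed $C^1$-small) perturbations within $\sr A$. The key input is the band-geometry machinery already developed: by (\ref{L:band1}) the band $B_\ga$ is always an immersion, and for a \emph{simple} band the stronger conclusion of (\ref{L:bandlength}) applies — any curve crossing the band has length $\geq\pi-(\rho_1-\rho_2)$. Compactness of $[0,1)\times[-\rho_1,\rho_1]$ (using $1$-periodicity) gives a uniform lower bound on the "injectivity radius" of $B_\ga$: there is $\delta>0$ such that $B_\ga(t_1,\theta_1)=B_\ga(t_2,\theta_2)$ forces either $(t_1,\theta_1)\equiv(t_2,\theta_2)$ or the two points to be far apart in parameter space. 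Since $B_\eta$ depends continuously on $\eta$ (in the $C^0$ sense, which suffices), any $\eta$ close enough to $\ga$ inherits injectivity: a would-be self-intersection of $B_\eta$ would, by an equicontinuity/compactness argument, produce a near-self-intersection of $B_\ga$ at parameter points that are neither close nor of the forbidden boundary type, contradicting simplicity of $B_\ga$. I would phrase this carefully using the characterization of self-intersections in (\ref{L:selfintersection}) to rule out the "boundary tangency" degeneracies.

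\textbf{Openness of the complement.} This is the step I expect to be the main obstacle, because "$B_\ga$ not simple" is not obviously an open condition — a transverse double point persists under perturbation, but a glancing (tangential) self-contact need not. The way around this is to first promote non-simplicity to something robust. By (\ref{L:selfintersection}), if $B_\ga$ fails to be simple but is \emph{quasi}-simple, then all self-intersections are points of self-tangency of the boundary curves $\be_\pm$, and moreover $\ta(t_2)=-\ta(t_1)$ there. I would show that such a tangential self-contact of a boundary curve, together with the constraint coming from the curvature bound $|\ka|\leq\ka_1$ (which controls how fast the boundary curves can peel apart) and the antipodal symmetry (\ref{E:antipodal}), in fact forces the boundary curve to cross itself transversally at a nearby parameter, or else produces a genuinely persistent obstruction; this is where the hypothesis $\ka_1\leq\sqrt3$ ($\rho_1\geq\pi/6$) is used a second time, to guarantee the band is "wide enough" that a self-contact cannot be an isolated tangency. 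Concretely, I expect to show: if $\ga\notin\sr A_0$, then either $B_\ga$ is not quasi-simple (and non-quasi-simplicity, being an interior-overlap phenomenon of an immersion, is open by the same equicontinuity argument as above, now applied to an open rectangle), or $B_\ga$ is quasi-simple with a boundary self-tangency, in which case I produce an open neighborhood of $\ga$ in $\sr A$ all of whose members still fail to be simple — by tracking the two strands of the boundary curve through the contact point and using Gauss--Bonnet / length estimates (as in (\ref{L:bandlength})) to see the topological configuration cannot be undone by a small move. Assembling the two open sets $\sr A_0$ and $\sr A\ssm\sr A_0$ gives the clopen decomposition, and since $\sig_0\in\sr A_0$, $\sig_2\notin\sr A_0$, both sets are nonempty, completing the proof.
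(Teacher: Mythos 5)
Your overall clopen strategy is sound, and your argument for the openness of $\sr A_0$ itself matches the paper's: the first half of the proof of (\ref{L:keepitsimple}) establishes exactly the stability of simplicity you describe, via the immersion property of $(s,z,\theta)\mapsto(\bar B_s(z,\theta),s)$ from (\ref{L:band1}) and a compactness/convergent-subsequence argument. Note, though, that the paper never proves openness of the complement directly: it proves the stronger statement (\ref{L:keepitsimple}) that simplicity of the band is preserved along any \emph{path} in $\sr A$, so that $\sr A_0$ and its complement are each unions of connected components of the locally path-connected space $\sr A$, and both conclusions follow at once. In that path-based setup one only ever has to analyze the band at the \emph{first} parameter where simplicity fails, which is automatically quasi-simple.

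The genuine gap is in your case (b), which you correctly identify as the crux: a band that is quasi-simple but not simple, i.e.\ exhibits a tangential self-contact of a boundary curve as in (\ref{L:selfintersection}). You propose to show that such a configuration is \emph{persistent} under perturbation, but you give no argument beyond an appeal to ``tracking strands'' and ``Gauss--Bonnet / length estimates.'' The paper's resolution ((\ref{L:keepitsimple2})) is different and is the real content of the proposition: such a configuration is \emph{impossible} — for $\ga\in\sr A$, quasi-simple implies simple. The proof is a specific global argument: a self-tangency $B_\ga(t_1,\theta)=B_\ga(t_2,\theta)$, $\theta=\pm\tfrac{\pi}{6}$, forces $\Ga_{t_1}=\Ga_{t_2}=C$ for a single great circle $C$; the crossing-interval machinery ((\ref{L:crossing})--(\ref{L:crossing3})), together with the antipodal symmetry \eqref{E:antipodal}, then shows $B_\ga$ has exactly six crossing intervals with respect to $C$, all degenerate, whose images are arcs of length $\tfrac{\pi}{3}$ exactly filling out $C$ — this is where $\ka_1\leq\sqrt3$ (band half-width $\geq\tfrac{\pi}{6}$) actually enters, not in the trivial verification that $\sig_0\in\sr A_0$. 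Comparing radii of curvature of the two boundary strands at each of the resulting six tangency points yields $\rho_j+\rho_{j+1}\geq\tfrac{4\pi}{3}$ pairwise, hence $\sum_{j=1}^{6}\rho_j\geq 4\pi$, while $\ga(t+\tfrac12)=-\ga(t)$ forces $\sum_{j=1}^{6}\rho_j=3\pi$, a contradiction. Once this is in hand your case (b) is vacuous, the complement of $\sr A_0$ coincides with the set of non-quasi-simple bands, and your case (a) openness argument (which is fine) finishes the job; without it, your proof of openness of the complement does not close.
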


To prove this result we will need several lemmas.

\begin{defna}\label{D:crossing}
Let $\ga\in \sr A$, let $B_\ga\colon \R\times [-\frac{\pi}{6},\frac{\pi}{6}]\to \Ss^2$ be its band and let $C\subs \Ss^2$ be a great circle. We shall say that $[\tau_1,\tau_2]\subs \R$ is a \tdef{crossing interval} of $B_\ga$ with respect to $C$ if:
\begin{enumerate}
	\item [(i)] $B_\ga\big(\se{\tau_1}\times \big[{-}\frac{\pi}{6},\frac{\pi}{6}\big]\big)$ is contained in a closed disk bounded by $C$;
	\item [(ii)] $B_\ga\big(\se{\tau_2}\times \big[{-}\frac{\pi}{6},\frac{\pi}{6}\big]\big)$ is contained in the other closed disk bounded by $C$;
	\item [(iii)] $B_\ga\big(\se{t}\times \big[{-}\frac{\pi}{6},\frac{\pi}{6}\big]\big)$ is not contained in either of the closed disks bounded by $C$ for $t\in (\tau_1,\tau_2)$.
\end{enumerate}
\end{defna}
Thus, $[\tau_1,\tau_2]$ is a crossing interval if it is a minimal interval during which the band passes from one side of $C$ to the other. In view of the 1-periodicity of $B_\ga$ in $t$, we shall identify two crossing intervals which differ by a translation by an integer.

\begin{lemmaa}\label{L:crossing}
	Let $\ga\in \sr A$, let $C\subs \Ss^2$ be a great circle and $[\tau_1,\tau_2]$ a crossing interval of $B_\ga$. Then:
	\begin{enumerate}
		\item $B_\ga(t+\frac{1}{2},\theta)=-B_\ga(t,-\theta)$ for all $t\in \R$, $\theta \in [-\frac{\pi}{6},\frac{\pi}{6}]$. In particular, the images of $\be_+$ and $\be_-$ are antipodal sets.
		\item $[\tau_1+\frac{1}{2},\tau_2+\frac{1}{2}]$ is also a crossing interval.
		\item Two crossing intervals are either equal or have disjoint interiors.
		\item $\tau_2-\tau_1\leq \frac{1}{2}$.
	\end{enumerate}
\end{lemmaa}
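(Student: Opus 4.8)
\textbf{Proof proposal for (\ref{L:crossing}).}

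The plan is to verify the four assertions in order, each following quickly from the definitions and from facts already in hand. For part (1), the identity $B_\ga(t+\tfrac12,\theta)=-B_\ga(t,-\theta)$ is a direct computation: by (\ref{E:antipodal}) we have $\ga(t+\tfrac12)=-\ga(t)$, and differentiating gives $\ta(t+\tfrac12)=-\ta(t)$ (the sign of the derivative is unchanged because the parameter shift is orientation-preserving), hence $\no(t+\tfrac12)=\ga(t+\tfrac12)\times\ta(t+\tfrac12)=(-\ga(t))\times(-\ta(t))=\no(t)$. Substituting into the defining formula $B_\ga(t,\theta)=\cos\theta\,\ga(t)+\sin\theta\,\no(t)$ yields
\begin{equation*}
	B_\ga\big(t+\tfrac12,\theta\big)=\cos\theta\,(-\ga(t))+\sin\theta\,\no(t)=-\big(\cos\theta\,\ga(t)-\sin\theta\,\no(t)\big)=-B_\ga(t,-\theta).
\end{equation*}
Taking $\theta=\rho_2$ (resp.\ $\theta=\rho_1-\pi$) and using that $-\rho_2$ and $\rho_1-\pi$ are the two endpoints of $[-\tfrac{\pi}{6},\tfrac{\pi}{6}]$ when $\rho_1-\pi=-\rho_2$ (i.e.\ in the symmetric-interval normalization with $\ka_1\le\sqrt3$), this says precisely that $\be_+(t+\tfrac12)=-\be_-(t)$, so the images of $\be_\pm$ are antipodal sets.

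Part (2) is then immediate: the antipodal map sends each closed disk bounded by $C$ to the other closed disk bounded by $C$ (since $C$ is a great circle, $-C=C$). By part (1), $B_\ga\big(\{\tau_i+\tfrac12\}\times[-\tfrac{\pi}{6},\tfrac{\pi}{6}\,]\big)=-B_\ga\big(\{\tau_i\}\times[-\tfrac{\pi}{6},\tfrac{\pi}{6}\,]\big)$, so conditions (i)--(iii) of (\ref{D:crossing}) for $[\tau_1,\tau_2]$ transform into the same conditions for $[\tau_1+\tfrac12,\tau_2+\tfrac12]$ with the two disks interchanged, which is again the definition of a crossing interval. Part (3) is a general fact about minimal intervals of this kind: if $[\tau_1,\tau_2]$ and $[\tau_1',\tau_2']$ are crossing intervals whose interiors meet but which are not equal, then one endpoint of one lies in the open interior of the other; but at that endpoint the band segment $B_\ga(\{t\}\times[-\tfrac{\pi}{6},\tfrac{\pi}{6}\,])$ is by (i)--(ii) contained in one of the closed disks, contradicting (iii) for the other interval. (One checks the handful of cases according to which endpoint lies inside which interval; all are ruled out the same way.)

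The main point is part (4), the bound $\tau_2-\tau_1\le\tfrac12$. Here is where I expect the real work. Suppose for contradiction $\tau_2-\tau_1>\tfrac12$. Consider the interval $[\tau_1,\tau_1+\tfrac12]$, which is strictly contained in $(\tau_1,\tau_2)$ together with its left endpoint. By part (2), $[\tau_1+\tfrac12,\tau_2+\tfrac12]$ is a crossing interval, and combining with $[\tau_1,\tau_2]$ one sees that the band segment at $\tau_1+\tfrac12$ lies in the closed disk \emph{opposite} to the one containing the segment at $\tau_1$ — this is exactly assertion (i) versus (ii), since $B_\ga(\{\tau_1+\tfrac12\}\times\cdots)=-B_\ga(\{\tau_1\}\times\cdots)$ and the antipode of a closed disk bounded by $C$ is the complementary closed disk. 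Now build a path $\al$ going along the segment $\theta\mapsto B_\ga(\tau_1,\theta)$ from $\be_-(\tau_1)$ to $\be_+(\tau_1)$: by (\ref{L:bandlength}) this minimal crossing path of the band has length exactly $\pi-(\rho_1-\rho_2)$, which in the present normalization is $\pi-2\rho_1\ge\pi-\tfrac{\pi}{3}=\tfrac{2\pi}{3}$. The idea is that such a crossing segment, starting in one closed hemisphere-disk and ending in the opposite one, must meet $C$; since the corresponding segment at $\tau_1$ lies entirely in one closed disk (by (i)), while the segment at $\tau_1+\tfrac12$ lies entirely in the other (just shown), and since these two segments are antipodal, one derives a contradiction with the fact that a set and its antipode that both fit on one side of a great circle can only do so if they both meet $C$ — but then (i)/(ii) force each of $\tau_1,\tau_1+\tfrac12$ to actually be an interior point violating (iii), or else $\tau_1+\tfrac12\ge\tau_2$, i.e.\ $\tau_2-\tau_1\le\tfrac12$. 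The cleanest route is probably: the segment at $\tau_1$ is in the closed disk $\bar D$; its antipode, the segment at $\tau_1+\tfrac12$, is in $\Ss^2\ssm D\subseteq \bar D'$; since $\tau_1+\tfrac12\in(\tau_1,\tau_2)$ would force the segment at $\tau_1+\tfrac12$ to be contained in neither closed disk by (iii), we must have $\tau_1+\tfrac12\notin(\tau_1,\tau_2)$, hence $\tau_1+\tfrac12\ge\tau_2$. I would write this last step carefully, watching the boundary cases where a band segment is tangent to $C$.
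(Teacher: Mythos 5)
Your proof is correct and follows essentially the same route as the paper: (1) by direct computation from $\ga(t+\tfrac12)=-\ga(t)$, (2) from the fact that the antipodal map swaps the two disks bounded by a great circle, (3) from minimality, and (4) by locating $\tau_1+\tfrac12$ inside $(\tau_1,\tau_2)$ and contradicting the definition of a crossing interval. The only remark worth making is that the detour through (\ref{L:bandlength}) in part (4) is both unnecessary and unavailable (that lemma assumes $B_\ga$ quasi-simple, which is not a hypothesis here); the "cleanest route" you settle on at the end — the segment at $\tau_1+\tfrac12$ is the antipode of the one at $\tau_1$, hence lies in a closed disk, contradicting condition (iii) if $\tau_1+\tfrac12<\tau_2$ — is exactly right and is all that is needed.
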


\begin{proof} Part (a) follows from definition (\ref{D:band}) and the relation $\ga(t+\frac{1}{2})=-\ga(t)$, which is valid for any $\ga\in \sr A$. 

Since $C$ is a great circle, the two disks bounded by $C$ are antipodal sets. Together with (a), this implies that $[\tau_1+ \frac{1}{2},\tau_2+ \frac{1}{2}]$ is a crossing interval if $[\tau_1,\tau_2]$ is, and proves (b).

Part (c) is an immediate consequence of definition (\ref{D:crossing}). 

Part (d) follows from (b) and (c): If $\tau_2-\tau_1> \frac{1}{2}$, then $(\tau_1,\tau_2)\cap (\tau_1+\frac{1}{2},\tau_2+\frac{1}{2})\neq \emptyset$.
\end{proof}

\begin{lemmaa}\label{L:crossing2}
		Let $\ga\in \sr A$, let $C\subs \Ss^2$ be a great circle and let $[\tau_1,\tau_2]$ be a crossing interval of $B_\ga$. Then the following conditions are equivalent:
	\begin{enumerate}
		\item [(i)] $C\cap B_\ga\big(\se{t}\times [-\frac{\pi}{6},\frac{\pi}{6}]\big)$ consists of more than one point for some $t\in [\tau_1,\tau_2]$.
		\item [(ii)] $B_\ga\big(\se{t}\times [-\frac{\pi}{6},\frac{\pi}{6}]\big)$ is completely contained in $C$ for some $t\in [\tau_1,\tau_2]$.
		\item [(iii)] $\ga(t)\in C$ and $\dot{\ga}(t)$ is orthogonal to $C$ for some $t\in [\tau_1,\tau_2]$.
		\item [(iv)] $B_\ga(t,\theta)\in C$ and $\frac{\bd B_\ga}{\bd t}(t,\theta)$ is orthogonal to $C$ for some $t\in [\tau_1,\tau_2]$ and all $\theta\in [-\frac{\pi}{6},\frac{\pi}{6}]$.
		\item [(v)] $\tau_1=\tau_2$.
	\end{enumerate}
\end{lemmaa}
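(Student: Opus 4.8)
\textbf{Proof plan for (\ref{L:crossing2}).}
The strategy is to prove the chain of implications (v)$\Rightarrow$(iv)$\Rightarrow$(iii)$\Rightarrow$(ii)$\Rightarrow$(i)$\Rightarrow$(v), which will establish that all five conditions are equivalent. The geometric picture to keep in mind is that each ``fiber'' $B_\ga(\se{t}\times [-\frac{\pi}{6},\frac{\pi}{6}])$ is a geodesic arc of length $\frac{\pi}{3}<\pi$ (by (\ref{L:band1}\?(b)), $\frac{\bd B_\ga}{\bd\theta}$ has unit norm and $\theta$ ranges over an interval of length $\frac{\pi}{3}$), so such an arc meets a great circle $C$ either in zero points, in exactly one point (a transverse or tangential crossing), or is entirely contained in $C$; it cannot meet $C$ in two or more isolated points because a great circle and a geodesic arc shorter than $\pi$ cannot cross twice. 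This last elementary fact is what makes (i)$\Leftrightarrow$(ii) immediate.

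First I would dispose of (v)$\Rightarrow$(iv): if $\tau_1=\tau_2$, then conditions (i) and (ii) of (\ref{D:crossing}) say that the single fiber over $\tau_1$ lies simultaneously in both closed disks bounded by $C$, hence in $C$ itself, so $B_\ga(\tau_1,\theta)\in C$ for all $\theta$; differentiating, $\frac{\bd B_\ga}{\bd t}(\tau_1,\theta)$ is tangent to $C$ at each point, but since the fiber itself is an arc of $C$ spanning a geodesic direction, and $\frac{\bd B_\ga}{\bd t}\perp\frac{\bd B_\ga}{\bd\theta}$ by (\ref{L:band1}\?(b)), the $t$-derivative must be the normal direction to $C$ — I would spell this out using that on $\Ss^2$ the tangent to a great circle and its conormal span the tangent plane. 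Then (iv)$\Rightarrow$(iii) is just the special case $\theta=0$ (where $B_\ga(t,0)=\ga(t)$ and $\frac{\bd B_\ga}{\bd t}(t,0)=\abs{\dot\ga(t)}\ta(t)$ by \eqref{E:partialt1}). For (iii)$\Rightarrow$(ii): if $\ga(t)\in C$ with $\dot\ga(t)\perp C$, then $\ta(t)$ is the unit conormal to $C$ at $\ga(t)$; the fiber through $\ga(t)$ is the geodesic arc in the direction $\no(t)=\ga(t)\times\ta(t)$, which is precisely the tangent direction to $C$ at $\ga(t)$, so the whole fiber runs along $C$. And (ii)$\Rightarrow$(i) is trivial since an arc has more than one point. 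It remains to close the loop with (i)$\Rightarrow$(v): suppose $\tau_1<\tau_2$ and, for contradiction, that some fiber over $t_0\in[\tau_1,\tau_2]$ meets $C$ in more than one point; by the two-points-force-containment observation the fiber lies in $C$, so $B_\ga(\se{t_0}\times[-\frac{\pi}{6},\frac{\pi}{6}])\subs C$, which means it is contained in \emph{both} closed disks bounded by $C$; if $t_0\in(\tau_1,\tau_2)$ this directly contradicts (\ref{D:crossing}\?(iii)), and if $t_0=\tau_1$ or $t_0=\tau_2$ one checks it is compatible with (i)/(ii) of (\ref{D:crossing}) but then a continuity/minimality argument on the crossing interval forces $\tau_1=\tau_2$ — I would argue that once an entire fiber lies in $C$, the fibers immediately before and after stay on definite sides only if the fiber is an isolated crossing, but here the fiber being in $C$ is exactly the degenerate case, contradicting $\tau_1<\tau_2$.

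The main obstacle I anticipate is the endpoint case in (i)$\Rightarrow$(v): when the offending fiber sits over $\tau_1$ or $\tau_2$, one must rule out that $[\tau_1,\tau_2]$ is a genuine nondegenerate crossing interval. The clean way is probably to observe that, since $B_\ga$ is an immersion (\ref{L:band1}\?(a)) and the fiber over $\tau_1$ is contained in $C$, for $t$ slightly larger than $\tau_1$ the fiber over $t$ must stick out on \emph{both} sides of $C$ (because $\frac{\bd B_\ga}{\bd t}$ is transverse to $C$ along a fiber lying in $C$, by the computation in (v)$\Rightarrow$(iv) run in reverse), so condition (\ref{D:crossing}\?(i)) — the fiber over $\tau_1$ lying in a \emph{closed disk} — together with minimality forces there to be no room between $\tau_1$ and the first genuine crossing, i.e.\ $\tau_1=\tau_2$. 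I would present this transversality computation carefully, as it is the one genuinely non-formal point; everything else is bookkeeping with the definitions and the length-$<\pi$ geodesic fact.
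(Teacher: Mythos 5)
Your cycle of implications is the reverse of the paper's (the paper proves (i)$\Rightarrow$(ii)$\Rightarrow$(iii)$\Rightarrow$(iv)$\Rightarrow$(v)$\Rightarrow$(i)), but that is immaterial, and the steps (v)$\Rightarrow$(iv)$\Rightarrow$(iii)$\Rightarrow$(ii)$\Rightarrow$(i) are all sound and use the same facts as the paper: each fiber is half of the great circle $\Ga_t$, two great circles sharing two non-antipodal points (or tangent at a point) coincide, and $\frac{\bd B_\ga}{\bd t}(t,\theta)$ is a positive multiple of $\ta(t)$ by \eqref{E:partialt2}.

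The problem is in (i)$\Rightarrow$(v), precisely at the point you flagged as the ``genuinely non-formal'' one, and your transversality claim there is backwards. Since $\frac{\bd B_\ga}{\bd t}(\tau_1,\theta)$ is a \emph{positive} multiple of the single vector $\ta(\tau_1)$ for every $\theta$ (again \eqref{E:partialt2}), all points of the fiber over $\tau_1$ move off $C$ in the \emph{same} normal direction; hence for $u$ slightly larger than $\tau_1$ the fiber $B_\ga\big(\se{u}\times[-\frac{\pi}{6},\frac{\pi}{6}]\big)$ is disjoint from $C$ and lies entirely in \emph{one} open disk bounded by $C$ — it does not ``stick out on both sides.'' Worse, if it did stick out on both sides, no contradiction would follow: that is exactly the behaviour condition (iii) of (\ref{D:crossing}) \emph{requires} of the fibers over the open interval $(\tau_1,\tau_2)$, so your purported contradiction with (\ref{D:crossing}\?(i)) ``together with minimality'' does not close. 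The correct contradiction is the opposite one: for $u\in(\tau_1,\tau_1+\de)$ the fiber over $u$ \emph{is} contained in a closed disk bounded by $C$, which violates (\ref{D:crossing}\?(iii)) unless $(\tau_1,\tau_2)\cap(\tau_1,\tau_1+\de)=\emptyset$, i.e.\ unless $\tau_1=\tau_2$. This is how the paper's (iv)$\Rightarrow$(v) step works (it exhibits $\de>0$ with $B_\ga(u,\theta)\nin C$ for $0<\abs{u-t}<\de$ and concludes $\tau_1=t=\tau_2$). Your interior case ($t_0\in(\tau_1,\tau_2)$) is handled correctly; only the endpoint case needs this repair.
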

\begin{proof} 
Suppose that (i) holds, and let $\Ga_t$ be parametrized by:
\begin{equation}\label{E:c'}
\qquad	u \mapsto \cos u\? \ga(t)+\sin u\? \no(t)\quad (u \in [{-}\pi,\pi)).
\end{equation}
By hypothesis, the great circles $C$ and $\Ga_t$ have at least two non-antipodal points in common. Hence, they must coincide, and (ii) holds.

If (ii) holds then $\frac{\bd B_\ga}{\bd \theta}(t,0)$ is tangent to $C$. Hence, by (\ref{L:band1}\?(b)), $\dot\ga(t)=\frac{\bd B_\ga}{\bd t}(t,0)$ is orthogonal to $C$, and (iii) holds.

Suppose that (iii) holds. Then $\frac{\bd B_\ga}{\bd \theta}(t,0)$ is tangent to $C$, which means that $C$ and the circle $\Ga_t$ defined in \eqref{E:c'} are two great circles which are tangent at $\ga(t)$. Therefore $C=\Ga_t$, and $B_\ga(\se{t}\times [-\frac{\pi}{6},\frac{\pi}{6}])\subs C$. Since $\frac{\bd B_\ga}{\bd t}(t,\theta)$ is a positive multiple of $\dot\ga(t)$, it, too, is orthogonal to $C$, for every $\theta\in [-\frac{\pi}{6},\frac{\pi}{6}]$. 

Suppose now that (iv) holds. Then there exists $\de>0$ such that $B_\ga(u,\theta)\nin C$  for $0<\abs{u-t}<\de$ and all $\theta\in [-\frac{\pi}{6},\frac{\pi}{6}]$. This implies that $\tau_1=t=\tau_2$. 

Finally, suppose (v) holds and let $t=\tau_1=\tau_2$. Then, according to definition (\ref{D:crossing}), $B_\ga\big(\se{t}\times [-\frac{\pi}{6},\frac{\pi}{6}]\big)$ must be contained in both of the closed disks bounded by $C$, that is, it must be contained in $C$, whence (i) holds.
\end{proof}

\begin{lemmaa}\label{L:crossing3}
	Let $\ga\in \sr A$, with $B_\ga$ quasi-simple. Let $C\subs \Ss^2$ be a great circle and $[\tau_1,\tau_2]$ a crossing interval of $B_\ga$ (with respect to $C$).  Then $C\cap B_\ga\big([\tau_1,\tau_2]\times [-\frac{\pi}{6},\frac{\pi}{6}])$ has total length $L\geq \frac{\pi}{3}$. Moreover, equality holds if and only if $\tau_1=\tau_2$.
\end{lemmaa}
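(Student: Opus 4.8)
The strategy is to show that $C$ meets the band ``at least once per crossing interval'' and that each such contribution has length $\geq \frac{\pi}{3}$, with equality exactly in the degenerate case $\tau_1=\tau_2$. The crucial geometric input is Lemma~(\ref{L:bandlength}): a curve that crosses the band from the image of $\be_-$ to the image of $\be_+$ has length at least $\pi-(\rho_1-\rho_2)$, which in the present setting (with $\ka_1=\sqrt 3$ as the extreme case, so $\rho_1=\frac{2\pi}{3}$ and $\rho_2=\frac{\pi}{3}$) is exactly $\pi - \frac{\pi}{3} = \frac{2\pi}{3}$; and for general $0<\ka_1\leq\sqrt3$ it is $\geq \frac{2\pi}{3}$. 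Since we only look at the sub-band $\R\times[-\frac{\pi}{6},\frac{\pi}{6}]$ rather than the full band $\R\times[-\rho_1,\rho_1]$, I will first note that Lemma~(\ref{L:bandlength}) applies verbatim to this sub-band with $\rho_1-\pi$ and $\rho_2$ replaced by $-\frac{\pi}{6}$ and $\frac{\pi}{6}$, giving the bound $\pi - \bigl(\frac{\pi}{6}-(-\frac{\pi}{6})\bigr) = \frac{2\pi}{3}$ for a curve crossing this sub-band.

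\textbf{Main argument.}
First dispose of the case $\tau_1=\tau_2$: by Lemma~(\ref{L:crossing2}), conditions (v)$\Leftrightarrow$(ii) say that the whole meridian segment $B_\ga(\{t\}\times[-\frac{\pi}{6},\frac{\pi}{6}])$ lies in $C$, so $C\cap B_\ga([\tau_1,\tau_2]\times[-\frac{\pi}{6},\frac{\pi}{6}])$ contains this segment, which has length exactly $\frac{\pi}{3}$ (the $\theta$-parameter has norm-one speed, by Lemma~(\ref{L:band1}\,(b)), over an interval of length $\frac{\pi}{3}$); and I will argue it equals this segment, since for $t\ne\tau_1$ in the crossing interval the meridian is not contained in $C$ and meets $C$ in at most one point (again Lemma~(\ref{L:crossing2}), (i)$\Leftrightarrow$(ii)), and those single points, as $t$ ranges, together form a set of $\mathcal H^1$-measure zero. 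This yields $L=\frac{\pi}{3}$ in the degenerate case. Now assume $\tau_1<\tau_2$. Consider the closed disks $\overline D_1,\overline D_2$ bounded by $C$; by (\ref{D:crossing}\,(i),(ii)) the meridian over $\tau_1$ lies in $\overline D_1$ and the meridian over $\tau_2$ lies in $\overline D_2$. I will pick a point $p_1$ on the $\tau_1$-meridian and $p_2$ on the $\tau_2$-meridian and observe that the set $X=C\cap B_\ga([\tau_1,\tau_2]\times[-\frac{\pi}{6},\frac{\pi}{6}])$, a compact subset of the great circle $C$, must ``disconnect'' the band-rectangle $R=B_\ga([\tau_1,\tau_2]\times[-\frac{\pi}{6},\frac{\pi}{6}])$ into (at least) a $\overline D_1$-part and a $\overline D_2$-part: any path in $R$ from $p_1$ to $p_2$ must meet $X$. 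Using the diffeomorphism $\bar B_\ga$ of (\ref{L:band1}\,(c)) (valid since $B_\ga$ is quasi-simple), $R$ is (the image of) an embedded rectangle away from its $\be_\pm$-edges, so I can find a path in $R$ from a point of $\be_-$ to a point of $\be_+$ that is forced to pass through $X$; the point is that $X$ must contain an arc of $C$ joining a point of $\be_-(\R)$ to a point of $\be_+(\R)$ — more precisely, $X$ separated from the two meridian ends must contain a connected piece crossing from the $\be_-$-side to the $\be_+$-side. Then the bi-Lipschitz/length estimate of Lemma~(\ref{L:bandlength}), applied to a subarc of $C$ running from $\be_-(\R)$ to $\be_+(\R)$ inside the band, forces that subarc to have length $\geq \pi-(\rho_1-\rho_2)\geq \frac{2\pi}{3}>\frac{\pi}{3}$, and since this subarc is contained in $X$ we get $L\geq \frac{2\pi}{3}>\frac{\pi}{3}$, so the inequality is strict precisely when $\tau_1=\tau_2$.

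\textbf{Main obstacle.}
The delicate step is the topological claim that $X=C\cap R$ must contain an arc of $C$ joining $\be_-(\R)$ to $\be_+(\R)$ — i.e., that the crossing of the band by $C$ genuinely ``goes all the way across'' rather than, say, nicking a corner. The crossing-interval definition (\ref{D:crossing}) controls the meridians over the endpoints $\tau_1,\tau_2$ and says no intermediate meridian lies entirely on one side; but I must convert this into a statement about $C$ itself. I expect the cleanest route is: reparametrize $C$ and track, along $C$, the function measuring on which side of the (quasi-simple) band each point lies — but a more robust approach is to work inside $R$ directly. Since $B_\ga$ restricted to $(\tau_1,\tau_2)\times(-\frac{\pi}{6},\frac{\pi}{6})$ is an embedding (quasi-simplicity plus (\ref{L:band1}\,(c)), noting $\tau_2-\tau_1\le\frac12$ by (\ref{L:crossing}\,(d)) so no two of these parameters are identified), the preimage $B_\ga^{-1}(C)\cap\bigl((\tau_1,\tau_2)\times(-\frac{\pi}{6},\frac{\pi}{6})\bigr)$ is a $1$-manifold in the open rectangle; its closure must contain a component joining the top edge $\theta=\frac{\pi}{6}$ to the bottom edge $\theta=-\frac{\pi}{6}$, because otherwise $C$ would fail to separate the $\tau_1$-meridian end from the $\tau_2$-meridian end within $R$, contradicting (\ref{D:crossing}). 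The image of that component under $B_\ga$ is an arc of $C$ crossing the band, and I apply Lemma~(\ref{L:bandlength}) to it. I will present this argument carefully, citing the Jordan-curve/separation facts already assembled in (\ref{L:separates}) and (\ref{T:Schoenflies}), and handle the boundary behaviour (the component's closure reaching the edges $\theta=\pm\frac{\pi}{6}$, hence the images $\be_\pm(\R)$) by the same continuity argument used at the end of the proof of (\ref{L:bandlength}).
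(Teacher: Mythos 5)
Your overall strategy -- reduce to the length estimate of (\ref{L:bandlength}) applied to an arc of $C$ that crosses the sub-band $\R\times[-\frac{\pi}{6},\frac{\pi}{6}]$ -- is the same as the paper's, but there is a genuine error in the constant, and the strictness claim collapses with it. The bound in (\ref{L:bandlength}) is $\pi-(\rho_1-\rho_2)=\rho_2-(\rho_1-\pi)$, i.e.\ it equals the \emph{width} of the parameter interval of the band. For the sub-band $[-\frac{\pi}{6},\frac{\pi}{6}]$ that width is $\frac{\pi}{3}$, not $\frac{2\pi}{3}$: substituting $\rho_1-\pi=-\frac{\pi}{6}$, $\rho_2=\frac{\pi}{6}$ gives $\rho_1-\rho_2=\frac{2\pi}{3}$ and hence $\pi-(\rho_1-\rho_2)=\frac{\pi}{3}$ (you computed $\pi-\text{width}$ instead of $\text{width}$; note also $\arccot(-\sqrt3)=\frac{5\pi}{6}$, not $\frac{2\pi}{3}$). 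A crossing arc of length $\geq\frac{2\pi}{3}$ is simply false -- a slight perturbation of a great circle meeting the band orthogonally crosses it in an arc of length just above $\frac{\pi}{3}$. With the correct constant you only get $L\geq\frac{\pi}{3}$ in the case $\tau_1<\tau_2$, so strictness does \emph{not} come for free; you must invoke the equality case of (\ref{L:bandlength}): equality would force the crossing arc to be a reparametrization of a meridian $\theta\mapsto B_\ga(t_0,\theta)$, hence that whole meridian would lie in the great circle $C$, and then (\ref{L:crossing2}), (ii)$\Leftrightarrow$(v), forces $\tau_1=\tau_2$, a contradiction. This rigidity step is the actual content of the ``equality iff $\tau_1=\tau_2$'' clause and is missing from your argument.

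On the route to producing the crossing arc: the paper avoids your separation-theoretic ``main obstacle'' entirely. Since $\tau_1<\tau_2$, (\ref{L:crossing2}), (i)$\Leftrightarrow$(v), shows each meridian over $t\in[\tau_1,\tau_2]$ meets $C$ in \emph{exactly one} point $B_\ga(t,\theta(t))$, and (iv)$\Leftrightarrow$(v) shows $\frac{\bd B_\ga}{\bd t}$ is never orthogonal to $C$ there, so the implicit function theorem makes $t\mapsto\theta(t)$ a $C^1$ graph; a short endpoint analysis gives $\theta(\tau_1)=-\theta(\tau_2)=\pm\frac{\pi}{6}$, and the intersection set is precisely this graph curve. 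Your alternative -- that the closed set $C\cap R$ separating the two meridian ends of the rectangle must contain a connected component joining the edges $\theta=\pm\frac{\pi}{6}$ -- is true but rests on a nontrivial separation lemma that you only gesture at; the uniqueness-of-$\theta(t)$ observation makes it unnecessary. (Minor point: in the degenerate case $\tau_1=\tau_2$ the rectangle is a single meridian, so your discussion of ``other $t$ in the crossing interval'' is vacuous.)
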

\begin{proof}
	If $\tau_1=\tau_2$ then the equivalence (ii)$\dar$(v) in (\ref{L:crossing2}) shows that $L=\frac{\pi}{3}$. Assume now that $\tau_1<\tau_2$. Then, from the equivalence (i)$\dar$(v) in (\ref{L:crossing2}), we deduce that for each $t\in [\tau_1,\tau_2]$ there exists exactly one $\theta(t)\in [-\frac{\pi}{6},\frac{\pi}{6}]$ such that $B_\ga(t,\theta(t))\in C$. Again by (\ref{L:crossing2}), $\frac{\bd B_\ga}{\bd t}(t,\theta(t))$ is not orthogonal to $C$ for any $t\in [\tau_1,\tau_2]$. Hence, the implicit function theorem guarantees that $t\mapsto \theta(t)$ is a $C^1$ map, and $\al(t)=B_\ga(t,\theta(t))$ defines a regular curve $\al\colon [\tau_1,\tau_2]\to \Ss^2$.
	
	Let $\theta_i=\theta(\tau_i)$, $i=1,2$. We claim first that $\theta_1,\theta_2\in \se{\pm \frac{\pi}{6}}$. Otherwise, $B_\ga(\tau_i\times [-\frac{\pi}{6},\frac{\pi}{6}])$ would contain points on both sides of $C$. Further, we claim that $\theta_2=-\theta_1$. Otherwise, say, $\theta_1=\theta_2=-\frac{\pi}{6}$. If  $\theta(t)\neq \frac{\pi}{6}$ for all $t\in [\tau_1,\tau_2]$, then the curve $t\mapsto B_\ga(t,\frac{\pi}{6})$ would not cross $C$ in $[\tau_1,\tau_2]$, a contradiction. Let $\bar\tau_2=\inf\set{t\in [\tau_1,\tau_2]}{\theta(t)=\frac{\pi}{6}}$. Then $[\tau_1,\bar\tau_2]\subs [\tau_1,\tau_2]$ is a crossing interval, hence we must have $\bar\tau_2=\tau_2$ and $\theta_2=\frac{\pi}{6}$, again a contradiction. Therefore $\al\colon [\tau_1,\tau_2] \to \Ss^2$ is a curve satisfying the hypotheses of (\ref{L:bandlength}), so it has length $\geq \frac{\pi}{3}$, and so does $C\cap B_\ga\big([\tau_1,\tau_2]\times [-\frac{\pi}{6},\frac{\pi}{6}])$. The remaining assertion follows from the case of equality in (\ref{L:bandlength}).
\end{proof}

\begin{lemmaa}\label{L:keepitsimple}
	Let $\ga_0,\ga_1\in \sr A$ lie in the same connected component, and suppose that $B_{\ga_0}$ is simple. Then $B_{\ga_1}$ is also simple. 
\end{lemmaa}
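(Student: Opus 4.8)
The statement to be proved is that simplicity of the band $B_\ga$ is a property that is locally constant on $\sr A$, i.e.\ if $\ga_0 \in \sr A_0$ (so $B_{\ga_0}$ simple) and $\ga_1$ is joined to $\ga_0$ by a path in $\sr A$, then $B_{\ga_1}$ is simple. Since a path in $\sr A$ induces a homotopy $s \mapsto \ga_s$ and hence a continuous family of bands $s \mapsto B_{\ga_s}$, the key is to show that the set $\{ s \in [0,1] : B_{\ga_s} \text{ is simple} \}$ is open and closed. The plan is to run the argument through the notion of quasi-simple bands and crossing intervals, using the length estimate (\ref{L:crossing3}): the point is that a non-simple quasi-simple band has a self-tangency on a boundary curve, and near such a self-tangency one can exhibit a great circle $C$ for which two distinct segments $B_\ga(\{t_i\}\times[-\tfrac\pi6,\tfrac\pi6])$ lie (strictly) on opposite sides of $C$, forcing a crossing interval $[\tau_1,\tau_2]$ with $\tau_1 < \tau_2$, and therefore $C \cap B_\ga([\tau_1,\tau_2]\times[-\tfrac\pi6,\tfrac\pi6])$ has length $> \tfrac\pi3$; iterating with $[\tau_1+\tfrac12,\tau_2+\tfrac12]$ (by (\ref{L:crossing}\?(b))) and disjointness (by (\ref{L:crossing}\?(c))) gives total length of $C \cap \Im(B_\ga)$ strictly bigger than $\tfrac{2\pi}{3}$. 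The contradiction will come from bounding the length of $C \cap \Im(B_\ga)$ above: since $\ga \in \sr A \subset \sr L_{-\ka_1}^{+\ka_1}(I)$ with $\ka_1 \le \sqrt 3$, each slice $\Ga_t$ meets $C$ in at most the two arcs $\pm B_\ga(\{t\}\times[-\tfrac\pi6,\tfrac\pi6])$ and $\pm C_\ga(\{t\}\times \cdots)$, so over one period the band can contribute length at most that of the complementary arcs, which for $\rho_1 - \rho_2 = 2\rho_1 \le \tfrac{2\pi}{3}$ is exactly $\le \tfrac{2\pi}{3}$; in the quasi-simple case the covering property (\ref{L:band1}\?(c)) prevents any over-counting.

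\textbf{Openness.} Suppose $B_{\ga_{s_0}}$ is simple. Simplicity means $B_{\ga_{s_0}}$ is injective on $[0,1)\times[\rho_1-\pi,\rho_2]$; since this is a compact domain and $B_{\ga_{s_0}}$ is an immersion by (\ref{L:band1}\?(a)), injectivity on the compact set forces $B_{\ga_{s_0}}$ to be a homeomorphism onto its image with a positive lower bound on the "separation" of far-apart points. A standard compactness argument (the map $(x,y) \mapsto d(B_\ga(x),B_\ga(y))$ on the compact set of pairs at distance $\ge \eps$ in the domain is bounded below, uniformly for $\ga$ near $\ga_{s_0}$ in the $C^r$, hence $C^1$, topology, and locally $B_\ga$ is an embedding because it is an immersion depending continuously on $\ga$) shows that $B_{\ga_s}$ remains injective for $s$ near $s_0$. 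So the set of $s$ with $B_{\ga_s}$ simple is open in $[0,1]$.

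\textbf{Closedness.} Conversely suppose $B_{\ga_s}$ is simple for all $s < s_0$; I must show $B_{\ga_{s_0}}$ is simple. First, $B_{\ga_{s_0}}$ is at least quasi-simple: a self-intersection of $B_{\ga_{s_0}}$ in the open part $[0,1)\times(\rho_1-\pi,\rho_2)$ would, by the immersion and continuity-of-family properties, persist as a genuine self-intersection of $B_{\ga_s}$ for $s$ slightly less than $s_0$ (one can set up a nonvanishing-determinant / transversality argument, or simply observe that the image $\Im(B_{\ga_s})$ varies continuously and a transverse double point is stable), contradicting simplicity of those $B_{\ga_s}$; the only subtlety is the possibility that the limiting self-intersection is a tangency in the interior, but (\ref{L:selfintersection}\?(a)) already rules interior self-intersections out for quasi-simple bands, and the same local analysis (two immersed sheets meeting) shows a limit of simple bands cannot develop a purely-interior tangency without a prior transverse crossing. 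Granting quasi-simplicity of $B_{\ga_{s_0}}$, if it were not simple then by (\ref{L:selfintersection}) it would have a self-tangency of $\be_+$ or $\be_-$, i.e.\ $B_{\ga_{s_0}}(t_1,\rho_2) = B_{\ga_{s_0}}(t_2,\rho_2)$ with $t_1 - t_2 \notin \Z$ and $\ta(t_2) = -\ta(t_1)$ (say, the $\be_+$ case). Take $C = \Ga_{t_1}$, the great circle through this common point with $\frac{\bd B_\ga}{\bd\theta}(t_1,\rho_2)$ tangent to it; by (\ref{L:selfintersection}\?(b)) it is also tangent to the other sheet. One then perturbs $C$ slightly (rotating it) so that the two local sheets of the band at the tangency point lie on strictly opposite closed disks bounded by $C$ in disjoint short $t$-intervals around $t_1$ and $t_2$; this produces at least two crossing intervals, and since crossing intervals come in antipodal pairs (\ref{L:crossing}\?(b)) with disjoint interiors (\ref{L:crossing}\?(c)), we actually get that $C$ has total length of intersection with $\Im(B_{\ga_{s_0}})$ at least $4 \cdot \tfrac\pi3 = \tfrac{4\pi}{3} > \tfrac{2\pi}{3}$ by (\ref{L:crossing3}) — contradicting the geometric upper bound $\tfrac{2\pi}{3}$ on $\mathrm{length}(C \cap \Im(B_{\ga_{s_0}}))$ valid when $\rho_1 - \rho_2 = 2\rho_1 \le \tfrac{2\pi}{3}$, i.e.\ $\ka_1 \le \sqrt 3$. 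Hence $B_{\ga_{s_0}}$ is simple.

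\textbf{Main obstacle.} The routine-but-delicate part is the perturbation-of-the-great-circle step in the closedness argument: I need to turn a self-tangency of the boundary curve into genuine crossing intervals while keeping control of how many antipodal copies of them I get and ensuring the slices stay strictly on opposite sides, so that (\ref{L:crossing3})'s strict inequality applies and the total-length count beats $\tfrac{2\pi}{3}$. The other place requiring care is pinning down the geometric upper bound $\mathrm{length}(C \cap \Im(B_\ga)) \le \tfrac{2\pi}{3}$: it rests on the fact that for $\ga$ with geodesic curvature in $(-\ka_1,\ka_1)$, the slice circle $\Ga_t$ is divided by the images of $\pm B_\ga(t,\cdot)$ and $\pm C_\ga(t,\cdot)$ into four arcs, two of which (the band arcs) have length $\rho_1-\rho_2 = 2\rho_1$ and hence the band occupies at most that fraction of any great circle it meets transversally — together with quasi-simplicity (no over-counting, via the covering property (\ref{L:band1}\?(c))) this yields the bound. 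Once both estimates are in hand the openness/closedness dichotomy closes the proof, and since $\sig_0 \in \sr A_0$ (its band is a simple equatorial strip) while $\sig_2 \notin \sr A_0$ (its band wraps three times, hence is not injective), the component containing $\sig_0$ is exactly $\sr A_0$ and the components of $\sig_0, \sig_2$ are distinct.
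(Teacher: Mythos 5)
Your openness argument and your reduction to the quasi-simple case follow the paper's proof closely and are fine. The gap is in the final and hardest step: ruling out a band that is quasi-simple but not simple (this is the content of the paper's separate lemma (\ref{L:keepitsimple2})). Your contradiction rests on the claimed upper bound $\mathrm{length}\bigl(C\cap \Im(B_\ga)\bigr)\leq \tfrac{2\pi}{3}$, and that bound is false. The estimate ``each slice $\Ga_t$ contributes at most the two band arcs of length $\rho_1-\rho_2$'' controls only the intersection of $C$ with a \emph{single} slice circle $\Ga_t$; the image of the band is a two-dimensional immersed annulus, and the union over all $t$ of the slices can (and here does) cover an arbitrarily large portion of $C$. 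Indeed, in the situation at hand the paper shows that the six degenerate crossing arcs $C_j=B_\ga(\{\tau_j\}\times[-\tfrac{\pi}{6},\tfrac{\pi}{6}])$ \emph{fill out all of} $C$, so the correct upper bound is $2\pi$, not $\tfrac{2\pi}{3}$. Your count of ``at least $4\cdot\tfrac{\pi}{3}$'' therefore produces no contradiction; even a sharpened count gives at most six crossing intervals, exactly saturating the length $2\pi$ of $C$.

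The missing idea is what the paper does after the crossing-interval count: it first upgrades ``at least four'' to ``at least six'' crossing intervals by a direction argument (the third crossing would have to go the wrong way since $\dot\ga(\tau_3)=-\dot\ga(\tau_1)$), concludes there are exactly six, all degenerate, and then derives the contradiction from a completely different source --- a curvature comparison at the six self-tangency points of $\be_+$. At each tangency the outer sheet must curve at least as sharply as the inner one, which translates into $\rho_j+\rho_{j+1}\geq \tfrac{4\pi}{3}$ for the three tangency pairs, hence $\sum_{j=1}^6\rho_j\geq 4\pi$; but the antipodal symmetry $\ga(t+\tfrac12)=-\ga(t)$ forces $\rho_{i+3}=\pi-\rho_i$ and so $\sum_{j=1}^6\rho_j=3\pi$. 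Without some substitute for this radius-of-curvature count, your argument does not close. (Your great-circle perturbation step is also unnecessary: the tangency circle $C=\Ga_{t_1}=\Ga_{t_2}$ itself already yields degenerate crossing intervals by (\ref{L:crossing2}), with no perturbation needed.)
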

This result implies that $\sig_0$ and $\sig_2$ (see eq.~\eqref{E:sigmas}) are not in the same connected component. In particular, the number of components of $\sr A$ is at least 3. More importantly for us, this lemma implies (\ref{L:main}): $\sr A_0$ is a union of connected components, hence $\sr A_0$ is open, and its complement is also a union of connected components, hence $\sr A_0$ is closed. (Here we are using the fact that $\sr A$ is locally path-connected: As explained in (\ref{R:A}), it is homeomorphic to the Hilbert manifold $\sr L_{-\ka_1}^{+\ka_1}(Q_{\mbf{k}})$.)
\begin{proof} Let $\ga_s$, $s\in [0,1]$, be a path joining $\ga_0$ to $\ga_1$ in $\sr A$, and let us denote $B_{\ga_s}$ simply by $B_s$. 

We claim first that if $B_{s_0}$ is simple, then so is $B_s$ for all $s$ sufficiently close to $s_0$. Indeed, to say that $B_s$ is simple is the same as to say that the unique map $\bar{B}_s$ which makes the following diagram commute is injective: 
\begin{equation*}
\xymatrix{
		\R \times [-\frac{\pi}{6},\frac{\pi}{6}] \ar[r]^{\qquad B_s} \ar[d]_{p\times \id}  & \Ss^2 \\
	\Ss^1\times [-\frac{\pi}{6},\frac{\pi}{6}] \ar@{..>}[ur]_{\bar{B}_s} &  
}
\end{equation*}
Here $p(t)=\exp(2\pi it)$. Now define
\begin{equation*}
	f\colon [0,1]\times \Ss^1 \times [-\tfrac{\pi}{6},\tfrac{\pi}{6}] \to \Ss^2\times [0,1],\qquad f(s,z,\theta)=\big(\bar{B}_s(z,\theta),s\big).
\end{equation*}
By (\ref{L:band1}\?(a)), $\bar{B}_{s}$ is an immersion for all $s$, hence so is $f$. Suppose that there exists a sequence $(s_k)$ with $s_k\to  s_0$ and $B_{s_k}$ not simple, and choose $z_k,z_k'\in \Ss^1$, $\theta_k,\theta_k'\in [-\frac{\pi}{6},\frac{\pi}{6}]$ with 
\begin{equation*}
\qquad \quad B_{s_k}(z_k,\theta_k)=B_{s_k}(z_k',\theta_k')\quad\text{and }\quad (z_k,\theta_k)\neq (z_k',\theta_k')\quad\text{for all $k\in \N$.}
\end{equation*}
By passing to a subsequence if necessary, we can assume that $(z_k,\theta_k)\to (z,\theta)$ and $(z_k',\theta_k')\to (z',\theta')$. If $(z,\theta)\neq (z',\theta')$ then $\bar{B}_{s_0}$ would not be injective, and if $(z,\theta)=(z',\theta')$ then $f$ would not be an immersion. Thus, no such sequence $(s_k)$ can exist, and this proves our claim.

Now suppose for the sake of obtaining a contradiction that there exists $s\in [0,1]$ such that $B_s$ is not simple, and let $s_0$ be the infimum of all such $s$. From what we have just proved, we know that $s_0>0$ and $B_{s_0}$ is not simple. We claim that $B_{s_0}$ is quasi-simple. If not, then there exist $z_1,z_2\in \Ss^1$ and $\theta_1,\theta_2\in (-\frac{\pi}{6},\frac{\pi}{6})$ such that 
	\begin{equation*}
	f(s_0,z_1,\theta_1)=f(s_0,z_2,\theta_2)\text{\quad and\quad}(z_1,\theta_1)\neq (z_2,\theta_2).	
	\end{equation*}
Choose $\eps>0$, open sets $U_i\ni z_i$ in $\Ss^1$ and disjoint neighborhoods $V_i\ni (s_0,z_i,\theta_i)$ of the form
\begin{equation*}
\qquad \quad	V_i=(s_0-\eps,s_0]\times U_i\times (\theta_i-\eps,\theta_i+\eps)\qquad (i=1,2)
\end{equation*}
restricted to which $f$ is a diffeomorphism. (The fact that $\theta_i$ belongs to the o\sz p\sz e\sz n interval $(-\frac{\pi}{6},\frac{\pi}{6})$ is essential for the definition of $V_i$.) Then $f(s,z,\theta)\in f(V_1)$ for all $(s,z,\theta)\in V_2$ sufficiently close to $(s_0,z_2,\theta_2)$, since $f(s_0,z_2,\theta_2)\in f(V_1)$.  But this contradicts the fact that $\bar{B}_s$ is injective for all $s<s_0$.

Therefore, $B_{{s_0}}$ must be quasi-simple, but not simple. The following lemma shows that this is impossible, which, in turn, allows us to conclude that $B_{s}$ must be simple for all $s\in [0,1]$.
\end{proof}

\begin{lemmaa}\label{L:keepitsimple2}
	Suppose that $\ga\in \sr A$ and $B_\ga$ is quasi-simple. Then $B_\ga$ is simple.
\end{lemmaa}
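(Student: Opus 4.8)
The plan is to argue by contradiction: suppose $B_\ga$ is quasi-simple but not simple. By (\ref{L:selfintersection}\?(a)), any self-intersection of $B_\ga$ must occur on one of the boundary curves $\be_+$ or $\be_-$, and by (\ref{L:selfintersection}\?(b)) it is a point of self-tangency with the unit tangent vectors reversed. So there exist $t_1\not\equiv t_2 \pmod 1$ with $\be_+(t_1)=\be_+(t_2)$ (the case of $\be_-$ being handled symmetrically, or via part (a) of (\ref{L:crossing}) which identifies $\be_-$ with the antipodal image of $\be_+$) and $\ta(t_1)=-\ta(t_2)$. The strategy is to use this self-tangency together with the antipodal symmetry $\ga(t+\tfrac12)=-\ga(t)$ of curves in $\sr A$ and the length estimate (\ref{L:bandlength}) to derive a contradiction.

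First I would set up the geometry at the tangency. At $p=\be_+(t_1)=\be_+(t_2)$ the two local branches of $\be_+$ are tangent; consider the great circle $C$ tangent to $\be_+$ at $p$, i.e. the great circle through $p$ with the same tangent direction. Using the antipodal symmetry, $-p$ also lies on the band, on $\be_-$, and $C$ passes through $-p$ as well (a great circle through $p$ passes through $-p$). The key point is that $C$ separates the band $\Im(B_\ga)$ in a controlled way near $p$: because $\be_+$ is tangent to $C$ at $p$ from one side (the geodesic curvature of $\be_+$ has constant sign, which one checks from $\hat\ka = \cot(\rho-(\rho_1-\pi))$ as in the computation preceding (\ref{L:cega}), here in the symmetric setting with the bound $\ka_1\le\sqrt3$, i.e. $\rho_1-\rho_2=\tfrac{2\pi}{3}$ at worst), the band stays locally on one side of $C$ near each branch. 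I would then identify a crossing interval $[\tau_1,\tau_2]$ of $B_\ga$ with respect to $C$ that straddles the parameters $t_1, t_2$, and apply (\ref{L:crossing3}): the intersection $C\cap B_\ga([\tau_1,\tau_2]\times[-\tfrac\pi6,\tfrac\pi6])$ has total length $\ge \tfrac{\pi}{3}$, with equality iff $\tau_1=\tau_2$.

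The quantitative contradiction should come from counting: the self-tangency forces $C$ to meet the band along the full width at the tangency parameter (this is the equivalence (i)$\leftrightarrow$(v) of (\ref{L:crossing2}) applied with the tangency), so $\tau_1=\tau_2$ there, meaning $B_\ga(\{t_1\}\times[-\tfrac\pi6,\tfrac\pi6])\subs C$; but then by (\ref{L:crossing2}\?(iv)) the whole meridian $\theta\mapsto B_\ga(t_1,\theta)$ lies in $C$ and is orthogonal to $C$, so the two tangent branches of $\be_+$ at $p$ are actually arcs of $C$ — and then, pushing this along the band and using the antipodal relation (\ref{L:crossing}\?(a)) which ties $\be_+$ and $\be_-$ together, the curve $\ga$ itself must be (a reparametrization of) a great circle, contradicting that $B_\ga$ is not simple (a great circle in $\sr A$ has simple band when $\ka_1\le\sqrt3$, i.e. the width $\tfrac\pi3$ is small enough, which is exactly why the hypothesis $\ka_1\le\sqrt3$ enters). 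Alternatively, if the tangency does not force the meridian into $C$, then $\tau_1<\tau_2$ and one gets strict inequality in (\ref{L:crossing3}); combining this with the crossing interval $[\tau_1+\tfrac12,\tau_2+\tfrac12]$ from (\ref{L:crossing}\?(b)), whose contribution is also $>\tfrac\pi3$ and which has disjoint interior by (\ref{L:crossing}\?(c)--(d)), one overfills $C$: the great circle $C$ has length $2\pi$, and assembling the forced-in pieces around $\Im(B_\ga)$ exceeds $2\pi$, a contradiction.

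The main obstacle I anticipate is making the "overfilling" count rigorous — precisely, showing that the pieces of $C$ contributed by the distinct crossing intervals (and their $\tfrac12$-translates under the antipodal symmetry) are essentially disjoint arcs of $C$ whose lengths add up past $2\pi$, rather than overlapping. This requires carefully using that $B_\ga$ restricted to $\R\times(-\tfrac\pi6,\tfrac\pi6)$ is a covering map onto its image (\ref{L:band1}\?(c)) to control how many times $C$ can re-enter the interior of the band, together with the constraint that each crossing interval has length $\le\tfrac12$ (\ref{L:crossing}\?(d)), so that there are at least two crossing intervals per period and hence enough length is forced onto $C$. The bound $\ka_1\le\sqrt3$ (equivalently $\rho_1-\rho_2\le\tfrac{2\pi}{3}$, band half-width $\le\tfrac\pi6$) is what guarantees the per-crossing length $\tfrac\pi3$ and makes the arithmetic close; I would keep track of exactly where this threshold is used and confirm it is sharp.
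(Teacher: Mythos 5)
Your starting point is right (all self-intersections of a quasi-simple band are self-tangencies of $\be_\pm$ with reversed tangents, by (\ref{L:selfintersection})), and the crossing-interval machinery is indeed the relevant tool, but there are two genuine problems. First, you choose the wrong auxiliary great circle: you take $C$ to be the circle \emph{tangent} to $\be_+$ at $p$, whereas the crossing-interval lemmas (\ref{L:crossing2}), (\ref{L:crossing3}) require the circle $\Ga_{t_1}=\Ga_{t_2}$ containing the meridian fibers, i.e.\ the great circle through $p$ \emph{orthogonal} to $\ta(t_1)=-\ta(t_2)$ (these two circles coincide as sets of centers only after swapping roles; as curves through $p$ they meet orthogonally). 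With the tangent circle, the fiber $B_\ga(\{t_1\}\times[-\frac{\pi}{6},\frac{\pi}{6}])$ is \emph{not} contained in $C$, so the equivalence (i)$\leftrightarrow$(v) of (\ref{L:crossing2}) does not apply as you invoke it, and your first branch collapses. (Your conclusion there --- that the branches of $\be_+$ are arcs of a great circle --- is also impossible on its face, since $\be_+$ has geodesic curvature $\cot(\rho-\frac{\pi}{6})>0$ everywhere.)

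Second, and more seriously, the "overfilling" count that you flag as the main obstacle does not close, and the idea needed to finish is absent from your proposal. With the correct circle $C=\Ga_{t_1}$ one gets four degenerate crossing intervals $\{t_1\},\{t_2\},\{t_1+\frac12\},\{t_2+\frac12\}$, and a parity argument on the direction of crossing (using $\dot\ga(t_1+\frac12)=-\dot\ga(t_1)$) forces at least six; by (\ref{L:crossing3}) six crossings contribute exactly $6\cdot\frac{\pi}{3}=2\pi$, so the length budget of $C$ is \emph{exactly} met, with no contradiction. The actual contradiction in the paper comes from a different source: at each of the three self-tangency points of $\be_+$, quasi-simplicity forces the inner branch to curve at least as much as the outer one, giving $\rho_j+\rho_{j'}\geq\frac{4\pi}{3}$ for the three tangent pairs and hence $\sum_{j=1}^{6}\rho_j\geq 4\pi$; but the antipodal symmetry $\ga(t+\frac12)=-\ga(t)$ gives $\rho_{i+3}=\pi-\rho_i$ and hence $\sum_{j=1}^{6}\rho_j=3\pi$. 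This radius-of-curvature comparison, not a length count on $C$, is the missing closing step.
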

\begin{proof}
	If $p=B_\ga(t_1,\theta_1)=B_\ga(t_2,\theta_2)$, $t_1-t_2\nin \Z$, is a point of self-intersection of $B_\ga$, then $\theta_1=\theta_2\in \se{\pm \frac{\pi}{6}}$ and $\ta(t_2)=-\ta(t_1)$, as guaranteed by (\ref{L:selfintersection}).
	
	For $p$ as above, let $C_{i}$ be the circle parametrized by 
	\begin{equation*}
\quad		u \mapsto \cos u\? \ga(t_i)+\sin u\? \no(t_i),\quad (u \in [0,2\pi],\ i=1,2).
	\end{equation*}
	Then both circles are centered at the origin and pass through $p$ in a direction orthogonal to $\ta(t_2)=-\ta(t_1)$. Hence $C_1=C_2$, and we shall denote it by $C$ from now on. Thus, by (\ref{L:crossing}\?(b)) and (\ref{L:crossing2}), $B_\ga$ has at least the following four crossing intervals, all degenerate: $\se{t_1}$, $\se{t_2}$, $\se{t_1+\frac{1}{2}}$ and $\se{t_2+\frac{1}{2}}$. Further, by (\ref{L:crossing}\?(b)), the number of crossing intervals of $B_\ga$ is even (or infinite). 

Let $\tau_j\in [0,1)$, $j=1,\dots,4$, be the numbers $t_i$, $t_i+\frac{1}{2}\pmod 1$ arranged so that $\tau_j<\tau_{j'}$ if $j<j'$. By definition, $\tau_1,\tau_2\in [0,\frac{1}{2})$ and $\tau_3=\tau_1+\frac{1}{2}$, $\tau_4=\tau_2+\frac{1}{2}$. Suppose that these are the only crossing intervals of $B_\ga$. Then $B_\ga$ crosses from one of the disks $D_1$ bounded by $C$ to the other one $D_2$ at $t=\tau_1$, from $D_2$ to $D_1$ at $t=\tau_2$ and from $D_1$ to $D_2$ at $t=\tau_3$. But the latter is incompatible with $\dot{\ga}(\tau_3)=-\dot{\ga}(\tau_1)$, which points towards $D_1$. We conclude that $B_\ga$ has at least six crossing intervals. Since $C$ has total length $2\pi$ and $B_\ga$ is quasi-simple, (\ref{L:crossing2}) implies that there cannot be more than six crossing intervals, and that all six are degenerate.

Let us again rearrange the crossing intervals (or numbers) $\tau_j\in [0,2)$, $j=1,\dots,6$, so that $\tau_j<\tau_{j'}$ if $j<j'$, and hence $\tau_i\in [0,\frac{1}{2})$ and $\tau_{i+3}=\tau_i+\frac{1}{2}$ for $i=1,2,3$. The sets $C_j=B_\ga(\se{\tau_j}\times [-\frac{\pi}{6},\frac{\pi}{6}])$ fill out the circle $C$, hence $B_\ga$ intersects itself in exactly 6 points. Suppose that $C_1$ and $C_2$ are disjoint. The image of $[\tau_1,\tau_2]$ by $\ga$ separates the closed disk which contains it in two parts, and the image of $(\tau_2,\tau_1+1)$ by $\ga$ contains points in both of these parts. Since $\ga$ is a simple curve, this is a contradiction which shows that $C_j\cap C_{j+1}\neq \emptyset$ for all $j\pmod{6}$. 

Note that the intersection $C_j\cap C_{j+1}$ consists of a single point of the form $B_\ga(t_j,\theta_{j,j+1})=B_\ga(t_{j+1},\theta_{j,j+1})$, where $\theta_{j,j+1}\in \se{\pm \frac{\pi}{6}}$ by (\ref{L:selfintersection}). We may assume without loss of generality that $\theta_{1,2}=\frac{\pi}{6}$. This forces $\theta_{3,4}=\theta_{5,6}=\frac{\pi}{6}$ also.

\begin{figure}[ht]
	\begin{center}
		\includegraphics[scale=.53]{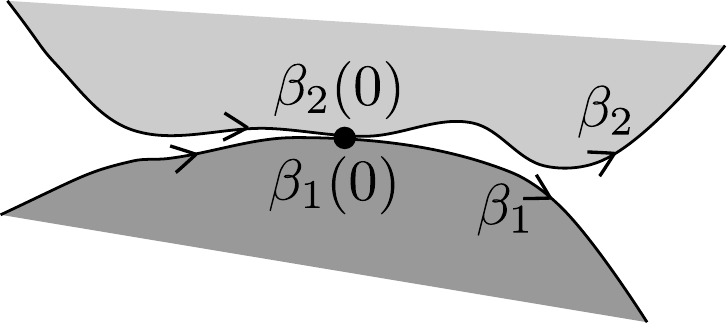}
		\caption{The darkly shaded region consists of points $B_\ga(t,\theta)$ for $(t,\theta)$ close to $(t_1,\frac{\pi}{6})$, and the lightly shaded region consists of points $B_\ga(t,\theta)$ for $(t,\theta)$ close to $(t_2,\frac{\pi}{6})$. Because $B_\ga$ is quasi-simple, the interiors of these regions cannot intersect.}
		\label{F:beta1beta2}
	\end{center}
\end{figure}

Let $\rho_j$ denote the radius of curvature of $\ga$ at $\ga(\tau_j)$. Then the radius of curvature of $\be_+$ at $t_j$ is $\rho_j-\frac{\pi}{6}$, by (\ref{C:radiusofcurvature}). Choose a small $\eps>0$ and consider the curves 
\[
\be_1,\be_2\colon (-\eps,\eps)\to \Ss^2,\quad \be_1(u)=\be_+(t_1+u),\quad \be_2(u)=\be_+(t_2-u).
\]
Then $\be_1(0)=\be_2(0)$ and $\dot\be_1(0)$ is a positive multiple of $\dot\be_2(0)$ by (\ref{L:selfintersection}). Moreover, the radius of curvature of $\be_1$ at $0$ is $\rho_1-\frac{\pi}{6}$ and that of $\be_2$ is $\pi-(\rho_2-\frac{\pi}{6})=\frac{7\pi}{6}-\rho_2$ (the latter formula coming from the reversal of orientation). Because $B_\ga$ is quasi-simple,  $\be_2$ always lies to the right of $\be_1$ (with respect to the common tangent unit vector at $0$, cf.~figure \ref{F:beta1beta2}), hence the curvature of $\be_2$ at $0$ is greater than or equal to that of $\be_1$ at $0$. Or, in terms of the radii of curvature, 
\begin{equation*}
	\rho_1-\frac{\pi}{6}\geq \frac{7\pi}{6}-\rho_2,\text{\quad that is,\quad}\rho_1+\rho_2\geq \frac{4\pi}{3}
\end{equation*}
Similarly,
\begin{equation*}
\rho_3+\rho_4\geq \frac{4\pi}{3}\text{\quad and\quad }\rho_5+\rho_6\geq \frac{4\pi}{3}.
\end{equation*}
Therefore, $\sum_{j=1}^6\rho_j\geq 4\pi$. On the other hand, the relation $\ga(t+1)=-\ga(t)$ yields $\rho_{i+3}=\pi-\rho_{i}$, $i=1,2,3$. Hence $\sum_{i=1}^6\rho_i=3\pi$. This contradiction shows that the assumption that $B_\ga$ has a point of self-intersection, i.e., that $B_\ga$ is not simple, must have been false.
\end{proof}

\subsection*{The inclusion $\sr L_{\ka_1}^{\ka_2}(I)\inc \sr L_{-\infty}^{+\infty}(I)$}  We will show in this subsection that the inclusion 
\begin{equation}\label{E:notweak0}
i\colon \sr L_{\ka_1}^{\ka_2}(I)\inc \sr L_{-\infty}^{+\infty}(I)
\end{equation}
is n\sz o\sz t a homotopy equivalence when $0<\rho_1-\rho_2\leq \frac{2\pi}{3}$, where $\rho_i=\arccot(\ka_i)$ (prop.~(\ref{P:notweak}))

The proof separates into two cases: For $0<\rho_1-\rho_2\leq \frac{\pi}{2}$, it is an easy consequence of Little's theorem that $\sr L_{\ka_1}^{\ka_2}(I)$ has at least three connected components, so the map induced by $i$ on $\pi_0$ is not a bijection. When $\frac{\pi}{2}<\rho_1-\rho_2\leq \frac{2\pi}{3}$, both spaces in \eqref{E:notweak0} do have the same number of components, but we will exhibit a non-trivial element of $\pi_2\big(\sr L_{\ka_1}^{\ka_2}(I),\ga_0\big)$ which lies in the kernel of the induced map $i_*$ (the basepoint $\ga_0$ is a circle traversed once). A very similar construction was previously used by Saldanha in \cite{Sal1} to obtain information on $\pi_2\big(\sr L_{0}^{+\infty}(I)\big)$ and $H^2(\sr L_{0}^{+\infty}(I)\big)$.

We conjecture, but do not prove, that the inclusion \eqref{E:notweak0} is not a homotopy equivalence unless $\rho_1-\rho_2=\pi$ (when the inclusion $i$ is simply the identity map of $\sr L_{-\infty}^{+\infty}(I)$). In order to show this directly it should be necessary to look at the induced map on $\pi_{2n}$ for greater and greater $n$ as $\rho_1-\rho_2$ increases to $\pi$.


\begin{defna}\label{D:srS}
Let $\sr S\subs \sr L_{-\ka_1}^{+\ka_1}(I)$ be the image of the map 
\[
G\colon (0,1)\times \sr L_{-\ka_1}^{+\ka_1} (Q_{\mbf{k}}) \times \sr L_{-\ka_1}^{+\ka_1}(Q_{\mbf{k}}) \to \sr L_{-\ka_1}^{+\ka_1}(I)
\]
which associates to a triple $(t_0,\ga_1,\ga_2)$ the curve $\ga$ obtained by concatenating $\ga_1$ and $\ga_2$ at $t=t_0$. More precisely, $G$ is given by:
\begin{equation*}
	\ga(t)=G(t_0,\ga_1,\ga_2)(t)=\begin{cases}
		\ga_1\big(\frac{t}{t_0}\big)   &   \text{ if\quad} 0\leq t\leq t_0 \\
		Q_{\mbf{k}}\ga_2\big(\frac{t-t_0}{1-t_0}\big)  &   \text{ if\quad} t_0\leq t\leq 1
	\end{cases}
\end{equation*}
\end{defna} 
We start by showing that $\sr S$ is a submanifold of $\sr L_{-\ka_1}^{+\ka_1}(I)$.

\begin{lemmaa}\label{L:trivialnormal}
	Let $\sr S$ be as above. Then $\sr S$ is a closed submanifold of $\sr L_{-\ka_1}^{+\ka_1}(I)$ of codimension 2 which has trivial normal bundle.
\end{lemmaa}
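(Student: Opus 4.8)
The plan is to realize $\sr S$ as the zero set of a suitable smooth map $F\colon \sr L_{-\ka_1}^{+\ka_1}(I)\to \R^2$ having $0$ as a regular value, and then invoke (\ref{L:Hilbert}\?(c)). The natural choice of $F$ comes from the defining condition of $\sr S$: a curve $\ga\in \sr L_{-\ka_1}^{+\ka_1}(I)$ lies in $\sr S$ precisely when its frame $\Phi_\ga$ passes through $Q_{\mbf k}$ at some interior parameter, i.e.\ when $\Phi_\ga(t_0)=Q_{\mbf k}$ for some $t_0\in (0,1)$. Equivalently, since $\SO_3$ is three-dimensional and $Q_{\mbf k}$ is a fixed point, one should pin down which two scalar constraints on $\Phi_\ga$ cut out the submanifold. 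I would define $F(\ga)=\big(\gen{\ga(t_0),e_1}-(-1),\ \gen{\ta(t_0),e_2}-(-1)\big)$ evaluated at the distinguished parameter; but to avoid choosing $t_0$ I would instead argue more invariantly: the map $(t,\ga)\mapsto \Phi_\ga(t)$ from $(0,1)\times \sr L_{-\ka_1}^{+\ka_1}(I)$ to $\SO_3$ is a submersion (this is the same submersion statement used repeatedly above, e.g.\ in the discussion after (\ref{D:Little}) and in \cite{Sal3}, \S 3), so its preimage $\sr Z$ of $Q_{\mbf k}$ is a closed submanifold of $(0,1)\times \sr L_{-\ka_1}^{+\ka_1}(I)$ of codimension $3$, and $\sr S$ is the image of $\sr Z$ under the projection to the second factor.

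First I would verify that $G$ is injective and that the projection $\sr Z\to \sr S$ is a homeomorphism: a curve $\ga\in \sr S$ cannot have $\Phi_\ga(t)=Q_{\mbf k}$ for two distinct interior values of $t$, because between two such times $\ga$ would be a closed curve in $\sr L_{-\ka_1}^{+\ka_1}(Q_{\mbf k})$ whose frame returns to $Q_{\mbf k}$, and — this is the point where I would need a short argument — with $\ka_1\leq\sqrt3$, hence $\rho_1\geq\pi/6$, the total curvature of such a sub-loop is too small for it to close up unless it is constant, contradicting regularity; alternatively, since $Q_{\mbf k}\ne I$, the two frame values $I$ (at the endpoints) and $Q_{\mbf k}$ are distinct, and a genuine double passage through $Q_{\mbf k}$ would force the first arc $\ga_1$ to be a closed curve in $\sr L_{-\ka_1}^{+\ka_1}(Q_{\mbf k})$, which is allowed — so the cleaner route is simply to show $t_0$ is unique generically and then note that the locus where it fails is of higher codimension, or better, to restrict attention to the open dense set and take closures. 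I expect the honest statement to be that $G$ restricted to $(0,1)\times\sr L_{-\ka_1}^{+\ka_1}(Q_{\mbf k})^{\times 2}$ is a smooth embedding onto $\sr S$, and I would prove this by exhibiting the inverse: given $\ga\in\sr S$, recover $t_0$ as the unique interior zero of $\ga\mapsto$ (distance of $\Phi_\ga(t)$ from $Q_{\mbf k}$), then set $\ga_1=\ga|_{[0,t_0]}$ rescaled and $\ga_2=Q_{\mbf k}^{-1}\ga|_{[t_0,1]}$ rescaled, all depending continuously (indeed smoothly in the Hilbert-manifold charts) on $\ga$.

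Next, granting that $\sr Z\to \sr S$ is a diffeomorphism, the codimension count gives $\dim\sr S = \dim\sr Z = \dim\big((0,1)\times\sr L_{-\ka_1}^{+\ka_1}(I)\big)-3 = \big(1+\dim\sr L_{-\ka_1}^{+\ka_1}(I)\big)-3$, i.e.\ codimension $2$ in $\sr L_{-\ka_1}^{+\ka_1}(I)$, as claimed. For closedness: $\sr S$ is closed because the condition ``$\Phi_\ga$ meets $Q_{\mbf k}$'' is closed — if $\ga_n\to\ga$ with $\Phi_{\ga_n}(t_n)=Q_{\mbf k}$, pass to a convergent subsequence $t_n\to t_\infty\in[0,1]$; then $\Phi_\ga(t_\infty)=Q_{\mbf k}\ne I=\Phi_\ga(0)=\Phi_\ga(1)$, so $t_\infty\in(0,1)$ and $\ga\in\sr S$. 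For the triviality of the normal bundle I would use (\ref{L:Hilbert}\?(c)) directly: writing $\sr S$ locally as $F^{-1}(0)$ for the $\R^2$-valued submersion $F$ obtained from the two ``transverse'' frame coordinates near the unique $t_0(\ga)$ (these two coordinates exist because $\Phi_\ga(t_0)=Q_{\mbf k}$ already forces the third), regularity of $0$ follows from the submersivity of $(t,\ga)\mapsto\Phi_\ga(t)$, and then (c) yields both the codimension-$2$ submanifold structure and the explicit trivialization $\tau(p,v)=\big((TF_p)|_{N\sr S_p}\big)^{-1}(v)$ of the normal bundle.

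\textbf{Main obstacle.} The delicate point is not the codimension arithmetic but justifying that the concatenation parameter $t_0$ is well-defined and depends smoothly on $\ga$ across all of $\sr S$ — equivalently, that $\sr Z\to\sr S$ is injective, not merely generically so. I expect to handle this by proving that $\Phi_\ga$ cannot hit $Q_{\mbf k}$ twice in $(0,1)$: if it did, the middle arc would be a closed loop based at $Q_{\mbf k}$ whose frame, lifted to $\Ss^3$, would have to return to $\pm$ its starting value; combined with the constraint $\ka_1\le\sqrt3$ (so $\rho_1\ge\pi/6$) and an elementary total-curvature lower bound, this is impossible for a short arc, and a direct argument shows any such loop must be trivial. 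If instead this uniqueness genuinely fails, I would fall back to defining $\sr S$ as the image and using that $G$ is an injective immersion from a manifold whose dimension I control, checking properness by hand to conclude $\sr S$ is an embedded closed submanifold, and then deriving normal-bundle triviality from the ambient cotangent pairing exactly as in (\ref{L:Hilbert}\?(c)).
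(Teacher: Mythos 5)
Your overall strategy---cut $\sr S$ out by two scalar conditions on the frame at the concatenation time and invoke (\ref{L:Hilbert}\?(c))---is the same as the paper's, and your closedness argument and codimension count are sound. The paper implements the two conditions concretely: it fixes the short open arc $A=\set{(-\cos\theta,0,\sin\theta)}{\abs{\theta}<\pi/12}$, restricts to the set $\sr U$ of curves meeting $A$ exactly once and transversally, records the crossing point $\theta_\ga$ and the slope $z_\ga$ (the $(3,2)$-entry of $\Phi_\ga(t_\ga)$), and sets $F(\ga)=(\theta_\ga,z_\ga)$, so that $\sr S=F^{-1}(0,0)$ and $F$ is a submersion along $\sr S$. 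This localization in the \emph{target} (rather than in the parameter $t$) is exactly the device you were missing.

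The genuine gap is in your treatment of the crossing time $t_0$. The uniqueness statement you hope to prove is false: the equator traversed three times lies in $\sr S$ (it equals $G(\tfrac{1}{6},\ga_1,\ga_2)$ with $\ga_1$ a half-equator and $\ga_2$ the equator traversed $\tfrac{5}{2}$ times, both of which lie in $\sr L_{-\ka_1}^{+\ka_1}(Q_{\mbf k})$), and its frame equals $Q_{\mbf k}$ at $t=\tfrac{1}{6},\tfrac{1}{2},\tfrac{5}{6}$. Your total-curvature heuristic cannot rescue this, because the sub-arc between two visits to $Q_{\mbf k}$ may be arbitrarily long. The fallback fails as well: $G$ is not injective (the triple equator has three distinct preimages), so $\sr S$ is not the image of an injective immersion. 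What actually saves the lemma is that being a submanifold is a local property: near a given $\ga_0\in\sr S$ one isolates a single crossing and applies the regular-value lemma to the resulting local defining map, which is what the paper's restriction to $\sr U$ accomplishes. You should note, however, that the difficulty you ran into is real and not entirely dissolved by the paper either: its claim $\sr U\sups\sr S$ also presupposes that the crossing of $A$ is unique, which the triple equator violates; at such a point $\sr S$ is locally a union of three codimension-two sheets and is not a submanifold. The argument therefore establishes the conclusion only on the locus of $\sr S$ where the frame meets $Q_{\mbf k}$ exactly once, which suffices for the application in (\ref{L:exotic}) and (\ref{P:notweak}) since there $f$ meets $\sr S$ only at a simple equator. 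Your instinct that this was the delicate point was correct; the repair is to localize, not to prove a false global uniqueness claim.
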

\begin{proof}
	Let $A$ be the arc of circle
\begin{equation*}
	A=\set{(-\cos \theta\?,\?0\?,\?\sin \theta)\in \Ss^2}{-\frac{\pi}{12}<\theta<\frac{\pi}{12}}.
\end{equation*}
Let $\sr U$ be the subset of $\sr L_{-\ka_1}^{+\ka_1}(I)$ consisting of all curves which intersect $A$ exactly once, and transversally. Then $\sr U\sups \sr S$ and, although $\sr U$ is not open, it is a neighborhood of $\sr S$. Given $\ga\in \sr U$, there exists exactly one $t_\ga\in (0,1)$ such that $\ga(t_\ga)\in A$. Write
\begin{equation*}
	\Phi_\ga(t_\ga)=\begin{pmatrix}
		-\cos \theta_\ga   & *  &*  \\
		 0  &  *  & * \\
		 \sin \theta_\ga & z_\ga & *
	\end{pmatrix},
\end{equation*}  
so that $\theta_\ga$ marks the point where $\ga$ crosses $A$ and $z_\ga$ measures the slope of the crossing at this point. Define a map $F\colon \sr U\to \R^2$ by 
\begin{equation*}
	F(\ga)=(\theta_\ga,z_\ga).
\end{equation*}
Then $\sr S=F^{-1}(0,0)$, and it is easy to see that $F$ is a submersion at any point of $\sr S$. Hence, lemma (\ref{L:Hilbert}\?(c)) applies. 
\end{proof}

\begin{lemmaa}\label{L:exotic}
	Let $1< \ka_1\leq \sqrt{3}$. Then there exists $f\colon \Ss^2\to \sr L_{-\ka_1}^{+\ka_1}(I)$ such that:
	\begin{enumerate}
		\item [(i)] $f$ intersects $\sr S$ only once and transversally;
		\item [(ii)] $f$ is null-homotopic in $\sr L_{-\infty}^{+\infty}(I)\sups \sr L_{-\ka_1}^{+\ka_1}(I)$.
	\end{enumerate}
\end{lemmaa}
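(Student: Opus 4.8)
\textbf{Proof proposal for (\ref{L:exotic}).} The plan is to build $f$ by an explicit two-parameter family of curves, mimicking the construction used by Saldanha in \cite{Sal1} but adapted to the interval $(-\ka_1,\ka_1)$ with $1<\ka_1\leq\sqrt 3$. The idea is that $\Ss^2$ parametrizes a rotation: to each $p\in\Ss^2$ we assign a curve $f(p)$ which is, roughly, the geodesic $\sig_0$ (the equator traversed once) ``pushed'' by an amount depending on $p$ in the direction that makes a small loop rotate its axis to $p$. More concretely, I would first construct a based loop of loops: start from the standard circle $\ga_0\in\sr L_{-\ka_1}^{+\ka_1}(I)$ traversed once and, using the hypothesis $\ka_1>1$ (so that $\rho_1=\arccot\ka_1<\pi/4$ and there is genuine room to bend), insert into $\ga_0$ a small ``flower'' whose shape is governed by $p\in\Ss^2$ and which degenerates to the trivial insertion when $p$ equals a fixed basepoint $p_0$. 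Since $\pi_2(\Ss^2)=\Z$ and the construction will be compatible with the usual generator, this produces a genuine map $f\colon\Ss^2\to\sr L_{-\ka_1}^{+\ka_1}(I)$ (after the standard correction $\ga\mapsto\Phi_\ga(0)^{-1}\ga$ to keep the frame condition $\Phi(0)=I=\Phi(1)$).

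Next I would verify (ii): nullhomotopy in $\sr L_{-\infty}^{+\infty}(I)=\sr I(I)$. By the Hirsch--Smale theorem, $\sr I(I)\iso\Om\Ss^3\du\Om\Ss^3$, and the relevant component is $\iso\Om\Ss^3$, whose $\pi_2$ is $\pi_2(\Om\Ss^3)\cong\pi_3(\Ss^3)\cong\Z$. The map $f$ will be designed so that the ``flower'' it inserts can, once curvature bounds are dropped, be shrunk to a point (the loop it adds to the lifted frame $\te\Phi_\ga(1)$ is nullhomotopic in $\Ss^3$ because it sweeps out a contractible family); concretely I would exhibit an explicit homotopy $F\colon[0,1]\times\Ss^2\to\sr I(I)$ contracting the flower radially, using that in $\sr I(I)$ there is no obstruction to deforming any small inserted arc system back to the constant insertion. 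The cleanest route is to check that $f_*\colon\pi_2(\Ss^2)\to\pi_2(\sr I(I))$ is zero by computing the image class via the logarithmic-derivative / lifted-frame description — the inserted flower contributes a spherical $2$-cycle in $\Ss^3$ of degree $0$.

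For (i), the transversality and single-intersection claim with $\sr S$ (the codimension-$2$ submanifold of ``concatenated'' curves crossing the fixed arc $A$ exactly once transversally, with trivial normal bundle by (\ref{L:trivialnormal})), I would arrange the family so that $f(p)$ lies in $\sr S$ precisely when the flower is positioned to pass through the distinguished arc $A$ with the prescribed slope, which by construction happens for exactly one value $p=p_1$; there one computes the composite $\Ss^2\os{f}{\to}\sr U\os{F}{\to}\R^2$ (with $F=(\theta_\ga,z_\ga)$ as in the proof of (\ref{L:trivialnormal})) and checks its derivative at $p_1$ is an isomorphism $T_{p_1}\Ss^2\to\R^2$. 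This is a direct but somewhat delicate local computation: I expect \textbf{this transversality/single-intersection verification to be the main obstacle}, since it requires choosing the flower construction carefully enough that (a) the curvature stays in $(-\ka_1,\ka_1)$ for all $p$ — which is exactly where the bound $\ka_1\leq\sqrt 3$, i.e. $\rho_1\geq\pi/6$, is used, presumably via the estimate that the flower's arcs of circle have radius of curvature controlled by $\pi/6$ as in (\ref{L:keepitsimple2}) — and (b) the crossing of $A$ is isolated and nondegenerate. Once (i) and (ii) are in place, $f$ is the desired map, and combined with the fact (to be used in the next proposition) that the mod-$2$ intersection number with $\sr S$ is a homotopy invariant in $\sr L_{-\ka_1}^{+\ka_1}(I)$ but must vanish in $\sr L_{-\infty}^{+\infty}(I)$, this will show $[f]\in\ker i_*$ is nonzero, so $i$ is not a homotopy equivalence.
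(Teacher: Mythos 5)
There is a genuine gap here, and it is structural rather than a matter of unfinished computation. First, the map $f$ is never actually constructed: ``insert a small flower whose shape is governed by $p$'' is a placeholder, not a definition, and none of the three things that must be checked (continuity in $p$, the curvature staying in $(-\ka_1,\ka_1)$, the intersection pattern with $\sr S$) can even be attempted without it. The paper's $f$ is completely explicit: it is the $\pi_2$-sum of the map $g\colon\Ss^2\to\sr L_{-\ka_1}^{+\ka_1}(I)$ obtained by suspending the bending of the $1$-equator (for $p$ at colatitude $\al$ and longitude $\theta$, $g(p)$ is the bending curve $\sig_\al$ with its starting point shifted by $\theta/4\pi$ and its frame renormalized), which lives in the space precisely because $\ka_1>1=\tan(\pi/4)$, together with the map $\bar g=\ga_2\ast(g\circ r)$, where $r$ is a reflection of the domain sphere and $\ga_2$ is the equator traversed twice. (The upper bound $\ka_1\le\sqrt3$ plays no role in this lemma; your appeal to the $\pi/6$ estimates of (\ref{L:keepitsimple2}) is a misattribution --- that bound only delimits the range of $\ka_1$ for which this $\pi_2$ argument is needed at all.)

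Second, and more seriously, your mechanism for (ii) cannot work as stated because it is in tension with (i). If the ``flower'' could be contracted radially by an explicit homotopy of inserted arcs, you would have to explain why that contraction is available in $\sr L_{-\infty}^{+\infty}(I)$ but not in $\sr L_{-\ka_1}^{+\ka_1}(I)$; otherwise $f$ would be nullhomotopic in the smaller space too, its intersection number with $\sr S$ would vanish, and (i) would fail. The paper's nullhomotopy is not a contraction but an algebraic cancellation: $[g\circ r]=-[g]$ because precomposition with an orientation-reversing diffeomorphism of $\Ss^2$ negates a $\pi_2$-class, and $[\ga_2\ast h]=[h]$ in $\sr L_{-\infty}^{+\infty}(I)$ because the lifted frame of $\ga_2$ is a nullhomotopic loop in $\Ss^3$ --- a fact specific to the unconstrained space. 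Hence $[f]=[g]+[\bar g]=0$ there, while prepending $\ga_2$ forces every curve $\bar g(p)$ to meet the reference arc $A$ of (\ref{L:trivialnormal}) at least twice, so $\bar g$ never enters $\sr S$ and $f$ meets $\sr S$ only at the one point where $g$ does (the north pole, where $g(N)$ is the once-traversed equator). This sum-plus-cancellation structure is the missing idea; it is what makes (i) and (ii) hold simultaneously, and the transversality verification you defer --- which for the explicit family reduces to a local computation of $(\theta_\ga,z_\ga)$ along $g$ near $N$ --- is the lesser of the two problems.
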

\begin{proof}
	Let $\sig_\al$, $0\leq \al\leq \pi$, be as described on pp.~\pageref{S:7}--\pageref{D:bending} and illustrated in figure \ref{F:fourcorners2}. Since 
	\begin{equation*}
		\ka_1>1=\tan\Big(\frac{\pi}{4}\Big),
	\end{equation*}
	we may define a map $g\colon \Ss^2\to \sr L_{-\ka_1}^{+\ka_1}(I)$ as follows: Set 
	\begin{equation*}
		g(N)(t)=(\cos 2\pi t,\sin 2\pi t,0),\ \ g(-N)(t)=(\cos 6\pi t,\sin 6\pi t,0)\quad (t\in [0,1])
	\end{equation*} and, for $p\neq \pm N$, write $p=(\cos\theta\sin\al,\sin\theta\sin\al,\cos\al)$ with $\theta\in [0,2\pi]$, $\al\in (0,\pi)$. Set
	\begin{equation*}
\qquad		g(p)(t)=\bigg(\Phi_{\sig_\al}\Big(t-\frac{\theta}{4\pi}\Big)\bigg)^{-1}\sig_\al\Big(t-\frac{\theta}{4\pi}\Big)\quad (t\in [0,1],~p\neq \pm N).
	\end{equation*}
Thus, any longitude circle $\theta=\theta_0$ describes a homotopy between a circle traversed once and a circle traversed three times in $\sr L_{-\ka_1}^{+\ka_1}(I)$; as $\theta_0$ varies, the only thing that changes is the starting point of the curves in homotopy, and we use multiplication by $\Phi_{\sig_\al}^{-1}$ to ensure that all curves have the correct frames.

To define $f\colon \Ss^2\to \sr L_{-\ka_1}^{+\ka_1}(I)$ as in the statement, let $r\colon \Ss^2\to \Ss^2$ be reflection across the $yz$-plane, and let $\ga_2\in \sr L_{-\ka_1}^{+\ka_1}(I)$ be the equator traversed two times. Define 
\begin{equation*}
\bar{g}\colon \Ss^2\to \sr L_{-\ka_1}^{+\ka_1}(I),\quad	\bar{g}(p)=\ga_2\ast\big((g\circ r)(p)\big),
\end{equation*}  
where $\ast$ denotes the concatenation of paths. Then $[\bar{g}]=-[g]$ in $\pi_2(\sr L_{-\infty}^{+\infty}(I),\sig_0)$, because $[g\circ r]= -[g]$ and concatenating with $\ga_2$ has no effect on the homotopy class: For any map $h\colon K\to \sr L_{-\infty}^{+\infty}(I)$ with domain a compact set, $h$ and $\ga_2\ast h$ are homotopic. 

Therefore, if we define $f\colon \Ss^2\to \sr L_{-\ka_1}^{+\ka_1}(I)$ to be the concatenation of $g$ and $\bar{g}$ (as in the sum operation in $\pi_2$), then trivially $[f]=0$. Moreover, it is immediate from the definition of $\sr S$ that $f(p)\in \sr S$ if and only if $p=N$.
\end{proof}

\begin{propa}\label{P:notweak}
	The inclusion 
	\begin{equation}\label{E:notweak2}
\qquad	i\colon \sr L_{\ka_1}^{\ka_2}(I)\inc \sr L_{-\infty}^{+\infty}(I)\iso \Om\Ss^3\du \Om\Ss^3
	\end{equation}
is n\sz o\sz t a weak homotopy equivalence for $0<\rho_1-\rho_2\leq \frac{2\pi}{3}$, where $\rho_i=\arccot \ka_i$.
\end{propa}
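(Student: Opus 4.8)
The plan is to split into the two cases signalled in the remark following the statement. When $0<\rho_1-\rho_2\leq \frac{\pi}{2}$, recall from Theorem~(\ref{T:components}) that the number $n$ of connected components of $\sr L_{\ka_1}^{\ka_2}(I)$ equals $\fl{\pi/(\rho_1-\rho_2)}+1\geq 3$, whereas $\sr L_{-\infty}^{+\infty}(I)\iso \Om\Ss^3\du \Om\Ss^3$ has exactly $2$ components. Hence $i_*\colon \pi_0\big(\sr L_{\ka_1}^{\ka_2}(I)\big)\to \pi_0\big(\sr L_{-\infty}^{+\infty}(I)\big)$ cannot be a bijection, so $i$ is not even a $\pi_0$-isomorphism, let alone a weak homotopy equivalence. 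This case is immediate. The remaining work is the regime $\frac{\pi}{2}<\rho_1-\rho_2\leq \frac{2\pi}{3}$, where both spaces have exactly two components (by~(\ref{C:negacircles}) the count on the left is $n=2$), so one must go to a higher homotopy group.

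For the hard case I would first reduce, via the homeomorphism $\sr L_{\ka_1}^{\ka_2}(I)\home \sr L_{-\ka_1'}^{+\ka_1'}(I)$ of Corollary~(\ref{C:symmetricinterval}), to a symmetric space with $1<\ka_1'\leq \sqrt{3}$ (the inequality $\frac{\pi}{2}<\rho_1-\rho_2\leq \frac{2\pi}{3}$ translates exactly into $\rho_1'\in(\frac{\pi}{6},\frac{\pi}{4}]$, i.e. $\ka_1'\in[1,\sqrt3)$; the endpoint $\rho_1-\rho_2=\frac\pi2$ gives $\ka_1'=\sqrt3$, already covered by the first case, so in fact $1<\ka_1'\leq\sqrt3$ works). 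Now invoke Lemma~(\ref{L:exotic}): it produces a map $f\colon \Ss^2\to \sr L_{-\ka_1'}^{+\ka_1'}(I)$ which meets the codimension-$2$ submanifold $\sr S$ (of Definition~(\ref{D:srS})) exactly once and transversally, and which is null-homotopic in $\sr L_{-\infty}^{+\infty}(I)$. Taking the basepoint $\ga_0$ to be a circle traversed once, the class $[f]\in \pi_2\big(\sr L_{-\ka_1'}^{+\ka_1'}(I),\ga_0\big)$ (after joining $f$ to the constant map by a path, legitimate since $\ga_0$ and the image of $f$ lie in the same component) satisfies $i_*[f]=0$ by construction. It therefore suffices to show $[f]\neq 0$, which then exhibits $i_*$ on $\pi_2$ as non-injective and finishes the proof.

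To prove $[f]\neq 0$ I would use the intersection number of $f$ with $\sr S$ as a homotopy invariant. By Lemma~(\ref{L:trivialnormal}), $\sr S$ is a closed codimension-$2$ submanifold with trivial normal bundle, so there is a well-defined algebraic intersection number $f\cdot \sr S\in \Z$ (or in $\Z/2$, which already suffices), invariant under homotopy of $f$ within $\sr L_{-\ka_1'}^{+\ka_1'}(I)$; since $f$ meets $\sr S$ transversally in a single point, $f\cdot \sr S=\pm 1\neq 0$, so $f$ is not null-homotopic in $\sr L_{-\ka_1'}^{+\ka_1'}(I)$. The one genuine technical point here is that intersection numbers with a finite-codimension submanifold of a Hilbert manifold behave as in the finite-dimensional case; this follows because a generic homotopy can be taken transverse to $\sr S$, and all the transversality and compactness arguments localize to the finite-dimensional normal directions to $\sr S$ — this is exactly the mechanism used by Saldanha in \cite{Sal1} for $\pi_2\big(\sr L_0^{+\infty}(I)\big)$, and I would either cite that or sketch the Hilbert-manifold transversality argument.

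The main obstacle is this last point: making rigorous that $f\cdot\sr S$ is a homotopy invariant in the infinite-dimensional setting, i.e. that any homotopy $\Ss^2\times[0,1]\to \sr L_{-\ka_1'}^{+\ka_1'}(I)$ can be perturbed to be transverse to $\sr S$ and that the resulting bordism of intersection points is compact. One must check that the relevant Fredholm/transversality package applies — $\sr S$ has finite codimension and trivial normal bundle by~(\ref{L:trivialnormal}), so pulling back a tubular neighborhood reduces the count to the degree of a map into $\R^2\ssm\{0\}$ on a small sphere, and compactness of $\Ss^2\times[0,1]$ controls the bordism. Everything else — the two-case split, the reductions via~(\ref{C:symmetricinterval}) and~(\ref{C:negacircles}), and the appeal to~(\ref{L:exotic}) and~(\ref{L:trivialnormal}) — is bookkeeping.
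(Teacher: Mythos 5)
Your proposal is correct and follows the same architecture as the paper's proof: the same two-case split at $\rho_1-\rho_2=\frac{\pi}{2}$, the same $\pi_0$ argument in the first case (the paper invokes Little's theorem directly rather than Theorem (\ref{T:components}), but the content is identical), and in the second case the same reduction to $\sr L_{-\ka_1}^{+\ka_1}(I)$ with $1<\ka_1\leq\sqrt3$ and the same pair of lemmas (\ref{L:trivialnormal}) and (\ref{L:exotic}) applied to the same class $[f]\in\pi_2$. The one place you diverge is the mechanism for showing $[f]\neq 0$, and the paper's choice dissolves what you flag as your main obstacle: instead of defining a geometric intersection number and proving its homotopy invariance — which requires perturbing arbitrary homotopies $\Ss^2\times[0,1]\to\sr L$ to be transverse to $\sr S$ — the paper takes a $2$-form $\tau$ representing the Thom class of the trivial normal bundle of $\sr S$, supported in a tubular neighborhood and extended by zero, and computes $\int_{\Ss^2}f^*\tau=1$ via the identification of the Thom class with the Poincar\'e dual; since a null-homotopic map pulls back closed forms to exact ones, this immediately gives $[f]\neq 0$. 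Only the single map $f$ need be transverse to $\sr S$, and that is supplied by (\ref{L:exotic}); no transversality of homotopies is ever invoked. Your route can be made rigorous (it is essentially Saldanha's argument in \cite{Sal1}), but the cohomological formulation is the cheaper one in the Hilbert-manifold setting. One bookkeeping slip: under (\ref{C:symmetricinterval}) the endpoint $\rho_1-\rho_2=\frac{\pi}{2}$ corresponds to $\ka_1'=1$ and $\rho_1-\rho_2=\frac{2\pi}{3}$ to $\ka_1'=\sqrt3$, not the reverse; your final range $1<\ka_1'\leq\sqrt3$ is nevertheless the correct one.
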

\begin{proof} If $\ka_0\geq 0$, then $\sr L_{\ka_0}^{+\infty}(I)$ is a subspace of $\sr L_{0}^{+\infty}(I)$. Let $\sig_j\in \sr L_{\ka_0}^{+\infty}(I)$ be a circle traversed $j$ times ($j=1,2,3$). Little's theorem guarantees that $\sig_1$, $\sig_2$ and $\sig_3$ are in pairwise distinct components of $\sr L_{0}^{+\infty}(I)$. Consequently, they must also be in different components of $\sr L_{\ka_0}^{+\infty}(I)$. Together with (\ref{T:size}), this implies that the map induced by \eqref{E:notweak2} on $\pi_0$ is not a bijection for $0<\rho_1-\rho_2\leq \frac{\pi}{2}$.
	
For the remaining cases we work instead with spaces of type $\sr L_{-\ka_1}^{+\ka_1}(I)$ ($1<\ka_1\leq \sqrt{3}$). It suffices to show that the map induced by \eqref{E:notweak2} is not an isomorphism on $\pi_2$ in this case. Let $\sr L=\sr L_{-\ka_1}^{+\ka_1}(I)$, and let $\sr S$ be its submanifold described in (\ref{D:srS})

By (\ref{L:trivialnormal}), the normal bundle $N\sr S$ of $\sr S$ in $\sr L$ is trivial, hence orientable. Let $\tau$ be a 2-form representing the Thom class of this bundle. Using a tubular neighborhood $\sr T$ of $\sr S$ in $\sr L$, we can assume that $\tau$ is a 2-form defined on $\sr T$, extended by 0 to all of $\sr L$. Let $f\colon \Ss^2\to \sr L$ be the map constructed in (\ref{L:exotic}). Then $f^*\tau$ is a 2-form on $\Ss^2$ which represents the Thom class of the normal bundle of $f^{-1}(\sr S)$ in $\Ss^2$. 

Now let $S$ be a an oriented submanifold of an oriented, finite-dimensional manifold $M$. Then the Poincar\'e dual of $S$ and the Thom class of the normal bundle of $S$ in $M$ are represented by the same form (see \cite{BottTu}, pp.~66--67). Applying this to $M=\Ss^2$ and $S=f^{-1}(\sr S)$, we obtain that $f^*\tau$ represents the Poincar\'e dual in $\Ss^2$ of a point. Therefore:
\begin{equation*}
	\int_{\Ss^2}f^*\tau=1.
\end{equation*}

In particular, we conclude that $f$ cannot be null-homotopic in $\sr L_{-\ka_1}^{+\ka_1}(I)$, otherwise $f^*=0$. As we saw in (\ref{L:exotic}),  $f$ is null-homotopic in $\sr L_{-\infty}^{+\infty}(I)$,  whence 
\[
\qquad i_*\colon \pi_2\big(\sr L_{-\ka_1}^{+\ka_1}(I),\ga_0\big)\to \pi_2\big(\sr L_{-\infty}^{+\infty}(I),\ga_0\big) \quad (1<\ka_1\leq \sqrt{3})
\]
is not injective, where $\ga_0$ a circle traversed once, as in (\ref{L:exotic}).
\end{proof}


\vfill\eject
\chapter{Basic Results on Convexity}

In this section we collect some results on convexity, none of which is new, that are used throughout the work.

Let $C\subs \R^{n+1}$. We say that $C$ is \tdef{convex} if it contains the line segment $[p,q]$ joining $p$ to $q$ whenever $p,q\in C$. The \tdef{convex hull} $\co{X}$ of a subset $X\subs \R^{n+1}$ is the intersection of all convex subsets of $\R^{n+1}$ which contain $X$. It may be characterized as the set of all points $q$ of the form
\begin{equation}\label{E:lc}
	q=\sum_{k=1}^m s_k p_k,\quad \text{where}\quad \sum_{k=1}^ms_k=1,\  s_k>0 \ \text{and}\  p_k\in X\text{ for each $k$}.
\end{equation}

\begin{lemmaa}\label{L:basicconvex}
	Let $X\subs \Ss^n$ and consider the conditions:
	\begin{enumerate}
		\item [(i)]	$0$ does not belong to the closure of $\hat{X}$.
		\item [(ii)] There exists an open hemisphere containing $X$.
		\item [(iii)] $0$ does not belong to $\hat{X}$.
		\item [(iv)] $X$ does not contain any pair of antipodal points.
	\end{enumerate}
	Then \tup{(i)\,$\rar$\,(ii)\,$\rar$\,(iii)\,$\rar$\,(iv)}, but none of the implications is reversible. If $X$ is closed then (ii) and (iii) are equivalent.
\end{lemmaa}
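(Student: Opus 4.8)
The chain of implications is the heart of the matter, and each link is elementary once set up correctly. For (i)$\,\rar\,$(ii): if $0\notin\overline{\hat X}$, then since $\overline{\hat X}$ is a closed convex set not containing the origin, there is a hyperplane through $0$ strictly separating $0$ from $\overline{\hat X}$; i.e.\ there exists $h\in\Ss^n$ with $\gen{h,q}>0$ for all $q\in\overline{\hat X}\supseteq X$. That says precisely that $X$ lies in the open hemisphere determined by $h$. For (ii)$\,\rar\,$(iii): if $X$ is contained in the open hemisphere $\{p:\gen{h,p}>0\}$, then every point of the form \eqref{E:lc} with $p_k\in X$ satisfies $\gen{h,q}=\sum_k s_k\gen{h,p_k}>0$ since the $s_k$ are positive and sum to $1$; hence $0\notin\hat X$. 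For (iii)$\,\rar\,$(iv): if $p$ and $-p$ both lie in $X$, then $0=\tfrac12 p+\tfrac12(-p)\in\hat X$, contradicting (iii).

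Next I would exhibit the counterexamples showing irreversibility, all in $\Ss^1$ or $\Ss^2$ (taking $n$ large enough, or noting the examples embed). For (iv)$\,\not\rar\,$(iii): take three points in $\Ss^1$ forming an equilateral triangle with $0$ in the interior of their convex hull; no two are antipodal, yet $0\in\hat X$. For (iii)$\,\not\rar\,$(ii): take $X$ to be an open semicircle of $\Ss^1$ (say $\{e^{i\theta}:0<\theta<\pi\}$); its convex hull is an open half-disk whose closure contains $0$ on its boundary, so $0\notin\hat X$ but no open hemisphere contains $X$ — indeed the only closed hemisphere containing $X$ has $X$ meeting its boundary equator. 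For (ii)$\,\not\rar\,$(i): take $X$ to be a closed semicircle minus its two endpoints together with a sequence of points approaching one endpoint — more simply, let $X=\{e^{i\theta}:0<\theta\le \pi/2\}\cup\{e^{i\theta}:\theta\to 0^+\}$; cleaner still, let $X$ be the set $\{(\cos\theta,\sin\theta):-\pi/2<\theta\le\pi/2\}$ which lies in the open hemisphere $\{p^1>0\}$... wait, that one has $0$ on the boundary of $\hat X$ but $\hat X$ itself closed. The genuinely clean example: let $X\subseteq\Ss^1$ be $\{(\cos\theta,\sin\theta): \theta\in(-\pi/2,\pi/2)\}$ together with nothing else is contained in the open hemisphere $p^1>0$, and $\overline{\hat X}$ is the closed half-disk $\{p^1\ge 0, |p|\le 1\}$, which contains $0$; so (ii) holds but (i) fails. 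I will use this last example.

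Finally, for the closed case, I must show (iii)$\,\rar\,$(ii) when $X$ is closed. Here $X$ closed in $\Ss^n$ is compact, so $\hat X$ is compact (the convex hull of a compact set in $\R^{n+1}$ is compact, by Carath\'eodory's theorem — every point of $\hat X$ is a convex combination of at most $n+2$ points of $X$, exhibiting $\hat X$ as the image of the compact set $\Delta^{n+1}\times X^{n+2}$ under a continuous map). Hence $\hat X$ is closed, so $\overline{\hat X}=\hat X$, and then (iii) is literally condition (i), which we have already shown implies (ii).

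The main obstacle is making sure the separation arguments and the compactness of the convex hull are invoked with the right hypotheses: strict separation of a point from a \emph{closed} convex set (not merely convex), and Carath\'eodory's theorem to get compactness of $\hat X$ from compactness of $X$. Both are standard; once they are in place, the implications are immediate, and choosing the three counterexamples so that each isolates exactly one failing implication is routine but should be stated carefully. I would present the counterexamples in $\Ss^1$ and remark that they yield counterexamples in $\Ss^n$ for all $n\ge 1$ by including $\Ss^1\subset\Ss^n$ as a great subsphere.
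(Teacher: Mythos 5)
Your chain of implications and your treatment of the closed case follow the same route as the paper: strict separation of the origin from the closed convex set $\ol{\hat{X}}$ for (i)$\,\rar\,$(ii), positivity of $\gen{h,\cdot}$ on convex combinations for (ii)$\,\rar\,$(iii), the segment $[-p,p]$ for (iii)$\,\rar\,$(iv), and Carath\'eodory/compactness of $\hat{X}$ to identify (iii) with (i) when $X$ is closed. Two of your three counterexamples (the equilateral triangle for (iv)$\,\not\rar\,$(iii), and the open semicircle in the hemisphere $p^1>0$ for (ii)$\,\not\rar\,$(i)) are also correct and match the paper's in substance.

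However, your counterexample for (iii)$\,\not\rar\,$(ii) is wrong. The \emph{open} semicircle $X=\{e^{i\theta}:0<\theta<\pi\}$ \emph{is} contained in an open hemisphere: taking $h=i=(0,1)$, one has $\gen{h,e^{i\theta}}=\sin\theta>0$ for every $\theta\in(0,\pi)$, so $X\subs\{p\in\Ss^1:\gen{p,h}>0\}$. Your claim that ``the only closed hemisphere containing $X$ has $X$ meeting its boundary equator'' is false for the same reason. The point of this counterexample is that one endpoint must be \emph{included}: take $X=\{e^{i\theta}:0\leq\theta<\pi\}$, as the paper does. Then $0\nin\hat{X}$ (any convex combination has second coordinate $\sum_k s_k\sin\theta_k\geq 0$, with equality only when all $\theta_k=0$, in which case the combination equals $1\neq 0$), but no open hemisphere $\{p:\gen{p,e^{i\phi}}>0\}$ contains $X$: one would need the interval $[-\phi,\pi-\phi)$ to lie in $(-\tfrac{\pi}{2},\tfrac{\pi}{2})$, forcing simultaneously $\phi<\tfrac{\pi}{2}$ and $\phi\geq\tfrac{\pi}{2}$. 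With that replacement the proof is complete and agrees with the paper's.
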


\begin{proof}\  
\begin{enumerate}
	\item [(i) $\rar$ (ii)] This is a special case of the Hahn-Banach theorem, since $\se{0}$ is a compact convex set and the closure of $\hat{X}$ is a closed convex set.
	\item [(ii) $\not \rar$ (i)] For $X\subs \Ss^n$ the open upper hemisphere, we have
	\begin{equation*}
		\hat{X}=\set{(x_1,\dots,x_{n+1})\in \D^{n+1}}{x_{n+1}>0}.
	\end{equation*}
	Hence the closure of $\hat{X}$ contains the origin, even though $X$ is (contained in) an open hemisphere.
	\item [(ii) $\rar$ (iii)] Let $H=\set{p\in \Ss^n}{\gen{p,h}>0}$ be an open hemisphere containing $X$ and $U=\set{p\in \R^{n+1}}{\gen{p,h}>0}$. Then $U$ is convex, $X\subs U$ and $0\nin U$. Thus, $0\nin \hat{X}$.
	\item [(iii) $\not \rar$ (ii)] Let $X$ be the image of $[0,\pi)$ under $t\mapsto \exp(it)$.
	\item [(iii) $\rar$ (iv)] If $p$ and $-p$ both belong to $X$, then $0\in [-p,p]\subs  \hat{X}$.
	\item [(iv) $\not \rar$ (iii)] Let $X=\se{1,\ze,\ze^2}\subs \Ss^1$, where $\ze=\exp(\frac{2}{3}\pi i)$ is a primitive third root of unity. Then $X$ does not contain antipodal points, but $0=\frac{1}{3}\big(1+\ze+\ze^2\big)$. \qedhere
\end{enumerate}
The last assertion is the combination of (i)\,$\rar$\,(ii) and (ii)\,$\rar$\,(iii), together with the fact that $\hat{X}$ is closed if $X$ is closed, as shown in (\ref{L:convexcompact}) below (its proof does not rely on the present lemma). 
\end{proof}

\begin{lemmaa}\label{L:closedhemisphere}
	Let $X\subs \Ss^n$. Then $0$ belongs to the interior of $\hat{X}$ if and only if $X$ is not contained in any closed hemisphere of $\Ss^n$. 
\end{lemmaa}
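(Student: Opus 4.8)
The statement is a standard fact, and the natural route is to prove the two contrapositive implications separately, using the basic machinery of supporting hyperplanes already available in the appendix. Write $\hat X$ for the convex hull of $X$ and recall from \eqref{E:lc} that $\hat X$ consists of all finite convex combinations of points of $X$.

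First I would treat the ``only if'' direction in contrapositive form: assuming $X$ is contained in some closed hemisphere, I show $0$ is not in the interior of $\hat X$. If $H=\set{p\in\Ss^n}{\gen{p,h}\geq 0}$ contains $X$, then the closed half-space $U=\set{p\in\R^{n+1}}{\gen{p,h}\geq 0}$ is convex and contains $X$, hence $\hat X\subseteq U$. Since $0\in\bd U$, every neighbourhood of $0$ meets the complementary open half-space $\set{p}{\gen{p,h}<0}$, which is disjoint from $\hat X$; therefore $0$ cannot be an interior point of $\hat X$. This half is essentially immediate.

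The other direction is the one requiring a little more care. In contrapositive form: assuming $0$ is \emph{not} in the interior of $\hat X$, I must produce a closed hemisphere containing $X$. If $0\notin\hat X$ at all, then by (the closed-set case of) \eqref{L:basicconvex}, or directly by Hahn--Banach separating the compact convex set $\{0\}$ from the closed convex set $\ol{\hat X}$, there is an open (a fortiori closed) hemisphere containing $X$ and we are done. So the substantive case is $0\in\hat X\setminus\Int\hat X$, i.e.\ $0\in\bd\hat X$. Here I invoke the supporting hyperplane theorem for the convex set $\hat X$ at its boundary point $0$: there exists $h\neq 0$ with $\gen{p,h}\geq 0$ for all $p\in\hat X$, and in particular for all $p\in X$. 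Normalizing $h$ to lie on $\Ss^n$, this says $X\subseteq\set{p\in\Ss^n}{\gen{p,h}\geq 0}$, a closed hemisphere. The main obstacle, such as it is, is making sure the supporting hyperplane theorem applies — one needs $\hat X$ convex with nonempty interior or at least $0$ a genuine boundary point of the (possibly lower-dimensional) convex set; if $\hat X$ lies in a proper subspace one applies the separation within that subspace's affine hull and then extends $h$ to all of $\R^{n+1}$, which only enlarges the hemisphere. Either way one gets the desired hemisphere, completing the equivalence.

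Throughout I would freely use that $\hat X$ is closed when $X$ is closed — noting, however, that the statement as phrased makes no closedness hypothesis, and indeed none is needed since interior points of $\hat X$ are interior points of $\ol{\hat X}$ and vice versa for this purpose, so the argument goes through verbatim for arbitrary $X\subseteq\Ss^n$ by replacing $\hat X$ with its closure where convenient.
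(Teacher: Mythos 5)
Your proof is correct and follows essentially the same route as the paper's: one direction by observing that a containing closed hemisphere forces $\hat X$ into a closed half-space with $0$ on its boundary, the other by strict separation when $0$ is outside the closure and a supporting hyperplane at $0$ otherwise. One small point: your case split should be $0\notin\ol{\hat X}$ versus $0\in\ol{\hat X}\ssm\Int\hat X$ (rather than $0\notin\hat X$ versus $0\in\hat X\ssm\Int\hat X$), since strict Hahn--Banach separation of $\{0\}$ from $\ol{\hat X}$ fails if $0$ lies in the closure but not in $\hat X$ itself; the supporting-hyperplane argument you give handles that residual case, so nothing is lost.
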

\begin{proof}
	Suppose first that $0\nin \Int \hat{X}$. If $0$ does not belong to the closure of $\hat{X}$ then, as above, we can use the Hahn-Banach theorem to find a hyperplane separating $0$ and $X$. If $0\in \bd \hat{X}$ then there exists a supporting hyperplane for $\hat{X}$ through $0$. One of the closed hemispheres determined by this hyperplane contains $X$. 
	
	Conversely, if $X$ is contained in a closed hemisphere 
	\begin{equation*}
		H=\set{p\in \Ss^n}{\gen{p,h}\geq 0}
	\end{equation*}
	then $\hat{X}$ is contained in the ``dome''
	\begin{equation*}
		D=\set{p\in \R^n}{\abs{p}\leq 1\text{ and }\gen{p,h}\geq 0},
	\end{equation*}
	which contains $0$ in its boundary. Hence, $\hat{X}$ cannot contain $0$ in its interior.
\end{proof}

Let $A\subs \Ss^n$, $n\geq 1$. We say that $A$ is \tdef{geodesically convex} if it contains no antipodal points and if for any $p,q\in A$, the shortest geodesic joining $p$ to $q$ is also contained in $A$. The \tdef{convexification} $\gc{X}$ of a subset $X\subs \Ss^n$ is defined to be the intersection of all geodesically convex subsets of $\Ss^n$ which contain $X$; if no such subset exists, then we set $\gc{X}=\Ss^n$. 

In what follows let $\pr \colon \R^{n+1}\ssm \se{0}\to \Ss^n$ denote the gnomic projection $x\mapsto \frac{x}{\abs{x}}$.
\begin{lemmaa}
	Let $X\subs \Ss^n$. 
	\begin{enumerate}
		\item [(a)] If $0\nin \hat{X}$ then $\gc{X}=\pr(\hat{X})$.
		\item [(b)] $0\in \hat{X}$ if and only if $\gc{X}=\Ss^n$.
	\end{enumerate}
\end{lemmaa}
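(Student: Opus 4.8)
The plan is to relate the geodesically convex sets in $\Ss^n$ containing $X$ with the convex cones in $\R^{n+1}$ containing $X$, via the gnomic projection $\pr$ and its partial inverse (the cone on a subset of the sphere). The key observation is that a subset $A\subs \Ss^n$ with no pair of antipodal points is geodesically convex if and only if its cone $\widehat A^{\,\text{cone}}=\set{tx}{x\in A,\ t\geq 0}$ is a convex subset of $\R^{n+1}$: a shortest geodesic arc between $p,q\in\Ss^n$ projects, under $\pr$, from the planar segment $[p,q]$ (which lies in the $2$-plane through $0$, $p$, $q$), and conversely convexity of the cone forces all such segments, hence all such arcs, to stay in $A$. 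One must be slightly careful when $p=-q$, which is why the hypothesis $0\nin\hat X$ enters: it guarantees $\hat X$, and the sets in play, omit antipodal pairs, so ``shortest geodesic'' is unambiguous and $\pr$ behaves well.

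For part (a), assume $0\nin\hat X$. First I would note that $\hat X$ is compact and convex (by (\ref{L:convexcompact}), invoked in the excerpt), that $0\nin\hat X$, and hence by (\ref{L:basicconvex}) (implication (iii)$\rar$(ii) for closed sets, or more directly the Hahn--Banach separation already used there) $\hat X$ is contained in an open half-space $\set{y}{\gen{y,h}>0}$; therefore $\pr(\hat X)$ is contained in the open hemisphere $H_h$, so in particular $\pr(\hat X)$ contains no antipodal points. Next I would show $\pr(\hat X)$ is geodesically convex: given $p,q\in\pr(\hat X)$, write $p=\pr(y)$, $q=\pr(z)$ with $y,z\in\hat X$; the planar segment $[y,z]\subs\hat X$ avoids $0$, and its image under $\pr$ is exactly the shortest geodesic arc from $p$ to $q$ (here one uses that $p,q$ both lie in $H_h$, hence are not antipodal, and a one-line computation that $\pr$ maps $[y,z]$ monotonically onto the minor arc). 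Since $\pr(\hat X)$ is a geodesically convex set containing $X$, we get $\gc X\sube\pr(\hat X)$. For the reverse inclusion, let $C$ be any geodesically convex set containing $X$; I would show $\pr(\hat X)\sube C$ by checking that $C':=\set{tx}{x\in C,\ t> 0}\cup\{0\}$ is convex in $\R^{n+1}$: because $C$ has no antipodal points and is geodesically convex, for $y,z$ in the cone the segment $[y,z]$ projects into $C$ and stays in the cone (again the antipodal-free hypothesis keeps $0$ off the open segment when $y,z\neq 0$ are not positive multiples of antipodal points, and the degenerate cases are trivial). Then $C'$ is a convex set containing $X$, so $\hat X\sube C'$, whence $\pr(\hat X)\sube\pr(C')=C$. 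Taking the intersection over all such $C$ gives $\pr(\hat X)\sube\gc X$, and (a) follows.

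For part (b): if $0\in\hat X$ then no geodesically convex subset of $\Ss^n$ can contain $X$ — for such a set would contain no antipodal points, hence by (\ref{L:basicconvex}) its convex hull (indeed $\hat X$) would omit $0$, a contradiction — so by the convention $\gc X=\Ss^n$. Conversely, if $0\nin\hat X$, then by part (a) $\gc X=\pr(\hat X)$, which is contained in an open hemisphere and is therefore a proper subset of $\Ss^n$; contrapositively, $\gc X=\Ss^n$ forces $0\in\hat X$.

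The main obstacle I anticipate is the careful verification that $\pr$ carries planar segments to shortest geodesic arcs and that cones of geodesically convex sets are convex, precisely in handling the antipodal/degenerate configurations — i.e.\ making rigorous that the hypothesis $0\nin\hat X$ (and the no-antipodal-points clause in the definition of geodesic convexity) exactly rules out the bad cases. Everything else is routine, using (\ref{L:basicconvex}), (\ref{L:closedhemisphere}) and (\ref{L:convexcompact}) as already established.
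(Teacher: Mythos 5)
Your overall strategy is viable, and your treatment of the inclusion $\pr(\hat{X})\sube\gc{X}$ via the cone construction is a genuinely different and clean alternative to the paper's argument: the paper proves that inclusion by induction on the number of terms in a convex combination, whereas you reduce it to the observation that the cone on a geodesically convex set is convex. (The other inclusion, via geodesic convexity of $\pr(\hat{X})$, is the same in both proofs.) However, two of your justifications invoke implications that Lemma (\ref{L:basicconvex}) explicitly shows to be \emph{false}. First, in (a) you deduce from $0\nin\hat{X}$ that $\hat{X}$ lies in an open half-space, i.e.\ that $X$ lies in an open hemisphere. This is the implication (iii)$\,\rar\,$(ii) of (\ref{L:basicconvex}), which fails for non-closed $X$ (the paper's counterexample is the image of $[0,\pi)$ under $t\mapsto\exp(it)$), and $X$ here is arbitrary; note also that (\ref{L:convexcompact}) does not give compactness of $\hat{X}$, since $X$ need not be closed. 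Fortunately the only consequence you draw from the hemisphere claim --- that $\pr(\hat{X})$ contains no antipodal points, so that shortest geodesics are unambiguous --- follows directly from $0\nin\hat{X}$: if $\pr(y)=-\pr(z)$ with $y,z\in\hat{X}$, then $0\in[y,z]\subs\hat{X}$. So this gap is easily patched.

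The second issue is more serious. In (b) you argue that a geodesically convex $C\sups X$ ``contains no antipodal points, hence by (\ref{L:basicconvex}) its convex hull would omit $0$.'' That is the implication (iv)$\,\rar\,$(iii), which (\ref{L:basicconvex}) also refutes (three equally spaced points on $\Ss^1$ contain no antipodal pair, yet $0$ lies in their hull). The conclusion is nevertheless true for geodesically convex $C$, but it needs an actual argument --- which your own cone construction supplies: if $0=\sum_k s_kp_k$ with $p_k\in C$, then $-p_1=\tfrac{1}{s_1}\sum_{k\geq 2}s_kp_k$ lies in the convex cone on $C$, hence $-p_1\in C$, contradicting the absence of antipodal pairs. (The paper instead derives the forward direction of (b) from part (a), by writing $0$ as a minimal convex combination and exhibiting an antipodal pair inside $\gc{X}$.) With these two repairs your proof is correct.
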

\begin{proof}We may assume that $X\neq \emptyset$ since (a) and (b) are trivially true if $X=\emptyset$.
\begin{enumerate}
	\item [(a)]  Assume that $0\nin \hat{X}$. If $p=\pr(p_0)$ and $-p=\pr(p_0')$ for $p_0,p_0'\in \hat{X}$, then $0\in [p_0,p_0']\subs \hat{X}$, a contradiction. Hence, $\pr(\hat{X})$ does not contain any antipodal points. 
	
	Let $q\in \co{X}$ be as in \eqref{E:lc}.  We shall prove by induction on $m$ that $\pr(q)\in \gc{X}$. This is obvious for $m=1$, so assume $m>1$, and set $\sig=s_1+\dots+s_{m-1}$. Then 
	\begin{equation*}
		q=(1-\sig)p_1+\sig\Big(\sum_{k=2}^{m}\frac{s_k}{\sig}p_k\Big)=(1-\sig) p_1+\sig p.
	\end{equation*}
Both $p_1$ and $p$ belong to $\co{X}$. Moreover, by the induction hypothesis, $\pr(p)\in \gc{X}$. Since $\gc{X}$ is geodesically convex, it contains the shortest geodesic joining $p_1$ to $\pr(p)$, which is precisely the image of the line segment $[p_1,p]$ under $\pr$. Hence $\pr(q)\in \gc{X}$, and $\pr(\co{X})\subs \gc{X}$ is established.

Now let $p=\pr(p_0)$, $q=\pr(q_0)$, with $p_0,q_0\in \co{X}$. Then $(1-s)p_0+sq_0\in \co{X}$ for all $s\in [0,1]$, whence $\pr[p_0,q_0]\subs \pr(\co{X})$. Since $\pr[p_0,q_0]$ is the shortest geodesic joining $p$ to $q$, we conclude that $\pr(\co{X})$ is geodesically convex. Therefore the reverse inclusion $\gc{X}\subs \pr(\co{X})$ also holds.

	\item [(b)] Suppose first that $0\in \hat{X}$ and write $0$ as a convex combination
	\begin{equation*}
	0=\sum_{k=1}^m s_k p_k,\ \ \text{where}\ \ \sum_{k=1}^ms_k=1,\ \ p_k\in X\ \ \text{and}\ \ s_k>0 \text{ for each $k$},
	\end{equation*} 
	with $m$ as small as possible; clearly, $m>1$. Set $\sig=s_2+\dots+s_m$. Then 
	\begin{equation*}
		0=(1-\sig)p_1+\sig\Big(\sum_{k=2}^{m}\frac{s_k}{\sig}p_k\Big)=(1-\sig) p_1+\sig p.
	\end{equation*}
	Let $S=\se{p_2,\dots,p_m}\subs \Ss^n$. If $0\in \hat{S}$, then we would be able to write $0$ as a convex combination of $m-1$ points in $X$, a contradiction. Thus, applying part (a) to $S$, we deduce that $\pr(p)\in \gc{S}\subs \gc{X}$. Because $0\in [p_1,p]$, $p_1$ and $\pr(p)$ must be antipodal to each other, whence $\gc{X}$ contains a pair of antipodal points. Therefore $\gc{X}=\Ss^n$.
	
	Finally, suppose that $0\nin \hat{X}$. By part (a), $\gc{X}=\pr(\hat{X})$. Further, as we saw in the first paragraph of the proof, $\pr(\hat{X})$ does not contain antipodal points. Therefore $\gc{X}\neq \Ss^n$. 
	\qedhere
\end{enumerate}
\end{proof}

\begin{lemmaa}\label{L:nonempty}
	A convex set $C\subs \R^n$ has empty interior if and only if it is contained in a hyperplane.
\end{lemmaa}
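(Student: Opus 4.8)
The statement is the standard dichotomy for convex subsets of $\R^n$: a convex set $C$ has empty interior if and only if $C$ is contained in a hyperplane. The plan is to prove the two implications separately, the easy one first.

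For the ``if'' direction, suppose $C$ is contained in a hyperplane $\Pi=\set{x\in \R^n}{\gen{x,u}=c}$ for some nonzero $u$ and some scalar $c$. Then every point of $C$ satisfies $\gen{x,u}=c$, while any open ball $B(p;\eps)$ contains points $p\pm \tfrac{\eps}{2}\tfrac{u}{\abs{u}}$ on which $\gen{\cdot,u}$ takes the two distinct values $c\pm \tfrac{\eps}{2}\abs{u}$. Hence no open ball is contained in $\Pi$, so $\Int C\subs \Int \Pi=\emptyset$.

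For the ``only if'' direction, I would argue the contrapositive: if $C$ is not contained in any hyperplane, then $\Int C\neq \emptyset$. The key step is to extract from $C$ an affinely independent $(n+1)$-tuple of points $p_0,p_1,\dots,p_n$, i.e. points whose affine hull is all of $\R^n$. This is done by induction: pick any $p_0\in C$ (if $C=\emptyset$ it is trivially in a hyperplane); having chosen $p_0,\dots,p_k$ spanning a $k$-dimensional affine subspace $A_k$ with $k<n$, the hypothesis that $C\nsubs A_k$ (since $A_k$ lies in some hyperplane when $k<n$) lets us choose $p_{k+1}\in C\ssm A_k$, enlarging the affine dimension. After $n$ steps we obtain the desired simplex. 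The convexity of $C$ then guarantees that the whole closed simplex $\De=\co{\se{p_0,\dots,p_n}}$ is contained in $C$ (using the characterization of the convex hull in \eqref{E:lc}, together with the fact that $C$ contains every segment between two of its points, extended inductively to finite convex combinations exactly as in part (a) of the preceding lemma). A nondegenerate $n$-simplex in $\R^n$ has nonempty interior — for instance its barycenter $\tfrac{1}{n+1}\sum_k p_k$ is an interior point, as one sees by noting that the affine map sending the standard simplex onto $\De$ is a linear isomorphism on directions — so $\Int C\supseteq \Int \De\neq \emptyset$.

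The main obstacle, such as it is, is purely bookkeeping: one must be careful that at each stage of the induction the affine span $A_k$ of $\se{p_0,\dots,p_k}$ is genuinely a proper affine subspace (hence contained in a hyperplane) precisely when $k<n$, so that the hypothesis ``$C$ is in no hyperplane'' can be invoked to continue. There is no real analytic difficulty; the argument is elementary affine geometry, and I would keep the writeup short, citing \eqref{E:lc} for the convex-combination description and suppressing the routine verification that the barycenter of a nondegenerate simplex is interior.
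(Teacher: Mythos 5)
Your proof is correct and follows essentially the same route as the paper: both prove the nontrivial direction by extracting $n+1$ affinely independent points of $C$, observing that convexity forces $C$ to contain the resulting nondegenerate simplex, and noting that such a simplex has nonempty interior. Your writeup merely fills in the inductive selection of the points and gives a slightly more careful justification (via the barycenter) that the simplex has interior, where the paper appeals to the simplex being homeomorphic to the standard one.
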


\begin{proof}
	Suppose that $C$ is not contained in a hyperplane and let $x_0\in C$. Then we can find $x_1,\dots,x_n\in C$ such that $\se{x_i-x_0}_{i=1,\dots,n}$ forms a basis for $\R^n$. Being convex, $C$ must contain the simplex $[x_0,\dots,x_n]$, which has nonempty interior since it is homeomorphic to the standard $n$-simplex. The converse is obvious.
\end{proof}

\begin{lemmaa}\label{L:Steinitz}
	Let $X\subs \R^n$ be any set. If $p\in \hat{X}$, then there exists a $k$-dimensional simplex which has vertices in $X$ and contains $p$, for some $k\leq n$.
\end{lemmaa}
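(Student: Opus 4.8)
The statement is the classical Carathéodory theorem: if $p\in\hat X$ then $p$ lies in some $k$-dimensional simplex with $k\le n$ whose vertices belong to $X$. The plan is to argue by contradiction on the size of a convex representation. Since $p\in\hat X$, by the characterization \eqref{E:lc} we may write $p=\sum_{k=1}^m s_k p_k$ with $s_k>0$, $\sum_k s_k=1$ and $p_k\in X$; choose such a representation with $m$ as small as possible. I would then show that the points $p_1,\dots,p_m$ are affinely independent, which forces $m-1\le n$ (so $k:=m-1\le n$) and exhibits $[p_1,\dots,p_m]$ as the desired simplex containing $p$.

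The heart of the argument is the affine independence. Suppose not: then there exist reals $t_1,\dots,t_m$, not all zero, with $\sum_k t_k=0$ and $\sum_k t_k p_k=0$. At least one $t_k$ is positive; let $\lambda=\min\set{s_k/t_k}{t_k>0}$, attained at some index $j$, so $\lambda>0$. Then
\begin{equation*}
p=\sum_{k=1}^m (s_k-\lambda t_k)\,p_k,
\end{equation*}
and one checks that $s_k-\lambda t_k\ge 0$ for all $k$ (trivially when $t_k\le 0$, and by the choice of $\lambda$ when $t_k>0$), that $\sum_k(s_k-\lambda t_k)=1$ since $\sum_k t_k=0$, and that $s_j-\lambda t_j=0$. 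Discarding the $j$-th term yields a convex combination of $p$ using at most $m-1$ points of $X$ (after renormalizing, which is unnecessary here since the coefficients already sum to $1$), contradicting minimality of $m$. Hence $p_1,\dots,p_m$ are affinely independent.

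Once affine independence is established, $\se{p_1,\dots,p_m}$ spans a simplex of dimension $k=m-1$; since $m$ affinely independent points can exist in $\R^n$ only for $m\le n+1$, we get $k\le n$. The coefficients $s_k>0$ with $\sum s_k=1$ express $p$ as a point of the (relative) interior of this simplex, so in particular $p$ belongs to the simplex $[p_1,\dots,p_m]$, whose vertices lie in $X$ by construction. This completes the proof.

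I do not anticipate any real obstacle here: the only slightly delicate point is verifying the sign and normalization conditions on the modified coefficients $s_k-\lambda t_k$, which is the standard Carathéodory reduction step and is entirely routine. The case $p\notin X$ with $m=1$ cannot arise (then $p=p_1\in X$ and the $0$-dimensional simplex $\se{p}$ works), and the degenerate possibility that some intermediate coefficient vanishes is exactly what the minimality hypothesis rules out, so no separate bookkeeping is needed.
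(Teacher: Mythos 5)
Your proof is correct, and it is a genuinely different argument from the one in the paper. You give the classical Carath\'eodory exchange argument: take a representation $p=\sum_{k=1}^m s_kp_k$ with $m$ minimal, and show that minimality forces $p_1,\dots,p_m$ to be affinely independent, since any affine dependence $\sum_k t_kp_k=0$, $\sum_k t_k=0$ can be used to shift the coefficients by $\lambda t_k$ until one of them vanishes while the rest stay nonnegative and still sum to $1$. All the sign and normalization checks you perform are right, and affine independence immediately gives $m\leq n+1$ and a bona fide $(m-1)$-simplex containing $p$. The paper instead first reduces to finite $X$ and then runs an induction on $m+n$ (cardinality plus ambient dimension): it peels off a point $x_0$, writes $p$ on the segment from $x_0$ to a point of $\widehat{X_0}$ where $X_0=X\smallsetminus\{x_0\}$, pushes along that segment to a boundary-type point $q_0$ of $\widehat{X_0}$, and splits into cases according to whether $X_0$ lies in a hyperplane; this relies on the auxiliary facts that $\widehat{X_0}$ is closed (Lemma on compactness of convex hulls) and that a convex set not contained in a hyperplane has nonempty interior. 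Your route is more self-contained and arguably cleaner: it needs no topological input (closedness of the hull, interiors) and no induction, only linear algebra, at the cost of not producing the explicit cone structure that the paper's inductive step exhibits. Either proof establishes the lemma as stated.
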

Another way to formulate this result is the following: If $X\subs \R^n$ and $p\in \hat{X}$, then it is possible to write $p$ as a convex combination of $k+1$ points in $X$ which are in general position, where $k$ is at most equal to $n$.
\begin{proof}
	If $p\in \hat{X}$ then $p$ can be written as a finite convex combination of points in $X$. Hence, we may always assume that $X$ is finite. The proof will be by induction on $m+n$, where $m$ is the cardinality of $X$. If $m=1$ or $n=1$ the result is trivial. 
	
	Let $X=\se{x_0,\dots,x_{m}}$ and $X_0=X\ssm \se{x_0}$. If $p\in \hat{X_0}$ then we can use the induction hypothesis on $X_0$, so we may suppose that $p\nin \hat{X_0}$. There exist $q\in \hat{X_0}$ and $t\in [0,1]$ such that  $p=(1-t)x_0+tq$. Let $t_0$ be the infimum of all $u\geq t$ such that $(1-u)x_0+uq\in \hat{X_0}$, and let $q_0=(1-t_0)x_0+t_0q$ be the corresponding point. Note that $q_0\in \hat{X_0}$ since the latter set is closed, by (\ref{L:convexcompact}), and that $t_0>t$, since $p\nin \hat{X_0}$
	
	If $X_0$ is contained in some hyperplane, then we can apply the induction hypothesis to $X_0\subs \R^{n-1}$ to conclude that there exists some $(k-1)$-dimensional simplex $\De_0$ with vertices in $X_0$ containing $q_0$, for some $k\leq n$. Then $p$ belongs to the $k$-dimensional simplex which is the cone on $\De_0$ with vertex $x_0$.
	
	If $X_0$ is not contained in a hyperplane then $\hat{X_0}$ has nonempty interior in $\R^n$, by (\ref{L:nonempty}). Suppose that it is not possible to write $q_0$ as a combination of fewer than $m$ points in $X_0$. Then $q_0\in \Int\hat{X_0}$, so that $(1-t)x_0+tq\in \hat{X_0}$ for all $t$ sufficiently close to $t_0$. This contradicts the choice of $t_0$. Hence, we may write $q_0$ as a convex combination of $m-1$ points in $X_0$, and $p$ as a convex combination of $m$ points in $X$. By the induction hypothesis, we conclude that $p$ lies in some $k$-dimensional simplex with vertices in $X$, $k\leq n$.
\end{proof}

Let $Y\subs \R^{2}$ the graph of the function $f(t)=(1+t^2)^{-1}$, for $t\in \R$. Then any point on the $x$-axis belongs to the closure of $\co{Y}$, but not to $\co{Y}$. Thus, even though $Y$ is closed, $\co{Y}$ is not. When $X$ is compact, however, the situation is different.

\begin{lemmaa}\label{L:convexcompact}
	If $X\subs \R^{n}$ is compact, then $\co{X}$ is compact. In particular, if $X\subs \Ss^n$ is closed, then $\co{X}$ is compact.\qed
\end{lemmaa}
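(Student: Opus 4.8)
The final statement in the excerpt is Lemma \ref{L:convexcompact}: if $X\subs\R^n$ is compact then $\co X$ is compact, and consequently if $X\subs\Ss^n$ is closed then $\co X$ is compact.

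My plan is to invoke Carath\'eodory's theorem — which is exactly Lemma \ref{L:Steinitz} as stated just above: every point of $\co X$ is a convex combination of at most $n+1$ points of $X$. Concretely, consider the standard $n$-simplex
\[
\De^n=\set{(s_0,\dots,s_n)\in\R^{n+1}}{s_k\geq 0,\ \textstyle\sum_{k=0}^n s_k=1},
\]
which is compact, and the product $\De^n\x X^{n+1}$, which is compact since $X$ is compact and a finite product of compacta is compact. Define the map
\[
\Sig\colon \De^n\x X^{n+1}\to \R^n,\qquad \Sig\big((s_0,\dots,s_n),(p_0,\dots,p_n)\big)=\sum_{k=0}^n s_k\?p_k,
\]
which is visibly continuous. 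By Lemma \ref{L:Steinitz}, the image of $\Sig$ is exactly $\co X$: every point of $\co X$ arises as such a combination (allowing repetitions among the $p_k$ to pad a shorter combination out to length $n+1$, and setting the corresponding weights to $0$), and conversely any such combination lies in $\co X$ by the characterization \eqref{E:lc}. Hence $\co X=\Sig(\De^n\x X^{n+1})$ is the continuous image of a compact set, therefore compact.

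For the second assertion: if $X\subs\Ss^n$ is closed, then $X$ is a closed subset of the compact space $\Ss^n$, hence compact; applying the first part (with $\R^{n+1}$ in place of $\R^n$) gives that $\co X$ is compact. This is the step flagged as needed in the proof of Lemma \ref{L:basicconvex} (that $\hat X$ is closed when $X$ is closed in $\Ss^n$).

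There is no real obstacle here; the only point requiring a moment's care is the bookkeeping that lets one always use exactly $n+1$ summands — padding with repeated points and zero weights — so that the domain $\De^n\x X^{n+1}$ is a fixed compact set rather than a union over varying $k$. Everything else (continuity of $\Sig$, compactness of finite products, continuous image of a compact set is compact) is standard and needs no elaboration.
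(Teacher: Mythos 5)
Your proof is correct and follows essentially the same route as the paper's: both invoke Carath\'eodory's theorem (\ref{L:Steinitz}) to realize $\co{X}$ as the image of the compact set $\De^n\times X^{n+1}$ under the continuous weighted-sum map, and then deduce compactness. Your explicit remark about padding shorter combinations with zero weights is a small point of care the paper leaves implicit, but the argument is the same.
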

\begin{proof}
	Let 
	\begin{equation*}
		\De^n=\set{(s_1,\dots,s_{n+1})\in \R^{n+1}}{s_1+s_2+\dots+s_{n+1}=1,\ \?s_i\in [0,1]\text{ for all $i$}}
	\end{equation*}
	and define $f\colon \De^n\times X^{n+1}\to \R^{n}$ by 
	\begin{equation*}
		f(s_1,\dots,s_{n+1},\?x_1,\dots,x_{n+1})=s_1x_1+s_2x_2+\dots+s_{n+1}x_{n+1}.
	\end{equation*}
	By (\ref{L:Steinitz}), the image of $f$ is exactly the convex closure $\hat{X}$ of $X$. Since $\De^n\times X^{n+1}$ is compact and $f$ is continuous, $\hat{X}$ must also be compact.
\end{proof}

\vfill\eject
\providecommand{\bysame}{\leavevmode\hbox to3em{\hrulefill}\thinspace}
\providecommand{\MR}{\relax\ifhmode\unskip\space\fi MR }
\providecommand{\MRhref}[2]{%
  \href{http://www.ams.org/mathscinet-getitem?mr=#1}{#2}
}
\providecommand{\href}[2]{#2}


\end{document}